\def\A{\mathrm{A}} \def\AGL{\mathrm{AGL}} \def\AGaL{\mathrm{A\Gamma L}} \def\ASL{\mathrm{ASL}} \def\Aut{\mathrm{Aut}}
\def\B{\mathrm{B}} \def\bfO{\mathbf{O}}
\def\C{\mathbf{C}}     \def\calO{\mathcal{O}}  \def\calT{\mathcal{T}} \def\Cay{\mathrm{Cay}} \def\Cen{\mathbf{Z}} \def\Co{\mathrm{Co}} \def\Cos{\mathrm{Cos}}
\def\D{\mathrm{D}} \def\di{\,\big|\,} 
\def\E{\mathrm{E}}
\def\F{\mathrm{F}} 
\def\G{\mathrm{G}} \def\Ga{{\it\Gamma}} \def\GaL{\mathrm{\Gamma L}} \def\GF{\mathrm{GF}} \def\GL{\mathrm{GL}} \def\GO{\mathrm{O}} \def\GU{\mathrm{GU}}
\def\He{\mathrm{He}} \def\HS{\mathrm{HS}}
\def\J{\mathrm{J}}
\def\K{\mathbf{K}}
\def\M{\mathrm{M}} \def\magma{{\sc Magma}}
\def\N{\mathrm{N}}
\def\Nor{\mathbf{N}}
\def\Out{\mathrm{Out}}
\def\Pa{\mathrm{P}}  \def\PGL{\mathrm{PGL}} \def\PGaL{\mathrm{P\Gamma L}} \def\PGU{\mathrm{PGU}}  \def\POm{\mathrm{P\Omega}} \def\PSL{\mathrm{PSL}}  \def\PSiU{\mathrm{P\Sigma U}} \def\PSO{\mathrm{PSO}} \def\PSp{\mathrm{PSp}} \def\PSU{\mathrm{PSU}}
\def\Q{\mathrm{Q}}
 \def\Rad{\mathrm{rad}}  
\def\Si{{\it\Sigma}} \def\SL{\mathrm{SL}} \def\SO{\mathrm{SO}} \def\Soc{\mathrm{soc}} \def\soc{\mathrm{soc}} \def\Sp{\mathrm{Sp}} \def\Stab{\mathrm{Stab}} \def\SU{\mathrm{SU}} \def\Suz{\mathrm{Suz}} \def\Sy{\mathrm{S}} \def\Sz{\mathrm{Sz}}
\def\Z{\mathrm{C}} \def\ZZ{\mathrm{C}}
\newtheorem{theorem}{Theorem}[chapter]
\newtheorem{lemma}[theorem]{Lemma}
\newtheorem{proposition}[theorem]{Proposition}
\newtheorem{corollary}[theorem]{Corollary}
\theoremstyle{definition}
\newtheorem{construction}[theorem]{Construction}
\newtheorem{example}[theorem]{Example}
\newtheorem{remark}[theorem]{Remark}
\newtheorem{hypothesis}[theorem]{Hypothesis}
\numberwithin{equation}{chapter}
\numberwithin{table}{chapter}
\numberwithin{section}{chapter}
\begin{document}

\title{Factorizations of almost simple groups with a solvable factor, and Cayley graphs of solvable groups}
\author{Cai Heng Li, Binzhou Xia}
\address[Li]
{The University of Western Australia\\ Crawley 6009, WA\\ Australia.\newline Email: {\tt cai.heng.li@uwa.edu.au}}
\address[Xia]
{Peking University\\ Beijing 100871, China.\newline Email: {\tt binzhouxia@pku.edu.cn}}

\date{\today}
\maketitle

\begin{center}
\textbf{\large Abstract}
\end{center}
~\\
\noindent A classification is given for factorizations of almost simple groups with at least one factor solvable, and it is then applied to characterize $s$-arc-transitive Cayley graphs of solvable groups, leading to a striking corollary: Except the cycles, every non-bipartite connected $3$-arc-transitive Cayley graph of a solvable group is a cover of the Petersen graph or the Hoffman-Singleton graph.
\vspace{5\baselineskip}

\noindent Key words and phrases:

\noindent factorizations; almost simple groups; solvable groups; $s$-arc-transitive graphs
\vspace{5\baselineskip}

\noindent AMS Subject Classification (2010): 20D40, 20D06, 20D08, 05E18
\vspace{5\baselineskip}

\noindent\textsc{Acknowledgements.}
We would like to thank Cheryl Praeger for valuable comments. We also thank Stephen Glasby and Derek Holt for their help with some of the computation in \magma. The first author acknowledges the support of a NSFC grant and an ARC Discovery Project Grant. The second author acknowledges the support of NSFC grant 11501011.

\setcounter{page}{2}
\tableofcontents


\chapter{Introduction}\label{Intro}


A classification is given for factorizations of almost simple groups with at least one factor solvable (Theorem~\ref{SolvableFactor}), and it is then applied to characterize $s$-arc-transitive Cayley graphs of solvable groups (Theorem~\ref{CayleyGraph}), leading to a striking corollary:
\begin{quote}
{\it Except the cycles, every non-bipartite connected $3$-arc-transitive Cayley graph of a solvable group is a cover of the Petersen graph or the Hoffman-Singleton graph.}
\end{quote}

\section{Factorizations of almost simple groups}\label{factorization}

For a group $G$, the expression $G=HK$ with $H,K$ proper subgroups of $G$ is called a \emph{factorization} of $G$,
and $H,K$ are called its \emph{factors}. A finite group $G$ is said to be \emph{almost simple} if it lies between a nonabelian simple group $S$ and its automorphism group $\Aut(S)$, or equivalently, if $G$ has a unique minimal normal subgroup, which is nonabelian simple.

The \emph{socle} of a group $G$ is the product of all minimal normal subgroups, denoted by $\Soc(G)$. For a finite almost simple group, the factorization with a factor containing the socle is trivial in some sense. Thus we only consider the factorizations of which the factors are core-free. A factorization of an almost simple group is said to be \emph{nontrivial} if both its factors are core-free. We also say the factors in nontrivial factorizations of an almost simple group \emph{nontrivial}. When speaking of factorizations of almost simple groups, we always refer to nontrivial ones. It is at the heart of studying factorizations of general groups to understand the factorizations of almost simple groups.

\vskip0.1in
{\sc Problem A.} Classify factorizations of finite almost simple groups.
\vskip0.1in

This problem has been investigated for a long time. In the early stage, there were some partial results for certain simple groups. For instance, It\^{o} \cite{ito1953} determined the factorizations of $\PSL_2(q)$, and the factorizations of certain finite simple groups into two simple groups are classified \cite{Gentchev-1,Gentchev-2,TchGen}.
A factorization is said to be \emph{exact} if the intersection of the two factors is trivial. The exact factorizations of alternating and symmetric groups are determined by Wiegold and Williamson \cite{WiWi}. Fisman and Arad \cite{FA} in 1987 proved a conjecture of Sz\'{e}p \cite{szep1963,szep1968} saying that a nonabelian simple group $G$ does not have a factorization $G=HK$ with the centers of $H$ and $K$ both nontrivial.

During 1980s' and 1990s', the status of research on Problem~A dramatically changed
since some significant classification results have been achieved.
In 1987, Hering, Liebeck and Saxl \cite{hering1987factorizations} classified
factorizations of exceptional groups of Lie type.
A factorization $G=HK$ is called a \emph{maximal factorization} of $G$
if both $H,K$ are maximal subgroups of $G$.
In 1990, Liebeck, Praeger and Saxl published the landmark work \cite{liebeck1990maximal}
classifying maximal factorizations of finite almost simple groups,
which is the foundation of many further works on Problem~A.
When the socle is an alternating group,
in fact all the factorizations of an almost simple group were determined in \cite[THEOREM~D]{liebeck1990maximal}.
Based on the maximal factorizations in \cite[THEOREM~C]{liebeck1990maximal}, Giudici \cite{giudici2006factorisations} in 2006 determined the factorizations of almost simple groups with sporadic simple group socle.

Nevertheless, Problem~A for classical groups of Lie type is widely open. Our approach to this problem is to divide it into three major cases:
\begin{itemize}
\item at least one of the two factors is solvable;
\item at least one of the two factors has at least two unsolvable composition factors;
\item both factors have a unique unsolvable composition factor.
\end{itemize}
In this paper, we solve Problem~A for the first case (Theorem~\ref{SolvableFactor}). In subsequent work \cite{LiXia-2}, we solve the problem for the second case.

For the notation in the following theorem, refer to Section \ref{sec2}.

\begin{theorem}\label{SolvableFactor}
Suppose that $G$ is an almost simple group with socle $L$ and $G=HK$ for solvable subgroup $H$ and core-free subgroup $K$ of $G$. Then one of the following statements holds.
\begin{itemize}
\item[(a)] Both factors $H,K$ are solvable, and the triple $(G,H,K)$ is described in \emph{Proposition~\ref{BothSolvable}}.
\item[(b)] $L=\A_n$, and the triple $(G,H,K)$ is described in \emph{Proposition~\ref{Alternating}}.
\item[(c)] $L$ is a sporadic simple group, and the triple $(G,H,K)$ is described in \emph{Proposition~\ref{Sporadic}}.
\item[(d)] $L$ is a classical group of Lie type, and the triple $(L,H\cap L,K\cap L)$ lies in \emph{Table \ref{tab7}} or \emph{Table~\ref{tab1}}.
\end{itemize}
Conversely, for each simple group $L$ in \emph{Table \ref{tab7}} and \emph{Table~\ref{tab1}}, there exist group $G$ and subgroups $H,K$ as described such that $\Soc(G)=L$ and $G=HK$.
\end{theorem}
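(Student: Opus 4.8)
The plan is to bootstrap off the Liebeck--Praeger--Saxl classification of \emph{maximal} factorizations of almost simple groups \cite{liebeck1990maximal} and then descend from maximal subgroups to arbitrary ones. Suppose $G=HK$ with $H$ solvable and $K$ core-free, and choose maximal subgroups $M\ge H$ and $N\ge K$ of $G$. Then $G=HK\le HN\le G$ gives $G=HN$, and Dedekind's modular law yields $M=M\cap HN=H(M\cap N)$; conversely, whenever $M=H(M\cap N)$ with $H\le M$ one recovers $G=MN=H(M\cap N)N=HN\supseteq HK$. Thus the problem reduces to the following: run through every maximal factorization $G=MN$ in \cite{liebeck1990maximal} and, for each one, determine all factorizations $M=H(M\cap N)$ in which $H$ is solvable, iterating the same analysis inside $M$ when $M$ fails to be almost simple. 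Since both $M$ and $M\cap N$ are explicitly described in \cite{liebeck1990maximal}, at each stage this is a question about a single, known subgroup; moreover $|M:M\cap N|=|G:N|$ must divide $|H|$, which already pins $|H|$ from below. Collecting the resulting triples $(G,H,K)$, and at the socle level the pairs of intersections $(H\cap L,K\cap L)$, will yield cases (a)--(d).

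The non-classical socles go quickly. When $L=\A_n$, \cite[THEOREM~D]{liebeck1990maximal} already lists \emph{all} factorizations of $G$, so statement~(b) and Proposition~\ref{Alternating} come from selecting those with a solvable factor. When $L$ is sporadic, Giudici's classification \cite{giudici2006factorisations} of all factorizations of almost simple groups with sporadic socle plays the identical role, giving statement~(c) and Proposition~\ref{Sporadic}. The subcase in which \emph{both} factors are solvable (statement~(a), Proposition~\ref{BothSolvable}) is highly constrained: $|G|=|H|\,|K|/|H\cap K|$ forces $|L|$ to be essentially a product of the orders of two solvable subgroups, which together with the known solvable maximal subgroups of almost simple groups and the arithmetic of $|L|$ leaves only a short list to verify.

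The substance of the theorem is the case where $L$ is a classical group of Lie type, handled via \cite[THEOREM~C]{liebeck1990maximal}. The plan is to go through that list of maximal factorizations $G=MN$ according to the Aschbacher class of $M$ and of $N$ (parabolic, imprimitive, field-extension, classical subgroup, and so on), and in each case to decide whether $M$ admits a factorization $M=H(M\cap N)$ with $H$ solvable. Two facts keep this within reach. First, a solvable subgroup of a classical group has a restricted shape --- it stabilizes a subspace, an imprimitivity decomposition, or a field-extension or torus structure --- so relative to the known structure of $M$ the order of $H$ divides a product of cyclotomic-type quantities; set against $|G:N|$ and the order formula for $|L|$, this eliminates most configurations by arithmetic alone. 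Second, when $M$ is itself reducible or imprimitive the factorization $M=H(M\cap N)$ is of the same kind and one recurses. The finitely many small-rank and small-field exceptions, essentially $L\in\{\PSL_2(q),\PSL_3(q),\PSL_4(q),\PSp_4(q),\dots\}$ over tiny $q$, are settled by direct computation in \magma. The surviving triples are exactly those recorded in Tables~\ref{tab7} and~\ref{tab1}.

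For the converse, each line of Tables~\ref{tab7} and~\ref{tab1} must be realized by an explicit $(G,H,K)$ with $\Soc(G)=L$ and $G=HK$. Typically $H$ is a transparent solvable subgroup --- a Borel subgroup, the normalizer of a Singer cycle, or a small solvable stabilizer on the natural module --- and $G=HK$ is confirmed either from the order identity $|H|\,|K|=|G|\,|H\cap K|$ together with a transitivity check (that $H$ acts transitively on $G/K$), or directly from the underlying geometry, with the passage from the socle-level factorization to larger $G$ being routine. The main obstacle throughout is the sheer bulk and delicacy of the classical-group analysis: every Aschbacher class of $M$ ramifies into several families, and ruling out a solvable co-factor in the numerous configurations that do \emph{not} appear in the tables demands tight control of the solvable subgroups of $M$ up to conjugacy, while the low-dimensional and small-field cases require separate, partly machine-assisted, treatment.
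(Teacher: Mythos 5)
Your overall strategy is exactly the paper's: embed $G=HK$ in a maximal factorization $G=MN$, apply Dedekind to obtain $M=H(M\cap N)$, and recurse inside $M$, treating the alternating, sporadic, and both-solvable cases separately and leaning on \magma\ for small exceptions. However, there are three places where what you treat as routine is actually a genuine obstacle. First, a maximal $M\geqslant H$ (or $N\geqslant K$) need not be core-free: if $HL<G$ then $M\geqslant HL\geqslant L$, and $G=MN$ is not a nontrivial maximal factorization appearing in LPS's tables at all. The paper resolves this by Lemma~\ref{Embedding}, replacing $G$ by $G^*=HL\cap KL$ so that $H^*L=K^*L=G^*$; this only preserves $H\cap L$ and $K\cap L$, which suffices for part~(d), but your reduction needs to incorporate and justify this step.

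Second, ``iterating the same analysis inside $M$'' is not automatic. To descend one must pass to $M/\Rad(M)$, which is almost simple only when $M$ has a \emph{unique} unsolvable composition factor (Lemma~\ref{p12}); the case where $M$ has two or more unsolvable composition factors has to be argued separately (Proposition~\ref{p1}), and your scheme does not address it. More seriously, even when $M/\Rad(M)$ is almost simple, the induced factorization by $H\Rad(M)/\Rad(M)$ and $(M\cap N)\Rad(M)/\Rad(M)$ might be \emph{trivial}, i.e.\ the second factor might contain the socle, in which case there is nothing to recurse on. The nontriviality is the content of Proposition~\ref{Intersection} in the paper and is established by an order-divisibility inspection of the LPS tables; it does not follow formally from $M=H(M\cap N)$. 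Finally, the both-solvable case does not fall out of the order identity $|G|=|H||K|/|H\cap K|$ plus arithmetic on maximal solvable subgroups: classifying the almost simple groups that factor as a product of two solvable subgroups is precisely Kazarin's theorem \cite{kazarin1986groups}, which the paper cites; you would either need to invoke it or supply a replacement argument, and a purely numerical one is not available.
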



Here are some remarks on Theorem~\ref{SolvableFactor}.
\begin{itemize}
\item[(i)] For the triple $(G,H,K)$ in part (d) of Theorem \ref{SolvableFactor}, $\Nor_L(K\cap L)$ is maximal in $L$ except when $K\cap L=\PSL_3(3)$ in row~6 of Table~\ref{tab1} or $K\cap L=\A_5$, $\Sy_5$ in row~11 of Table~\ref{tab1}, respectively.
\item[(ii)] By the benefit of isomorphisms
$$
\qquad\quad\ \ \PSL_4(q)\cong\POm_6^+(q),\quad\PSp_{2m}(2^f)\cong\POm_{2m+1}(2^f)\quad\mbox{and}\quad\PSp_4(q)\cong\POm_5(q),
$$
there is a uniform description for rows~2--9 of Table \ref{tab7}: $L=\PSU_{2m}(q^{1/2})$ with $m\geqslant2$ or $\POm_{2m+1}(q)$ with $m\geqslant2$ or $\POm_{2m}^+(q)$ with $m\geqslant3$, and up to graph automorphisms of $L$, we have
\begin{equation*}
H\cap L\leqslant\mathbf{O}_p(\Pa_m){:}\hat{~}\GL_1(q^m).m<\Pa_m\quad\mbox{and}\quad\Nor_L(K\cap L)=\N_1^\varepsilon,
\end{equation*}
where $\varepsilon=2m-n\in\{0,-\}$ with $n$ being the dimension of $L$. To illustrate this, take row~4 of Table \ref{tab7} as an instance. Denote by $\tau$ the graph automorphism of $L$ of order $2$, and consider the factorization $G^\tau=H^\tau K^\tau$ deriving from $G=HK$. Since $\tau$ maps $\Pa_1$ to $\Pa_2$ and $\Sp_2(q^2).2$ to $\GO_4^-(q)$, we deduce that $H^\tau\cap L\leqslant q^3{:}(q^2-1).2<\Pa_2$ and $K^\tau\cap L\trianglerighteq\Omega_4^-(q)$. This means that the triple $(G^\tau,H^\tau,K^\tau)$ is in our uniform description with $L=\Soc(G^\tau)=\POm_5(q)$ and $q$ even.
\item[(iii)] Although Table \ref{tab7} presents $\Pa_k$ as a maximal subgroup of $L$ containing $H\cap L$ for each of the rows~2--9, it does not assert that $\Pa_k$ is the only such maximal subgroup. In fact, $H\cap L$ may be contained in the intersection of two different maximal subgroups of $L$, one of which is $\Pa_k$. For example, $G=\Sp_4(4)$ has a factorization $G=HK$ with $H=2^4{:}15<\Pa_2\cap\GO_4^-(4)$ and $K=\Sp_2(16){:}2$.
\end{itemize}

\begin{table}[htbp]
\caption{}\label{tab7}
\centering
\begin{tabular}{|l|l|l|l|l|}
\hline
row & $L$ & $H\cap L\leqslant$ & $K\cap L\trianglerighteq$ & remark\\
\hline
\multirow{2}*{1} & \multirow{2}*{$\PSL_n(q)$} & \multirow{2}*{$\hat{~}\GL_1(q^n){:}n=\frac{q^n-1}{(q-1)d}{:}n$} & \multirow{2}*{$q^{n-1}{:}\SL_{n-1}(q)$} & \multirow{2}*{$d=(n,q-1)$}\\
 & & & & \\
\hline
\multirow{2}*{2} & \multirow{2}*{$\PSL_4(q)$} & \multirow{2}*{$q^3{:}\frac{q^3-1}{d}.3<\Pa_k$} & \multirow{2}*{$\PSp_4(q)$} & $d=(4,q-1)$,\\
 & & & & $k\in\{1,3\}$\\
\hline
\multirow{2}*{3} & \multirow{2}*{$\PSp_{2m}(q)$}  & \multirow{2}*{$q^{m(m+1)/2}{:}(q^m-1).m<\Pa_m$} & \multirow{2}*{$\Omega_{2m}^-(q)$} & \multirow{2}*{$m\geqslant2$, $q$ even}\\
 & & & & \\
\hline
\multirow{2}*{4} & \multirow{2}*{$\PSp_4(q)$} & \multirow{2}*{$q^3{:}(q^2-1).2<\Pa_1$} & \multirow{2}*{$\Sp_2(q^2)$} & \multirow{2}*{$q$ even}\\
 & & & & \\
\hline
\multirow{2}*{5} & \multirow{2}*{$\PSp_4(q)$} & \multirow{2}*{$q^{1+2}{:}\frac{q^2-1}{2}.2<\Pa_1$} & \multirow{2}*{$\PSp_2(q^2)$} & \multirow{2}*{$q$ odd}\\
 & & & & \\
\hline
\multirow{2}*{6} & \multirow{2}*{$\PSU_{2m}(q)$} & \multirow{2}*{$q^{m^2}{:}\frac{q^{2m}-1}{(q+1)d}.m<\Pa_m$} & \multirow{2}*{$\SU_{2m-1}(q)$} & $m\geqslant2$,\\
 & & & & $d=(2m,q+1)$\\
\hline
\multirow{2}*{7} & \multirow{2}*{$\Omega_{2m+1}(q)$} & \multirow{2}*{$(q^{m(m-1)/2}.q^m){:}{q^m-1\over2}.m<\Pa_m$} & \multirow{2}*{$\Omega_{2m}^-(q)$} & \multirow{2}*{$m\geqslant3$, $q$ odd}\\
 & & & & \\
\hline
\multirow{3}*{8} & \multirow{3}*{$\POm_{2m}^+(q)$} & \multirow{3}*{$q^{m(m-1)/2}{:}\frac{q^m-1}{d}.m<\Pa_k$} & \multirow{3}*{$\Omega_{2m-1}(q)$} & $m\geqslant5$,\\
 & & & & $d=(4,q^m-1)$,\\
 & & & & $k\in\{m,m-1\}$\\
\hline
\multirow{2}*{9} & \multirow{2}*{$\POm_8^+(q)$} & \multirow{2}*{$q^6{:}\frac{q^4-1}{d}.4<\Pa_k$} & \multirow{2}*{$\Omega_7(q)$} & $d=(4,q^4-1)$,\\
 & & & & $k\in\{1,3,4\}$\\
\hline
\end{tabular}
\end{table}

\begin{table}[htbp]
\caption{}\label{tab1}
\centering
\begin{tabular}{|l|l|l|l|}
\hline
row & $L$ & $H\cap L\leqslant$ & $K\cap L$\\
\hline
1 & $\PSL_2(11)$ & $11{:}5$ & $\A_5$\\
2 & $\PSL_2(16)$ & $\D_{34}$ & $\A_5$\\
3 & $\PSL_2(19)$ & $19{:}9$ & $\A_5$\\
4 & $\PSL_2(29)$ & $29{:}14$ & $\A_5$\\
5 & $\PSL_2(59)$ & $59{:}29$ & $\A_5$\\
\hline
6 & $\PSL_4(3)$ & $2^4{:}5{:}4$ & $\PSL_3(3)$, $3^3{:}\PSL_3(3)$\\
7 & $\PSL_4(3)$ & $3^3{:}13{:}3$ & $(4\times\PSL_2(9)){:}2$\\
8 & $\PSL_4(4)$ & $2^6{:}63{:}3$ & $(5\times\PSL_2(16)){:}2$\\
\hline
9 & $\PSL_5(2)$ & $31{:}5$ & $2^6{:}(\Sy_3\times\PSL_3(2))$\\
\hline
10 & $\PSp_4(3)$ & $3^3{:}\Sy_4$ & $2^4{:}\A_5$\\
11 & $\PSp_4(3)$ & $3_+^{1+2}{:}2.\A_4$ & $\A_5$, $2^4{:}\A_5$, $\Sy_5$, $\A_6$, $\Sy_6$\\
12 & $\PSp_4(5)$ & $5_+^{1+2}{:}4.\A_4$ & $\PSL_2(5^2)$, $\PSL_2(5^2){:}2$\\
13 & $\PSp_4(7)$ & $7_+^{1+2}{:}6.\Sy_4$ & $\PSL_2(7^2)$, $\PSL_2(7^2){:}2$\\
14 & $\PSp_4(11)$ & $11_+^{1+2}{:}10.\A_4$ & $\PSL_2(11^2)$, $\PSL_2(11^2){:}2$\\
15 & $\PSp_4(23)$ & $23_+^{1+2}{:}22.\Sy_4$ & $\PSL_2(23^2)$, $\PSL_2(23^2){:}2$\\
\hline
16 & $\Sp_6(2)$ & $3_+^{1+2}{:}2.\Sy_4$ & $\A_8$, $\Sy_8$\\
17 & $\PSp_6(3)$ & $3_+^{1+4}{:}2^{1+4}.\D_{10}$ & $\PSL_2(27){:}3$\\
\hline
18 & $\PSU_3(3)$ & $3_+^{1+2}{:}8$ & $\PSL_2(7)$\\
19 & $\PSU_3(5)$ & $5_+^{1+2}{:}8$ & $\A_7$\\
\hline
20 & $\PSU_4(3)$ & $3^4{:}\D_{10}$, $3^4{:}\Sy_4$, & $\PSL_3(4)$\\
 & & $3^4{:}3^2{:}4$, $3_+^{1+4}.2.\Sy_4$ & \\
\hline
21 & $\PSU_4(8)$ & $513{:}3$ & $2^{12}{:}\SL_2(64).7$\\
\hline
22 & $\Omega_7(3)$ & $3^5{:}2^4.\AGL_1(5)$ & $\G_2(3)$\\
23 & $\Omega_7(3)$ & $3^{3+3}{:}13{:}3$ & $\Sp_6(2)$\\
\hline
24 & $\Omega_9(3)$ & $3^{6+4}{:}2^{1+4}.\AGL_1(5)$ & $\Omega^-_8(3)$, $\Omega^-_8(3).2$\\
\hline
25 & $\Omega_8^+(2)$ & $2^2{:}15.4<\A_9$ & $\Sp_6(2)$\\
26 & $\Omega_8^+(2)$ & $2^6{:}15.4$ & $\A_9$\\
27 & $\POm_8^+(3)$ & $3^6{:}2^4.\AGL_1(5)$& $\Omega_7(3)$\\
28 & $\POm_8^+(3)$ & $3^6{:}(3^3{:}13{:}3)$, $3^{3+6}{:}13.3$ & $\Omega_8^+(2)$\\
\hline
\end{tabular}

~\\
\vskip0.1in
Note the isomorphism $\PSp_4(3)\cong\PSU_4(2)$ for rows 10 and 11.
\end{table}

\vfill
\eject

\section{$s$-Arc transitive Cayley graphs}

The second part of this paper gives a characterization of non-bipartite connected $s$-arc-transitive Cayley graphs of solvable groups where $s\geqslant2$, as an application of Theorem~\ref{SolvableFactor}. Before stating the result, we introduce some definitions.

Let $\Ga=(V,E)$ be a simple graph with vertex set $V$ and edge set $E$. An automorphism of $\Ga$ is a permutation on $V$ that preserves the edge set $E$. The group consisting of all the automorphisms of $\Ga$ is called the \emph{(full) automorphism group} of $\Ga$ and denoted by $\Aut(\Ga)$. For a positive integer $s$, an \emph{$s$-arc} in $\Ga$ is a sequence of vertices $(\alpha_0,\alpha_1,\dots,\alpha_s)$ such that $\{\alpha_{i-1},\alpha_i\}\in E$ and $\alpha_{i-1}\neq\alpha_{i+1}$ for all admissible $i$. A $1$-arc is simply called an \emph{arc}. For some $G\leqslant\Aut(\Ga)$, the graph $\Ga$ is said to be \emph{$(G,s)$-arc-transitive} if $\Ga$ is regular (means that each vertex has the same number of neighbors) and $G$ acts transitively on the $s$-arcs of $\Ga$; $\Ga$ is said to be \emph{$(G,s)$-transitive} if $\Ga$ is $(G,s)$-arc-transitive but not $(G,s+1)$-arc-transitive. If $\Ga$ is $(\Aut(\Ga),s)$-arc-transitive or $(\Aut(\Ga),s)$-transitive, then we say that $\Ga$ is \emph{$s$-arc-transitive} or \emph{$s$-transitive}, respectively. Note that the cycles can be $s$-arc-transitive for any positive integer $s$.

Given a group $R$ and a subset $S\subset R$ which does not contain the identity of $R$ such that $S=S^{-1}:=\{g^{-1}\mid g\in S\}$,
the \emph{Cayley graph} $\Cay(R,S)$ of $R$ is the graph with vertex set $R$ such that two vertices $x,y$ are adjacent
if and only if $yx^{-1}\in S$. It is easy to see that $\Cay(R,S)$ is connected if and only if $S$ generates the group $R$, and a graph $\Ga$ is (isomorphic to) a Cayley graph of $R$ if and only if $\Aut(\Ga)$ has a subgroup isomorphic to $R$ acting regularly on the vertices of $\Ga$.

There have been classification results for certain classes of $s$-arc-transitive Cayley graphs in the literature. For instance, see \cite{ACMX} for $2$-arc-transitive circulants (Cayley graphs of cyclic groups), \cite{DMM,Marusic1,Marusic2} for $2$-arc-transitive dihedrants (Cayley graphs of dihedral groups) and \cite{IP,LP} for $2$-arc-transitive Cayley graphs of abelian groups. Cubic $s$-arc-transitive Cayley graphs are characterized in \cite{Conder,LL,XFWX05,XFWX07}, and tetravalent case is studied in \cite{LLZ}.

A graph $\Ga$ is said to be a \emph{cover} of a graph $\Si$, if there exists a surjection $\phi$ from the vertex set of $\Ga$ to the vertex set of $\Si$ which preserves adjacency and is a bijection from the neighbors of $v$ to the neighbors of $v^\phi$ for any vertex $v$ of $\Ga$. Suppose that $\Ga=(V,E)$ is a $(G,s)$-arc-transitive graph with $s\geqslant2$, and $G$ has a normal subgroup $N$ which has at least three orbits on the vertex set $V$. Then $N$ induces a graph $\Ga_N=(V_N,E_N)$, where $V_N$ is the set of $N$-orbits on $V$ and $E_N$ is the set of $N$-orbits on $E$, called the \emph{normal quotient} of $\Ga$ induced by $N$. In this context, we call $\Ga$ a \emph{normal cover} of $\Ga_N$, and call $\Ga$ a \emph{proper normal cover} of $\Ga_N$ if in addition $N\neq1$.

A transitive permutation group $G$ is called \emph{primitive} if $G$ does not preserve any nontrivial partition of the points, and is called \emph{quasiprimitive} if every nontrivial normal subgroup of $G$ is transitive. Given a non-bipartite $(G,s)$-arc-transitive graph $\Si$ with $s\geqslant2$, one can take a maximal intransitive normal subgroup $N$ of $G$, so that $\Si_N$ is $(G/N,s)$-arc-transitive with $G/N$ vertex-quasiprimitive and $\Si$ is a cover of $\Si_N$, see~\cite{Praeger}. If in addition $\Si$ is a Cayley graph of a solvable group, then the normal quotient $\Si_N$ admits a solvable vertex-transitive group of automorphisms. Thus to characterize $s$-arc-transitive Cayley graphs of solvable groups, the first step is to classify $(G,s)$-arc-transitive graphs with $G$ vertex-quasiprimitive containing a solvable vertex-transitive subgroup. This is essentially the result in Theorem~\ref{CayleyGraph}, where $\Si$ is extended to the class of graphs admitting a solvable vertex-transitive group of automorphisms although our original purpose is those Cayley graphs of solvable groups. A group $G$ is called \emph{affine} if $\Z_p^d\trianglelefteq G\leqslant\AGL_d(p)$ for some prime $p$ and positive integer $d$. The $(G,2)$-arc-transitive graphs for primitive affine groups $G$ are classified in~\cite{IP}. The $(G,2)$-arc-transitive graphs for almost simple groups $G$ with socle $\PSL_2(q)$ are classified in~\cite{HNP}.

\begin{theorem}\label{CayleyGraph}
Let $\Si$ be a non-bipartite connected $(X,s)$-transitive graph admitting a solvable vertex-transitive subgroup of $X$, where $s\geqslant2$ and the valency of $\Si$ is at least three. Then $s=2$ or $3$, and $X$ has a normal subgroup $N$ with the $(G,s)$-transitive normal quotient $\Ga$ described below, where $G=X/N$ and $\Ga=\Si_N$.
\begin{itemize}
\item[(a)] $G$ is a $3$-transitive permutation group of degree $n$, $\Ga=\K_n$, and $s=2$.
\item[(b)] $G$ is a primitive affine group, so that $\Ga$ is classified in~\cite{IP}, and $s=2$.
\item[(c)] $\PSL_2(q)\leqslant G\leqslant\PGaL_2(q)$ for some prime power $q\geqslant4$, so that $\Ga$ is classified in~\cite{HNP}.
\item[(d)] $G=\PSU_3(5)$ or $\PSiU_3(5)$, $\Ga$ is the Hoffman-Singlton graph, and $s=3$.
\item[(e)] $G=\HS$ or $\HS.2$, $\Ga$ is the Higman-Sims graph, and $s=2$.
\end{itemize}
In particular, $s=3$ if and only if $\Ga$ is the Petersen graph or the Hoffman-Singleton graph.
\end{theorem}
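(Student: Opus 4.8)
The plan is to reduce Theorem~\ref{CayleyGraph} to the classification of factorizations in Theorem~\ref{SolvableFactor} via the standard normal-quotient reduction for $s$-arc-transitive graphs. First I would invoke the result of~\cite{Praeger}: given the non-bipartite connected $(X,s)$-transitive graph $\Si$ with $s\geqslant2$, pick a normal subgroup $N$ of $X$ maximal subject to having at least three orbits on vertices (such $N$ exists and may be trivial), set $G=X/N$ and $\Ga=\Si_N$; then $\Ga$ is connected, non-bipartite, $(G,s)$-arc-transitive, $\Si$ is a normal cover of $\Ga$, and $G$ acts vertex-quasiprimitively on $\Ga$. Moreover, since $\Si$ carries a solvable vertex-transitive subgroup $R\leqslant X$, the image $RN/N$ is a solvable vertex-transitive subgroup of $G$, so $\Ga$ inherits a solvable vertex-transitive group of automorphisms. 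Thus it suffices to classify non-bipartite connected $(G,s)$-arc-transitive graphs $\Ga$ of valency $\geqslant3$ with $s\geqslant2$, $G$ vertex-quasiprimitive, and $G$ possessing a solvable vertex-transitive subgroup.

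The next step is to run through the O'Nan--Scott types of the vertex-quasiprimitive group $G$. The existence of a \emph{solvable} transitive subgroup $T\leqslant G$ is a severe restriction. Write $\alpha$ for a vertex and $G_\alpha$ for its stabilizer; transitivity of $T$ gives the factorization $G=TG_\alpha$. If $G$ has an abelian minimal normal subgroup, then $G$ is affine and we land in case~(b), where the relevant graphs are classified in~\cite{IP}. If $\soc(G)$ is a product of $k\geqslant2$ nonabelian simple groups, the quasiprimitive types (HA is excluded, TW, HS, HC, SD, CD, PA) must each be examined; in the product-action and twisted-wreath-type cases the point stabilizer structure combined with solvability of a transitive complement should be eliminated using the fact that a transitive subgroup must project onto a transitive subgroup in each coordinate, forcing a solvable transitive subgroup of an almost simple group acting on the simple socle's coset space, and one argues these cannot assemble into a solvable transitive subgroup of $G$ unless $k$ is small and $\Ga=\K_n$ with $G$ $3$-transitive (case~(a)) or we are in the Higman--Sims configuration (case~(e), $G=\HS$ or $\HS.2$ of holomorph-simple type). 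The main case is $G$ almost simple with socle $L$: then $G=TG_\alpha$ with $T$ solvable and $G_\alpha$ core-free (vertex-quasiprimitivity forces $G_\alpha$ core-free since $\Ga$ has $\geqslant3$ vertices), so $(G,T,G_\alpha)$ is exactly a factorization with a solvable factor as classified in Theorem~\ref{SolvableFactor}, and one reads off the possibilities from Propositions~\ref{BothSolvable}, \ref{Alternating}, \ref{Sporadic} and Tables~\ref{tab7}, \ref{tab1}.

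Having pinned down $(G,L,G_\alpha)$, the remaining work is graph-theoretic: for each admissible $(G,G_\alpha)$ one must decide whether a vertex-quasiprimitive $(G,s)$-arc-transitive graph with that stabilizer actually exists, and if so identify it and compute the exact value of $s$. This is done by analyzing the permutation action of $G_\alpha$ on the neighborhood $\Ga(\alpha)$: the valency is $|G_\alpha:G_{\alpha\beta}|$ for a neighbor $\beta$, $(G,2)$-arc-transitivity is equivalent to $G_\alpha$ acting $2$-transitively on $\Ga(\alpha)$, and $(G,3)$-arc-transitivity imposes a further transitivity condition on the stabilizer of an arc. For socle $\PSL_2(q)$ one cites the classification in~\cite{HNP} to get case~(c). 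The genuinely new configurations that survive are $L=\PSU_3(5)$ with $G_\alpha=\A_7$ (row~19 of Table~\ref{tab1}), yielding the Hoffman--Singleton graph and $s=3$ (case~(d)), and $L=\HS$ giving the Higman--Sims graph with $s=2$ (case~(e)); the affine rows feed into~\cite{IP}. One then verifies that in every surviving non-affine, non-$\PSL_2$ case except the two exceptional ones the graph is either $\K_n$ with $G$ $3$-transitive (case~(a), forced $s=2$) or does not yield a connected non-bipartite $s$-arc-transitive graph of valency $\geqslant3$ at all. Finally, collecting the values of $s$ across all cases shows $s\in\{2,3\}$, and $s=3$ occurs precisely in case~(d) and in the valency-$3$ subcase of the $\PSL_2$ family corresponding to the Petersen graph ($\PSL_2(5)\cong\A_5$ acting on $10$ points), giving the last sentence.

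The main obstacle I expect is the case-by-case elimination within part~(d) of Theorem~\ref{SolvableFactor}: Tables~\ref{tab7} and~\ref{tab1} contain many infinite families and sporadic rows, and for each one must determine the action of $G_\alpha=K$ (or rather $\Nor_G(K)$-related stabilizer) on a putative neighborhood and rule out $2$-arc-transitivity, which requires delicate information about subgroup structure and double cosets rather than a uniform argument. A secondary subtlety is that the factor $T$ in Theorem~\ref{SolvableFactor} is only assumed solvable, not that $K$ is core-free as a \emph{vertex stabilizer} in the precise sense needed; one must check that vertex-quasiprimitivity plus connectivity genuinely forces the graph-theoretic nondegeneracy (valency $\geqslant3$, $G_\alpha$ acting faithfully and primitively or at least with the right transitivity on $\Ga(\alpha)$), and handle the borderline rows flagged in Remark~(i) where $\Nor_L(K\cap L)$ fails to be maximal.
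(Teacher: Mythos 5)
Your overall architecture matches the paper's: reduce to the quasiprimitive case via Praeger's normal-quotient theorem, eliminate the non-affine, non-almost-simple quasiprimitive types, and then exploit the factorization $G=TG_\alpha$ together with Theorem~\ref{SolvableFactor} in the almost simple case, finishing with coset-graph arguments that compute the exact value of $s$. However, there are two substantive issues worth flagging.

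First, you have a genuine confusion around the middle step. When $\soc(G)$ is nonabelian and non-simple, the relevant quasiprimitive types for $(G,2)$-arc-transitive graphs are already limited by Praeger's theorem (Lemma~\ref{normal-q} in the paper) to TW and PA; the types HS, HC, SD, CD never arise for $2$-arc-transitive actions and need not be examined. More seriously, you claim that when the TW/PA analysis fails to eliminate the case, one lands in $\K_n$ with $G$ $3$-transitive, or in ``the Higman--Sims configuration, $G=\HS$ or $\HS.2$ of holomorph-simple type.'' This conflates the sporadic simple group $\HS$ with the quasiprimitive type HS (holomorph simple); the group $\HS$ is nonabelian simple and $\HS$ and $\HS.2$ are \emph{almost simple} groups, falling squarely under the AS case. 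Moreover the $3$-transitive $\K_n$ groups are affine or almost simple, not of TW/PA type. In fact the correct conclusion, which the paper proves in Lemmas~\ref{l11} and~\ref{l9}, is that TW and PA are eliminated outright: no cases survive with $\ell\geqslant2$ simple factors in the socle.

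Second, you treat the TW/PA elimination as a routine projection argument, but this is actually the hardest part of the theorem and cannot be waved through. The paper's proof requires Kazarin's classification of products of two solvable groups to pin down $T$, Dixon's bound and Legendre's formula for the TW case, and — for the PA case — the Double Star theorem (Theorem~\ref{DoubleStar}) to show $K_\alpha^{\Ga(\alpha)}$ is imprimitive, a structural result on diagonal subgroups and normal stabilizers (Lemma~\ref{normal-stab}, Lemma~\ref{Restriction}), plus an improvement of Gorenstein's theorem (Theorem~\ref{CommonDivisor}) showing $|T|_r\geqslant r|\Out(T)|_r$ for any common prime divisor $r$, followed by a painstaking run through Hering's classification of affine $2$-transitive groups as candidates for $G_\alpha^{\Ga(\alpha)}$. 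Simply asserting that ``a transitive subgroup must project onto a transitive subgroup in each coordinate'' does not get you there; the projections need not be transitive (one needs a transitive subgroup of $\Sy_\ell$ glued to diagonal pieces), and the order-bookkeeping is delicate enough to demand the new number-theoretic machinery. Any complete proof will have to supply these ingredients or an equivalent.
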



A Cayley graph $\Ga$ of a group $R$ is called a \emph{normal Cayley graph} of $R$ if the right regular representation of $R$ is normal in $\Aut(\Ga)$, and a Cayley graph of a solvable group is called a \emph{solvable Cayley graph}. An instant consequence of Theorem~\ref{CayleyGraph} is the following.

\begin{corollary}
A connected non-bipartite $2$-arc-transitive solvable Cayley graph of valency at least three is a normal cover of
\begin{itemize}
\item[(a)] a complete graph, or
\item[(b)] a normal Cayley graph of $\ZZ_2^d$, or
\item[(c)] a $(G,2)$-arc-transitive graph with  $\soc(G)=\PSL_2(q)$, or
\item[(d)] the Hoffman-Singlton graph, or
\item[(e)] the Higman-Sims graph.
\end{itemize}
In particular, a connected non-bipartite $3$-arc-transitive solvable Cayley graph of
valency at least three is a normal cover of the Petersen graph or the Hoffman-Singleton graph.
\end{corollary}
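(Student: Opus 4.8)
The plan is to derive the Corollary from Theorem~\ref{CayleyGraph} by unpacking the relationship between a non-bipartite connected $2$-arc-transitive solvable Cayley graph $\Si$ and its quasiprimitive normal quotient. First I would let $X=\Aut(\Si)$, recall that $\Si=\Cay(R,S)$ with $R$ solvable and hence the right regular representation of $R$ is a solvable vertex-transitive subgroup of $X$, and observe that $\Si$ is $(X,s)$-transitive for some $s\geqslant 2$ (indeed $s\in\{2,3\}$ by Theorem~\ref{CayleyGraph}). Then I would invoke the reduction of Praeger cited in the introduction: take a maximal intransitive normal subgroup $N$ of $X$; since $\Si$ is non-bipartite and $2$-arc-transitive with valency at least three, $N$ has at least three orbits on vertices, so $\Ga=\Si_N$ is the normal quotient and $\Si$ is a normal cover of $\Ga$, with $G=X/N$ acting vertex-quasiprimitively and $(G,s)$-arc-transitively on $\Ga$. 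This is exactly the setup of Theorem~\ref{CayleyGraph}, so $(G,\Ga)$ is one of the five cases (a)--(e) there.

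The next step is to translate each case of Theorem~\ref{CayleyGraph} into the language of the Corollary. In case~(a), $\Ga=\K_n$ is a complete graph, giving conclusion~(a). In case~(b), $G$ is a primitive affine group, so $G$ has an elementary abelian regular normal subgroup $\ZZ_p^d$; I would argue that the induced graph $\Ga$ is then a Cayley graph of $\ZZ_p^d$ and, because this normal subgroup is regular and normal in $G$, it is a \emph{normal} Cayley graph — here I should note that since $\Si$ (hence $\Ga$) has valency at least three and is $2$-arc-transitive, the relevant prime is $p=2$, matching conclusion~(b); one small point to check is why the affine prime must be $2$, which follows from the analysis in \cite{IP} or can be read off the constraints on $2$-arc-transitive affine graphs. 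Cases~(c), (d), (e) are immediate: $\soc(G)=\PSL_2(q)$ gives~(c), $\Ga$ being the Hoffman--Singleton graph gives~(d), and $\Ga$ being the Higman--Sims graph gives~(e). Thus every connected non-bipartite $2$-arc-transitive solvable Cayley graph of valency at least three is a normal cover of one of the graphs listed in~(a)--(e).

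For the final sentence of the Corollary, suppose in addition that $\Si$ is $3$-arc-transitive. Then $s=3$ in the reduction, and by the last clause of Theorem~\ref{CayleyGraph} the normal quotient $\Ga$ must be the Petersen graph or the Hoffman--Singleton graph. (Here one uses that $3$-arc-transitivity of $\Si$ forces $3$-arc-transitivity of the quotient $\Ga$ under the quasiprimitive quotient group, which is part of the statement of Theorem~\ref{CayleyGraph}.) Hence $\Si$ is a normal cover of one of these two graphs, completing the proof.

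The main obstacle — really the only non-formal point — is justifying the reduction step cleanly: verifying that for a non-bipartite $2$-arc-transitive graph of valency $\geqslant 3$ a maximal intransitive normal subgroup of $X$ genuinely has at least three orbits (so that normal quotients behave well and $\Si$ is a genuine cover of $\Ga$), and that the solvable vertex-transitive subgroup descends to a solvable vertex-transitive subgroup of $G=X/N$. Both facts are standard (the first because a normal subgroup with at most two orbits on a non-bipartite graph forces transitivity or bipartiteness, the second because quotients of solvable groups are solvable and the image of a transitive subgroup is transitive), and are precisely the content of the cited reduction of Praeger \cite{Praeger}; once they are in place, the Corollary is a direct case-by-case readout of Theorem~\ref{CayleyGraph}.
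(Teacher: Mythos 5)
Your proof takes exactly the route the paper has in mind when it calls the corollary ``an instant consequence'' of Theorem~\ref{CayleyGraph}: run Praeger's reduction to a maximal intransitive normal subgroup $N$ of $X=\Aut(\Si)$, verify that $N$ has at least three orbits because $\Si$ is non-bipartite (a two-orbit normal subgroup would give a bipartition), pass to the quasiprimitive quotient $G=X/N$ acting on $\Ga=\Si_N$ with the image of the regular solvable subgroup still solvable and vertex-transitive, and then read the five cases off Theorem~\ref{CayleyGraph}. The final clause about $s=3$ is handled the same way; your proof is correct in structure and level of detail.

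There is one place where your argument is a little loose and worth tightening, in case~(b). You write that $\Ga$ is a \emph{normal} Cayley graph of $\Z_p^d$ ``because this normal subgroup is regular and normal in $G$''; but normality of a Cayley graph requires normality in the \emph{full} automorphism group $\Aut(\Ga)$, not merely in the quasiprimitive quotient $G\leqslant\Aut(\Ga)$. To close this, observe that $\Aut(\Ga)$ is primitive (it contains the primitive group $G$ on the same vertex set) and not $2$-transitive (else $\Ga$ is complete and we are in case~(a)); a primitive permutation group containing a primitive affine subgroup and not $2$-transitive must itself be of affine type with the same regular elementary abelian socle, which then is normal in $\Aut(\Ga)$. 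Likewise, as you already flagged, the reason the prime is $2$ (so one gets $\Z_2^d$ rather than $\Z_p^d$) is not a formal consequence of primitivity but must be read off the classification of primitive affine $(G,2)$-arc-transitive graphs in~\cite{IP}: among those, the non-bipartite, non-complete examples all occur in characteristic~$2$. With those two points spelled out, the proof is complete.
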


We remark that neither the Petersen graph nor the Hoffman-Singleton graph is a Cayley graph. Thus a non-bipartite $3$-arc-transitive solvable Cayley graph (if any) is a proper normal cover of the Petersen graph or the Hoffman-Singleton graph.


\chapter{Preliminaries}


We collect in this chapter the notation and elementary facts as well as some technical lemmas. Some basic facts will be used in the sequel without further reference.

\section{Notation}\label{sec2}

In this paper, all the groups are supposed to be finite and all graphs are supposed to be finite and simple if there are no further instructions. We set up the notation below, where $G,H,K$ are groups, $M$ is a subgroup of $G$, $n$ and $m$ are positive integers, $p$ is a prime number, and $q$ is a power of $p$ (so that $q$ is called a $p$-power).

\vskip0.1in
\begin{tabular}{ll}
$\Cen(G)$ & center of $G$\\
$\Rad(G)$ & solvable radical (the largest solvable normal subgroup) of $G$\\
$\Soc(G)$ & socle (product of the minimal normal subgroups) of $G$\\
$G^{(\infty)}$ & $=\bigcap_{i=1}^\infty G^{(i)}$\\
$\C_G(M)$ & centralizer of $M$ in $G$\\
$\Nor_G(M)$ & normalizer of $M$ in $G$\\
$\bfO_p(G)$ & largest normal $p$-subgroup of $G$\\
$[G{:}M]$ & set of right cosets of $M$ in $G$\\
$G\circ H$ & a central product of $G$ and $H$\\
$G{:}H$ & a split extension of $G$ by $H$\\
$G.H$ & an extension of $G$ by $H$\\
$G{:}H{:}K$ & a group of both type $G{:}(H{:}K)$ and $(G{:}H){:}K$\\
$G{:}H.K$ & a group of type $G{:}(H.K)$ (this is automatically of type $(G{:}H).K$)\\
$G.H.K$ & a group of type $G.(H.K)$ (this is automatically of type $(G.H).K$)\\
$\Sy_n$ & symmetric group of degree $n$ (naturally acting on $\{1,2,\dots,n\}$)\\
$\A_n$ & alternating group of degree $n$ (naturally acting on $\{1,2,\dots,n\}$)\\
$\Z_n$ & cyclic group of order $n$ (sometimes just denoted by $n$)\\
$\D_{2n}$ & dihedral group of order $2n$\\
$[n]$ & an unspecified group of order $n$\\
$n_p$ & $p$-part of $n$ (the largest $p$-power that divides $n$)\\
$n_{p'}$ & $=n/n_p$\\
$p^n$ & elementary abelian group of order $p^n$\\
$p^{n+m}$ & $=p^n.p^m$\\
$p^{1+2n}_+$ & extraspecial group of order $p^{1+2n}$ with exponent $p$ when $p$ is odd\\
$p^{1+2n}_-$ & extraspecial group of order $p^{1+2n}$ with exponent $p^2$ when $p$ is odd\\
$\GF(q)$ & finite field of $q$ elements\\
$\GaL_n(q)$ & group of semilinear bijections on $\GF(q)^n$\\
$\GL_n(q)$ & general linear group on $\GF(q)^n$\\
$\SL_n(q)$ & special linear group on $\GF(q)^n$\\
$\PGaL_n(q)$ & $=\GaL_n(q)/\Cen(\GL_n(q))$\\
\end{tabular}

\begin{tabular}{ll}
$\PGL_n(q)$ & projective general linear group on $\GF(q)^n$\\
$\PSL_n(q)$ & projective special linear group on $\GF(q)^n$\\
$\Sp_{2n}(q)$ & symplectic group on $\GF(q)^{2n}$\\
$\PSp_{2n}(q)$ & projective symplectic group on $\GF(q)^{2n}$\\
$\GU_n(q)$ & general unitary group on $\GF(q^2)^n$\\
$\SU_n(q)$ & special unitary group on $\GF(q^2)^n$\\
$\PGU_n(q)$ & projective general unitary group on $\GF(q^2)^n$\\
$\PSU_n(q)$ & projective special unitary group on $\GF(q^2)^n$\\
$\GO_{2n+1}(q)$ & general orthogonal group on $\GF(q)^{2n+1}$\\
$\GO_{2n}^+(q)$ & general orthogonal group on $\GF(q)^{2n}$ with Witt index $n$\\
$\GO_{2n}^-(q)$ & general orthogonal group on $\GF(q)^{2n}$ with Witt index $n-1$\\
$\SO_{2n+1}(q)$ & special orthogonal group on $\GF(q)^{2n}$\\
$\SO_{2n}^+(q)$ & special orthogonal group on $\GF(q)^{2n}$ with Witt index $n$\\
$\SO_{2n}^-(q)$ & special orthogonal group on $\GF(q)^{2n}$ with Witt index $n-1$\\
$\Omega_{2n+1}(q)$ & derived subgroup of $\SO_{2n+1}(q)$\\
$\Omega_{2n}^+(q)$ & derived subgroup of $\SO_{2n}^+(q)$\\
$\Omega_{2n}^-(q)$ & derived subgroup of $\SO_{2n}^-(q)$\\
$\PSO_{2n+1}(q)$ & projective special orthogonal group on $\GF(q)^{2n}$\\
$\PSO_{2n}^+(q)$ & projective special orthogonal group on $\GF(q)^{2n}$ with Witt\\
 & index $n$\\
$\PSO_{2n}^-(q)$ & projective special orthogonal group on $\GF(q)^{2n}$ with Witt\\
 & index $n-1$\\
$\POm_{2n+1}(q)$ & derived subgroup of $\PSO_{2n+1}(q)$\\
$\POm_{2n}^+(q)$ & derived subgroup of $\PSO_{2n}^+(q)$\\
$\POm_{2n}^-(q)$ & derived subgroup of $\PSO_{2n}^-(q)$\\
$\K_n$ & complete graph on $n$ vertices\\
$\K_{n,m}$ & complete bipartite graph with bipart sizes $n$ and $m$\\
\end{tabular}

\vskip0.1in
Let $T$ be a classical linear group on $V$ with center $Z$ such that $T/Z$ is a classical simple group, and $X$ be a subgroup of $\GL(V)$ containing $T$ as a normal subgroup. Then for any subgroup $Y$ of $X$, denote by $\hat{~}Y$ the subgroup $(Y\cap T)Z/Z$ of $T/Z$. For example, if $X=\GL_n(q)$ and $Y=\GL_1(q^n){:}n\leqslant X$ (see Section \ref{sec4}), then $\hat{~}Y=((q^n-1)/((q-1)(n,q-1))){:}n$ as the third column of row 1 in Table \ref{tab7}.

If $G$ is a linear group, define $\Pa_k[G]$ to be the stabilizer of a $k$-space in $G$. For the rest of this section, let $G$ be a symplectic, unitary or orthogonal group. If $G$ is transitive on the totally singular $k$-spaces, then define $\Pa_k[G]$ to be the stabilizer of a totally singular $k$-space in $G$. If $G$ is not transitive on the totally singular $k$-spaces (so that $G$ is an orthogonal group of dimension $2k$ with Witt index $k$, see \cite[2.2.4]{liebeck1990maximal}), then $G$ has precisely two orbits on them and
\begin{itemize}
\item[(i)] define $\Pa_{k-1}[G],\Pa_k[G]$ to be the stabilizers of totally singular $k$-spaces in the two different orbits of $G$;
\item[(ii)] define $\Pa_{k-1,k}[G]$ to be the stabilizer of a totally singular $(k-1)$-space in $G$;
\item[(iii)] define $\Pa_{1,k-1}[G]$ to be the intersection $\Pa_1[G]\cap\Pa_{k-1}[G]$, where the $1$-space stabilized by $\Pa_1[G]$ lies in the $k$-space stabilized by $\Pa_{k-1}[G]$;
\item[(iv)] define $\Pa_{1,k}[G]$ to be the intersection $\Pa_1[G]\cap\Pa_k[G]$, where the $1$-space stabilized by $\Pa_1[G]$ lies in the $k$-space stabilized by $\Pa_k[G]$.
\end{itemize}
Let $W$ be a non-degenerate $k$-space. We define
\begin{itemize}
\item[(i)] $\N_k[G]=G_W$ if either $G$ is symplectic or unitary, or $G$ is orthogonal of even dimension with $k$ odd;
\item[(ii)] $\N_k^\varepsilon[G]=G_W$ for $\varepsilon=\pm$ if $G$ is orthogonal and $W$ has type $\varepsilon$;
\item[(iii)] $\N_k^\varepsilon[G]=G_W$ for $\varepsilon=\pm$ if $G$ is orthogonal of odd dimension and $W^\bot$ has type $\varepsilon$.
\end{itemize}
For the above defined groups $\Pa_k[G]$, $\Pa_{i,j}[G]$, $\N_k[G]$, $\N_k^-[G]$ and $\N_k^+[G]$, we will simply write $\Pa_k$, $\Pa_{i,j}$, $\N_k$, $\N_k^-$ and $\N_k^+$, respectively, if the classical group $G$ is clear from the context.


We shall employ the families $\mathcal{C}_1$--$\mathcal{C}_8$ of subgroups of classical groups defined in \cite{aschbacher1984maximal}, see \cite{kleidman1990subgroup,kleidman1987maximal} for their group structure and \cite{kleidman1990subgroup,BHR-book} for determination of their maximality in classical simple groups. The subgroups $\Pa_k$, $\N_k$, $\N_k^+$ and $\N_k^-$ defined in the previous paragraph are actually $\mathcal{C}_1$ subgroups of classical groups.

\section{Results on finite simple groups}

This section contains some information about the finite simple groups which is needed in the sequel.

\subsection{Classification of the finite simple groups}

Let $q$ be a prime power. By the \emph{classification of finite simple groups} (CFSG), the finite nonabelian simple groups are:
\begin{itemize}
\item[(i)] $\A_n$ with $n\geqslant5$;
\item[(ii)] classical groups of Lie type:
\begin{eqnarray*}
&&\mbox{$\PSL_n(q)$ with $n\geqslant2$ and $(n,q)\neq(2,2)$ or $(2,3)$,}\\
&&\mbox{$\PSp_{2m}(q)$ with $m\geqslant2$ and $(m,q)\neq(2,2)$,}\\
&&\mbox{$\PSU_n(q)$ with $n\geqslant3$ and $(n,q)\neq(3,2)$,}\\
&&\mbox{$\Omega_{2m+1}(q)$ with $m\geqslant3$ and $q$ odd,}\\
&&\mbox{$\POm^+_{2m}(q)$ with $m\geqslant4$,\quad$\POm^-_{2m}(q)$ with $m\geqslant4$;}
\end{eqnarray*}
\item[(iii)] exceptional groups of Lie type:
\begin{eqnarray*}
&&\mbox{$\G_2(q)$ with $q\geqslant3$,\quad$\F_4(q)$,\quad$\E_6(q)$,\quad$\E_7(q)$,\quad$\E_8(q)$,}\\
&&\mbox{$\,^2\B_2(2^{2c+1})=\Sz(2^{2c+1})$ with $c\geqslant1$,\quad$\,^2\G_2(3^{2c+1})$ with $c\geqslant1$,}\\
&&\mbox{$\,^2\F_4(2^{2c+1})$ with $c\geqslant1$,\quad$\,^2\F_4(2)'$,\quad$\,^3\D_4(q)$,\quad$\,^2\E_6(q)$;}
\end{eqnarray*}
\item[(iv)] 26 sporadic simple groups.
\end{itemize}
Furthermore, the only repetitions among (i)--(iv) are:
\begin{eqnarray*}
&&\mbox{$\PSL_2(4)\cong\PSL_2(5)\cong\A_5$,\quad$\PSL_3(2)\cong\PSL_2(7)$,}\\
&&\mbox{$\PSL_2(9)\cong\A_6$,\quad$\PSL_4(2)\cong\A_8$,\quad$\PSU_4(2)\cong\PSp_4(3)$.}
\end{eqnarray*}
For the orders of the groups listed in (ii)--(iv), see \cite[TABLE 2.1]{liebeck1990maximal}.
\begin{remark}
We note that
\[\mbox{$\PSp_4(2)\cong\Sy_6$,\quad$\G_2(2)\cong\PSU_3(3){:}2$,\quad$\,^2\G_2(3)\cong\PGaL_2(8)$,}\]
and the groups $\PSL_2(2)$, $\PSL_2(3)$, $\PSU_3(2)$, $\,^2\B_2(2)$ are all solvable.
\end{remark}

\subsection{Outer automorphism group}

As a consequence of CFSG, the outer automorphism groups of finite simple groups are explicitly known. In particular, the \emph{Schreier conjecture} is true, that is, the outer automorphism group of every finite simple group is solvable. We collect some information for the outer automorphism groups of simple groups of Lie type in the following two tables, where $q=p^f$ with $p$ prime. For the presentations of outer automorphism groups of classical simple groups, see \cite[1.7]{BHR-book}.

\begin{table}[h!]
\caption{}\label{tab10}
\centering
\begin{tabular}{|l|l|l|}
\hline
$L$ & $\Out(L)$ & $d$ \\
\hline
$\PSL_2(q)$ & $\Z_d\times\Z_f$ & $(2,q-1)$ \\
$\PSL_n(q)$, $n\geqslant3$ & $\Z_d{:}(\Z_2\times\Z_f)$ & $(n,q-1)$ \\
$\PSU_n(q)$, $n\geqslant3$ & $\Z_d{:}\Z_{2f}$ & $(n,q+1)$ \\
$\PSp_{2m}(q)$, $(m,p)\neq(2,2)$ & $\Z_d\times\Z_f$ & $(2,q-1)$ \\
$\PSp_4(q)$, $q$ even & $\Z_{2f}$ & $1$ \\
$\Omega_{2m+1}(q)$, $m\geqslant3$, $q$ odd & $\Z_2\times\Z_f$ & $1$ \\
$\POm_{2m}^-(q)$, $m\geqslant4$, $q^m\not\equiv3\pmod{4}$ & $\Z_d\times\Z_{2f}$ & $(2,q-1)$ \\
$\POm_{2m}^-(q)$, $m\geqslant4$, $q^m\equiv3\pmod{4}$ & $\D_8\times\Z_f$ & $4$ \\
$\POm_{2m}^+(q)$, $m\geqslant5$, $q^m\not\equiv1\pmod{4}$ & $\Z_2\times\Z_d\times\Z_f$ & $(2,q-1)$ \\
$\POm_{2m}^+(q)$, $m\geqslant5$, $q^m\equiv1\pmod{4}$ & $\D_8\times\Z_f$ & $4$ \\
$\POm_8^+(q)$ & $\Sy_d\times\Z_f$ & $2+(2,q-1)$ \\
\hline
\end{tabular}
\end{table}

\begin{table}[h!]
\caption{}\label{tab23}
\centering
\begin{tabular}{|l|l|l|}
\hline
$L$ & $|\Out(L)|$ & $d$ \\
\hline
$\G_2(q)$, $q\geqslant3$ & $(3,p)f$ & $1$ \\
$\F_4(q)$ & $(2,p)f$ & $1$ \\
$\E_6(q)$ & $2df$ & $(3,q-1)$ \\
$\E_7(q)$ & $df$ & $(2,q-1)$ \\
$\E_8(q)$ & $f$ & $1$ \\
$\,^2\B_2(q)$, $q=2^{2c+1}\geqslant2^3$ & $f$ & $1$ \\
$\,^2\G_2(q)$, $q=3^{2c+1}\geqslant3^3$ & $f$ & $1$ \\
$\,^2\F_4(q)$, $q=2^{2c+1}\geqslant2^3$ & $f$ & $1$ \\
$\,^2\F_4(2)'$ & $2$ & $1$ \\
$\,^3\D_4(q)$ & $3f$ & $1$ \\
$\,^2\E_6(q)$ & $2df$ & $(3,q+1)$ \\
\hline
\end{tabular}
\end{table}

Let $a$ and $m$ be positive integers. A prime number $r$ is called a \emph{primitive prime divisor} of the pair $(a,m)$ if $r$ divides $a^m-1$ but does not divide $a^\ell-1$ for any positive integer $\ell<m$. We will simply say that $r$ is a primitive prime divisor of $a^m-1$ when $a$ is prime.

\begin{lemma}\label{l6}
A prime number $r$ is a primitive prime divisor of $(a,m)$ if and only if $a$ has order $m$ in $\GF(r)^\times$. In particular, if $r$ is a primitive prime divisor of $(a,m)$, then $m\di r-1$ and $r>m$.
\end{lemma}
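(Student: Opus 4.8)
The plan is to reduce everything to the elementary fact that, for an integer $a$ coprime to a prime $r$, the set of positive integers $n$ with $r\di a^n-1$ is precisely the set of multiples of the multiplicative order of $a$ in $\GF(r)^\times$. First I would observe that a primitive prime divisor $r$ of $(a,m)$ is automatically coprime to $a$: if $r\di a$, then $a^m\equiv0\pmod r$, so $r\nmid a^m-1$, contradicting the defining condition $r\di a^m-1$. Hence the image $\bar a$ of $a$ in $\GF(r)$ lies in $\GF(r)^\times$; write $d$ for its order. For every positive integer $n$ we then have $r\di a^n-1$ if and only if $\bar a^n=1$ in $\GF(r)^\times$, that is, if and only if $d\di n$.

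Next I would feed this into the definition. The statement that $r$ is a primitive prime divisor of $(a,m)$ says exactly that $d\di m$ while $d\nmid\ell$ for every integer $\ell$ with $1\leqslant\ell<m$. If $d<m$, then taking $\ell=d$ violates the second clause; hence $d=m$. Conversely, if $d=m$, then $d\di m$ holds and $d\nmid\ell$ for each $\ell$ with $1\leqslant\ell<m$, so $r$ is a primitive prime divisor of $(a,m)$. This gives the stated equivalence.

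Finally, for the ``in particular'' clause: if $r$ is a primitive prime divisor of $(a,m)$, then by the equivalence $a$ has order $m$ in the cyclic group $\GF(r)^\times$, which has order $r-1$, so Lagrange's theorem yields $m\di r-1$; in particular $m\leqslant r-1$, whence $r>m$. There is essentially no obstacle in this argument — the only point that requires a little care is the first step, namely checking that coprimality of $r$ and $a$ is genuinely forced by the hypothesis $r\di a^m-1$ rather than tacitly assumed, so that passing to $\GF(r)^\times$ is legitimate.
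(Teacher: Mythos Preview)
Your argument is correct and complete. The paper states this lemma without proof, treating it as an elementary observation, so there is no proof in the paper to compare against; your write-up supplies exactly the standard justification via the multiplicative order in $\GF(r)^\times$ and Lagrange's theorem.
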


The following \emph{Zsigmondy's theorem} shows exactly when the primitive prime divisors exist.

\begin{theorem}\label{Zsigmondy}
\emph{(Zsigmondy, see \emph{\cite[Theorem IX.8.3]{blackburn1982finite}})} Let $a$ and $m$ be integers greater than $1$. Then $(a,m)$ has a primitive prime divisor except for $(a,m)=(2,6)$ or $(2^k-1,2)$ with some positive integer $k$.
\end{theorem}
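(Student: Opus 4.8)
The plan is to run the classical cyclotomic argument, which reduces the existence of a primitive prime divisor to an elementary size estimate. Write $\Phi_n$ for the $n$-th cyclotomic polynomial, so that $x^n-1=\prod_{d\mid n}\Phi_d(x)$ and, by M\"obius inversion, $\Phi_n(x)=\prod_{d\mid n}(x^d-1)^{\mu(n/d)}$. The first step is a local analysis of the prime divisors of $\Phi_m(a)$. Fix a prime $r$ with $r\mid\Phi_m(a)$ and let $d$ be the multiplicative order of $a$ modulo $r$; since $r\mid a^m-1$, we have $d\mid m$. If $d=m$, then $r$ is a primitive prime divisor of $(a,m)$, and moreover $m\mid r-1$, so $r>m$. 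If $d<m$, then $\Phi_m(x)\mid(x^m-1)/(x^d-1)$ together with the congruence $(a^m-1)/(a^d-1)=\sum_{j=0}^{m/d-1}(a^d)^j\equiv m/d\pmod r$ forces $r\mid m/d$; a lifting-the-exponent computation then sharpens this to ``$m/d$ is a power of $r$'' and yields $v_r(\Phi_m(a))=1$ whenever $m>2$. Since $d\mid r-1$, in this second case $r$ is the largest prime factor of $m$. Hence every prime factor of $\Phi_m(a)$ is either a primitive prime divisor of $(a,m)$ or equals the largest prime $p$ dividing $m$, and in the latter case $p^2\nmid\Phi_m(a)$ unless $m\le 2$.

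Now suppose $(a,m)$ has no primitive prime divisor. By the first step every prime factor of $\Phi_m(a)$ equals the largest prime $p$ of $m$, so $\Phi_m(a)$ is a power of $p$. If $m=2$, this says $a+1=\Phi_2(a)$ is a power of $2$, i.e. $a=2^k-1$; conversely, when $a=2^k-1$ every prime dividing $a^2-1=(a-1)(a+1)=2^{\,k+1}(2^{k-1}-1)$ already divides $a-1=2^k-2$, so these pairs really are exceptions. If $m>2$, then $p^2\nmid\Phi_m(a)$ forces $\Phi_m(a)=p$, and since $p\mid m$ this gives $\Phi_m(a)\le m$.

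It therefore suffices to prove that $\Phi_m(a)>m$ for all integers $a\ge2$ and $m\ge3$, with the single exception $(a,m)=(2,6)$ where $\Phi_6(2)=4-2+1=3$; this contradicts $\Phi_m(a)\le m$ and leaves $(2,6)$ together with the family $(2^k-1,2)$ as the only exceptions. Writing $\Phi_m(a)=\prod_\zeta(a-\zeta)$ over the $\phi(m)$ primitive $m$-th roots of unity and using $|a-\zeta|\ge a-1$ gives $\Phi_m(a)\ge(a-1)^{\phi(m)}$. For $a\ge3$ this already yields $\Phi_m(a)\ge 2^{\phi(m)}>m$ for every $m\ge3$ outside $m\in\{4,6\}$, and these two values are handled by $\Phi_4(a)=a^2+1$ and $\Phi_6(a)=a^2-a+1$. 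For $a=2$ one needs the sharper bound $\Phi_m(2)>m$ for $m\ge3$, $m\ne6$; I would obtain an estimate of the shape $\Phi_m(2)>2^{c\,\phi(m)}$ from $\log\Phi_m(2)=\phi(m)\log2+\sum_{d\mid m}\mu(d)\log(1-2^{-m/d})$, the error sum always exceeding $-\log2$, and then dispose of the finitely many small $m$ by direct computation.

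The main obstacle is the first step, and within it the multiplicity statement $v_p(\Phi_m(a))=1$ in the non-primitive case for $m>2$: this is exactly the point responsible for the two exceptional families — the prime $2$ at $m=2$, where $v_2(a+1)$ can be arbitrarily large, and $\Phi_6(2)=3$. Getting the $r=2$ bookkeeping right (the appearance of the factor $a+1$) is the delicate part; the remainder of the first step and all the size estimates are routine, if mildly tedious, elementary arithmetic.
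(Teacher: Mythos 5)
The paper does not prove Zsigmondy's theorem at all; it simply cites \cite[Theorem IX.8.3]{blackburn1982finite}. So there is no proof in the text against which to compare yours, and supplying one is a legitimate choice. What you give is the classical cyclotomic argument going back to Birkhoff--Vandiver, and its overall architecture is sound: you correctly reduce to analysing the prime divisors of $\Phi_m(a)$, distinguish the primitive case ($d=m$, forcing $r\equiv1\pmod m$) from the non-primitive one, and in the latter use lifting-the-exponent to pin down both that $m/d$ is a power of $r$ (hence $r$ is the largest prime factor of $m$) and that $v_r(\Phi_m(a))\leqslant1$ for $m>2$. You also correctly isolate where the two exceptional families come from: the $r=2$, $m=2$ case, where $v_2(\Phi_2(a))=v_2(a+1)$ is unbounded and produces $a=2^k-1$; and the numerical accident $\Phi_6(2)=3$, which is precisely the failure of the size estimate.

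Two places deserve a note, neither fatal. First, in passing from ``every prime factor of $\Phi_m(a)$ equals $p$, the largest prime of $m$'' to ``$\Phi_m(a)=p$'' you implicitly use $\Phi_m(a)>1$; this is true for $a\geqslant2$, $m\geqslant2$ (from $\Phi_m(a)=\prod_\zeta|a-\zeta|$ with each $|a-\zeta|>1$ when $\zeta\ne1$), but it should be said. Second, the final estimate ``$\Phi_m(2)>2^{\phi(m)-1}$, because the error sum exceeds $-\log 2$'' is plausible and matches the values I spot-checked, but as written it is an assertion rather than a proof; as you note yourself, one must in any case list the small $m$ (the bound $2^{\phi(m)-1}>m$ already fails for $m\in\{3,4,6,8,10,12\}$) and check $\Phi_m(2)$ directly there, which is where the unique failure $\Phi_6(2)=3<6$ surfaces. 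Once that direct check is done, your structure closes the argument. In short: correct route, essentially complete, with the lifting-the-exponent step and the final inequality left at sketch level.
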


Checking the orders of outer automorphism groups of finite simple groups and viewing Lemma \ref{l6}, one has the consequence below, see \cite[p.38 PROPOSITION B]{liebeck1990maximal}.

\begin{lemma}
Let $L$ be a simple group of Lie type over $\GF(q)$, where $q=p^f$ with $p$ prime and $m\geqslant3$. If $(q,m)\neq(2,6)$ or $(4,3)$, then no primitive prime divisor of $p^{fm}-1$ divides $|\Out(L)|$.
\end{lemma}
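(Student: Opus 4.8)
The plan is to show directly that no primitive prime divisor $r$ of $p^{fm}-1$ can divide $|\Out(L)|$. Write $q=p^f$, so that $p^{fm}-1=q^m-1$. The assertion is vacuous unless $p^{fm}-1$ has a primitive prime divisor at all; by Theorem~\ref{Zsigmondy}, applied to the pair $(p,fm)$ (where $fm\geqslant3$ rules out the exceptional family $(2^k-1,2)$), this happens only when $(p,fm)=(2,6)$, i.e. when $(q,m)\in\{(2,6),(4,3)\}$ --- precisely the excluded pairs. So I fix a primitive prime divisor $r$ of $p^{fm}-1$ and aim to show $r\nmid|\Out(L)|$. By Lemma~\ref{l6}, $p$ has order $fm$ in $\GF(r)^\times$ and $fm\di r-1$, whence $r>fm\geqslant3f$. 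In particular $r>f$, so $r\nmid f$; and $r>m\geqslant3$, so $r$ is a prime greater than $3$ and hence $r\notin\{2,3\}$.

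The second step is a case check of Tables~\ref{tab10} and~\ref{tab23}, establishing the uniform shape $|\Out(L)|=2^a3^b\,d\,f$ (with $0\leqslant a\leqslant3$ and $0\leqslant b\leqslant1$) for every simple group $L$ of Lie type over $\GF(q)$, where $d$ --- the order of the group of diagonal automorphisms of $L$ --- divides $q-1$ or $q+1$, and hence divides $q^2-1=p^{2f}-1$. For instance, $d=(n,q-1)$ for $\PSL_n(q)$ and $\E_6(q)$; $d=(n,q+1)$ for $\PSU_n(q)$ and $\,^2\E_6(q)$; $d=(2,q-1)$ for $\PSL_2(q)$, $\PSp_{2m}(q)$ and $\E_7(q)$; and $d\in\{1,2,4\}$ for the remaining types. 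The powers $2^a3^b$ collect the graph-automorphism contributions --- including the enlargement of $\D_8$ to $\Sy_3$ or $\Sy_4$ in the $\POm_{2m}^{\pm}(q)$ and $\POm_8^+(q)$ rows, and the factors $(2,p)$, $(3,p)$ in the $\F_4(q)$ and $\G_2(q)$ rows.

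The final step combines these observations: since $r\notin\{2,3\}$ we get $r\nmid 2^a3^b$; since $r>f$ we get $r\nmid f$; and if $r\di d$ then $r\di q^2-1=p^{2f}-1$, so the order of $p$ modulo $r$ divides $2f$, contradicting that this order equals $fm$ with $m\geqslant3$ (which would force $m\di 2$). Hence $r\nmid d$, and therefore $r\nmid|\Out(L)|$, as required. I expect the main --- indeed the only nonroutine --- obstacle to be the table inspection of the second step; once the shape $|\Out(L)|=2^a3^b d f$ with $d\di q^2-1$ is secured, the rest is immediate. This is also precisely the place where the hypothesis $m\geqslant3$, rather than merely $m\geqslant2$, is essential: for unitary groups and $\,^2\E_6(q)$ one only learns that the order of $p$ modulo $r$ divides $2f$ (not $f$), so deriving a contradiction from $m\di 2$ genuinely needs $m\geqslant3$.
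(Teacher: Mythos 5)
Your proof is correct, and it carries out precisely the strategy the paper gestures at but does not write out: the paper's text says only ``Checking the orders of outer automorphism groups of finite simple groups and viewing Lemma~\ref{l6}, one has the consequence below'' and then defers to \cite[p.38 Proposition~B]{liebeck1990maximal}. Your argument supplies the missing details — a primitive prime divisor $r$ of $p^{fm}-1$ has $r>fm\geqslant\max\{3f,3m\}$ by Lemma~\ref{l6}, so $r\notin\{2,3\}$ and $r\nmid f$; the only remaining factor of $|\Out(L)|$ beyond a $\{2,3\}$-number and $f$ is the diagonal part $d$, which divides $q\pm1$ and hence $q^2-1=p^{2f}-1$, so $r\di d$ would force the order $fm$ of $p$ modulo $r$ to divide $2f$ and thus $m\di 2$, contradicting $m\geqslant3$ — and the remark that $m\geqslant3$ (rather than $m\geqslant2$) is genuinely needed for the unitary and $\,^2\E_6$ families is an accurate and worthwhile observation.
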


\subsection{Minimal index of proper subgroups}

A group $G$ is said to be \emph{perfect} if $G'=G$. A group $G$ is said to be \emph{quasisimple} if $G$ is perfect and $G/\Cen(G)$ is nonabelian simple.

\begin{lemma}\label{MinimalDegree}
Let $P(X)$ denote the smallest index of proper subgroups of an arbitrary group $X$. Then the following statements hold.
\begin{itemize}
\item[(a)] If $G$ is almost simple with socle $L$, then $|G|/|H|\geqslant P(L)$ for any core-free subgroup $H$ of $G$.
\item[(b)] If $G$ is quasisimple with center $Z$, then $P(G/Z)=P(G)$.
\item[(c)] If $G$ is a permutation group on $n$ points and $N$ is a normal subgroup of $G$, then $P(G/N)\leqslant n$.
\item[(d)] If $H$ and $K$ are subgroups of $G$ such that $|G|/|K|<P(H)$, then $H\leqslant K$.
\end{itemize}
\end{lemma}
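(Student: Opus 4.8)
The plan is to treat the four parts separately, in each case by restricting a coset action to a suitable subgroup and bounding orbit lengths. For part~(a), let $G$ act on the set $[G:H]$ of right cosets of $H$ by right multiplication, an action of degree $|G|/|H|$, and restrict it to the socle $L\trianglelefteq G$. The stabiliser in $L$ of the coset $Hg$ is $L\cap H^g$, and this is a \emph{proper} subgroup of $L$: if $L\leqslant H^g$ then $L=L^{g^{-1}}\leqslant H$ since $L\trianglelefteq G$, whence $L\leqslant\bigcap_{x\in G}H^x=1$, contradicting the fact that $L$ is nonabelian simple. Therefore every $L$-orbit on $[G:H]$ has length $|L:L\cap H^g|\geqslant P(L)$, and since $[G:H]$ is nonempty it follows that $|G|/|H|\geqslant P(L)$.

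For part~(b), the inequality $P(G)\leqslant P(G/Z)$ is immediate from the correspondence theorem: a proper subgroup of $G/Z$ is $M/Z$ for some $Z\leqslant M<G$, and $|G/Z:M/Z|=|G:M|$. For the reverse, note that a subgroup of $G$ of minimal index is maximal, and that every maximal subgroup $M$ of the quasisimple group $G$ contains $Z$; indeed if $Z\not\leqslant M$ then $MZ=G$ by maximality, and since $Z$ is central one computes $(MZ)'=M'$, so $G=G'=(MZ)'=M'\leqslant M$, contradicting $M<G$. Hence the minimal index $P(G)$ is realised by $M/Z<G/Z$, giving $P(G/Z)\leqslant P(G)$. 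For part~(d), let $H$ act on $[G:K]$ by restriction of the coset action of $G$; each $H$-orbit has length $|H:H\cap K^g|\leqslant|G:K|<P(H)$ for the relevant $g$, which forces $H\cap K^g=H$, that is $H\leqslant K^g$, for every $g\in G$; taking $g=1$ yields $H\leqslant K$.

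Part~(c) is the one needing a little care, and I would prove it by induction on $n$. If some point $\alpha$ satisfies $NG_\alpha\neq G$, then $NG_\alpha/N$ is a proper subgroup of $G/N$ whose index divides $|G:G_\alpha|=|\alpha^G|\leqslant n$, and we are done. Otherwise $NG_\alpha=G$ for every point $\alpha$, which says precisely that $N$ is transitive on each $G$-orbit. If $G=1$ there is nothing to prove; otherwise choose $\alpha$ lying in a $G$-orbit of size at least $2$, so that $G_\alpha$ is a faithful permutation group on the remaining $n-1$ points, $N\cap G_\alpha\trianglelefteq G_\alpha$, and $G_\alpha/(N\cap G_\alpha)\cong G_\alpha N/N=G/N$; the inductive hypothesis applied to $G_\alpha$ then gives $P(G/N)=P(G_\alpha/(N\cap G_\alpha))\leqslant n-1\leqslant n$.

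None of this is deep; the only points demanding attention are the bookkeeping of degenerate cases in the induction for~(c) — $G$ trivial, $G/N$ trivial, all point-orbits singletons, or $N$ already transitive — and, in~(b), the invocation of the standard fact that $Z\leqslant\Phi(G)$ for a quasisimple group $G$, which is exactly the one-line commutator computation given above. I expect the main (very mild) obstacle to be organising part~(c) cleanly rather than any substantive difficulty.
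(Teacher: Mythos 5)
Your proof is correct and, for parts (c) and (d) --- the only parts the paper actually proves --- follows essentially the same route: the same induction on $n$ for (c), with the identical case split on whether $NG_\alpha = G$, and the same index-counting contradiction for (d). The paper dismisses (a) and (b) as ``fairly easy'' and omits them; your arguments (the $L$-orbit bound on $[G{:}H]$ for (a), and the $Z\leqslant\Phi(G)$ fact for quasisimple $G$ for (b)) are the standard ones and fill that gap correctly.
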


\begin{proof}
Proof for parts~(a) and~(b) is fairly easy, so we omit it. To prove part~(c), use induction on $n$. When $n=1$, the conclusion holds trivially. Let $K\leqslant G$ be the stabilizer of an arbitrary point. Evidently, $|G|/|K|\leqslant n$, and $K$ is a permutation group on $n-1$ points. As $K/K\cap N\cong KN/N$, we see that $K/K\cap N$ is isomorphic to a subgroup $H$ of $G/N$. If $H\neq G/N$, then
$$
P(G/N)\leqslant\frac{|G/N|}{|H|}=\frac{|G||K\cap N|}{|N||K|}\leqslant\frac{|G|}{|K|}\leqslant n.
$$
If $H=G/N$, then $P(G/N)=P(K/K\cap N)\leqslant n-1$ by the inductive hypothesis. Consequently, part~(c) is true.

It remains to prove part~(d). Suppose on the contrary that $H\nleqslant K$. Then $H\cap K$ is a proper subgroup of $H$, and so $|H|/|H\cap K|\geqslant P(H)$. This causes a contradiction that
$$
P(H)>\frac{|G|}{|K|}\geqslant\frac{|HK|}{|K|}=\frac{|H|}{|H\cap K|}\geqslant P(H).
$$
Thereby we have $H\leqslant K$.
\end{proof}

%
%

A list of the smallest indices of proper subgroups of classical simple groups was obtained by Cooperstein \cite{cooperstein1978minimal}. In fact, the smallest index of proper subgroups of a classical simple group follows immediately from the classification of its maximal subgroups. This is cited in the following theorem and is referred to \cite[Theorem~5.2.2]{kleidman1990subgroup}, which also points out the two errors in Cooperstein's list.

\begin{theorem}\label{l7}
The smallest index $P(L)$ of proper subgroups of a classical simple group $L$ is as in \emph{Table~\ref{tab24}}.
\end{theorem}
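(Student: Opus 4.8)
The plan is to deduce the statement from the classification of maximal subgroups of the finite classical simple groups and then carry out a comparison of indices. First I would observe that a proper subgroup of $L$ of smallest index is necessarily maximal, so that $P(L)=\min\{\,|L{:}M|\mid M\text{ maximal in }L\,\}$. I would then invoke Aschbacher's theorem \cite{aschbacher1984maximal}: every maximal subgroup of $L$ either belongs to one of the geometric families $\calC_1,\dots,\calC_8$, whose orders and structures are listed in \cite{kleidman1990subgroup,kleidman1987maximal}, or is almost simple and acts absolutely irreducibly on the natural module (the class $\mathcal{S}$).

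Next I would identify the expected minimisers, which are the $\calC_1$ subgroups coming from the natural geometry of $L$: for $L=\PSL_n(q)$ the stabiliser $\Pa_1$ of a projective point, of index $(q^n-1)/(q-1)$; and for the symplectic, unitary and orthogonal groups the stabiliser $\Pa_1$ of an isotropic $1$-space, or — in those cases where such a subgroup is unavailable or not of least index — the stabiliser of a suitable non-degenerate $1$-space (a subgroup of type $\N_1$), as happens for instance for $\Omega_8^+(2)$, where $\mathrm{Sp}_6(2)=\N_1$ has index $120$ while the singular points number $135$. Writing out these indices reproduces exactly the generic entries of Table~\ref{tab24}, which count the points of the relevant projective, symplectic, unitary or orthogonal polar space.

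The substantial part of the argument is to show that no other maximal subgroup has smaller index. For a member $M$ of $\calC_2,\dots,\calC_8$ I would use the order formulas of \cite{kleidman1990subgroup} to bound $|M|$ from above, hence $|L{:}M|$ from below, and check that this lower bound exceeds the $\calC_1$ value for all $(n,q)$ outside a bounded list; generically $|L{:}M|$ for these families has strictly larger order in $q$ than the geometric index, so only rather crude estimates are needed. For $M\in\mathcal{S}$ I would combine a standard bound of the form $|M|<q^{3n}$ with the known lower bounds on the degrees of projective representations of $L$ in the defining and cross characteristics, which once more forces $|L{:}M|>P(L)$ except for small $n$ and small $q$.

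I expect the genuine obstacle to be the resulting short list of small cases, where the non-generic entries of Table~\ref{tab24} occur: small rank and small field, the exceptional isomorphisms $\PSL_2(4)\cong\PSL_2(5)\cong\A_5$, $\PSL_2(7)\cong\PSL_3(2)$, $\PSL_2(9)\cong\A_6$, $\PSL_4(2)\cong\A_8$ and $\PSU_4(2)\cong\PSp_4(3)$, and the subtleties introduced by the exceptional graph automorphism of $\PSp_4(q)$ with $q$ even and by triality for $\POm_8^+(q)$. These I would settle by direct inspection of the explicit lists of maximal subgroups in \cite{BHR-book}, supplemented where convenient by computation in \magma; one must in particular verify carefully that in low rank no $\mathcal{S}$-subgroup or exceptional small-index $\calC_i$ subgroup undercuts the geometric candidate — for example $P(\PSL_2(11))=11$, realised by $\A_5$ rather than by the $12$-point projective line. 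This case analysis is essentially the content of Cooperstein's theorem \cite{cooperstein1978minimal}, with the two corrections recorded in \cite[Theorem~5.2.2]{kleidman1990subgroup}.
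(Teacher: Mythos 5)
The paper supplies no proof of this statement: the sentence immediately preceding it says the result ``is cited in the following theorem and is referred to \cite[Theorem~5.2.2]{kleidman1990subgroup}'', where the list appears together with the two corrections to Cooperstein's original paper \cite{cooperstein1978minimal}. Your sketch therefore cannot be measured against an argument in the paper; it is a reconstruction of the cited result, and the broad strategy you describe --- reduce to maximal subgroups via Aschbacher's theorem, identify the geometric minimizer, bound the remaining Aschbacher classes --- is indeed the standard route.

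There is, however, a substantive slip in your identification of the candidate minimizers as ``$\Pa_1$, or failing that $\N_1$''. For $L=\Sp_{2m}(2)$ with $m\geqslant3$, Table~\ref{tab24} gives $P(L)=2^{m-1}(2^m-1)$, and this is achieved by the $\calC_8$ subgroup $\GO_{2m}^-(2)$: there is no $\N_1$ in a symplectic space (every $1$-space is isotropic), and $\Pa_1$ has the strictly larger index $2^{2m}-1$. Since this happens for every $m$, it is a full infinite row of the table, not one of the ``small cases'' you defer to direct inspection, and your phrase ``for all $(n,q)$ outside a bounded list'' would not flag it. A careful bounding step would reveal that the $\calC_8$ estimate degenerates precisely at $q=2$, but your outline should acknowledge up front that the minimizer for the characteristic-two symplectic family sits in $\calC_8$, not $\calC_1$. (By contrast the entry $2^{m-1}(2^m-1)$ for $\POm_{2m}^+(2)$ is the index of $\N_1\cong\Sp_{2m-2}(2)$, and $3^m(3^m-1)/2$ for $\Omega_{2m+1}(3)$ is the index of $\N_1^-$; these are genuinely covered by your $\N_1$ fallback.)
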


\begin{table}[h!]
\caption{}\label{tab24}
\centering
\begin{tabular}{|l|l|}
\hline
$L$ & $P(L)$ \\
\hline
$\PSL_n(q)$, $(n,q)\neq(2,5)$, $(2,7)$, $(2,9)$, $(2,11)$, $(4,2)$ & $(q^n-1)/(q-1)$ \\
\hline
$\PSL_2(5)$, $\PSL_2(7)$, $\PSL_2(9)$, $\PSL_2(11)$, $\PSL_4(2)$ & $5$, $7$, $6$, $11$, $8$ \\
\hline
$\PSp_{2m}(q)$, $m\geqslant2$, $q>2$, $(m,q)\neq(2,3)$ & $(q^{2m}-1)/(q-1)$ \\
\hline
$\Sp_{2m}(2)$, $m\geqslant3$ & $2^{m-1}(2^m-1)$ \\
\hline
$\PSp_4(3)$ & $27$ \\
\hline
$\Omega_{2m+1}(q)$, $m\geqslant3$, $q\geqslant5$ odd & $(q^{2m}-1)/(q-1)$ \\
\hline
$\Omega_{2m+1}(3)$, $m\geqslant3$ & $3^m(3^m-1)/2$ \\
\hline
$\POm_{2m}^+(q)$, $m\geqslant4$, $q\geqslant3$ & $(q^m-1)(q^{m-1}+1)/(q-1)$ \\
\hline
$\POm_{2m}^+(2)$, $m\geqslant4$ & $2^{m-1}(2^m-1)$ \\
\hline
$\POm_{2m}^-(q)$, $m\geqslant4$ & $(q^m+1)(q^{m-1}-1)/(q-1)$ \\
\hline
$\PSU_3(q)$, $q\neq5$ & $q^3+1$ \\
\hline
$\PSU_3(5)$ & $50$ \\
\hline
$\PSU_4(q)$ & $(q+1)(q^3+1)$ \\
\hline
$\PSU_n(q)$, $n\geqslant5$, $(n,q)\neq(6m,2)$ & $\frac{(q^n-(-1)^n)(q^{n-1}-(-1)^{n-1})}{q^2-1}$ \\
\hline
$\PSU_n(2)$, $n\equiv0\pmod{6}$ & $2^{n-1}(2^n-1)/3$ \\
\hline
\end{tabular}
\end{table}

%

\section{Elementary facts concerning factorizations}

We first give several equivalent conditions for a factorization.

\begin{lemma}\label{p3}
Let $H,K$ be subgroups of $G$. Then the following are equivalent.
\begin{itemize}
\item[(a)] $G=HK$.
\item[(b)] $G=H^xK^y$ for any $x,y\in G$.
\item[(c)] $|H\cap K||G|=|H||K|$.
\item[(d)] $|G|\leqslant|H||K|/|H\cap K|$.
\item[(e)] $H$ acts transitively on $[G{:}K]$ by right multiplication.
\item[(f)] $K$ acts transitively on $[G{:}H]$ by right multiplication.
\end{itemize}
\end{lemma}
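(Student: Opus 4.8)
The plan is to establish all the equivalences using two elementary tools: the product formula $|HK|=|H||K|/|H\cap K|$, valid for any subgroups $H,K$ of the finite group $G$ (where $HK$ is regarded as a subset, not necessarily a subgroup), and the observation that $G=HK$ precisely when $H$ meets every coset in $[G{:}K]$, which is exactly transitivity of $H$ on $[G{:}K]$ by right multiplication. I would organize the proof as (a)$\Leftrightarrow$(c)$\Leftrightarrow$(d), then (a)$\Leftrightarrow$(e), (a)$\Leftrightarrow$(f), then (a)$\Leftrightarrow$(b).

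First I would dispose of (a)$\Leftrightarrow$(c)$\Leftrightarrow$(d). Since $HK\subseteq G$ one always has $|HK|\leqslant|G|$, and the product formula gives $|HK|=|H||K|/|H\cap K|$; hence $|H||K|/|H\cap K|\leqslant|G|$ always, with equality if and only if $HK=G$. This yields (a)$\Leftrightarrow$(d) at once, and (c) is just the equality case of (d), so the three conditions are equivalent.

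Next, for (a)$\Leftrightarrow$(e) and (a)$\Leftrightarrow$(f): observe first that $G=HK$ if and only if $G=KH$ (apply inversion to both sides). The group $H$ acts on $[G{:}K]$ by right multiplication, and the $H$-orbit of the trivial coset $K$ is $\{Kh\mid h\in H\}$, which is all of $[G{:}K]$ exactly when $KH=G$. Since $G$ itself acts transitively on $[G{:}K]$, the subgroup $H$ is transitive on $[G{:}K]$ if and only if the orbit of $K$ (equivalently, of any point) is the whole set, that is, if and only if $KH=G$, i.e. $G=HK$; this is (a)$\Leftrightarrow$(e). Interchanging the roles of $H$ and $K$, the same reasoning shows $K$ is transitive on $[G{:}H]$ if and only if $HK=G$, which is (a)$\Leftrightarrow$(f).

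Finally I would handle (a)$\Leftrightarrow$(b). The implication (b)$\Rightarrow$(a) is immediate by taking $x=y=1$. For (a)$\Rightarrow$(b), fix $x,y\in G$; I would show $H^x$ is transitive on $[G{:}K^y]$ and then invoke (a)$\Leftrightarrow$(e) applied to the pair $(H^x,K^y)$. The $G$-set $[G{:}K^y]$ is isomorphic to $[G{:}K]$ via $K^yg\mapsto K(yg)$, and right multiplication by $x$ is a bijection $[G{:}K]\to[G{:}K]$ satisfying $(Kg\cdot h)x=(Kgx)\cdot(x^{-1}hx)$, so it carries the $H$-action to the $H^x$-action; composing these, transitivity of $H^x$ on $[G{:}K^y]$ is equivalent to transitivity of $H$ on $[G{:}K]$, which holds by (a) and (a)$\Leftrightarrow$(e). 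I expect this last equivalence to be the only point that requires any care: the naive counting approach fails here, since $|H^x\cap K^y|$ need not equal $|H\cap K|$, so one genuinely has to route through the coset action and the conjugation-invariance of transitivity rather than re-deriving a statement of type (c) for the conjugated subgroups.
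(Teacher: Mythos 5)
The paper states Lemma \ref{p3} without proof, treating it as elementary background, so there is no proof in the paper against which to compare yours. Your argument is correct and complete. The chain (a)$\Leftrightarrow$(c)$\Leftrightarrow$(d) via the product formula, and (a)$\Leftrightarrow$(e),(f) via the orbit of the trivial coset, are the standard arguments one would expect. For (a)$\Rightarrow$(b) you are right that a direct cardinality count fails because $|H^x\cap K^y|$ can change under conjugation, and your route through the $G$-equivariant bijection $[G{:}K^y]\to[G{:}K]$ together with the intertwiner $Kg\mapsto Kgx$ between the $H$- and $H^x$-actions is sound. A slightly shorter alternative for that step: $G=H^xK^y$ holds iff the double coset $H(xy^{-1})K$ equals $G$ (substitute $g\mapsto x^{-1}gy$), and $G=HK$ means $G$ is a single $(H,K)$-double coset, so every $HzK$ equals $G$; but your version is equally valid and arguably more transparent.
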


Due to part (b) of Lemma~\ref{p3}, we will consider conjugacy classes of subgroups when studying factorizations of a group. Given a group $G$ and its subgroups $H,K$, in order to inspect whether $G=HK$ holds we only need to compute the orders of $G$, $H$, $K$ and $H\cap K$ by part (c) or (d) of Lemma~\ref{p3}. This is usually easier than checking the equality $G=HK$ directly, see our \magma codes in APPENDIX B. Utilizing equivalent conditions (e) and (f) in Lemma~\ref{p3}, one can construct factorizations in terms of permutation groups.

\begin{example}
According to Lemma~\ref{p3}, each $k$-homogeneous permutation group $H$ of degree $n$ gives rise to a factorization $\Sy_n=H(\Sy_k\times\Sy_{n-k})$.
\begin{itemize}
\item[(a)] Each transitive permutation group $H$ of degree $n$ gives rise to a factorization $\Sy_n=H\Sy_{n-1}$. Since a group is transitive in its regular permutation representation, we see that each group is a factor of some symmetric group.
\item[(b)] Each $2$-homogeneous permutation group $H$ of degree $n$ gives rise to a factorization $\Sy_n=H(\Sy_2\times\Sy_{n-2})$. The solvable $2$-homogeneous group $H$ of degree $n\geqslant5$ only exists for prime power $n$, and is either a subgroup of $\AGaL_1(n)$ or one of the following (see \cite{kantor1972k} and \cite[Theorem~XII.7.3]{huppert1982finite}):
\[
\begin{array}{|l|l|}
\hline
H & n \\
\hline
5^2{:}\SL_2(3),\ 5^2{:}\Q_8.6,\ 5^2{:}\SL_2(3).4 & 5^2 \\
7^2{:}\Q_8.\Sy_3,\ 7^2{:}\SL_2(3).6 & 7^2 \\
11^2{:}\SL_2(3).5,\ 11^2{:}\SL_2(3).10 & 11^2 \\
23^2{:}\SL_2(3).22 & 23^2 \\
3^4{:}2^{1+4}.5,\ 3^4{:}2^{1+4}.\D_{10},\ 3^4{:}2^{1+4}.\AGL_1(5) & 3^4 \\
\hline
\end{array}
\]
\item[(c)] There are only three solvable $3$-homogeneous groups of degree $n\geqslant5$, namely, $\AGL_1(8)$ and $\AGaL_1(8)$ with $n=8$, and $\AGaL_1(32)$ with $n=32$. Each of them is a factor of $\Sy_n$ with the other factor being $\Sy_3\times\Sy_{n-3}$.
\end{itemize}
See the proof of Proposition~\ref{Alternating} for a more comprehensive treatment of these factorizations.
\end{example}

The following simple lemma will be used repeatedly in subsequent chapters.

\begin{lemma}\label{p4}
Let $H,K$ be subgroups of $G$ and $L$ be a normal subgroup of $G$. If $G=HK$, then we have the following divisibilities.
\begin{itemize}
\item[(a)] $|G|$ divides $|H||K|$.
\item[(b)] $|G|$ divides $|H\cap L||K||G/L|$.
\item[(c)] $|L|$ divides $|H\cap L||K|$.
\item[(d)] $|L|$ divides $|H\cap L||K\cap L||G/L|$.
\end{itemize}
\end{lemma}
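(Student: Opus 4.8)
The plan is to make part~(c) the single substantive step and obtain the other three parts from it by purely formal manipulation of orders.

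First I would dispatch part~(a) directly: from $G=HK$ we have $|G|=|HK|=|H||K|/|H\cap K|$, which is exactly Lemma~\ref{p3}(c), and this visibly shows $|G|$ divides $|H||K|$. The same identity rewrites as $|G|/|H|=|K|/|H\cap K|$, so $|G|/|H|$ divides $|K|$; I will reuse this divisibility in the proof of~(c).

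The heart of the argument is part~(c). Since $L\trianglelefteq G$, the product $HL$ is a subgroup with $H\leqslant HL\leqslant G$, so $|HL|/|H|$ divides $|G|/|H|$; and by the second isomorphism theorem $|HL|/|H|=|L|/|H\cap L|$. Chaining this with the divisibility $|G|/|H|\mid|K|$ noted above gives that $|L|/|H\cap L|$ divides $|K|$, i.e.\ $|L|$ divides $|H\cap L||K|$, which is~(c).

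Parts~(b) and~(d) are then immediate consequences. For~(b), since $|G|=|L|\cdot|G/L|$, multiplying the conclusion of~(c) by $|G/L|$ yields that $|G|$ divides $|H\cap L||K||G/L|$. For~(d), write $|K|=|K\cap L|\cdot|KL/L|$ (second isomorphism theorem again), and note $|KL/L|$ divides $|G/L|$ because $KL/L\leqslant G/L$; substituting into~(c) gives that $|L|$ divides $|H\cap L||K\cap L||KL/L|$, hence divides $|H\cap L||K\cap L||G/L|$.

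There is no serious obstacle here; the only thing to be careful about is the bookkeeping that keeps~(c) as the lone nontrivial step, rather than proving each part separately from its own factorization such as $G/L=(HL/L)(KL/L)$ or $LK=(LK\cap H)K$ (which also works, but is longer and more repetitive).
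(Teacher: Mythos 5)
Your proof is correct and takes essentially the same approach as the paper: both rest on the second-isomorphism-theorem identity $|HL|/|H|=|L|/|H\cap L|$ combined with the order relation $|G||H\cap K|=|H||K|$ coming from $G=HK$, and parts (b) and (d) are obtained by the same order-counting manipulations. The only difference is cosmetic ordering --- you make (c) the pivotal step and derive (b) from it, while the paper derives (b) from (a) and then observes that (b) and (c) are equivalent.
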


\begin{proof}
It derives from $G=HK$ that $|H\cap K||G|=|H||K|$ and thus statement (a) holds. Then since $|H|=|H\cap L||HL/L|$ divides $|H\cap L||G/L|$, we conclude that $|G|$ divides $|H\cap L||K||G/L|$. Hence statement (b) holds, which is equivalent to (c). Since $|K|=|K\cap L||KL/L|$ divides $|K\cap L||G/L|$, we then further deduce statement (d).
\end{proof}

In the remainder of this section, we present some lemmas relating factorizations of a group and those of its subgroups.

\begin{lemma}\label{p9}
Let $H,K$ and $M$ be subgroups of $G$. If $G=HK$, then $M=(H\cap M)(K\cap M)$ if and only if $|HM||KM|\leqslant|G||(H\cap K)M|$. In particular, if $G=HK$ and $H\leqslant M$, then $M=H(K\cap M)$.
\end{lemma}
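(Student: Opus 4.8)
The plan is to prove Lemma~\ref{p9} by a direct counting argument using the equivalent conditions for a factorization supplied by Lemma~\ref{p3}, specifically conditions (c) and (d). Throughout, $G=HK$ is assumed, so by Lemma~\ref{p3}(c) we have $|H\cap K|\,|G|=|H|\,|K|$, and the orbit-counting identities $|HM|=|H|\,|M|/|H\cap M|$, $|KM|=|K|\,|M|/|K\cap M|$, $|(H\cap K)M|=|H\cap K|\,|M|/|H\cap K\cap M|$ will be the basic tools. (These hold as set-size identities even when $HM$, $KM$ are not subgroups.) The strategy is to translate the proposed inequality $|HM|\,|KM|\leqslant|G|\,|(H\cap K)M|$ into a statement purely about $|M|$, $|H\cap M|$, $|K\cap M|$ and $|H\cap K\cap M|$, and then to recognize that statement as exactly condition (d) of Lemma~\ref{p3} applied to the subgroups $H\cap M$ and $K\cap M$ of $M$.

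First I would expand both sides. Using the three identities above together with $|G|=|H|\,|K|/|H\cap K|$, the right-hand side becomes
\[
|G|\,|(H\cap K)M|=\frac{|H|\,|K|}{|H\cap K|}\cdot\frac{|H\cap K|\,|M|}{|H\cap K\cap M|}=\frac{|H|\,|K|\,|M|}{|H\cap K\cap M|},
\]
while the left-hand side is
\[
|HM|\,|KM|=\frac{|H|\,|M|}{|H\cap M|}\cdot\frac{|K|\,|M|}{|K\cap M|}=\frac{|H|\,|K|\,|M|^2}{|H\cap M|\,|K\cap M|}.
\]
Cancelling the common positive factor $|H|\,|K|\,|M|$, the inequality $|HM|\,|KM|\leqslant|G|\,|(H\cap K)M|$ is therefore equivalent to
\[
\frac{|M|}{|H\cap M|\,|K\cap M|}\leqslant\frac{1}{|H\cap K\cap M|},
\]
that is, to $|M|\leqslant|H\cap M|\,|K\cap M|/|H\cap K\cap M|$. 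Observing that $(H\cap M)\cap(K\cap M)=H\cap K\cap M$, this last inequality is precisely condition (d) of Lemma~\ref{p3} for the group $M$ with the two subgroups $H\cap M$ and $K\cap M$; by that lemma it is equivalent to $M=(H\cap M)(K\cap M)$. This establishes the main equivalence.

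For the ``in particular'' clause, suppose additionally $H\leqslant M$. Then $H\cap M=H$, $H\cap K\cap M=H\cap K$, and $HM=M$, so the right-hand inequality reads $|M|\,|KM|\leqslant|G|\,|(H\cap K)M|$; but one can more simply just verify condition (d) directly: $|H\cap M|\,|K\cap M|/|H\cap K\cap M|=|H|\,|K\cap M|/|H\cap K|\geqslant|H|\,|K\cap M|/|H\cap K|$, and since $M\supseteq H$ and $G=HK$ gives $M=M\cap HK=H(K\cap M)$ by the standard Dedekind-type argument (any $m\in M$ writes as $hk$ with $h\in H\subseteq M$, forcing $k=h^{-1}m\in M$), so in fact $M=H(K\cap M)$ holds outright. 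I would present this as the quick consequence of the first part specialized to $H\leqslant M$, noting $|HM|=|M|$ makes the general inequality automatic, or alternatively give the one-line Dedekind argument directly.

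I do not anticipate a genuine obstacle here: the whole content is the algebraic manipulation of set-cardinality identities and an appeal to Lemma~\ref{p3}(d). The only point requiring a little care is that $HM$, $KM$, $(H\cap K)M$ are in general just subsets, not subgroups, so one must use the coset-counting formula $|HM|=|H|\,|M|/|H\cap M|$ (valid for any subgroups $H,M$ of $G$ regardless of whether $HM$ is a subgroup) rather than any group-order argument; I would state this explicitly to keep the proof clean.
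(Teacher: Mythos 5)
Your proof is correct and follows essentially the same route as the paper: both reduce the claimed inequality to condition (d) of Lemma~\ref{p3} applied to the subgroups $H\cap M$ and $K\cap M$ of $M$, via the product formula $|XY|=|X||Y|/|X\cap Y|$ and the identity $|G|=|H||K|/|H\cap K|$. You simply expand and cancel both sides separately rather than substituting into the target inequality, but the algebraic content is identical.
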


\begin{proof}
By Lemma~\ref{p3}, $M=(H\cap M)(K\cap M)$ if and only if
\begin{equation}\label{eq15}
|M|\leqslant|H\cap M||K\cap M|/|H\cap K\cap M|.
\end{equation}
Substituting $|H\cap M|=|H||M|/|HM|$, $|K\cap M|=|K||M|/|HK|$ and $|H\cap K\cap M|=|H\cap K||M|/|(H\cap K)M|$ into (\ref{eq15}), we obtain
$$
|HM||KM||H\cap K|\leqslant|H||K||(H\cap K)M|.
$$
Since $|H||K|/|H\cap K|=|G|$ in view of $G=HK$, the above inequality turns out to be $|HM||KM|\leqslant|G||(H\cap K)M|$. This proves the lemma.
\end{proof}

\begin{lemma}\label{p8}
Let $K,M$ be subgroups of $G$ and $H$ be a subgroup of $M$. If $M=H(K\cap M)$, then $G=HK$ if and only if $G=MK$.
\end{lemma}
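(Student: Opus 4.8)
The plan is to prove both directions of the equivalence directly from the hypothesis $M=H(K\cap M)$, using only elementary counting via Lemma~\ref{p3} and the multiplicativity of indices. One direction is immediate: if $G=MK$, then since $H\leqslant M$ and $M=H(K\cap M)$, we can substitute to get $G=MK=H(K\cap M)K=HK$, because $(K\cap M)K=K$. So the only real content is the forward direction: assuming $M=H(K\cap M)$ and $G=HK$, deduce $G=MK$.

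For the forward direction, I would argue by counting. From $G=HK$ we have $|G|=|H||K|/|H\cap K|$ by Lemma~\ref{p3}(c). From $M=H(K\cap M)$ we similarly get $|M|=|H||K\cap M|/|H\cap K\cap M|$. The goal $G=MK$ is equivalent, by Lemma~\ref{p3}(d), to the inequality $|G|\leqslant|M||K|/|M\cap K|$. Substituting the two expressions above, this reduces to showing
\begin{equation*}
\frac{|H||K|}{|H\cap K|}\leqslant\frac{|H||K\cap M|}{|H\cap K\cap M|}\cdot\frac{|K|}{|K\cap M|},
\end{equation*}
that is, $|H\cap K\cap M|\leqslant|H\cap K|$. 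But $H\cap K\cap M$ is a subgroup of $H\cap K$, so this holds trivially. Hence $G=MK$. (In fact, since $H\cap K\leqslant H\leqslant M$ we have $H\cap K\cap M=H\cap K$, so equality holds throughout and $G=MK$ is an honest factorization with $|M\cap K|=|K\cap M|$ as expected.)

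Alternatively, and perhaps more cleanly, I would phrase the forward direction as a one-line set-theoretic computation: $G=HK\subseteq M K$ since $H\subseteq M$, and $MK\subseteq G$ trivially, so $G=MK$; here $M$ and $K$ are automatically proper (so this is a genuine factorization) because $K$ is a proper subgroup and $M$ is a proper subgroup — note that $M=G$ would force $H\leqslant M=G$ with $M=H(K\cap M)=HK=G$, which is consistent, so one should take care that ``factorization'' in this lemma just means $G$ is the product of the two subgroups, not that they are proper; re-reading the statement, $M=H(K\cap M)$ and $G=HK$, $G=MK$ are all meant as products, so no properness check is needed.

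There is essentially no obstacle here: the lemma is a bookkeeping statement, and the only thing to be careful about is which of the equivalent conditions in Lemma~\ref{p3} to invoke and making sure the substitution of index formulas is done correctly. I expect the write-up to be three or four lines: handle $G=MK\Rightarrow G=HK$ by direct substitution using $(K\cap M)K=K$, and handle the converse either by the containment $HK\subseteq MK\subseteq G$ or by the index inequality reducing to $H\cap K\cap M\leqslant H\cap K$.
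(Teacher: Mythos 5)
Your proof is correct, and your ``cleaner'' alternative for the forward direction ($G=HK\subseteq MK\subseteq G$ since $H\leqslant M$) together with your substitution $G=MK=H(K\cap M)K=HK$ for the converse is exactly the paper's two-line proof. The preceding counting argument via Lemma~\ref{p3}(c)--(d) is valid but an unnecessary detour for a statement that, as you yourself observe, reduces to trivial containments.
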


\begin{proof}
If $G=HK$, then $H\leqslant M$ implies $G=MK$. If $G=MK$, then $G=(H(K\cap M))K=H((K\cap M)K)=HK$.
\end{proof}

The above two lemmas enable us to construct new factorizations from given ones.

\begin{example}\label{ExampleLinear}
Let $G=\PSL_4(3)$, $M=\PSp_4(3).2<G$ and $K=\Pa_1=3^3{:}\PSL_3(3)$. Then we have $G=MK$ and $K\cap M=3^3{:}(\Sy_4\times2)$ by \cite[3.1.1]{liebeck1990maximal}. Since the almost simple group $M=\PSp_4(3).2$ has a factorization $M=H(K\cap M)$ with $H=2^4{:}\AGL_1(5)$ (see row 11 of Table \ref{BothSolvable}), there holds $G=HK$ by Lemma \ref{p8}. This factorization is as described in row 6 of Table~\ref{tab1}.
\end{example}

\begin{example}
Let $G=\POm_8^+(3)$, $M=\Pa_4=3^6{:}\PSL_4(3)$ and $K=\N_1=\Omega_7(3)$. Then we have $G=MK$ and $K\cap M=3^{3+3}{:}\PSL_3(3)$ by \cite[5.1.15]{liebeck1990maximal}. Write $M=R{:}S$, where $R=\Z_3^6$ and $S=\PSL_4(3)$. As shown in Example \ref{ExampleLinear}, $S$ has a factorization $S=H_1K_1$ with $H_1=2^4{:}\AGL_1(5)$ and $K_1=3^3{:}\PSL_3(3)<K\cap M$. Let $H=R{:}H_1<M$. It follows that
$$
M=RS=RH_1K_1=HK_1=H(K\cap M),
$$
and thus $G=HK$ by Lemma \ref{p8}. This factorization is as described in row 27 of Table~\ref{tab1}.
\end{example}

%
%

\section{Maximal factorizations of almost simple groups}

The nontrivial maximal factorizations of almost simple groups are classicfied by Liebeck, Praeger and Saxl \cite{liebeck1990maximal}. According to \cite[THEOREM~A]{liebeck1990maximal}, any nontrivial maximal factorization $G=AB$ of almost simple group $G$ with socle $L$ classical of Lie type lies in TABLEs 1--4 of \cite{liebeck1990maximal}. In TABLE 1 of \cite{liebeck1990maximal}, the maximal subgroups $A$ and $B$ are given by some natural subgroups $X_A$ and $X_B$ of $A\cap L$ and $B\cap L$ respectively such that $A=\Nor_G(X_A)$ and $B=\Nor_G(X_B)$. In fact, the explicit group structures of $A\cap L$ and $B\cap L$ in TABLE 1 of \cite{liebeck1990maximal} can be read off from \cite{kleidman1990subgroup}, which gives the following lemma.

\begin{lemma}\label{l1}
Let $G$ be an almost simple group with socle $L$ classical of Lie type. If $G=AB$ is a nontrivial maximal factorization as described in \emph{TABLE 1} of \cite{liebeck1990maximal}, then letting $X_A$ and $X_B$ be as defined in \emph{TABLE 1} of \cite{liebeck1990maximal}, we have one of the following.
\begin{itemize}
\item[(a)] $A\cap L=X_A$ and $B\cap L=X_B$.
\item[(b)] $L=\PSL_n(q)$ with $n\geqslant4$ even, $B\cap L=X_B$, and $A\cap L=\PSp_n(q).a$ where $a=(2,q-1)(n/2,q-1)/(n,q-1)$.
\item[(c)] $L=\PSU_{2m}(q)$ with $m\geqslant2$, $A\cap L=X_A$, and $B\cap L=\PSp_{2m}(q).a$ where $a=(2,q-1)(m,q+1)/(2m,q+1)$.
\item[(d)] $L=\POm_{2m}^+(q)$ with $m\geqslant6$ even and $q>2$, $A\cap L=X_A$, and $B\cap L=(\PSp_2(q)\otimes\PSp_m(q)).a$ where $a=\gcd(2,m/2,q-1)$.
\end{itemize}
In particular, $|A\cap L|/|X_A|\leqslant2$ and $|B\cap L|/|X_B|\leqslant2$.
\end{lemma}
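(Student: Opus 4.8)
The plan is to run through TABLE~1 of \cite{liebeck1990maximal} row by row, computing for each of the two maximal factors its exact intersection with $L$ from the structural data on geometric ($\mathcal{C}_i$) subgroups of classical groups recorded in \cite{kleidman1990subgroup}. The organising observation is that every factor $M\in\{A,B\}$ in TABLE~1 is presented as $M=\Nor_G(X_M)$ for a natural subgroup $X_M\leqslant L$, so that $M\cap L=\Nor_L(X_M)$, and the whole content of the lemma is the value of the index $|\Nor_L(X_M):X_M|$. Since $X_M$ is itself a full geometric subgroup of $L$, \cite{kleidman1990subgroup} supplies both $\Nor_L(X_M)$ and the number of $L$-classes into which the $\PGL_n(q)$-class (or its unitary/orthogonal analogue) of $X_M$ splits, and these two pieces of information pin down the index.

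First I would dispose of the generic situation. For all but a short list of rows --- those in which $X_A$ or $X_B$ is a parabolic $\Pa_k$, a nonsingular point or hyperplane stabiliser $\N_1^\varepsilon$, a tensor-product or field/subfield subgroup, or a classical subgroup of symplectic, unitary or orthogonal type --- the subgroup $X_M$ is self-normalising in $L$ (parabolics and the $\N_1^\varepsilon$ are maximal in $L$, while for the remaining types no nontrivial diagonal automorphism of $L$ normalises $X_M$), giving conclusion~(a). The remaining rows are precisely the three families in which a diagonal automorphism of $L$ enlarges a factor: $\Nor_{\PSL_n(q)}(\PSp_n(q))$ with $n$ even (conformal symplectic elements of determinant $1$, modulo scalars), producing~(b); the dual case $\Nor_{\PSU_{2m}(q)}(\PSp_{2m}(q))$, producing~(c); and $\Nor_{\POm_{2m}^+(q)}(\PSp_2(q)\otimes\PSp_m(q))$ with $m$ even, producing~(d). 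In each of these I would read off the displayed formula for $a$ from the order formulas in \cite{kleidman1990subgroup} and observe that $a\leqslant2$: for (b) and (c) because $(n,q-1)/(n/2,q-1)$ and $(2m,q+1)/(m,q+1)$ each divide $2$, so $a$ equals $(2,q-1)$ or $(2,q-1)/2$; for (d) trivially since $a=\gcd(2,m/2,q-1)$. Combined with the generic case, this also yields the final ``in particular'' clause.

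The main obstacle is the bookkeeping in the middle step rather than any isolated difficulty. One must guard against reading $M\cap L$ off from the mere fact that $M=\Nor_G(X_M)$ is maximal in $G$: the point is exactly that $\Nor_L(X_M)$ can properly contain $X_M$, and showing that this overgroup index is $1$ outside the four tabulated families --- and equals the stated $a\leqslant2$ inside them --- rests entirely on a careful reading of the $\mathcal{C}_i$-normaliser tables of \cite{kleidman1990subgroup}. A secondary nuisance is that TABLE~1 of \cite{liebeck1990maximal} is stated up to the isomorphisms among small classical groups (for instance $\PSL_4(q)\cong\POm_6^+(q)$ and $\PSp_4(q)\cong\POm_5(q)$), so a handful of rows must be re-examined through these isomorphisms to confirm that no further enlargement is concealed by a change of type.
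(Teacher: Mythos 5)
Your proposal is correct and takes essentially the same approach as the paper: the paper supplies no proof beyond the remark that the structures of $A\cap L$ and $B\cap L$ can be read off from \cite{kleidman1990subgroup}, and your row-by-row reading of the $\mathcal{C}_i$-normaliser data via $M\cap L=\Nor_L(X_M)$, isolating the three families where a diagonal automorphism enlarges a factor and checking $a\leqslant 2$ in each, is exactly that reading carried out in detail.
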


In light of Lemma \ref{l1}, we restate THEOREM~A of \cite{liebeck1990maximal} as follows.

\begin{theorem}\label{Maximal}
\emph{(Liebeck, Praeger and Saxl)} Let $G$ be an almost simple group with socle $L$ classical of Lie type not isomorphic to $\A_5$, $\A_6$ or $\A_8$. If $G=AB$ is a nontrivial maximal factorization of $G$, then interchanging $A$ and $B$ if necessary, the triple $(L,A\cap L,B\cap L)$ lies in \emph{Tables \ref{tabLinear}--\ref{tabOmegaPlus2}} in \emph{APPENDIX A}.
\end{theorem}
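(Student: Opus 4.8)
The plan is to derive this restatement of \cite[THEOREM~A]{liebeck1990maximal} directly from the original theorem together with Lemma~\ref{l1}, rather than re-proving the classification of maximal factorizations. First I would recall that \cite[THEOREM~A]{liebeck1990maximal} asserts that any nontrivial maximal factorization $G=AB$ of an almost simple group $G$ with socle $L$ classical of Lie type is recorded (up to swapping $A$ and $B$) in TABLES 1--4 of \cite{liebeck1990maximal}, where TABLE~1 lists the two factors only implicitly via natural subgroups $X_A\le A\cap L$ and $X_B\le B\cap L$ with $A=\Nor_G(X_A)$ and $B=\Nor_G(X_B)$. So the content of the restatement is purely bookkeeping: translating the implicit entries of TABLE~1 into explicit group structures for $A\cap L$ and $B\cap L$, and merging TABLES 1--4 into the single list of Tables~\ref{tabLinear}--\ref{tabOmegaPlus2} placed in APPENDIX~A.

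The key steps are as follows. (1)~For each row of TABLE~1 of \cite{liebeck1990maximal}, invoke Lemma~\ref{l1} to identify $A\cap L$ and $B\cap L$: in the generic case (a) one has $A\cap L=X_A$ and $B\cap L=X_B$ exactly, and in the four exceptional families (b)--(d) the structure of the larger factor picks up an explicitly named index-$\le 2$ extension ($\PSp_n(q).a$, $\PSp_{2m}(q).a$, or $(\PSp_2(q)\otimes\PSp_m(q)).a$ with the stated value of $a$). The group structures of the $X_A$, $X_B$ themselves are read off from \cite{kleidman1990subgroup} (the $\mathcal{C}_i$-subgroup structure), as already noted in the paragraph preceding Lemma~\ref{l1}. (2)~For the rows coming from TABLES 2--4 of \cite{liebeck1990maximal}, the factors are already given explicitly, so these are transcribed directly. (3)~Exclude the three socles $L\cong\A_5,\A_6,\A_8$ from the statement, since for these the "classical" description is redundant with the alternating-group description and the small-group coincidences (e.g.\ $\PSL_4(2)\cong\A_8$, $\PSL_2(9)\cong\A_6$) would otherwise force duplicated or anomalous rows; these cases are instead subsumed under Proposition~\ref{Alternating}. (4)~Collect everything, sorted by the type of $L$ ($\PSL$, $\PSp$, $\PSU$, $\Omega$ odd-dimensional, $\POm^+$, $\POm^-$), into Tables~\ref{tabLinear}--\ref{tabOmegaPlus2}.

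The main obstacle is step~(1): one must be careful that Lemma~\ref{l1} genuinely exhausts the discrepancy between $A\cap L$ and $X_A$ (and likewise on the $B$ side), i.e.\ that $|A\cap L|/|X_A|\le 2$ always and that the index is $2$ precisely in the listed families with the precise extension type indicated. This is exactly the content of Lemma~\ref{l1} and is traced back to the $\mathcal{C}_i$-subgroup data in \cite{kleidman1990subgroup,BHR-book}, so no new argument is required here, but the transcription into APPENDIX~A must faithfully record which factor is the ambient-normalizer type ($\Pa_k$, $\N_k^\varepsilon$, etc.) so that downstream results (in particular part~(d) of Theorem~\ref{SolvableFactor} and the remarks following it) can refer to these tables unambiguously. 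A secondary, purely clerical, point is consistency of notation: the $\hat{~}Y$ convention and the $\Pa_k[G]$, $\N_k^\varepsilon[G]$ notation set up in Section~\ref{sec2} must be applied uniformly when rewriting the $\mathcal{C}_1$ entries of \cite{liebeck1990maximal}, so that Tables~\ref{tabLinear}--\ref{tabOmegaPlus2} are in the normalized form used throughout the paper.
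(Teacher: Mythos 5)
Your proposal is correct and takes essentially the same approach as the paper: the paper itself presents Theorem~\ref{Maximal} as a ``restatement'' of THEOREM~A of \cite{liebeck1990maximal} obtained by applying Lemma~\ref{l1} to render the implicit $X_A,X_B$ entries of TABLE~1 explicit and then compiling TABLES~1--4 into Tables~\ref{tabLinear}--\ref{tabOmegaPlus2}, with the socles $\A_5$, $\A_6$, $\A_8$ excluded because of the exceptional isomorphisms handled separately in Proposition~\ref{Alternating}. No new proof is required beyond this bookkeeping, and your identification of the one nontrivial verification step (that Lemma~\ref{l1} fully accounts for the discrepancy $|A\cap L|/|X_A|\le 2$, with the index-$2$ cases occurring exactly as listed) matches the role Lemma~\ref{l1} plays in the paper.
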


For a group $G$, the \emph{solvable radical} of $G$, denoted by $\Rad(G)$, is the product of all the solvable normal subgroups of $G$.

\begin{lemma}\label{p12}
If a group $G$ has precisely one (involving multiplicity) unsolvable composition factor, then $G/\Rad(G)$ is almost simple.
\end{lemma}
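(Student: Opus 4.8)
The plan is to reduce immediately to the case of trivial solvable radical and then read off the structure from the socle. Write $R=\Rad(G)$ and put $\bar G=G/R$. I would first record two elementary facts: that $\Rad(\bar G)=1$ (a normal subgroup of $\bar G$ whose preimage in $G$ is solvable must already lie in $R$, since an extension of a solvable group by a solvable group is solvable), and that, because $R$ is solvable, refining a composition series of $G$ through $R$ shows the unsolvable composition factors of $G$ are exactly those of $\bar G$. Hence $\bar G$ has trivial solvable radical and precisely one unsolvable composition factor, and since $G/\Rad(G)=\bar G$ it suffices to prove: a group $G$ with $\Rad(G)=1$ and exactly one unsolvable composition factor is almost simple.

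So assume $\Rad(G)=1$. Then $G$ has no nontrivial abelian normal subgroup, so every minimal normal subgroup of $G$ is a direct product of nonabelian simple groups, and therefore $\Soc(G)$ is a direct product of nonabelian simple groups; it is nontrivial because $G$ is not solvable (it has an unsolvable composition factor), whence $G\neq1$ and $\Soc(G)\neq1$. The composition factors of $\Soc(G)$ are precisely its simple direct factors, counted with multiplicity, and each is unsolvable; since $G$ has only one unsolvable composition factor, $\Soc(G)=S$ for a single nonabelian simple group $S$. Moreover $S$ is then the unique minimal normal subgroup of $G$, because any minimal normal subgroup is contained in $\Soc(G)=S$ and, being nontrivial, equals $S$.

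To finish, I would look at $\C_G(S)$. It is normal in $G$ and $\C_G(S)\cap S=\Cen(S)=1$, so if $\C_G(S)\neq1$ it would contain a minimal normal subgroup of $G$, which must be $S$, contradicting $\C_G(S)\cap S=1$; hence $\C_G(S)=1$. Therefore the conjugation action of $G$ on $S$ is faithful and identifies $G$ with a subgroup of $\Aut(S)$ containing $\mathrm{Inn}(S)\cong S$, so $S\trianglelefteq G\leqslant\Aut(S)$ and $G$ is almost simple. There is no serious obstacle here: the only care needed is in the reduction step --- checking that passing to $G/\Rad(G)$ both kills the solvable radical and leaves the multiset of unsolvable composition factors intact --- and in noting that a trivial solvable radical forces every minimal normal subgroup to be a product of nonabelian simple groups.
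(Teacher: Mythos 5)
Your proof is correct and takes essentially the same route as the paper's: both pass to $\bar G=G/\Rad(G)$, argue that no minimal normal subgroup of $\bar G$ can be abelian (because its preimage would be solvable normal, contradicting maximality of the radical), and then use the fact that $\bar G$ has a single unsolvable composition factor to force the socle to be a single nonabelian simple group. The paper's version is terser; you supply two details the paper leaves implicit --- that the multiset of unsolvable composition factors is preserved in passing to the quotient, and the final $\C_G(S)=1$ argument showing $S\trianglelefteq G\leqslant\Aut(S)$ (which is actually unnecessary here, since the paper's definition of almost simple explicitly includes the equivalent characterization ``unique minimal normal subgroup, which is nonabelian simple'').
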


\begin{proof}
Let $R=\Rad(G)$ be the solvable radical of $G$. If $G/R$ has an abelian minimal normal subgroup $N$ say, then the full preimage of $N$ is solvable and normal in $G$, which is a contradiction since $R$ is the largest solvable normal subgroup of $G$. Thus each minimal normal subgroup of $G/R$ is nonabelian. As $G$ has only one unsolvable composition factor, so does $G/R$. Therefore, $G/R$ has only one minimal normal subgroup and the minimal normal subgroup is nonabelian simple, which shows that $G/R$ is almost simple.
\end{proof}

For a group $G$, let $G^{(\infty)}=\bigcap_{i=1}^\infty G^{(i)}$ be the first perfect group in the derived series of $G$. Obviously, $G$ is solvable if and only if $G^{(\infty)}=1$. In fact, $G^{(\infty)}$ is the smallest normal subgroup of $G$ such that $G/G^{(\infty)}$ is solvable.


The following proposition plays a fundamental role in our further analysis.

\begin{proposition}\label{Intersection}
Suppose $G$ is an almost simple group with socle classical of Lie type, and $G=AB$ with subgroups $A,B$ maximal and core-free in $G$. If $A$ has exactly one unsolvable composition factor and $R=\Rad(A)$, then $(A\cap B)R/R$ is core-free in $A/R$.
\end{proposition}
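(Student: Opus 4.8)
The plan is to argue by contradiction, reducing the assertion to a numerical divisibility that is then eliminated using the Liebeck--Praeger--Saxl classification. Since $A$ has exactly one unsolvable composition factor, Lemma~\ref{p12} gives that $\bar A:=A/R$ is almost simple. Put $\bar S:=\Soc(\bar A)$, a nonabelian simple group; as $\Out(\bar S)$ is solvable (Schreier), $\bar A^{(\infty)}=\bar S$, so the preimage of $\bar S$ in $A$ is $A^{(\infty)}R$. Because $\bar A$ is almost simple, a subgroup of $\bar A$ is core-free precisely when it does not contain $\bar S$. Hence $(A\cap B)R/R$ is core-free in $\bar A$ if and only if $A^{(\infty)}\nleqslant(A\cap B)R$, and it suffices to derive a contradiction from the assumption $A^{(\infty)}\leqslant(A\cap B)R$.

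So suppose $A^{(\infty)}\leqslant(A\cap B)R$ and set $N:=(A\cap B)R$. Since $N=(A\cap B)R$, the map $A\cap B\to N/R$ is surjective, hence sends $(A\cap B)^{(\infty)}$ onto $(N/R)^{(\infty)}$, so $(A\cap B)^{(\infty)}R=N^{(\infty)}R$. Combining this with $A^{(\infty)}=(A^{(\infty)})^{(\infty)}\leqslant N^{(\infty)}$ and with $(A\cap B)^{(\infty)}\leqslant A^{(\infty)}$ (valid since $A\cap B\leqslant A$) forces $(A\cap B)^{(\infty)}R=A^{(\infty)}R$. Therefore $(A\cap B)^{(\infty)}$ maps onto $A^{(\infty)}R/R=\bar S$, so $|\bar S|$ divides $|(A\cap B)^{(\infty)}|$, hence $|\bar S|$ divides $|A\cap B|$. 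As $G=AB$, Lemma~\ref{p3}(c) gives $|A\cap B|=|A||B|/|G|$, and consequently
\[
|G{:}B|=\frac{|A|}{|A\cap B|}\ \text{ divides }\ \frac{|A|}{|\bar S|}=|R|\cdot|\bar A{:}\bar S| .
\]
Equivalently, $\bar S$ is a composition factor of $B$, and the solvable normal subgroup $R$ has at most $|\bar A{:}\bar S|$ orbits on $[G{:}B]$, each of length dividing $|R|$.

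It remains to show that this is impossible, and here we invoke Theorem~\ref{Maximal}: the triple $(L,A\cap L,B\cap L)$ is one of those listed there, and the requirement that $A$ have a unique unsolvable composition factor cuts $A$ down to a short sublist --- roughly, the point stabilisers $\Pa_1$ and their images under graph automorphisms, the non-degenerate hyperplane stabilisers $\N_1^\varepsilon$, the $\mathcal{C}_8$-subgroups, a handful of small-field parabolic and imprimitive subgroups, and the $\mathcal{S}$-type subgroups, with the paired factor $B$ read off the tables. When the radical $R$ is small (the $\mathcal{S}$-, $\mathcal{C}_8$- and $\N_1^\varepsilon$-cases) one has $|R|\cdot|\bar A{:}\bar S|<P(L)\leqslant|G{:}B|$ by Lemma~\ref{MinimalDegree} and Theorem~\ref{l7}, contradicting the displayed divisibility. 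When $R$ is large (the $\Pa_1$-type and imprimitive cases) this coarse bound fails; there one instead uses the rearrangement $|\bar S|\cdot|G{:}A|\leqslant|B|$, which follows from $|\bar S|\mid|A\cap B|=|B|/|G{:}A|$: in those cases $B$ is forced to be a comparatively small subgroup transitive on the relevant geometry while $|G{:}A|\geqslant P(L)$ is large, so $|\bar S|\cdot|G{:}A|>|B|$, again a contradiction.

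The main obstacle is this last step, and specifically the large-radical cases, where the coarse index bounds no longer suffice and one must weigh the exact orders of $A$, $B$ and $\bar S$ from the tables of Theorem~\ref{Maximal} against Theorem~\ref{l7}. Particular care is needed for the triality factorizations of $\POm_8^+(q)$ and for the small-field exceptions (such as those involving $\PSp_4(q)$), where the composition-factor patterns of the two factors are least rigid and the relevant $A$, $B$ must essentially be handled individually.
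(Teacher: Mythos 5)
Your proof is correct and takes essentially the same approach as the paper: both reduce the assertion to the divisibility $|G{:}B| \mid |A|/|S|$ (you via a derived-series argument showing $(A\cap B)^{(\infty)}R=A^{(\infty)}R$, the paper via direct order manipulation using $|A||R|/|A^{(\infty)}R|=|A|/|A^{(\infty)}/\Rad(A^{(\infty)})|$), and then invoke the tables of Theorem~\ref{Maximal} to rule it out. Your suggested organization of the table-check via $P(L)$ bounds is a helpful elaboration of the paper's brief ``inspecting the candidates,'' though the parenthetical claim that $\bar S$ is a composition factor of $B$ does not follow (composition factors of a subgroup need not be composition factors of the group); fortunately it is not used in the argument.
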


\begin{proof}
Let $S$ be the unique unsolvable composition factor of $A$. By Lemma \ref{p12}, $A/R$ is an almost simple group with socle $S$. Suppose that $(A\cap B)R/R$ contains $\Soc(A/R)=(A/R)^{(\infty)}=A^{(\infty)}R/R$. Then $(A\cap B)R\geqslant A^{(\infty)}R$. From $G=AB$ we deduce that $|G|/|B|=|A|/|A\cap B|$ divides $|A||R|/|(A\cap B)R|$. Hence $|G|/|B|$ divides
$$
\frac{|A||R|}{|A^{(\infty)}R|}=\frac{|A||A^{(\infty)}\cap R|}{|A^{(\infty)}|}=\frac{|A|}{|A^{(\infty)}/\Rad(A^{(\infty)})|}.
$$
Since $A^{(\infty)}/\Rad(A^{(\infty)})$ is also an almost simple group with socle $S$, this implies that $|G|/|B|$ divides $|A|/|S|$. However, inspecting the candidates in Tables \ref{tabLinear}--\ref{tabOmegaPlus2}, we conclude that the factorization $G=AB$ does not satisfy this divisibility condition. Thus $(A\cap B)R/R$ does not contain $\Soc(A/R)$, that is, $(A\cap B)R/R$ is core-free in $A/R$.
\end{proof}

In order to appeal the classification of maximal factorizations to investigate the general factorizations of an almost simple group $G$, say, we need to embed a nontrivial factorization $G=HK$ to the a maximal factorization $G=AB$. This can be easily accomplished by taking arbitrary maximal subgroups $A,B$ of $G$ containing $H,K$ respectively. However, such maximal subgroups $A$ and $B$ are not necessarily core-free.



\begin{lemma}\label{Embedding}
Suppose that $G$ is an almost simple group with socle $L$ and $G$ has a nontrivial factorization $G=HK$. Then the following statements hold.
\begin{itemize}
\item[(a)] $HL=KL=G$ if and only if for any maximal subgroups $A,B$ of $G$ containing $H,K$ respectively, $A,B$ are both core-free.
\item[(b)] There exist $L\trianglelefteq G^*\leqslant G$ and a factorization $G^*=H^*K^*$ of $G^*$ such that $H^*\cap L=H\cap L$, $K^*\cap L=K\cap L$ and $H^*L=K^*L=G^*$.
\end{itemize}
\end{lemma}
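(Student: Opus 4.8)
The plan is to handle the two parts separately; part~(a) is essentially structural, while part~(b) calls for an explicit construction followed by a little order bookkeeping.

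For part~(a), I would first record the elementary observation that a \emph{maximal} subgroup $M$ of $G$ is core-free if and only if $L\not\leqslant M$: the core of $M$ in $G$ is the largest normal subgroup of $G$ contained in $M$, and since $L$ is the unique minimal normal subgroup of $G$, this core is nontrivial precisely when it contains $L$. Granting this, the forward implication is immediate: if $HL=G$ and $A$ is maximal with $H\leqslant A$, then $L\leqslant A$ would give $G=HL\leqslant A$, contradicting that $A$ is proper, so $L\not\leqslant A$ and $A$ is core-free, and likewise for any maximal $B\geqslant K$. Conversely, if $HL\neq G$ then $HL$ is a proper subgroup of $G$ (as $L\trianglelefteq G$), hence contained in some maximal subgroup $A$; now $H\leqslant A$ and $L\leqslant A$, so $A$ is not core-free, against the hypothesis. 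Thus $HL=G$, and symmetrically $KL=G$.

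For part~(b), I would set $G^{*}:=HL\cap KL$, $H^{*}:=H\cap KL$ and $K^{*}:=K\cap HL$. Then $L\trianglelefteq G^{*}\leqslant G$ since $L\leqslant HL\cap KL$, and because $L\leqslant KL$ and $L\leqslant HL$ one gets $H^{*}\cap L=H\cap L$ and $K^{*}\cap L=K\cap L$ at once. From $G=HK\subseteq(HL)(KL)$ we have $G=(HL)(KL)$, so Lemma~\ref{p3} gives $|G^{*}|=|HL||KL|/|G|$. Applying the ``in particular'' clause of Lemma~\ref{p9} to $G=HK$ with $M=HL$, and (with the roles of $H$ and $K$ interchanged) with $M=KL$, yields $HL=H(K\cap HL)$ and $KL=K(H\cap KL)$, whence $|K^{*}|=|HL||H\cap K|/|H|$ and $|H^{*}|=|KL||H\cap K|/|K|$. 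Substituting these and $|G|=|H||K|/|H\cap K|$ into the relevant index formulas, one checks the three numerical identities $|H^{*}L|=|G^{*}|$, $|K^{*}L|=|G^{*}|$ and $|H^{*}||K^{*}|/|H^{*}\cap K^{*}|=|G^{*}|$ (noting $H^{*}\cap K^{*}=H\cap K$); by Lemma~\ref{p3} these give $H^{*}L=K^{*}L=G^{*}$ and $G^{*}=H^{*}K^{*}$. Finally $H^{*},K^{*}$ are proper in $G^{*}$: if, say, $H^{*}=G^{*}$ then $L\leqslant G^{*}\leqslant H$, contradicting core-freeness of $H$.

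The only step demanding care is the chain of order computations in part~(b); conceptually it is nothing more than combining $G=(HL)(KL)$ with the two instances of Lemma~\ref{p9}. If one prefers to avoid carrying this out simultaneously, the same conclusion follows by iterating: first replace $(G,H,K)$ by $(HL,\,H,\,K\cap HL)$, a nontrivial factorization of the almost simple group $HL$ with socle $L$ in which the first factor now covers the quotient modulo $L$; then replace this in the analogous fashion to force the second factor to cover as well, using a single index identity. In neither presentation is there a substantive obstacle, since the statement is ultimately a bookkeeping consequence of Lemma~\ref{p3} and Lemma~\ref{p9}.
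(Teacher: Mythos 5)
Your part~(a) argument matches the paper's essentially verbatim. For part~(b) you choose the same subgroup $G^{*}=HL\cap KL$ (and your $H^{*}=H\cap KL$, $K^{*}=K\cap HL$ agree with the paper's $H^{*}=H\cap G^{*}$, $K^{*}=K\cap G^{*}$ since $H\leqslant HL$ and $K\leqslant KL$), but you differ in how the factorization is established: the paper simply invokes \cite[Lemma~2(i)]{liebeck1996factorizations} to conclude $G^{*}=H^{*}K^{*}$ and $H^{*}L=K^{*}L=G^{*}$, whereas you derive these directly from Lemmas~\ref{p3} and~\ref{p9} by the order bookkeeping you outline. Your computation checks out: $G=(HL)(KL)$ gives $|G^{*}|=|HL|\,|KL|/|G|$; the two applications of Lemma~\ref{p9} give $|K^{*}|=|HL|\,|H\cap K|/|H|$ and $|H^{*}|=|KL|\,|H\cap K|/|K|$; combined with $H^{*}\cap K^{*}=H\cap K$ and $H^{*}\cap L=H\cap L$ these yield the three required identities, and the inclusions $H^{*}L,K^{*}L,H^{*}K^{*}\subseteq G^{*}$ then force equalities. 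Your closing remark that $H^{*},K^{*}$ are proper (via core-freeness of $H$ and $K$) is also what is needed for $G^{*}=H^{*}K^{*}$ to count as a nontrivial factorization, and the paper makes the same observation. The net effect is that your version is self-contained where the paper's is not, at the cost of carrying through a chain of index calculations; either is acceptable.
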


\begin{proof}
Assume that $HL=KL=G$. For any maximal subgroups $A$ of $G$ containing $H$, since $A\geqslant L$ will lead to a contradiction that $A\geqslant HL=G$, we know that $A$ is core-free in $G$. Similarly, any maximal subgroup $B$ of $G$ containing $K$ is core-free in $G$.

Conversely, assume that any maximal subgroups of $G$ containing $H,K$, respectively, are core-free in $G$. If $HL<G$, then the maximal subgroup of $G$ containing $HL$ (and thus containing $H$) is not core-free in $G$, contrary to the assumption. Hence $HL=G$, and similarly we have $KL=G$. Therefore, part~(a) is true.

For part~(b), take $G^*=HL\cap KL$, $H^*=H\cap G^*$ and $K^*=K\cap G^*$. Then $L\trianglelefteq G^*\leqslant G$, $H^*\cap L=H\cap L$ and $K^*\cap L=K\cap L$. It follows that $H^*$ and $K^*$ are both core-free in $G^*$ since $H$ and $K$ are both core-free in $G$. By \cite[Lemma~2(i)]{liebeck1996factorizations}, we have $G^*=H^*K^*$ and $H^*L=K^*L=G^*$. Thus $G^*=H^*K^*$ is a factorization satisfying part~(b).
\end{proof}

\begin{remark}\label{rmk1}
For a nontrivial factorization $G=HK$, if we are concerned with $H\cap\Soc(G)$ and $K\cap\Soc(G)$ instead of $H$ and $K$ (such as in proving Theorem \ref{SolvableFactor}(d)), then Lemma \ref{Embedding} allows us to assume that any maximal subgroups of $G$ containing $H,K$, respectively, are core-free in $G$.
\end{remark}


\chapter[Prime dimension]{The factorizations of linear and unitary groups of prime dimension}\label{p-dim}


We classify factorizations of almost simple linear and unitary groups of prime dimension in this chapter. It turns out that all these factorizations have at least one solvable factor unless the socle is $\PSL_3(4)$.

\section{Singer cycles}\label{sec4}

Let $n=ab\geqslant2$ and $V=\GF(q^n)$. Then $V$ may be viewed as an $n$-dimensional vector space over $\GF(q)$ and an $a$-dimensional vector space over $\GF(q^b)$. Let $g$ be a $\GF(q^b)$-linear transformation on $V$. This means that, by definition, $g$ is a bijection on $V$ satisfying
\[\mbox{$(u+v)^g=u^g+v^g$ for any $u,v\in V$ and}\]
\[\mbox{$(\lambda v)^g=\lambda(v^g)$ for any $\lambda\in\GF(q^b)$.}\]
Since $\GF(q)\subseteq\GF(q^b)$, we see from the above conditions that $g$ is also a $\GF(q)$-linear transformation on $V$. Therefore, $\GL_a(q^b)\leqslant\GL_n(q)$. In fact, for any $g\in\GL_a(q^b)$ and $\sigma\in\mathrm{Gal}(\GF(q^b)/\GF(q))$, it is direct to check by definition that $\sigma^{-1}g\sigma$ is still a $\GF(q^b)$-linear transformation. Hence we have
$$
\GL_a(q^b){:}\mathrm{Gal}(\GF(q^b)/\GF(q))=\GL_a(q^b){:}b\leqslant\GL_n(q).
$$

Taking $a=1$ and $b=n$ in the above argument, one obtains that $\GL_1(q^n)<\GL_1(q^n){:}n<\GL_n(q)$. We call the cyclic group $\GL_1(q^n)$ and its conjugates in $\GL_n(q)$ the \emph{Singer cycle} of $\GL_n(q)$, and call $\hat{~}\GL_1(q^n)$ and its conjugates in $\PGL_n(q)$ the \emph{Singer cycle} of $\PGL_n(q)$. Obviously, the group $\GL_1(q^n)$ consisting of all $\GF(q^n)$-linear transformations of $V$ is transitive on $V\setminus\{0\}$. It follows that Singer cycles are transitive on the $1$-spaces and $(n-1)$-spaces, respectively, of $V$. Accordingly, for $k=1$ or $n-1$ we obtain the factorizations
$$
\GL_n(q)=\GL_1(q^n)\Pa_k[\GL_n(q)]=(\GL_1(q^n){:}n)\Pa_k[\GL_n(q)]
$$
and
$$
\PGL_n(q)=\hat{~}\GL_1(q^n)\Pa_k[\PGL_n(q)]=(\hat{~}\GL_1(q^n){:}n)\Pa_k[\PGL_n(q)].
$$

It is worth mentioning that $\GaL_1(q^n)$ actually has a lot of subgroups which are transitive on $V\setminus\{0\}$, see \cite{Foulser1964,Foulser1969}.

We have seen in the above that there exist almost simple groups $G$ with socle $L=\PSL_n(q)$ and subgroups $H,K$ such that $G=HK$, $H\cap L\leqslant\hat{~}\GL_1(q^n){:}n$ and $K\cap L\leqslant\Pa_1$ or $\Pa_{n-1}$. In the rest of this section, we will show that if such a factorization holds then $K\cap L$ must contain a large normal subgroup of $\Pa_1$ or $\Pa_{n-1}$.

\begin{lemma}\label{l5}
Let $G$ be an almost simple group with socle $L=\PSL_n(q)$ and $(n,q)\neq(3,2)$, $(3,3)$ or $(4,2)$. If $G=HK$ for subgroups $H$ and $K$ of $G$ such that $H\cap L\leqslant\hat{~}\GL_1(q^n){:}n$ and $K\cap L\leqslant\Pa_1$ or $\Pa_{n-1}$, then $K<\PGaL_n(q)$.
\end{lemma}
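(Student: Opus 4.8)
The plan is a proof by contradiction. If $n=2$ there is nothing to prove, since $\PGaL_2(q)=\Aut(\PSL_2(q))\geqslant G\geqslant K$; so assume $n\geqslant3$, whence $\PGaL_n(q)$ has index $2$ in $\Aut(L)$ and the automorphisms in the nontrivial coset act on $\PG(V)$ as correlations --- inclusion-reversing bijections of the subspace lattice carrying $k$-spaces to $(n-k)$-spaces. Suppose for contradiction that $K\not\leqslant\PGaL_n(q)$. Applying the graph automorphism to the whole factorization if necessary (this interchanges the classes of $\Pa_1$ and $\Pa_{n-1}$ and preserves the classes of the Singer normaliser and of $\PGaL_n(q)$, and by Lemma~\ref{p3}(b) we may conjugate back into $\hat{~}\GL_1(q^n){:}n$ and $\Pa_1[L]$), we may assume $K\cap L\leqslant\Pa_1[L]=L_W$, the stabiliser in $L$ of a $1$-space $W$ of $V$.

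\emph{An order estimate.} As $G=HK$ and $L\trianglelefteq G$, Lemma~\ref{p4}(d) shows that $|L|$ divides $|H\cap L|\,|K\cap L|\,|G/L|$. Since $|H\cap L|$ divides $|\hat{~}\GL_1(q^n){:}n|=n(q^n-1)/((q-1)d)$ with $d=(n,q-1)$, and $[L:\Pa_1[L]]=(q^n-1)/(q-1)$, this rearranges to
\[
[\Pa_1[L]:K\cap L]\di\frac{n}{d}\,|G/L|,
\]
whose right-hand side divides $2nf$ because $|\Out(\PSL_n(q))|=2df$. Thus $K\cap L$ has small index in $\Pa_1[L]$.

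\emph{A geometric constraint and the endgame.} Because $L\trianglelefteq G$, the subgroup $K$ normalises $K\cap L$; choosing $g\in K\setminus\PGaL_n(q)$, which induces a correlation, $W^g$ is a hyperplane of $V$, and as $K\cap L=(K\cap L)^g$ fixes $W^g$ we get $K\cap L\leqslant L_W\cap L_{W^g}$. If $W\leqslant W^g$ this is the flag stabiliser $\Pa_1[L]\cap\Pa_{n-1}[L]$, of index $(q^{n-1}-1)/(q-1)$ in $\Pa_1[L]$; otherwise $V=W\oplus W^g$ and it is a Levi subgroup of $\Pa_1[L]$, of index $q^{n-1}$. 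Hence $(q^{n-1}-1)/(q-1)$ or $q^{n-1}$ divides $\tfrac{n}{d}|G/L|$ by the order estimate. For $n\geqslant4$, and for $n=3$ with $q\notin\{4,5,8,9,16\}$, the inequality $1+q+\dots+q^{n-2}>2nf$ holds under the stated exclusions and gives a contradiction; the pairs $\PSL_3(4)$, $\PSL_3(9)$, $\PSL_3(16)$ are then excluded because no multiple of $(q^{n-1}-1)/(q-1)$ can divide $\tfrac{n}{d}|G/L|$ (for instance no multiple of $5$ divides $12$), and for the remaining $n=3$ cases a primitive-prime-divisor argument via Lemma~\ref{l6} and Theorem~\ref{Zsigmondy} applies. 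For the two boundary cases $L=\PSL_3(5)$ and $L=\PSL_3(8)$, the estimates force $H\cap L=\hat{~}\GL_1(q^3){:}3$ and $K\cap L$ to be a Borel subgroup of $L$, and I would finish by a direct check (as in our \magma\ computations) that no $H$ and $K$ of this shape with $K\not\leqslant\PGaL_n(q)$ satisfy $|H\cap K|\,|G|=|H|\,|K|$; alternatively one may read this off Theorem~\ref{Maximal}, noting that the pertinent maximal factorization of $L$ does not extend to a $G$ outside $\PGaL_n(q)$.

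The main obstacle I anticipate is this endgame: the bookkeeping that trims the possibilities to a short list, and especially the elimination of the boundary cases $\PSL_3(5)$ and $\PSL_3(8)$, which pass the divisibility test by the slimmest margin. A subsidiary point requiring a careful statement is the geometric dichotomy --- that an element inducing a correlation forces $K\cap L$ to stabilise a hyperplane as well as $W$, and that ``flag'' versus ``direct sum'' really exhausts the possibilities for $L_W\cap L_{W^g}$.
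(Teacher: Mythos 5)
Your strategy is the same as the paper's: extract a divisibility bound on $[\Pa_1[L]:K\cap L]$ from Lemma~\ref{p4}, then use the geometric fact that an element $g\in K\setminus\PGaL_n(q)$ forces $K\cap L$ to stabilize a hyperplane $W^g$ as well as the $1$-space $W$. The difference is that the paper \emph{asserts} that $K\cap L$ stabilizes a direct-sum decomposition $V=V_1\oplus V_{n-1}$, taking only the non-incident case; the divisibility then becomes the clean statement $q^{n-1}\mid 2fn$ (equation~(\ref{eq19})), which already has no solutions for $n=3$ and only $(n,q)=(4,2)$, an excluded pair, for $n=4$. You instead keep track of both possibilities for $L_W\cap L_{W^g}$, and you are right that the incident (flag) case is a priori possible: if $K\cap L$ is, say, a full Borel subgroup then $W$ and $W^g$ \emph{must} be incident, and the index $(q^{n-1}-1)/(q-1)$ of $\Pa_{1,n-1}$ in $\Pa_1$ is strictly smaller than $q^{n-1}$, so your bound is genuinely weaker and the boundary cases $\PSL_3(5)$, $\PSL_3(8)$ survive the arithmetic. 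Thus your ``subsidiary point'' is not mere caution---it flags a step the paper takes without justification, and one that cannot be dismissed by the pure order count alone.

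That said, your treatment of the flag case is not finished. For $q\in\{5,8\}$ you delegate the exclusion to a computation or to a reading of Theorem~\ref{Maximal}; the latter is the right move (embedding the putative factorization into a maximal one with factors the Singer normalizer and a flag-stabilizer novelty $\Pa_{1,n-1}.2$, and observing that no such maximal factorization is listed), but as written it is a sketch, not a proof. In fact the paper's own direct-sum assertion is almost certainly meant to be read off the same classification: the $\mathcal{C}_1$-novelty appearing opposite a Singer factor in a maximal factorization with $G\nleqslant\PGaL_n(q)$ is $\Stab(V_1\oplus V_{n-1})$, never $\Pa_{1,n-1}$. So either approach ultimately leans on Liebeck--Praeger--Saxl; yours just makes visible the case the paper elides. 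One small wrinkle in the exposition: you claim the inequality $1+q+\dots+q^{n-2}>2nf$ ``holds under the stated exclusions'' for $n\geqslant4$, but for $(n,q)=(4,2)$ it fails (that $7<8$) and one must appeal to the non-divisibility $7\nmid 8$ instead; this is harmless because $(4,2)$ is excluded, but it should be said.

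Summary: correct in outline and more scrupulous than the paper about the incident/non-incident dichotomy, but the boundary-case elimination for $\PSL_3(5)$ and $\PSL_3(8)$ is left as a claimed computation rather than an argument.
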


\begin{proof}
Suppose $K\nleqslant\PGaL_n(q)$, and write $d=(n,q-1)$. Then $n\geqslant3$, $K$ involves the graph automorphism of $L$, and $K\cap L$ stabilizes a decomposition $V=V_1\oplus V_{n-1}$, where $V$ is an $n$-dimensional vector space over $\GF(q)$ and $V_1,V_{n-1}$ are $1$-space and $(n-1)$-space, respectively, in $V$. This implies that $|K\cap L|$ divides $|\GL_{n-1}(q)|/d$. Moreover, we conclude from $H\cap L\leqslant\hat{~}\GL_1(q^n){:}n$ that $|H\cap L|$ divides $n(q^n-1)/(d(q-1))$. Hence by Lemma \ref{p4} we obtain
\begin{equation}\label{eq19}
q^{n-1}\di2fn,
\end{equation}
and thereby $2^{n-1}\leqslant p^{n-1}\leqslant p^{f(n-1)}/f\leqslant2n$. This yields $n\leqslant4$, and so $n=3$ or $4$. However, we deduce $q=2$ from (\ref{eq19}) if $n=3$ or $4$, contrary to our assumption that $(n,q)\neq(3,2)$ or $(4,2)$. Thus $K\leqslant\PGaL_n(q)$, and further $K<\PGaL_n(q)$ since $K\neq\PGaL_n(q)$.
\end{proof}

\begin{lemma}\label{LowerLinear}
Let $G$ be an almost simple group with socle $L=\PSL_n(q)$. If $G=HK$ for subgroups $H,K$ of $G$ such that $H\cap L\leqslant\hat{~}\GL_1(q^n){:}n$ and $K\cap L\leqslant\Pa_1$ or $\Pa_{n-1}$, then one of the following holds.
\begin{itemize}
\item[(a)] $q^{n-1}{:}\SL_{n-1}(q)\trianglelefteq K\cap L$.
\item[(b)] $(n,q)\in\{(2,4),(3,2),(3,3),(3,4),(3,8)\}$ and $K$ is solvable.
\end{itemize}
\end{lemma}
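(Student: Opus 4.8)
\noindent
The plan is to prove that the index $t:=|\Pa_1:K\cap L|$ is forced to be so small that $K\cap L$ must contain $N:=q^{n-1}{:}\SL_{n-1}(q)$, except in a short list of small cases; since $N\trianglelefteq\Pa_1$ with $\Pa_1/N$ cyclic of order $(q-1)/d$, the inclusion $N\leqslant K\cap L$ is exactly conclusion~(a). We may assume $K\cap L\leqslant\Pa_1$, the $\Pa_{n-1}$ case being equivalent via a graph automorphism, and throughout we write $d=(n,q-1)$ and $q=p^f$, and recall $|L:\Pa_1|=(q^n-1)/(q-1)$.

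First I would bound $t$. By Lemma~\ref{p4}(b), $|G|$ divides $|H\cap L||K||G/L|$; since $|G|=|L||G/L|$ and $|K|=|K\cap L||KL/L|$, this forces $|L|$ to divide $|H\cap L||K\cap L||KL/L|$. From $H\cap L\leqslant\hat{~}\GL_1(q^n){:}n$ we have $|H\cap L|\di n(q^n-1)/((q-1)d)$, and $|KL/L|\di|\Out(L)|\di 2df$ by Table~\ref{tab10}; feeding these in, together with $|L|=\bigl((q^n-1)/(q-1)\bigr)|\Pa_1|$ and $|K\cap L|=|\Pa_1|/t$, yields
\[
t\di 2fn .
\]
When $(n,q)\notin\{(3,2),(3,3),(4,2)\}$, Lemma~\ref{l5} gives $K<\PGaL_n(q)$, hence $|KL/L|\di df$ and the bound sharpens to $t\di fn$.

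Next, the $P$-function does the generic work. As $N\trianglelefteq\Pa_1$, the subgroup $N\cap(K\cap L)$ has index in $N$ dividing $t$, so $t<P(N)$ already forces $N\cap(K\cap L)=N$, that is, $N\leqslant K\cap L$ and~(a). To bound $P(N)$ from below, put $U=\bfO_p(N)=q^{n-1}$, on which $\SL_{n-1}(q)\cong N/U$ acts as on the natural module. For $n\geqslant3$ this action is transitive on $U\setminus\{1\}$, so $1$ and $U$ are the only $\SL_{n-1}(q)$-invariant subgroups of $U$; hence any proper $J<N$ either has $UJ=N$, in which case $U\cap J$ is an $\SL_{n-1}(q)$-invariant proper subgroup of $U$, so $U\cap J=1$ and $|N:J|=q^{n-1}$, or has $UJ\neq N$, so $|N:J|\geqslant P(\SL_{n-1}(q))$. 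Therefore
\[
P(N)\geqslant\min\{q^{n-1},\,P(\SL_{n-1}(q))\}=P(\SL_{n-1}(q))=P(\PSL_{n-1}(q)),
\]
the last two equalities holding by Lemma~\ref{MinimalDegree}(b) and Theorem~\ref{l7} whenever $\SL_{n-1}(q)$ is quasisimple, i.e.\ for $(n,q)\neq(3,2),(3,3)$ (in those two cases $N=\Sy_4$ or $3^2{:}\SL_2(3)$ and one checks directly). Combining $t\di 2fn$ with $t\geqslant P(N)$ then forces $(q^{n-1}-1)/(q-1)\leqslant 2fn$, up to the small groups listed in Table~\ref{tab24}, which by an elementary estimate, or by Zsigmondy's theorem, leaves only a short finite list of pairs $(n,q)$. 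For $n=2$, where $\SL_1(q)=1$, one argues instead with $P(N)=p$: the Borel torus of $\Pa_1$ acts irreducibly on $U$, so $K\cap L\cap U\in\{1,U\}$, and $K\cap L\cap U=1$ makes $|K\cap L|$ divide $q-1$, incompatible with $t\di 2f$ once $q\geqslant5$ ($\PSL_2(2)$ and $\PSL_2(3)$ being non-simple), leaving only $q=4$.

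Finally, for each surviving pair $(n,q)$ I would read off the candidates for $K\cap L$ from the structure of $\Pa_1$ and verify case by case that either a sharper count still gives $N\leqslant K\cap L$, so~(a) holds — this is what happens, for instance, for $(n,q)=(4,2),(3,5),(3,9),(3,16)$, where either no divisor of $2fn$ reaches $P(N)$, or the only subgroup of $\Pa_1$ of the admissible index still contains $N$ — or $K\cap L$, and therefore $K$, is solvable and we are in~(b); the genuine exceptions that remain are precisely $(n,q)\in\{(2,4),(3,2),(3,3),(3,4),(3,8)\}$. I expect this last, case-by-case, stage to be the main obstacle: for the borderline pairs one must exclude a solvable $K$ with~(a) failing, which requires detailed knowledge of the subgroups of $\GL_{n-1}(q)$ of index at most $2fn$ and, for the most delicate pairs (such as $(3,5)$, $(3,8)$, $(3,9)$ and $(4,2)$), an appeal to the maximal-factorization classification of Theorem~\ref{Maximal}, or a direct computation in \magma, to decide whether the putative factorization exists at all.
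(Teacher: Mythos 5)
Your strategy for $n\geqslant3$ is genuinely different from the paper's and is, in outline, sound. The paper passes to the almost simple quotient $\overline{B}=B/\Rad(B)$ with socle $\PSL_{n-1}(q)$ and splits according to whether $\overline{K}=K\Rad(B)/\Rad(B)$ is core-free there, then finishes the non-core-free branch with a separate $p$-part count. You instead bound the index $t=|\Pa_1:K\cap L|$ directly via Lemma~\ref{p4}, observe that $|N:N\cap(K\cap L)|$ divides $t$ and is at least $P(N)$ whenever (a) fails, and bound $P(N)$ from below by $P(\PSL_{n-1}(q))$ using the irreducibility of $\SL_{n-1}(q)$ on $\bfO_p(N)$. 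This is a cleaner abstraction: it handles in one step both of the paper's branches (in particular the $K\cap L\leqslant\hat{~}\GL_{n-1}(q)$ possibility is absorbed automatically since then $t\geqslant q^{n-1}$), and with the sharper bound $t\di nf$ coming from Lemma~\ref{l5} it reproduces exactly the paper's exceptional list $\{(3,4),(3,8),(3,9)\}$ together with the separately handled $(3,2),(3,3),(4,2)$. The residual case-by-case work you flag at the end (distinguishing conclusion~(a) from conclusion~(b) for the survivors, partly by \magma) is of the same nature and size as in the paper.

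The $n=2$ case, however, contains a real gap. You assert that since the Borel torus of $\Pa_1$ acts irreducibly on $U=\bfO_p(\Pa_1)\cong\Z_p^f$, one has $K\cap L\cap U\in\{1,U\}$. This deduction is not valid: $K\cap L\cap U$ is normalized only by $K\cap L$, not by the whole torus, and $K\cap L\cap(\text{torus})$ can be a proper subgroup whose action on $U$ is reducible. Indeed, for $q=p^f$ with $f\geqslant2$ there genuinely are subgroups of $\Pa_1$ whose intersection with $U$ is an intermediate subgroup. What actually rescues the conclusion is the same arithmetic you use later: the divisibility $t\di 2f$ forces $|K\cap L|$ to be divisible by a primitive prime divisor $r$ of $p^f-1$ (for $f\geqslant3$, with the Zsigmondy exception $(p,f)=(2,6)$), and an element of order $r$ in $K\cap L\leqslant\Pa_1$ acts on $U$ without any proper nonzero invariant subspace over $\GF(p)$; hence $K\cap L\cap U\in\{1,U\}$, and the case $K\cap L\cap U=1$ is excluded as you say. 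For $f\leqslant2$ one argues directly from $q\di 2f|K\cap L|$. This is precisely the paper's Case~1, and without invoking the primitive prime divisor the step ``$K\cap L\cap U\in\{1,U\}$'' is unjustified. A smaller imprecision: several pairs you single out for case analysis, notably $(3,5)$ and $(3,16)$, are in fact already eliminated by the refined bound $t\di nf$ together with the exact values of $P(\PSL_2(q))$ in Table~\ref{tab24}, so the genuine survivors for $n\geqslant3$ are only $(3,4),(3,8),(3,9)$ (plus the separately treated $(3,2),(3,3),(4,2)$).
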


\begin{proof}
Let $q=p^f$ with prime number $p$. We divide the discussion into two cases distinguishing $n=2$ or not. Note that if $n=2$, part~(a) turns out to be $\Z_p^f\trianglelefteq K\cap L$.

{\bf Case 1.} Assume $n=2$, and suppose $q\neq4$. Then $q\geqslant5$ and $K\cap L\leqslant\Z_p^f{:}\Z_{(q-1)/(2,q-1)}$.

First suppose $f\leqslant2$. Then $p>2$ as $q\geqslant5$. It derives from Lemma \ref{p4} that $q$ divides $|K\cap L|$. Hence $q{:}\SL_1(q)=\Z_p^f\trianglelefteq K\cap L$ as part~(a) of Lemma \ref{LowerLinear}.

Next suppose $f\geqslant3$. By Zsigmondy's theorem, $p^f-1$ has a primitive prime divisor $r$ except $(p,f)=(2,6)$. If $(p,f)=(2,6)$, then $2^4\cdot3\cdot7$ divides $|K\cap L|$ by Lemma \ref{p4}, but the only subgroups of $2^6{:}63$ in $\PSL_2(64)$ with order divisible by $2^4\cdot3\cdot7$ are $2^6{:}21$ and $2^6{:}63$, which leads to $2^6\trianglelefteq K\cap L$. If $(p,f)\neq(2,6)$, then Lemma \ref{p4} implies that $K\cap L$ has order divisible by $r$ and thus has an element of order $r$. Note that $\Z_r$ does not have any faithful representation over $\GF(p)$ of dimension less than $f$. We then have $\Z_p^f{:}\Z_r\trianglelefteq K\cap L$.

{\bf Case 2.} Assume $n\geqslant3$, and assume without loss of generality $K\cap L\leqslant\Pa_1$. Write $\calO=G/L$.
By Lemma~\ref{Embedding}, we may assume that there exist core-free maximal subgroups $A,B$ of $G$ containing $H,K$, respectively. Then $G=AB$ with $A=\hat{~}\GL_1(q^n){:}n.\calO$ and $B\leqslant\PGaL_n(q)$ by Lemma \ref{l5}. It follows from Theorem \ref{Maximal} that $B\cap L=\Pa_1$. Thus $B=\Pa_1.\calO$ and $\calO\leqslant\PGaL_n(q)/\PSL_n(q)=[(n,q-1)f]$. Since $|G|/|A|$ divides $|K|$, $|B|/|K|$ divides
$$
\frac{|A||B|}{|G|}=\frac{|A|(q-1)}{q^n-1}=\frac{|\hat{~}\GL_1(q^n)|(q-1)|\calO|}{q^n-1}=\frac{n|\calO|}{(n,q-1)}.
$$
This yields that $|B|/|K|$ divides $nf$. Suppose that $(n,q)\neq(3,2)$ or $(3,3)$. Then $B$ has a unique unsolvable composition factor $\PSL_{n-1}(q)$, and $B^{(\infty)}=(B\cap L)^{(\infty)}=q^{n-1}{:}\SL_{n-1}(q)$. Let $R=\Rad(B)$, $\overline{B}=B/R$ and $\overline{K}=KR/R$. It follows that $\overline{B}$ is an almost simple group with socle $\PSL_{n-1}(q)$ by Lemma \ref{p12}. Notice that $|\overline{B}|/|\overline{K}|$ divides $|B|/|K|$ and thus divides $nf$. Moreover, we conclude from Theorem~\ref{l7} that either each proper subgroup of $\PSL_{n-1}(q)$ has index greater than $nf$, or $(n,q)\in\{(3,4),(3,8),(3,9)\}$.

First assume that $\overline{K}$ is core-free in $\overline{B}$. Then the observation
$$
\frac{|\Soc(\overline{B})|}{|\overline{K}\cap\Soc(\overline{B})|}
=\frac{|\overline{K}\,\Soc(\overline{B})|}{|\overline{K}|}\leqslant\frac{|\overline{B}|}{|\overline{K}|}\leqslant nf
$$
implies that $(n,q)\in\{(3,4),(3,8),(3,9)\}$. If $(n,q)=(3,4)$ or $(3,8)$, then $\overline{K}$ is solvable since it is core-free in $\PGaL_2(q)$, which implies that $K$ is solvable as part~(b) asserts. If $(n,q)=(3,9)$, then computation in \magma \cite{bosma1997magma} shows that $3^4{:}\SL_2(9)\trianglelefteq K\cap L$, as part~(a) of Lemma \ref{LowerLinear}.

Next assume that $\overline{K}\geqslant\Soc(\overline{B})$. Since $\Soc(\overline{B})=\overline{B}^{(\infty)}$, this yields $KR\geqslant B^{(\infty)}$. As a consequence, $K\geqslant\SL_{n-1}(q)$, which implies $K\cap L\geqslant\SL_{n-1}(q)$. Note that $B\cap L=q^{n-1}{:}\hat{~}\GL_{n-1}(q)$ and $\SL_{n-1}(q)\leqslant\hat{~}\GL_{n-1}(q)\leqslant B^{(\infty)}$ acts irreducibly on $q^{n-1}$. We conclude that either $K\cap L\geqslant q^{n-1}{:}\SL_{n-1}(q)$ or $K\cap L\leqslant\hat{~}\GL_{n-1}(q)$. However, the latter causes $|A|_p|K\cap L|_p|\calO|_p<|G|_p$, contrary to Lemma \ref{p4}. Thus $K\cap L\geqslant q^{n-1}{:}\SL_{n-1}(q)$ as part~(a) asserts.
\end{proof}

\section{Linear groups of prime dimension}


%
%

Now we determine the factorizations of almost simple groups with socle $\PSL_n(q)$ for prime dimension $n$. We exclude the values of $(n,q)\in\{(2,4),(2,5),(2,9),(3,2)\}$ due to the isomorphisms $\PSL_2(4)\cong\PSL_2(5)\cong\A_5$, $\PSL_2(9)\cong\A_6$ and $\PSL_3(2)\cong\PSL_2(7)$.

\begin{theorem}\label{p-dimensionLinear}
Let $G$ be an almost simple group with socle $L=\PSL_n(q)$ and $n$ be a prime with $(n,q)\not\in\{(2,4),(2,5),(2,9),(3,2)\}$. If $G=HK$ is a nontrivial factorization, then interchanging $H$ and $K$ if necessary, one of the following holds.
\begin{itemize}
\item[(a)] $H\cap L\leqslant\hat{~}\GL_1(q^n){:}n$, and $q^{n-1}{:}\SL_{n-1}(q)\trianglelefteq K\cap L\leqslant\Pa_1$ or $\Pa_{n-1}$.
\item[(b)] $n=2$ with $q\in\{7,11,16,19,23,29,59\}$, or $n=3$ with $q\in\{3,4,8\}$, or $(n,q)=(5,2)$, and $(G,H,K)$ lies in \emph{Table \ref{tab2}}.
\end{itemize}
Conversely, for each $L$ there exists a factorization $G=HK$ satisfying part~\emph{(a)} with $\Soc(G)=L$, and each triple $(G,H,K)$ in \emph{Table \ref{tab2}} gives a factorization $G=HK$.
\end{theorem}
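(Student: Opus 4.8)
The plan is to reduce to the maximal factorizations classified in Theorem~\ref{Maximal} and then apply Lemma~\ref{LowerLinear}. Start from a nontrivial factorization $G=HK$ with $\Soc(G)=L=\PSL_n(q)$. By Remark~\ref{rmk1} (via Lemma~\ref{Embedding}), for the purpose of determining $H\cap L$ and $K\cap L$ we may assume that any maximal subgroups $A\geqslant H$ and $B\geqslant K$ of $G$ are core-free, so that $G=AB$ is a nontrivial maximal factorization; then $H\cap L\leqslant A\cap L$ and $K\cap L\leqslant B\cap L$, with $(L,A\cap L,B\cap L)$ appearing among the $\PSL_n(q)$-rows of the tables of Appendix~A.

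The crucial point is that the primality of $n$ severely restricts the geometric maximal subgroups of $\PSL_n(q)$: a field-extension ($\mathcal{C}_3$) subgroup $\GL_{n/b}(q^b).b$ forces $b\in\{1,n\}$, so the only one is the Singer normalizer $\hat{~}\GL_1(q^n){:}n$; and an imprimitive ($\mathcal{C}_2$) subgroup $\GL_{n/t}(q)\wr\Sy_t$ forces $t\in\{1,n\}$, leaving only the monomial subgroup, which does not arise as a factor. Scanning the surviving $\PSL_n(q)$-rows for $n\geqslant3$, one sees that, up to a graph automorphism, either one of $A\cap L$, $B\cap L$ is $\Pa_1$ or $\Pa_{n-1}$ and the other is contained in $\hat{~}\GL_1(q^n){:}n$, or $(n,q)$ is one of finitely many small pairs --- among these $(5,2)$, with a $\Pa_2$-factor, and $\PSL_3(q)$ for a few small $q$. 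In the first situation we have $H\cap L\leqslant\hat{~}\GL_1(q^n){:}n$ and $K\cap L\leqslant\Pa_1$ or $\Pa_{n-1}$, so Lemma~\ref{LowerLinear} yields either conclusion~(a), i.e.\ $q^{n-1}{:}\SL_{n-1}(q)\trianglelefteq K\cap L$, or $(n,q)\in\{(2,4),(3,2),(3,3),(3,4),(3,8)\}$ with $K$ solvable; discarding the excluded pairs $(2,4)$ and $(3,2)$ leaves $\PSL_3(3)$, $\PSL_3(4)$, $\PSL_3(8)$ for the case-by-case analysis. The case $n=2$ is treated directly from the list of maximal subgroups of $\PSL_2(q)$ (essentially It\^{o}'s classification): the generic factorization $\PSL_2(q)=\Pa_1\cdot\D_{2(q+1)/d}$, where $\D_{2(q+1)/d}=\hat{~}\GL_1(q^2){:}2$ and $\Pa_1\trianglerighteq\Z_p^f=q{:}\SL_1(q)$, gives part~(a), while in any other factorization one of the factors lies in $\A_4$, $\Sy_4$, $\A_5$ or a subfield subgroup, and the resulting divisibility constraints (of the shape $q+1\mid 60$, etc.) force $q\in\{7,11,16,19,23,29,59\}$.

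It then remains to do the bookkeeping for the exceptional socles --- $\PSL_2(q)$ with $q\in\{7,11,16,19,23,29,59\}$, $\PSL_3(q)$ with $q\in\{3,4,8\}$, and $\PSL_5(2)$. For each of these one determines, using the order criterion of Lemma~\ref{p3}(c), exactly which subgroups $H\leqslant A$, $K\leqslant B$ and which overgroup $G$ (possibly as large as $\PGaL_n(q)$) give $|G|=|H||K|/|H\cap K|$, and records the outcomes as the rows of Table~\ref{tab2}; several of these are exact factorizations that appear only after passing to the full automorphic extension. For the converse, part~(a) is realized by the Singer-cycle factorization $\PGL_n(q)=\hat{~}\GL_1(q^n)\,\Pa_1$ of Section~\ref{sec4}, and each triple in Table~\ref{tab2} is checked directly (the \magma\ computation) to satisfy $|G|=|H||K|/|H\cap K|$. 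I expect the main obstacle to be this final step: carefully keeping track of $\Out(L)$ and of how $H$ and $K$ sit above $H\cap L$ and $K\cap L$, unearthing the exceptional factorizations hidden inside $\PSiL_n(q)$ or $\PGaL_n(q)$ rather than $\PSL_n(q)$ itself, and verifying that no further small $\PSL_n(q)$ --- in particular no $\PSL_3(q)$ with $q\notin\{3,4,8\}$ --- contributes an unexpected row.
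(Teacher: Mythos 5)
Your proposal is correct and follows essentially the same route as the paper: reduce via Lemma~\ref{Embedding} (Remark~\ref{rmk1}) to a nontrivial maximal factorization $G^*=A^*B^*$, observe that primality of $n$ leaves only the Singer-normalizer/parabolic pair among the generic rows of the linear-group table plus the small exceptions $(\PSL_2(q),q\in\{7,11,16,19,23,29,59\})$, $(\PSL_3(4),\PSL_2(7),\A_6)$, and $(\PSL_5(2),31{:}5,\Pa_2)$, then apply Lemma~\ref{LowerLinear} to the Singer/parabolic case and settle the residual small socles $\PSL_3(3)$, $\PSL_3(4)$, $\PSL_3(8)$ (and the $n=2$ exceptions) by direct computation. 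The only cosmetic difference is that you treat $n=2$ from It\^o's classification rather than from rows~5--8 of Table~\ref{tabLinear}, which amounts to the same information.
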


\begin{table}[htbp]
\caption{}\label{tab2}
\centering
\begin{tabular}{|l|l|l|l|}
\hline
row & $G$ & $H$ & $K$ \\
\hline
1 & $\PSL_2(7).\calO$ & $7{:}\calO$, $7{:}(3\times\calO)$ & $\Sy_4$ \\
\hline
2 & $\PSL_2(11).\calO$ & $11{:}(5\times\calO_1)$ & $\A_4.\calO_2$ \\
3 & $\PSL_2(11).\calO$ & $11{:}\calO$, $11{:}(5\times\calO)$ & $\A_5$ \\
\hline
4 & $\PGaL_2(16)$ & $17{:}8$ & $(\A_5\times 2).2$ \\
\hline
5 & $\PSL_2(19).\calO$ & $19{:}(9\times\calO$) & $\A_5$ \\
\hline
6 & $\PSL_2(23).\calO$ & $23{:}(11\times\calO)$ & $\Sy_4$ \\
\hline
7 & $\PSL_2(29)$ & $29{:}7$, $29{:}14$ & $\A_5$ \\
8 & $\PGL_2(29)$ & $29{:}28$ & $\A_5$ \\
\hline
9 & $\PSL_2(59).\calO$ & $59{:}(29\times\calO)$ & $\A_5$ \\
\hline
10 & $\PSL_3(2).2$ & $7{:}6$ & $8$ \\
\hline
11 & $\PSL_3(3).\calO$ & $13{:}(3\times\calO)$ & $3^2{:}\GaL_1(9)$ \\
\hline
12 & $\PSL_3(4).2$ & $\PGL_2(7)$ & $\M_{10}$ \\
13 & $\PSL_3(4).2^2$ & $\PGL_2(7)$ & $\M_{10}{:}2$ \\
14 & $\PSL_3(4).2^2$ & $\PGL_2(7)\times2$ & $\M_{10}$ \\
15 & $\PSL_3(4).(\Sy_3\times\calO)$ & $7{:}(3\times\calO).\Sy_3$ & $(2^4.(3\times\D_{10})).2$ \\
\hline
16 & $\PSL_3(8).(3\times\calO)$ & $73{:}(9\times\calO_1)$ & $2^{3+6}{:}7^2{:}(3\times\calO_2)$ \\
\hline
17 & $\PSL_5(2).\calO$ & $31{:}(5\times\calO)$ & $2^6{:}(\Sy_3\times\SL_3(2))$ \\
\hline
\end{tabular}

~\\
where $\calO\leqslant\Z_2$, and $\calO_1,\calO_2$ are subgroups of $\calO$ such that $\calO=\calO_1\calO_2$.
\end{table}

\begin{proof}
Suppose that $G=HK$ is a nontrivial factorization. By Lemma~\ref{Embedding}, there exist a group $G^*$ with socle $L$ and its factorization $G^*=H^*K^*$ such that $H^*\cap L=H\cap L$, $K^*\cap L=K\cap L$, and the maximal subgroups $A^*,B^*$ containing $H^*,K^*$ respectively are core-free in $G^*$. Now $G^*=A^*B^*$ is determined by Theorem \ref{Maximal}. Inspecting candidates there and interchanging $A^*$ and $B^*$ if necessary, we obtain the following possibilities as $n$ is prime.
\begin{itemize}
\item[(i)] $A^*\cap L=\hat{~}\GL_1(q^n){:}n$ and $B^*\cap L=\Pa_1$ or $\Pa_{n-1}$.
\item[(ii)] $n=2$ and $q\in\{7,11,16,19,23,29,59\}$ as in rows 5--8 of Table \ref{tabLinear}.
\item[(iii)] $L=\PSL_3(4)$, $A^*\cap L=\PSL_2(7)$ and $B^*\cap L=\A_6$.
\item[(iv)] $L=\PSL_5(2)$, $A^*\cap L=31.5$ and $B^*\cap L=\Pa_2$ or $\Pa_3$.
\end{itemize}
Let $A,B$ be maximal core-free subgroups (maximal among the core-free subgroups) of $G$ containing $H,K$ respectively.

{\bf Case 1.} Assume that (i) appears, which implies $H\cap L=H^*\cap L\leqslant\hat{~}\GL_1(q^n){:}n$ and $K\cap L=K^*\cap L\leqslant\Pa_1$ or $\Pa_{n-1}$. If $(n,q)\not\in\{(3,2),(3,3),(3,8)\}$, then Lemma~\ref{LowerLinear} already leads to part~(a). It then remains to treat $(n,q)=(3,3)$ and $(3,8)$, respectively.

First consider $(n,q)=(3,3)$. Since $|G|/|A|=144$, we deduce from the factorization $G=AK$ that $|K|$ must be divisible by $144$. Suppose that part~(a) fails, that is, $3^2{:}\SL_2(3)\nleqslant K\cap L$. Then simple computation in \magma \cite{bosma1997magma} shows $K=\AGaL_1(9)=3^2{:}\GaL_1(9)$ in order that $G=AK$. Now $|K|=144=|G|/|A|$, and it derives from $G=HK$ that $H=A$. Thus row~11 of Table \ref{tab2} occurs.

Next consider $(n,q)=(3,8)$, and suppose that part~(a) fails. Then by Lemma~\ref{LowerLinear}, $K$ is solvable. Besides, $H\leqslant A$ is solvable as well. Searching by \magma \cite{bosma1997magma} the factorizations of $G$ with two solvable factors, we obtain the factorizations in row~16 of Table \ref{tab2}.

{\bf Case 2.} Assume that (ii) appears. In this case, $A^*\cap L$ and $B^*\cap L$ are described in rows 5--8 of Table \ref{tabLinear} as follows:
\[
\begin{array}{|l|l|l|}
\hline
A^*\cap L & B^*\cap L & q \\
\hline
\Pa_1 & \A_5 & 11,19,29,59 \\
\Pa_1 & \Sy_4 & 7,23 \\
\Pa_1 & \A_4 & 11 \\
\D_{34} & \PSL_2(4) & 16 \\
\hline
\end{array}
\]
Arguing in the same vein as the above case leads to rows~1--9 of Table \ref{tab2}. We have also confirmed these factorizations with computation in \magma \cite{bosma1997magma}.

{\bf Case 3.} Assume that (iii) appears. Computation in \magma \cite{bosma1997magma} for this case gives the triple $(G,H,K)$ in rows~12--15 of Table \ref{tab2}. We remark that there are $H_2\cong\PGL_2(7)$, $K_2\cong\M_{10}{:}2$, $H_3\cong\PGL_2(7)\times2$ and $K_3\cong\M_{10}$ such that $\PSL_3(4){:}2^2=H_2K_2=H_3K_3$, but $\PSL_3(4){:}2^2\neq H_3K_2$. In fact, the subgroup $H_1$ of $H_3$ isomorphic to $\PGL_2(7)$ is not conjugate to $H_2$ in $\PSL_3(4){:}2^2$, and the subgroup $K_1$ of $K_2$ isomorphic to $\M_{10}$ is not conjugate to $K_3$ in $\PSL_3(4){:}2^2$.

{\bf Case 4.} Finally, assume that (iv) appears. In this case, $\Pa_2\cong B\leqslant L$ and $|G|/|A|=|B|$, refer to \cite{atlas}. Thus $H=A$ and $K=B$ as in row~17 of Table \ref{tab2}.

Conversely, since the Singer cycle $H$ of $G=\PGL_n(q)$ is transitive on the $1$-spaces and $(n-1)$-spaces respectively, it gives a factorization $G=HK$ satisfying part~(a), where $K=\Pa_1[\PGL_n(q)]$ and $\Pa_{n-1}[\PGL_n(q)]$ respectively. Moreover, computation in \magma \cite{bosma1997magma} verifies that each triple $(G,H,K)$ in Table \ref{tab2} gives a factorization $G=HK$. The proof is thus completed.
\end{proof}

As a consequence of Theorem \ref{p-dimensionLinear}, we have

\begin{corollary}
Let $G$ be an almost simple linear group of prime dimension. Then any nontrivial factorization of $G$ has at least one factor solvable, unless $\Soc(G)=\PSL_3(4)$ and the factorization $G=HK$ is described in rows~\emph{12--14} of \emph{Table \ref{tab2}}.
\end{corollary}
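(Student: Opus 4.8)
The plan is to obtain the corollary by reading it off Theorem~\ref{p-dimensionLinear} together with a row-by-row inspection of Table~\ref{tab2}, so the argument will be essentially bookkeeping. Write $L=\Soc(G)=\PSL_n(q)$ with $n$ prime. First I would clear away the cases $(n,q)\in\{(2,4),(2,5),(2,9),(3,2)\}$ excluded from Theorem~\ref{p-dimensionLinear}: for $(n,q)\in\{(2,4),(2,5),(3,2)\}$ one has $L\in\{\A_5,\PSL_2(7)\}$, and since every maximal subgroup of $G\leqslant\Aut(L)$ is solvable in these cases, every nontrivial factorization of $G$ has both factors solvable; the case $(n,q)=(2,9)$, where $L\cong\A_6$, is governed by the alternating-socle analysis (Proposition~\ref{Alternating}) and is set aside here under the same convention as in Theorem~\ref{p-dimensionLinear}. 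For all remaining $(n,q)$ the theorem applies directly.

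Next, given a nontrivial factorization $G=HK$, Theorem~\ref{p-dimensionLinear} tells us that, after possibly interchanging $H$ and $K$, either its part~(a) or its part~(b) holds. In part~(a) I would observe that $H$ is solvable: $H\cap L\leqslant\hat{~}\GL_1(q^n){:}n$ is metacyclic, and $H/(H\cap L)$ embeds into $\Out(L)$, which is solvable by the Schreier conjecture; hence $H$ is solvable and the factorization has a solvable factor (for $n=2$ this is already clear since both factors lie in solvable subgroups). In part~(b) the triple $(G,H,K)$ is one of the rows of Table~\ref{tab2}, and I would go through them: in every row other than $12$--$14$ the factor $H$ displayed is an extension of a metacyclic group by a solvable group --- for instance $7{:}\calO$, $11{:}(5\times\calO_1)$, $13{:}(3\times\calO)$, $19{:}(9\times\calO)$, $29{:}7$, $31{:}(5\times\calO)$, $73{:}(9\times\calO_1)$, $7{:}(3\times\calO).\Sy_3$ --- hence solvable, so the factorization again has a solvable factor. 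In rows $12$--$14$ one has $\Soc(G)=\PSL_3(4)$ and both factors are (automorphic extensions of) $\PGL_2(7)$ and $\M_{10}$, both unsolvable, so these are precisely the factorizations named in the exceptional clause; the converse half of Theorem~\ref{p-dimensionLinear} ensures they genuinely occur, so the clause is not vacuous.

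I do not expect a serious obstacle: the whole content is the inspection of the short finite Table~\ref{tab2}, where for each row it suffices to point to one factor all of whose composition factors are cyclic, which can be read straight off the displayed group structures. The one place meriting a moment's care is row~$17$, whose companion factor $K=2^6{:}(\Sy_3\times\SL_3(2))$ is unsolvable --- but this is harmless, as the partner $H=31{:}(5\times\calO)$ is solvable. Apart from that, the only thing to double-check is that the reduction in the first paragraph is complete, i.e.\ that the small isomorphism cases excluded from Theorem~\ref{p-dimensionLinear} either satisfy the conclusion outright (as $\A_5$ and $\PSL_2(7)$ do, their relevant maximal subgroups all being solvable) or are legitimately treated among the alternating groups, which is already built into the conventions of this chapter.
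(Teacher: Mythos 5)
Your argument is correct and is exactly the route the paper intends: the corollary is stated as an immediate consequence of Theorem~\ref{p-dimensionLinear}, with no separate proof given, so the content is indeed to observe that $H$ is solvable in part~(a) (the intersection $H\cap L\leqslant\hat{~}\GL_1(q^n){:}n$ is metacyclic and $H/(H\cap L)$ embeds in the solvable $\Out(L)$), and to scan Table~\ref{tab2} row by row, identifying rows~12--14 as the only ones where both displayed factors are unsolvable. Your row~17 aside and your direct check that all core-free subgroups of $\Aut(\A_5)$ and $\Aut(\PSL_2(7))$ are solvable are both right.

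The one place to sharpen is the treatment of $(n,q)=(2,9)$. Setting it aside is not merely ``the same convention as in the theorem'': it is genuinely necessary, because $\A_6\cong\PSL_2(9)$ admits the nontrivial factorization $\A_6=\A_5\cdot\PSL_2(5)$ with the two factors coming from the two nonconjugate classes of $\A_5$ in $\A_6$ (one intransitive, one transitive on the six points, with $|\A_5\cap\PSL_2(5)|=10$); both factors are simple, hence unsolvable, and $\Soc(G)\ne\PSL_3(4)$, so the conclusion of the corollary would fail outright if $(2,9)$ were included. So the corollary, like the theorem, must be read as tacitly carrying the exclusions $(n,q)\in\{(2,4),(2,5),(2,9),(3,2)\}$. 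Your parenthetical pointer to Proposition~\ref{Alternating} is a bit of a red herring here: that proposition classifies factorizations with one factor \emph{already} solvable, so it says nothing about whether $\A_6$ has an all-unsolvable factorization (it does, as above). Replacing that pointer with the explicit counterexample would make the necessity of the exclusion transparent.
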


\section{Unitary groups of prime dimension}\label{tab5}

The factorizations of unitary groups of prime dimension are classified in the following theorem.

\begin{theorem}\label{p-dimensionUnitary}
Let $G$ be an almost simple group with socle $L=\PSU_n(q)$ for an odd prime $n$. If $G=HK$ is a nontrivial factorization, then interchanging $H$ and $K$ if necessary, $(G,H,K)$ lies in \emph{Table \ref{tab3}}. Conversely, each triple $(G,H,K)$ in \emph{Table \ref{tab3}} gives a factorization $G=HK$.
\end{theorem}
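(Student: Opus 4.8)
The plan is to follow the same template used in the proof of Theorem~\ref{p-dimensionLinear}, specializing everything to unitary groups of odd prime dimension $n$. First I would invoke Lemma~\ref{Embedding} to reduce to a factorization $G^*=H^*K^*$ with $\Soc(G^*)=L=\PSU_n(q)$ in which the maximal subgroups $A^*,B^*$ containing $H^*,K^*$ are core-free, so that $G^*=A^*B^*$ is a maximal factorization governed by Theorem~\ref{Maximal}. Then I would read off from the relevant table in APPENDIX~A (the unitary table) the list of maximal factorizations of $G^*$ with socle $\PSU_n(q)$, $n$ an odd prime. Because $n$ is prime, the parabolic possibilities collapse: $\Pa_k$ can only occur for $k=1$ or $k=n-1$, and the field-extension subgroups $\GU_{n/b}(q^b)$ essentially disappear. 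So the candidate pairs $(A^*\cap L, B^*\cap L)$ should reduce to a short list, roughly: one factor a $\mathcal{C}_1$ subgroup $\N_1=\GU_{n-1}(q)$-type or $\Pa_1$, the other a Singer-type normalizer $\hat{~}\GL_1(q^n)$-analogue (here the relevant torus normalizer is of order $(q^n+1)/(q+1)$ times $n$ when $n$ is odd), plus a handful of small-$q$ exceptions such as $\PSU_3(3)$, $\PSU_3(5)$, $\PSU_4(2)\cong\PSp_4(3)$, and possibly $\PSU_5(2)$.

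Next, for the generic family — where one factor lies in the Singer-cycle normalizer $\hat{~}\GU_1(q^n)$-type subgroup of order dividing $n(q^n+1)/((q+1)(n,q+1))$ and the other lies in $\Pa_1$ or $\N_1$ — I would prove an analogue of Lemma~\ref{LowerLinear}: namely that the ``$\Pa_1$-side'' factor must contain the large normal subgroup $q^{1+2(n-2)/2}{:}\SU_{n-2}(q)$-type piece (or the corresponding reductive chunk). The mechanism is identical to Case~2 of the proof of Lemma~\ref{LowerLinear}: pass to $\overline{B}=B/\Rad(B)$, which is almost simple with socle a smaller unitary (or linear) group by Lemma~\ref{p12}; use the divisibility from Lemma~\ref{p4} to bound $|\overline{B}|/|\overline{K}|$ by a small multiple of $f$; then compare with the minimal index $P(\Soc(\overline{B}))$ from Theorem~\ref{l7} to force $\overline{K}\geqslant\Soc(\overline{B})$ except for a few tiny cases; finally use irreducibility of $\SU_{n-2}(q)$ on the unipotent radical to conclude $K\cap L$ contains the claimed normal subgroup. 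Zsigmondy's theorem (Theorem~\ref{Zsigmondy}) handles the small-$f$ subtleties, exactly as in Case~1 of Lemma~\ref{LowerLinear}.

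For the sporadic small-$q$ exceptions thrown up by the table ($\PSU_3(3)$, $\PSU_3(5)$, $\PSU_4(2)$ — which is really a symplectic case and should be cross-referenced — and any $\PSU_5(2)$ entry), I would do the same thing the authors do throughout: direct computation in \magma\ to list all factorizations, matching the output against the rows of Table~\ref{tab3} (rows~18--21 of Table~\ref{tab1} in particular, plus whatever dedicated Table~\ref{tab3} contains). The converse direction — that every listed triple genuinely factorizes — is again split between a conceptual argument for the generic family (the Singer cycle of $\PGU_n(q)$ is transitive on isotropic $1$-spaces, giving $G=HK$ with $K=\Pa_1$) and \magma\ verification for the finitely many exceptional rows.

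The main obstacle I anticipate is not any single step but the bookkeeping in the generic-family lemma: unlike the linear case, the $\mathcal{C}_1$ subgroup $\N_1$ of $\PSU_n(q)$ is a non-degenerate-hyperplane stabilizer with structure $\GU_{n-1}(q)$, which is itself quasisimple-ish rather than a parabolic, so the ``large normal subgroup'' one extracts has a different shape ($\SU_{n-1}(q)$ versus a unipotent-radical extension), and one must also keep $\Pa_1$ as a separate sub-case with radical $q^{1+2(n-2)}$-type. Disentangling which of $\Pa_1$ or $\N_1$ actually occurs, and proving the containment uniformly for both while tracking the $(n,q+1)$ and $(2,q-1)$ index factors coming from Lemma~\ref{l1}, is where the care is needed; Zsigmondy exceptions at $(q,n)=(2,6)$-type values and the genuine exceptions $(n,q)=(3,3),(3,5)$ will need to be peeled off by hand or by \magma.
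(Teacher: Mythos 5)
Your first step (reduce via Lemma~\ref{Embedding} to a maximal factorization and consult Theorem~\ref{Maximal}) and your last step (\magma{} verification of the exceptional cases and the converse) match the paper. But the middle of your plan rests on a false premise: you posit a ``generic family'' of factorizations of $\PSU_n(q)$ ($n$ odd prime) in which one factor is a Singer-type torus normalizer $\hat{~}\GU_1(q^n){:}n$ and the other is $\Pa_1$ or $\N_1$, and you propose proving an analogue of Lemma~\ref{LowerLinear} for it. No such family exists, and this is precisely where the unitary odd-prime case departs completely from the linear case.

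In the linear case, a Singer cycle of $\PGL_n(q)$ is transitive on the $q^{n-1}+\dots+1$ one-spaces, which gives the unbounded family $\PGL_n(q)=(\hat{~}\GL_1(q^n){:}n)\Pa_1$. For a unitary space of odd dimension $n$, the number of isotropic $1$-spaces is $(q^n+1)(q^{n-1}-1)/(q^2-1)$, while the image of the torus normalizer $\GU_1(q^n){:}n$ in $\PGU_n(q)$ has order $n(q^n+1)/(q+1)$; already for $n=3$ the ratio of these two quantities is $(q+1)/3>1$ once $q>2$, so the torus cannot be transitive on isotropic points (and $\PSU_3(2)$ is solvable, so $q=2$ is excluded). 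An order count against $\N_1$ fails similarly. Consequently the Liebeck--Praeger--Saxl classification contains no torus-plus-parabolic (or torus-plus-$\N_1$) maximal factorization of $\PSU_n(q)$ for odd prime $n$. Scanning Table~\ref{tabUnitary}: rows 1--4, 8--10, 12 all require $n=2m$ even; row 11 has $n=9$, which is odd but not prime; only rows 5--7 have $n$ an odd prime, and they are exactly $(n,q)=(3,3),(3,5),(3,8)$. The paper's proof is therefore three lines: after the reduction, the only maximal factorizations are those three, and \magma{} does the rest. Your proposed Lemma~\ref{LowerLinear}-analogue, the irreducibility argument for $\SU_{n-2}(q)$ on the unipotent radical, the Zsigmondy bookkeeping for the ``$\Pa_1$ versus $\N_1$'' dichotomy --- all of it would be machinery constructed to analyse an empty family. (Your mention of $\PSU_4(2)$ and $\PSU_5(2)$ is also off: neither has odd prime dimension, and $\PSU_5(2)$ appears nowhere in this theorem.)

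Worth understanding why this is so: the theorem you are proving is essentially a \emph{non-existence} statement, and its entire content is read off from the LPS tables. Your instinct to hunt for the unitary analogue of the Singer family is reasonable to entertain, but the right move is to rule it out by counting (or by trusting the LPS tables), not to build a containment lemma for it.
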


\begin{table}[htbp]
\caption{}\label{tab3}
\centering
\begin{tabular}{|l|l|l|l|l|}
\hline
row & $G$ & $H$ & $K$ & \\
\hline
1 & $\PSU_3(3).\calO$ & $3_+^{1+2}{:}8{:}\calO$ & $\PSL_2(7).\calO$ & $\calO\leqslant\Z_2$\\
2 & $\PSU_3(3).2$ & $3_+^{1+2}{:}8$ & $\PGL_2(7)$ & \\
\hline
3 & $\PSU_3(5).\calO$ & $5_+^{1+2}{:}8.\calO$ & $\A_7$ & $\calO\leqslant\Sy_3$\\
4 & $\PSU_3(5).2$ & $5_+^{1+2}{:}8$, $5_+^{1+2}{:}8{:}2$ & $\Sy_7$ & \\
5 & $\PSU_3(5).\Sy_3$ & $5_+^{1+2}{:}(3{:}8)$, $5_+^{1+2}{:}24{:}2$ & $\Sy_7$ & \\
\hline
6 & $\PSU_3(8).3^2.\calO$ & $57{:}9.\calO_1$ & $2^{3+6}{:}(63{:}3).\calO_2$ & $\calO_1\calO_2=\calO\leqslant\Z_2$ \\
\hline
\end{tabular}
\end{table}

\begin{proof}
Suppose $G=HK$ to be a nontrivial factorization. By Lemma~\ref{Embedding}, there exist a group $G^*$ with socle $L$ and its factorization $G^*=H^*K^*$ such that $H^*\cap L=H\cap L$, $K^*\cap L=K\cap L$, and the maximal subgroups $A^*,B^*$ containing $H^*,K^*$ respectively are core-free in $G^*$. Now $G^*=A^*B^*$ is determined by Theorem \ref{Maximal}, which shows that, interchanging $A^*$ and $B^*$ if necessary, the triple $(L,A^*\cap L,B^*\cap L)$ lies in rows 5--7 of Table \ref{tabUnitary} as follows:
\[
\begin{array}{|l|l|l|}
\hline
L & A^*\cap L & B^*\cap L \\
\hline
\PSU_3(3) & \PSL_2(7) & \Pa_1 \\
\PSU_3(5) & \A_7 & \Pa_1 \\
\PSU_3(8) & 19.3 & \Pa_1 \\
\hline
\end{array}
\]


Computation in \magma \cite{bosma1997magma} for these cases produces all nontrivial factorizations as listed in Table \ref{tab3}. We remark that there are two non-isomorphic groups of type $3_+^{1+2}{:}8$ for $H$ in row~2 of Table \ref{tab3}. Also, there are two non-isomorphic groups of type $5_+^{1+2}{:}(3{:}8)$ for $H$ in row~5 of Table \ref{tab3}, where one is $5_+^{1+2}{:}24$ and the other is not.
\end{proof}

As a consequence of Theorem \ref{p-dimensionUnitary}, we have

\begin{corollary}
Let $G$ be an almost simple unitary group of odd prime dimension. Then any nontrivial factorization of $G$ has at least one factor solvable.
\end{corollary}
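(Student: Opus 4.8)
The plan is to reduce everything to Theorem~\ref{p-dimensionUnitary} and then read off the conclusion from Table~\ref{tab3}. Concretely: let $G$ be almost simple with socle $L=\PSU_n(q)$ for $n$ an odd prime, and suppose $G=HK$ is a nontrivial factorization. Theorem~\ref{p-dimensionUnitary} tells us that, after interchanging $H$ and $K$ if necessary, the triple $(G,H,K)$ is one of the rows of Table~\ref{tab3}. So the corollary will follow once I check that in every row of that table at least one factor is solvable.

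That check is purely a matter of recognising the solvable groups in the third column of Table~\ref{tab3}. In rows~1--5 the factor $H$ is of the form $p_+^{1+2}{:}(\text{solvable})$ with $p\in\{3,5\}$ (explicitly $3_+^{1+2}{:}8{:}\calO$, $3_+^{1+2}{:}8$, $5_+^{1+2}{:}8.\calO$, and $5_+^{1+2}{:}(3{:}8)$ or $5_+^{1+2}{:}24{:}2$, each with $\calO\leqslant\Sy_3$); since an extension of a solvable group by a solvable group is solvable, $H$ is solvable in each of these rows. In row~6 I will note that \emph{both} factors are solvable: $57{:}9.\calO_1$ is solvable because $57=3\cdot19$, and $2^{3+6}{:}(63{:}3).\calO_2$ is solvable because $63=9\cdot7$ and $\calO\leqslant\Z_2$. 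Thus in every case the $H$-factor is solvable, which is exactly the assertion of the corollary.

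I do not anticipate any real difficulty: the substance is already contained in Theorem~\ref{p-dimensionUnitary}, and what remains is only the elementary verification above. The points to be careful about are bookkeeping conventions --- that the permutation of $\{H,K\}$ in Theorem~\ref{p-dimensionUnitary} is arranged so that the solvable factor sits in the $H$-column of Table~\ref{tab3}, and that for odd prime dimensions $n$ with $L\notin\{\PSU_3(3),\PSU_3(5),\PSU_3(8)\}$ the group $G$ admits no nontrivial factorization at all, so the statement holds vacuously for those.
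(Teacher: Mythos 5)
Your proof is correct and takes essentially the same approach as the paper: the corollary is read off directly from Theorem~\ref{p-dimensionUnitary} by inspecting the $H$-column of Table~\ref{tab3} and noting each entry there is solvable. The bookkeeping points you raise (the orientation of the swap in Theorem~\ref{p-dimensionUnitary}, and the vacuous case when $L$ is none of $\PSU_3(3)$, $\PSU_3(5)$, $\PSU_3(8)$) are exactly the implicit content of the paper's one-line deduction.
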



\chapter{Non-classical groups}\label{Non-classical}


For non-classical almost simple groups, since the factorizations are classified \cite{hering1987factorizations,liebeck1990maximal,giudici2006factorisations}, we can read off those factorizations which have a solvable factor.

\section{The case that both factors are solvable}

We first classify factorizations of almost simple groups with both factors solvable based on Kazarin's result \cite{kazarin1986groups}, so that we can focus later on the factorizations with precisely one solvable factor.

\begin{proposition}\label{BothSolvable}
Let $G$ be an almost simple group with socle $L$. If $G=HK$ for solvable subgroups $H,K$ of $G$, then interchanging $H$ and $K$ if necessary, one of the following holds.
\begin{itemize}
\item[(a)] $L=\PSL_2(q)$, $H\cap L\leqslant\D_{2(q+1)/d}$ and $q\trianglelefteq K\cap L\leqslant q{:}((q-1)/d)$, where $q$ is a prime power and $d=(2,q-1)$.
\item[(b)] $L$ is one of the groups: $\PSL_2(7)\cong\PSL_3(2)$, $\PSL_2(11)$, $\PSL_3(3)$, $\PSL_3(4)$, $\PSL_3(8)$, $\PSU_3(8)$, $\PSU_4(2)\cong\PSp_4(3)$ and $\M_{11}$; moreover, $(G,H,K)$ lies in \emph{Table \ref{tab4}}.
\end{itemize}
Conversely, for each prime power $q$ there exists a factorization $G=HK$ satisfying part~\emph{(a)} with $\Soc(G)=L=\PSL_2(q)$, and each triple $(G,H,K)$ in \emph{Table \ref{tab4}} gives a factorization $G=HK$.
\end{proposition}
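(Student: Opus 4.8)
The plan is to reduce to a maximal factorization, invoke Kazarin's theorem \cite{kazarin1986groups} on products of two solvable subgroups (or, equivalently, a direct elimination) to cut the socle down to a short list, and then treat each surviving case. First I would apply Lemma~\ref{Embedding}, in the form of Remark~\ref{rmk1}, to replace $G=HK$ by a factorization in which there exist core-free maximal subgroups $A,B$ of $G$ with $H\leqslant A$ and $K\leqslant B$; then $G=AB$ is a nontrivial maximal factorization and $H,K$ are still solvable. According to the type of $L=\Soc(G)$, the triple $(L,A\cap L,B\cap L)$ is read off from Theorem~\ref{Maximal} (classical socle), from \cite[THEOREM~D]{liebeck1990maximal} (alternating socle), from \cite{giudici2006factorisations} (sporadic socle), or from \cite{hering1987factorizations} (exceptional socle).

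The key step is to discard every configuration in which a factor is forced to carry an unsolvable composition factor it cannot shed. Here I would appeal to Kazarin's result, which restricts a finite simple group that is a product of two solvable subgroups to $\PSL_2(q)$ with $q$ an arbitrary prime power, together with the finitely many groups named in part~(b). Alternatively one argues by hand: if, say, $A$ has a unique unsolvable composition factor and $R=\Rad(A)$, then $A/R$ is almost simple by Lemma~\ref{p12}, the solvable subgroup $HR/R$ is core-free in $A/R$, whence $|A|/|H|\geqslant P(\Soc(A/R))$ by Lemma~\ref{MinimalDegree} and Theorem~\ref{l7}; on the other hand $G=AB$ gives $|A|/|H|\leqslant|A||B|/|G|=|A\cap B|$, and comparing these two inequalities against Tables~\ref{tabLinear}--\ref{tabOmegaPlus2} (and the alternating, sporadic and exceptional tables) leaves only small exceptions, which I would settle in \magma \cite{bosma1997magma}.

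It then remains to deal with the surviving socles. For $L=\PSL_2(q)$ I would use Dickson's description of the subgroups of $\PSL_2(q)$ together with It\^o's classification of the factorizations of $\PSL_2(q)$ \cite{ito1953}: a solvable subgroup lies inside a Borel subgroup $q{:}((q-1)/d)$, inside a dihedral torus normalizer $\D_{2(q\pm1)/d}$, or (for special $q$) inside $\Sy_4$ or $\A_4$, and an order-and-transitivity analysis shows that, apart from the sporadic small $q$ falling into part~(b), two such subgroups multiply to $\PSL_2(q)$ exactly as in part~(a), with moreover $K\cap L$ containing the full unipotent subgroup $q$ and $H\cap L$ containing a torus of order $(q+1)/d$. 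The socles $\PSL_2(7)\cong\PSL_3(2)$, $\PSL_2(11)$, $\PSL_3(3)$, $\PSL_3(4)$, $\PSL_3(8)$ already occur in Theorem~\ref{p-dimensionLinear} and $\PSU_3(8)$ in Theorem~\ref{p-dimensionUnitary}, so I would simply extract from Tables~\ref{tab2} and~\ref{tab3} the rows with two solvable factors; the remaining two socles $\PSU_4(2)\cong\PSp_4(3)$ and $\M_{11}$ have only small maximal factorizations and are handled by direct computation in \magma. Assembling all of this produces Table~\ref{tab4}.

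For the converse I would exhibit the generic family explicitly: in $\PSL_2(q)$ the Borel subgroup $q{:}((q-1)/d)$ and the dihedral normalizer $\D_{2(q+1)/d}$ of a nonsplit torus satisfy $\PSL_2(q)=(q{:}((q-1)/d))\D_{2(q+1)/d}$, the point being that a cyclic group of order $(q+1)/d$ has at most two orbits on the projective line of $q+1$ points and the involutions in the dihedral group interchange them, so the dihedral group is transitive there; then every triple in Table~\ref{tab4} is verified in \magma \cite{bosma1997magma}. The main obstacle I anticipate is the socle-restriction step --- either extracting precisely the statement one needs from Kazarin's theorem, or, in the self-contained route, grinding through the maximal-factorization tables to eliminate every configuration with a stubbornly unsolvable factor; a secondary difficulty is pinning down the exact subgroup data in part~(a) (that $K\cap L\trianglerighteq q$, and the precise dihedral order bounding $H\cap L$) rather than merely the containments.
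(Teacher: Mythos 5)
Your proposal is correct and essentially matches the paper's proof: the paper likewise opens by invoking Kazarin's theorem \cite{kazarin1986groups} to cut $L$ down to $\PSL_2(q)$ or the finite list $\PSU_3(8)$, $\PSU_4(2)\cong\PSp_4(3)$, $\PSL_4(2)$, $\M_{11}$, $\PSL_3(q)$ with $q\in\{3,4,5,7,8\}$, then disposes of the exceptional socles by \magma\ and finishes the $\PSL_2(q)$ case by citing its own Theorem~\ref{p-dimensionLinear} (treating $q\in\{4,5,9\}$ directly), rather than reproving it via Dickson and It\^o as you suggest. One gratuitous step in your outline is the preliminary reduction to a maximal factorization $G=AB$ --- once Kazarin's result is in hand that reduction does no work, and the paper simply omits it --- while your ``self-contained'' alternative via Tables~\ref{tabLinear}--\ref{tabOmegaPlus2} only covers classical socles, so it would still need the alternating, sporadic and exceptional references and does not actually bypass the external input.
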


\begin{table}[htbp]
\caption{}\label{tab4}
\centering
\begin{tabular}{|l|l|l|l|}
\hline
row & $G$ & $H$ & $K$ \\
\hline
1 & $\PSL_2(7).\calO$ & $7{:}\calO$, $7{:}(3\times\calO)$ & $\Sy_4$ \\
2 & $\PSL_2(11).\calO$  & $11{:}(5\times\calO_1)$ & $\A_4.\calO_2$ \\
3 & $\PSL_2(23).\calO$ & $23{:}(11\times\calO)$ & $\Sy_4$ \\
4 & $\PSL_3(3).\calO$ & $13{:}\calO$, $13{:}(3\times\calO)$ & $3^2{:}2.\Sy_4$ \\
5 & $\PSL_3(3).\calO$ & $13{:}(3\times\calO)$ & $\AGaL_1(9)$ \\
6 & $\PSL_3(4).(\Sy_3\times\calO)$ & $7{:}(3\times\calO).\Sy_3$ & $2^4{:}(3\times\D_{10}).2$ \\
7 & $\PSL_3(8).(3\times\calO)$ & $73{:}(9\times\calO_1)$ & $2^{3+6}{:}7^2{:}(3\times\calO_2)$ \\
\hline
8 & $\PSU_3(8).3^2.\calO$ & $57{:}9.\calO_1$ & $2^{3+6}{:}(63{:}3).\calO_2$ \\
9 & $\PSU_4(2).\calO$ & $2^4{:}5$ & $3_+^{1+2}{:}2.(\A_4.\calO)$ \\
10 & $\PSU_4(2).\calO$ & $2^4{:}\D_{10}.\calO_1$ & $3_+^{1+2}{:}2.(\A_4.\calO_2)$ \\
11 & $\PSU_4(2).2$ & $2^4{:}5{:}4$ & $3_+^{1+2}{:}\Sy_3$, $3^3{:}(\Sy_3\times\calO)$, \\
 & & & $3^3{:}(\A_4\times2)$, $3^3{:}(\Sy_4\times\calO)$ \\
\hline
12 & $\M_{11}$ & $11{:}5$ & $\M_9.2$ \\
\hline
\end{tabular}

~\\
where $\calO\leqslant\Z_2$, and $\calO_1,\calO_2$ are subgroups of $\calO$ such that $\calO=\calO_1\calO_2$.
\end{table}

\begin{proof}
Suppose $G=HK$ for solvable subgroups $H,K$ of $G$. By the result of \cite{kazarin1986groups}, either $L=\PSL_2(q)$, or $L$ is one of the groups:
\[\mbox{$\PSU_3(8)$, $\PSU_4(2)\cong\PSp_4(3)$, $\PSL_4(2)$, $\M_{11}$, $\PSL_3(q)$ with $q=3,4,5,7,8$.}\]
For the latter case, computation in \magma \cite{bosma1997magma} excludes $\PSL_3(5)$ and $\PSL_3(7)$, and produces for the other groups all the factorizations $G=HK$ with $H,K$ solvable as in rows~4--12 of Table~\ref{tab4}. Next we assume the former case, namely, $L=\PSL_2(q)$.

If $L=\PSL_2(4)\cong\PSL_2(5)\cong\A_5$ or $L=\PSL_2(9)\cong\A_6$, it is easy to see that part~(a) holds. Thus we assume $L=\PSL_2(q)$ with $q\in\{4,5,9\}$. Appealing Theorem \ref{p-dimensionLinear}, we have the factorization $G=HK$ as described in either part~(a) or the first three rows of Table~\ref{tab4}. This completes the proof.
\end{proof}

\section{Exceptional groups of Lie type}

Consulting the classification \cite{hering1987factorizations} of factorizations of exceptional groups of Lie type, one obtains the following proposition.

\begin{proposition}\label{ExceptionalLie}
Suppose $G$ is an almost simple group with socle $L$ and $G=HK$ with core-free subgroups $H,K$ of $G$. If $H$ is solvable, then $L$ is not an exceptional group of Lie type.
\end{proposition}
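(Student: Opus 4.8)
The plan is to read the statement directly off the classification of factorizations of exceptional groups of Lie type due to Hering, Liebeck and Saxl \cite{hering1987factorizations}, which lists every nontrivial factorization $G=HK$ of an almost simple group $G$ whose socle $L$ is an exceptional group of Lie type. Concretely, I would go through that list and, for each entry, determine whether either factor can be solvable; the claim is that the answer is always no. So the proof reduces to a finite check against a published table, and the work is to organize that check efficiently rather than to invent any new argument.

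First I would recall the shape of the Hering--Liebeck--Saxl list. The exceptional socles that admit any nontrivial factorization at all are very restricted: essentially $L=\G_2(q)$ with $q$ even (where one gets the factorizations coming from $\G_2(q)$ acting on the cosets of $\SU_3(q)$ or $\SL_3(q)$, together with a parabolic), the small cases $\G_2(4)$, $\G_2(2)'\cong\PSU_3(3)$, ${}^2\B_2(q)$ (Suzuki groups, which have no nontrivial factorization), ${}^2\G_2(q)$, ${}^3\D_4(q)$, and the sporadic-like coincidences such as $\G_2(2)\cong\PSU_3(3){:}2$ and ${}^2\G_2(3)\cong\PGaL_2(8)$. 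For each such $L$ I would write down the two factors $H$ and $K$ as given in the table and exhibit an explicit nonabelian simple (or at least non-solvable) composition factor in each of them. For the generic $\G_2(q)$ ($q=2^f$) case the relevant factors are (up to conjugacy and swapping) $\SL_3(q){:}2$ or $\SU_3(q){:}2$ on one side and a parabolic $P_1$ or $P_2$ on the other; the former visibly has the non-solvable composition factor $\PSL_3(q)$ or $\PSU_3(q)$ (here one uses $q=2^f\geqslant 2$ and $(3,q)\neq(3,2)$ is handled separately via $\G_2(2)'\cong\PSU_3(3)$), and the Levi factor of $P_i$ contains $\SL_2(q)$, which is non-solvable for $q\geqslant 4$; the residual small cases $q=2$ are dispatched by hand or by noting $\G_2(2)\cong\PSU_3(3){:}2$ and quoting Theorem~\ref{p-dimensionUnitary}. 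The Ree groups ${}^2\G_2(q)$, the triality groups ${}^3\D_4(q)$, and ${}^2\F_4(q)$ either have no nontrivial factorization or have both factors containing an obvious classical or exceptional non-solvable section; the Suzuki groups ${}^2\B_2(q)$ are not factorizable nontrivially at all, as recorded already.

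The one place where a little genuine care is needed — and the step I expect to be the main obstacle — is the handling of the exceptional isomorphisms and the very small groups, namely $\G_2(2)\cong\PSU_3(3){:}2$, $\G_2(2)'\cong\PSU_3(3)$, ${}^2\G_2(3)\cong\PGaL_2(8)$, and the sporadic-looking factorizations of $\G_2(4)$ (which involve $\J_2$, $\PSU_3(4)$, etc.). For these I would not rely on the generic argument but instead invoke the already-established results of the paper: factorizations with socle $\PSU_3(3)$ are covered by Theorem~\ref{p-dimensionUnitary} (Table~\ref{tab3}), where in every row both factors — e.g. $3^{1+2}_+{:}8$ versus $\PSL_2(7)$ — have the second one non-solvable, so $H$ solvable forces $L=\PSU_3(3)$, not an exceptional group in the strict sense; similarly ${}^2\G_2(3)\cong\PGaL_2(8)$ is an almost simple group with socle $\PSL_2(8)$ and its factorizations are governed by It\^o's theorem / Proposition~\ref{BothSolvable}, again with no solvable-factor exception. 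For $\G_2(4)$ I would simply quote the table of \cite{hering1987factorizations} (or a short \magma\ verification) to see that each factor contains $\PSU_3(4)$ or $\PSL_2(13)$ or $\J_2$ as a section. Assembling these observations: in no nontrivial factorization $G=HK$ with $\Soc(G)=L$ an exceptional group of Lie type can the factor $H$ be solvable, which is exactly the assertion of Proposition~\ref{ExceptionalLie}.

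I would present the proof as: "By \cite{hering1987factorizations} the nontrivial factorizations of almost simple groups with exceptional socle are as listed there; we inspect the list." Then a short paragraph noting that the generic $\G_2(q)$, $q$ even, entries have factors $\{\SL_3(q).c \text{ or } \SU_3(q).c,\ P_i\}$, both non-solvable for $q\geqslant 4$, and that $q=2$ together with the remaining small/exceptional-isomorphism cases reduce to $\PSU_3(3)$ and $\PSL_2(8)$, already treated in Theorems~\ref{p-dimensionLinear} and~\ref{p-dimensionUnitary} and Proposition~\ref{BothSolvable} (or checked directly), none of which yields a solvable factor in a factorization of an exceptional group. This keeps the argument to a page and offloads all arithmetic to the cited classification and to the paper's own earlier results.
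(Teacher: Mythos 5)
Your approach is exactly the paper's: Proposition~\ref{ExceptionalLie} is justified in the text only by the remark that it follows from consulting the Hering--Liebeck--Saxl classification \cite{hering1987factorizations}, which is precisely what you propose to do. Your recollection of the contents of that list is not entirely accurate (for instance, the generic $\G_2(q)$ factorizations you describe with a parabolic and $q$ even do not occur as stated), but since the argument is a finite check read directly off the published tables, these slips do not affect the soundness of the method.
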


\section{Alternating group socles}

In this section we study the case of alternating group socle.

\begin{proposition}\label{Alternating}
Suppose $n\geqslant5$ and $\Soc(G)=\A_n$ acting naturally on $\Omega_n=\{1,\dots,n\}$. If $G=HK$ for solvable subgroup $H$ and core-free unsolvable subgroup $K$ of $G$, then one of the following holds.
\begin{itemize}
\item[(a)] $\A_n\trianglelefteq G\leqslant\Sy_n$ with $n\geqslant6$, $H$ is transitive on $\Omega_n$, and $\A_{n-1}\trianglelefteq K\leqslant\Sy_{n-1}$.
\item[(b)] $\A_n\trianglelefteq G\leqslant\Sy_n$ with $n=p^f$ for some prime $p$, $H$ is $2$-homogeneous on $\Omega_n$, and $\A_{n-2}\trianglelefteq K\leqslant\Sy_{n-2}\times\Sy_2$; moreover, either $H\leqslant\AGaL_1(p^f)$ or $(H,n)$ lies in the table:
\[
\begin{array}{|l|l|}
\hline
H & n \\
\hline
5^2{:}\SL_2(3),\ 5^2{:}\Q_8.6,\ 5^2{:}\SL_2(3).4  & 5^2 \\
7^2{:}\Q_8.\Sy_3,\ 7^2{:}\SL_2(3).6 & 7^2 \\
11^2{:}\SL_2(3).5, \ 11^2{:}\SL_2(3).10 & 11^2 \\
23^2{:}\SL_2(3).22 & 23^2 \\
3^4{:}2^{1+4}.5,\ 3^4{:}2^{1+4}.\D_{10},\ 3^4{:}2^{1+4}.\AGL_1(5) & 3^4 \\
\hline
\end{array}
\]
\item[(c)] $\A_n\trianglelefteq G\leqslant\Sy_n$ with $n=8$ or $32$, $\A_{n-3}\trianglelefteq K\leqslant\Sy_{n-3}\times\Sy_3$, and $(H,n)=(\AGL_1(8),8)$, $(\AGaL_1(8),8)$ or $(\AGaL_1(32),32)$.
\item[(d)] $\A_6\trianglelefteq G\leqslant\Sy_6$, $H\leqslant\Sy_4\times\Sy_2$, and $K=\PSL_2(5)$ or $\PGL_2(5)$.
\item[(e)] $\A_6\trianglelefteq G\leqslant\Sy_6$, $H\leqslant\Sy_3\wr\Sy_2$, and $K=\PSL_2(5)$ or $\PGL_2(5)$.
\item[(f)] $n=6$ or $8$, and $(G,H,K)$ lies in \emph{Table \ref{tab6}}.
\end{itemize}
Conversely, for each $G$ in parts~\emph{(a)--(e)} there exists a factorization $G=HK$ as described, and each triple $(G,H,K)$ in \emph{Table \ref{tab6}} gives a factorization $G=HK$.
\end{proposition}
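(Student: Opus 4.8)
The plan is to reduce Proposition~\ref{Alternating} to the classification of factorizations of almost simple groups with alternating socle given in \cite[THEOREM~D]{liebeck1990maximal}, and then to extract from that classification exactly those triples in which one factor is solvable. First I would invoke Lemma~\ref{Embedding} to pass to a group $G^*$ with $\A_n\trianglelefteq G^*\leqslant G$ admitting a factorization $G^*=H^*K^*$ with $H^*\cap\A_n=H\cap\A_n$, $K^*\cap\A_n=K\cap\A_n$, and both maximal subgroups over $H^*,K^*$ core-free; this lets me work with a maximal factorization $G^*=AB$ with $A,B$ core-free. Since $H^*\leqslant A$ and $H$ is solvable, $A$ must have a solvable ``essential'' part, which by \cite[THEOREM~D]{liebeck1990maximal} forces $A$ (after possibly interchanging the two factors) to be one of a very short list: an intransitive maximal subgroup $(\Sy_k\times\Sy_{n-k})\cap G^*$ with $k\leqslant3$ (because $\A_k$ must be solvable, i.e.\ $k\leqslant4$, and $A$ must genuinely contain a transitive/homogeneous solvable group), a primitive maximal subgroup that is itself almost simple or affine of small degree, or one of the sporadic factorizations occurring only for $n\in\{6,8\}$.

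The main body of the argument is then a case division on the type of $A$:
\begin{itemize}
\item[(i)] If $A=(\Sy_{n-1}\cap G^*)$, then $K^*\cap\A_n$ is contained in $\Sy_{n-1}$ and $H$ must be transitive on $\Omega_n$; since $K$ is unsolvable one needs $\A_{n-1}\trianglelefteq K$ (use that $K\cap\A_n$ projects onto something unsolvable in $\Sy_{n-1}$ and Lemma~\ref{MinimalDegree} to bound the index), giving part~(a); here $n\geqslant6$ is forced because $\A_5$ would make $K$ too small or $H$ would not exist solvable.
\item[(ii)] If $A=(\Sy_2\times\Sy_{n-2})\cap G^*$, then $H$ is $2$-homogeneous of degree $n$; invoking the classification of solvable $2$-homogeneous groups (Kantor \cite{kantor1972k}, Huppert \cite[Theorem~XII.7.3]{huppert1982finite}, already quoted in the Example after Lemma~\ref{p3}) forces $n=p^f$ and $H\leqslant\AGaL_1(p^f)$ or $H$ in the displayed exceptional list, and $\A_{n-2}\trianglelefteq K\leqslant\Sy_{n-2}\times\Sy_2$ as in part~(b).
\item[(iii)] If $A=(\Sy_3\times\Sy_{n-3})\cap G^*$, then $H$ is a solvable $3$-homogeneous group, of which there are only the three listed in the Example ($\AGL_1(8),\AGaL_1(8),\AGaL_1(32)$), giving part~(c) with $\A_{n-3}\trianglelefteq K$.
\item[(iv)] If $A$ is a primitive maximal subgroup, then by \cite[THEOREM~D]{liebeck1990maximal} the only possibilities with $A$ (or rather $H\leqslant A$) solvable occur for small $n$; together with the transitive primitive $B=\PSL_2(5)$ cases at $n=6$ this yields parts~(d), (e), and the sporadic cases $n\in\{6,8\}$ collected in Table~\ref{tab6} (part~(f)), all verified by \magma.
\end{itemize}
In each transitive/intransitive case the identification of the normal subgroup of $K$ uses Lemma~\ref{MinimalDegree}(d) (or Lemma~\ref{p4}) to force $K$ to contain the derived subgroup of the relevant symmetric group once the index $|G|/|H|$ is known to be divisible by a large enough number.

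For the converse, I would note that a transitive solvable group of degree $n$ exists for every $n$ (regular representation), so part~(a) is realized for all $n\geqslant6$; a solvable $2$-homogeneous group exists exactly for the prime-power degrees listed (e.g.\ $\AGaL_1(p^f)$), realizing (b); the three solvable $3$-homogeneous groups realize (c); and the small cases (d)--(f) are confirmed directly, e.g.\ by the orders via Lemma~\ref{p3}(c) or by \magma. The main obstacle I anticipate is case~(iv): ruling out primitive maximal $A$ for large $n$ requires care, because one must check that no solvable primitive group of degree $n$ can serve as $H$ while leaving room for an unsolvable complement $K$ — this is where the detailed list in \cite[THEOREM~D]{liebeck1990maximal} and the index bounds of Lemma~\ref{MinimalDegree} do the real work, and where a handful of genuinely exceptional small-$n$ configurations (the content of Table~\ref{tab6} and parts~(d),(e)) must be isolated by computation rather than by a uniform argument.
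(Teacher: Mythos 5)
Your plan has the right ingredients --- Theorem~D of \cite{liebeck1990maximal}, the classification of solvable $k$-homogeneous groups, and \magma{} for the small exceptional cases --- and the conclusions you reach are essentially correct, but the case division is stated in an internally inconsistent way that, followed literally, would not produce parts~(a)--(c) and misattributes parts~(d)--(e). You set $H^*\leqslant A$ and $K^*\leqslant B$ and then assert that $A$ should be the intransitive maximal subgroup $(\Sy_k\times\Sy_{n-k})\cap G^*$; but a subgroup of $\Sy_k\times\Sy_{n-k}$ is not even transitive on $\Omega_n$, so under that hypothesis $H$ cannot be transitive, contradicting what you then claim in cases~(i)--(iii). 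What Theorem~D (with Remark~2 after it) actually says is that one factor is $k$-homogeneous on $\Omega_n$ and the other lies in $\Sy_{n-k}\times\Sy_k$ for some $k\leqslant5$, apart from two sporadic configurations at $n=6$ and $n=8$; so the correct dichotomy is by \emph{which} of $H$, $K$ is the $k$-homogeneous factor. If $H$ is $k$-homogeneous, solvability of $H$ forces $k\leqslant3$ and $K\leqslant\Sy_{n-k}\times\Sy_k$, giving parts~(a)--(c). If instead $K$ is $k$-homogeneous, then $\A_{n-k}\leqslant H$ must be solvable, forcing $n\leqslant9$; this small-degree branch is where parts~(d), (e) and most of (f) live. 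Your "primitive $A$" case misplaces this: in parts~(d)--(e) it is $K=\PSL_2(5)$ that is the primitive ($2$-transitive) factor, while $H$ lies in $\Sy_4\times\Sy_2$ or $\Sy_3\wr\Sy_2$, so it is $B$, not $A$, that is primitive, and the justification "$\A_k$ must be solvable" is aimed at the wrong factor. Two smaller points: the detour through Lemma~\ref{Embedding} and maximal factorizations is unnecessary here, since Theorem~D of \cite{liebeck1990maximal} already classifies \emph{all} nontrivial factorizations with alternating socle (the embedding trick is needed for the classical case, where only maximal factorizations are catalogued); and the groups with $n=6$ and $G\nleqslant\Sy_6$ (rows~1--4 of Table~\ref{tab6}) must be dealt with separately, by computation, before one reduces to $G\leqslant\Sy_n$, as the paper does at the very start of the proof.
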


\begin{table}[htbp]
\caption{}\label{tab6}
\centering
\begin{tabular}{|l|l|l|l|}
\hline
row & $G$ & $H$ & $K$\\
\hline
1 & $\M_{10}$ & $3^2{:}\Q_8$ & $\PSL_2(5)$\\
2 & $\PGL_2(9)$ & $\AGL_1(9)$ & $\PSL_2(5)$\\
3 & $\PGaL_2(9)$ & $\AGaL_1(9)$ & $\PSL_2(5)$\\
4 & $\PGaL_2(9)$ & $\AGL_1(9)$, $3^2{:}\Q_8$, $\AGaL_1(9)$ & $\PGL_2(5)$\\
\hline
5 & $\A_8$ & $15$, $3\times\D_{10}$, $\GaL_1(16)$ & $\AGL_3(2)$\\
6 & $\Sy_8$ & $\D_{30}$, $\Sy_3\times5$, $\Sy_3\times\D_{10}$, & $\AGL_3(2)$\\
 & & $3\times\AGL_1(5)$, $\Sy_3\times\AGL_1(5)$ & \\
\hline
\end{tabular}
\end{table}

\begin{proof}
Since all core-free subgroups of $\Sy_5$ is solvable, we obtain $n\geqslant6$ by our assumption that $K$ is unsolvable. If $n=6$ and $G\nleqslant\Sy_6$, then computation in \magma \cite{bosma1997magma} leads to rows 1--4 of Table \ref{tab6}. Thus we assume $G\leqslant\Sy_n$ with $n\geqslant6$ in the rest of the proof. By THEOREM~D and Remark~2 after it in \cite{liebeck1990maximal}, one of the following cases appears.
\begin{itemize}
\item[(i)] $H$ is $k$-homogeneous on $\Omega_n$ and $\A_{n-k}\leqslant K\leqslant\Sy_{n-k}\times\Sy_k$ for some $k\in\{1,2,3,4,5\}$.
\item[(ii)] $\A_{n-k}\leqslant H\leqslant\Sy_{n-k}\times\Sy_k$ and $K$ is $k$-homogeneous on $\Omega_n$ for some $k\in\{1,2,3,4,5\}$.
\item[(iii)] $n=6$, $H\cap\A_6\leqslant\Sy_3\wr\Sy_2$ and $K\cap\A_6=\PSL_2(5)$ with $H,K$ both transitive on $\Omega_6$.
\item[(iv)] $n=8$, $H\geqslant\Z_{15}$ and $K=\AGL_3(2)$.
\end{itemize}

The $k$-homogeneous but not $k$-transitive permutation groups are determined by Kantor \cite{kantor1972k}. Besides, the $2$-transitive permutation groups are well-known, see for example \cite[Tables\,7.3-7.4]{cameron1999permutation}. Indeed, Huppert \cite{huppert1957zweifach} classified the solvable $2$-transitive permutation groups much earlier, see also \cite[Theorem XII.7.3]{huppert1982finite}. From these results, we conclude that any solvable $2$-homogeneous permutation group $H\leqslant\Sy_n$ is as described in part~(b). Moreover, for $k\geqslant3$ the only solvable $k$-homogeneous subgroups of $\Sy_n$ are $\AGL_1(8)$ with $(n,k)=(8,3)$, $\AGaL_1(8)$ with $(n,k)=(8,3)$ and $\AGaL_1(32)$ with $(n,k)=(32,3)$.

First suppose (i) appears. It is just part~(a) of Proposition \ref{Alternating} if $k=1$. If $k\geqslant2$, then the conclusion in the last paragraph leads to parts~(b) and (c).

Next suppose (ii) appears. Since $n\geqslant6$ and $\A_{n-k}\leqslant H$ is solvable, $k\neq1$. Assume $k=2$. We then have $n=6$ as $\A_{n-2}\leqslant H$ is solvable. Note that the only unsolvable $2$-homogeneous subgroups of $\Sy_6$ not containing $\A_6$ are $\PSL_2(5)$ and $\PGL_2(5)$ (see for example \cite[Table 2.1]{dixon1996permutation}). This corresponds to part~(d). Similarly, we obtain part~(e) if $k=3$. If $k\geqslant4$, $\A_{n-k}\leqslant H$ solvable implies $5\leqslant n\leqslant9$, but there are no $k$-homogeneous permutation groups for such degrees by \cite{kantor1972k} and \cite[Table 7.3 and Table 7.4]{cameron1999permutation}.

Finally, for (iii) and (iv), computation in \magma \cite{bosma1997magma} leads to part~(e) and rows 5--6 of Table \ref{tab6}. Thus the proof is completed.
\end{proof}

\section{Sporadic group socles}

Now we consider the sporadic almost simple groups.

\begin{proposition}\label{Sporadic}
Let $L=\Soc(G)$ be a sporadic simple group. If $G=HK$ for solvable subgroup $H$ and core-free unsolvable subgroup $K$ of $G$, then one of the following holds.
\begin{itemize}
\item[(a)] $\M_{12}\leqslant G\leqslant\M_{12}.2$, $H$ is transitive on $[G{:}\M_{11}]$, and $K=\M_{11}$.
\item[(b)] $G=\M_{24}$, $H$ is transitive on $[\M_{24}{:}\M_{23}]$, and $K=\M_{23}$.
\item[(c)] $(G,H,K)$ lies in \emph{Table \ref{tab8}}.
\end{itemize}
Conversely, each triple $(G,H,K)$ in parts~\emph{(a)--(c)} gives a factorization $G=HK$.
\end{proposition}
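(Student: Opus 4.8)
The plan is to extract the required list from Giudici's classification of all nontrivial factorizations of almost simple groups with sporadic socle \cite{giudici2006factorisations}, which itself builds on the maximal factorizations of \cite[THEOREM~C]{liebeck1990maximal}. First I would run through Giudici's tables and, for each sporadic simple group $L$, retain exactly those triples $(G,H,K)$ with $\Soc(G)=L$ and $G=HK$ in which at least one factor is solvable. Among the retained triples, those with both factors solvable are precisely the ones already recorded in Proposition~\ref{BothSolvable} (the only sporadic group occurring there being $\M_{11}$), so after interchanging $H$ and $K$ we may assume $H$ solvable and $K$ unsolvable; $K$ is then automatically core-free since the factorization is nontrivial. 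This leaves a finite list of triples, to be sorted into the three asserted families.

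For $L=\M_{12}$ the key maximal factorization is $\M_{12}=\M_{11}\M_{11}$ (the two $\M_{11}$-classes being fused in $\M_{12}.2$): whenever $K\cap L=\M_{11}$, Lemma~\ref{p3}(e) tells us that $G=HK$ holds exactly when $H$ is transitive on $[G{:}\M_{11}]$, and imposing solvability of $H$ yields part~(a) (covering both $G=\M_{12}$ acting on $12$ points and $G=\M_{12}.2$ acting on $24$ points). Likewise, from the maximal factorization $\M_{24}=\M_{23}\cdot X$ with $X$ a transitive maximal subgroup of $\M_{24}$, one obtains part~(b), with $K=\M_{23}$ and $H$ a solvable subgroup transitive on $[\M_{24}{:}\M_{23}]$; here $\Out(\M_{24})=1$, so only $G=\M_{24}$ occurs. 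Every remaining triple with a solvable factor --- those with a different sporadic socle, together with the $\M_{12}$- and $\M_{24}$-factorizations whose unsolvable factor is not $\M_{11}$, respectively $\M_{23}$ --- I would tabulate as Table~\ref{tab8}, which is part~(c).

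For the converse, cases~(a) and~(b) are immediate in one direction: if $H$ is transitive on $[G{:}K]$ with $K=\M_{11}$ (respectively $\M_{23}$), then $G=HK$ by Lemma~\ref{p3}(e). It then remains only to exhibit one solvable $H$ of the required type --- a solvable transitive subgroup of $\M_{12}$ of degree $12$, and a solvable transitive (indeed regular) subgroup of $\M_{24}$ of degree $24$ --- which is a routine finite check carried out in \magma \cite{bosma1997magma}. For part~(c), each triple $(G,H,K)$ of Table~\ref{tab8} is confirmed to be a genuine factorization straight from the orders of $G$, $H$, $K$ and $H\cap K$ via Lemma~\ref{p3}(c), again by \magma computation.

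The main obstacle I anticipate is not a single hard argument but the bookkeeping: reading off from Giudici's tables exactly which factorizations carry a solvable factor, respecting throughout the distinction between $G$ and $\Soc(G)$ (several of these factorizations exist only when $G$ is strictly larger than $L$, and others only when $G=L$), and, in the $\M_{12}$ and $\M_{24}$ cases, confirming both that the solvable transitive subgroups of degrees $12$ and $24$ required for the converse really do embed in these groups and that no further $\M_{12}$- or $\M_{24}$-factorization with a solvable factor has been missed. All of these are dispatched by a methodical case check supplemented with \magma verification.
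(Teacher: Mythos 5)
Your proposal takes essentially the same route as the paper: both read the answer off Giudici's classification~\cite{giudici2006factorisations} of factorizations of sporadic almost simple groups, isolate those triples with a solvable factor, peel off the cases already handled in Proposition~\ref{BothSolvable}, and verify the remainder (plus the converse) by \magma{} computations via Lemma~\ref{p3}. The only organizational difference is that the paper explicitly splits the argument according to whether $G=L$ (so that one reads from Giudici's Theorem~1.1 directly) and, when $G\neq L$, whether $L=(H\cap L)(K\cap L)$ holds (distinguishing Giudici's Theorems~1.1 and~1.2); you propose a more uniform sweep through the tables, resolving the same distinctions during the bookkeeping you correctly flag as the main workload. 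Your use of Lemma~\ref{p3}(e) to reformulate parts~(a) and~(b) as transitivity statements matches the paper's formulation.
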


\begin{table}[htbp]
\caption{}\label{tab8}
\centering
\begin{tabular}{|l|l|l|l|}
\hline
row & $G$ & $H$ & $K$\\
\hline
1 & $\M_{11}$ & $11$, $11{:}5$ & $\M_{10}$\\
2 & $\M_{11}$ & $\M_9$, $\M_9.2$, $\AGL_1(9)$ & $\PSL_2(11)$\\
\hline
3 & $\M_{12}$ & $\M_9.2$, $\M_9.\Sy_3$ & $\PSL_2(11)$\\
4 & $\M_{12}.2$ & $\M_9.2$, $\M_9.\Sy_3$ & $\PGL_2(11)$\\
\hline
5 & $\M_{22}.2$ & $11{:}2$, $11{:}10$ & $\PSL_3(4){:}2$\\
\hline
6 & $\M_{23}$ & $23$ & $\M_{22}$\\
7 & $\M_{23}$ & $23{:}11$ & $\M_{22}$, $\PSL_3(4){:}2$, $2^4{:}\A_7$\\
\hline
8 &{$\J_2.2$} & $5^2{:}4$, $5^2{:}(4\times2)$, $5^2{:}12$, & {$\G_2(2)$}\\
 & & $5^2{:}\Q_{12}$, $5^2{:}(4\times\Sy_3)$ &\\
\hline
9 & $\HS$ & $5_+^{1+2}{:}8{:}2$ & $\M_{22}$\\
10 & $\HS.2$ & $5_+^{1+2}{:}(4\wr\Sy_2)$ & $\M_{22}$, $\M_{22}.2$\\
11 & $\HS.2$ & $5^2{:}4$, $5^2{:}(4\times2)$, $5^2{:}4^2$, $5_+^{1+2}{:}4$, & $\M_{22}.2$\\
 & & $5_+^{1+2}{:}(4\times2)$, $5_+^{1+2}{:}4^2$, $5_+^{1+2}{:}8{:}2$ &\\
\hline
12 & {$\He.2$} & $7_+^{1+2}{:}6$, $7_+^{1+2}{:}(6\times2)$, $7_+^{1+2}{:}(6\times3)$, & $\Sp_4(4).4$\\
 & & $7_+^{1+2}{:}(\Sy_3\times3)$, $7_+^{1+2}{:}(\Sy_3\times6)$ &\\
\hline
13 & $\Suz.2$ & $3^5{:}12$, $3^5{:}((11{:}5)\times2)$ & $\G_2(4).2$\\
\hline
\end{tabular}
\end{table}

\begin{proof}

Suppose that $G=HK$ is a nontrivial factorization with $H$ solvable and $K$ unsolvable. If $G=L$, then from \cite[Theorem 1.1]{giudici2006factorisations} we directly read off the triples $(G,H,K)$, as stated in the proposition. Now suppose $G\neq L$. Since this indicates $\Out(L)\neq1$, we have $L\neq\M_k$ for $k\in\{11,23,24\}$.

First assume $L=(H\cap L)(K\cap L)$. Then since $H\cap L$ is solvable, we deduce from \cite[Theorem 1.1]{giudici2006factorisations} that $L=\M_{12}$ or $\HS$. Consequently, $G=\M_{12}.2$ or $\HS.2$. Searching in \magma \cite{bosma1997magma} for factorizations in these two groups leads to part~(a) or one of rows~4, 10, 11 of Table \ref{tab8}.

Next assume $L\neq(H\cap L)(K\cap L)$. Then by \cite[Theorem 1.2]{giudici2006factorisations} and computation results of \magma \cite{bosma1997magma}, $(G,H,K)$ lies in one of rows 5, 8, 10--13 of Table \ref{tab8}.
\end{proof}

\begin{remark}
\begin{itemize}
\item[(i)] For $G=\J_2.2$ in row~8 of Table \ref{tab8}, there are two non-isomorphic groups of shape $5^2{:}4$ for $H$.
\item[(ii)] For $G=\HS.2$ in row~11 of Table \ref{tab8}, there are four non-isomorphic groups of shape $5^2{:}4$, two non-isomorphic groups of shape $5^2{:}(4\times2)$ and two non-isomorphic groups of shape $5_+^{1+2}{:}4$ for $H$.
\item[(iii)] For $G=\He.2$ in row~12 of Table \ref{tab8}, three non-isomorphic groups of shape $7_+^{1+2}{:}6$ for $H$.
\end{itemize}
\end{remark}


\chapter{Examples in classical groups}


We present in this chapter examples for infinite families of factorizations appearing in Table \ref{tab7}. Let $q=p^f$ throughout this chapter, where $p$ is a prime and $f$ is a positive integer.

\section{Examples in unitary groups}

Let $V$ be a vector space over $\GF(q^2)$ of dimension $2m\geqslant4$ equipped with a non-degenerate unitary form $\beta$. There is a basis $e_1,\dots,e_m$, $f_1,\dots,f_m$ of $V$ such that
$$
\beta(e_i,e_j)=\beta(f_i,f_j)=0,\quad\beta(e_i,f_j)=\delta_{i,j}
$$
for any $i,j\in\{1,\dots,m\}$ (see \cite[2.2.3]{liebeck1990maximal}). Let
$$
G=\GU(V,\beta)=\GU_{2m}(q)
$$
be the \emph{general unitary group} of dimension $2m$ over $\GF(q^2)$, and let $A$ be the stabilizer of the totally singular subspace $\langle e_1,\dots,e_m\rangle$ in $G$, called a \emph{parabolic subgroup}. Then
$$
A=\Pa_m[G]=q^{m^2}{:}\GL_m(q^2),
$$
see \cite[3.6.2]{wilson2009finite}. Fix an element $\mu$ of $\GF(q^2)$ such that $\mu+\mu^q\neq0$. Then $e_m+\mu f_m$ is a nonisotropic vector.
Let $K$ be the stabilizer of $e_m+\mu f_m$ in $G$. Then
$$
K=\GU_{2m-1}(q)\leqslant\N_1[G].
$$
We now construct a solvable subgroup $H$ of $A$ such that $G=HK$.

\begin{construction}\label{ConstructUnitary}
Let $R=\bfO_p(A)=q^{m^2}$ and $C=\GL_m(q^2)<A$. Take $S$ to be a Singer cycle of $C$ and $H=RS$.
\end{construction}

\begin{proposition}\label{ExampleUnitary}
In the above notation, $H=q^{m^2}{:}\GL_1(q^{2m})$ is a solvable subgroup of $\Pa_m[G]$, $H\cap K=R\cap K=q^{(m-1)^2}$, and
$$
\GU_{2m}(q)=G=HK=(q^{m^2}{:}\GL_1(q^{2m}))\GU_{2m-1}(q).
$$
\end{proposition}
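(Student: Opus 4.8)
The plan is to verify the three assertions in turn: the structure of $H$ and its solvability, the intersection $H\cap K$, and then the factorization itself via the order criterion of Lemma~\ref{p3}(d). First I would identify $H=RS$ where $R=\bfO_p(A)=q^{m^2}$ and $S$ is a Singer cycle of $C=\GL_m(q^2)$, so $S\cong\GL_1(q^{2m})=\ZZ_{q^{2m}-1}$; since $R$ is a $p$-group and $S$ is cyclic, $H=R{:}S$ is solvable, and $|H|=q^{m^2}(q^{2m}-1)$. This part is essentially immediate from the definitions in Construction~\ref{ConstructUnitary} and the fact that $\GL_1(q^{2m})<\GL_m(q^2)$ as a Singer cycle.

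Next I would compute $H\cap K$. Since $K=\GU_{2m-1}(q)$ is the stabilizer in $G$ of the nonisotropic vector $v=e_m+\mu f_m$, an element $h=rs\in H$ (with $r\in R$, $s\in S$) fixes $v$ iff $v^{rs}=v$. The key geometric observation is that $R=\bfO_p(A)$ acts trivially on the quotient $\langle e_1,\dots,e_m\rangle$ and on $V/\langle e_1,\dots,e_m\rangle$, so modulo $\langle e_1,\dots,e_m\rangle$ the vector $v^r$ equals $v$; I would argue that $S$ acts fixed-point-freely on the relevant part so that $h$ fixing $v$ forces $s=1$, reducing to $R\cap K$. Then $R\cap K$ is the stabilizer in $R$ of $v$, which one computes to be the unipotent radical of the parabolic of $\GU_{2m-1}(q)$ stabilizing $\langle e_1,\dots,e_{m-1}\rangle$ inside $v^\perp\cap$(appropriate space), giving $|R\cap K|=q^{(m-1)^2}$. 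So $H\cap K=R\cap K\cong q^{(m-1)^2}$.

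Finally, for $G=HK$ I would invoke Lemma~\ref{p3}(d): it suffices to check $|G|\leqslant|H||K|/|H\cap K|$, in fact equality by Lemma~\ref{p3}(c). We have $|H||K|/|H\cap K| = q^{m^2}(q^{2m}-1)\cdot|\GU_{2m-1}(q)|/q^{(m-1)^2}$. Using $m^2-(m-1)^2=2m-1$ and the order formula $|\GU_{2m}(q)| = q^{m(2m-1)}(q^{2m}-1)\prod_{i=1}^{2m-1}(q^i-(-1)^i)$ together with $|\GU_{2m-1}(q)| = q^{(m-1)(2m-1)}\prod_{i=1}^{2m-1}(q^i-(-1)^i)$, a direct cancellation should yield exactly $|\GU_{2m}(q)|$, confirming the factorization.

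The main obstacle I anticipate is the precise determination of $H\cap K$, in particular justifying rigorously that the Singer-cycle part $S$ cannot contribute — i.e.\ that $rs$ fixing the nonisotropic vector $v$ forces $s=1$. This requires understanding how $S$ permutes the relevant vectors modulo $R$; one clean way is to pass to $A/R\cong\GL_m(q^2)$ and note that $K$ maps into $A/R$ with image meeting the Singer cycle trivially (since $K\cap A$ projects to a subgroup of $\GL_{m-1}(q^2)$-type, which contains no element of order a primitive prime divisor of $q^{2m}-1$ acting appropriately), or alternatively to argue directly with the action on $v^\perp$. Once $H\cap K=q^{(m-1)^2}$ is nailed down, the order computation is routine arithmetic and the conclusion follows. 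This also feeds into the converse direction of Theorem~\ref{SolvableFactor} for row~6 of Table~\ref{tab7}, after passing to $L=\PSU_{2m}(q)$ by factoring out the center.
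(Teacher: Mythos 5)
Your approach to the intersection $H\cap K$ is genuinely different from the paper's, and in fact somewhat cleaner. The paper writes down explicit generators $w_k,x_{i,j},y_{i,j},z_k$, computes $|R\cap K|=q^{(m-1)^2}$ directly, and then handles the possible $p'$-part of $H\cap K$ via a Hall-subgroup argument: any Hall $p'$-subgroup of $H\cap K$ lies in some \emph{conjugate} $S^h$ with $h\in R$, and $|S^h\cap K|=1$ is shown by a Sylow/fixed-vector argument inside $A\cap K$ using the fact that $S$ is regular on $W\setminus\{0\}$. Your argument instead decomposes $h=rs$ with $r\in R$, $s\in S$, notes that $R$ acts trivially on $V/W$ (where $W=\langle e_1,\dots,e_m\rangle$), deduces $\bar{v}^s=\bar{v}$ in $V/W$, and concludes $s=1$ from regularity. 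This sidesteps the Hall-subgroup/conjugacy bookkeeping entirely and gives $H\cap K\subseteq R$ directly, rather than merely bounding $|H\cap K|$.

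The one real gap in your argument is the assertion that ``$S$ acts fixed-point-freely on the relevant part.'' This is true but needs a sentence of justification: $C$ acts faithfully on $V/W$ (which is the conjugate-dual of $W$ via $\beta$, since $W=W^\perp$), so $S$ embeds in $\GL(V/W)\cong\GL_m(q^2)$ as a cyclic group of order $q^{2m}-1$; any such subgroup is irreducible (an order count rules out preserving a proper subspace), hence by Schur is a full Singer cycle, hence regular on $(V/W)\setminus\{0\}$. Without this the argument is incomplete. Also, your fallback suggestion in the last paragraph --- that the image of $A\cap K$ in $A/R\cong\GL_m(q^2)$ ``meets the Singer cycle trivially'' --- is not correct as stated: that image is a parabolic-type subgroup stabilizing $\langle e_1,\dots,e_{m-1}\rangle=W\cap v^\perp$, and its intersection with $SR/R$ can be nontrivial (for instance it contains the scalars $\Z_{q^2-1}$). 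That is precisely why the paper works with conjugates $S^h$ and a fixed-vector argument rather than a naive image-in-$A/R$ computation. Your main argument via $V/W$, once the regularity is justified, avoids this pitfall; the order count for $G=HK$ is then routine as you say.
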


\begin{proof}
Define linear maps $w_k(\lambda)$ for $1\leqslant k\leqslant m$ and $\lambda\in\GF(q^2)^*$, $x_{i,j}(\lambda)$ for $i\neq j$ and $\lambda\in\GF(q^2)$, $y_{i,j}(\lambda)$ for $1\leqslant i<j\leqslant m$ and $\lambda\in\GF(q^2)$, and $z_k(\lambda)$ for $1\leqslant k\leqslant m$, $\lambda\in\GF(q^2)$ and $\lambda+\lambda^q=0$ by
\begin{eqnarray*}
&w_k(\lambda):&e_k\mapsto\lambda e_k,\quad f_k\mapsto\lambda^{-q}f_k,\\
&x_{i,j}(\lambda):&e_j\mapsto e_j+\lambda e_i,\quad f_i\mapsto f_i-\lambda^qf_j,\\
&y_{i,j}(\lambda):&f_i\mapsto f_i+\lambda e_j,\quad f_j\mapsto f_j-\lambda^qe_i,\\
&z_k(\lambda):&f_k\mapsto f_k+\lambda e_k
\end{eqnarray*}
and fixing all the other basis vectors of $V$. By \cite[3.6.2]{wilson2009finite}, $A=R{:}C$ with $C$ denoting the group generated by all $w_k(\lambda)$ and $x_{i,j}(\lambda)$ and $R$ denoting the group generated by all $y_{i,j}(\lambda)$ and $z_k(\lambda)$.

Since $S$ is a Singer cycle of $C=\GL_m(q^2)$, the group $H=R{:}S=q^{m^2}{:}\GL_1(q^{2m})$ is solvable. It is obvious that $S$ is a Hall $p'$-subgroup of $H$. Let $M$ be a Hall $p'$-subgroup of $H\cap K$. Since $M$ is a $p'$-subgroup of $H$, $M\leqslant S^h$ for some $h\in R$. We then have $M\leqslant S^h\cap K$, and so $|H\cap K|_{p'}$ divides $|S^h\cap K|$. Similarly, $|H\cap K|_p$ divides $|R\cap K|$, and thus $|H\cap K|$ divides $|R\cap K||S^h\cap K|$.

First we calculate $|R\cap K|$. For all $1\leqslant i<j\leqslant m-1$, $1\leqslant k\leqslant m-1$ and $\lambda\in\GF(q)$, it is obvious that $y_{i,j}(\lambda)$ and $z_k(\lambda)$ both fix $e_m+\mu f_m$, that is to say, $y_{i,j}(\lambda)$ and $z_k(\lambda)$ are both in $K$. These $y_{i,j}(\lambda)$ and $z_k(\lambda)$ generate an elementary abelian group of order $q^{(m-1)^2}$. Now consider an element
$$
g=y_{1,m}(\lambda_1)\dots y_{m-1,m}(\lambda_{m-1})z_m(\lambda_m)
$$
in $R$. Notice that $g$ sends $e_m+\mu f_m$ to
$$
-\sum\limits_{i=1}^{m-1}\mu\lambda_i^qe_i+(1+\mu\lambda_m)e_m+\mu f_m.
$$
Then $g\in K$ if and only if $\lambda_1=\dots=\lambda_m=0$, that is, $g=1$. Thereby we conclude $|R\cap K|=q^{(m-1)^2}$.

Next we show $|S^h\cap K|=1$. Suppose on the contrary that $|S^h\cap K|$ is divisible by a prime number $r$. Then there exists a subgroup $X$ of order $|S^h\cap K|_r$ in $S$ such that $X^h\leqslant K$. Denote by $Y$ the subgroup of $A\cap K$ generated by
$$
\{w_k(\lambda):k\leqslant m-1,\lambda\in\GF(q^2)^*\}\cup\{x_{i,j}(\lambda):i\leqslant m-1,j\leqslant m-1,\lambda\in\GF(q^2)\}.
$$
Then $Y\cong\GL_{m-1}(q^2)$ and $Y$ fixes $e_m$. Since $|A\cap K|=q^{(2m-1)(m-1)}(q^{2m-2}-1)\dots(q^2-1)$ (see \cite[3.3.3]{liebeck1990maximal}), we know that $Y$ contains a Sylow $r$-subgroup of $A\cap K$. It follows that $X^h\leqslant Y^{h_1}$ for some $h_1\in A\cap K$. Hence $X\leqslant Y^{h_1h^{-1}}$, and so $X$ fixes $e_m^{h_1h^{-1}}$. This is a contradiction since $S$ acts regularly on the nonzero vectors of $\langle e_1,\dots,e_m\rangle$.

Now that $|H\cap K|$ divides $|R\cap K||S^h\cap K|=|R\cap K|=q^{(m-1)^2}$, we conclude $G=HK$ by Lemma~\ref{p3}. We also see that $H\cap K=R\cap K$ is a elementary abelian group of order $q^{(m-1)^2}$.
\end{proof}

The lemma below excludes certain factorizations of unitary groups, which will be useful in the proof of Theorem \ref{SolvableFactor}.

\begin{lemma}\label{l4}
In the above notation, let $\tau$ be the semilinear map on $V$ such that $(\lambda v)^\tau=\lambda^{p^e}v$ for any $\lambda\in\GF(q^2)$ and $v\in V$, where $e|2f$. Let $\Si=\SU(V,\beta){:}\langle\tau\rangle$ and $N\leqslant\Si$ be the stabilizer of $e_2+\mu f_2$. If $M$ is a maximal solvable subgroup of $\Si$ stabilizing $\langle e_1,e_2\rangle$, then $\Si\neq MN$.
\end{lemma}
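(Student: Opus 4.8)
The plan is to reduce the claim to the factorization $\GU_{2m}(q)=HK$ of Proposition~\ref{ExampleUnitary} with $m=2$, and then show that any solvable subgroup of $\Si$ stabilizing $\langle e_1,e_2\rangle$ is ``too small'' to factorize $\Si$ together with the point stabilizer $N$. First I would record the orders: with $n=2m=4$, $|\Si|=|\SU_4(q)|\cdot e$ and $N\cap\SU_4(q)=\SU_3(q)$, so $|\Si|/|N|=|\SU_4(q)|/|\SU_3(q)|=q^3(q^4-1)/(q+1)=q^3(q^3-q^2+q-1)$ up to the factor coming from $\tau$; more precisely $|\Si{:}N|$ equals the number of nonsingular $1$-spaces of the appropriate type, which is $q^3(q^2-q+1)$ times a power of... the point is that $|\Si|/|N|$ is divisible by $q^3$ and by a primitive prime divisor $r$ of $q^6-1$ (equivalently of $p^{6f}-1$), which exists by Zsigmondy's theorem~\ref{Zsigmondy} since $(p,6f)\ne(2,6)$ forces checking, and even when $6f=6$ one has $q=2$ excluded or handled separately. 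Then by Lemma~\ref{p4}, if $\Si=MN$ held, $|\Si|$ would divide $|M\cap\SU_4(q)|\cdot|N|\cdot|\Si/\SU_4(q)|$, so $|M|$ would be divisible by $q^3$ and by $r$.

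Next I would analyze the structure of a maximal solvable subgroup $M$ of $\Si$ stabilizing $\langle e_1,e_2\rangle=\langle e_1,\dots,e_m\rangle$ (for $m=2$). Such an $M$ is contained in $\Pa_2[\Si]$, and $\Pa_2[\SU_4(q)]=q^4{:}\GL_2(q^2)$ by \cite[3.6.2]{wilson2009finite} as in the setup before Construction~\ref{ConstructUnitary}. A solvable subgroup of $\GL_2(q^2).\langle\tau\rangle$ has order dividing (a bounded multiple of) $|\GaL_1(q^4)|=(q^4-1)\cdot 4f$ when it contains an element of order $r$, because an element of order $r=$ (a primitive prime divisor of $q^6-1$)... here I must be careful: $r\mid q^6-1$ but the relevant group is $\GL_2(q^2)$, whose order is $q^2(q^2-1)(q^4-1)$, and a primitive prime divisor of $q^6-1$ does \emph{not} divide $|\GL_2(q^2)|$. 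That is precisely the contradiction: $|M|$ is forced to be divisible by $r$, but $M\leqslant q^4{:}(\GL_2(q^2).\langle\tau\rangle)$ and neither $q^4$, nor $\GL_2(q^2)$, nor the cyclic $\langle\tau\rangle$ of order dividing $2f$ has order divisible by a primitive prime divisor of $q^6-1$ (using Lemma~\ref{l6}, which gives $r>6\geqslant 2f/f\cdot\ldots$; one checks $r$ cannot divide $2f$ since $r>6f\geqslant 2f$ would need $6\mid r-1$, so $r\geqslant 7$, and for the small cases $f$ is tiny). Hence no such $M$ can satisfy the divisibility, so $\Si\neq MN$.

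The remaining work is to handle the genuinely exceptional cases where the primitive prime divisor argument is unavailable, namely when $q^6-1$ has no primitive prime divisor: by Theorem~\ref{Zsigmondy} this is only $q=2$ (giving $p^{6f}-1=2^6-1$, the $(2,6)$ exception). So I would dispose of $\SU_4(2){:}\langle\tau\rangle$ — with $\tau$ of order dividing $2$ here — by a direct order/structure check or by \magma\ computation, verifying that $\Pa_2$ contains no solvable subgroup $M$ with $|\Si|\leqslant|M||N|$; note $\SU_4(2)\cong\PSp_4(3)$ has its factorizations already listed (rows~10--11 of Table~\ref{tab1}, Table~\ref{tab4}) and none is of the required shape with a nonsingular-point stabilizer.

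\textbf{Main obstacle.} The delicate point is pinning down the exact order $|\Si{:}N|$ and, relatedly, which primitive prime divisor to use — one wants $r$ dividing $|\Si{:}N|$ but \emph{not} dividing $|\Pa_2[\SU_4(q)]|\cdot 2f$; the natural candidate is a primitive prime divisor of $q^6-1$ (order of a maximal torus of $\SU_4(q)$ not meeting $\Pa_2$), and one must confirm it genuinely divides $|\SU_4(q)|/|\SU_3(q)|$ and genuinely misses $|\GL_2(q^2)|=q^2(q^2-1)(q^4-1)$, which it does since $\mathrm{lcm}$ of the multiplicative orders appearing there is $4$, not $6$. Handling the interaction with the field automorphism $\tau$ (so that $r\nmid e\leqslant 2f$) via Lemma~\ref{l6} is routine but must be stated. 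Everything else is bookkeeping with Lemma~\ref{p4} and Construction~\ref{ConstructUnitary}.
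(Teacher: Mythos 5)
Your proposal has a genuine gap at its central step. You assert that a primitive prime divisor $r$ of $q^6-1=p^{6f}-1$ divides $|\Si\colon N|$, so that Lemma~\ref{p4} would force $r\di|M|$. But with $m=2$ the stabilizer satisfies $N\cap\SU_4(q)=\SU_3(q)$, and
\[
\frac{|\SU_4(q)|}{|\SU_3(q)|}=\frac{q^6(q^2-1)(q^3+1)(q^4-1)}{q^3(q^2-1)(q^3+1)}=q^3(q^4-1),
\]
while the extra factor coming from $\langle\tau\rangle$ divides $2f/e$. A primitive prime divisor of $q^6-1$ divides $q^2-q+1$, is at least $7$, and satisfies $\gcd(q^2-q+1,\,q^4-1)\di 3$ as well as $\gcd(q^2-q+1,\,q\cdot 2f)=1$; hence it does \emph{not} divide $|\Si\colon N|$. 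Your two in-passing formulas $|\Si\colon N|=q^3(q^4-1)/(q+1)$ and $|\Si\colon N|=q^3(q^2-q+1)$ are both incorrect, and it is the second (wrong) one that made the plan look viable. Moreover the plan cannot be rescued by switching primes: since $M$ lies in the parabolic $\Pa_2[\Si]$, whose $L$-part has order $q^6(q^4-1)(q-1)$ (up to a bounded factor), every prime dividing $q^3(q^4-1)$ already divides that order. In particular a primitive prime divisor of $q^4-1$ does divide $|\Si\colon N|$, but it equally divides $|\GL_2(q^2)|=q^2(q^2-1)(q^4-1)$, so no prime-divisibility obstruction of the kind you describe exists.

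The paper does use a primitive prime divisor, but of $p^{4f}-1=q^4-1$, and for a structural rather than a numerical purpose: a solvable subgroup $Q$ of $C{:}\langle\tau\rangle\leqslant\GL_{4f}(p)$ whose order is divisible by such a ppd must normalize a Singer cycle, whence $Q$ is determined and $|X|=RQ$ is exactly $4f|H|/e$ relative to the group $H$ of Construction~\ref{ConstructUnitary}. The contradiction is then obtained not from divisibility but from a sharp order inequality: extend $\Si=MN$ to $\Ga=\GU_4(q){:}\langle\tau\rangle=XY$ using $\GU_4(q)=HK$ (Proposition~\ref{ExampleUnitary}), apply Lemma~\ref{p9}, and use $H\cap Y=R\cap K\leqslant L$ to deduce $q+1\leqslant 4f/e$, which leaves only a short list of $(q,e)$ to be ruled out by computation. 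Your instinct to lean on Construction~\ref{ConstructUnitary} and Proposition~\ref{ExampleUnitary} is the right starting point, but the payoff must be this tight inequality, not a missing prime.
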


\begin{proof}
Suppose $\Si=MN$. Let $L=\SU(V,\beta)$, $\Ga=\GU(V,\beta){:}\langle\tau\rangle$ and $Y\leqslant\Ga$ be the stabilizer of $e_2+\mu f_2$. Evidently, $M=X\cap\Si$ with $X$ a maximal solvable subgroup of $\Ga$ stabilizing $\langle e_1,e_2\rangle$. Note that the stabilizer of $\langle e_1,e_2\rangle$ in $\Ga$ is $A{:}\langle\tau\rangle=R{:}(C{:}\langle\tau\rangle)$. Then $R\leqslant X$, and $X=RQ$ with $Q$ maximal solvable in $C{:}\langle\tau\rangle$. Since $\Si=(X\cap\Si)N$, $|X|$ is divisible by a primitive prime divisor $r$ of $p^{4f}-1$, which implies that $|Q|$ is divisible by $r$.

Take $S$ and $H$ as in Construction~\ref{ConstructUnitary}. Viewing $C{:}\langle\tau\rangle\leqslant\GL_{4f}(p)$, we conclude that $S\leqslant Q$ and $|Q|=4f|S|/e$. As a consequence, $H\geqslant X$ and $|X|=4f|H|/e$. Moreover, $\Ga=XG=XGY=X(HK)Y=(XH)(KY)=XY$. It then derives from $\Si=(X\cap\Si)N=(X\cap\Si)(Y\cap\Si)$ and Lemma \ref{p9} that
\begin{equation}\label{eq18}
|X\Si||Y\Si|\leqslant|\Ga||(X\cap Y)\Ga|.
\end{equation}
It is easily seen $HL=KL=G$. Hence $H\Si=K\Si=\Ga$, and so $X\Si=Y\Si=\Ga$. From $|X|=4f|H|/e$ we deduce that $|X\cap Y|\leqslant4f|H\cap Y|/e$ and thus $|(X\cap Y)\Si|\leqslant4f|(H\cap Y)\Si|/e$. This in conjunction with $H\cap Y=H\cap K=R\cap K\leqslant L$ yields $|(X\cap Y)\Si|\leqslant4f|\Si|/e$. Therefore, (\ref{eq18}) implies that
$$
(q+1)|\Si|=|\Ga|\leqslant|(X\cap Y)\Si|\leqslant\frac{4f|\Si|}{e},
$$
which gives $(q,e)=(2,1)$, $(3,1)$, $(4,1)$ or $(8,1)$. However, computation in \magma \cite{bosma1997magma} shows that none of these is possible.
\end{proof}

\section{Examples in symplectic groups}

Let $p=2$ and $V$ be a vector space over $\GF(q)$ of dimension $2m\geqslant4$ equipped with a non-degenerate symplectic form $\beta$. There is a basis $e_1,\dots,e_m,f_1,\dots,f_m$ of $V$ such that
$$
\beta(e_i,e_j)=\beta(f_i,f_j)=0,\quad\beta(e_i,f_j)=\delta_{i,j}
$$
for any $i,j\in\{1,\dots,m\}$. Let
$$
G=\Sp(V,\beta)=\Sp_{2m}(q)
$$
be the \emph{symplectic group} of dimension $2m$ over $\GF(q)$, and let $A$ be the stabilizer of the totally singular subspace $\langle e_1,\dots,e_m\rangle$ in $G$. Then
$$
A=\Pa_m[G]=q^{m(m+1)/2}{:}\GL_m(q),
$$
see \cite[3.5.4]{wilson2009finite}. Let $Q$ be a quadratic form of minus type on $V$ associated with the form $\beta$. Fix an element $\sigma$ with $x^2+x+\sigma$ an irreducible quadratic over $\GF(q)$ such that
\[\begin{array}{l}
Q(e_1)=\dots=Q(e_{m-1})=Q(f_1)=\dots=Q(f_{m-1})=0, \\
Q(e_m)=1,\ Q(f_m)=\sigma
\end{array}\]
(see \cite[2.2.3]{liebeck1990maximal}). Take
$$
K=\GO^-(V,Q)=\GO^-_{2m}(q),
$$
the \emph{general orthogonal group of minus type} of dimension $2m$ over $\GF(q)$.


We construct a solvable subgroup $H$ of $A$ such that $G=HK$.

\begin{construction}\label{ConstructSymplectic}
Let $R=\bfO_2(A)=q^{m(m+1)/2}$ and $C=\GL_m(q)<A$. Take $S$ to be a Singer cycle of $C$ and $H=RS$.
\end{construction}

\begin{proposition}\label{ExampleSymplectic}
In the above notation, $H=q^{m(m+1)/2}{:}\GL_1(q^m)$ is a solvable subgroup of $\Pa_m[G]$, and
$$
\Sp_{2m}(q)=G=HK=(q^{m(m+1)/2}{:}\GL_1(q^m))\GO^-_{2m}(q).
$$
\end{proposition}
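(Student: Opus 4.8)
The plan is to follow the proof of Proposition~\ref{ExampleUnitary}, replacing the Hermitian geometry by the characteristic-$2$ orthogonal one. With $H=RS$ as in Construction~\ref{ConstructSymplectic}, the group $R=\bfO_2(A)=q^{m(m+1)/2}$ is elementary abelian and $S\cong\GL_1(q^m)$ is cyclic of odd order $q^m-1$, so $H=R{:}S\leqslant A=\Pa_m[G]$ is metabelian, hence solvable. To prove $G=HK$ it is enough, by Lemma~\ref{p3}, to show $|H\cap K|\leqslant|H||K|/|G|$; a direct computation, using $(q^m-1)(q^m+1)=q^{2m}-1$ to cancel the product $\prod_{i=1}^{m}(q^{2i}-1)$, gives $|H||K|/|G|=2q^{m(m-1)/2}$. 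Since $q$ is even, $R$ is the (normal) Sylow $2$-subgroup of $H$ and $S$ is a Hall $2'$-subgroup; therefore a Sylow $2$-subgroup of $H\cap K$ lies in $R$, a Hall $2'$-subgroup of $H\cap K$ lies in $S^h$ for some $h\in R$, and so $|H\cap K|$ divides $|R\cap K|\,|S^h\cap K|$.

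I would then evaluate $|R\cap K|$. Realizing $R$ as the group of maps fixing each $e_i$ and sending $f_i\mapsto f_i+\sum_j c_{ij}e_j$ with $c=(c_{ij})$ a symmetric $m\times m$ matrix over $\GF(q)$ (symmetry $\Leftrightarrow$ preservation of $\beta$; alternatively one writes out the corresponding root elements as in Proposition~\ref{ExampleUnitary}), and using that preservation of $\beta$ together with preservation of $Q$ on a basis forces preservation of $Q$, one finds that such a map lies in $K=\GO^-(V,Q)$ exactly when $c_{ii}=c_{im}^2$ for all $i$. For $i<m$ this determines $c_{ii}$ from the free entry $c_{im}$, and for $i=m$ it forces $c_{mm}\in\{0,1\}$; counting the free parameters yields $|R\cap K|=2q^{m(m-1)/2}$. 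Hence $|R\cap K|$ already equals $|H||K|/|G|$, and the proof of $G=HK$ reduces to showing $S^h\cap K=1$.

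For this last point, suppose a prime $r$ divides $|S^h\cap K|$; as $|S|=q^m-1$ is odd, $r$ is odd, and there is a nontrivial $r$-subgroup $X\leqslant S$ with $X^h\leqslant A\cap K$. The key assertion is that $\Stab_{A\cap K}(e_m)$ has index a power of $p$ in $A\cap K$. Indeed $U=\langle e_1,\dots,e_m\rangle$ is a maximal totally isotropic subspace for $\beta$, so $Q|_U$ has rank $1$ and, by Witt's theorem, $K$ is transitive on all $\prod_{i=1}^{m}(q^i+1)$ of these subspaces, whence $|A\cap K|=2q^{m(m-1)}\prod_{i=1}^{m-1}(q^i-1)$; a second application of Witt's theorem shows $A\cap K$ is transitive on the $q^{m-1}$ vectors $v\in U$ with $Q(v)=1$, so $|A\cap K:\Stab_{A\cap K}(e_m)|=q^{m-1}$. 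Consequently $\Stab_{A\cap K}(e_m)$ contains a Sylow $r$-subgroup of $A\cap K$, so $X^h$ lies in $\Stab_{A\cap K}(e_m)^{h_1}$ for some $h_1\in A\cap K$, and therefore $X$ fixes the vector $e_m^{h_1h^{-1}}$, which is a nonzero vector of $U$ because $h_1$ stabilizes $U$ and $h\in R$ fixes $U$ pointwise. But $S$ acts on $U\cong\GF(q^m)$ as scalar multiplication by $\GF(q^m)^\times$ and so is fixed-point-free on $U\setminus\{0\}$, a contradiction. Thus $|H\cap K|$ divides $|R\cap K|=2q^{m(m-1)/2}$, so $G=HK$ by Lemma~\ref{p3} (and $H\cap K=R\cap K$).

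The step I expect to be the main obstacle is the characteristic-$2$ orthogonal bookkeeping: getting $|R\cap K|$ exactly right --- in particular the extra factor $2$, which comes from the idempotent condition $c_{mm}=c_{mm}^2$ and has no analogue in the unitary case --- and verifying that $\Stab_{A\cap K}(e_m)$ has $p$-power index in $A\cap K$. For the latter one may instead quote the structure of the stabilizer in $\GO^-_{2m}(q)$ of a totally isotropic subspace from \cite{kleidman1990subgroup}; either way care is needed because $U$ is totally isotropic for $\beta$ but not totally singular for $Q$. The smallest cases, such as $m=2$, can be confirmed by direct computation.
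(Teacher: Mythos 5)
Your proposal follows the paper's argument almost step for step (modelled, as you say, on the unitary case): same reduction via Lemma~\ref{p3} to bounding $|H\cap K|$, same splitting $|H\cap K|\di|R\cap K|\,|S^h\cap K|$ via the Hall decomposition, same value $|R\cap K|=2q^{m(m-1)/2}$ (you compute it with a symmetric-matrix parametrization of $R$, the paper with the root elements $y_{i,j},z_k$; both are fine and the extra factor~$2$ from $c_{mm}\in\{0,1\}$ matches the paper's $\mu_m\in\{0,1\}$), and the same fixed-point contradiction to force $S^h\cap K=1$. The one place you should tighten is the claim that \emph{a second application of Witt's theorem} shows $A\cap K$ is transitive on $\{v\in U:Q(v)=1\}$. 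Witt's theorem gives an isometry of $V$ carrying $v$ to $v'$, but nothing forces that isometry to stabilize $U$, so this does not immediately give the needed $p$-power index of $\Stab_{A\cap K}(e_m)$. The transitivity is true, but you should either exhibit explicit elements of $A\cap K$ realizing the translations of $e_m+W$ (e.g.\ $z_i(\lambda^2\sigma)\,x_{i,m}(\lambda)$ for $i<m$, which one checks preserves $Q$ on the basis and sends $e_m\mapsto e_m+\lambda e_i$), or do what the paper does: exhibit the subgroup $Y=\langle w_k(\lambda),x_{i,j}(\lambda):i,j,k\leqslant m-1\rangle\cong\GL_{m-1}(q)$ inside $\Stab_{A\cap K}(e_m)$ and use the cited order $|A\cap K|=2q^{m(m-1)}\prod_{i=1}^{m-1}(q^i-1)$ to see that $[A\cap K:Y]$ is a power of~$2$, so $Y$ already contains a Sylow $r$-subgroup for every odd $r$. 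With that repair the argument is complete, and your hedge (quoting \cite{kleidman1990subgroup} for the parabolic stabilizer structure) is exactly the safe alternative.
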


\begin{proof}
Define linear maps $w_k(\lambda)$ for $1\leqslant k\leqslant m$ and $\lambda\in\GF(q)^*$, $x_{i,j}(\lambda)$ for $i\neq j$ and $\lambda\in\GF(q)$, $y_{i,j}(\lambda)$ for $1\leqslant i<j\leqslant m$ and $\lambda\in\GF(q)$, and $z_k(\lambda)$ for $1\leqslant k\leqslant m$ and $\lambda\in\GF(q)$ by
\begin{eqnarray*}
&w_k(\lambda):&e_k\mapsto\lambda e_k,\quad f_k\mapsto\lambda^{-1}f_k,\\
&x_{i,j}(\lambda):&e_j\mapsto e_j+\lambda e_i,\quad f_i\mapsto f_i-\lambda f_j,\\
&y_{i,j}(\lambda):&f_i\mapsto f_i+\lambda e_j,\quad f_j\mapsto f_j+\lambda e_i,\\
&z_k(\lambda):&f_k\mapsto f_k+\lambda e_k
\end{eqnarray*}
and fixing all the other basis vectors of $V$. By \cite[3.5.4]{wilson2009finite}, $A=R{:}C$ with $C$ denoting the group generated by all $w_k(\lambda)$ and $x_{i,j}(\lambda)$ and $R$ denoting the group generated by all $y_{i,j}(\lambda)$ and $z_k(\lambda)$.

Since $S$ is a Singer cycle of $C=\GL_m(q)$, $H=R{:}S=q^{m(m+1)/2}{:}\GL_1(q^m)$ is solvable. Clearly, $S$ is a Hall $2'$-subgroup of $H$. Let $M$ be a Hall $2'$-subgroup of $H\cap K$. Since $M$ is a $2'$-subgroup of $H$, $M\leqslant S^h$ for some $h\in R$. We then have $M\leqslant S^h\cap K$, and so $|H\cap K|_{2'}$ divides $|S^h\cap K|$. Similarly, $|H\cap K|_2$ divides $|R\cap K|$, and thus $|H\cap K|$ divides $|R\cap K||S^h\cap K|$.

First we calculate $|R\cap K|$. For all $1\leqslant i<j\leqslant m-1$ and $\lambda\in\GF(q)$, it is obvious that $y_{i,j}(\lambda)$ fixes $Q(e_k)$ and $Q(f_k)$ for $k=1,\dots,m$ and thus $y_{i,j}(\lambda)\in K$. These $y_{i,j}(\lambda)$ generate an elementary abelian group of order $q^{(m-1)(m-2)/2}$. Now consider an element
$$
g=y_{1,m}(\lambda_1)\dots y_{m-1,m}(\lambda_{m-1})z_1(\mu_1)\dots z_m(\mu_m)
$$
in $R$. Notice that $g$ fixes $e_1,\dots,e_m$ and
\begin{eqnarray*}
&g:&f_i\mapsto f_i+\lambda_ie_m+\mu_ie_i,\quad i=1,\dots,m-1,\\
&&f_m\mapsto f_m+\sum\limits_{i=1}^{m-1}\lambda_ie_i+\mu_me_m.
\end{eqnarray*}
Then $g\in K$ if and only if
$$
Q(f_i+\lambda_ie_m+\mu_ie_i)=Q(f_i),\quad i=1,\dots,m-1
$$
and
$$
Q(f_m+\sum\limits_{i=1}^{m-1}\lambda_ie_i+\mu_me_m)=Q(f_m),
$$
which is equivalent to $\mu_i=-\lambda_i^2$ for $i=1,\dots,m-1$ and $\mu_m=0$ or $1$. Hence all the elements of shape $g$ in $K$ generate an elementary abelian group of order $2q^{m-1}$. Therefore, we conclude $|R\cap K|=2q^{m-1}\cdot q^{(m-1)(m-2)/2}=2q^{m(m-1)/2}$.

Next we show $|S^h\cap K|=1$. Suppose on the contrary that $|S^h\cap K|$ is divisible by an odd prime $r$. Then there exists a subgroup $X$ of order $r$ in $S$ such that $X^h\leqslant K$. Denote by $Y$ the subgroup of $A\cap K$ generated by
$$
\{w_k(\lambda):k\leqslant m-1,\lambda\in\GF(q)^*\}\cup\{x_{i,j}(\lambda):i\leqslant m-1,j\leqslant m-1,\lambda\in\GF(q)\}.
$$
Then $Y\cong\GL(m-1,q)$ and $Y$ fixes $e_m$. Since $|A\cap K|=2q^{m(m-1)}(q^{m-1}-1)\dots(q-1)$ (see \cite[3.2.4(a)]{liebeck1990maximal}), we know that $Y$ contains a Sylow $r$-subgroup of $A\cap K$. It follows that $X^h\leqslant Y^{h_1}$ for some $h_1\in A\cap K$. Hence $X\leqslant Y^{h_1h^{-1}}$, and so $X$ fixes $e_m^{h_1h^{-1}}$. This is a contradiction since $S$ acts regularly on the nonzero vectors of $\langle e_1,\dots,e_m\rangle$.

Now that $|H\cap K|$ divides $|R\cap K||S^h\cap K|=2q^{m(m-1)/2}$, we conclude $G=HK$ by Lemma~\ref{p3}.
\end{proof}

\section{Examples in orthogonal groups of odd dimension}

Let $p>2$ and $V$ be a vector space over $\GF(q)$ of dimension $2m+1\geqslant5$ equipped with a non-degenerate quadratic form $Q$ and the associated bilinear form $\beta$. There is a basis $e_1,\dots,e_m$, $f_1,\dots,f_m$, $d$ of $V$ such that
\[\begin{array}{l}
\beta(e_i,e_j)=\beta(f_i,f_j)=\beta(e_i,d)=\beta(f_i,d)=0,\quad\beta(e_i,f_j)=\delta_{i,j},\\
Q(e_i)=0,\ Q(f_j)=0,\ Q(d)=1
\end{array}\]
for any $i,j\in\{1,\dots,m\}$ (see \cite[2.2.3]{liebeck1990maximal}). Let
$$
G=\SO(V,Q)=\SO_{2m+1}(q)
$$
be the \emph{special orthogonal group} of dimension $2m+1$ over $\GF(q)$, and let $A=\Pa_m[G]$ be the stabilizer of the totally singular subspace $\langle e_1,\dots,e_m\rangle$ in $G$. Then
$$
A=\Pa_m[G]=(q^{m(m-1)/2}.q^m){:}\GL_m(q),
$$
where $q^{m(m-1)/2}.q^m$ is a special $p$-group with center of order $q^{m(m-1)/2}$, see \cite[3.7.4]{wilson2009finite}. Fix a non-square element $\mu$ in $\GF(q)$, and let $K$ be the stabilizer of the vector $e_m+\mu f_m$ in $G$. Then
$$
K=\SO^-_{2m}(q)\leqslant\N_1^-[G].
$$
We show that $A$ has a solvable subgroup $H$ such that $G=HK$.

\begin{construction}\label{ConstructOrthogonal1}
Let $R=\bfO_p(A)=q^{m(m-1)/2}.q^m$ and $C=\GL_m(q)<A$. Take $S$ to be a Singer cycle of $C$ and $H=RS$.
\end{construction}

\begin{proposition}\label{ExampleOrthogonal1}
In the above notation, $H=(q^{m(m-1)/2}.q^m){:}\GL_1(q^m)$ is a solvable subgroup of $\Pa_m[G]$, and
$$
\SO_{2m+1}(q)=G=HK=((q^{m(m-1)/2}.q^m){:}\GL_1(q^m))\SO^-_{2m}(q).
$$
\end{proposition}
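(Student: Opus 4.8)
The plan is to follow the template of Propositions~\ref{ExampleUnitary} and~\ref{ExampleSymplectic}. That $H=RS$ has the asserted structure is immediate: $R=\bfO_p(A)$ is normal in $A=\Pa_m[G]$ while $S$ lies in the Levi complement $C=\GL_m(q)$ and is a $p'$-group, so $R\cap S=1$ and $H=R{:}S=(q^{m(m-1)/2}.q^m){:}\GL_1(q^m)$, which is solvable since $R$ is a $p$-group and $S$ is cyclic. To prove $G=HK$ it suffices, by Lemma~\ref{p3}, to show $|H\cap K|$ divides $|H||K|/|G|$. Here $|H|=q^{m(m+1)/2}(q^m-1)$, $|G|=|\SO_{2m+1}(q)|=q^{m^2}\prod_{i=1}^m(q^{2i}-1)$ and $|K|=|\SO_{2m}^-(q)|=q^{m(m-1)}(q^m+1)\prod_{i=1}^{m-1}(q^{2i}-1)$, and a direct computation gives $|H||K|/|G|=q^{m(m-1)/2}$, so the goal is $|H\cap K|\di q^{m(m-1)/2}$. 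Since $R$ is a $p$-group and $S$ is a Hall $p'$-subgroup of $H$, conjugacy of Hall subgroups in the solvable group $H$ puts a Hall $p'$-subgroup of $H\cap K$ inside some $S^h$ with $h\in R$, while a Sylow $p$-subgroup of $H\cap K$ lies in $R$; hence $|H\cap K|$ divides $|R\cap K|\,|S^h\cap K|$, and it is enough to prove $|R\cap K|=q^{m(m-1)/2}$ and $|S^h\cap K|=1$.

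\emph{The $p$-part.} Following \cite[3.7.4]{wilson2009finite}, coordinatise $R$ by the central unipotent maps $y_{i,j}(\lambda)$ for $1\leqslant i<j\leqslant m$, $\lambda\in\GF(q)$, with $f_i\mapsto f_i+\lambda e_j$, $f_j\mapsto f_j-\lambda e_i$, together with the maps $t_k(\lambda)$ for $1\leqslant k\leqslant m$, $\lambda\in\GF(q)$, with $d\mapsto d+\lambda e_k$, $f_k\mapsto f_k+\tfrac{\lambda}{2}d+\tfrac{\lambda^2}{4}e_k$ (all fixing the remaining basis vectors; the coefficients $\tfrac12,\tfrac14$ are where $p$ odd enters). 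The $y_{i,j}(\lambda)$ generate the centre $q^{m(m-1)/2}$ of $R$, and adjoining the $t_k(\lambda)$ yields all of $R$. The maps $y_{i,j}(\lambda)$ with $j<m$ and $t_k(\lambda)$ with $k<m$ all fix $e_m+\mu f_m$ and generate a subgroup of order $q^{(m-1)(m-2)/2}q^{m-1}=q^{m(m-1)/2}$. Conversely, writing a general $g\in R$ as a product of the $t_k(\lambda_k)$ with a central element and computing $(e_m+\mu f_m)^g$, the displacement has $d$-coordinate $\tfrac{\mu\lambda_m}{2}$ and, along each $e_i$ with $i<m$, picks up $-\mu$ times the $y_{i,m}$-parameter of $g$; vanishing forces $\lambda_m=0$ and those parameters to be $0$, so $R\cap K$ is exactly that subgroup and $|R\cap K|=q^{m(m-1)/2}$.

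\emph{The $p'$-part.} This runs exactly as in the two preceding propositions. Suppose a prime $r$ divides $|S^h\cap K|$; since $r\di|S|=q^m-1$ we have $r\neq p$, and there is $X\leqslant S$ with $|X|=|S^h\cap K|_r$ and $X^h\leqslant K$. Let $Y\leqslant A\cap K$ be generated by the $w_k(\lambda)\colon e_k\mapsto\lambda e_k,\ f_k\mapsto\lambda^{-1}f_k$ for $k\leqslant m-1$ and the $x_{i,j}(\lambda)\colon e_j\mapsto e_j+\lambda e_i,\ f_i\mapsto f_i-\lambda f_j$ for $i,j\leqslant m-1$; then $Y\cong\GL_{m-1}(q)$ fixes both $e_m$ and $f_m$, so $Y\leqslant K$. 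From the maximal factorization $G=\Pa_m\cdot\N_1^-$ \cite[Table~1]{liebeck1990maximal} one gets $|A\cap K|=|A||K|/|G|=q^{m(m-1)}\prod_{i=1}^{m-1}(q^i-1)$, so $|A\cap K|/|Y|$ is a power of $p$ and $Y$ contains a Sylow $r$-subgroup of $A\cap K$; hence $X^h\leqslant Y^{h_1}$ for some $h_1\in A\cap K$, so $X\leqslant Y^{h_1h^{-1}}$ fixes the nonzero vector $e_m^{h_1h^{-1}}$ of $\langle e_1,\dots,e_m\rangle$, contradicting the fact that the Singer cycle $S$ acts regularly on the nonzero vectors of $\langle e_1,\dots,e_m\rangle$. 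Thus $|S^h\cap K|=1$, whence $|H\cap K|\di q^{m(m-1)/2}=|H||K|/|G|$ and $G=HK$ by Lemma~\ref{p3}.

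The main obstacle is the $p$-part: coordinatising the unipotent radical $R=q^{m(m-1)/2}.q^m$ of the odd-dimensional orthogonal parabolic and pinning down $R\cap K$. Unlike the symplectic and unitary cases, $R$ is not visibly a ``matrix-block'' group that is abelian modulo its centre; the extra anisotropic vector $d$ forces in the maps $t_k(\lambda)$ with their characteristic-$\neq2$ coefficients, and one must check both that the $y_{i,j}(\lambda)$ and $t_k(\lambda)$ generate $R$ with the stated centre and that the only elements of $R$ fixing $e_m+\mu f_m$ are the listed ones. Once that computation is in place, the $p'$-part and the order arithmetic are formally identical to Propositions~\ref{ExampleUnitary} and~\ref{ExampleSymplectic}.
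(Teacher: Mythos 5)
Your proof is correct, and apart from one sub-step it follows the paper's template exactly: the order arithmetic $|H||K|/|G|=q^{m(m-1)/2}$, the Hall-subgroup decomposition of $H\cap K$ in the solvable group $H$, and the Sylow argument via $Y\cong\GL_{m-1}(q)$ acting on $A\cap K$ for the $p'$-part are all the same. Where you genuinely diverge is in establishing $|R\cap K|=q^{m(m-1)/2}$. You coordinatise the unipotent radical $R$ explicitly by the central maps $y_{i,j}(\lambda)$ together with the non-central maps $t_k(\lambda)$ involving the anisotropic vector $d$, and then read off which products fix $e_m+\mu f_m$. The paper argues structurally instead: any $g\in R\cap K$ fixes $e_m$ and $e_m+\mu f_m$, hence fixes $f_m$ since $\mu\neq0$, hence stabilizes $W=\langle e_m,f_m\rangle^\bot$ and restricts there to the pointwise stabilizer of $\langle e_1,\dots,e_{m-1}\rangle$ in $\SO(W,\beta)$; this identifies $R\cap K$ with the unipotent radical of $\Pa_{m-1}$ in $\SO_{2m-1}(q)$, whose order $q^{(m-1)(m-2)/2}\cdot q^{m-1}=q^{m(m-1)/2}$ is already supplied by the same reference used to describe $R$ itself. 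The paper's reduction is cleaner and avoids the coordinate bookkeeping that your route demands, and indeed your $t_k(\lambda)$ as displayed is not an isometry: with $Q(d)=1$ and $\beta(e_k,f_k)=1$ one has $Q\bigl(f_k+\tfrac{\lambda}{2}d+\tfrac{\lambda^2}{4}e_k\bigr)=\tfrac{\lambda^2}{2}\neq0$, so both correction terms need a minus sign, $f_k\mapsto f_k-\tfrac{\lambda}{2}d-\tfrac{\lambda^2}{4}e_k$. The sign change propagates into the $d$-coordinate of the displacement of $e_m+\mu f_m$ but leaves the conclusion $\lambda_m=0$ and vanishing of the $y_{i,m}$-parameters unaffected, so the count $|R\cap K|=q^{m(m-1)/2}$ stands.
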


\begin{proof}
Define linear maps $w_k(\lambda)$ for $1\leqslant k\leqslant m$ and $\lambda\in\GF(q)^*$ and $x_{i,j}(\lambda)$ for $i\neq j$ and $\lambda\in\GF(q)$ by
\begin{eqnarray*}
&w_k(\lambda):&e_k\mapsto\lambda e_k,\quad f_k\mapsto\lambda^{-1}f_k,\\
&x_{i,j}(\lambda):&e_j\mapsto e_j+\lambda e_i,\quad f_i\mapsto f_i-\lambda f_j
\end{eqnarray*}
and fixing all the other basis vectors of $V$. By \cite[3.7.4]{wilson2009finite}, $A=R{:}C$ with $C$ denoting the group generated by all $w_k(\lambda)$ and $x_{i,j}(\lambda)$ and $R$ denoting the kernel of the action of $A$ on $\langle e_1,\dots,e_m\rangle$.

Since $S$ is a Singer cycle of $C=\GL_m(q)$, $H=R{:}S=(q^{m(m-1)/2}.q^m){:}\GL_1(q^m)$ is solvable. It is obvious that $S$ is a Hall $p'$-subgroup of $H$. Let $M$ be a Hall $p'$-subgroup of $H\cap K$. Since $M$ is a $p'$-subgroup of $H$, $M\leqslant S^h$ for some $h\in R$. We then have $M\leqslant S^h\cap K$, and so $|H\cap K|_{p'}$ divides $|S^h\cap K|$. Similarly, $|H\cap K|_p$ divides $|R\cap K|$, and thus $|H\cap K|$ divides $|R\cap K||S^h\cap K|$.

First we calculate $|R\cap K|$. For any $g\in R\cap K$, since $g$ fixes both $e_m$ and $e_m+\mu f_m$, $g$ fixes $f_m$ as well, and so $g$ fixes
$$
W:=\langle e_m,f_m\rangle^\bot=\langle e_1,\dots,e_{m-1},f_1,\dots,f_{m-1},d\rangle.
$$
Thus we conclude that $R\cap K$ equals the pointwise stabilizer of $\langle e_1,\dots,e_{m-1}\rangle$ in $\SO(W,\beta)$. According to the previous paragraph, this is a special group of order $q^{m(m-1)/2}$, whence $|R\cap K|=q^{m(m-1)/2}$.

Next we show that $|S^h\cap K|=1$. Suppose on the contrary that $|S^h\cap K|$ is divisible by a prime number $r$. Then there exists a subgroup $X$ of order $|S^h\cap K|_r$ in $S$ such that $X^h\leqslant K$. Denote by $Y$ the subgroup of $A\cap K$ generated by
$$
\{w_k(\lambda):k\leqslant m-1,\lambda\in\GF(q)^*\}\cup\{x_{i,j}(\lambda):i\leqslant m-1,j\leqslant m-1,\lambda\in\GF(q)\}.
$$
Then $Y\cong\GL(m-1,q)$ and $Y$ fixes $e_m$. Since $|A\cap K|=q^{m(m-1)}(q^{m-1}-1)\dots(q-1)$ (see \cite[3.4.1]{liebeck1990maximal}), we know that $Y$ contains a Sylow $r$-subgroup of $A\cap K$. It follows that $X^h\leqslant Y^{h_1}$ for some $h_1\in A\cap K$. Hence $X\leqslant Y^{h_1h^{-1}}$, and so $X$ fixes $e_m^{h_1h^{-1}}$. This is a contradiction since $S$ acts regularly on the nonzero vectors of $\langle e_1,\dots,e_m\rangle$.

Now that $|H\cap K|$ divides $|R\cap K||S^h\cap K|=q^{m(m-1)/2}$, we conclude $G=HK$ by Lemma~\ref{p3}.
\end{proof}

\section{Examples in orthogonal groups of plus type}

Let $V$ be a vector space over $\GF(q)$ of dimension $2m\geqslant6$ equipped with a non-degenerate quadratic form $Q$ and the associated bilinear form $\beta$ such that the Witt index of $Q$ equals $m$. There is a basis $e_1,\dots,e_m$, $f_1,\dots,f_m$ of $V$ such that
$$
Q(e_i)=Q(f_i)=\beta(e_i,e_j)=\beta(f_i,f_j)=0,\quad\beta(e_i,f_j)=\delta_{i,j}
$$
for any $i,j\in\{1,\dots,m\}$ (see \cite[2.2.3]{liebeck1990maximal}). Let
$$
G=\SO(V,Q)=\SO^+_{2m}(q)
$$
be the \emph{special orthogonal group of plus type} of dimension $2m$ over $\GF(q)$, and let $A$ be the stabilizer of the totally singular subspace $\langle e_1,\dots,e_m\rangle$ in $G$. Then
$$
A=\Pa_m[G]=q^{m(m-1)/2}{:}\GL_m(q),
$$
see \cite[3.7.4]{wilson2009finite}. Let $K$ be the stabilizer of the vector $e_m+f_m$ in $G$. Then
$$
K=\SO_{2m-1}(q)\leqslant\N_1[G].
$$
We show in the following that $A$ has a solvable subgroup $H$ such that $G=HK$.

\begin{construction}\label{ConstructOrthogonal2}
Let $R=\bfO_p(A)=q^{m(m-1)/2}$ and $C=\GL_m(q)<A$. Take $S$ to be a Singer cycle of $C$ and $H=RS$.
\end{construction}

\begin{proposition}\label{ExampleOrthogonal2}
In the above notation, $H=q^{m(m-1)/2}{:}\GL_1(q^m)$ is a solvable subgroup of $\Pa_m[G]$, and
$$
\SO^+_{2m}(q)=G=HK=(q^{m(m-1)/2}{:}\GL_1(q^m))\SO_{2m-1}(q).
$$
\end{proposition}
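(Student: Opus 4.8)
The plan is to mimic, essentially line for line, the proofs of Propositions~\ref{ExampleSymplectic} and~\ref{ExampleOrthogonal1}. First I would set up explicit generators for $A=R{:}C$: the maps $w_k(\lambda):e_k\mapsto\lambda e_k,\ f_k\mapsto\lambda^{-1}f_k$ and $x_{i,j}(\lambda):e_j\mapsto e_j+\lambda e_i,\ f_i\mapsto f_i-\lambda f_j$ generating $C=\GL_m(q)$, and the maps $y_{i,j}(\lambda):f_i\mapsto f_i+\lambda e_j,\ f_j\mapsto f_j-\lambda e_i$ for $1\leqslant i<j\leqslant m$ generating $R=\bfO_p(A)=q^{m(m-1)/2}$, each fixing all the remaining basis vectors; by \cite[3.7.4]{wilson2009finite} these do generate $A$, with $R$ the kernel of the action of $A$ on $\langle e_1,\dots,e_m\rangle$. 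Note that, in contrast with the symplectic case, there is no generator of $z_k$-type, because $Q(e_k)=0$ prevents a map $f_k\mapsto f_k+\lambda e_k$ from preserving $Q$ for $\lambda\neq0$. With $S$ a Singer cycle of $C$, the group $H=R{:}S=q^{m(m-1)/2}{:}\GL_1(q^m)$ is solvable and $S$ is a Hall $p'$-subgroup of it, so, exactly as in the earlier proofs, a Hall $p'$-subgroup of $H\cap K$ is contained in $S^h$ for some $h\in R$, whence $|H\cap K|$ divides $|R\cap K|\,|S^h\cap K|$.

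Next I would pin down $|R\cap K|$. Since $Q(e_m+f_m)=1$, the vector $v=e_m+f_m$ is nonsingular and $K=G_v$. A general element $g\in R$ fixes every $e_i$ and sends $f_m$ to $f_m-\sum_{i<m}\lambda_i e_i$ for suitable $\lambda_i\in\GF(q)$, so $g$ fixes $v$ if and only if $\lambda_1=\dots=\lambda_{m-1}=0$; the remaining generators $y_{i,j}(\lambda)$ with $i<j\leqslant m-1$ fix both $e_m$ and $f_m$, hence fix $v$, and generate an elementary abelian group of order $q^{(m-1)(m-2)/2}$, so $|R\cap K|=q^{(m-1)(m-2)/2}$. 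I would then establish $|S^h\cap K|=1$ by the same Sylow argument used before: the subgroup $Y$ generated by the $w_k(\lambda)$ and $x_{i,j}(\lambda)$ with all indices at most $m-1$ is isomorphic to $\GL_{m-1}(q)$, fixes $e_m$, and lies in $A\cap K$; reading off $|A\cap K|$ from the relevant result of \cite{liebeck1990maximal} (or \cite{kleidman1990subgroup}) shows that $|A\cap K|$ and $|Y|$ have the same $p'$-part, so for every prime $r\neq p$ the subgroup $Y$ contains a Sylow $r$-subgroup of $A\cap K$. Consequently, if a prime $r$ divided $|S^h\cap K|$, then a nontrivial $r$-subgroup $X\leqslant S$ with $X^h\leqslant K$ would, after $A\cap K$-conjugation, land in $Y$, forcing $X$ to fix a nonzero vector of $\langle e_1,\dots,e_m\rangle$ — impossible, since $S$, and hence $S^h$, acts regularly on those vectors.

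Putting the two calculations together, $|H\cap K|$ divides $q^{(m-1)(m-2)/2}$. On the other hand $v$ lies in a $G$-orbit of length $q^{m-1}(q^m-1)$, so $|H|\,|K|=|G|\,q^{(m-1)(m-2)/2}$, and then Lemma~\ref{p3} forces $G=HK$ (and in fact $H\cap K=R\cap K$). I do not anticipate a serious obstacle: the whole argument is a mechanical transcription of the symplectic and odd-dimensional orthogonal cases, and the only steps needing genuine attention are the determination of $|A\cap K|$ — so that the Sylow step really does cover every prime $r\neq p$, in particular $r=2$ when $q$ is odd — and the final order bookkeeping, where for even $q$ one must read $\SO^+_{2m}(q)$ with the conventions of Section~\ref{sec2} in order that the orbit of $v$ have length $q^{m-1}(q^m-1)$.
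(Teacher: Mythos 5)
Your proposal matches the paper's proof essentially line for line: the same explicit generators $w_k,x_{i,j},y_{i,j}$ with the same citation, the same Hall-$p'$ reduction of $|H\cap K|$ to $|R\cap K|\,|S^h\cap K|$, the same computation of $|R\cap K|=q^{(m-1)(m-2)/2}$ by splitting $R$ into the $y_{i,j}$ with $j\leqslant m-1$ (which fix $v$) and those with $j=m$ (which move $v$ unless trivial), the same Sylow argument via $Y\cong\GL_{m-1}(q)$ and $|A\cap K|=q^{(m-1)^2}(q^{m-1}-1)\cdots(q-1)$, and the same conclusion via Lemma~\ref{p3}. This is a correct transcription of the argument.
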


\begin{proof}
Define linear maps $w_k(\lambda)$ for $1\leqslant k\leqslant m$ and $\lambda\in\GF(q)^*$, $x_{i,j}(\lambda)$ for $i\neq j$ and $\lambda\in\GF(q)$, and $y_{i,j}(\lambda)$ for $1\leqslant i<j\leqslant m$ and $\lambda\in\GF(q)$ by
\begin{eqnarray*}
&w_k(\lambda):&e_k\mapsto\lambda e_k,\quad f_k\mapsto\lambda^{-1}f_k,\\
&x_{i,j}(\lambda):&e_j\mapsto e_j+\lambda e_i,\quad f_i\mapsto f_i-\lambda f_j,\\
&y_{i,j}(\lambda):&f_i\mapsto f_i+\lambda e_j,\quad f_j\mapsto f_j-\lambda e_i
\end{eqnarray*}
and fixing all the other basis vectors of $V$. By \cite[3.7.4]{wilson2009finite}, $A=R{:}C$ with $C$ denoting the group generated by all $w_k(\lambda)$ and $x_{i,j}(\lambda)$ and $R$ denoting the group generated by all $y_{i,j}(\lambda)$.

Since $S$ is a Singer cycle of $C=\GL_m(q)$, $H=R{:}S=q^{m(m-1)/2}{:}\GL_1(q^m)$ is solvable. It is obvious that $S$ is a Hall $p'$-subgroup of $H$. Let $M$ be a Hall $p'$-subgroup of $H\cap K$. Since $M$ is a $p'$-subgroup of $H$, $M\leqslant S^h$ for some $h\in R$. We then have $M\leqslant S^h\cap K$, and so $|H\cap K|_{p'}$ divides $|S^h\cap K|$. Similarly, $|H\cap K|_p$ divides $|R\cap K|$, and thus $|H\cap K|$ divides $|R\cap K||S^h\cap K|$.

First we calculate $|R\cap K|$. For all $1\leqslant i<j\leqslant m-1$ and $\lambda\in\GF(q)$, it is obvious $y_{i,j}(\lambda)\in K$. These $y_{i,j}(\lambda)$ generate an elementary abelian group of order $q^{(m-1)(m-2)/2}$. Now consider an element
$$
g=y_{1,m}(\lambda_1)\dots y_{m-1,m}(\lambda_{m-1})
$$
in $R$. Notice that $g$ sends $e_m+f_m$ to
$$
-\sum\limits_{i=1}^{m-1}\lambda_ie_i+e_m+f_m.
$$
Then $g\in K$ if and only if $\lambda_1=\dots=\lambda_{m-1}=0$, which is equivalent to $g=1$. Therefore, we conclude $|R\cap K|=q^{(m-1)(m-2)/2}$.

Next we show $|S^h\cap K|=1$. Suppose on the contrary that $|S^h\cap K|$ is divisible by a prime number $r$. Then there exists a subgroup $X$ of order $|S^h\cap K|_r$ in $S$ such that $X^h\leqslant K$. Denote by $Y$ the subgroup of $A\cap K$ generated by
$$
\{w_k(\lambda):k\leqslant m-1,\lambda\in\GF(q)^*\}\cup\{x_{i,j}(\lambda):i\leqslant m-1,j\leqslant m-1,\lambda\in\GF(q)\}.
$$
Then $Y\cong\GL(m-1,q)$ and $Y$ fixes $e_m$. Since $|A\cap K|=q^{(m-1)^2}(q^{m-1}-1)\dots(q-1)$ (see \cite[3.6.1(a)]{liebeck1990maximal}), we know that $Y$ contains a Sylow $r$-subgroup of $A\cap K$. It follows that $X^h\leqslant Y^{h_1}$ for some $h_1\in A\cap K$. Hence $X\leqslant Y^{h_1h^{-1}}$, and so $X$ fixes $e_m^{h_1h^{-1}}$. This is a contradiction since $S$ acts regularly on the nonzero vectors of $\langle e_1,\dots,e_m\rangle$.

Now that $|H\cap K|$ divides $|R\cap K||S^h\cap K|=q^{(m-1)(m-2)/2}$, we conclude $G=HK$ by Lemma~\ref{p3}.
\end{proof}







\chapter{Reduction for classical groups}


Towards the proof of Theorem \ref{SolvableFactor}, we set up the strategy for the classical group case when precisely one factor is solvable, and establish some preconditioning results.

Throughout this chapter, let $G$ be an almost simple group with socle $L$ classical of Lie type, and $G=HK$ be a nontrivial factorization of $G$ with $H$ solvable. By the results in Chapter \ref{Non-classical}, we assume that $L$ is not isomorphic to $\A_5$, $\A_6$ or $\A_8$. To show that part~(d) of Theorem \ref{SolvableFactor} holds, we may further assume that the factorization $G=HK$ can be embedded into a nontrivial maximal factorization $G=AB$ by virtue of Lemma \ref{Embedding}. The candidates for the factorization $G=AB$ are listed in Tables \ref{tabLinear}--\ref{tabOmegaPlus2}.

\section{The case that $A$ is solvable}

First of all, we determine the case when $A$ is solvable. An inspection of Tables \ref{tabLinear}--\ref{tabOmegaPlus2} shows that this occurs for three classes of socle $L$:
\begin{itemize}
\item[(i)] $L=\PSL_n(q)$ with $n$ prime,
\item[(ii)] $L=\PSp_4(3)\cong\PSU_4(2)$,
\item[(iii)] $L=\PSU_3(3)$, $\PSU_3(5)$ or $\PSU_3(8)$.
\end{itemize}

The candidates in (i) and (iii) are treated respectively in Theorem~\ref{p-dimensionLinear} and Theorem~\ref{p-dimensionUnitary}. Thus we only need to deal with (ii). For an almost simple group $G$ with socle $\PSp_4(3)\cong\PSU_4(2)$, all the nontrivial factorizations of $G$ can be produced instantly by \magma \cite{bosma1997magma}. We state the computation result here.

\begin{proposition}\label{Small1}
Let $G$ be an almost simple group with socle $L=\PSp_4(3)\cong\PSU_4(2)$. Then the following four cases give all the nontrivial factorizations $G=HK$ with $H$ solvable.
\begin{itemize}
\item[(a)] Both $H$ and $K$ are solvable, and $(G,H,K)$ lies in rows~\emph{9--11} of \emph{Table \ref{tab4}}.
\item[(b)] $G=\PSp_4(3)$, $H\leqslant\Pa_k[\PSp_4(3)]$, and $(G,H,K,k)$ lies in rows~\emph{1--4} of \emph{Table \ref{tab9}}; moreover, $\Pa_k[\PSp_4(3)]$ is the only maximal subgroup of $G$ containing $H$.
\item[(c)] $G=\PSp_4(3).2$, and $L=(H\cap L)(K\cap L)$; in particular, $H=(H\cap L).\calO_1$ and $K=(K\cap L).\calO_2$, where $\calO_i\leqslant\Z_2$ for $i\in\{1,2\}$ and $\calO_1\calO_2=\Z_2$.
\item[(d)] $G=\PSp_4(3).2$, $L\neq(H\cap L)(K\cap L)$, $H\leqslant\Pa_k[\PSp_4(3).2]$, and $(G,H,K,k)$ lies in rows~\emph{5--6} of \emph{Table \ref{tab9}}; moreover, $\Pa_k[\PSp_4(3).2]$ is the only maximal subgroup of $G$ containing $H$.
\end{itemize}
\end{proposition}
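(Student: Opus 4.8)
The statement is a computational classification, so the plan is to carry it out explicitly in \magma{} and then sort the output. Since $\Out(\PSp_4(3))\cong\Z_2$, the almost simple group $G$ is either $L=\PSp_4(3)\cong\PSU_4(2)$ or $L.2$, both of order at most $2\cdot 25920$, so a complete enumeration of conjugacy classes of subgroups is entirely feasible. First I would compute, with \magma's subgroup machinery, representatives for the conjugacy classes of subgroups of $G$, recording for each whether it is solvable and whether it is core-free (that is, does not contain $L$).

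Next, for each solvable subgroup $H$ and each core-free subgroup $K$ in these lists, I would test the factorization criterion $|H||K|=|H\cap K||G|$ of Lemma~\ref{p3}(c); by Lemma~\ref{p3}(b) it suffices, for a fixed $H$, to run over a single conjugate of $K$, so the search is cheap. This yields the complete set of pairs $(H,K)$ with $G=HK$, $H$ solvable, and both factors core-free.

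Then I would partition the surviving pairs into the four advertised cases. Those with $K$ also solvable must, by Proposition~\ref{BothSolvable}, occur in Table~\ref{tab4}, and a direct check shows they are precisely rows~9--11 there; this is part~(a). Among the remaining pairs (so $K$ unsolvable): if $G=L$ we are in part~(b); if $G=L.2$ we split according to whether $L=(H\cap L)(K\cap L)$, giving part~(c) --- where Lemma~\ref{p9} forces $H=(H\cap L).\calO_1$ and $K=(K\cap L).\calO_2$ with $\calO_1\calO_2=\Z_2$ --- or part~(d) otherwise. For parts~(b) and~(d) one reads off $(G,H,K,k)$ to obtain rows~1--6 of Table~\ref{tab9}, and for each such $H$ I would additionally list the maximal subgroups of $G$ containing $H$ and confirm that $\Pa_k[G]$ is the only one.

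There is no conceptual obstacle here; the work is bookkeeping. The fussy points are matching \magma's structure descriptions with the notation of Tables~\ref{tab4} and~\ref{tab9}, presenting the factors up to the correct conjugacy (non-isomorphic complements can occur, as in analogous results elsewhere in the paper), and --- the part I expect to be the most delicate --- verifying the uniqueness assertions in parts~(b) and~(d) that $\Pa_k[G]$ is the \emph{only} maximal subgroup of $G$ containing $H$, rather than the factorization search itself.
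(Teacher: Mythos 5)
Your approach matches the paper's: the paper itself simply states that the factorizations "can be produced instantly by \magma" and records the output, exactly the computational enumeration you describe. Your elaboration of the bookkeeping (using Lemma~\ref{p3}(b),(c) to test factorizations over conjugacy class representatives, Lemma~\ref{p9} to get the $\calO_1\calO_2=\Z_2$ decomposition in part~(c), and an explicit check of the maximal overgroups of $H$ for the uniqueness claims in parts~(b) and~(d)) is a sensible way to organize that computation and is consistent with the \magma{} codes the paper supplies in its appendix.
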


\begin{table}[htbp]
\caption{}\label{tab9}
\centering
\begin{tabular}{|l|l|l|l|l|}
\hline
row & $G$ & $H$ & $K$ & $k$\\
\hline
1 & $\PSp_4(3)$ & $3_-^{1+2}$, $3_+^{1+2}$, $3_+^{1+2}{:}2$, $[3^4]$, $[3^4]{:}2$ & $2^4{:}\A_5$ & $1$, $2$\\
2 & $\PSp_4(3)$ & $3_+^{1+2}{:}4$ & $2^4{:}\A_5$ & $1$\\
3 & $\PSp_4(3)$ & $3_+^{1+2}{:}\Q_8$, $3_+^{1+2}{:}2.\A_4$ & $2^4{:}\A_5$, $\Sy_5$, $\A_6$, $\Sy_6$ & $1$\\
4 & $\PSp_4(3)$ & $3^3{:}\A_4$, $3^3{:}\Sy_4$ & $2^4{:}\A_5$ & $2$\\
\hline
5 & $\PSp_4(3).2$ & $3_+^{1+2}{:}8$ & $\Sy_5\times2$, $\A_6\times2$, $\Sy_6$, $\Sy_6\times2$ & $1$\\
6 & $\PSp_4(3).2$ & $3_+^{1+2}{:}\GaL_1(9)$, $3_+^{1+2}{:}2.\Sy_4$ & $\Sy_5$ & $1$\\
\hline
\end{tabular}
\end{table}

To sum up, we have the following proposition.

\begin{proposition}\label{p13}
Let $G$ be an almost simple group with socle $L$ classical of Lie type not isomorphic to $\A_5$, $\A_6$ or $\A_8$. If $G=HK$ is a nontrivial factorization such that $H$ is contained in some solvable maximal subgroup of $G$, then one of the following holds as in \emph{Theorem~\ref{SolvableFactor}}.
\begin{itemize}
\item[(a)] $L=\PSL_n(q)$ with $n$ prime, and $(G,H,K)$ lies in row~\emph{1} of \emph{Table~\ref{tab7}}, or rows~\emph{1--7} of \emph{Table~\ref{tab4}}, or rows~\emph{1--5}, \emph{9} of \emph{Table~\ref{tab1}}.
\item[(b)] $L=\PSp_4(3)\cong\PSU_4(2)$, and $(G,H,K)$ lies in rows~\emph{9--11} of \emph{Table~\ref{tab4}} or rows~\emph{10--11} of \emph{Table~\ref{tab1}}.
\item[(c)] $L=\PSU_3(q)$ with $q\in\{3,5,8\}$, and $(G,H,K)$ lies in row~\emph{8} of \emph{Table~\ref{tab4}} or rows~\emph{18--19} of \emph{Table~\ref{tab1}}.
\end{itemize}
\end{proposition}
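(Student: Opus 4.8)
The plan is to use the classification of maximal factorizations to cut the problem down to the three families of socle already settled, and then to assemble the answer by collecting the conclusions of Theorems~\ref{p-dimensionLinear} and~\ref{p-dimensionUnitary} together with Propositions~\ref{BothSolvable} and~\ref{Small1}.

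First I would pin down $L$. If $H$ is contained in a solvable maximal subgroup $A$ of $G$, then $A$ is core-free (a solvable subgroup cannot contain the nonabelian simple socle $L$), and $G=AK$. By Lemma~\ref{Embedding} (cf.\ Remark~\ref{rmk1}) I may assume that a maximal subgroup $B$ of $G$ containing $K$ is core-free too, so $G=AB$ is a nontrivial maximal factorization of $G$ with the factor $A$ solvable. Scanning Tables~\ref{tabLinear}--\ref{tabOmegaPlus2} via Theorem~\ref{Maximal} for an entry with a solvable factor then forces $L$ to be one of: $\PSL_n(q)$ with $n$ prime; $\PSp_4(3)\cong\PSU_4(2)$; or $\PSU_3(q)$ with $q\in\{3,5,8\}$ --- exactly the trichotomy already recorded at the start of this section, so the structural work is essentially done.

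Next I would treat the three cases in turn. For $L=\PSL_n(q)$ with $n$ prime, Theorem~\ref{p-dimensionLinear} applies (the pairs it excludes either have socle $\A_5$ or $\A_6$, which are barred here, or give $\PSL_2(7)$, still of the required shape): its part~(a) is precisely row~1 of Table~\ref{tab7}, and the triples of Table~\ref{tab2} divide according to whether $K$ is solvable --- the solvable-$K$ ones are the factorizations of Proposition~\ref{BothSolvable}, hence rows~1--7 of Table~\ref{tab4}, and the remaining ones (where $K$ is not solvable) are rows~1--5 and~9 of Table~\ref{tab1}. For $L=\PSU_3(q)$ with $q\in\{3,5,8\}$, Theorem~\ref{p-dimensionUnitary} gives Table~\ref{tab3}, in which every $H$ is already solvable; the rows with $K$ solvable (those with socle $\PSU_3(8)$) fall under Proposition~\ref{BothSolvable} and become row~8 of Table~\ref{tab4}, and the remaining rows, read off at the socle level, are rows~18--19 of Table~\ref{tab1}. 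For $L=\PSp_4(3)\cong\PSU_4(2)$ I invoke Proposition~\ref{Small1}: its case~(a) gives rows~9--11 of Table~\ref{tab4}; cases~(b) and~(d) put $H$ inside a (solvable) parabolic subgroup, and reading Table~\ref{tab9} at the socle level yields rows~10--11 of Table~\ref{tab1}; and case~(c), where $L=(H\cap L)(K\cap L)$ is itself a nontrivial factorization of $\PSp_4(3)$ with $H\cap L$ solvable, reduces to case~(a) or~(b) for $L$ and so again to rows~9--11 of Table~\ref{tab4} or rows~10--11 of Table~\ref{tab1}.

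The only delicate part is the last matching step: checking that, once the condition ``$H$ solvable'' is imposed, every triple coming out of Theorems~\ref{p-dimensionLinear} and~\ref{p-dimensionUnitary} and out of Table~\ref{tab9} really does sit in the asserted row of Table~\ref{tab7}, \ref{tab4} or~\ref{tab1}, and that the solvable-$K$ subcases are exactly those subsumed by Proposition~\ref{BothSolvable}, with the outer part $G/L$ correctly tracked through the relevant extensions. This is a finite, routine verification over a handful of small socles and requires no further structural input.
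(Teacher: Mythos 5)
Your proposal is correct and follows essentially the same approach as the paper: the paper itself presents Proposition~\ref{p13} merely as a "to sum up" after establishing the trichotomy for $L$ via the tables, then invoking Theorem~\ref{p-dimensionLinear}, Theorem~\ref{p-dimensionUnitary} and Proposition~\ref{Small1}, which is exactly the three-case assembly you carry out. Your handling of the excluded pairs in Theorem~\ref{p-dimensionLinear}, the split by solvability of $K$ to route through Proposition~\ref{BothSolvable} versus Tables~\ref{tab7} and~\ref{tab1}, and the treatment of case~(c) of Proposition~\ref{Small1} as a factorization at the socle level all match the paper's intent.
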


\section{Inductive hypothesis}

The lemma below enables us to prove Theorem~\ref{SolvableFactor} by induction.

\begin{lemma}\label{l2}
Let $G=HB$ be a factorization and $H\leqslant A\leqslant G$. If $N$ is a normal subgroup of $A$ such that $A/N$ is almost simple, then either $\Soc(A/N)\trianglelefteq(A\cap B)N/N$ or $HN/N$ is a nontrivial factor of $A/N$.
\end{lemma}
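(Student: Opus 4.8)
The plan is to push the factorization $G=HB$ down first to $A$ and then to the almost simple quotient $A/N$, and to read off the claimed dichotomy from the position of the image of $A\cap B$ relative to $\Soc(A/N)$.

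First I would invoke Lemma~\ref{p9}. Since $G=HB$ is a factorization and $H\leqslant A\leqslant G$, its ``in particular'' clause (applied with $M=A$, the second factor there being $B$) gives $A=H(A\cap B)$. Applying the quotient map $A\to A/N$ to this equality yields $A/N=(HN/N)((A\cap B)N/N)$: any coset $gN$ with $g=hc$, $h\in H$, $c\in A\cap B$, decomposes as $(hN)(cN)$. Thus the images of $H$ and of $A\cap B$ multiply up to all of $A/N$.

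Next I would split into two cases according to whether $(A\cap B)N/N$ contains $\Soc(A/N)$. Because $A/N$ is almost simple, $\Soc(A/N)$ is its unique minimal normal subgroup, so a subgroup of $A/N$ is core-free exactly when it does not contain $\Soc(A/N)$; and $\Soc(A/N)$, being normal in $A/N$, is normal in any subgroup containing it. Hence if $\Soc(A/N)\leqslant(A\cap B)N/N$ we immediately get the first alternative $\Soc(A/N)\trianglelefteq(A\cap B)N/N$. Otherwise $(A\cap B)N/N$ is core-free, in particular proper, in $A/N$. Here I would use that $HN/N\cong H/(H\cap N)$ is a homomorphic image of $H$, which is solvable by the standing hypothesis of the chapter; a solvable group contains no nonabelian simple subgroup, so $HN/N$ cannot contain $\Soc(A/N)$ and is therefore core-free, hence proper, in $A/N$. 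Consequently $A/N=(HN/N)((A\cap B)N/N)$ is a nontrivial factorization of the almost simple group $A/N$, and $HN/N$ is one of its factors, i.e.\ a nontrivial factor of $A/N$, which is the second alternative.

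There is no real obstacle: this is a routine descent of a factorization along a quotient. The one point that genuinely needs the solvability of $H$ is ruling out $\Soc(A/N)\leqslant HN/N$ in the second case (without it one could have $HN/N=A/N$ and neither alternative would hold), and the two bookkeeping items to get right are the correct invocation of Lemma~\ref{p9} (whose statement names the factors $H,K$ while here the second factor is $B$) and the equivalence ``core-free $\Leftrightarrow$ does not contain the socle'' for subgroups of an almost simple group.
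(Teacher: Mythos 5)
Your proof is correct and takes essentially the same route as the paper's: invoke Lemma~\ref{p9} to get $A=H(A\cap B)$, pass to the quotient to obtain $A/N=(HN/N)\bigl((A\cap B)N/N\bigr)$, and then read off the dichotomy by asking whether $(A\cap B)N/N$ contains $\Soc(A/N)$. The one point you make explicit that the paper's terse ``and the lemma follows'' leaves tacit is that the chapter-wide solvability of $H$ is precisely what guarantees $HN/N$ is core-free (and proper) in the almost simple group $A/N$, so that the second alternative really does yield a nontrivial factorization.
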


\begin{proof}
Since $G=HB$ and $H\leqslant A$, we have $A=H(A\cap B)$ by Lemma \ref{p9}. Let $\overline{A}=A/N$, $\overline{H}=HN/N$ and $\overline{A\cap B}=(A\cap B)N/N$. Then it holds $\overline{A}=\overline{H}\,\overline{A\cap B}$, and the lemma follows.
\end{proof}

Combining Lemma \ref{l2} and Proposition \ref{Intersection}, we obtain the following lemma.

\begin{lemma}\label{l3}
Let $G$ be an almost simple classical group of Lie type, $G=HK$ is a nontrivial factorizations of $G$ with $H$ solvable and $HL=KL=G$. If $A$ is a maximal subgroup of $G$ containing $H$ such that $A$ has precisely one unsolvable composition factor, then $A/\Rad(A)$ is an almost simple group with a nontrivial solvable factor $H\Rad(A)/\Rad(A)$.
\end{lemma}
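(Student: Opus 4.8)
The plan is to assemble the conclusion from two facts already established: Lemma~\ref{l2}, which describes the dichotomy for the image of a factorization in an almost simple quotient $A/N$, and Proposition~\ref{Intersection}, which rules out the ``bad'' branch of that dichotomy when $A,B$ are maximal core-free subgroups of $G$. First I would observe that since $A$ has exactly one unsolvable composition factor, Lemma~\ref{p12} guarantees that $\overline{A}:=A/\Rad(A)$ is an almost simple group; write $R=\Rad(A)$ throughout. So the target group $A/\Rad(A)$ is of the required type, and the entire content of the lemma is that the factor $HR/R$ is nontrivial, i.e.\ that $HR/R$ does not contain $\Soc(\overline{A})$, together with the (trivial) fact that $\overline{A}=(HR/R)((A\cap K)R/R)$.

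Next I would set up the factorization at the level of $A$. Since $G=HK$ with $H\leqslant A$, Lemma~\ref{p9} gives $A=H(A\cap K)$. Applying Lemma~\ref{l2} with $B=K$ and $N=R$, we get that either $\Soc(\overline{A})\trianglelefteq(A\cap K)R/R$, or $HR/R$ is a nontrivial factor of $\overline{A}=A/R$. In the latter case we are done, since then $A/R=(HR/R)((A\cap K)R/R)$ is a nontrivial factorization of the almost simple group $A/R$ with solvable factor $HR/R$ (solvable because $H$ is). So it suffices to exclude the former alternative.

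To exclude $\Soc(\overline{A})\trianglelefteq(A\cap K)R/R$, I would invoke Proposition~\ref{Intersection}. The hypotheses $HL=KL=G$ let me, via Lemma~\ref{Embedding} (see also Remark~\ref{rmk1}), choose maximal subgroups $A',B$ of $G$ containing $H,K$ respectively that are both core-free in $G$; since $A$ was given as a maximal subgroup containing $H$, I may take $A'=A$, so $A$ is in particular core-free and maximal, and $B$ is a core-free maximal subgroup with $K\leqslant B$. Now $G=AB$ is a nontrivial maximal factorization with $A$ having exactly one unsolvable composition factor, so Proposition~\ref{Intersection} yields that $(A\cap B)R/R$ is core-free in $A/R$, i.e.\ does not contain $\Soc(A/R)$. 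Since $A\cap K\leqslant A\cap B$, a fortiori $(A\cap K)R/R$ does not contain $\Soc(\overline{A})$, which contradicts the former alternative of Lemma~\ref{l2}. Hence the latter alternative holds and the lemma is proved.

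The only delicate point is making sure the maximal subgroup $A$ in the statement may legitimately be used as the ``$A$'' of a maximal factorization $G=AB$ — that is, that it is core-free. This is exactly what the hypothesis $HL=KL=G$ buys us through Lemma~\ref{Embedding}(a): every maximal subgroup of $G$ containing $H$ (in particular $A$) is then automatically core-free, and similarly one obtains a core-free maximal $B\geqslant K$ so that $G=AB$ is a genuine nontrivial maximal factorization to which Proposition~\ref{Intersection} applies. No computation is needed; the argument is a short chain of the preceding lemmas.
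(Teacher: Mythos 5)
Your proof is correct and follows essentially the same route as the paper's: take a core-free maximal $B\geqslant K$ (using Lemma~\ref{Embedding} and the hypothesis $HL=KL=G$), form the nontrivial maximal factorization $G=AB$, apply Proposition~\ref{Intersection} to rule out $\Soc(A/R)\leqslant(A\cap B)R/R$, and then conclude via Lemma~\ref{l2}. The only cosmetic difference is that you instantiate Lemma~\ref{l2} with $B=K$ and then pass to $A\cap K\leqslant A\cap B$, whereas the paper applies Lemma~\ref{l2} directly to the factorization $G=HB$; both are valid and equivalent.
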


\begin{proof}
Take $B$ to be a maximal subgroup of $G$ containing $K$. By Lemma \ref{Embedding}, $B$ is core-free. Since $H\leqslant A$, we then obtain a nontrivial maximal factorization $G=AB$. Write $R=\Rad(A)$. Then $A/R$ is almost simple, and Proposition \ref{Intersection} asserts that $\Soc(A/R)\nleqslant(A\cap B)R/R$. Applying Lemma \ref{l2} with $N=R$ there, we know that $HR/R$ is a nontrivial factor of $A/R$. Besides, $HR/R$ is obviously solvable as $H$ is solvable. This completes the proof.
\end{proof}

Let $G$ be an almost simple group. We will use induction on the order of $G$ to finish the proof of Theorem \ref{SolvableFactor}. The base case for induction is settled by Propositions \ref{BothSolvable}--\ref{Sporadic} and \ref{p13}. Now we make the inductive hypothesis:

\begin{hypothesis}\label{Hypo}
For any almost simple group $G_1$ with order properly dividing $|G|$, if $G_1=H_1K_1$ for solvable subgroup $H_1$ and core-free subgroup $K_1$ of $G_1$, then the triple $(G_1,H_1,K_1)$ satisfies Theorem~\ref{SolvableFactor}.
\end{hypothesis}

For our convenience when applying the inductive hypothesis in subsequent chapters, we draw together some essential information in the next lemma under Hypothesis \ref{Hypo}.

\begin{lemma}\label{Reduction}
Suppose $G$ is an almost simple group satisfying Hypothesis~$\ref{Hypo}$ and $G=HK$ is a nontrivial factorization with $H$ solvable and $HL=KL=G$. If $A$ is a maximal subgroup of $G$ containing $H$ such that $A$ has the unique unsolvable composition factor $S$, then writing $R=\Rad(A)$ and $A/R=S.\calO$, we have the following statements.
\begin{itemize}
\item[(a)] $S$ is not an exceptional group of Lie type.
\item[(b)] $S\neq\POm_{2\ell}^-(q) $ for $\ell\geqslant4$.
\item[(c)] If $S=\PSL_\ell(q)$, then $H$ is as described in one of the following rows:
\[
\begin{array}{|l|l|l|}
\hline
\emph{row} & H\leqslant & \emph{remark}\\
\hline
1 & R.(((q^\ell-1)/((q-1)d)).\ell).\calO & d=(\ell,q-1) \\
2 & R.(q{:}((q-1)/d)).\calO & \ell=2,\ d=(2,q-1) \\
3 & R.(q^3{:}((q^3-1)/d).3).\calO & \ell=4,\ d=(4,q-1) \\
4 & R.(\Sy_2\times\Sy_3) & \ell=2,\ q=4 \\
5 & R.\Sy_4 & \ell=2,\ q\in\{5,7,11,23\} \\
6 & R.(\Sy_2\times\Sy_4) & \ell=2,\ q=9 \\
7 & R.(3^2{:}2.\Sy_4) & \ell=3,\ q=3 \\
8 & R.(2^4.(3\times\D_{10})).2 & \ell=3,\ q=4 \\
9 & R.(2^6{:}(56{:}7){:}6) & \ell=3,\ q=8 \\
10 & R.(\Sy_2\wr\Sy_4) & \ell=4,\ q=2 \\
11 & R.(\Sy_4\wr\Sy_2) & \ell=4,\ q=2 \\
12 & R.(2^4{:}5{:}4).\calO & \ell=4,\ q=3 \\
\hline
\end{array}
\]
\item[(d)] If $S=\PSp_{2\ell}(q)$ with $\ell\geqslant2$, then $H$ is as described in one of the following rows:
\[
\begin{array}{|l|l|l|}
\hline
\emph{row} & H\leqslant & \emph{remark}\\
\hline
1 & R.(q^{\ell(\ell+1)/2}{:}(q^\ell-1).\ell).\calO & q\ \emph{even} \\
2 & R.(q^{1+2}{:}((q^2-1)/2).2).\calO & \ell=2,\ q\ \emph{odd} \\
3 & R.(2^4{:}\AGL_1(5)) & \ell=2,\ q=3 \\
4 & R.(3^3{:}(\Sy_4\times 2)) & \ell=2,\ q=3 \\
5 & R.(3_+^{1+2}{:}2.\Sy_4) & \ell=2,\ q=3 \\
6 & R.(q^3{:}(q-1).\A_4).2 & \ell=2,\ q\in\{5,11\} \\
7 & R.(q^3{:}(q-1).\Sy_4).2 & \ell=2,\ q\in\{7,23\} \\
8 & R.(3_+^{1+2}{:}2.\Sy_4) & \ell=3,\ q=2 \\
9 & R.(3_+^{1+4}{:}2^{1+4}.\D_{10}).2 & \ell=3,\ q=3 \\
\hline
\end{array}
\]
\item[(e)] If $S=\PSU_{2\ell+1}(q)$ with $\ell\geqslant1$, then $\ell=1$ and $H$ is as described in one of the following rows:
\[
\begin{array}{|l|l|l|}
\hline
\emph{row} & H\leqslant & \emph{remark}\\
\hline
1 & R.(3_+^{1+2}{:}8{:}2) & q=3 \\
2 & R.(5_+^{1+2}{:}24{:}2) & q=5 \\
3 & R.(57{:}9.2) & q=8 \\
4 & R.(2^{3+6}{:}63{:}3).2 & q=8 \\
\hline
\end{array}
\]
\item[(f)] If $S=\PSU_{2\ell}(q)$ with $\ell\geqslant2$, then $H$ is as described in one of the following rows:
\[
\begin{array}{|l|l|l|}
\hline
\emph{row} & H\leqslant & \emph{remark}\\
\hline
1 & R.(q^{\ell^2}{:}((q^{2\ell}-1)/((q+1)d)).\ell).\calO & d=(2\ell,q+1) \\
2 & R.(3^3{:}(\Sy_4\times 2)) & \ell=2,\ q=2 \\
3 & R.(3_+^{1+2}{:}2.\Sy_4) & \ell=2,\ q=2 \\
4 & R.(3^4{:}\Sy_4).\calO & \ell=2,\ q=3 \\
5 & R.(3^4{:}3^2{:}4).\calO & \ell=2,\ q=3 \\
6 & R.(3_+^{1+4}.2.\Sy_4).\calO & \ell=2,\ q=3 \\
7 & R.(513{:}3).\calO & \ell=2,\ q=8 \\
\hline
\end{array}
\]
\item[(g)] If $S=\Omega_{2\ell+1}(q)$ with $\ell\geqslant3$ and $q$ odd, then $H$ is as described in one of the following rows:
\[
\begin{array}{|l|l|l|}
\hline
\emph{row} & H\leqslant & \emph{remark}\\
\hline
1 & R.((q^{\ell(\ell-1)/2}.q^\ell){:}((q^\ell-1)/2).\ell).\calO & \\
2 & R.(3^5{:}2^4.\AGL_1(5)).2 & \ell=3,\ q=3 \\
3 & R.(3^{6+4}{:}2^{1+4}.\AGL_1(5)).2 & \ell=4,\ q=3 \\
\hline
\end{array}
\]
\item[(h)] If $S=\POm_{2\ell}^+(q)$ with $\ell\geqslant4$, then $H$ is as described in one of the following rows:
\[
\begin{array}{|l|l|l|}
\hline
\emph{row} & H\leqslant & \emph{remark}\\
\hline
1 & R.(q^{\ell(\ell-1)/2}{:}((q^\ell-1)/d).\ell).\calO & d=(4,q^\ell-1) \\
2 & R.(3^6{:}2^4.\AGL_1(5)).\calO & \ell=4,\ q=3 \\
3 & R.([3^9].13.3).\calO & \ell=4,\ q=3 \\
\hline
\end{array}
\]
\end{itemize}
\end{lemma}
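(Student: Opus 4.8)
The plan is to feed the factorization into the inductive hypothesis by way of Lemma~\ref{l3}. Pick a maximal subgroup $B$ of $G$ containing $K$; since $HL=KL=G$, Lemma~\ref{Embedding}(a) makes $B$ core-free, so $G=AB$ is a nontrivial maximal factorization and $A=H(A\cap B)$ by Lemma~\ref{p9}. Writing $R=\Rad(A)$, Lemma~\ref{l3} gives that $\overline{A}:=A/R$ is almost simple with socle $S$ and that $\overline{A}=\overline{H}\,\overline{C}$ is a nontrivial factorization with $\overline{H}:=HR/R$ solvable, where $\overline{C}:=(A\cap B)R/R$. As $A$ is a proper subgroup of $G$, the order $|\overline{A}|=|A|/|R|$ is a proper divisor of $|G|$, so Hypothesis~\ref{Hypo} applies to the triple $(\overline{A},\overline{H},\overline{C})$, which therefore satisfies Theorem~\ref{SolvableFactor}. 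Finally $H\leqslant HR$ and $HR$ is the preimage in $A$ of $\overline{H}$, i.e.\ $HR=R.\overline{H}$; so once the possibilities for $\overline{H}$ are known, every possibility for $H$ is obtained as $H\leqslant R.\overline{H}$.

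\textbf{Parts~(a) and~(b).} These are immediate. If $S$ were an exceptional group of Lie type, then $\overline{A}$ would be an almost simple group with exceptional socle admitting a factorization with the solvable factor $\overline{H}$, contradicting Proposition~\ref{ExceptionalLie}; this proves~(a). For~(b), the simple group $\POm_{2\ell}^-(q)$ with $\ell\geqslant4$ is not isomorphic to any other nonabelian simple group, and a scan of the conclusion of Theorem~\ref{SolvableFactor} — Propositions~\ref{BothSolvable}, \ref{Alternating}, \ref{Sporadic} together with Tables~\ref{tab7} and~\ref{tab1} — shows that no almost simple group with such a socle admits a factorization with a solvable factor; hence $S\neq\POm_{2\ell}^-(q)$.

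\textbf{Parts~(c)--(h).} Here we argue uniformly: fix the claimed type of $S$ and enumerate all structures the solvable factor $\overline{H}$ of $\overline{A}$ can have according to Theorem~\ref{SolvableFactor}. Since $S$ is classical, $\overline{A}$ falls under part~(a), (b) or~(d) of Theorem~\ref{SolvableFactor} (part~(c) cannot occur, no classical group being sporadic). The ``generic'' first row of each part records $\overline{H}$ as the solvable factor of one of the infinite families: row~1 of Table~\ref{tab7} when $S=\PSL_\ell(q)$, row~2 of Table~\ref{tab7} when $S=\PSL_4(q)$, rows~3--9 of Table~\ref{tab7} for the symplectic, orthogonal and even-dimensional unitary socles, and the generic $\PSL_2(q)$ factorization of Proposition~\ref{BothSolvable}(a) for $S=\PSL_2(q)$; the ``$.\calO$'' in these rows encodes which outer automorphisms the factor can absorb, which must be checked case by case. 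The remaining rows come from the finitely many sporadic triples in Table~\ref{tab1}, the small-field cases of Proposition~\ref{BothSolvable} (Table~\ref{tab4}) and Proposition~\ref{Small1}, the prime-dimension Theorems~\ref{p-dimensionLinear} and~\ref{p-dimensionUnitary}, and Proposition~\ref{Alternating} reached via the isomorphisms $\PSL_2(4)\cong\PSL_2(5)\cong\A_5$, $\PSL_2(9)\cong\A_6$ and $\PSL_4(2)\cong\A_8$ (the relevant solvable factors then being transitive, $2$-homogeneous or $3$-homogeneous subgroups of the associated symmetric group, which accounts for the wreath-product entries). In part~(e) the assertion $\ell=1$ is itself part of this inspection: by Theorem~\ref{p-dimensionUnitary}, together with the absence of $\PSU_{2\ell+1}(q)$ with $2\ell+1$ composite from Tables~\ref{tab7} and~\ref{tab1} and from Proposition~\ref{BothSolvable}, the only odd-dimensional unitary socles arising are $\PSU_3(q)$ with $q\in\{3,5,8\}$. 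One also has to dispose of the remaining low-rank coincidences — $\PSL_3(2)\cong\PSL_2(7)$, $\POm_6^+(q)\cong\PSL_4(q)$, $\POm_5(q)\cong\PSp_4(q)$, $\PSU_4(q)\cong\POm_6^-(q)$, $\PSp_4(2)\cong\Sy_6$ — so that each factorization is counted under exactly one socle, and where necessary use the hypothesis $G=HK$ (via the core-freeness of $\overline{C}$ from Proposition~\ref{Intersection}) to discard superfluous possibilities.

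\textbf{Main obstacle.} The bulk of the work, and the genuine difficulty, lies in parts~(c)--(h): one must sweep the entire (lengthy) conclusion of Theorem~\ref{SolvableFactor}, isolate every occurrence of each classical type as a socle, and confirm for each that the structure of the solvable factor — after allowing the extension by $R=\Rad(A)$ — is captured by one of the listed rows, all the while keeping meticulous track of which outer automorphisms the factor may involve and of the numerous small-field exceptions (factors such as $\Sy_4$, $2^4{:}\AGL_1(5)$, $3_+^{1+2}{:}2.\Sy_4$, or $\GaL_1(q^\ell)$-type Singer normalizers). This is pure, careful bookkeeping rather than a new idea, but it is extensive.
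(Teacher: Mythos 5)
Your proposal is correct and takes essentially the same approach as the paper: reduce to the almost simple quotient $A/\Rad(A)$ via Lemma~\ref{l3}, invoke Hypothesis~\ref{Hypo} to apply Theorem~\ref{SolvableFactor} to the resulting factorization, then read off the possible solvable factors case by case (tracking the low-dimensional isomorphisms $\PSL_2(4)\cong\A_5$, $\PSL_2(9)\cong\A_6$, $\PSL_4(2)\cong\A_8$, etc., exactly as the paper does). The only cosmetic difference is that you partly re-derive the content of Lemma~\ref{l3} (via Lemma~\ref{Embedding}, Lemma~\ref{p9}, and Proposition~\ref{Intersection}) before citing it, which is harmless but redundant.
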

\begin{proof}
Let $\overline{A}=A/R$ and $\overline{H}=HR/R$. Then $\overline{A}$ is an almost simple group with socle $S$, $\calO=\overline{A}/S\leqslant\Out(S)$, and
$$
H\leqslant HR=R.\overline{H}\leqslant R.(\overline{H}\cap S).\calO.
$$
By Lemma~\ref{l3}, $\overline{H}$ is a nontrivial solvable factor of $\overline{A}$. By Hypothesis~\ref{Hypo}, the pair $(\overline{A},\overline{H})$ satisfies Theorem~\ref{SolvableFactor}. Thus the candidates for $(\overline{A},\overline{H})$ are from Propositions \ref{BothSolvable}, \ref{Alternating}, \ref{Sporadic} or Tables \ref{tab7}--\ref{tab1}. In particular, statements (a) and (b) of the lemma are both true. Next we prove statements (c)--(h) case by case.

{\bf Case 1.} Let $S=\PSL_\ell(q)$. If the solvable factor $\overline{H}$ of $\overline{A}$ is from Proposition \ref{BothSolvable}, then in view of $\PSL_2(4)\cong\PSL_2(5)\cong\A_5$, we see that one of rows 1--2, 4--5 and 7--9 appears.

Next, consider the solvable factors $\overline{H}$ from Proposition \ref{Alternating}, which occurs only for $(\ell,q)=(2,9)$ or $(4,2)$. For $(\ell,q)=(2,9)$, $S=\A_6$ and one of rows 1--2 and 6 appears. For $(\ell,q)=(4,2)$, $S=\A_8$ and one of rows 1, 3 and 10--11 appears.

Finally, the candidates from Tables \ref{tab7}--\ref{tab1} are just rows~1, 3 and 12 in statement (c). This completes the verification of statement (c).

{\bf Case 2.} Let $S=\PSp_{2\ell}(q)$ with $\ell\geqslant2$. Then $(\overline{A},\overline{H})$ either satisfies Proposition \ref{BothSolvable} or lies in Tables \ref{tab7}--\ref{tab1}. For the former, $(\ell,q)=(2,3)$ and one of rows 3--5 in statement (d) appears. For the latter, we have rows 1--2 and 4--9 in statement (d).

{\bf Case 3.} Assume that $S=\PSU_{2\ell+1}(q)$. Then $\ell=1$, and $(\overline{A},\overline{H})$ lies in rows 18--19 of Table~\ref{tab1} or row 8 of Table \ref{tab4}. This leads to the four rows in statement (e).

{\bf Case 4.} Let $S=\PSU_{2\ell}(q)$ with $\ell\geqslant2$. Then $(\overline{A},\overline{H})$ either satisfies Proposition \ref{BothSolvable} or is from Tables \ref{tab7}--\ref{tab1}. For the former, $(\ell,q)=(2,2)$ and one of rows 1--3 in statement (f) appears. For the latter, we have rows 1 and 4--7 in statement (f).

{\bf Case 5.} Let $S=\Omega_{2\ell+1}(q)$ with $\ell\geqslant3$. In this case, $(\overline{A},\overline{H})$ only arises in Tables \ref{tab7}--\ref{tab1}, from which we read off the three rows in statement (g).

{\bf Case 6.} Now let $S=\Omega^+_{2\ell}(q)$ with $\ell\geqslant4$. Then the only candidates in Tables \ref{tab7}--\ref{tab1} lead to the three rows in statement (h).
\end{proof}

\section{The case that $A$ has at least two unsolvable composition factors}

This section is devoted to the case when $A$ has at least two unsolvable composition factors. We can read off the candidates of maximal factorizations from Tables \ref{tabLinear}--\ref{tabOmegaPlus2}.

\begin{lemma}\label{l8}
Let $G$ be an almost simple group with socle $L$ classical of Lie type. If $G=AB$ is a nontrivial maximal factorization of $G$ such that $A$ has at least two unsolvable composition factors, then $A$ has exactly two unsolvable composition factors and $(L,A\cap L,B\cap L)$ lies in \emph{Table~\ref{tab11}}, where $a=\gcd(2,\ell,q-1)$.
\end{lemma}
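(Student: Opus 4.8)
The plan is to establish Lemma~\ref{l8} by a direct inspection of the classification of maximal factorizations of almost simple classical groups. The first step is to reduce the hypothesis on $A$ to a hypothesis on $A\cap L$: since $A/(A\cap L)$ embeds into $\Out(L)$, which is solvable by the Schreier conjecture (a consequence of CFSG), the unsolvable composition factors of $A$ are exactly those of $A\cap L$. Thus, by Theorem~\ref{Maximal}, it suffices to run through the rows of Tables~\ref{tabLinear}--\ref{tabOmegaPlus2}, and for each row determine whether the designated factor $A$ satisfies that $A\cap L$ has at least two unsolvable composition factors. Here one must be attentive that the hypothesis is not symmetric in $A$ and $B$, so both columns of each row have to be examined in turn.

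The core of the argument is the resulting case analysis. For each candidate I would record the precise isomorphism type of $A\cap L$ --- invoking Lemma~\ref{l1} to replace the shorthand $X_A$ of LPS TABLE~1 by its true structure when necessary --- and count the unsolvable composition factors using the structure of the Aschbacher classes $\mathcal{C}_1$--$\mathcal{C}_8$ from \cite{kleidman1990subgroup}. The guiding observation is that a maximal subgroup of a classical simple group acquires a second unsolvable composition factor only when it has one of the following shapes: a parabolic $\Pa_k$ with $2\leqslant k\leqslant n-2$ whose Levi factor has two non-solvable classical direct factors; a $\mathcal{C}_1$ stabilizer $\N_k$ of a non-degenerate subspace that splits the space into two non-degenerate pieces each carrying a non-solvable classical group; a $\mathcal{C}_4$ tensor-decomposition subgroup $X\otimes Y$ with $X$ and $Y$ both non-solvable; or an imprimitive $\mathcal{C}_2$ subgroup $H_0\wr\Sy_t$ with $H_0$ non-solvable. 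In the first three shapes the count is exactly $2$, while the shapes that could contribute three or more --- a $\mathcal{C}_2$ subgroup with $t\geqslant3$ non-solvable blocks, or a $\mathcal{C}_7$ tensor-induced subgroup --- simply do not occur as a factor in any of the tables. Carrying this out, the only surviving entries are the infinite family $L=\POm_{4\ell}^+(q)$ with $A\cap L=(\PSp_2(q)\otimes\PSp_{2\ell}(q)).a$ and $B\cap L=\N_1=\Omega_{4\ell-1}(q)$ coming from Lemma~\ref{l1}(d) (which is also where $a=\gcd(2,\ell,q-1)$ comes from), together with a small number of further rows; assembling them produces Table~\ref{tab11}. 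In particular every surviving $A$ has exactly two unsolvable composition factors, which is the first assertion of the lemma.

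The main obstacle I anticipate is the sheer bookkeeping across all of Tables~\ref{tabLinear}--\ref{tabOmegaPlus2}, and in particular not overlooking the small-field and triality rows. For instance, exceptional isomorphisms such as $\PSL_4(q)\cong\POm_6^+(q)$, $\PSp_4(q)\cong\Omega_5(q)$, $\POm_6^-(q)\cong\PSU_4(q)$ and $\Omega_4^+(q)\cong\SL_2(q)\circ\SL_2(q)$ can turn a factor that appears to have a single unsolvable composition factor into one with two (and, in the $\POm_8^+(q)$ triality cases, make several distinct-looking rows coincide), so these have to be checked individually, with \magma used for the genuinely small groups occurring as sporadic rows. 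A secondary subtlety is pinning down the normalising multiplicities --- the ``$.a$'' factors --- exactly, for which Lemma~\ref{l1} is the decisive tool.
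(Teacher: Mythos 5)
Your proposal is correct and follows essentially the same approach as the paper: Lemma~\ref{l8} is obtained by direct inspection of Tables~\ref{tabLinear}--\ref{tabOmegaPlus2} (the restatement of the Liebeck--Praeger--Saxl classification), reducing via the Schreier conjecture to counting unsolvable composition factors of $A\cap L$. Your treatment of the asymmetry in $A$ and $B$, the exceptional isomorphisms such as $\Omega_4^+(q)\cong\SL_2(q)\circ\SL_2(q)$, and the role of Lemma~\ref{l1}(d) in pinning down the multiplier $a$ all match the bookkeeping the paper leaves implicit.
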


\begin{table}[htbp]
\caption{}\label{tab11}
\centering
\begin{tabular}{|l|l|l|l|}
\hline
row & $L$ & $A\cap L$ & $B\cap L$\\
\hline
1 & $\Sp_{4\ell}(2^f)$, $f\ell\geqslant2$ & $\Sp_{2\ell}(2^f)\wr\Sy_2$ & $\GO_{4\ell}^-(2^f)$\\
2 & $\Sp_{4\ell}(4)$, $\ell\geqslant2$ & $\Sp_2(4)\times\Sp_{4\ell-2}(4)$ & $\Sp_{2\ell}(16).2$\\
3 & $\Sp_4(2^f)$, $f\geqslant2$ & $\GO_4^+(2^f)$ & $\Sp_2(4^f).2$\\
4 & $\Sp_4(2^f)$, $f\geqslant3$ odd & $\GO_4^+(2^f)$ & $\Sz(2^f)$\\
5 & $\Sp_6(2^f)$, $f\geqslant2$ & $\Sp_2(2^f)\times\Sp_4(2^f)$ & $\G_2(2^f)$\\
6 & $\Sp_4(4)$ & $\GO_4^+(4)$ & $\GO_4^-(4)$\\
\hline
7 & $\POm_{4\ell}^+(q)$, $\ell\geqslant3$, $q\geqslant4$ & $(\PSp_2(q)\times\PSp_{2\ell}(q)).a$ & $\N_1$\\
8 & $\POm_8^+(q)$, $q\geqslant5$ odd & $(\PSp_2(q)\times\PSp_4(q)).2$ & $\Omega_7(q)$\\
9 & $\Omega_8^+(2)$ & $(\SL_2(4)\times\SL_2(4)).2^2$ & $\Sp_6(2)$\\
10 & $\Omega_8^+(4)$ & $(\SL_2(16)\times\SL_2(16)).2^2$ & $\Sp_6(4)$\\
\hline
\end{tabular}
\end{table}

\begin{remark}
In row 6 of Table~\ref{tab11}, either $A,B$ are both $\mathcal{C}_8$ subgroups, or $A$ is in $\mathcal{C}_2$ and $B$ is in $\mathcal{C}_3$.
\end{remark}

Now we analyze the candidates in Table~\ref{tab11} under Hypothesis \ref{Hypo}.

\begin{proposition}\label{p1}
Let $G$ be an almost simple group satisfying \emph{Hypothesis \ref{Hypo}} with socle $L$ classical of Lie type. If $G=HK$ is a nontrivial factorization with $H$ solvable and $HL=KL=G$, and a maximal subgroup $A$ of $G$ containing $H$ has at least two unsolvable composition factors, then one of the following holds.
\begin{itemize}
\item[(a)] $L=\Sp_4(4)$, and $(L,H\cap L,K\cap L)$ lies in row~\emph{3} or \emph{4} of \emph{Table \ref{tab7}}.
\item[(b)] $L=\Omega_8^+(2)$ or $\Omega_8^+(4)$, and $(L,H\cap L,K\cap L)$ lies in row~\emph{9} of \emph{Table \ref{tab7}}.
\end{itemize}
\end{proposition}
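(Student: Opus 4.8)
The plan is to pass to a maximal factorization and then work through the ten rows of Table~\ref{tab11}. First I would choose a maximal subgroup $B$ of $G$ with $K\leqslant B$; since $HL=KL=G$, Lemma~\ref{Embedding}(a) shows $B$ is core-free, and as $H\leqslant A$ we obtain a nontrivial maximal factorization $G=AB$ in which $A$ has at least two unsolvable composition factors. By Lemma~\ref{l8}, $A$ then has exactly two, and $(L,A\cap L,B\cap L)$ is one of the ten rows of Table~\ref{tab11}; moreover Lemma~\ref{p9} gives a factorization $A=H(A\cap B)$ with solvable factor $H$. From $G=HK$, $K\leqslant B$ and Lemma~\ref{p4} I record the two facts I will play off against each other: $|H|\geqslant|G|/|B|$, and $|G|_{p'}/|B|_{p'}$ divides $|H|_{p'}$, where $p$ is the defining characteristic of $L$.

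For rows~1 (with $\ell\geqslant2$), 2, 5, 7 and 8 these two facts are incompatible. Computing $|G|/|B|$ from the orders in \cite[TABLE~2.1]{liebeck1990maximal}, its $p'$-part is a product of cyclotomic values $q^k-1$ which contains a factor large enough that every element of order a primitive prime divisor $r$ of it — $r$ existing by Zsigmondy's theorem (Theorem~\ref{Zsigmondy}) and Lemma~\ref{l6}, the finitely many Zsigmondy exceptions dealt with separately — lies inside a cyclic maximal torus of one of the two classical composition factors of $A$. Since $r\mid|H|$ by the second fact and $H$ is solvable, $H$ is forced into the normaliser of such a torus (times a solvable subgroup of the other composition factor and a bounded outer contribution), and the order of any such subgroup is far below $|G|/|B|$, contradicting the first fact. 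This rules out all of these rows.

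The remaining rows are the boundary cases. In rows~1 (with $\ell=1$), 3, 4 and 6 we have $L=\Sp_4(q)$ with $q$ even and $A\cap L=\GO_4^+(q)\cong\Sp_2(q)\wr\Sy_2$; here the same counting, sharpened by finer arithmetic (tracking exact prime-power exponents, using the explicit subgroup structure of $\SL_2(q)$, and using that $|H\cap K|$ must be an integer), forces $q=4$, the cases $q=8,16$ surviving the crudest estimate but eliminated by the refinement or by a short computation. For $L=\Sp_4(4)$, the primitive prime divisor $17$ of $q^4-1=2^8-1$ must divide $|K|$, which puts $K$ inside a conjugate of $\Sp_2(16){:}2$; an order count then gives $|H|=120$, and one verifies that a solvable subgroup of $\GO_4^+(4)$ of order $120$ with the forced $p'$-part also lies in $\Pa_1$ or $\Pa_2$, so that $(L,H\cap L,K\cap L)$ is exactly row~3 or row~4 of Table~\ref{tab7}. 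In rows~9 and~10, $L=\Omega_8^+(2)$ or $\Omega_8^+(4)$, and the analogous analysis — or a direct computation, these being specific groups of moderate size — identifies the factorization as row~9 of Table~\ref{tab7} with $q=2$, respectively $q=4$.

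The main obstacle is exactly this sharpening in the boundary rows: the bare $p'$-part estimate does not decide them, so one must control the exact prime-power exponents occurring in $|G|$, $|H|$ and $|K|$, invoke the list of solvable subgroups of $\SL_2(q)$ together with the maximal subgroups of $\Sp_4(q)$ containing them, and keep careful account of the outer automorphisms (in characteristic $p$ these are $p$-elements apart from field automorphisms, so they barely affect the $p'$-parts but still complicate the passage between $L$ and $G$). Since the surviving socles $\Sp_4(4)$, $\Omega_8^+(2)$ and $\Omega_8^+(4)$ are small, this final bookkeeping can, where needed, be discharged in \magma. One could instead try to apply Lemma~\ref{l2} to an almost simple quotient of $A$, but when $A\cap B$ projects onto the composition factors of $A$ this yields only the trivial alternative, so the order estimates above seem to be the essential tool.
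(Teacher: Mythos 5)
Your high-level strategy matches the paper's: embed into a maximal factorization, invoke Table~\ref{tab11}, and play $|G|/|B|\mid|H|$ against what a solvable subgroup of $A$ can look like, with Zsigmondy primes as the pivot. The direct order estimate you propose does dispose of rows 2, 5, 7 and 8 (indeed one can check that in each of these the Hall $r'$-part of any solvable subgroup of $A$ meeting the relevant primitive prime divisor $r$ falls well short of $|G|/|B|$; the paper instead deduces the absence of $r$ from $|H|$ via Lemma~\ref{l2} and Hypothesis~\ref{Hypo}, which is cleaner but not strictly necessary). However, there are two places where the proposal, as written, genuinely does not close.

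First, for the rows with $L=\PSp_4(q)$ (row~1 with $\ell=1$ and rows 3, 4, 6) your ``sharpened arithmetic'' is not enough. Write $A\cap L=(T_1\times T_2){.}\Sy_2$ with $T_i\cong\SL_2(q)$ and $X=(H\cap L)\cap(T_1\times T_2)$. Without a constraint linking the two projections $X_1,X_2$ of $X$, the worst case has $X_1$ a Borel and $X_2$ a torus normalizer, and one checks that the resulting divisibility $q^2(q^2-1)/4\mid e|X|$ still admits $q=8$ (and more). The indispensable extra ingredient is structural, not arithmetic: since $A\cap B$ lies in the index-two subgroup $T$ with $T\cap L=T_1\times T_2$, the hypothesis $G=HB$ forces $H\nleqslant T$ (else $A=T(A\cap B)$ would be a factorization into subgroups of $T$), so some element of $H$ swaps $T_1$ and $T_2$, giving $X_1\cong X_2$ and hence $|X|\di|X_1|^2$. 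Only then do the cases $|X_1|\di q(q-1)$, $2(q\pm1)$ or $12$ lead to a contradiction for all $q\geqslant4$ except the already-computed $\Sp_4(4)$. You do not mention this swap argument, and it cannot be replaced by tracking prime-power exponents more finely.

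Second, for row~1 with $\ell\geqslant2$ the claim that the ``same counting'' dispatches it is wrong even after the above refinement. For $L=\Sp_8(2)$ ($\ell=2$, $f=1$) one has $|G|/|B|=120$, while a solvable subgroup of $\Sp_4(2)\wr\Sy_2$ with the required diagonal structure and order divisible by $5$ can reach order $800$; so no pure order estimate resolves this. The paper instead applies Lemma~\ref{l2} to $\overline{T}=T/T_1$ — here $A\cap B=(\GO_{2\ell}^-\times\GO_{2\ell}^+){:}e$ does \emph{not} project onto $\Sp_{2\ell}(2^f)$ — yielding a factorization of $\Sp_{2\ell}(2^f).e$ by a solvable group and $\GO_{2\ell}^+(2^f).e$, to which Hypothesis~\ref{Hypo} applies; this pins down $(\ell,f)\in\{(2,1),(3,1)\}$, and a second swap argument plus \magma{} then kills these. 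Your closing remark that Lemma~\ref{l2} ``yields only the trivial alternative'' when $A\cap B$ projects onto the composition factors is precisely backwards for the rows at issue: in rows 1 ($\ell\geqslant2$), 2, 5, 7 and 8 the intersection $A\cap B$, as recorded in \cite{liebeck1990maximal}, does \emph{not} surject onto the relevant factor, which is exactly why the paper's inductive route is so effective there.
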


\begin{proof}
Take $B$ to be a maximal subgroup of $G$ containing $K$. By Lemma \ref{Embedding}, $A,B$ are both core-free in $G$. Hence $G=AB$ is a nontrivial maximal factorization, and by Lemma \ref{l8}, $(L,A\cap L,B\cap L)$ lies in Table~\ref{tab11}. If $L=\Sp_4(4)$, then computation in \magma \cite{bosma1997magma} shows that $(L,H\cap L,K\cap L)$ lies in row 3 or 4 of Table \ref{tab7}, as described in part (a) of the proposition. For $L=\Omega_8^+(2)$ or $\Omega_8^+(4)$, computation in \magma \cite{bosma1997magma} shows that part (b) of the proposition holds. Thus we assume that $L\not\in\{\Sp_4(4),\Omega_8^+(2),\Omega_8^+(4)\}$, and in particular, none of rows 6 and 9--10 of Table \ref{tab11} appears.

{\bf Case 1.} Suppose $L=\PSp_4(q)$. Then it is seen in Table \ref{tab11} that $q=2^f\geqslant4$ and one of rows 1 (with $\ell=1$), 3 and 4 appears. Consequently, $A\cap L\cong\SL_2(q)\wr\Sy_2$, $B\cap L\cong\SL_2(q^2).2$ or $\Sz(q)$, and $G/L=\Z_e\leqslant\Z_f$.
Viewing
\[
|L|/|B\cap L|=
\left\{\begin{array}{ll}
q^2(q^2-1)/2, & \mbox{if $B\cap L\cong\SL_2(q^2).2$},\\
q^2(q^2-1)(q+1), & \mbox{if $B\cap L=\Sz(q)$},
\end{array}\right.
\]
we conclude that $|L|/|B\cap L|$ is divisible by $q^2(q^2-1)/2$, and so is $|H\cap L||G/L|$ by Lemma \ref{p4}.

Write $A\cap L=(T_1\times T_2).\Sy_2$, where $T_1\cong T_2\cong\SL_2(q)$, and let $X=(H\cap L)\cap(T_1\times T_2)$. Then $X$ is a solvable subgroup of $T_1\times T_2$, and $X$ has index at most $2$ in $H\cap L$. Thus $|X||G/L|$ is divisible by $q^2(q^2-1)/4$. Since $G/L=\Z_e\leqslant\Z_f$, it follows that $e|X|$ is divisible by $q^2(q^2-1)/4$.
Let $X_i=XT_i/T_i\lesssim T_{3-i}$ for $i=1,2$. Then $X_1,X_2$ are solvable, and $X\lesssim X_1\times X_2$.

Note that $A$ has a subgroup $T$ of index $2$ such that $T\cap L\cong\SL_2(q)\times\SL_2(q)$. If $H\leqslant T$, then $G=TB$ and $A=T(A\cap B)$, contrary to the facts that $A\cap B\leqslant T$ (see \cite[3.2.4(b) and 5.1.7(b)]{liebeck1990maximal}). Hence $H\nleqslant T$, and thus there exists an element $g\in H\setminus T$ interchanging $T_1$ and $T_2$ by conjugation. Since $X^g=(H\cap L)^g\cap(T_1\times T_2)^g=X$, $g$ induces an isomorphism between $X_1$ and $X_2$. As a consequence, $|X|$ divides $|X_1||X_2|=|X_1|^2$.

Any solvable subgroup of $\SL_2(2^f)$ has order dividing $q(q-1)$, $2(q-1)$, $2(q+1)$ or $12$, refer to \cite[Chapter~2, 8.27]{huppert1967}. Hence $|X|$ divides $(q(q-1))^2$, $(2(q-1))^2$, $(2(q+1))^2$ or $144$. Since $q^2(q^2-1)/4$ divides $e|X|$ and $e$ divides $f$, we deduce that $q^2(q^2-1)/4$ divides $fq^2(q-1)^2$, $4f(q-1)^2$, $4f(q+1)^2$ or $144f$.
If $q^2(q^2-1)/4$ divides $fq^2(q-1)^2$, or equivalently $q+1$ divides $4f(q-1)$, then observing $(q+1,4(q-1))=1$, we have $q+1\di f$, which is impossible. In the same vein, we see that the others are impossible too. This excludes the possibility of $L=\PSp_4(q)$, and especially, rows 3 and 4 of Table \ref{tab11}.


To complete the proof, we still need to exclude rows 1, 2, 5 and 7--8.

{\bf Case 2.} Suppose that $(L,A\cap L,B\cap L)$ lies in one of row 1 of Table~\ref{tab11}. From case 1 we see that $\ell\geqslant2$. Let $G/L=\Z_e\leqslant\Z_f$ and $T=(T_1\times T_2){:}e$, where $T_1\cong T_2\cong\Sp_{2\ell}(2^f)$. Observe that $A\cap B=(\GO_{2\ell}^-(2^f)\times\GO_{2\ell}^+(2^f)){:}e$ (see \cite[3.2.4(b)]{liebeck1990maximal}). Without loss of generality, suppose $A\cap B\cap T_1=\GO_{2\ell}^-(2^f)$. It derives from $G=HB$ that $A=H(A\cap B)$, and thus $T=(H\cap T)(A\cap B)$ since $A\cap B\leqslant T$. Now consider the factorization $\overline{T}=X_1\overline{A\cap B}$, where $\overline{T}=T/T_1=\Sp_{2\ell}(2^f).e$, $\overline{A\cap B}=(A\cap B)T_1/T_1=\GO_{2\ell}^+(2^f).e$ and $X_1=(H\cap T)T_1/T_1$ is solvable. By Hypothesis \ref{Hypo}, this factorization must satisfy the conclusion of Theorem \ref{SolvableFactor}. Hence we have $(\ell,f)=(2,1)$ or $(3,1)$.

Now $L=\Sp_{4\ell}(2)$ with $\ell\in\{2,3\}$. Replacing $T_1$ with $T_2$ in the above paragraph, we obtain the factorization $\Sp_{2\ell}(2)=X_2\GO_{2\ell}^-(2)$ along the same lines, where $X_2=(H\cap T)T_2/T_2$ is solvable. Moreover, since $B$ is not transitive on the $2\ell$-dimensional non-degenerate symplectic subspaces, we know that $G\neq TB$. Consequently, $H\nleqslant T$ as $G=HB$. It follows that each element in $H\setminus T$ induces an isomorphism between $X_1$ and $X_2$. However, computation in \magma \cite{bosma1997magma} shows that there do not exist two factorizations $\Sp_{2\ell}(2)=X_{(3-\varepsilon)/2}\GO_{2\ell}^\varepsilon(2)$, where $\varepsilon=\pm1$, such that $X_1$ and $X_2$ are both solvable and have the same order. This contradiction implies that row 1 of Table~\ref{tab11} cannot appear.

{\bf Case 3.} Suppose that $(L,A\cap L,B\cap L)$ lies in one of rows 2, 5, 7 and 8 of Table~\ref{tab11}. Then $A\cap L$ has a subgroup $\PSp_2(q)\times T$ of index at most $2$, where $T=\PSp_{2k}(q)$ with $k\geqslant2$ and $q\geqslant4$. We deduce from Zsigmondy's theorem that $q^{2k}-1$ has a primitive prime divisor $r$.
Write $N=\C_A(T)$, and note $T\vartriangleleft A$ and $N\cap T=1$. Then $N\vartriangleleft\Nor_A(T)=A$, and $T\cong NT/N\leqslant A/N\lesssim\Aut(T)$, which means that $A/N$ is an almost simple group with socle $\PSp_{2k}(q)$. By $N\cong NT/T\leqslant A/T$ we know that $|N|$ divides $2|\PSp_2(q)||\Out(L)|$. Consequently, $|N|$ is not divisible by $r$.

As the intersection $A\cap B$ is determined in \cite{liebeck1990maximal} (see 3.2.1(a), 5.23(b), 3.6.1(d) and 5.1.15 there), it follows readily that $(A\cap B)N/N$ does not contain $\PSp_{2k}(q)$.
Hence by Lemma~\ref{l2}, $HN/N$ is a nontrivial factor of $A/N$. Then since $HN/N$ is solvable, Hypothesis \ref{Hypo} implies that either $HN/N\cap\PSp_{2k}(q)\leqslant q^{k(k+1)/2}{:}(q^k-1).k$, or $q\in\{5,7,11,23\}$ and $HN/N\cap\PSp_{2k}(q)\leqslant q^3{:}(q-1).\Sy_4$. Thereby we conclude that $|HN/N|$ is not divisible by $r$.
Since $|H|=|HN/N||H\cap N|$ divides $|HN/N||N|$, this implies that $|H|$ is not divisible by $r$. However, the factorization $G=HB$ requires $|H|$ to be divisible by $|G|/|B|$ and thus by $r$, a contradiction.
\end{proof}


\chapter[Proof]{Proof of Theorem~\ref{SolvableFactor}}


We will complete the proof of Theorem~\ref{SolvableFactor} by induction on the order of the almost simple group. Thus throughout the first five sections of this chapter, we assume Hypothesis~\ref{Hypo}.

\section{Linear groups}

The main result of this section is the following proposition, which verifies Theorem \ref{SolvableFactor} for linear groups under Hypothesis~\ref{Hypo}.

\begin{proposition}\label{Linear}
Let $G$ be an almost simple group with socle $L=\PSL_n(q)$ not isomorphic to $\A_5$, $\A_6$ or $\A_8$. If $G=HK$ is a nontrivial factorization with $H$ solvable, $K$ unsolvable and $HL=KL=G$, then under \emph{Hypothesis \ref{Hypo}}, one of the following holds.
\begin{itemize}
\item[(a)] $H\cap L\leqslant\hat{~}\GL_1(q^n).n$, and $q^{n-1}{:}\SL_{n-1}(q)\trianglelefteq K\cap L\leqslant\Pa_1$ or $\Pa_{n-1}$.
\item[(b)] $L=\PSL_4(q)$, $H\cap L\leqslant q^3{:}((q^3-1)/(4,q-1)).3$, and $\PSp_4(q)\trianglelefteq K\cap L\leqslant\PSp_4(q).a$, where $a=(4,q+1)/(2,q-1)\leqslant2$.
\item[(c)] $L=\PSL_2(q)$ with $q\in\{11,16,19,29,59\}$ or $\PSL_4(q)$ with $q\in\{3,4\}$ or $\PSL_5(2)$, and $(L,H\cap L,K\cap L)$ is as described in \emph{Table \ref{tab13}}.
\end{itemize}
\end{proposition}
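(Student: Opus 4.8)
Because $HL=KL=G$, Lemma~\ref{Embedding}(a) shows that any maximal subgroups $A,B$ of $G$ with $H\leqslant A$ and $K\leqslant B$ are core-free, so $G=AB$ is a nontrivial maximal factorization and, by Theorem~\ref{Maximal}, $(L,A\cap L,B\cap L)$ occurs in \emph{Table~\ref{tabLinear}} (the tiny $L\cong\PSL_2(q)$ having a solvable maximal factor are already covered by Proposition~\ref{p13} and Theorem~\ref{p-dimensionLinear}). Interchanging $A$ and $B$ we put $H$ on the left; since $H$ is solvable and $K$ is unsolvable the roles are then forced in every row, as a solvable maximal subgroup --- in particular a Singer-type subgroup $\hat{~}\GL_1(q^n){:}n$, which is metacyclic --- can only be the one containing $H$. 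The rows of \emph{Table~\ref{tabLinear}} split into: (i) a Singer/$\mathcal{C}_3$-type factor paired with $\Pa_1$ or $\Pa_{n-1}$; (ii) a factor $\PSp_n(q).a$ ($n$ even) paired with $\Pa_1$ or $\Pa_{n-1}$; (iii) a factor $\G_2(q)$ ($n=6$, $q$ even) paired with $\Pa_1$ or $\Pa_5$; and (iv) finitely many small socles, among them $\PSL_4(2)\cong\A_8$ (excluded) together with $\PSL_4(3)$, $\PSL_4(4)$ and $\PSL_5(2)$.

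\textbf{Case (i).} If $A\cap L=\hat{~}\GL_1(q^n){:}n$ then, up to the graph automorphism, $B\cap L=\Pa_1$ or $\Pa_{n-1}$, except in the few small pairs $(n,q)$ for which a Singer cycle is also transitive on an intermediate $k$-space (this produces $\PSL_5(2)$ with $K\cap L\leqslant\Pa_2$ and a handful of similar examples, placed in~(c) after a \magma\ check). In the main subcase $H\cap L\leqslant\hat{~}\GL_1(q^n).n$ and $K\cap L\leqslant\Pa_1$ or $\Pa_{n-1}$, so Lemma~\ref{LowerLinear} applies and gives either $q^{n-1}{:}\SL_{n-1}(q)\trianglelefteq K\cap L$, which is conclusion~(a), or $(n,q)\in\{(2,4),(3,2),(3,3),(3,4),(3,8)\}$ with $K$ solvable, which is impossible here. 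So Case~(i) yields precisely~(a).

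\textbf{Cases (ii)--(iii).} Here one of $A,B$ is a parabolic $\Pa_1$ (or $\Pa_{n-1}$), whose unique unsolvable composition factor is $\PSL_{n-1}(q)$, and the other factor ($\PSp_n(q).a$ or $\G_2(q)$) also has a unique unsolvable composition factor. First, $H$ cannot lie in the $\PSp_n(q)$- or $\G_2(q)$-factor: if it did, Lemma~\ref{Reduction} applied to that factor would force $|H\cap L|$ to be coprime to every primitive prime divisor of $q^n-1$, whereas $|G|/|K|=|L|/|K\cap L|$ is a multiple of $(q^n-1)/(q-1)$, hence divisible by such a prime, and by Lemma~\ref{p4} this prime then divides $|H\cap L||G/L|$ and so (being coprime to $|\Out(L)|$) divides $|H\cap L|$; this contradiction fails only when $q^n-1$ has no primitive prime divisor, i.e. $(n,q)=(2,6)$, giving $\PSL_6(2)$, which we defer. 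Hence $H\leqslant\Pa_1$ (or $\Pa_{n-1}$), and Lemma~\ref{Reduction}(c) with $S=\PSL_{n-1}(q)$ confines $H$ to row~1 of the table there, apart from finitely many rows with $q$ small; translating into $L$ (absorbing the central torus of the parabolic) this gives, for $n=4$, exactly the bound $H\cap L\leqslant q^3{:}((q^3-1)/(4,q-1)).3$ of part~(b). Since $K\leqslant B$ is unsolvable, a short index estimate via Theorem~\ref{l7} forces $K\cap L$ to contain the socle of $B\cap L$, and comparing $|G|/|H|$ with $|B|\geqslant|K|$ (Lemma~\ref{p4}) shows that $G=HK$ can hold only for $n=4$ with the $\PSp_4(q)$-row, giving $\PSp_4(q)\trianglelefteq K\cap L\leqslant\PSp_4(q).a$ as in~(b); for $n\geqslant6$, including the $\G_2(q)$-row, one has $|G|/|H|>|B|\geqslant|K|$, a contradiction. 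The exceptional small-$q$ rows of Lemma~\ref{Reduction}(c) have $n-1\in\{2,3,4\}$ with $q$ small; these, together with the socles in~(iv) and $\PSL_6(2)$, are finitely many and are settled in \magma, producing the entries of \emph{Table~\ref{tab13}} and hence~(c). In particular, for $L=\PSL_4(q)$ with $q\in\{3,4\}$ one finds $K\cap L$ strictly contained in $\PSp_4(q)$, which is why these land in~(c) rather than~(b), and for $L=\PSL_3(4)$ one checks that no factorization with $H$ solvable and $K$ unsolvable exists at all.

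\textbf{Expected main difficulty.} The crux is Cases~(ii)--(iii): one must first verify that the relevant maximal subgroup has a \emph{unique} unsolvable composition factor so that the inductive hypothesis can be fed in through Lemma~\ref{Reduction}, and then run the order and primitive-prime-divisor bookkeeping (Lemma~\ref{p4}, Theorem~\ref{l7}, Zsigmondy's theorem) carefully enough to decide, family by family, whether $K\cap L$ is forced to contain $q^{n-1}{:}\SL_{n-1}(q)$ or $\PSp_4(q)$ --- giving~(a) or~(b) --- or whether $G=HK$ is impossible, while isolating exactly the finitely many small $(n,q)$ that escape these estimates and must be treated by computer.
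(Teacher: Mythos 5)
Your high-level plan — embed in a maximal factorization, split by rows of Table~\ref{tabLinear}, apply Lemma~\ref{Reduction} to feed in the inductive hypothesis, and finish with Lemma~\ref{LowerLinear}, Theorem~\ref{l7}, and primitive prime divisors — is the right framework and matches the paper's route in spirit. But there are several genuine gaps.

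First, your Case~(i) treats only $a=1$, i.e.\ the Singer case $\hat{~}\GL_1(q^n){:}n$. Row~1 of Table~\ref{tabLinear} has $\hat{~}\GL_a(q^b).b$ with $ab=n$ and $b$ prime, and for $a\geqslant2$ this subgroup is \emph{unsolvable}. Consequently the orientation is not ``forced'': the $\mathcal{C}_3$ factor may contain $K$ rather than $H$. The paper therefore has a separate Case~1 with $B\cap L=\hat{~}\GL_a(q^b).b$ containing $K$ and $A\cap L\in\{\Pa_1,\Pa_{n-1},\Stab(V_1\oplus V_{n-1})\}$ containing $H$, ruled out by a $p$-part divisibility estimate culminating in $q^{n(n-a)/2-n+1}\mid b(n-1)f$. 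You have no analogue of this. Moreover, even when $H$ lies in $\hat{~}\GL_a(q^b).b$ with $a\geqslant2$, one must first apply Lemma~\ref{Reduction}(c) with socle $\PSL_a(q^b)$ to conclude $H\leqslant\PGaL_1(q^n)$ before Lemma~\ref{LowerLinear} applies; you skip directly to the $a=1$ picture.

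Second, rows~3 and~4 of Table~\ref{tabLinear} feature $\Stab(V_1\oplus V_{n-1})$ (when $G\nleqslant\PGaL_n(q)$). These candidates do not appear anywhere in your enumeration; the paper handles them in its Cases~1, 3 and~4.

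Third, your Case~(iii) with a factor $\G_2(q)$ for $L=\PSL_6(q)$, $q$ even, does not exist: $\G_2(q)$ does not occur in the maximal factorizations of a linear socle in Table~\ref{tabLinear}. This is presumably a confusion with the symplectic table. Since the case is spurious, the argument you run in Cases~(ii)--(iii) about it is moot, but it does not replace the genuine missing cases.

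Finally, your exclusion of $n\geqslant6$ in Case~(ii) by ``comparing $|G|/|H|$ with $|B|\geqslant|K|$'' is not substantiated and likely fails as a crude inequality; the paper instead takes a primitive prime divisor $r$ of $p^{f(n-3)}-1$ (Zsigmondy), observes that $r$ divides $|L|/|B\cap L|$, and then shows via Lemma~\ref{Reduction}(c) that $r$ does not divide $|H|$, contradicting Lemma~\ref{p4}. Your Cases~(ii)--(iii) therefore need to be replaced by the paper's finer PPD bookkeeping, and Cases~1, 3, 4 of the paper (the $\mathcal{C}_3$-factor-containing-$K$ case and the $\Stab(V_1\oplus V_{n-1})$ rows) need to be added.
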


\begin{table}[htbp]
\caption{}\label{tab13}
\centering
\begin{tabular}{|l|l|l|l|}
\hline
row & $L$ & $H\cap L\leqslant$ & $K\cap L$\\
\hline
1 & $\PSL_2(11)$ & $11{:}5$ & $\A_5$\\
2 & $\PSL_2(16)$ & $\D_{34}$ & $\A_5$\\
3 & $\PSL_2(19)$ & $19{:}9$ & $\A_5$\\
4 & $\PSL_2(29)$ & $29{:}14$ & $\A_5$\\
5 & $\PSL_2(59)$ & $59{:}29$ & $\A_5$\\
6 & $\PSL_4(3)$ & $2^4{:}5{:}4$ & $\PSL_3(3)$, $3^3{:}\PSL_3(3)$\\
7 & $\PSL_4(3)$ & $3^3{:}26{:}3$ & $(4\times\PSL_2(9)){:}2$\\
8 & $\PSL_4(4)$ & $2^6{:}63{:}3$ & $(5\times\PSL_2(16)){:}2$\\
9 & $\PSL_5(2)$ & $31{:}5$ & $2^6{:}(\Sy_3\times\PSL_3(2))$\\
\hline
\end{tabular}
\end{table}

\begin{proof}
By Lemma \ref{Embedding}, we may take $A,B$ to be core-free maximal subgroups of $G$ containing $H,K$ respectively.
Then the maximal factorizations $G=AB$ are listed in Table \ref{tabLinear} (interchanging $A,B$ if necessary) by Theorem \ref{Maximal}. If $n$ is a prime, then the proposition holds by Theorem~\ref{p-dimensionLinear}. We thus assume that $n$ is a composite number. Under this assumption, $(L,A\cap L,B\cap L)$ lies in rows 1--4 of Table \ref{tabLinear}. For $L=\PSL_4(3)$ or $\PSL_4(4)$, computation in \magma \cite{bosma1997magma} shows that one of parts (a), (b) and (c) of Proposition \ref{Linear} holds. Thus assume $(n,q)\not\in\{(4,3),(4,4)\}$ for the rest of the proof.

{\bf Case 1.} Suppose that $B\cap L=\hat{~}\GL_a(q^b).b$ with $ab=n$ and $b$ prime, as in row 1 or row 4 of Table \ref{tabLinear} (with $A,B$ interchanged). Then $A\cap L=\Pa_1$, $\Pa_{n-1}$ or $\Stab(V_1\oplus V_{n-1})$. Notice that $A$ has the unique unsolvable composition factor $\PSL_{n-1}(q)$. Write $q=p^f$ with $p$ prime, $R=\Rad(A)$ and $A/R=\PSL_{n-1}(q).\calO$.

As $(n,q)\not\in\{(4,3),(4,4)\}$, we deduce from Lemma~\ref{Reduction}(c) that either $|H|_p$ divides $(n-1)|R|_p|\calO|_p$, or $(n,q)=(4,8)$ and $|H|$ divides $2^6\cdot56\cdot7\cdot6|R|$. If the latter occurs, then $|H|$ is not divisible by $73$, contrary to the factorization $G=HB$ since $a=b=2$ and $|G|/|B|$ is divisible by $73$. Hence $|H|_p$ divides $(n-1)|R|_p|\calO|_p$, and then as $|R||\calO|=|A|/|\PSL_{n-1}(q)|$, we have that $|H|_p$ divides $(n-1)fq^{n-1}$. Since $G=HB$, we know that $|H|_p|B\cap L|_p$ is divisible by $|L|_p$, and hence $|L|_p$ divides $(n-1)fq^{n-1}|B\cap L|_p$. Consequently,
$$
q^{n(n-1)/2}\di(n-1)fq^{n-1}\cdot q^{n(a-1)/2}b,
$$
that is, $q^{n(n-a)/2-n+1}\di b(n-1)f$. Since $(b,n-1)=1$ we conclude that either $q^{n(n-a)/2-n+1}\di bf$ or $q^{n(n-a)/2-n+1}\di (n-1)f$. Since $a\leqslant n/2$ and $b<n-1$, it follows that
\begin{equation}\label{eq16}
q^{n^2/4-n+1}\leqslant q^{n(n-a)/2-n+1}\leqslant\max(b,n-1)f=(n-1)f.
\end{equation}
This implies
$$
2^{n^2/4-n+1}\leqslant p^{n^2/4-n+1}\leqslant p^{f(n^2/4-n+1)}/f\leqslant n-1,
$$
which leads to $n=4$. However, substituting $n=4$ into (\ref{eq16}) gives $q\leqslant3f$, contradicting the assumption $(n,q)\not\in\{(4,2),(4,3)\}$.

{\bf Case 2.} Suppose that $B\cap L=\Pa_1$ or $\Pa_{n-1}$, as in row 1 or row 2 of Table \ref{tabLinear}.

Assume that row 2 of Table \ref{tabLinear} appears. Then $A\cap L=\PSp_n(q).c$ with $n\geqslant4$ even and $c=(2,q-1)(n/2,q-1)/(n,q-1)\leqslant2$. As $(n,q)\neq(4,3)$, Lemma~\ref{Reduction}(d) implies that either $(n,q)=(6,2)$ or $|H|$ is not divisible by any primitive prime divisor of $q^n-1$. Since the factorization $G=HK$ requires $|H|$ to be divisible by $|G|/|B|=(q^n-1)/(q-1)$, we have $(n,q)=(6,2)$ and thus $|H|$ is divisible by $(q^n-1)/(q-1)=63$. However, by Lemma~\ref{Reduction}(d), $|H|$ divides $2^6\cdot(2^3-1)\cdot3$ or $3^3\cdot2\cdot24$, which is a contradiction.

We thus have row 1 of Table \ref{tabLinear}, namely, $A\cap L=\hat{~}\GL_a(q^b).b$ with $ab=n$ and $b$ prime. Note that $A$ has the unique unsolvable composition factor $\PSL_a(q^b)$. By Lemma~\ref{Reduction}(c), $H\leqslant\PGaL_1((q^b)^a)=\PGaL_1(q^n)$. This by Lemma~\ref{LowerLinear} leads to part (a) of Proposition \ref{Linear}.

{\bf Case 3.} Suppose that $B\cap L=\PSp_n(q).c$ with $n\geqslant4$ even and $c=(2,q-1)(n/2,q-1)/(n,q-1)$, as in row 2 or row 3 of Table \ref{tabLinear} (with $A,B$ interchanged). Then
\[\mbox{$A\cap L=\Pa_1$, $\Pa_{n-1}$ or $\Stab(V_1\oplus V_{n-1})$.}\]
Notice that $A$ has the unique unsolvable composition factor $\PSL_{n-1}(q)$. Write $q=p^f$ with $p$ prime, $R=\Rad(A)$ and $A/R=\PSL_{n-1}(q).\calO$.

Assume $A\cap L=\Stab(V_1\oplus V_{n-1})\cong\hat{~}\GL_{n-1}(q)$. Then by Lemma~\ref{Reduction}(c), $|H|_p$ divides $2(n-1)f$. This implies that $|H|_p$ is not divisible by $|L|_p/|B\cap L|_p=q^{n(n-1)/2-n^2/4}$. Consequently, $|H|$ is not divisible by $|L|/|B\cap L|$, which is a contradiction to the factorization $G=HB$. Therefore, $A\cap L=\Pa_1$ or $\Pa_{n-1}$.

Assume $n\geqslant6$. By Zsigmondy's theorem, $(q,n-3)$ has a primitive prime divisor $r$ as $n\neq9$. Then Lemma~\ref{Reduction}(c) implies that $|H|$ is not divisible by $r$. Observe that $r$ does not divide $|B|$. This yields that $r$ does not divide $|H||B|$, contrary to the factorization $G=HB$ since $|G|$ is divisible by $r$.

We thus have $n=4$ and $\A\cap L\cong\Pa_1=q^3{:}(\GL_3(q)/(4,q-1))$. Hence $c=(2,q-1)^2/(4,q-1)=(4,q+1)/(2,q-1)$. By Lemma~\ref{Reduction}(c), either $H\leqslant R.(((q^3-1)/((q-1)(3,q-1))).3).\calO$, or $q=8$ and $|H|$ divides $2^6\cdot56\cdot7\cdot6|R|$. If the latter occurs, then $|H|$ is not divisible by $73$, contrary to the factorization $G=HB$ since $|L|/|B\cap L|=|\PSL_4(8)|/|\PSp_4(8)|$ is divisible by $73$. Therefore,
$$
H\leqslant R.\left(\frac{q^3-1}{(q-1)(3,q-1)}.3\right).\calO=\left(q^3.\frac{q^3-1}{(4,q-1)}.3\right).(G/L).
$$
Accordingly we have $H\cap L\leqslant q^3{:}((q^3-1)/(4,q-1)).3$, and this further yields by Lemma \ref{p4} that $q^3(q^4-1)(q^2-1)$ divides $6(4,q-1)f|K\cap L|$. Consequently,
$$
\frac{|\PSp_4(q).a|}{|K\cap L|}\leqslant\frac{6(4,q-1)f|\PSp_4(q).a|}{q^3(q^4-1)(q^2-1)}<\frac{q^4-1}{q-1}.
$$
Since $(q^4-1)/(q-1)$ equals the smallest index of proper subgroups of $\PSp_4(q)$ by Theorem~\ref{l7}, we obtain from Lemma \ref{MinimalDegree} that $\PSp_4(q)\trianglelefteq K\cap L$. Thus part (b) of Proposition \ref{Linear} follows.

{\bf Case 4.} Suppose that $B\cap L=\Stab(V_1\oplus V_{n-1})=\hat{~}\GL_{n-1}(q)$ with $n\geqslant4$ even, as in row 3 or row 4 of Table \ref{tabLinear}. Note that the factorization $G=HK$ requires $|H|$ to be divisible by $|L|/|B\cap L|=q^{n-1}(q^n-1)/(q-1)$.

Assume that row 3 of Table \ref{tabLinear} appears. Then $A\cap L=\PSp_n(q).c$ with $c=(2,q-1)(n/2,q-1)/(n,q-1)\leqslant2$. As $(n,q)\neq(4,3)$, Lemma~\ref{Reduction}(d) implies that either $(n,q)=(6,2)$ or $|H|$ is not divisible by any primitive prime divisor of $q^n-1$. Since $|H|$ is divisible by $(q^n-1)/(q-1)$, we have $(n,q)=(6,2)$ and thus $|H|$ is divisible by $(q^n-1)/(q-1)=63$. However, by Lemma~\ref{Reduction}(d), $|H|$ divides $2^6\cdot(2^3-1)\cdot3$ or $3^3\cdot2\cdot24$, which is a contradiction.

Now row 4 of Table \ref{tabLinear} appears, that is, $A\cap L=\hat{~}\GL_{n/2}(q^2).2$ with $q\in\{2,4\}$. As $(n,q)\not\in\{(4,2),(4,4)\}$, we have $n\geqslant6$. Since $A$ has the unique unsolvable composition factor $\PSL_{n/2}(q^2)$, Lemma~\ref{Reduction}(c) holds with $R=\Rad(A)$ and $A/R=\PSL_{n/2}(q^2).\calO$. To be more precise, row 1, 3 or 8 in Lemma~\ref{Reduction}(c) holds as $q\in\{2,4\}$ and $n\geqslant6$. Observe that $|R||\calO|=|A|/|\PSL_{n/2}(q^2)|$ divides $2(q^2-1)(n/2,q^2-1)$. If $q^n-1$ has a primitive prime divisor $r$, then the factorization $G=HK$ requires $|H|$ to be divisible by $q^{n-1}r$, but none of rows 1, 3 and 8 in Lemma~\ref{Reduction}(c) satisfies this. Thus $q^n-1$ does not have any primitive prime divisor, which is equivalent to $(n,q)=(6,2)$ by Zsigmondy's theorem. In this situation, row 1 or 8 in Lemma~\ref{Reduction}(c) appears, but neither of them allows $|H|$ to be divisible by $q^{n-1}(q^n-1)/(q-1)=2^5\cdot3^2\cdot7$, a contradiction.
\end{proof}

\section{Symplectic Groups}

In this section we verify Theorem \ref{SolvableFactor} for symplectic groups under Hypothesis \ref{Hypo}.

\begin{proposition}\label{Symplectic}
Let $G$ be an almost simple group with socle $L=\PSp_{2m}(q)$, where $m\geqslant2$. If $G=HK$ is a nontrivial factorization with $H$ solvable, $K$ unsolvable and $HL=KL=G$, then under \emph{Hypothesis \ref{Hypo}}, one of the following holds.
\begin{itemize}
\item[(a)] $q$ is even, $H\cap L\leqslant q^{m(m+1)/2}{:}(q^m-1).m<\Pa_m$, and $\Omega_{2m}^-(q)\trianglelefteq K\cap L\leq\GO_{2m}^-(q)$.
\item[(b)] $m=2$, $q$ is even, $H\cap L\leqslant q^3{:}(q^2-1).2<\Pa_1$, and $\Sp_2(q^2)\trianglelefteq K\cap L\leq\Sp_2(q^2).2$.
\item[(c)] $m=2$, $q$ is odd, $H\cap L\leqslant q^{1+2}{:}((q^2-1)/2).2<\Pa_1$ and $\PSp_2(q^2)\trianglelefteq K\cap L\leqslant\PSp_2(q^2).2$.
\item[(d)] $L=\PSp_4(q)$ with $q\in\{3,5,7,11,23\}$ or $\PSp_6(q)$ with $q=2$ or $3$, and $(L,H\cap L,K\cap L)$ is as described in \emph{Table \ref{tab14}}.
\end{itemize}
\end{proposition}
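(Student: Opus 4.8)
The plan is to follow exactly the inductive template already used for linear groups in Proposition~\ref{Linear}. By Lemma~\ref{Embedding} we may choose core-free maximal subgroups $A$ and $B$ of $G$ with $H\leqslant A$ and $K\leqslant B$, so that $G=AB$ is a nontrivial maximal factorization, and the triple $(L,A\cap L,B\cap L)$ is then one of the symplectic rows of Tables~\ref{tabLinear}--\ref{tabOmegaPlus2} by Theorem~\ref{Maximal}. Two possibilities have already been disposed of: if $A$ is solvable then $L\cong\PSp_4(3)$ and Proposition~\ref{p13}(b) lists the factorizations, which reappear among the $\PSp_4(3)$ entries of Table~\ref{tab14}; if $A$ has at least two unsolvable composition factors then Proposition~\ref{p1} applies, and for symplectic socle it returns precisely $L=\Sp_4(4)$ in rows~3--4 of Table~\ref{tab7}, which are cases~(a) and (b). Hence we may assume $A$ has a unique unsolvable composition factor $S$; writing $R=\Rad(A)$ and $A/R=S.\calO$, Lemma~\ref{Reduction} applies and restricts $S$ (in particular $S\neq\POm_{2\ell}^-(q)$ and $S$ is not exceptional) and confines $H$ to one of the explicit shapes listed in parts~(c)--(h) of that lemma.

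The bulk of the work is to run through the geometric type of $A\cap L$. If $A\cap L=\GO_{2m}^-(q)$ (with $q$ even) the unsolvable composition factor is $\POm_{2m}^-(q)$, excluded outright by Lemma~\ref{Reduction}(b); if $A\cap L=\GO_{2m}^+(q)$ then $S=\POm_{2m}^+(q)$ and Lemma~\ref{Reduction}(h) combined with a primitive prime divisor of $|G|/|B|$ (Theorem~\ref{Zsigmondy}, Lemma~\ref{l6}, Lemma~\ref{p4}) gives a contradiction. For $A\cap L$ an imprimitive ($\mathcal{C}_2$) subgroup $\Sp_{2a}(q)\wr\Sy_{m/a}$, a subfield ($\mathcal{C}_5$) subgroup $\Sp_{2m}(q_0)$, a $\mathcal{C}_3$ subgroup, or an $\mathcal{S}$-subgroup such as $\G_2(q)$, $\Sz(q)$ or $\A_c$, one argues uniformly: the index $|G|/|B\cap L|$ is divisible by a primitive prime divisor of $q^e-1$ for a suitable $e$, whereas the shapes allowed for $H$ by Lemma~\ref{Reduction} have order not divisible by this prime, contradicting Lemma~\ref{p4}; only finitely many small $q$ survive this sieve and are handled directly. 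This leaves the reducible cases. When $A\cap L\leqslant\Pa_m$ and $B\cap L=\GO_{2m}^-(q)$ with $q$ even, we have $S=\PSL_m(q)$, and row~1 of Lemma~\ref{Reduction}(c) forces $H\cap L\leqslant q^{m(m+1)/2}{:}(q^m-1).m$; comparing $|H\cap L||G/L|$ with $|L|/|B\cap L|$ via Lemma~\ref{p4} and then invoking Theorem~\ref{l7} and Lemma~\ref{MinimalDegree} yields $\Omega_{2m}^-(q)\trianglelefteq K\cap L$, which is case~(a), realised by Proposition~\ref{ExampleSymplectic}. When $m=2$ and $A\cap L=\Pa_1$, the unsolvable composition factor is $\PSp_2(q)\cong\PSL_2(q)$ and $B\cap L$ is the $\mathcal{C}_3$ factor $\Sp_2(q^2).2$ (for $q$ even) or $\PSp_2(q^2).2$ (for $q$ odd); the relevant rows of Lemma~\ref{Reduction}(c) with $\ell=2$ give the stated bounds on $H\cap L$, and a further minimal-index comparison forces $\Sp_2(q^2)\trianglelefteq K\cap L$, respectively $\PSp_2(q^2)\trianglelefteq K\cap L$, which are cases~(b) and (c). For $m\geqslant3$ and $A\cap L=\Pa_1$ we instead have $S=\PSp_{2m-2}(q)$, and the restrictive lists of Lemma~\ref{Reduction}(d) together with a Zsigmondy argument on $|G|/|B|$ rule this out. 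The residual groups $\PSp_4(q)$ with $q\in\{3,5,7,11,23\}$ and $\PSp_6(q)$ with $q\in\{2,3\}$ are finite in number and are settled by computation in \magma, producing Table~\ref{tab14} and case~(d).

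I expect the principal obstacle to lie in the reducible $\Pa_1$-cases for $m=2$: separating the generic factorizations in cases~(b) and (c) from the genuinely sporadic ones at small $q$, matching the several near-isomorphic group shapes ($q^{1+2}{:}((q^2-1)/2).2$ versus $q^3{:}(q^2-1).2$, and $\Sp_2(q^2).2$ versus $\PSp_2(q^2).2$) to the correct parity of $q$, and---since $\Pa_1[\Sp_4(q)]$ for small $q$ can lie inside more than one maximal subgroup of $G$---choosing $A$ so that Lemma~\ref{Reduction} remains applicable. A secondary nuisance is the case-by-case verification, for each $\mathcal{S}$-type $A\cap L$, that the Zsigmondy divisor of $|G|/|B|$ is genuinely incompatible with every row of the relevant part of Lemma~\ref{Reduction}; this is routine but must be checked by hand in the low-rank and small-characteristic situations where Zsigmondy's theorem fails or where $(A/R,\,HR/R)$ could be one of the sporadic entries of Tables~\ref{tab7}--\ref{tab1}.
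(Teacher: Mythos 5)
Your overall skeleton (embed $H\leqslant A$, $K\leqslant B$ maximal via Lemma~\ref{Embedding}, dispatch the solvable-$A$ case by Proposition~\ref{p13} and the many-unsolvable-composition-factors case by Proposition~\ref{p1}, then classify the remaining cases by the geometric type of $A\cap L$ using Lemma~\ref{Reduction}) is the right one and matches the paper's Lemmas~\ref{4-dimensionSymplectic}--\ref{Symplectic2}. However, the ``uniform primitive-prime-divisor sieve'' on the irreducible geometric types is too aggressive, and this creates a genuine gap.

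Specifically, when $A\cap L$ is the $\mathcal{C}_3$ subgroup $\Sp_2(q^m).m$ (so $S\cong\PSL_2(q^m)$, covered by row~1 of Lemma~\ref{Reduction}(c) with $\ell=2$), the solvable factor $H$ may lie in $R.\D_{2(q^m+1)/(2,q-1)}.\calO$, the normalizer of a nonsplit torus, and this group has order divisible by every primitive prime divisor $r$ of $q^{2m}-1$ (since $r\mid q^m+1$). Your claim that ``the shapes allowed for $H$ by Lemma~\ref{Reduction} have order not divisible by this prime'' is therefore simply false for this case, so the contradiction via Lemma~\ref{p4} does not arise. For $m\geqslant3$ the case \emph{is} eventually killed, but by a finer argument: one must compare the full divisor $|L|/|B\cap L|$ against the bound $|H|\mid 2fm(q^m+1)$ obtained from Lemma~\ref{Reduction}, run through each candidate $B\cap L\in\{\Pa_1,\GO_{2m}^+(q),\GO_{2m}^-(q)\}$, and derive a contradiction separately for each (this is precisely the content of the paper's Lemma~\ref{Symplectic3}). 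Worse, for $m=2$ the case $A\cap L\cong\PSL_2(q^2).2$, $B\cap L\cong\GO_4^-(q)$ is not a contradiction at all: it is consistent with the factorization, and one must show (as the paper does in Case~2 of Lemma~\ref{4-dimensionSymplectic}) that it forces $H\cap L$ into $\Pa_1[L]$ or $\Pa_2[L]$ depending on whether $A\cap L$ is the $\mathcal{C}_8$ subgroup $\GO_4^-(q)$ or the $\mathcal{C}_3$ subgroup $\Sp_2(q^2).2$, yielding parts~(b) and (a) of the proposition respectively. Your sieve would discard this case and thereby lose precisely those generic factorizations the proposition is asserting to exist. A correct argument must therefore replace the blanket ``$r\nmid|H|$'' claim by the two-pronged strategy of the paper: show that \emph{if} $r\mid|H\cap L|$ then $A\cap L$ is forced to a torus normalizer and the ensuing divisibility constraint on $|B\cap L|$ fails (Lemma~\ref{Symplectic3}), conclude $r\mid|B\cap L|$, and then analyze the surviving possibilities for $B\cap L$ separately (Lemma~\ref{Symplectic1}), with Lemma~\ref{Symplectic4} relocating $H$ into $\Pa_m[G]$ in the surviving $\GO^-$ case.
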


\begin{table}[htbp]
\caption{}\label{tab14}
\centering
\begin{tabular}{|l|l|l|l|}
\hline
row & $L$ & $H\cap L\leqslant$ & $K\cap L$\\
\hline
1 & $\PSp_4(3)$ & $3^3{:}\Sy_4$ & $2^4{:}\A_5$\\
2 & $\PSp_4(3)$ & $3_+^{1+2}{:}2.\A_4$ & $\A_5$, $2^4{:}\A_5$, $\Sy_5$, $\A_6$, $\Sy_6$\\
3 & $\PSp_4(5)$ & $5^3{:}4.\A_4$ & $\PSL_2(5^2)$, $\PSL_2(5^2){:}2$\\
4 & $\PSp_4(7)$ & $7^3{:}6.\Sy_4$ & $\PSL_2(7^2)$, $\PSL_2(7^2){:}2$\\
5 & $\PSp_4(11)$ & $11^3{:}10.\A_4$ & $\PSL_2(11^2)$, $\PSL_2(11^2){:}2$\\
6 & $\PSp_4(23)$ & $23^3{:}22.\Sy_4$ & $\PSL_2(23^2)$, $\PSL_2(23^2){:}2$\\
7 & $\Sp_6(2)$ & $3_+^{1+2}{:}2.\Sy_4$ & $\A_8$, $\Sy_8$\\
8 & $\PSp_6(3)$ & $3_+^{1+4}{:}2^{1+4}.\D_{10}$ & $\PSL_2(27){:}3$\\
\hline
\end{tabular}
\end{table}

Throughout this section, fix $G,L,H,K$ to be the groups in the condition of Proposition \ref{Symplectic}, and take $A,B$ to be core-free maximal subgroups of $G$ containing $H,K$ respectively (such $A$ and $B$ are existent by Lemma \ref{Embedding}). We shall first treat symplectic groups of dimension four in Section \ref{sec1} (see Lemma \ref{4-dimensionSymplectic}), and then treat the general case in Section \ref{sec3} by Lemmas~\ref{Symplectic1} and \ref{Symplectic2}, distinguishing the maximal factorization $G=AB$ in rows 1--12 or 13--16 of Table \ref{tabSymplectic}. These lemmas together prove Proposition \ref{Symplectic}.

\subsection{Symplectic groups of dimension four}\label{sec1}

The main result of this subsection is stated in the following lemma.

\begin{lemma}\label{4-dimensionSymplectic}
If $L=\PSp_4(q)$ with $q\geqslant3$, then one of the following holds.
\begin{itemize}
\item[(a)] $q$ is even, $H\cap L\leqslant q^3{:}(q^2-1).2<\Pa_2$, and $\Omega_4^-(q)\trianglelefteq K\cap L\leq\GO_4^-(q)$.
\item[(b)] $q$ is even, $H\cap L\leqslant q^3{:}(q^2-1).2<\Pa_1$, and $\Sp_2(q^2)\trianglelefteq K\cap L\leq\Sp_2(q^2).2$.
\item[(c)] $q$ is odd, $H\cap L\leqslant q^{1+2}{:}((q^2-1)/2).2<\Pa_1$ and $\PSp_2(q^2)\trianglelefteq K\cap L\leqslant\PSp_2(q^2).2$.
\item[(d)] $L=\PSp_4(q)$ with $q\in\{3,5,7,11,23\}$, and $(L,H\cap L,K\cap L)$ is as described in rows \emph{1--6} of \emph{Table \ref{tab14}}.
\end{itemize}
\end{lemma}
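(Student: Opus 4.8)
The plan is to run through the maximal factorizations $G=AB$ of almost simple groups with socle $L=\PSp_4(q)$, $q\geqslant3$, as listed in Table~\ref{tabSymplectic} (this is legitimate by Lemma~\ref{Embedding} and Remark~\ref{rmk1}, so that $A,B$ may be taken core-free, with $H\leqslant A$, $K\leqslant B$). For $q=3$ the claim is Proposition~\ref{Small1} (equivalently the $\PSp_4(3)$-rows of Proposition~\ref{p13}), giving part~(d); and for $q=4$, which involves the $\mathcal{C}_2\cup\mathcal{C}_3$ factorizations with $A$ having two unsolvable composition factors, the conclusion has already been recorded in Proposition~\ref{p1}(a) (rows~3--4 of Table~\ref{tab7}, which is exactly parts~(a)--(b) here). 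So from now on assume $q\geqslant5$, and inspect the remaining rows of Table~\ref{tabSymplectic}: the relevant factor types for $A$ (the one containing the solvable $H$) are, up to the graph automorphism when $q$ is even, $\Pa_1$, $\Pa_2$, $\N_2^-=\GO_4^-(q)$, $\N_2^+=\GO_4^+(q)$, $\Sp_2(q^2).2$, $\Sz(q)$ (only $q$ even, odd power of $2$), and the sporadic small ones, while $B$ ranges over the "complementary" members.

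First I would dispose of the cases where $A$ is one of $\GO_4^+(q)$, $\Sp_2(q^2).2$ or $\Sz(q)$: here $A/\Rad(A)$ has socle $\PSL_2(q)\times\PSL_2(q)$, $\PSL_2(q^2)$, or $\Sz(q)$, and Lemma~\ref{Reduction}(d) (together with Lemma~\ref{l8}/Proposition~\ref{p1} for the reducible-tensor case $\GO_4^+$) forces $|H|$ to avoid a primitive prime divisor $r$ of $q^4-1$; but in each such factorization $G=AB$ the index $|G|/|B|$ is divisible by $r$, contradicting $G=HK$. The residual case where $A$ is $\GO_4^-(q)$ but $H$ is small can be handled the same way, or absorbed into the analysis below by symmetry of the graph automorphism. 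The main work is the case $A\cap L=\Pa_1$ or $\Pa_2$ (parabolic), where $A$ has the single unsolvable composition factor $\SL_2(q)$ (sitting inside the Levi $q^{1+2}{:}(\GL_1(q)\times\Sp_2(q))$-type structure, resp.\ $q^3{:}\GL_2(q)$-type for $\Pa_2$), and I would apply Lemma~\ref{Reduction}(d) with $S=\PSL_2(q)$: since $q\geqslant5$, $H$ is forced into one of the rows listed there, which for $q\notin\{5,7,11,23\}$ means $H\cap L\leqslant q^{1+2}{:}((q^2-1)/2).2<\Pa_1$ (odd $q$) or $H\cap L\leqslant q^3{:}(q^2-1).2<\Pa_k$ (even $q$), and for $q\in\{5,7,11,23\}$ additionally allows the $q^3{:}(q-1).\Sy_4$-type and the extraspecial-normalizer type $3_+^{1+2}{:}2.\Sy_4$ (the latter being a $\mathcal{C}_6$ subgroup, not a parabolic).

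Given the constraint on $H$, I would then pin down $K$ by an index/order estimate: the divisibility $|L|$ divides $|H\cap L||K\cap L||G/L|$ (Lemma~\ref{p4}) combined with the explicit order of $H\cap L$ forces $|L|/|K\cap L|$ to be bounded above by a small multiple of $f$ times a power of $q$, and then Theorem~\ref{l7} (the minimal index $P(\PSp_2(q^2))=q^2+1$ for the relevant subsystem subgroup $B\cap L=\Sp_2(q^2).2$, resp.\ $P(\Omega_4^-(q))$) together with Lemma~\ref{MinimalDegree}(d) forces $K\cap L$ to contain $\Sp_2(q^2)$ (from $\Pa_1$, giving parts~(b)/(c)) or $\Omega_4^-(q)$ (from $\Pa_2$, giving part~(a)). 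The cases $q\in\{5,7,11,23\}$ with $H$ of $\mathcal{C}_6$-type feed into row~3--6 of Table~\ref{tab14} (part~(d)) after a short computation checking which $K$ complete the factorization; these are small enough to verify in \magma, or by hand using $G=AB$ with $B\cap L=\PSL_2(q^2).2$ and the exact order relation $|H\cap K|=1$. The main obstacle I expect is keeping the bookkeeping of the graph automorphism straight for even $q$ — $\Pa_1$ and $\Pa_2$ are swapped by it, and one must make sure parts~(a) and~(b) are genuinely distinct outcomes and that the "$<\Pa_k$" containment is recorded with the correct $k$ — and making the index estimate tight enough that Lemma~\ref{MinimalDegree}(d) applies without leaving stray small cases beyond $q\in\{5,7,11,23\}$.
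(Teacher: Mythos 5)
There is a genuine gap in the step where you propose to \emph{dispose of} the case $A\cap L\cong\Sp_2(q^2).2$ (equivalently, by the graph automorphism for even $q$, $\GO_4^-(q)$). First, with $S=\PSL_2(q^2)$ the relevant reduction is Lemma~\ref{Reduction}(c) with $\ell=2$, not~(d) (which requires $\ell\geqslant 2$ for $\PSp_{2\ell}$, so does not apply to $\PSp_2$), and that lemma does \emph{not} force $|H|$ to avoid a primitive prime divisor $r$ of $q^4-1$: row~1 of Lemma~\ref{Reduction}(c) permits $H$ inside the normalizer of a nonsplit torus, whose order is divisible by $q^2+1$ and hence by $r$. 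Second, and more seriously, when $B\cap L\cong\GO_4^-(q)$ one has $|L|/|B\cap L|=q^2(q^2-1)/2$, which is coprime to $r$, so the factorization $G=HK$ imposes no constraint through $r$ at all. This configuration is therefore not a dead end --- it is precisely where parts~(a) and~(b) of the lemma come from: the factorization against $B\cong\GO_4^{\pm}(q)$ forces $H$ into the Borel of $A\cap L$, which then sits inside $\Pa_2[L]$ or $\Pa_1[L]$ according as $A$ is the $\mathcal{C}_3$ subgroup $\Sp_2(q^2).2$ or the $\mathcal{C}_8$ subgroup $\GO_4^-(q)$, after which one argues inside the parabolic as you describe. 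As written, your plan would erroneously discard these cases and never recover parts~(a) and~(b).

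A secondary inaccuracy: $q=4$ is not fully covered by Proposition~\ref{p1}(a). That proposition only handles the choice of maximal $A\supseteq H$ having at least two unsolvable composition factors (i.e.\ $A\cap L=\GO_4^+(4)$); the choices $A\cap L\in\{\Pa_1,\Pa_2,\Sp_2(16).2,\GO_4^-(4),\Sp_4(2)\}$ for $q=4$ still have to be run through the main argument, and indeed the paper begins its case analysis at $q\geqslant 4$, not $q\geqslant 5$.
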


\begin{proof}
For $L=\PSp_4(3)$, the lemma holds as a consequence of Proposition \ref{Small1}. Thus we assume $q\geqslant4$ for the rest of our proof. By Proposition \ref{p1}, we further assume that $A\cap L$ has at most one unsolvable composition factor. Therefore, we only need to consider rows 1--11 of Table \ref{tabSymplectic} for the maximal factorization $G=AB$ by Lemma \ref{Maximal}. Moreover, $A\cap L\neq\Sz(q)$ as Lemma \ref{Reduction}(a) asserts, which rules out row 11 of Table \ref{tabSymplectic}. Hence we have the following candidates for the pair $(A\cap L,B\cap L)$.
\begin{itemize}
\item[(i)] $A\cap L\cong\PSL_2(q^2).2$ and $B\cap L=\Pa_1$ or $\Pa_2$ .
\item[(ii)] $A\cap L\cong\PSL_2(q^2).2$ and $B\cap L\cong\GO_4^-(q)$ or $\GO_4^+(q)$.
\item[(iii)] $A\cap L=\Pa_1$ and $B\cap L=\PSp_2(q^2).2$.
\item[(iv)] $q$ is even, $A\cap L=\Pa_2$ and $B\cap L=\GO_4^-(q)$.
\item[(v)] $q=4^f$ with $f\in\{1,2\}$, $A\cap L=\GO_4^-(4^f)$ and $B\cap L=\Sp_4(2^f)$.
\item[(vi)] $q=4^f$ with $f\in\{1,2\}$, $A\cap L=\Sp_4(2^f)$ and $B\cap L=\GO_4^-(4^f)$.
\end{itemize}

{\bf Case 1.} Suppose that $(A\cap L,B\cap L)$ is as described in (i). Let $R=\Rad(A)$ and $S=\Soc(A/R)$. Write $A/R=S.\calO$ and $q=p^f$ with $p$ prime. Then $S=\PSL_2(q^2)$, and we see from Lemma \ref{Reduction}(c) that $H\leqslant R.(((q^4-1)/((q^2-1)(2,q-1)).2).\calO$ or $R.(q^2{:}((q^2-1)/(2,q-1))).\calO$ as in row 1 or row 2 of the table there. In particular, $|H|$ divides $2(q^2+1)|R||\calO|/(2,q-1)$ or $q^2(q^2-1)|R||\calO|/(2,q-1)$. Since $|R||\calO|=|A|/|S|=2|A|/|A\cap L|$ divides $f(2,q-1)$, we thus obtain that $|H|$ divides $4f(q^2+1)$ or $2fq^2(q^2-1)$. Moreover, $|L|/|B\cap L|=(q^4-1)/(q-1)$ divides $|H|$ according to the factorization $G=HB$. Hence $(q^4-1)/(q-1)$ divides $4f(q^2+1)$ or $2fq^2(q^2-1)$, which is impossible.

{\bf Case 2.} Suppose that $(A\cap L,B\cap L)$ is as described in (ii). Let $R=\Rad(A)$ and $S=\Soc(A/R)$. Write $A/R=S.\calO$ and $q=p^f$ with $p$ prime. Then $S=\PSL_2(q^2)$, and we see from Lemma \ref{Reduction}(c) that $H\leqslant R.(((q^4-1)/((q^2-1)(2,q-1)).2).\calO$ or $R.(q^2{:}((q^2-1)/(2,q-1))).\calO$ as in row 1 or row 2 of the table there. In particular, $|H|$ divides $2(q^2+1)|R||\calO|/(2,q-1)$ or $q^2(q^2-1)|R||\calO|/(2,q-1)$. Since $|R||\calO|=|A|/|S|=2|A|/|A\cap L|=2|G|/|L|$ divides $2f(2,q-1)$, we thus obtain that $|H|$ divides $4f(q^2+1)$ or $2fq^2(q^2-1)$. Moreover, $|L|$ divides $|H||B\cap L|$ due to the factorization $G=HB$. Hence
\begin{equation}\label{eq14}
\mbox{$|L|/|B\cap L|$ divides $4f(q^2+1)$ or $2fq^2(q^2-1)$.}
\end{equation}
If $B\cap L\cong\GO_4^-(q)$, then $|L|/|B\cap L|=q^2(q^2-1)/2$. If $B\cap L\cong\GO_4^+(q)$, then $|L|/|B\cap L|=q^2(q^2+1)/2$. Thus by (\ref{eq14}), either
\[\mbox{$B\cap L\cong\GO_4^-(q)$ and $|H|$ divides $2fq^2(q^2-1)$, or}\]
\[\mbox{$q=4$ and $B\cap L=\GO_4^+(4)$.}\]
Computation in \magma \cite{bosma1997magma} shows that the latter gives no factorization $G=HB$ with $H$ solvable. Thus we have the former, which indicates $p=2$ since row 1 of Table \ref{tabSymplectic} is now excluded. Also, $H\cap L\leqslant\Pa_1[\PSL_2(q^2).2]$ since $H\leqslant R.(q^2{:}((q^2-1)/(2,q-1))).\calO$ as row 2 of the table in Lemma \ref{Reduction}(c). Combining the condition that $|H|$ divides $2fq^2(q^2-1)$ with the conclusion of the factorization $G=HK$ that $|L|$ divides $|H||K\cap L|$, we deduce that $|L|$ divides $2fq^2(q^2-1)|K\cap L|$. That is to say, $q^4(q^4-1)(q^2-1)$ divides $2fq^2(q^2-1)|K\cap L|$, or equivalently, $q^2(q^4-1)$ divides $2f|K\cap L|$. Since $K\cap L\leqslant B\cap L\cong\SL_2(q^2).2$, it then follows that $\SL_2(q^2)\lesssim K\cap L$. Note that $A\cap L$ lie in two possible Aschbacher classes of subgroups of $L$, namely, $\mathcal{C}_3$ and $\mathcal{C}_8$. We distinguish these two classes in the next two paragraphs.

Assume that $A\cap L=\Sp_2(q^2).2$ is a $\mathcal{C}_3$ subgroup of $L$. Then $B\cap L=\GO_4^-(q)$, and $\Pa_1[A\cap L]\leqslant\Pa_2[L]$. Hence $H\cap L\leqslant\Pa_2[L]$. Since $\Pa_2[L]=q^3{:}\GL_2(q)$, we have $H\cap L\leqslant q^3{:}M$ for some maximal solvable subgroup $M$ of $\GL_2(q)$. From the factorization $G=HB$ we deduce that $|L|$ divides $f|B\cap L||H\cap L|$, that is, $q^4(q^4-1)(q^2-1)$ divides $2fq^2(q^4-1)|H\cap L|$. Consequently, $q^2(q^2-1)$ divides $2f|H\cap L|$, which further yields that $(q^2-1)$ divides $2f|M|$. This implies that $M=(q^2-1){:}2$, and thus $H\cap L\leqslant q^3{:}(q^2-1).2$. Therefore, part (a) of Lemma \ref{4-dimensionSymplectic} appears.

Assume that $A\cap L=\GO_4^-(q)$ is a $\mathcal{C}_8$ subgroup of $L$. Then $B\cap L=\Sp_2(q^2).2$, and $\Pa_1[A\cap L]\leqslant\Pa_1[L]$. Hence $H\cap L\leqslant\Pa_1[L]$. Since $\Pa_1[L]=q^3{:}\GL_2(q)$, we have $H\cap L\leqslant q^3{:}M$ for some maximal solvable subgroup $M$ of $\GL_2(q)$. According to the factorization $G=HB$ we have that $|L|$ divides $f|B\cap L||H\cap L|$, that is, $q^4(q^4-1)(q^2-1)$ divides $2fq^2(q^4-1)|H\cap L|$. Consequently, $q^2(q^2-1)$ divides $2f|H\cap L|$, which yields that $(q^2-1)$ divides $2f|M|$. This implies that $M=(q^2-1){:}2$, and thus $H\cap L\leqslant q^3{:}(q^2-1).2$. Therefore, part (b) of Lemma \ref{4-dimensionSymplectic} appears.

{\bf Case 3.} Consider the pair $(A\cap L,B\cap L)$ as described in (iii). If $q$ is even, then since $H\cap L\leqslant\Pa_1[L]$, arguing as the second paragraph of Case 1 leads to part (b) of Lemma \ref{4-dimensionSymplectic}. Thus assume $q=p^f$ with odd prime $p$. Let $X$ be a maximal solvable subgroup of
$$
A\cap L=\Pa_1=q^{1+2}{:}((q-1)\times\Sp_2(q))/2=q^{1+2}{:}(q-1).\PSp_2(q)
$$
containing $H\cap L$. Then $X=q^{1+2}{:}((q-1)\times Y)/2$ for some maximal solvable subgroup $Y$ of $\Sp_2(q)$. By Lemma \ref{Reduction}(c), $Y/\Z_2\leqslant\PSp_2(q)$ lies in the following table:
\[
\begin{array}{|l|l|l|}
\hline
\text{row} & Y/\Z_2 & q\\
\hline
1 & \D_{q+1} & \text{odd} \\
2 & q{:}((q-1)/2) & \text{odd} \\
3 & \A_4 & 5,11 \\
4 & \Sy_4 & 7,23 \\
5 & \Sy_4 & 9 \\
\hline
\end{array}
\]
Moreover, the factorization $G=HB$ together with $H\cap L\leqslant X$ implies that $|L|$ divides $|X||B\cap L||\Out(L)|$, that is, $q+1$ divides $2qf|Y|$. Thereby checking the above table we conclude that only its rows 1 and 3--4 are possible. Notice that $|L|$ divides $|X||K\cap L||\Out(L)|$ due to the factorization $G=HK$. If row 1 appears, then $X=q^{1+2}{:}\Z_{(q^2-1)/2}.\Z_2$ and it follows that $q(q^4-1)$ divides $4f|K\cap L|$. This implies that $\PSp_2(q^2)\trianglelefteq K\cap L$, as part (c) of Lemma \ref{4-dimensionSymplectic}. If row 3 appears, then $X=q^{1+2}{:}(q-1).\A_4$ with $q\in\{5,11\}$ and it follows that $q(q^4-1)(q+1)$ divides $48f|K\cap L|$. This implies that $\PSp_2(q^2)\leqslant K\cap L$, and leads to part (d) of Lemma \ref{4-dimensionSymplectic}. Similarly, if row 4 appears, then $X=q^{1+2}{:}(q-1).\Sy_4$ with $q\in\{7,23\}$ and it follows that $q(q^4-1)(q+1)$ divides $96f|K\cap L|$. This implies that $\PSp_2(q^2)\leqslant K\cap L$, as part (d) of Lemma \ref{4-dimensionSymplectic}.

{\bf Case 4.} Consider the pair $(A\cap L,B\cap L)$ as described in (iv). Since $H\cap L\leqslant\Pa_2[L]$, arguing as the third paragraph of Case 1 leads to part (a) of Lemma \ref{4-dimensionSymplectic}.

{\bf Case 5.} Suppose that $(A\cap L,B\cap L)$ is as described in (v). If $f=1$, then $|L|/|B\cap L|$ is divisible by $5\cdot17$, but Lemma \ref{Reduction}(c) implies that $|H|$ is not divisible by $5\cdot17$, contrary to the factorization $G=HB$. If $f=2$, then $|L|/|B\cap L|$ is divisible by $17\cdot257$, but Lemma \ref{Reduction}(c) implies that $|H|$ is not divisible by $17\cdot257$, contrary to the factorization $G=HB$. Hence this case is not possible.

{\bf Case 6.} Finally, consider the pair $(A\cap L,B\cap L)$ as described in (vi). If $f=1$, then the factorization $G=HB$ requires  $|H\cap L|$ to be divisible by $15$, contrary to the fact that $A\cap L=\Sp_4(2)\cong\Sy_6$ does not possess any solvable subgroup of order divisible by $15$. If $f=2$, then $|L|/|B\cap L|$ is divisible by $17$, but Lemma \ref{Reduction}(d) implies that $|H|$ is not divisible by $17$, contradicting the factorization $G=HB$. Hence this case is not possible either. We thus complete the proof.
\end{proof}

\subsection{Symplectic groups of dimension at least six}\label{sec3}

We first embark on the infinite families for the maximal factorization $G=AB$, namely, rows 1--12 of Table \ref{tabSymplectic}.

\begin{lemma}\label{Symplectic3}
Let $m\geqslant3$, $q=p^f$ with $p$ prime and $(m,q)\neq(3,2)$. If the maximal factorization $G=AB$ lies in rows~\emph{1--12} of \emph{Table \ref{tabSymplectic}} (interchanging $A,B$ if necessary), then each primitive prime divisor of $p^{2fm}-1$ divides $|B\cap L|$.
\end{lemma}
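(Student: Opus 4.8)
The plan is to argue by contradiction: suppose some primitive prime divisor $r$ of $p^{2fm}-1$ does not divide $|B\cap L|$. Since $G=AB$ and hence $|G|$ divides $|A||B|$ (Lemma \ref{p4}(a)), the prime $r$ must divide $|A|$, indeed $|A\cap L|$ up to the contribution of $|\Out(L)|$; but by the lemma following Zsigmondy's theorem (the one deduced from checking $|\Out(L)|$), since $2m\geqslant6$ and $(q,2m)\neq(2,6)$ or $(4,3)$, no primitive prime divisor of $p^{2fm}-1$ divides $|\Out(L)|$, so in fact $r$ divides $|A\cap L|$. The first step is therefore to go through rows 1--12 of Table \ref{tabSymplectic} and, for each candidate $(A\cap L,B\cap L)$, determine whether $r\mid|A\cap L|$ forces a contradiction with $r\nmid|B\cap L|$. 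For most rows this is immediate from the orders: $r=|\PSp_{2m}(q)|$'s primitive prime divisor at the top degree, and one checks that in every row of Table \ref{tabSymplectic} the subgroup not equal to a $\mathcal{C}_1$ parabolic or $\mathcal{N}_i$ subgroup carrying the full $2m$-degree torus is exactly the one that is forced to contain $r$. The key numeric observation is Lemma \ref{l6}: $r>2fm$ and $2fm\mid r-1$, so $r$ cannot divide small index quantities like $2fm$, $|\Out(L)|$, or the order of a proper subdiagram subgroup of small rank.

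The second and main step is to identify, for each row, \emph{which} factor's order is divisible by $r$ and then to rule out that the \emph{other} factor also fails to contain $r$. Concretely: a primitive prime divisor $r$ of $p^{2fm}-1$ divides $|\PSp_{2m}(q)|$ but divides the order of a maximal subgroup $M$ of $L$ only if $M$ contains a cyclic torus of order divisible by $r$; inspecting the maximal subgroups in rows 1--12 (parabolics $\Pa_k$, $\mathcal{N}_k$-type subgroups $\Sp_{2k}(q)\times\Sp_{2m-2k}(q)$, $\GO_{2m}^\pm(q)$, $\Sp_{2m}(q_0)$ for subfields, $\GL_m(q).2$, $\GU_m(q).2$, $\Sp_2(q)\otimes\Sp_m(q)$, etc.) one checks that $r$ divides the order precisely when the subgroup is $\mathcal{N}_1$-type $\GO_{2m}^-(q)$, or $\Pa_1$, or $\GL_m(q).2$ / $\GU_m(q).2$ (the ones whose natural torus reaches order $q^m\pm1$ dividing $q^{2m}-1$), or a subfield subgroup $\Sp_{2m}(q_0)$ with $q=q_0^t$. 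Since rows 1--12 are exactly the maximal factorizations with a parabolic or subfield-type or classical-type factor, in each such row the factor $B$ (after interchanging if necessary, as permitted) is the one of $\mathcal{C}_1$/$\mathcal{C}_5$/$\mathcal{C}_8$ type that carries this torus, and the factor $A$ is the imprimitive-type or tensor-type or field-extension-type factor whose order is \emph{not} divisible by $r$. Hence $r\nmid|A\cap L|$ as well, forcing $r\nmid|A||B|$, contradicting $r\mid|G|$.

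The step I expect to be the main obstacle is the bookkeeping in the second step: one must correctly match, row by row of Table \ref{tabSymplectic}, which of the two listed subgroups has order divisible by $r$, handling carefully the rows where \emph{both} subgroups a priori could contain a large torus (for instance $\GL_m(q).2$ versus $\GU_m(q).2$, where $r\mid q^m-1$ in one case and $r\mid q^m+1$ in the other — and $r$ is a primitive prime divisor of $q^{2m}-1$, so it divides exactly one of $q^m-1$, $q^m+1$). Getting the correct convention for which factor is called $B$ after the allowed interchange, and confirming that this is consistent with the claim that $r\mid|B\cap L|$, is where the argument is delicate. The exceptional small cases $(m,q)=(3,2)$ (excluded by hypothesis) and the need to keep $(q,2m)\neq(4,3)$ in mind (which here is automatic since $2m\geqslant6$) should be flagged, and any residual ambiguous row can be settled by a direct order comparison using the list in \cite[TABLE 2.1]{liebeck1990maximal}.
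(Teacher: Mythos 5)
Your proposal is based on a wrong premise about what the lemma claims and on incorrect order arithmetic, and it omits the central idea of the paper's proof.

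First, the conclusion ``$r$ divides $|B\cap L|$'' is about a specific $B$ that is fixed throughout Section~6.2: $B$ is the core-free maximal subgroup containing $K$, the unsolvable factor of the nontrivial factorization $G=HK$, while $A$ is the one containing the solvable factor $H$. The phrase ``interchanging $A,B$ if necessary'' only adjusts for the order in which Table~\ref{tabSymplectic} lists the pair; it does not license you to designate $B$ to be whichever factor happens to contain $r$. Your proposal quietly assumes the latter (``the factor $B$, after interchanging if necessary, is the one that carries this torus''), which would make the lemma vacuous and useless for the applications in Lemma~\ref{Symplectic1}.

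Second, the order bookkeeping is wrong. The field-extension subgroup $\PSp_{2a}(q^b).b$ with $ab=m$ (rows~1, 2, 5, 8) has $|\PSp_{2a}(q^b)|$ divisible by $(q^b)^{2a}-1=q^{2m}-1$, hence by $r$; so your claim that the field-extension factor ``has order not divisible by $r$'' is false. Conversely, $|\Pa_1|$ and a subfield subgroup $|\Sp_{2m}(q_0)|$ with $q=q_0^t$ are \emph{not} divisible by $r$ (their orders involve only $q^i-1$ with $i<2m$, respectively $q_0^{2m}-1<q^{2m}-1$), and $|\GL_m(q)|$ is also not divisible by $r$; yet you list these as carrying $r$. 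In row~1, for instance, $A\cap L=\PSp_{2a}(q^b).b$ has order divisible by $r$ while $B\cap L=\Pa_1$ does not, so a pure order comparison cannot deliver the conclusion.

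The genuine gap is that you never address the case $r\mid|A\cap L|$, which is exactly the hard case. The paper's proof shows the stronger statement that $r\nmid|H\cap L|$ (which then forces $r\mid|K\cap L|\leqslant|B\cap L|$ via Lemma~\ref{p4}). To do so when $A\cap L=\PSp_{2a}(q^b).b$, $\SO_{2m}^-(q)$, or $\G_2(q)$ — all of whose orders are divisible by $r$ — one must invoke the inductive Hypothesis~\ref{Hypo} through Lemma~\ref{Reduction}. That lemma constrains the structure of $H$ inside $A$ to a short explicit list: Reduction(a) rules out $\G_2(q)$, Reduction(b) and~(f) handle $\SO_{2m}^-(q)$, and Reduction(c)–(d) force $a=1$, $b=m$ prime and $|H|\mid 2fm(q^m+1)$, after which a divisibility comparison against $|L|/|B\cap L|$ in the three possible cases for $B\cap L$ gives contradictions. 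None of this appears in your proposal, so what you have would not close the argument.
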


\begin{proof}
By Lemma \ref{p4}, it suffices to show that any primitive prime divisor $r$ of $p^{2fm}-1$ does not divide $|H\cap L|$. Suppose to the contrary that $r$ divides $|H\cap L|$. Then $r$ divides $|A\cap L|$. Inspecting rows 1--12 of Table \ref{tabSymplectic}, we conclude that $A\cap L=\PSp_{2a}(q^b).b$ with $ab=m$ and $b$ prime or $\SO_{2m}^-(q)$ with $q$ even or $\G_2(q)$ with $m=3$. Notice that $\SO_6^-(q)\cong\PSU_4(q).2$ if $q$ is even. Since $r$ divides $|H|$, we deduce from Lemma~\ref{Reduction}(a), (b) and (f) that either $A\cap L=\PSp_{2a}(q^b).b$ with $ab=m$ and $b$ prime or $L=\Sp_6(8)$ and $A\cap L=\SO_6^-(8)$.

Suppose that $L=\Sp_6(8)$ and $A\cap L=\SO_6^-(8)$. Then one sees from Table \ref{tabSymplectic} that $B\cap L=\Sp_2(8^3).3$, $\Pa_3$ or $\G_2(8)$. For these candidates of $B\cap L$, it holds that $|L|/|B\cap L|$ is divisible by $13$. However, Lemma~\ref{Reduction}(f) shows that $|H|$ is not divisible by $13$, contrary to the factorization $G=HB$. Therefore, $A\cap L=\PSp_{2a}(q^b).b$ with $ab=m$ and $b$ prime.

Write $R=\Rad(A)$ and $A/R=\PSp_{2a}(q^b).\calO$. Since $r$ divides $|H|$, we deduce from Lemma \ref{Reduction}(c) (with $\ell=2$ there) and (d) that $a=1$, $b=m$ is prime and $H\leqslant R.\D_{2(q^m+1)/(2,q-1)}.\calO$. This together with the equality $|R||\calO|=|A|/|\PSp_2(q^m)|$ yields that $|H|$ divides
$$
\frac{2(q^m+1)|A|}{(2,q-1)|\PSp_2(q^m)|}=\frac{2m(q^m+1)|A|}{(2,q-1)|A\cap L|}=\frac{2m(q^m+1)|G|}{(2,q-1)|L|},
$$
whence $|H|$ divides $2fm(q^m+1)$. Then from the factorization $G=HB$ we obtain that
\begin{equation}\label{eq9}
\mbox{$|L|/|B\cap L|$ divides $2fm(q^m+1)$.}
\end{equation}
As seen from Table \ref{tabSymplectic}, the possibilities for $B\cap L$ are:
\[\mbox{$\Pa_1$,\quad$\SO_{2m}^+(q)$ with $q$ even,\quad$\SO_{2m}^-(q)$ with $q$ even.}\]
We proceed by these three cases for $B\cap L$.

{\bf Case 1}: $B\cap L=\Pa_1$. Then $(q^{2m}-1)/(q-1)$ divides $2fm(q^m+1)$ by (\ref{eq9}), that is, the divisibility in
\begin{equation}\label{eq10}
q^m-1\di2fm(q-1).
\end{equation}
It follows by (\ref{eq10}) that $(m,q)\neq(3,4)$. Hence $p^{fm}-1$ has a primitive prime divisor $s$ by Zsigmondy's theorem as $m$ is prime. However, since $2fm(q-1)$ is not divisible by $s$, (\ref{eq10}) does not hold, which is a contradiction.

{\bf Case 2}: $B\cap L=\SO_{2m}^+(q)$ with $p=2$. Then $q^m(q^m+1)/2$ divides $2fm(q^m+1)$ by (\ref{eq9}), that is, $2^{fm}\di4fm$. This forces $fm=4$, contrary to the condition that $m\geqslant3$ is prime.

{\bf Case 3}: $B\cap L=\SO_{2m}^-(q)$ with $p=2$. Then $q^m(q^m-1)/2$ divides $2fm(q^m+1)$ by (\ref{eq9}), that is, $q^m(q^m-1)\di4fm(q^m+1)$. Since $(q^m,q^m+1)=(q^m-1,q^m+1)=1$, this implies $q^m(q^m-1)\di4fm$, which is impossible.
\end{proof}

\begin{lemma}\label{Symplectic4}
Let $m\geqslant3$, $q=2^f$ and $(m,q)\neq(3,2)$. If the maximal factorization $G=AB$ lies in rows~\emph{1--12} of \emph{Table \ref{tabSymplectic}} (interchanging $A,B$ if necessary) with $B\cap L=\SO_{2m}^-(q)$, then $H\leqslant\Pa_m[G]$.
\end{lemma}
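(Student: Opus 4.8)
The plan is to read off from Table~\ref{tabSymplectic} the rows whose second factor is $\SO^-_{2m}(q)$ with $q$ even, and to treat each resulting shape of $A\cap L$. Since $G=AB$ lies in rows~1--12 of that table, Lemma~\ref{l8} and Proposition~\ref{p1} (using $m\geqslant3$, so that $L\ne\Sp_4(4)$) guarantee that $A$ has at most one unsolvable composition factor; and $\Sp_{2m}(q)=\Pa_1[L]\,\SO^-_{2m}(q)$ is not a factorization, because $\GO^-_{2m}(q)$ has two orbits (singular and nonsingular) on the $1$-spaces of the natural module. I expect this to leave exactly the candidates
\[
A\cap L=\Pa_m[L],\qquad A\cap L=\G_2(q)\ (m=3),\qquad A\cap L=\Sp_{2a}(q^b).b\ (ab=m,\ b\text{ prime}),
\]
all the imprimitive subgroups $\Sp_{2\ell}(q)\wr\Sy_2$ belonging instead to Table~\ref{tab11} and having been disposed of in Proposition~\ref{p1}.

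If $A\cap L=\Pa_m[L]$, we are done: $A\cap L\trianglelefteq A$, and since $\Pa_m[L]$ is the full stabiliser in $L$ of a totally singular $m$-space $U=\C_V(\bfO_2(\Pa_m[L]))$, any element normalising $\Pa_m[L]$ fixes $U$, so $A\leqslant\Nor_G(\Pa_m[L])=\Pa_m[G]$ and hence $H\leqslant\Pa_m[G]$. If $A\cap L=\G_2(q)$, then (as $(m,q)\ne(3,2)$ forces $q\geqslant4$ even) $\G_2(q)$ is simple, so it is the unique unsolvable composition factor of $A$, contradicting Lemma~\ref{Reduction}(a); this is precisely the place where the hypothesis $(m,q)\ne(3,2)$ is needed, since $\G_2(2)$ is not simple.

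It remains to handle $A\cap L=\Sp_{2a}(q^b).b$. Here $R:=\Rad(A)$ is trivial (because $\PSp_{2a}(q^b)$ is simple, as $q^b\geqslant4$), $A/R=\PSp_{2a}(q^b).\calO$, and by Lemma~\ref{l3} the image $HR/R$ is a nontrivial solvable factor of $A/R$. Applying Lemma~\ref{Reduction}(d) with socle $\PSp_{2a}(q^b)$ over the even field $\GF(q^b)$, all of its exceptional rows are excluded on parity grounds, so only the generic row survives and
\[
H\leqslant R.\big(q^{ab(a+1)/2}{:}(q^m-1).a\big).\calO .
\]
By row~3 of Table~\ref{tab7} (with parameters $a$ and $q^b$ in place of $m$ and $q$) the middle factor lies inside $\Pa_a[\Sp_{2a}(q^b)]$, which stabilises a totally singular $a$-dimensional $\GF(q^b)$-subspace $U$ of the natural $\Sp_{2a}(q^b)$-module. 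Regarded over $\GF(q)$, $U$ is a totally singular $m$-dimensional subspace of the natural $\Sp_{2m}(q)$-module, so $\Pa_a[\Sp_{2a}(q^b)]\leqslant\Pa_m[L]$; choosing $U$ standard, the field and graph automorphisms recorded in $\calO$ (and the trivial $R$) also fix $U$, whence $H\leqslant\Pa_m[G]$.

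The main obstacle I anticipate is the very first step: pinning down exactly which rows of Table~\ref{tabSymplectic} have $B\cap L=\SO^-_{2m}(q)$ and confirming the short candidate list above, in particular ruling out $\Pa_1[L]$, plus‑type orthogonal subgroups, and any subfield, tensor or $\mathcal{C}_6$ subgroup; together with the bookkeeping in the field‑extension case needed to see that the whole subgroup $R.(q^{ab(a+1)/2}{:}(q^m-1).a).\calO$, outer part included, lies in a single conjugate of $\Pa_m[G]$. Once the candidate list is secured, the $\Pa_m$ case is immediate from the definition of $\Pa_m[G]$ and the $\G_2$ case from Lemma~\ref{Reduction}(a).
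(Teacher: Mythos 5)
Your candidate list for $A\cap L$ is incomplete, and this is a genuine gap, not a bookkeeping detail. After interchanging $A$ and $B$, rows~6 and~7 of Table~\ref{tabSymplectic} pair $\GO_{2m}^-(q)$ with $A\cap L=\GO_{2m}^+(q)$ for $q\in\{2,4\}$, and rows~9 and~10 pair it with $A\cap L=\Sp_{2m}(q^{1/2})$ for $q\in\{4,16\}$. Neither of these is an imprimitive wreath subgroup, neither is excluded by Proposition~\ref{p1}, and neither is touched by your observation that $\GO_{2m}^-(q)$ is intransitive on $1$-spaces (which only explains why $\Pa_1$ is absent). Both require real work in the paper's proof: for the plus-type orthogonal subgroup the inductive hypothesis (via $\GO_6^+(q)\cong\PSL_4(q).2$) throws up several subcases, including $H\leqslant\Pa_1[A]$ for $m=4$, a \magma\ check for $(m,q)=(4,2)$, and a $7$-divisibility contradiction for $(m,q)=(3,4)$; and the subfield subgroup is eliminated by a primitive-prime-divisor argument plus Hypothesis~\ref{Hypo}.

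Your treatment of the field-extension case $A\cap L=\Sp_{2a}(q^b).b$ is also not complete. Appealing to Lemma~\ref{Reduction}(d) and ``only the generic row survives on parity grounds'' misses two subcases. When $a=1$, the socle is $\PSp_2(q^m)\cong\PSL_2(q^m)$, Lemma~\ref{Reduction}(d) does not apply (it requires $\ell\geqslant 2$), and the relevant fact is Proposition~\ref{BothSolvable}(a), which allows a solvable factor inside a dihedral normalizer of a nonsplit torus rather than inside a parabolic; the paper rules this out by an order count against the factorization $G=HB$, showing $q^m(q^m-1)\mid 4fm(q^m+1)$ is impossible. When $a=2$, row~4 of Table~\ref{tab7} allows $H\leqslant\Pa_1[A]$, and the observation $\Pa_1[A]\leqslant\Pa_{m/2}[G]$ must be used together with Theorem~\ref{Maximal} to derive a contradiction from the resulting factorization $G=\Pa_{m/2}[G]B$; your argument does not address this at all. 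The parts of your outline that do work — the immediate conclusion in the $\Pa_m$ case, and the exclusion of $\G_2(q)$ via Lemma~\ref{Reduction}(a) (which the paper omits without comment, so making it explicit is a small improvement) — are correct, but the proof as written is far from complete.
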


\begin{proof}
By Proposition \ref{p1}, the maximal factor $A$ has at most one unsolvable composition factor. Consequently, $A\cap L\neq\Sp_m\wr\Sy_2$. Then in rows 1--12 of Table \ref{tabSymplectic}, the possibilities for $A\cap L$ are:
\[\mbox{$\Sp_{2a}(q^b).b$ with $ab=m$ and $b$ prime,\quad$\Pa_m$,}\]
\[\mbox{$\GO_{2m}^+(q)$ with $q\in\{2,4\}$,\quad$\Sp_{2m}(q^{1/2})$ with $q\in\{4,16\}$.}\]
We proceed by these four cases for $A\cap L$.

{\bf Case 1}: $A\cap L=\Sp_{2a}(q^b).b$ with $ab=m$ and $b$ prime. In this case, $A=\Sp_{2a}(q^b).\Z_{be}$, where $\Z_e=G/L\leqslant\Z_f$. By Lemma \ref{l2}, $H$ is a solvable nontrivial factor of $A$.

Assume $H\nleqslant\Pa_a[A]$, a parabolic subgroup of $A$. By Hypothesis \ref{Hypo}, we see from Theorem \ref{SolvableFactor} that either $a=1$ and $|H|$ divides $2(q^m+1)me$, or $a=2$ and $H\leqslant\Pa_1[A]$. For the latter, the observation $\Pa_1[A]\leqslant\Pa_{m/2}[G]$ yields that $H\leqslant\Pa_{m/2}[G]$, and thus the factorization $G=\Pa_{m/2}[G]B$ arises, contrary to Theorem \ref{Maximal}. Hence we have the former, and in particular $|H|$ divides $2(q^m+1)mf$. Then the factorization $G=HB$ implies that $|L|/|B\cap L|=q^m(q^m-1)/2$ divides $2(q^m+1)mf$, that is, $q^m(q^m-1)\di4fm(q^m+1)$. Since $(q^m,q^m+1)=(q^m-1,q^m+1)=1$, this derives $fq^m(q^m-1)\di4fm$, which is impossible.

Therefore, $H\leqslant\Pa_a[A]$, then the observation $\Pa_a[A]\leqslant\Pa_m[G]$ implies that $H\leqslant\Pa_m[G]$, as the lemma asserts.

{\bf Case 2}: $A\cap L=\Pa_m$. Then $H\leqslant A=\Pa_m[G]$ as stated in the lemma.

{\bf Case 3}: $A\cap L=\GO_{2m}^+(q)$ with $q\in\{2,4\}$. In this case, $A=\GO_{2m}^+(q).\Z_e$, where $\Z_e\leqslant\Z_f$. By Lemma \ref{l2}, $H$ is a solvable nontrivial factor of $A$.

Assume $H\nleqslant\Pa_m[A]$. Viewing $\GO_6^+(q)=\PSL_4(q).2$, we conclude from Hypothesis \ref{Hypo} and Theorem \ref{SolvableFactor} that one of the following holds.
\begin{itemize}
\item[(i)] $m=4$ and $H\leqslant\Pa_1[A]$.
\item[(ii)] $(m,q)=(4,2)$ and $H\leqslant\Sy_9$.
\item[(iii)] $(m,q)=(3,4)$ and $|H|$ divides $4(4^4-1)|\Out(\PSL_4(4))|=2^4\cdot3\cdot5\cdot17$.
\end{itemize}
If (i) holds, then the observation $\Pa_1[A]\leqslant\Pa_1[G]$ yields that $H\leqslant\Pa_1[G]$, and thus the factorization $G=\Pa_1[G]B$ arises, contrary to Theorem \ref{Maximal}. For (ii), computation in \magma \cite{bosma1997magma} shows that it gives no factorization $G=HB$ with $H$ solvable. Thus we have (iii), and in particular $|H|$ is not divisible by $7$. However, the factorization $G=HB$ requires $|H|$ to be divisible by $|L|/|B\cap L|$, and thus $|H|$ is divisible by $7$, a contradiction.

Consequently, $H\leqslant\Pa_m[A]$, and it follows that $H\leqslant\Pa_m[G]$, as the lemma states.

{\bf Case 4}: $A\cap L=\Sp_{2m}(q^{1/2})$ with $q\in\{4,16\}$. If $(m,q)=(3,4)$, then Hypothesis \ref{Hypo} in conjunction with Theorem \ref{SolvableFactor} implies that $|H|$ is not divisible by $63$, contrary to the condition that $2^5\cdot63=|L|/|B\cap L|$ divides $|H|$. Thus $(m,q)\neq(3,4)$, and it follows that $2^{fm}-1$ has a primitive prime divisor $r$. Since $r$ divides $|G|/|B|$, $r$ should also divide $|H|$ as the factorization $G=HB$ requires. However, by Hypothesis \ref{Hypo} and Theorem \ref{SolvableFactor}, $r$ does not divide $|H|$, which is a contradiction.
\end{proof}

Now we finish the analysis for maximal factorizations $G=AB$ from rows 1--12 of Table \ref{tabSymplectic}.

\begin{lemma}\label{Symplectic1}
If $m\geqslant3$ and the maximal factorization $G=AB$ lies in rows~\emph{1--12} of \emph{Table \ref{tabSymplectic}} ($A,B$ may be interchanged), then one of the following holds.
\begin{itemize}
\item[(a)] $q$ is even, $H\cap L\leqslant q^{m(m+1)/2}{:}(q^m-1).m<\Pa_m$, and $\Omega_{2m}^-(q)\trianglelefteq K\cap L\leq\GO_{2m}^-(q)$.
\item[(b)] $G=\PSp_6(2)$, $H=3_+^{1+2}{:}2.M$ with $M\leqslant\Sy_4$, and $\A_8\trianglelefteq K\leqslant\GO_6^+(2)\cong\Sy_8$; moreover, $(M,A,K)$ lies in the following table:
\[
\begin{array}{|l|l|l|}
\hline
M & A & K\\
\hline
2^2,\ 4 & \GO_6^-(2),\ \G_2(2) & \Sy_8 \\
\D_8 & \GO_6^-(2),\ \G_2(2) & \A_8,\ \Sy_8 \\
\A_4 & \GO_6^-(2) & \Sy_8 \\
\Sy_4 & \GO_6^-(2) & \A_8,\ \Sy_8 \\
\hline
\end{array}
\]
\item[(c)] $G=\PSp_6(3).2$, $A=\Pa_1[G]$, $K=B=\PSL_2(27){:}6$, $H=3_+^{1+4}{:}2^{1+4}.\AGL_1(5)$ and $H\cap L=3_+^{1+4}{:}2^{1+4}.\D_{10}$.
\end{itemize}
\end{lemma}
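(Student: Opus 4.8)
The strategy is to run the reduction already prepared for symplectic groups. Take $B$ to be a core-free maximal subgroup of $G$ containing $K$ (Lemma~\ref{Embedding}), so that $G=AB$ is a nontrivial maximal factorization lying in rows~1--12 of Table~\ref{tabSymplectic}. By Proposition~\ref{p1}, since $L=\PSp_{2m}(q)$ with $m\geqslant3$ is none of $\Sp_4(4)$, $\Omega_8^+(2)$, $\Omega_8^+(4)$, the factor $A$ has at most one unsolvable composition factor; and since the subcase of $A$ solvable was disposed of in Proposition~\ref{p13} (whose only symplectic socle is $\PSp_4(3)$), $A$ has exactly one. First I would settle the small cases $(m,q)=(3,2)$ and $(3,3)$ directly in \magma, obtaining parts~(b) and~(c); from now on $(m,q)\notin\{(3,2),(3,3)\}$.

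The next step cuts down $B$. Since $2m\geqslant6$ and $(m,q)\neq(3,2)$, the pair $(p,2fm)$ admits a primitive prime divisor $r$, and Lemma~\ref{Symplectic3} gives $r\di|B\cap L|$. As $r\nmid q^i-1$ for every $i<2m$, while $r\di q^{2m}-1=(q^m-1)(q^m+1)$ forces $r\di q^m+1$, a glance at the orders of the candidates in rows~1--12 of Table~\ref{tabSymplectic} rules out $B\cap L=\Pa_1$, $\GO_{2m}^+(q)$, $\Sp_{2m}(q^{1/2})$ and $\Sp_{2\ell}(q^b)\wr\Sy_2$, leaving only $B\cap L\in\{\GO_{2m}^-(q)\ (q\text{ even}),\,\Sp_2(q^m).m,\,\G_2(q)\ (m=3)\}$. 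For $B\cap L=\Sp_2(q^m).m$ we have $A\cap L=\Pa_1$, with unique unsolvable composition factor $\PSp_{2(m-1)}(q)$; applying Lemma~\ref{Reduction}(d) with $\ell=m-1$ to bound $|H|$ and then comparing with the requirement that $|G|/|B|$ divide $|H|$ yields a contradiction for every $m\geqslant3$ except $(m,q)\in\{(3,2),(3,3)\}$, with the exceptional rows of Lemma~\ref{Reduction}(d) for small $\ell$ checked one by one. For $B\cap L=\G_2(q)$ (so $m=3$) the same device, together with the fact that $\G_2(q)\not\leqslant\Sp_6(q)$ for $q$ odd, gives a contradiction when $q\geqslant4$. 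Hence $q$ is even and $B\cap L=\GO_{2m}^-(q)=\Omega_{2m}^-(q)$.

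By Lemma~\ref{Symplectic4} we now have $H\leqslant\Pa_m[G]$, so $A=\Pa_m[G]$ and $\Pa_m[L]=q^{m(m+1)/2}{:}\GL_m(q)$. Since $H$ is solvable, $H\cap L\leqslant q^{m(m+1)/2}{:}M$ for a solvable $M\leqslant\GL_m(q)$; the factorization $G=HK$ forces a primitive prime divisor $s$ of $q^m-1$ (which exists as $q$ is even, the only exception $(q,m)=(2,6)$ being treated separately) to divide $|G|/|B|$, hence $|H|$, and as $s\nmid|\Out(L)|$ and $s\nmid q^{m(m+1)/2}$ we get $s\di|M|$. A solvable subgroup of $\GL_m(q)$ of order divisible by a primitive prime divisor of $q^m-1$ lies in the Singer normalizer $\GL_1(q^m){:}m$; therefore $H\cap L\leqslant q^{m(m+1)/2}{:}(q^m-1).m<\Pa_m[L]$, as claimed in part~(a). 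For $K\cap L$, apply the symmetric form of Lemma~\ref{p9} to $K\leqslant B$ to get $B=(H\cap B)K$; here $H\cap B$ is solvable, so if this factorization of the almost simple group $B$ (which has socle $\POm_{2m}^-(q)$ and order properly dividing $|G|$) is nontrivial, Hypothesis~\ref{Hypo} and Theorem~\ref{SolvableFactor} describe it. For $m\geqslant4$ there is no such description (Lemma~\ref{Reduction}(b)), a contradiction; hence the factorization is trivial and $\Omega_{2m}^-(q)\trianglelefteq K$, giving part~(a). For $m=3$, where $\POm_6^-(q)\cong\PSU_4(q)$ does admit solvable-factor factorizations, one has in addition $G=\Pa_m[G]K$ (by Lemma~\ref{p8}, since $\Pa_m[G]=H(K\cap\Pa_m[G])$), so $K$ is transitive on the totally singular $m$-spaces; checking the unitary entries of Tables~\ref{tab7}--\ref{tab1} against this transitivity (the relevant candidates for $K\cap\POm_6^-(q)$ have order not divisible by $q^2+1$, whereas the number of such $m$-spaces is) shows once more that the factorization of $B$ cannot be nontrivial, so $\Omega_6^-(q)\trianglelefteq K\cap L$.

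The main obstacle is the determination of $K\cap L$ in the last step: a bare minimal-index estimate (Theorem~\ref{l7}) is too weak once $m\geqslant4$, since $H\cap B$ can be a sizeable solvable subgroup of $\Pa_{m-1}[\GO_{2m}^-(q)]$. The decisive idea is instead to read $B=(H\cap B)K$ as a genuine factorization of a \emph{smaller} almost simple group with a solvable factor and invoke the non-existence of such for socle $\POm_{2m}^-(q)$ with $m\geqslant4$ (equivalently Lemma~\ref{Reduction}(b)), supplemented by the transitivity argument in the single remaining rank $m=3$. A secondary difficulty is the bookkeeping in the second paragraph: the exceptional rows of Lemma~\ref{Reduction}(d) for small $\PSp_{2(m-1)}(q)$, the identification $\G_2(q)\leqslant\Sp_6(q)$ only for $q$ even, and the isolated case $(q,m)=(2,6)$ must all be tracked with care, the last via a primitive prime divisor of $2^3-1$ together with \magma.
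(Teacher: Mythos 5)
Your overall strategy is sound and close in spirit to the paper's: apply Lemma~\ref{Symplectic3} to force a primitive prime divisor $r$ of $p^{2fm}-1$ into $|B\cap L|$, use Lemma~\ref{Symplectic4} in the $\GO_{2m}^-(q)$ case, and then invoke the inductive hypothesis. But the list of surviving possibilities for $B\cap L$ after the $r$-argument is wrong. You claim only $\GO_{2m}^-(q)$, $\Sp_2(q^m).m$, and $\G_2(q)$ remain; in fact $\PSp_{2a}(q^b).b$ with $ab=m$, $b$ prime, for \emph{any} $a$ (not just $a=1$) has order divisible by $r$, because $|\PSp_{2a}(q^b)|$ contains the factor $(q^b)^{2a}-1=q^{2m}-1$. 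So rows~1, 2, 5 and~8 of Table~\ref{tabSymplectic} all still contribute. Moreover, even in your $a=1$ subcase ($B\cap L=\Sp_2(q^m).m$), the corresponding $A\cap L$ need not be $\Pa_1$: row~2 (with $A,B$ interchanged) gives $A\cap L=\GO_{2m}^\pm(q)$ for $q$ even. The paper's Case~1 separately handles $A\cap L\in\{\Pa_1,\N_2\}$ (composition factor $\PSp_{2m-2}(q)$, apply Lemma~\ref{Reduction}(d)) and $A\cap L\in\{\GO_{2m}^+(q),\GO_{2m}^-(q)\}$ (composition factors $\PSL_4(q)$, $\PSU_4(q)$ after forcing $m=3$, apply Lemma~\ref{Reduction}(c) and~(f)). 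Your argument covers only one of these subcases and silently drops the rest.

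The second substantive gap is in the $B\cap L=\G_2(q)$ case ($m=3$, $q$ even, $q\geqslant4$). You say ``the same device \dots gives a contradiction,'' but among the admissible $A\cap L$ here is $\GO_6^-(q)$, for which primitive-prime-divisor bounds are not enough: the paper must invoke the technical Lemma~\ref{l4} (a structural non-factorization result inside $\SU(V,\beta){:}\langle\tau\rangle$) to rule it out. That is genuinely a different mechanism and you would need to reproduce it. Two smaller issues: your claim that a solvable subgroup of $\GL_m(q)$ of order divisible by a primitive prime divisor of $q^m-1$ must lie in the Singer normalizer $\GL_1(q^m){:}m$ is asserted without proof or reference — the paper avoids needing it by applying the inductive hypothesis to $A/\Rad(A)\cong\PSL_m(q).\calO$ through Lemma~\ref{Reduction}(c) — and the Zsigmondy exception for $q^m-1$ with $q$ even includes $(q,m)=(4,3)$ in addition to the $(2,6)$ you mention.
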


\begin{proof}
Let $q=p^f$ with $p$ prime. For $(m,q)=(3,2)$, computation in \magma \cite{bosma1997magma} verifies the lemma directly. Thus we assume $(m,q)\neq(3,2)$ henceforth, and it follows that $p^{2fm}-1$ has a primitive prime divisor $r$. By Lemma \ref{Symplectic3}, $r$ divides $|B\cap L|$. Then according to Table \ref{tabSymplectic}, the possibilities for $B\cap L$ are:
\[\mbox{$\PSp_{2a}(q^b).b$ with $ab=m$ and $b$ prime,}\]
\[\mbox{$\GO_{2m}^-(q)$ with $q$ even,\quad$\G_2(q)$ with $m=3$ and $q$ even.}\]
We proceed by these three cases for $B\cap L$.

{\bf Case 1.} Suppose that $B\cap L=\PSp_{2a}(q^b).b$, where $ab=m$ and $b$ is prime. By Proposition \ref{p1}, $A$ has at most one unsolvable composition factor. Then inspecting Table \ref{tabSymplectic}, we obtain the following candidates for $A\cap L$:
\[\mbox{$\Pa_1$,\quad$\GO_{2m}^+(q)$ with $q$ even,\quad$\GO_{2m}^-(q)$ with $q$ even,\quad$\N_2$ with $b=2$ and $q=2$.}\]
If $(m,q)\neq(4,2)$, then $p^{f(2m-2)}-1$ has a primitive prime divisor $s$ by Zsigmondy's theorem. Let $s=3^3\cdot7$ if $(m,q)=(4,2)$. Since $s$ divides $|L|/|B\cap L|$, the factorization $G=HB$ requires $|H|$ to be divisible by $s$.

First assume that $A\cap L=\Pa_1$ or $A\cap L=\N_2$ with $b=q=2$. Then $A$ has the unique unsolvable composition factor $\PSp_{2m-2}(q)$. Since $s$ divides $|H|$, we conclude from Lemma \ref{Reduction}(d) that $(m,q)=(3,3)$, $A\cap L=\Pa_1$ and $H\leqslant\Rad(A).(2^4{:}\AGL_1(5))$. In this situation, computation in \magma \cite{bosma1997magma} shows that part (c) of Lemma~\ref{Symplectic1} appears.

Next assume that $A\cap L=\GO_{2m}^+(q)$ with $p=2$. If $m\geqslant4$, then there is a contradiction that $|H|$ is not divisible by $s$ by Lemma~\ref{Reduction}(h). Thus $m=3$, which indicates that $A$ has the unique unsolvable composition factor $\PSL_4(q)$. Since $|H|$ is divisible by $s$, we deduce from Lemma \ref{Reduction}(c) that $|H|$ divides $8f(q^4-1)/(q-1)$. Therefore, the factorization $G=HB$ implies that $|L|$ divides $|H||\PSp_2(q^3).3|$ and thus $8f(q^2+1)(q+1)|\PSp_2(q^3).3|$. This turns out to be that $q^6(q^2-1)(q-1)\di24f$, which is impossible.

Now assume that $A\cap L=\GO_{2m}^-(q)$ with $p=2$. By Lemma~\ref{Reduction}(b), $m=3$ and thus $A$ has the unique unsolvable composition factor $\PSU_4(q)$. It then derives from Lemma~\ref{Reduction}(f) that either $|H|$ divides $4fq^4(q^4-1)/(q+1)$ or $q=8$ and $|H|$ is not divisible by $13$. The latter is contrary to the factorization $G=HB$ since $|L|/|B\cap L|$ is divisible by $13$. Consequently, $|H|$ divides $4fq^4(q^4-1)/(q+1)$. Thereby the factorization $G=HB$ implies that $|L|$ divides $|H||\PSp_2(q^3).3|$ and thus $4fq^4(q^2+1)(q-1)|\PSp_2(q^3).3|$. This turns out to be that $q^2(q^2-1)(q+1)\di12f$, which is impossible.

{\bf Case 2.} Suppose that $B\cap L=\GO_{2m}^-(q)$ with $p=2$. By Lemma \ref{Symplectic4} we have $H\leqslant\Pa_m[G]$. Let $X=\Pa_m[G]$ be a maximal subgroup of $G$ containing $H$. Then $X=q^{m(m+1)/2}{:}\GL_m(q).\Z_e$ and $B=\GO_{2m}^-(q).\Z_e$, where $\Z_e\leqslant\Z_f$. If $(m,q)\neq(3,4)$ or $(6,2)$, then $2^{fm}-1$ has a primitive prime divisor $s$ by Zsigmondy's theorem. Let $s=7$ if $(m,q)=(3,4)$ or $(6,2)$. Since $s$ divides $|L|/|B\cap L|$, we conclude by Lemma \ref{p4} that $s$ also divides $|H|$. This together with Lemma \ref{Reduction}(c) implies that
\begin{equation}\label{eq17}
H\leqslant(q^{m(m+1)/2}{:}(q^m-1).m).\Z_e<X,
\end{equation}
and thus $H\cap L\leqslant q^{m(m+1)/2}{:}(q^m-1).m<\Pa_m$.

It derives from the factorization $G=HK$ that $B=(H\cap B)K$. Note that $H\cap B$ is solvable and $K$ is unsolvable. Then by Hypothesis \ref{Hypo} and Theorem \ref{SolvableFactor}, either $B^{(\infty)}\trianglelefteq K$, or $m=3$ and $B$ has the unique unsolvable composition factor $\PSU_4(q)$ with $|K|$ not divisible by any primitive prime divisor of $2^{4f}-1$. In light of (\ref{eq17}), the latter indicates that $|H||K|$ is not divisible by any primitive prime divisor of $2^{4f}-1$, contradicting the factorization $G=HK$. Thus we have $B^{(\infty)}\trianglelefteq K$, which leads to $\Omega_{2m}^-(q)\trianglelefteq K\cap L$. Therefore, part (a) of Lemma \ref{Symplectic1} appears.

{\bf Case 3.} Suppose that $m=3$ and $B\cap L=\G_2(q)$ with $p=2$. In this case, we have $f>1$ by our assumption that $(m,q)\neq(3,2)$. By Proposition \ref{p1}, $A$ has at most one unsolvable composition factor. Then inspecting Table \ref{tabSymplectic}, we obtain the following candidates for $A\cap L$:
\[\mbox{$\GO_6^+(q)$,\quad$\GO_{2m}^-(q)$,\quad$\Pa_1$.}\]
From the factorization $G=HB$ we know that $|L|/|B\cap L|$ divides $|H|$, that is,
\begin{equation}\label{eq11}
q^3(q^4-1)\di|H|.
\end{equation}
By Zsigmondy's theorem, $q^4-1$ has a primitive prime divisor $s$. Then $s$ divides $|H|$ as a consequence of (\ref{eq11}). Accordingly, we exclude the candidate $A\cap L=\Pa_1$ by Lemma \ref{Reduction}(d) since in this situation $A$ has the unique unsolvable composition factor $\PSp_4(q)$.

Assume $A\cap L=\GO_6^+(q)$. Then $A$ has the unique unsolvable composition factor $\PSL_4(q)$. Since $s$ divides $|H|$, we deduce from Lemma \ref{Reduction}(c) that $|H|_2$ divides $8f$. This together with (\ref{eq11}) yields that $q^3\di8f$, which is impossible.

Now assume $A\cap L=\GO_6^-(q)$. Note that $A=\Omega_6^-(q).\Z_{2f/e}=\SU_4(q).\Z_{2f/e}$, where $e\di2f$. Let $V$ be a vector space over $\GF(q^2)$ of dimension $4$ equipped with a non-degenerate unitary form $\beta$, and $u_1,u_2,u_3,u_4$ be a orthonormal basis of $V$. Let $\tau$ be the semilinear transformation of $V$ fixing the basis vectors $u_1,u_2,u_3,u_4$ such that $(\lambda v)^\tau=\lambda^{p^e}v$ for any $v\in V$ and $\lambda\in\GF(q^2)$. Then we can write $A=S{:}T$ with $S=\SU(V,\beta)$ and $T=\langle\tau\rangle=\Z_{2f/e}$ such that $A\cap B=(S\cap B){:}T$ and $S\cap B=\SU_3(q)$ is the stabilizer of $u_1$ in $S$ by \cite[5.2.3(b)]{liebeck1990maximal}. It derives from $G=HB$ that $A=H(A\cap B)$ and thus $A=H(S\cap B)T$. Denote the stabilizer of $u_1$ in $A$ by $N$. Take $e_1,e_2,f_1,f_2$ to be a basis of $V$ and $\mu\in\GF(q^2)$ such that $e_2+\mu f_2=u_1$ and
$$
\beta(e_i,e_j)=\beta(f_i,f_j)=0,\quad\beta(e_i,f_j)=\delta_{i,j}
$$
for any $i,j\in\{1,2\}$. Then $A=HN$ since $(S\cap B)T\subseteq N$. By Hypothesis \ref{Hypo}, this factorization should satisfy Theorem \ref{SolvableFactor}, that is, $H^x\leqslant M$ with some $x\in A$ and maximal solvable subgroup $M$ of $A$ stabilizing $\langle e_1,e_2\rangle$. However, this gives $A=H^xN=MN$, contrary to Lemma \ref{l4}.
\end{proof}

For rows 13--16 of Table \ref{tabSymplectic}, since the case $L=\PSp_4(3)$ has been treated in Lemma \ref{4-dimensionSymplectic}, we only need to consider rows 14--16.

\begin{lemma}\label{Symplectic2}
The maximal factorization $G=AB$ does not lie in rows~\emph{14--16} of \emph{Table \ref{tabSymplectic}} ($A,B$ may be interchanged).
\end{lemma}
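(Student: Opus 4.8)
\textbf{Proof proposal for Lemma~\ref{Symplectic2}.}

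The plan is to rule out rows 14--16 of Table~\ref{tabSymplectic} one at a time, using the inductive machinery built in Chapter~\ref{Reduction} together with Zsigmondy primes to force divisibility contradictions. In each of these three rows the socle $L$ is $\PSp_{2m}(q)$ with $m\geqslant3$ small and $q$ small (these are the ``sporadic-looking'' maximal factorizations of symplectic groups: $\Sp_6(2)$ with a factor $\PSU_4(2).2\cong\Sp_6(2)\cap\mathrm{something}$, $\Sp_8(2)$ with a factor $\GO_8^-(2)$ or $\SO_7(2)$-type subgroup, and $\Sp_6(3)$ with the $\PSL_2(27)$-type factor), so $|L|$ is explicitly computable. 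First I would, for each row, identify the two maximal subgroups $A$ and $B$ and, after interchanging if necessary, determine which one can contain the solvable factor $H$ and which contains $K$. Since $A$ must contain the solvable $H$, Proposition~\ref{p1} lets me assume $A$ has at most one unsolvable composition factor, eliminating the ``wreath-type'' or product-type choices for $A$ and pinning down $A\cap L$ in each row.

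The core step is then to combine two facts: (i) the factorization $G=HB$ forces $|G|/|B|$ to divide $|H|$ (Lemma~\ref{p4}), so $|H|$ must be divisible by every prime dividing $|L|/|B\cap L|$ up to the small contribution of $|\Out(L)|$; and (ii) Lemma~\ref{Reduction} severely constrains the possible $H$ inside $A=R.(S.\calO)$, in particular bounding $|H|$ in terms of $|R|$, $|\calO|$, and the explicit solvable factors of the almost simple group $S.\calO$. For each of rows 14--16 I would compute a Zsigmondy primitive prime divisor $r$ of some $p^{fk}-1$ that divides $|L|/|B\cap L|$ but, by the relevant part of Lemma~\ref{Reduction} (parts (c)--(g) according to whether $S$ is linear, symplectic, unitary, or orthogonal), cannot divide $|H|$ --- giving the contradiction. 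Where the groups are genuinely tiny and Zsigmondy fails (e.g.\ exponents like $2$ or $6$ over $\GF(2)$), I would instead invoke a direct \magma\ computation, exactly as done in the analogous small cases in Lemmas~\ref{4-dimensionSymplectic} and~\ref{Symplectic1}; the statement of the lemma (a clean nonexistence assertion) makes such a finite check decisive.

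Concretely: in the $\Sp_8(2)$ row the factor $B\cap L$ is $\GO_8^-(2)$ or $\GO_8^+(2)$, so $|L|/|B\cap L|$ is divisible by a primitive prime divisor of $2^8-1$ (namely $17$) or of $2^3\cdot2^6$-type orders; meanwhile $A\cap L$ is forced by Proposition~\ref{p1} to be parabolic or an $\GO_8^{\pm}(2)$-subgroup of the other type, and Lemma~\ref{Reduction}(h) (or (d)) shows $|H|$ omits that prime. In the $\Sp_6(3)$ row $B\cap L$ involves $\PSL_2(27){:}3$, so $|L|/|B\cap L|$ carries the primitive prime divisor $13$ of $3^3-1$, and the available solvable $H$ inside the admissible $A$ (a parabolic $\Pa_k[\PSp_6(3)]$, whose semisimple part is $\PSL_3(3)$ or $\Sp_4(3)$) cannot supply a $13$ by Lemma~\ref{Reduction}(c)/(d). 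The $\Sp_6(2)$ row, being the smallest, is the one I expect to need the explicit \magma\ verification. \textbf{The main obstacle} will be bookkeeping: making sure that in each row I have correctly matched $A$ with the solvable factor (the labels in Table~\ref{tabSymplectic} may list $(A,B)$ in the opposite order), correctly read off $\Rad(A)$ and the almost simple quotient $S.\calO$, and chosen a Zsigmondy prime that genuinely survives in $|L|/|B\cap L|$ but is killed by the Lemma~\ref{Reduction} bound on $|H|$ --- the arithmetic is routine once the setup is right, but a mis-assignment would derail it, so I would double-check each row against \cite{liebeck1990maximal} for the precise group structures and intersections.
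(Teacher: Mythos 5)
Your high-level plan (isolate the side $A$ containing the solvable factor $H$, then play the divisibility $|G|/|B|\mid|H|$ from Lemma~\ref{p4} against the restrictions of Lemma~\ref{Reduction}, with Zsigmondy primes doing the killing) captures part of the paper's method, but the proposal has two real gaps that would prevent it from going through as written.

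First, the rows are misidentified. Rows~14--16 of Table~\ref{tabSymplectic} are $(L,A\cap L,B\cap L)=(\PSp_6(3),\PSL_2(13),\Pa_1)$, $(\Sp_8(2),\GO_8^-(2),\Sy_{10})$, and $(\Sp_8(2),\PSL_2(17),\GO_8^+(2))$. There is no $\Sp_6(2)$ row and no $\PSL_2(27)$ factor anywhere in that table; those groups occur in other tables. Because of this, your planned \magma\ fallback is aimed at a group that never appears, you never list the possibility $A\cap L=\Sy_{10}$, and you never notice that the case $A\cap L=\GO_8^-(2)$ has to be removed up front by Lemma~\ref{Reduction}(b) (since $\GO_8^-(2)$ has a \emph{single} unsolvable composition factor, Proposition~\ref{p1} does not exclude it, contrary to what your ``at most one unsolvable composition factor'' filter would suggest).

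Second, three of the five actual orderings do not fall to ``a primitive prime divisor divides $|G|/|B|$ but is killed by Lemma~\ref{Reduction}.'' When $A\cap L=\Sy_{10}$, the quotient $A/\Rad(A)$ has socle $\A_{10}$, which is alternating, so Lemma~\ref{Reduction} (stated for Lie-type socle $S$) does not apply; the paper instead invokes Hypothesis~\ref{Hypo} on the factorization $A=H(A\cap B)$ and uses Proposition~\ref{Alternating} to see $H$ would have to be a transitive solvable subgroup of $\Sy_{10}$ of order divisible by $120$, which does not exist. When $A\cap L=\PSL_2(17)$, the divisibility constraint $2^3\cdot 17\mid|H|$ is actually \emph{satisfied} by $H=17{:}8$, so no Zsigmondy prime gives a contradiction; the paper instead computes $A\cap B=\D_{18}$ and shows $|H||A\cap B|<|H\cap(A\cap B)||A|$, refuting $A=H(A\cap B)$. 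When $A\cap L=\GO_8^+(2)$, the index $|G|/|B|=2^{12}\cdot3^3\cdot5^2\cdot7$ is so large that the only proper subgroup of $\GO_8^+(2)$ of that order is $\Omega_8^+(2)$, which is unsolvable --- a direct order count, not a Zsigmondy argument. So while your strategy handles the $(\PSp_6(3),\PSL_2(13),\Pa_1)$ direction (where $7\cdot 13$ divides $(3^6-1)/2$ but not any solvable subgroup of $\PGL_2(13)$), it misses the distinct shapes of the remaining cases, and your proposed escape hatch (\magma) is aimed at the wrong group and is not what the paper in fact uses --- none of rows~14--16 is handled by computation.
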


\begin{proof}
By Lemma \ref{Reduction}(b), $A\cap L\neq\GO_8^-(2)$. We now deal with the remaining candidates in rows 14--16 of Table \ref{tabSymplectic}.

{\bf Case 1.} $L=\PSp_6(3)$, $A\cap L=\PSL_2(13)$ and $B\cap L=\Pa_1$. Since $|L|/|B\cap L|=(3^6-1)/(3-1)$, we deduce from $G=HB$ that $|H|$ is divisible by $7\cdot13$. However, Lemma \ref{Reduction}(c) shows that $|H|$ is not divisible by $7\cdot13$, a contradiction.

{\bf Case 2.} $L=\PSp_6(3)$, $A\cap L=\Pa_1$ and $B\cap L=\PSL_2(13)$. In order that $B$ is a maximal subgroup of $G$ such that $B\cap L=\PSL_2(13)$, we have $G=L$ (see \cite{atlas}). Consequently, $A=A\cap L=\Pa_1=3_+^{1+4}{:}\Sp_4(3)$.
Since $|L|/|B\cap L|$ is divisible by $2^7\cdot5$, we deduce from $G=HB$ that $|H|$ is divisible by $2^7\cdot5$. However, $\Sp_4(3)$ has no solvable subgroup of order divisible by $2^7\cdot5$, which indicates that $A$ has no solvable subgroup of order divisible by $2^7\cdot5$. Hence this case is not possible either.

{\bf Case 3.} $L=\Sp_8(2)$, $A\cap L=\Sy_{10}$ and $B\cap L=\GO_8^-(2)$. In this case, $G=L$ and $|G|/|B|=120$. Hence $|H|$ is divisible by $120$ according to the factorization $G=HB$. Applying Hypothesis \ref{Hypo} to the factorization $A=H(H\cap B)$, we view from Proposition~\ref{Alternating} that $H$ is a transitive subgroup of $\Sy_{10}$. However, $\Sy_{10}$ does not have a solvable transitive subgroup of order divisible by $120$, which is a contradiction.

{\bf Case 4.} $L=\Sp_8(2)$, $A\cap L=\PSL_2(17)$ and $B\cap L=\GO_8^+(2)$. In this case, $G=L$ and $A\cap B=\D_{18}$ (see \cite[5.1.9]{liebeck1990maximal}). Since $|G|/|B|$ is divisible by $2^3\cdot17$, we deduce from $G=HB$ that $|H|$ is divisible by $2^3\cdot17$. Hence $H$ must be $\Z_{17}{:}\Z_8$ as a solvable subgroup of $\PSL_2(17)$. However, this leads to $H\cap(A\cap B)=2$ and thus $|H||A\cap B|<|H\cap(A\cap B)||A|$, contradicting the factorization $A=H(A\cap B)$.

{\bf Case 5.} $L=\Sp_8(2)$, $A\cap L=\GO_8^+(2)$ and $B\cap L=\PSL_2(17)$. From the factorization $G=HB$ we deduce that $|H|$ is divisible by $|G|/|B|=2^{12}\cdot3^3\cdot5^2\cdot7$, but the only proper subgroup of $\GO_8^+(2)$ with order divisible by $2^{12}\cdot3^3\cdot5^2\cdot7$ is $\Omega_8^+(2)$, which is unsolvable. Hence this case is impossible too.
\end{proof}

\section{Unitary Groups}

This section is devoted to the proof of the proposition below, which confirms Theorem~\ref{SolvableFactor}
for unitary groups under Hypothesis \ref{Hypo}.

\begin{proposition}\label{Unitary}
Let $G$ be an almost simple group with socle $L=\PSU_n(q)$, where $n\geqslant3$. If $G=HK$ is a nontrivial factorization with $H$ solvable, $K$ unsolvable and $HL=KL=G$, then under \emph{Hypothesis \ref{Hypo}}, one of the following holds.
\begin{itemize}
\item[(a)] $n=2m$, $H\cap L\leqslant q^{m^2}{:}((q^{2m}-1)/((q+1)(n,q+1))).m<\Pa_m$, and $\SU_{2m-1}(q)\trianglelefteq K\cap L\leqslant\N_1$.
\item[(b)] $L=\PSU_3(q)$ with $q=\in\{3,5\}$ or $\PSU_4(q)$ with $q\in\{2,3,8\}$, and $(L,H\cap L,K\cap L)$ is as described in \emph{Table \ref{tab15}}.
\end{itemize}
\end{proposition}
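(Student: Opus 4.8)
The plan is to mirror the proofs of Propositions~\ref{Linear} and~\ref{Symplectic}: embed $G=HK$ into a nontrivial maximal factorization, run through the candidates of Theorem~\ref{Maximal}, and use the inductive data of Lemma~\ref{Reduction} to control the solvable factor. By Lemma~\ref{Embedding} I may choose core-free maximal subgroups $A,B$ of $G$ with $H\leqslant A$ and $K\leqslant B$, so that $(L,A\cap L,B\cap L)$ appears in Table~\ref{tabUnitary}. If $n$ is an odd prime, Theorem~\ref{p-dimensionUnitary} already gives the conclusion (with $K$ unsolvable only $\PSU_3(3)$ and $\PSU_3(5)$ survive, since the $\PSU_3(8)$ factorizations have both factors solvable); if $A$ is solvable, Proposition~\ref{p13} applies and produces exactly the unitary entries of parts~(a)--(b); and if $A$ has at least two unsolvable composition factors, Proposition~\ref{p1} forces $L\in\{\Sp_4(4),\Omega_8^+(2),\Omega_8^+(4)\}$, none of which is unitary. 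So from now on I assume $n$ is composite and $A$ has a unique unsolvable composition factor $S$; writing $R=\Rad(A)$ and $A/R=S.\calO$, Lemma~\ref{l3} makes $HR/R$ a nontrivial solvable factor of the almost simple group $A/R$, so by Hypothesis~\ref{Hypo} the subgroup $H$ is pinned down by the relevant part of Lemma~\ref{Reduction}.

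The generic situation is $n=2m$ with $A\cap L=\Pa_m[L]$ (so $S=\PSL_m(q^2)$ and Lemma~\ref{Reduction}(c) applies with $\ell=m$ over $\GF(q^2)$) and $B\cap L=\N_1=\GU_{2m-1}(q)$; the existence half here is Construction~\ref{ConstructUnitary}/Proposition~\ref{ExampleUnitary}. For the classification half I would argue: a primitive prime divisor $r$ of $q^{2m}-1$ (available by Zsigmondy outside a short list of small $(q,m)$) divides $|H|$ by Lemma~\ref{p4}, hence divides $|A\cap L|$; being a primitive prime divisor of $(q^2)^m-1$ it acts irreducibly on the Levi factor, so Lemma~\ref{Reduction}(c) forces $H\leqslant R.\bigl(((q^{2m}-1)/((q^2-1)(m,q^2-1))).m\bigr).\calO$, and the same divisibility requirement eliminates every candidate for $B$ other than $\N_1$. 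Transferring this description back to $L$ (identifying $R$ with $\bfO_p(\Pa_m[L])=q^{m^2}$ and the displayed cyclic group with the Singer-type torus of the Levi of $\Pa_m[L]$) yields $H\cap L\leqslant q^{m^2}{:}((q^{2m}-1)/((q+1)(2m,q+1))).m<\Pa_m$, exactly as in Table~\ref{tab7} row~6. Finally, from $|L|\di|H\cap L|\,|K\cap L|\,|\Out(L)|$ and the bound just obtained, one checks that $|\SU_{2m-1}(q)|$ divided by the order of $K\cap L\cap\SU_{2m-1}(q)$ falls below $P(\PSU_{2m-1}(q))$ as recorded in Theorem~\ref{l7}, so Lemma~\ref{MinimalDegree}(d) gives $\SU_{2m-1}(q)\trianglelefteq K\cap L\leqslant\N_1$; this is part~(a). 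The ``swapped'' configuration, where the solvable $H$ instead lies inside $\N_1$, has $A$ with socle $\PSU_{n-1}(q)$, and Lemma~\ref{Reduction}(e)--(f) then forces the odd integer $n-1$ to equal $3$, hence $n=4$.

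What remains is $n=4$, where a \magma computation yields the generic family of part~(a) together with the exceptional triples of Table~\ref{tab15} for $\PSU_4(2)\cong\PSp_4(3)$, $\PSU_4(3)$ and $\PSU_4(8)$ (the case $\PSU_4(5)$ being killed because $|\Out(\PSU_4(5))|$ carries no prime large enough to let $|H|$ be divisible by $|G|/|B|$), and a handful of further finite socles such as $\PSU_6(2)$ and $\PSU_{12}(2)$ occurring in Table~\ref{tabUnitary}, each excluded by exhibiting a Zsigmondy prime divisor of $|G|/|B|$ that cannot divide $|H|$ in view of Lemma~\ref{Reduction}; Table~\ref{tabUnitary} contains no row with $n$ odd and composite.

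I expect the decisive difficulty to be the generic family $\PSU_{2m}(q)=\Pa_m\cdot\N_1$ for $m\geqslant3$, for two reasons. First, upgrading ``$H\cap L$ is contained in a parabolic'' to the precise torus-normalizer shape in part~(a) requires the careful arithmetic of matching the Singer cycle of $\GL_m(q^2)$ with the Levi torus of $\Pa_m[\PSU_{2m}(q)]$ (the $(q+1)$ versus $(q^2-1)$ bookkeeping), together with a verification that no strictly larger solvable overgroup of $H\cap L$ survives the order constraints, all while separately dispatching the Zsigmondy exceptions $(q,m)$ by hand or in \magma. Second, the lower bound $\SU_{2m-1}(q)\trianglelefteq K\cap L$ needs the minimal-index estimates of Theorem~\ref{l7} and Lemma~\ref{MinimalDegree}(d), since $\N_1$ differs from $\GU_{2m-1}(q)$ by scalars and $K\cap L$ could a priori be a maximal subgroup of $\SU_{2m-1}(q)$ of small index; the rest of the analysis is routine bookkeeping through Table~\ref{tabUnitary}.
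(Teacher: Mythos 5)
Your overall plan --- embed into a maximal factorization from Table~\ref{tabUnitary}, dispose of the prime-dimension, solvable and many-unsolvable-composition-factor cases by the earlier propositions, then constrain $H$ via Lemma~\ref{Reduction}(c) using a Zsigmondy prime of $(q^2,m)$ --- matches the paper, and your treatment of the auxiliary candidates (rows 3--4 and 10--12 of Table~\ref{tabUnitary}, and the ``swapped'' case $A\cap L=\N_1$ via Lemma~\ref{Reduction}(e)) is essentially right. But there is a genuine gap in the generic case $A\cap L=\Pa_m$, $B\cap L=\N_1$ when $m\geqslant3$: the minimal-index argument you propose for proving $\SU_{2m-1}(q)\trianglelefteq K\cap L$ does not go through for large $m$. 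Combining Lemma~\ref{p4}(d) with the bound $|H\cap L|\leqslant q^{m^2}\cdot m(q^{2m}-1)/((q+1)d)$ yields only $|B\cap L|/|K\cap L|\leqslant q^{(m-1)^2}\cdot 2fm$, whereas by Theorem~\ref{l7} one has $P(\PSU_{2m-1}(q))\approx q^{4m-5}$. Since $(m-1)^2\geqslant 4m-5$ once $m\geqslant5$, and the numerical constants already defeat the estimate at $(m,q)=(4,2)$ where $2^{9}\cdot 8=4096>2709=P(\PSU_7(2))$, Lemma~\ref{MinimalDegree}(d) cannot be invoked.

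The paper's Lemma~\ref{Unitary2} uses your index-counting route only for $m=2$, where the exponents cooperate. For $m\geqslant3$ it takes a genuinely different tack: pass to $\overline{B}=B/\Rad(B)$, which by Lemma~\ref{p12} is almost simple with socle $\PSU_{2m-1}(q)$; the factorization $G=HK$ induces $\overline{B}=\overline{H\cap B}\,\overline{K}$ with $\overline{H\cap B}$ solvable, and Hypothesis~\ref{Hypo} applied to $\overline{B}$ (whose order properly divides $|G|$) tells you that an almost simple group with socle $\PSU_{2m-1}(q)$, $2m-1\geqslant5$, admits no nontrivial factorization with a solvable factor --- such socles simply do not occur in Proposition~\ref{BothSolvable} or Tables~\ref{tab7}--\ref{tab1}. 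Hence $\overline{K}\geqslant\Soc(\overline{B})$ and therefore $K\cap L\geqslant(B\cap L)^{(\infty)}=\SU_{2m-1}(q)$. You should replace your minimal-index step by this inductive quotient argument when $m\geqslant3$.
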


\begin{table}[htbp]
\caption{}\label{tab15}
\centering
\begin{tabular}{|l|l|l|l|}
\hline
row & $L$ & $H\cap L\leqslant$ & $K\cap L$\\
\hline
1 & $\PSU_4(2)$ & $3^3{:}\Sy_4$ & $2^4{:}\A_5$\\
2 & $\PSU_4(2)$ & $3_+^{1+2}{:}2.\A_4$ & $\A_5$, $2^4{:}\A_5$, $\Sy_5$, $\A_6$, $\Sy_6$\\
3 & $\PSU_3(3)$ & $3_+^{1+2}{:}8$ & $\PSL_2(7)$\\
4 & $\PSU_3(5)$ & $5_+^{1+2}{:}8$ & $\A_7$\\
\hline
5 & $\PSU_4(3)$ & $3^4{:}\D_{10}$, $3^4{:}\Sy_4$, & $\PSL_3(4)$\\
 & & $3^4{:}3^2{:}4$, $3_+^{1+4}.2.\Sy_4$ & \\
\hline
6 & $\PSU_4(8)$ & $513{:}3$ & $2^{12}{:}\SL_2(64).7$\\
\hline
\end{tabular}
\end{table}

We start the proof of the proposition by taking $A,B$ to be core-free maximal subgroups of $G$ containing $H,K$, respectively (such $A$ and $B$ are existent by Lemma \ref{Embedding}). The maximal factorizations $G=AB$ are listed in Table \ref{tabUnitary} by Theorem \ref{Maximal}. If $n$ is prime, then the proposition holds clearly by Theorem~\ref{p-dimensionUnitary}. We thus assume that $n$ is a composite number. For $L=\PSU_4(2)\cong\PSp_4(3)$, it is seen from Proposition \ref{Small1} that Proposition \ref{Unitary} is true. For $L=\PSU_4(3)$, computation in \magma \cite{bosma1997magma} shows that part (a) or (b) of Proposition \ref{Unitary} holds. Thus assume $(n,q)\not\in\{(4,2),(4,3)\}$ in the following.

Under the above assumptions, we only need to consider rows 1--4 and 10--12 of Table \ref{tabUnitary}. After analysis for these rows below, the proposition will follow by Lemmas \ref{Unitary2} and \ref{Unitary3}.

\begin{lemma}\label{Unitary1}
If $(n,q)\not\in\{(4,2),(4,3)\}$ and the maximal factorization $G=AB$ lies in rows~\emph{1--4} of \emph{Table \ref{tabUnitary}} (interchanging $A,B$ if necessary), then either $H\cap L\leqslant\Pa_m$ and $B\cap L=\N_1$ or $(L,H\cap L,K\cap L)$ is as described in row~\emph{6} of \emph{Table \ref{tab15}}.
\end{lemma}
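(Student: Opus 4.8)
\textbf{Proof plan for Lemma~\ref{Unitary1}.}
The plan is to work through the candidates in rows~1--4 of Table~\ref{tabUnitary}, which (as $n$ is even and composite with $(n,q)\notin\{(4,2),(4,3)\}$) give $L=\PSU_{2m}(q)$ with $A\cap L$ a $\mathcal{C}_2$- or $\mathcal{C}_3$-type subgroup of the form $\GU_a(q^b).b$ with $ab=2m$, or $\PSp_{2m}(q).a$, and $B\cap L$ either a parabolic $\Pa_k$, an $\N_k$-subgroup, or again of the shape occurring in Table~\ref{tabUnitary}. The key tool is that $A$ has exactly one unsolvable composition factor $S$ (Proposition~\ref{p1} handles the two-factor case), so $A/\Rad(A)$ is almost simple and Lemma~\ref{Reduction} pins down the possible shapes of $H$. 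First I would, for each row, identify $S$ and $\calO$, read off from the relevant part of Lemma~\ref{Reduction} (parts (c), (d), (e), (f) according as $S$ is linear, symplectic, odd-dimensional unitary, or even-dimensional unitary) the divisor constraints on $|H|$: essentially $|H|$ divides $\Rad(A)$ times a Singer-type cyclic group times $|\calO|$, with a handful of small exceptions.

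The second step is to play these constraints against the arithmetic requirement $|G|/|B|\bigm| |H|$ coming from $G=HB$ (Lemma~\ref{p4}). For the generic rows this is a Zsigmondy argument: one chooses a primitive prime divisor $r$ of an appropriate $p^{fm'}-1$ dividing $|G|/|B|$, notes by Lemma~\ref{Reduction} and Lemma~\ref{l6} that $r$ cannot divide the index-type factor of $|H|$ unless we are in the ``big'' case where $A\cap L$ is a parabolic $\Pa_m$ and $H\cap L$ lies in $q^{m^2}{:}((q^{2m}-1)/((q+1)(2m,q+1))).m$, i.e.\ the conclusion $H\cap L\leqslant\Pa_m$; and simultaneously force $B\cap L=\N_1$ by eliminating the other options for $B$ in Table~\ref{tabUnitary} via the same divisibility (the non-$\N_1$ choices of $B$ make $|G|/|B|$ too large or introduce a prime $H$ cannot absorb). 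I would use Lemma~\ref{l4} exactly where the earlier symplectic analysis used it, namely to kill the residual configuration in which $A\cap L=\GU_m(q^2).2$ (a $\mathcal{C}_2$-subgroup stabilizing a decomposition into two $m$-spaces over $\GF(q^2)$) factorizes with an $\N_1$-type subgroup --- there Lemma~\ref{l4} says $\Si\neq MN$, contradicting $A=H(A\cap B)$.

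The third step is to sweep up the small cases that Zsigmondy misses: the exceptional rows in Lemma~\ref{Reduction}(c)--(f) (those with explicit small $q$, e.g.\ $\ell=2,q\in\{2,3\}$ or $q=8$), which bound $L$ to a short list; here I would invoke \magma\ to check which, if any, actually yield a factorization with $H$ solvable and $K$ unsolvable, the upshot being the single surviving entry $L=\PSU_4(8)$, $H\cap L\leqslant 513{:}3$, $K\cap L=2^{12}{:}\SL_2(64).7$, i.e.\ row~6 of Table~\ref{tab15}. The main obstacle I anticipate is the case $A\cap L=\PSp_{2m}(q).a$ (a $\mathcal{C}_5$-type subgroup of $\PSU_{2m}(q)$): by Lemma~\ref{Reduction}(d) the solvable factor of $\PSp_{2m}(q).a$ is very constrained, but one must carefully compare $|G|/|B|$ --- which for $B\cap L=\N_1=\GU_{2m-1}(q)$ contains a primitive prime divisor of $q^{2(2m-1)}-1$ --- against the available $|H|$, and also rule out $B$ being a second $\mathcal{C}_5$ or $\mathcal{C}_3$ subgroup; I expect this to reduce either to a contradiction or to the listed row via a primitive-prime-divisor count, but it is the place where the bookkeeping over $|\Out(L)|$ and the index $a$ is most delicate.
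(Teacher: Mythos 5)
Your high-level strategy (split by which factor is $\N_1$, apply Lemma~\ref{Reduction} to pin down $H$, and kill the options by a Zsigmondy / divisibility argument against $|G|/|B|\mid |H|$) matches the paper, and your sweep-up step correctly predicts the surviving $\PSU_4(8)$ entry. However there are two concrete gaps.

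First, you propose to use Lemma~\ref{l4} to kill a residual configuration where ``$A\cap L=\GU_m(q^2).2$ factorizes with an $\N_1$-type subgroup.'' That lemma does not apply here and the paper does not use it in this proof. Lemma~\ref{l4} concerns $\SU_4(q).\langle\tau\rangle$: it is invoked in the symplectic and odd-orthogonal chapters precisely because there $A\cap L\cong\GO_6^-(q)$, which has socle $\PSU_4(q)$. In Lemma~\ref{Unitary1}, when $A\cap L=\N_1$ the unique unsolvable composition factor of $A$ is $\PSU_{2m-1}(q)$ --- \emph{odd}-dimensional unitary --- so Lemma~\ref{l4} has nothing to say. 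Moreover, rows~1--4 of Table~\ref{tabUnitary} never have $A\cap L$ of the form $\GU_m(q^2).2$; the $\mathcal{C}_2$-type factors there are $\hat{~}\SL_m(q^2).b$ with $q\in\{2,4\}$, and in the paper's Case~2 these are eliminated by a one-line $p$-part comparison ($|H|_2$ divides $4fm$ while $|G|/|B|$ contributes $q^{2m-1}$), not by Lemma~\ref{l4}. The ingredient you actually need, and do not mention, is Lemma~\ref{Reduction}(e): the rows there are so restrictive (only $\ell=1$, i.e.\ $m=2$, with $q\in\{3,5,8\}$) that the case $A\cap L=\N_1$ collapses immediately to $\PSU_4(q)$ for $q\in\{5,8\}$ before any Zsigmondy argument is even needed.

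Second, you treat $A\cap L=\PSp_{2m}(q).a$ as a case that should end in ``a contradiction or the listed row,'' but it does neither. The paper's analysis shows that Lemma~\ref{Reduction}(d) together with the Zsigmondy constraint forces $(n,q)=(6,2)$ and $H\leqslant\Pa_3[A]$ with $A\cap L=\PSp_6(2)$; the step you are missing is the geometric transfer $\Pa_3[\PSp_6(2)]\leqslant\Pa_3[\PSU_6(2)]$, which shows $H\cap L$ lies inside a unitary parabolic $\Pa_m$ as the lemma asserts. Without that observation this subcase would look stuck: the divisibility does not contradict anything, and it does not match the $\PSU_4(8)$ row either, so your proposed dichotomy (contradiction vs.\ Table~\ref{tab15} row~6) would leave it unresolved.
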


\begin{proof}
As in rows 1--4 of Table \ref{tabUnitary}, $n=2m\geqslant4$ and either $A\cap L$ or $B\cap L$ is $\N_1$.

{\bf Case 1.} Assume that $A\cap L=\N_1$. Then $A$ has the unique unsolvable composition factor $\PSU_{2m-1}(q)$. By Lemma \ref{Reduction}(e), we have $m=2$ and $q\in\{5,8\}$ since $(n,q)\neq(4,3)$. Consequently, neither row 3 nor 4 of Table \ref{tabUnitary} appears. Now $B\cap L=\Pa_2$ or $\PSp_4(q).a$ with $a\leqslant2$ as in row 1 or row 2 of Table \ref{tabUnitary}. One checks directly for $q\in\{5,8\}$ that $r=(q^2-q+1)/3$ is a prime number dividing $|L|/|B\cap L|$. It follows then from the factorization $G=HB$ that $r$ divides $|H|$, which leads to $q=8$ and $H\cap L\leqslant\GU_1(8^3){:}3=513{:}3$ by Lemma \ref{Reduction}(e). In in turn implies that $|L|$ divides $513\cdot3|K\cap L||\Out(L)|$, that is, $2^{17}\cdot3^2\cdot5\cdot7^2\cdot13$ divides $|K\cap L|$. Thereby we obtain $K\cap L=\Pa_2=2^{12}{:}\SL_2(64).7$ as in row~6 of Table \ref{tab15}.

{\bf Case 2.} Assume that $B\cap L=\N_1=\hat{~}\GU_{2m-1}(q)$ and one of the following four cases appears.
\begin{itemize}
\item[(i)] $A\cap L=\Pa_m$.
\item[(ii)] $A\cap L=\PSp_{2m}(q).a$ with $a=(2,q-1)(m,q+1)/(n,q+1)\leqslant2$.
\item[(iii)] $q=2$ and $A\cap L=\hat{~}\SL_m(4).2$.
\item[(iv)] $q=4$ and $A\cap L=\hat{~}\SL_m(16).3.2$.
\end{itemize}
Let $q=p^f$ with $p$ prime. For case (i), the lemma already holds. By Zsigmondy's theorem, $p^{fn}-1$ has a primitive prime divisor $r$ if $(n,q)\neq(6,2)$. Let $r=7$ if $(n,q)=(6,2)$. Since $G=HB$ and $r$ divides $|L|/|B\cap L|$, it follows that $r$ divides $|H|$ by Lemma \ref{p4}.

First suppose that case (ii) appears. Then from Lemma \ref{Reduction}(d) we conclude that $(n,q)=(6,2)$ and $H\leqslant\Pa_3[A]$. Note that $\Pa_3[\PSp_6(2)]\leqslant\Pa_3[\PSU_6(2)]$. We thereby obtain $H\cap L\leqslant\Pa_3[\PSU_6(2)]$ as the lemma asserts.

Next suppose that case (iii) or (iv) appears. Then $p=2$ and $f\in\{1,2\}$. Write $R=\Rad(A)$ and $A/R=\PSL_m(q^2).\calO$. Noticing that $r$ divides $|H|$, we conclude from Lemma \ref{Reduction}(c) that
$$
H\leqslant R.\left(\frac{q^{2m}-1}{(q^2-1)(m,q^2-1)}.m\right).\calO.
$$
In particular, $|H|_2$ divides $m(|R||\calO|)_2$. This together with the equality $|R||\calO|=|A|/|\PSL_m(q^2)|$ implies that $|H|_2$ divides $2m(|A|/|A\cap L|)_2=2m|G/L|_2$ and thus divides $4fm$. However, the factorization $G=HB$ indicates that $|H|_2$ is divisible by $(|G|/|B|)_2$. Hence $q^{2m-1}=(|G|/|B|)_2$ divides $4fm$, which is impossible as $(n,q)\neq(4,2)$.
\end{proof}

\begin{lemma}\label{Unitary2}
If $(n,q)\not\in\{(4,2),(4,3)\}$ and the maximal factorization $G=AB$ lies in rows~\emph{1--4} of \emph{Table \ref{tabUnitary}} (interchanging $A,B$ if necessary), then either part~\emph{(a)} of \emph{Proposition \ref{Unitary}} holds or $(L,H\cap L,K\cap L)$ is as described in row~\emph{6} of \emph{Table \ref{tab15}}.
\end{lemma}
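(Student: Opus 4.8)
The plan is to feed the dichotomy of Lemma~\ref{Unitary1} into the argument. If $(L,H\cap L,K\cap L)$ is already the triple in row~6 of Table~\ref{tab15}, there is nothing to do, so we may assume $H\cap L\leqslant\Pa_m[L]$ and $B\cap L=\N_1[L]=\hat{~}\GU_{2m-1}(q)$ with $n=2m\geqslant4$; revisiting the proof of Lemma~\ref{Unitary1}, $H$ lies in a maximal subgroup $A=\Pa_m[G]$ of $G$. It then remains to establish the two halves of Proposition~\ref{Unitary}(a): the shape of $H\cap L$, and that $\SU_{2m-1}(q)\trianglelefteq K\cap L$.

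For the $K$-side, from $G=HK$ and $K\leqslant B$ we get $B=(H\cap B)K$ by Lemma~\ref{p9}, a factorization of $B$ with solvable factor $H\cap B$. Put $N=\Rad(B)$; by Lemma~\ref{p12}, $\overline{B}=B/N$ is almost simple with socle $\PSU_{2m-1}(q)$, and $\overline{B}=\overline{H\cap B}\,\overline{K}$. If $\overline{K}\geqslant\Soc(\overline{B})$ then $K\geqslant B^{(\infty)}$, whence $\SU_{2m-1}(q)\trianglelefteq K\cap L$ as wanted; otherwise $\overline{K}$ is core-free and, by Hypothesis~\ref{Hypo}, $(\overline{B},\overline{H\cap B},\overline{K})$ satisfies Theorem~\ref{SolvableFactor}. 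When $m\geqslant3$ we have $2m-1\geqslant5$, and since none of Propositions~\ref{BothSolvable}, \ref{Alternating}, \ref{Sporadic} nor Tables~\ref{tab7}--\ref{tab1} involves a socle $\PSU_{2m-1}(q)$ with $2m-1\geqslant5$, the core-free alternative is impossible and the $K$-side is settled. When $m=2$, $\Soc(\overline{B})=\PSU_3(q)$, and Theorem~\ref{p-dimensionUnitary} (applicable via Hypothesis~\ref{Hypo}) forces $q\in\{3,5,8\}$; as $(n,q)\neq(4,3)$ this leaves $q\in\{5,8\}$. Here I would use that the image in $\overline{B}$ of $\Pa_2[L]\cap\N_1[L]$ is the parabolic $\Pa_1[\PSU_3(q)]$, so $\overline{H\cap B}\leqslant\Pa_1[\overline{B}]$: for $q=8$ this contradicts Table~\ref{tab3}, row~6, where the solvable factor must be an irreducible Singer normalizer $57{:}9.\calO_1$; for $q=5$ I would combine this containment with the order bound on $|H|$ from the next paragraph and the divisibility $|L|\di|H\cap L||K\cap L||\Out(L)|$ of Lemma~\ref{p4}, falling back on \magma if the bookkeeping is too tight.

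For the $H$-side, $A=\Pa_m[G]$ has unique unsolvable composition factor $\PSL_m(q^2)$, so Lemma~\ref{Reduction}(c) applies with $S=\PSL_m(q^2)$; write $R=\Rad(A)$ and $A/R=\PSL_m(q^2).\calO$, so the relevant pair $(\ell,\text{field})$ is $(m,q^2)$. Because $q^2$ is a square $\geqslant4$ and $(n,q)\notin\{(4,2),(4,3)\}$, only the generic row~1, the ``$\ell=2$'' parabolic row~2 (when $m=2$), the ``$\ell=4$'' row~3 (when $m=4$) and the ``$\ell=3$, field~$4$'' row~8 (when $(n,q)=(6,2)$) of Lemma~\ref{Reduction}(c) can occur. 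Let $r$ be a prime dividing $|L|/|\N_1[L]|=|G|/|B|$ — a primitive prime divisor of $q^{2m}-1$ when one exists, and $r=7$ when $(n,q)=(6,2)$; by Lemma~\ref{p4}, $r$ divides $|H|$. A direct check shows the orders of the candidates in rows~2, 3 and 8 are coprime to $r$ (using $r>2m$, $r\nmid q^k-1$ for $k<2m$, and that $r\nmid|\Out(L)|$), whereas row~1 has order divisible by $r$; hence $H$ is as in row~1, and unwinding $(R\cap L).(\text{Singer normalizer}).\calO$ yields $H\cap L\leqslant q^{m^2}{:}((q^{2m}-1)/((q+1)(n,q+1))).m<\Pa_m$, which is exactly Proposition~\ref{Unitary}(a).

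The hardest part, I expect, is the $m=2$ analysis of the $K$-side: unlike $m\geqslant3$, the base group $\PSU_3(q)$ genuinely admits factorizations with a solvable factor, so $q\in\{5,8\}$ cannot be dismissed by a non-existence statement and must be killed by the parabolic-containment observation for $q=8$ together with delicate order estimates (or computation) for $q=5$. Getting the small constants in $|L|\di|H\cap L||K\cap L||\Out(L)|$ right, and identifying $(R\cap L).(\text{Singer normalizer}).\calO$ with the subgroup recorded in row~6 of Table~\ref{tab7}, is where the remaining work is heaviest.
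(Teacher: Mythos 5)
Your $H$-side argument and your $K$-side argument for $m\geqslant3$ match the paper's proof closely (primitive prime divisor of $|G|/|B|$ forces row~1 of Lemma~\ref{Reduction}(c), then the quotient $\overline B=B/\Rad(B)$ has socle $\PSU_{2m-1}(q)$ with $2m-1\geqslant5$, which admits no nontrivial solvable factor under Hypothesis~\ref{Hypo}). The divergence is the $K$-side at $m=2$, and that is where your proposal has a genuine gap.

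The paper does \emph{not} pass to $\overline B$ when $m=2$. Once $H\cap L\leqslant q^4{:}\frac{q^4-1}{(q+1)d}.2$ is known, Lemma~\ref{p4}(d) gives $|L|\di|H\cap L||K\cap L||\Out(L)|$, which after cancellation yields $|B\cap L|/|K\cap L|\leqslant 4fq$. Since $P(\SU_3(q))=P(\PSU_3(q))>4fq$ for every $q$ (Theorem~\ref{l7} and Lemma~\ref{MinimalDegree}(b), noting the exceptional value $P(\PSU_3(5))=50>20$), Lemma~\ref{MinimalDegree}(d) forces $\SU_3(q)\leqslant K\cap L$. This single divisibility estimate handles all $q$ uniformly, with no case analysis and no appeal to Theorem~\ref{p-dimensionUnitary}.

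Your route instead applies Hypothesis~\ref{Hypo} to $\overline B$, which only narrows to $q\in\{3,5,8\}$ and then must be finished case by case. The $q=8$ justification as you state it is not correct: in Table~\ref{tab3} row~6 \emph{both} factors $57{:}9.\calO_1$ and $2^{3+6}{:}(63{:}3).\calO_2$ are solvable, so your parabolic containment $\overline{H\cap B}\leqslant\Pa_1[\overline B]$ does not force $\overline{H\cap B}$ to be the Singer normalizer and gives no contradiction on its own. (The case does die, but for a different reason: whichever factor is $\overline K$, it is solvable, hence $K$ is solvable, contradicting the hypothesis of Proposition~\ref{Unitary}.) For $q=5$ you explicitly punt to \magma, which leaves the argument unfinished. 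I would replace the whole $m=2$ quotient step with the paper's $P(\SU_3(q))$ bound; it is shorter, avoids the $q$-by-$q$ bookkeeping entirely, and also incidentally removes any need for the parabolic-containment observation.
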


\begin{proof}
By Lemma \ref{Unitary1}, we may assume that $A\cap L=\Pa_m$ with $n=2m$, and $B\cap L=\N_1$. Let $d=(n,q+1)$ and $q=p^f$ with $p$ prime. Note that $\Pa_m=q^{m^2}{:}(\SL_m(q^2).(q-1)/d)$ and $\N_1=\GU_{n-1}(q)/d$. Then $A$ has the unique unsolvable composition factor $\PSL_m(q^2)$. By Zsigmondy's theorem, $(q^2,m)$ has a primitive prime divisor $r$. It derives from the factorization $G=HB$ that $r$ divides $|H|$ since $r$ divides $|L|/|B\cap L|$. Hence by Lemma~\ref{Reduction}(c),
$$
H\leqslant\Rad(A).\left(\frac{q^{2m}-1}{(q^2-1)(m,q^2-1)}.m\right).\calO=\left(q^{m^2}.\frac{q^{2m}-1}{(q+1)d}.m\right).(G/L)
$$
with $A/\Rad(A)=\PSL_m(q^2).\calO$. Accordingly, we have $H\cap L\leqslant q^{m^2}{:}((q^{2m}-1)/((q+1)d)).m$.

Suppose $m=2$. Then the conclusion $H\cap L\leqslant q^4{:}((q^4-1)/((q+1)d)).2$ above implies that $|L|$ divides
$$
\frac{2q^4(q^4-1)|K\cap L||\Out(L)|}{(q+1)d}=4fq^4(q^2-1)(q-1)|K\cap L|.
$$
due to the factorization $G=HK$. It follows that $q^2(q^3+1)(q^2-1)(q+1)$ divides $4df|K\cap L|$, and thus
$$
\frac{|B\cap L|}{|K\cap L|}\leqslant\frac{4df|B\cap L|}{q^2(q^3+1)(q^2-1)(q+1)}=4fq.
$$
Note that each proper subgroup of $\SU_3(q)$ has index greater than $4fq$ by Theorem~\ref{l7}. We thereby obtain $\SU_3(q)\trianglelefteq K\cap L$ from Lemma \ref{MinimalDegree}, whence part (a) of Proposition \ref{Unitary} appears.

Now suppose $m\geqslant3$. Let $R=\Rad(B)$, $\overline{B}=B/R$, $\overline{H\cap B}=(H\cap B)R/R$ and $\overline{K}=KR/R$. By Lemma \ref{p12}, $\overline{B}$ is almost simple with socle $\PSU_{2m-1}(q)$. From $G=HK$ we deduce $B=(H\cap B)K$ and further $\overline{B}=\overline{H\cap B}\,\overline{K}$. Note that $\overline{H\cap B}$ is solvable, and $\overline{B}$ does not have any solvable nontrivial factor by Hypothesis \ref{Hypo}. We then have $\PSU_{2m-1}(q)\trianglelefteq\overline{K}$. Therefore, $K\cap L$ contains $(B\cap L)^{(\infty)}=\SU_{2m-1}(q)$, and so $\SU_{2m-1}(q)\trianglelefteq K\cap L\leqslant B\cap L=\N_1$ as part (a) of Proposition \ref{Unitary}. This completes the proof.
\end{proof}

\begin{lemma}\label{Unitary3}
The maximal factorization $G=AB$ does not lie in rows~\emph{10--12} of \emph{Table \ref{tabUnitary}} (interchanging $A,B$ if necessary).
\end{lemma}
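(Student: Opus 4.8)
The plan is to go through rows~10--12 of Table~\ref{tabUnitary} one at a time and eliminate each of them, exactly as was done for the symplectic groups in Lemma~\ref{Symplectic2}. First I would record what these rows contain: they are the ``sporadic'' maximal factorizations of almost simple unitary groups of small (composite) dimension --- concretely $L=\PSU_4(q)$ or $\PSU_6(q)$ for a handful of small $q$ --- so that $G$ is bounded and one can argue purely by order divisibility together with Hypothesis~\ref{Hypo}. In each case I would write $G=HB$ with $H\leqslant A$, so that $A=H(A\cap B)$ by Lemma~\ref{p9}, and then exploit the two constraints: (i) $|H|$ is divisible by $|G|/|B|$, and (ii) if $A$ has a unique unsolvable composition factor $S$, then by Lemma~\ref{l3} and Hypothesis~\ref{Hypo} the pair $(A/\Rad(A),H\Rad(A)/\Rad(A))$ appears in Propositions~\ref{BothSolvable}--\ref{Sporadic} or in Tables~\ref{tab7}--\ref{tab1}, so $|H|$ is rather restricted (this is precisely the content of Lemma~\ref{Reduction}, which I would invoke for whichever classical $S$ occurs, e.g.\ $S=\PSL_a(q^b)$ or $S=\PSp_4(q)$). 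Comparing the prime divisors forced by (i) with those allowed by (ii) --- typically via a Zsigmondy primitive prime divisor of $q^{fn}-1$ or of $q^{f(n-1)}-1$ that divides $|G|/|B|$ but cannot divide any admissible $|H|$ --- should produce a contradiction in every row. Where $A$ itself is almost simple (e.g.\ $A\cap L$ an alternating group or $\PSL_2(r)$), I would instead apply Lemma~\ref{l2} directly, concluding that $HN/N$ is a nontrivial solvable factor of the almost simple quotient, and then either Proposition~\ref{Alternating}/\ref{BothSolvable} restricts $H$ enough to contradict (i), or a direct \magma\ check finishes it, mirroring Cases~3--5 of Lemma~\ref{Symplectic2}.

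More precisely, the routine order arithmetic I expect to carry out is: identify $|G|/|B|$ and its ``large'' prime factors (a primitive prime divisor $r$ of some $q^{e}-1$); observe by Lemma~\ref{p4} that $r\mid|H|$; and then note that every row of the relevant part of Lemma~\ref{Reduction} gives $|H|$ dividing an expression of the form (something involving $q$, $f$, $\Rad(A)$, $\calO$) that is visibly coprime to $r$. In the rare case where the relevant Lemma~\ref{Reduction} row does admit $r$ (the analogue of the $\PSU_4(8)$ situation in Lemma~\ref{Unitary1}), the factorization would already be one of the listed examples, and since Lemma~\ref{Unitary3} is only claimed for rows~10--12 --- which by the structure of Table~\ref{tabUnitary} do not overlap with the genuine examples in Table~\ref{tab15} --- one extra step comparing $|H||K|$ against $|G|$ rules it out. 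For the smallest cases (bounded $L$ such as $\PSU_4(2)$, $\PSU_4(3)$, which in any event have already been dispatched before this lemma, or $\PSU_6(2)$), I would simply cite computation in \magma\ \cite{bosma1997magma}, as is done repeatedly elsewhere in the paper.

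The main obstacle I anticipate is not any single hard estimate but rather bookkeeping: making sure the list of rows~10--12 is reproduced correctly, that the unique-unsolvable-composition-factor hypothesis of Lemma~\ref{Reduction} genuinely applies in each (otherwise one must fall back on Lemma~\ref{l2} or on Proposition~\ref{p1}, which would already have been triggered if $A$ had two unsolvable composition factors), and that the Zsigmondy exceptions $(q,e)$ with no primitive prime divisor are handled separately --- in those finitely many exceptional $(L)$ one substitutes a concrete prime (as with ``$r=7$ if $(n,q)=(6,2)$'' in Lemma~\ref{Unitary1}) or defers to \magma. I would end the proof with the sentence that these are all the cases in rows~10--12, hence the maximal factorization $G=AB$ cannot lie there, completing both the lemma and, together with Lemmas~\ref{Unitary1} and~\ref{Unitary2}, the proof of Proposition~\ref{Unitary}.
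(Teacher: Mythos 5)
Your plan is essentially the same as the paper's: restrict $H$ using Lemma~\ref{l3} together with Hypothesis~\ref{Hypo} (invoking Lemma~\ref{Reduction} when $\Soc(A/\Rad(A))$ is classical, and the sporadic data of Proposition~\ref{Sporadic}/Table~\ref{tab8} when it is sporadic), then exhibit a prime dividing $|G|/|B|$ but not $|H|$. One factual correction: rows~10--12 of Table~\ref{tabUnitary} concern $L=\PSU_6(2)$, $\PSU_9(2)$ and $\PSU_{12}(2)$ (with $A\cap L$ among $\N_1$, $\PSU_4(3).2$, $\M_{22}$, $\J_3$, $\Pa_1$, $\Suz$), not $\PSU_4(q)$ as you suggest; the paper first rules out $S\in\{\PSU_5(2),\J_3,\PSU_7(2),\PSU_{11}(2)\}$ outright via Hypothesis~\ref{Hypo}, reducing to three cases, and then in each of those uses the single prime $7$ (so no \magma{} is needed here) --- but this is exactly an instance of the ``prime that divides $|G|/|B|$ but cannot divide any admissible $|H|$'' argument you describe.
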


\begin{proof}
For the factorization $G=AB$ in these rows, $A$ has a unique unsolvable composition factor $S$. By Lemma \ref{l3} and Hypothesis \ref{Hypo}, $S\neq\PSU_5(2)$, $\J_3$, $\PSU_7(2)$ or $\PSU_{11}(2)$. Thus we have the following three cases for $(L,A\cap L,B\cap L)$.
\begin{itemize}
\item[(i)] $L=\PSU_6(2)$, $A\cap L=\PSU_4(3).2$ and $B\cap L=\N_1$.
\item[(ii)] $L=\PSU_6(2)$, $A\cap L=\M_{22}$ and $B\cap L=\N_1$.
\item[(iii)] $L=\PSU_{12}(2)$, $A\cap L=\Suz$ and $B\cap L=\N_1$.
\end{itemize}

First assume that case (i) appears. Then by Lemma \ref{Reduction}(f), $|H|$ is not divisible by $7$. However, $|G|/|B|$ is divisible by $7$, which is a contradiction to the factorization $G=HB$.

Next assume the case (ii) appears. Then by Lemma \ref{l3} and Hypothesis \ref{Hypo}, $|H|$ is not divisible by $7$ as row 5 of Table \ref{tab8} suggests. However, $|G|/|B|$ is divisible by $7$, contrary to the factorization $G=HB$.

Finally assume the case (iii) appears. Then again by Lemma \ref{l3} and Hypothesis \ref{Hypo}, $|H|$ is not divisible by $7$ as row 13 of Table \ref{tab8} suggests. However, $|G|/|B|$ is divisible by $7$, contrary to the factorization $G=HB$.
\end{proof}

\section{Orthogonal groups of odd dimension}

In this section we verify Theorem \ref{SolvableFactor} for orthogonal groups of odd dimension under Hypothesis \ref{Hypo}. First of all, we compute in \magma \cite{bosma1997magma} all the nontrivial factorizations of $G$ with $\Soc(G)=\Omega_7(3)$, and list them in the following proposition.

\begin{proposition}\label{Small2}
Let $G$ be an almost simple group with socle $L=\Omega_7(3)$. Then the following three cases give all the nontrivial factorizations $G=HK$ with $H$ solvable.
\begin{itemize}
\item[(a)] $G=\Omega_7(3)$, $H<\Pa_k[\Omega_7(3)]$, and $(G,H,K,k)$ lies in rows~\emph{1--2} of \emph{Table \ref{tab12}}; moreover, $\Pa_k[\Omega_7(3)]$ is the only maximal subgroup of $G$ containing $H$.
\item[(b)] $G=\SO_7(3)$, and $L=(H\cap L)(K\cap L)$; in particular, $H=(H\cap L).\calO_1$ and $K=(K\cap L).\calO_2$, where $\calO_i\leqslant\Z_2$ for $i\in\{1,2\}$ and $\calO_1\calO_2=\Z_2$.
\item[(c)] $G=\SO_7(3)$, $L\neq(H\cap L)(K\cap L)$, $H<\Pa_3[\SO_7(3)]$, $K<\N_1^-[\SO_7(3)]$ and $(G,H,K,k)$ lies in row~\emph{3} of \emph{Table \ref{tab12}}; moreover, $\Pa_3[\SO_7(3)]$ is the only maximal subgroup of $G$ containing $H$.
\end{itemize}
\end{proposition}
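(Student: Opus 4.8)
The statement is a finite computation, so the plan is to first reduce it to a manageable search and then carry that search out in \magma. Since $\Out(\Omega_7(3))=\Z_2$ by Table~\ref{tab10}, only $G=\Omega_7(3)$ and $G=\SO_7(3)$ occur. Given a nontrivial factorization $G=HK$ with $H$ solvable and $K$ core-free, Lemma~\ref{Embedding}(b) lets me replace $G$ by a subgroup with the same socle (this only affects $G=\SO_7(3)$) so that every maximal subgroup of $G$ containing $H$ or $K$ is core-free; hence $G=AB$ embeds into a nontrivial maximal factorization, and by Theorem~\ref{Maximal} the triple $(L,A\cap L,B\cap L)$ is one of the finitely many entries with $L=\Omega_7(3)$ in the orthogonal-odd-dimension table of APPENDIX~A. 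The factors arising there are the parabolics $\Pa_k[\Omega_7(3)]$, the cross-characteristic and subfield subgroups $\Sp_6(2)$ and $\G_2(3)$, and the non-degenerate stabilizers $\N_1^\pm$, $\N_2^\pm$ (of type $\GO_6^\pm(3)$, $\GO_5(3)$, etc.), together with the intersections $A\cap B$, all explicitly recorded in \cite{liebeck1990maximal}.

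Next, since $H$ is solvable and $H\leqslant A$, for each candidate pair the divisibility ``$|G|/|B|$ divides $|H|$'' from Lemma~\ref{p4}(a), combined with the known order and structure of $A$, rules out those $A$ that contain no solvable subgroup of the required order; the cases that survive are exactly the ones feeding parts~(a) and~(c), where $A$ is a parabolic, and the ``restricted'' case where $G=\SO_7(3)$ and $L=(H\cap L)(K\cap L)$, which is part~(b). The point of keeping the search inside $A$ is that a parabolic has the form $\Pa_k=R{:}C$ with $R$ its unipotent radical and $C$ a Levi complement ($C$ being $\GL$ or $\SO$ of small rank over $\GF(3)$); any solvable $H\leqslant\Pa_k$ has solvable image in $\Pa_k/R\cong C$, hence lies in $R{:}M$ for some maximal solvable $M\leqslant C$. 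So it suffices to enumerate the few conjugacy classes of maximal solvable subgroups of the Levi factors and the $\GF(3)$-submodules of $R$ they stabilize — a small computation rather than an enumeration of subgroups of $G$.

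With this set-up I would run the \magma search: build $G=\Omega_7(3)$ and $G=\SO_7(3)$ and the relevant maximal subgroups $A$, enumerate the conjugacy classes of solvable subgroups $H\leqslant A$ with $|H|$ divisible by $|G|/|B|$, and test the factorization criterion $|H\cap B|\,|G|=|H|\,|B|$ of Lemma~\ref{p3}(c) (it suffices to do this up to conjugacy by Lemma~\ref{p3}(b)); whenever it holds, record $(G,H,K)$ with $K$ a suitable subgroup of $B$. Matching the output against Table~\ref{tab12} and against rows~22--23 of Table~\ref{tab1} gives the three cases, and the additional assertions — that $\Pa_k$ is the \emph{only} maximal overgroup of $H$, and the dichotomy between~(b) and~(c) according to whether $L=(H\cap L)(K\cap L)$ — are read off by checking, for each recorded $H$, its list of maximal overgroups and the order of $(H\cap L)(K\cap L)$.

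The main obstacle is simply the size of $\Omega_7(3)$ (of order about $4.6\times10^9$): a direct enumeration of all subgroups, or even all solvable subgroups, of $G$ is infeasible, so the computation must be organised to remain inside the comparatively small maximal subgroups and to exploit their explicit $R{:}C$ structure, reducing everything to subgroup enumeration in Levi factors of small $\GF(3)$-rank together with linear algebra over $\GF(3)$ for the action on $R$. The group-theoretic input (Lemma~\ref{Embedding}, Theorem~\ref{Maximal}, and the order divisibilities) is precisely what legitimises this reduction; once it is in place the remaining verification is routine for \magma.
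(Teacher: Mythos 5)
The paper's ``proof'' of this proposition is a single sentence — ``we compute in \magma\ all the nontrivial factorizations of $G$ with $\Soc(G)=\Omega_7(3)$'' — with the relevant search routines supplied in Appendix~B. Your proposal is essentially that same computational verification, fleshed out with the theoretical scaffolding (Lemma~\ref{Embedding}, Theorem~\ref{Maximal}, the $\Pa_k=R{:}C$ structure of parabolics) that makes the search tractable; this matches the paper's general methodology and its Appendix~B code, even though the paper does not spell out any of it for this particular computation.
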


\begin{table}[htbp]
\caption{}\label{tab12}
\centering
\begin{tabular}{|l|l|l|l|l|}
\hline
row & $G$ & $H$ & $K$ & $k$\\
\hline
1 & $\Omega_7(3)$ & $3^5{:}2^4.5$, $3^5{:}2^4.\D_{10}$, $3^5{:}2^4.\AGL_1(5)$ & $\G_2(3)$ & $1$\\
2 & $\Omega_7(3)$ & $3^{3+3}{:}13$, $3^{3+3}{:}13{:}3$ & $\N_1^-$, $\Sp_6(2)$ & $3$\\
\hline
3 & $\SO_7(3)$ & $3^{3+3}{:}13{:}2$, $3^{3+3}{:}13{:}6$ & $\Omega_6^-(3).2$ & $3$\\
\hline
\end{tabular}
\end{table}

Now we state the main result of this section.

\begin{proposition}\label{Omega}
Let $G$ be an almost simple group with socle $L=\Omega_{2m+1}(q)$ with $m\geqslant3$ and $q$ odd. If $G=HK$ is a nontrivial factorization with $H$ solvable, $K$ unsolvable and $HL=KL=G$, then under \emph{Hypothesis \ref{Hypo}}, one of the following holds.
\begin{itemize}
\item[(a)] $H\cap L\leqslant(q^{m(m-1)/2}.q^m){:}((q^m-1)/2).m<\Pa_m$, and $\Omega_{2m}^-(q)\trianglelefteq K\cap L\leqslant\N_1^-$.
\item[(b)] $L=\Omega_7(3)$ or $\Omega_9(3)$, and $(L,H\cap L,K\cap L)$ is as described in \emph{Table \ref{tab16}}.
\end{itemize}
\end{proposition}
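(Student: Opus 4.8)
The plan is to argue exactly as in Propositions~\ref{Linear}, \ref{Symplectic} and \ref{Unitary}. First dispose of $L=\Omega_7(3)$ using Proposition~\ref{Small2}: among the factorizations in Table~\ref{tab12}, those with $K$ unsolvable and $HL=KL=G$ give either part~(a) (the rows whose $K$ lies in $\N_1^-$) or part~(b) (the rows with $K\cap L\in\{\G_2(3),\Sp_6(2)\}$, recorded in Table~\ref{tab16}). So assume $(m,q)\neq(3,3)$ henceforth; since $q$ is odd, $\Omega_{2m+1}(q)$ is simple of orthogonal type and there is no overlap with the symplectic case. By Lemma~\ref{Embedding} choose core-free maximal subgroups $A\geqslant H$ and $B\geqslant K$, so that $G=AB$ is a nontrivial maximal factorization listed among the odd-dimensional orthogonal rows covered by Theorem~\ref{Maximal} (Table~\ref{tabOmegaOdd}). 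As $H$ is solvable, Proposition~\ref{p1} forces $A$ to have at most one unsolvable composition factor, its other conclusion being impossible since it would put $L\in\{\Sp_4(4),\Omega_8^+(2),\Omega_8^+(4)\}$. Interchanging $A,B$ so that $H\leqslant A$, this leaves $(A\cap L,B\cap L)$ equal to $(\Pa_m,\N_1^-)$, to $(\N_1^-,\Pa_m)$, to a point stabilizer paired with a $\Pa_m$, or (only for $m=3$) to $(\Pa_1,\G_2(q))$ or $(\G_2(q),\Pa_1)$, together with a few sporadic rows with $L=\Omega_7(3)$ or $\Omega_9(3)$.

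All cases except $(A\cap L,B\cap L)=(\Pa_m,\N_1^-)$ are eliminated by combining the constraint on $|H|$ from the appropriate part of Lemma~\ref{Reduction} with the divisibility $|G|/|B|\di|H|$ (Lemma~\ref{p4}). Concretely: $A\cap L=\G_2(q)$ is impossible by Lemma~\ref{Reduction}(a); if $A\cap L=\N_1^-$ then $A$ has unique unsolvable composition factor $\Omega_{2m}^-(q)$, excluded for $m\geqslant4$ by Lemma~\ref{Reduction}(b) and for $m=3$ (where $\Omega_6^-(q)\cong\PSU_4(q)$) by Lemma~\ref{Reduction}(f), because a primitive prime divisor of $q^6-1$ divides $|\Omega_7(q)|/|\Pa_3|$ yet is coprime to $q$, $q\pm1$ and $q^2+1$ and hence to the $|H|$ allowed by Lemma~\ref{Reduction}(f); in the remaining point-stabilizer and $\Pa_1$ cases the radical of $A$ is not a large $p$-group, so the $p$-part of $|H|$ is bounded by a polynomial in $m$, $\log q$ and $f$ (via Lemma~\ref{Reduction}(c),(d),(h)), whereas the $p$-part of $|G|/|B|$ is a large power of $q$, a contradiction once $(m,q)\neq(3,3)$. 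For the one or two genuinely borderline small cases, Zsigmondy's theorem (Theorem~\ref{Zsigmondy}) or direct computation in \magma settles matters. The sporadic rows with $L=\Omega_7(3)$ are already covered by Proposition~\ref{Small2}, and the $\Omega_9(3)$ row is the factorization $\Pa_4\cdot\N_1^-$, handled below.

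It remains to treat $A\cap L=\Pa_m$, $B\cap L=\N_1^-$. Now $A=\Pa_m[G]$ has unique unsolvable composition factor $\PSL_m(q)$, $R:=\Rad(A)=\bfO_p(\Pa_m)$ is the special $p$-group $q^{m(m-1)/2}.q^m$, and $A/R=\PSL_m(q).\calO$; by Lemma~\ref{Reduction}(c) either row~1 holds, namely $H\leqslant R.(((q^m-1)/((q-1)d)).m).\calO$, or $(m,q)=(4,3)$ (row~12) and $H\leqslant R.(2^4{:}5{:}4).\calO$. In the first case, intersecting with $L$ — computing the image of the Singer torus of $\GL_m(q)$ under the determinant $\GL_m(q)\to\GF(q)^\times$ and cutting down by the index-two subgroup that defines $\Pa_m[\Omega_{2m+1}(q)]$ inside $\Pa_m[\SO_{2m+1}(q)]$ — yields $H\cap L\leqslant(q^{m(m-1)/2}.q^m){:}((q^m-1)/2).m<\Pa_m$, the first assertion of part~(a). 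For the second, $(B\cap L)^{(\infty)}=\Omega_{2m}^-(q)$ and $B=(H\cap B)K$ by Lemma~\ref{p9}. When $m\geqslant4$, $B/\Rad(B)$ is almost simple with socle $\POm_{2m}^-(q)$ by Lemma~\ref{p12}, and since Theorem~\ref{SolvableFactor} (under Hypothesis~\ref{Hypo}) admits no nontrivial factorization of such a group with a solvable factor, the induced factorization of $B/\Rad(B)$ shows that $KR$ covers it, so $\Omega_{2m}^-(q)=(B\cap L)^{(\infty)}\trianglelefteq K\cap L\leqslant\N_1^-$, exactly as in the final paragraph of the proof of Lemma~\ref{Unitary2}. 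When $m=3$, $\POm_6^-(q)\cong\PSU_4(q)$ does have solvable factorizations, so instead estimate $|B\cap L|/|K\cap L|\leqslant|B||H|/|G|$ from the bound $|H\cap L|\leqslant q^6\cdot((q^3-1)/2)\cdot3$, check that it is below $P(\POm_6^-(q))=(q^3+1)(q+1)$ (Theorem~\ref{l7}) for every odd $q>3$, and deduce $\Omega_6^-(q)\trianglelefteq K\cap L$ from Lemma~\ref{MinimalDegree}(d). Finally, for $(m,q)=(4,3)$ the group $H\cap L$ is an exceptional solvable subgroup of $\Pa_4[\Omega_9(3)]$ not contained in the group of part~(a), and computation in \magma determines the corresponding $K$, giving row~24 of Table~\ref{tab1}, i.e. part~(b).

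The main obstacle is this endgame. For $m\geqslant4$ it closes cleanly precisely because no almost simple group with socle $\POm_{2\ell}^-(q)$ has a solvable factor; the delicate part is $m=3$, where one must verify that the index estimate stays below $P(\POm_6^-(q))$ for every odd $q>3$ — this is exactly where excluding $(m,q)=(3,3)$ is indispensable and where the preliminary computation for $\Omega_7(3)$ does its work. A further subtlety is correctly distinguishing row~1 from row~12 of Lemma~\ref{Reduction}(c), equivalently recognising the exceptional solvable subgroups of the Levi $\GL_m(q)$, so that the $\Omega_9(3)$ factorization is recorded in part~(b) rather than wrongly absorbed into part~(a); and pinning down the precise factor $(q^m-1)/2$ in the description of $H\cap L$ in part~(a) requires the scalar bookkeeping indicated above.
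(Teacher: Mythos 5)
Your overall strategy matches the paper's: dispose of $\Omega_7(3)$ via Proposition~\ref{Small2}, embed $G=HK$ in a nontrivial maximal factorization from Table~\ref{tabOmega}, constrain $A\cap L$ using Lemma~\ref{Reduction}, and in the surviving case $(\Pa_m,\N_1^-)$ apply Lemma~\ref{Reduction}(c) to pin down $H\cap L$ and then argue that $K$ contains $(B\cap L)^{(\infty)}=\Omega_{2m}^-(q)$. (Incidentally the reference \verb|\ref{tabOmegaOdd}| does not exist; the relevant table is Table~\ref{tabOmega}.)

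However, there is a genuine gap in your elimination of $A\cap L=\N_1^-$ with $m=3$. Your argument is that a primitive prime divisor $r$ of $q^6-1$ divides $|\Omega_7(q)|/|\Pa_3|$, while Lemma~\ref{Reduction}(f) forces $r\nmid|H|$, contradicting $|G|/|B|\mid|H|$. This works only when $B\cap L=\Pa_3$. But Table~\ref{tabOmega} (row~2, with $A$ and $B$ interchanged) also allows $B\cap L=\G_2(q)$, and in that case $|L|/|B\cap L|=\tfrac12 q^3(q^4-1)$, which is \emph{not} divisible by any primitive prime divisor of $q^6-1$; the divisibility forced by $G=HB$ gives no contradiction with Lemma~\ref{Reduction}(f). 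This is exactly the case the paper cannot dispose of by a prime-power count: its Case~1 first uses the primitive-prime-divisor argument to rule out $B\cap L=\Pa_m$, and then for $B\cap L=\G_2(q)$ passes to the concrete unitary setup $A=\SU_4(q).\Z_{2f/e}$, produces a factorization $A=HN$ where $N$ is the stabilizer of a non-isotropic vector, and derives a contradiction from Lemma~\ref{l4}. That structural argument (the $\mathcal C_1$-type factorization of $\SU_4(q).\langle\tau\rangle$ with both factors as in Proposition~\ref{ExampleUnitary}) is an essential ingredient you have omitted, and no divisibility-only argument replaces it.

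Two smaller omissions: your case list leaves out $(A\cap L,B\cap L)=(\PSp_6(q).a,\N_1^-)$ for $m=6$, $q=3^f$ (row~4 of Table~\ref{tabOmega}); the paper kills the only surviving instance $(m,q)=(6,3)$ by noting $7\mid|L|/|B\cap L|$ while Lemma~\ref{Reduction}(d) gives $7\nmid|H|$. And in the $m=3$, $A\cap L\in\{\Pa_1,\N_1^+,\N_2^\pm\}$ subcase, your "$p$-part of $|H|$ bounded by a polynomial" claim is not accurate for $\Pa_1$ (whose radical has $p$-part $q^5$); the paper first uses a ppd of $q^4-1$ to reduce to $A\cap L=\N_1^+$ and only then compares $p$-parts.
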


\begin{table}[htbp]
\caption{}\label{tab16}
\centering
\begin{tabular}{|l|l|l|l|}
\hline
row & $L$ & $H\cap L\leqslant$ & $K\cap L$\\
\hline
1 & $\Omega_7(3)$ & $3^5{:}2^4.\AGL_1(5)$ & $\G_2(3)$\\
2 & $\Omega_7(3)$ & $3^{3+3}{:}13{:}3$ & $\Sp_6(2)$\\
3 & $\Omega_9(3)$ & $3^{6+4}{:}2^{1+4}.\AGL_1(5)$ & $\Omega^-_8(3)$, $\Omega^-_8(3).2$\\
\hline
\end{tabular}
\end{table}

\begin{proof}
Let $q=p^f$ with odd prime $p$. By Lemma \ref{Embedding}, we may take $A,B$ to be core-free maximal subgroups of $G$ containing $H,K$ respectively. For $L=\Omega_7(3)$, the proposition follows immediately from Proposition \ref{Small2}. Thus we assume $(m,q)\neq(3,3)$ for the rest of the proof. Under this assumption, the maximal factorizations $G=AB$ lies in rows 1--5 of Table \ref{tabOmega} (interchanging $A,B$ if necessary) by Theorem \ref{Maximal}. Further by Lemma \ref{Reduction}, the possibilities for $A\cap L$ are:
\[\mbox{$\N_1^-$ with $m=3$,\quad$\Pa_m$,\quad$\Pa_1$ with $m=3$,\quad$\N_1^+$ with $m=3$,}\]
\[\mbox{$\N_2^-$ with $m=3$,\quad$\N_2^+$ with $m=3$,\quad$\PSp_6(3).a$ with $(m,q)=(6,3)$ and $a\leqslant2$.}\]

{\bf Case 1.} Suppose that $A\cap L=\N_1^-$ with $m=3$. Then $A$ has the unique unsolvable composition factor $\PSU_4(q)$, and $B\cap L=\Pa_m$ or $\G_2(q)$ seen from Table \ref{tabOmega}. By Zsigmondy'e theorem, $p^{6f}-1$ has a primitive prime divisor $r$, and we conclude from Lemma \ref{Reduction}(f) that $r$ does not divide $|H|$. It follows that $r$ divides $|B\cap L|$ according to the factorization $G=HB$. This implies that $B\cap L\neq\Pa_m$, and thus $B\cap L=\G_2(q)$. Consequently, we have $\SO_7(q)\nleqslant G$ by \cite[Proposition 5.7.2]{BHR-book}, since $B$ is a maximal subgroup of $G$. Hence $A=\SU_4(q).\Z_{2f/e}$ with $e\di2f$.

Let $V$ be a vector space over $\GF(q^2)$ of dimension $4$ equipped with a non-degenerate unitary form $\beta$, and $u_1,u_2,u_3,u_4$ be a orthonormal basis of $V$. Let $\tau$ be the semilinear transformation of $V$ fixing the basis vectors $u_1,u_2,u_3,u_4$ such that $(\lambda v)^\tau=\lambda^{p^e}v$ for any $v\in V$ and $\lambda\in\GF(q^2)$. Then we can write $A=S{:}T$ with $S=\SU(V,\beta)$ and $T=\langle\tau\rangle=\Z_{2f/e}$ such that $A\cap B=(S\cap B){:}T$ and $S\cap B=\SU_3(q)$ is the stabilizer of $u_1$ in $S$ by \cite[5.1.14]{liebeck1990maximal}. It derives from $G=HB$ that $A=H(A\cap B)$ and thus $A=H(S\cap B)T$. Denote the stabilizer of $u_1$ in $A$ by $N$. Take $e_1,e_2,f_1,f_2$ to be a basis of $V$ and $\mu\in\GF(q^2)$ such that $e_2+\mu f_2=u_1$ and
$$
\beta(e_i,e_j)=\beta(f_i,f_j)=0,\quad\beta(e_i,f_j)=\delta_{i,j}
$$
for any $i,j\in\{1,2\}$. Then $A=HN$ since $(S\cap B)T\subseteq N$. By Hypothesis \ref{Hypo}, this factorization should satisfy Theorem \ref{SolvableFactor}, that is, $H^x\leqslant M$ with some $x\in A$ and maximal solvable subgroup $M$ of $A$ stabilizing $\langle e_1,e_2\rangle$. However, this gives $A=H^xN=MN$, contrary to Lemma \ref{l4}.

{\bf Case 2.} Suppose that $A\cap L=\Pa_m$. Then it is seen from Table \ref{tabOmega} that $B\cap L=\N_1^-$. Noticing $\Pa_m=(q^{m(m-1)/2}.q^m){:}\SL_m(q).((q-1)/2)$, we conclude that $A$ has the unique unsolvable composition factor $\PSL_m(q)$. By Zsigmondy's theorem, $p^{2fm}-1$ has a primitive prime divisor $r$, and $p^{fm}-1$ has a primitive prime divisor $s$. Note that $s$ divides $|H|$ as $s$ divides $|L|/|B\cap L|$. Then by Lemma \ref{Reduction}(c), either
$$
H\leqslant R.\left(\frac{q^m-1}{(q-1)(m,q-1)}.m\right).\calO=\left((q^{m(m-1)/2}.q^m){:}\frac{q^m-1}{2}.m\right).(G/L),
$$
or $(m,q)=(4,3)$ and $H\leqslant R.(2^4{:}5{:}4).\calO$, where $R,\calO$ are defined in Lemma~\ref{Reduction} with $A/R=\PSL_m(q).\calO$. Thus either
\[\mbox{$H\cap L\leqslant (q^{m(m-1)/2}.q^m){:}((q^m-1)/2).m$, or}\]
\[\mbox{$L=\Omega_9(3)$ and $H\cap L\leqslant3^{6+4}{:}2^{1+4}.\AGL_1(5)$.}\]
As a consequence, $r$ does not divide $|H|$. This in conjunction with the factorization $G=HK$ yields that $r$ divides $|K|$ since $r$ divides $|L|$.

Let $R=\Rad(B)$, $\overline{B}=B/R$, $\overline{H\cap B}=(H\cap B)R/R$ and $\overline{K}=KR/R$. By Lemma \ref{p12}, $\overline{B}$ is almost simple with socle $\POm_{2m}^-(q)$. From $G=HK$ we deduce $B=(H\cap B)K$ and further $\overline{B}=\overline{H\cap B}\,\overline{K}$. Moreover, $\overline{H\cap B}$ is solvable and $r$ divides $|\overline{K}|$ since $r$ divides $|K|$. By Hypothesis \ref{Hypo}, $\overline{B}$ does not have any solvable nontrivial factor of order divisible by $r$. We then conclude that $\POm_{2m}^-(q)\trianglelefteq\overline{K}$. Hence $K\cap L$ contains $(B\cap L)^{(\infty)}=\Omega_{2m}^-(q)$, and so $\Omega_{2m}^-(q)\trianglelefteq K\cap L\leqslant B\cap L=\N_1^-$.

To sum up, we have shown in this case that either part (a) of Proposition \ref{Omega} holds, or the triple $(L,H\cap L,K\cap L)$ is described in row 3 of Table \ref{tab16}.

{\bf Case 3.} Suppose that $A\cap L\in\{\Pa_1,\N_1^+,\N_2^-,\N_2^+\}$ with $m=3$. In this case, we have $B\cap L=\G_2(q)$ from Table \ref{tabOmega}. By Zsigmondy's theorem, $p^{4f}-1$ has a primitive prime divisor $r$, and $r$ divides $|H|$ since $r$ divides $|L|/|B\cap L|$. Thereby we conclude from Lemma~\ref{Reduction}(c) and (d) that $A\cap L=\N_1^+$ and $H\leqslant R.(((q^2+1)(q+1)/(4,q-1)).4).\calO$ with $R=\Rad(A)$ and $A/R=\PSL_4(q).\calO$. This implies that $|H|_p$ divides
$$
(|R||\calO|)_p=(|A|/|\PSL_4(q)|)_p=(|A|/|A\cap L|)_p=|G/L|_p
$$
and thus divides $f$. As the factorization $G=HB$ requires $|H|_p$ to be divisible by $|L|_p/|B\cap L|_p=q^3$, we then obtain a contradiction that $q^3\di f$.

{\bf Case 4.} Finally suppose that $\PSp_6(3).a$ with $(m,q)=(6,3)$ and $a\leqslant2$. Then we have $B\cap L=\N_1^-$ as Table \ref{tabOmega} shows. It follows that $|L|/|B\cap L|$ is divisible by $7$, and thereby the factorization $G=HB$ forces $|H|$ to be divisible by $7$. However, we view from Lemma~\ref{Reduction}(d) that $|H|$ is not divisible by $7$. Thus this case is impossible too. The proof is completed.
\end{proof}

\section{Orthogonal groups of even dimension}

The main result of this section is the proposition below, which confirms Theorem~\ref{SolvableFactor}
for orthogonal groups of even dimension under Hypothesis \ref{Hypo}.

\begin{proposition}\label{OmegaPlus}
Let $G$ be an almost simple group with socle $L=\POm_{2m}^\varepsilon(q)$ with $m\geqslant4$ and $\varepsilon=\pm$. If $G=HK$ is a nontrivial factorization with $H$ solvable, $K$ unsolvable and $HL=KL=G$, then under \emph{Hypothesis \ref{Hypo}}, $\varepsilon=+$ and one of the following holds.
\begin{itemize}
\item[(a)] $m\geqslant5$, $H\cap L\leqslant q^{m(m-1)/2}{:}((q^m-1)/(4,q^m-1)).m<\Pa_m$ or $\Pa_{m-1}$, and $\Omega_{2m-1}(q)\trianglelefteq K\cap L\leqslant\N_1$.
\item[(b)] $m=4$, $H\cap L\leqslant q^6{:}((q^4-1)/(4,q^4-1)).4<\Pa_1$ or $\Pa_3$ or $\Pa_4$, and $K\cap L=\Omega_7(q)$.
\item[(c)] $L=\Omega_8^+(2)$ or $\POm_8^+(3)$, and $(L,H\cap L,K\cap L)$ is as described in \emph{Table \ref{tab17}}.
\end{itemize}
\end{proposition}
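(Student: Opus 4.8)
The plan is to run the same induction–plus–maximal-factorization machinery already used in this chapter for the linear, symplectic, unitary and odd-dimensional orthogonal cases. First I would apply Lemma~\ref{Embedding} to replace $(G,H,K)$ by a factorization with the same socle in which $H\leqslant A$ and $K\leqslant B$ for core-free maximal subgroups $A,B$, so that $G=AB$ is a nontrivial maximal factorization; by Theorem~\ref{Maximal} the triple $(L,A\cap L,B\cap L)$ then lies, after possibly interchanging $A$ and $B$, in the orthogonal tables of Appendix~A (Tables~\ref{tabOmegaPlus1}--\ref{tabOmegaPlus2} together with the minus-type table). The small socles $L=\Omega_8^+(2)$ and $L=\POm_8^+(3)$ will be settled by direct computation in \magma, producing the rows of Table~\ref{tab17} (cross-checked against rows~27--28 of Table~\ref{tab1}); from then on I assume $L\notin\{\Omega_8^+(2),\POm_8^+(3)\}$. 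If the factor $A$ containing $H$ has at least two unsolvable composition factors, Proposition~\ref{p1} applies and, since $L$ is orthogonal of even dimension, forces $L=\Omega_8^+(4)$ with $(L,H\cap L,K\cap L)$ in row~9 of Table~\ref{tab7}, which is part~(b). So henceforth $A$ has a unique unsolvable composition factor $S$, and Lemma~\ref{Reduction} governs the structure of $H$.

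Next I would dispose of $\varepsilon=-$. The list of nontrivial maximal factorizations with socle $\POm_{2m}^-(q)$, $m\geqslant4$, is very short; for each candidate, Lemma~\ref{Reduction}(a)--(b) rules out $S$ being exceptional or of shape $\POm_{2\ell}^-(q)$, and then the precise shape of $H$ given by Lemma~\ref{Reduction}(c)--(h), compared with $|L|/|B\cap L|$ (and its $p$-part) via Lemma~\ref{p4} and a primitive prime divisor furnished by Theorem~\ref{Zsigmondy}, excludes every case. This yields $\varepsilon=+$.

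For $\varepsilon=+$, write $q=p^f$. Lemma~\ref{Reduction}(a)--(b) and inspection of Tables~\ref{tabOmegaPlus1}--\ref{tabOmegaPlus2} leave the possibilities: $A\cap L\in\{\Pa_m,\Pa_{m-1}\}$ for general $m\geqslant4$; $A\cap L=\Pa_1$ when $m=4$ (via triality); $A\cap L$ of type $\N_1$ or $\N_2$; and $A\cap L$ of type $(\PSp_2(q)\times\PSp_{2\ell}(q)).a$ or $\PSp_2(q)\otimes\PSp_m(q)$. The non-parabolic cases die by divisibility exactly as in the symplectic and unitary sections: an appropriate primitive prime divisor (of $p^{2fm}-1$, or of $p^{f(2m-2)}-1$ when $A\cap L$ is of type $\N_1$ or $\N_2$) is forced by $G=HB$ to divide $|H|$, while Lemma~\ref{Reduction}(d),(g),(h) shows it cannot — or a $p$-part count against $|L|_p/|B\cap L|_p$ already fails. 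For the parabolic case $A\cap L=\Pa_k$, the unsolvable composition factor of $A$ is $\PSL_m(q)$ (respectively $\PSL_4(q)$ when $m=4$), $B\cap L$ must be $\N_1=\Omega_{2m-1}(q)$ (its triality image $\Omega_7(q)$ when $m=4$; $\Sp_{2m-2}(q)$, respectively $\Sp_6(q)$, when $q$ is even), and Lemma~\ref{Reduction}(c) — driven by a primitive prime divisor of $p^{2fm}-1$ dividing $|H|$ — pins down
$$
H\cap L\leqslant q^{m(m-1)/2}{:}((q^m-1)/d).m<\Pa_k,\qquad d=(4,q^m-1),
$$
with the evident $m=4$ modification $q^6{:}((q^4-1)/d).4$. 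Finally, passing to $\overline{B}=B/\Rad(B)$, which is almost simple with socle $\Omega_{2m-1}(q)$ (resp. $\Omega_7(q)$) by Lemma~\ref{p12}, the relation $G=HK$ yields $\overline{B}=\overline{H\cap B}\,\overline{K}$ with solvable first factor; since no entry of Tables~\ref{tab7}--\ref{tab1} with this socle supplies a solvable factor of order divisible by the relevant primitive prime divisor, Hypothesis~\ref{Hypo} forces $\Omega_{2m-1}(q)\trianglelefteq K\cap L\leqslant\N_1$, which is parts~(a) and~(b) according as $m\geqslant5$ or $m=4$.

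I expect the main obstacle to be the bookkeeping forced by triality when $m=4$: there $\Pa_1,\Pa_3,\Pa_4$ and the subgroups $\Omega_7(q)$, $\Sp_6(q)$ are permuted by $\Out(\POm_8^+(q))$, so one must check that the reduction ``$B\cap L=\N_1$ is the only survivor'' and the subsequent application of Hypothesis~\ref{Hypo} to $\overline{B}$ remain valid after applying a triality automorphism, and one must track $d=(4,q^m-1)$ carefully since it depends on the parity of $q$. A secondary point requiring care is confirming that the exceptional socles are exactly $\Omega_8^+(2)$ and $\POm_8^+(3)$, and that the \magma output there matches the rows of Table~\ref{tab17}, including the correct conjugacy classes of the solvable factors.
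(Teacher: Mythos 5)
Your plan mirrors the paper's own: embed into a maximal factorization via Lemma~\ref{Embedding} and Theorem~\ref{Maximal}, settle $\Omega_8^+(2)$ and $\POm_8^+(3)$ by computation (Proposition~\ref{Small3} and \magma), invoke Proposition~\ref{p1} when $A$ has two unsolvable composition factors, otherwise constrain $H$ through Lemma~\ref{Reduction} under Hypothesis~\ref{Hypo} using primitive-prime-divisor and $p$-part counts, and finally pass to $\overline{B}=B/\Rad(B)$ to force $\Omega_{2m-1}(q)\trianglelefteq K\cap L$. Three sketch-level points need correction before this can be carried out.
First, for $L=\POm_{2m}^+(q)$ the order of $L$ has no factor $q^{2m}-1$, so there is no ``primitive prime divisor of $p^{2fm}-1$'' available to drive Lemma~\ref{Reduction}(c); the paper uses a ppd of $p^{4f}-1$ when $m=4$ (Lemma~\ref{OmegaPlus1}), while for $m\geqslant5$ row~1 of Lemma~\ref{Reduction}(c) is selected automatically because every other row there requires $\ell\leqslant4$.
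Second, the minus-type candidate $A\cap L=\A_{12}$ (row~5 of Table~\ref{tabOmegaMinus}, $(m,q)=(5,2)$) has an \emph{alternating} unsolvable composition factor, so Lemma~\ref{Reduction}(c)--(h) cannot be applied to it; the paper instead invokes Hypothesis~\ref{Hypo} together with Proposition~\ref{Alternating}, using the intersection $A\cap B=(\Sy_4\times\Sy_8)\cap A$ and the nonexistence of solvable $4$-homogeneous groups of degree $12$.
Third, for $m\geqslant5$ the conclusion $\Omega_{2m-1}(q)\trianglelefteq K\cap L$ does not follow simply from ``no solvable factor of $\overline{B}$ has the relevant primitive prime divisor'': when $m$ is odd a ppd of $p^{f(m-1)}-1$ also divides $|\Omega_{2m-2}^-(q)|$ through the factor $q^{2\cdot(m-1)/2}-1$, so the ppd argument alone fails to rule out a core-free $\overline{K}$. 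The paper instead bounds $(|G|/|B|)(|\overline{B}|/|\overline{K}|)$ against $|H\cap L||\Out(L)|$ using the shape of $H\cap L$ already obtained and derives the arithmetic contradiction $q^{m-1}-1\di4fm(2,q-1)$.
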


\begin{table}[htbp]
\caption{}\label{tab17}
\centering
\begin{tabular}{|l|l|l|l|}
\hline
row & $L$ & $H\cap L\leqslant$ & $K\cap L$\\
\hline
1 & $\Omega_8^+(2)$ & $2^2{:}15.4$ & $\Sp_6(2)$\\
2 & $\Omega_8^+(2)$ & $2^6{:}15.4$ & $\A_9$\\
3 & $\POm_8^+(3)$ & $3^6{:}2^4.\AGL_1(5)$& $\Omega_7(3)$\\
4 & $\POm_8^+(3)$ & $3^6{:}(3^3{:}13{:}3)$, $3^{3+6}{:}13.3$ & $\Omega_8^+(2)$\\
\hline
\end{tabular}
\end{table}

We treat the orthogonal groups of minus type, plus type of dimension $8$, and plus type of dimension at least $10$, respectively in the following three subsections. The above proposition will follow by Lemmas\ref{OmegaMinus}, \ref{OmegaPlus1} and \ref{OmegaPlus2}. Throughout this section, fix $G,L,H,K$ to be the groups in the condition of Proposition \ref{Symplectic}, and take $A,B$ to be core-free maximal subgroups of $G$ containing $H,K$ respectively (such $A$ and $B$ exist by Lemma \ref{Embedding}).

\subsection{Orthogonal groups of minus type}

First, we exclude the possibility of orthogonal groups of minus type.

\begin{lemma}\label{OmegaMinus}
Let $G$ be an almost simple group with socle $L$. If $G=HK$ is a nontrivial factorization with $H$ solvable, $K$ unsolvable and $HL=KL=G$, then under \emph{Hypothesis \ref{Hypo}} we have $L\neq\POm_{2m}^-(q)$ for $m\geqslant4$.
\end{lemma}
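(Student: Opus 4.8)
The plan is to follow the same inductive template used for the linear, symplectic, unitary and odd-dimensional orthogonal cases: embed the given factorization $G=HK$ into a maximal factorization $G=AB$ via Lemma~\ref{Embedding}, enumerate the possibilities for $(L,A\cap L,B\cap L)$ from Table~\ref{tabOmegaMinus}, and in each case derive a contradiction. First I would dispose of the small cases with $|\Out(L)|$ large or $L$ of small order (in particular $\POm_8^-(2)$ and $\POm_8^-(3)$) by direct computation in \magma, so that afterwards we may assume $p^{2fm}-1$, $p^{fm}-1$ and $p^{f(2m-2)}-1$ all admit primitive prime divisors by Zsigmondy's theorem (Theorem~\ref{Zsigmondy}); the handful of exceptions $(q,m)=(2,6)$ etc.\ get special values of $r$ assigned by hand.

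Next I would invoke Proposition~\ref{p1} to reduce to the case where a maximal subgroup $A$ containing $H$ has at most one unsolvable composition factor, since Table~\ref{tab11} contributes nothing with socle $\POm_{2m}^-(q)$. After that the key engine is Lemma~\ref{Reduction}: for each admissible $A\cap L$ in Table~\ref{tabOmegaMinus} the unique unsolvable composition factor $S$ of $A$ is one of $\PSL_\ell(q')$, $\PSp_{2\ell}(q')$, $\PSU_n(q')$, $\Omega_{2\ell+1}(q')$ or $\POm_{2\ell}^+(q')$ (note $\POm_{2\ell}^-$ is already barred by Lemma~\ref{Reduction}(b)), and the corresponding clause of Lemma~\ref{Reduction} forces $H$ into a very small subgroup, typically making $|H|$ divisible by at most a fixed polynomial in $f$ together with one ``Singer-type'' cyclic factor. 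Confronting this with the divisibility requirement $|L|/|B\cap L|\bigm| |H\cap L||G/L|$ coming from Lemma~\ref{p4}, applied to each candidate $B\cap L$ (parabolic, $\N_k^\pm$, $\PSp$-type tensor subgroup, or exceptional-type subgroup), yields an impossible divisibility such as $q^{c}\bigm| 2fm$ with $c\geqslant2$, exactly as in the proofs of Lemmas~\ref{Symplectic1}--\ref{Symplectic4} and~\ref{Unitary1}--\ref{Unitary3}. For the stubborn rows where $A\cap L$ is itself a $\POm$- or $\GU$-type subgroup and the naive counting leaves a gap, I would instead run the ``$A=HN=MN$'' argument used in Case~2 of Lemma~\ref{Symplectic1} and Case~1 of Proposition~\ref{Omega}: realize $A$ as $\SU(V,\beta){:}\langle\tau\rangle$ (or the orthogonal analogue), identify $A\cap B$ as the stabilizer of a nonsingular vector, and derive a contradiction with Lemma~\ref{l4}.

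The main obstacle I anticipate is the row(s) of Table~\ref{tabOmegaMinus} in which $B\cap L$ is a subfield or tensor-decomposition subgroup so that $|L|/|B\cap L|$ is relatively small, and simultaneously $A\cap L$ has unsolvable composition factor a group (like $\PSL_m(q^2)$ or $\PSU_m(q)$) whose own Singer cycle is large enough that the Zsigmondy primitive prime divisor of $p^{2fm}-1$ actually can divide $|H|$; there the crude order comparison does not immediately close, and one must either pass to the quotient $\overline{B}=B/\Rad(B)$ and use Hypothesis~\ref{Hypo} to show $\overline{B}$ has no solvable nontrivial factor of the required order (as in Case~2 of Proposition~\ref{Omega}), or fall back on the geometric $MN$-argument. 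A secondary nuisance will be bookkeeping the graph automorphisms and the $\D_8$ or $\Sy_4$ outer automorphism groups appearing in Table~\ref{tab10} for $\POm_{2m}^-(q)$ with $q^m\equiv 3\pmod 4$, which inflate $|G/L|$; but since $|G/L|$ enters the divisibility bounds only polynomially while the obstruction is exponential in $m$, this should not affect the conclusion, only the constants.

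Concretely, after these reductions every surviving candidate is eliminated, so no nontrivial factorization $G=HK$ with $H$ solvable and $HL=KL=G$ exists when $L=\POm_{2m}^-(q)$, $m\geqslant4$, which is the assertion of Lemma~\ref{OmegaMinus}.
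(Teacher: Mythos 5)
Your plan follows essentially the same template as the paper's proof: embed $G=HK$ into a maximal factorization, read the candidates off Table~\ref{tabOmegaMinus}, apply the appropriate clause of Lemma~\ref{Reduction} to the almost simple quotient $A/\Rad(A)$, and close each case by confronting a Zsigmondy primitive prime divisor of $|L|/|B\cap L|$ with Lemma~\ref{p4}. That is indeed how every Lie-type row of Table~\ref{tabOmegaMinus} is eliminated, and the only residual computation is for $\POm_8^-(2)$ arising when $A\cap L=\Omega_4^-(q^2).2$ forces $q=2$, which is much narrower than the blanket \magma\ pass over ``small cases'' you propose. You also overshoot in a second respect: Lemma~\ref{l4} and the geometric $A=HN=MN$ argument are never needed in the minus-type case. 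The $\hat{~}\GU_m(q)$ candidate (which is where that argument earns its keep in Lemma~\ref{Symplectic1} and Proposition~\ref{Omega}) is already killed here by Lemma~\ref{Reduction}(e), because Table~\ref{tabOmegaMinus} only produces $\hat{~}\GU_m(q)$ for $m$ odd, hence $m\geqslant 5$, whereas Reduction(e) forces $\PSU_{2\ell+1}$ with $\ell=1$.

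There is one genuine gap in the plan: row~5 of Table~\ref{tabOmegaMinus} gives $A\cap L=\A_{12}$ with $(m,q)=(5,2)$ and $B\cap L=\Pa_1$. Here the unsolvable composition factor of $A$ is $\A_{12}$, which does not appear in your list of possible $S$ (you enumerate only classical types), and none of the clauses (c)--(h) of Lemma~\ref{Reduction} applies. The paper handles this directly: $A\cap B=(\Sy_4\times\Sy_8)\cap A$ by \cite[5.2.16]{liebeck1990maximal}, so by Lemma~\ref{p9} the almost simple group $A$ (with socle $\A_{12}$) admits a factorization $A=H(A\cap B)$ with $H$ solvable and $A\cap B$ the stabilizer of a $4$-subset; Proposition~\ref{Alternating} (invoked via Hypothesis~\ref{Hypo}) permits no such factorization, since there is no solvable $4$-homogeneous group of degree $12$. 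This omission is easy to repair once spotted, but as written your plan would not notice that $\A_{12}$ escapes the Reduction-plus-Zsigmondy machinery and demands a separate argument.
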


\begin{proof}
Suppose on the contrary that $L=\POm_{2m}^-(q)$ with $m\geqslant4$. By Lemma \ref{Embedding}, we may take $A,B$ to be core-free maximal subgroups of $G$ containing $H,K$ respectively. By Theorem \ref{Maximal}, the maximal factorizations $G=AB$ lies in Table \ref{tabOmegaMinus} (interchanging $A,B$ if necessary). If $m$ is odd, then we have by Lemma \ref{Reduction}(b) and (e) that $A\cap L\neq\Pa_1$, $\N_2^+$ or $\hat{~}\GU_m(q)$. Hence the candidates for $A\cap L$ are:
\[\mbox{$\N_1$,\quad$\Omega_m^-(q^2).2$ with $m$ even and $q\in\{2,4\}$,\quad$\A_{12}$ with $(m,q)=(5,2)$.}\]
Let $q=p^f$ for prime number $p$. We proceed by the above three cases for $A\cap L$.

{\bf Case 1.} Suppose $A\cap L=\N_1$. In this case, as Table \ref{tabOmegaMinus} shows, either $B\cap L=\hat{~}\GU_m(q)$ with $q$ odd, or $B\cap L=\Omega_m^-(q^2).2$ with $m$ even and $q\in\{2,4\}$. If $(m,q)\neq(4,2)$, then $p^{f(2m-2)}-1$ has a primitive prime divisor $r$ by Zsigmondy's theorem. If $(m,q)=(4,2)$, then let $r=63$. Then $r$ divides $|L|/|B\cap L|$, and thus $r$ divides $|H|$ due to the factorization $G=HK$. However, since $A$ has the unique unsolvable composition factor $\POm_{2m-1}^-(q)$, we deduce from Lemma \ref{Reduction}(d) and (g) that $|H|$ is not divisible by $r$, a contradiction.

{\bf Case 2.} Suppose that $A\cap L=\Omega_m^-(q^2).2$ with $m$ even and $q\in\{2,4\}$. Then $p=2$ and it is seen from Table \ref{tabOmegaMinus} that $B\cap L=\N_1$ and $G=\Aut(L)$. By Zsigmondy's theorem, $2^{2fm}-1$ has a primitive prime divisor $r$. Since $r$ divides $|L|/|B\cap L|$, we know that $r$ divides $|H|$ according to the factorization $G=HB$. It follows that $m=4$ and $H\leqslant R.\D_{2(q^4+1)}.\calO$ by Lemma \ref{Reduction}(b), (c) and (f), where $R,\calO$ are defined in Lemma \ref{Reduction} with $A/R=\PSL_2(q^4).\calO$. In particular, $|H|$ divides
$$
2(q^4+1)|R||\calO|=\frac{2(q^4+1)|A|}{|\PSL_2(q^4)|}=\frac{4(q^4+1)|A|}{|A\cap L|}=4(q^4+1)|\Out(L)|=8f(q^4+1),
$$
which implies that $|G|/|B|$ divides $8f(q^4+1)$ due to the factorization $G=HB$. As a consequence, $q^3=(|G|/|B|)_2$ divides $8f$. This restricts $q=2$, whence $L=\POm_8^-(2)$. However, computation in \magma \cite{bosma1997magma} shows that $G=\SO_8^-(2)$ allows no factorization $G=HB$ with $H$ solvable. Thus this case is impossible too.

{\bf Case 3.} Finally suppose that $A\cap L=\A_{12}$ with $(m,q)=(5,2)$. Then $B\cap L=\Pa_1$, $\A_{12}\trianglelefteq A\leqslant\Sy_{12}$, and $A\cap B=(\Sy_4\times\Sy_8)\cap A$ (see \cite[5.2.16]{liebeck1990maximal}). This indicates that the factorization $A=H(A\cap B)$ does not satisfy Theorem \ref{SolvableFactor}, contrary to Hypothesis \ref{Hypo}. This completes the proof.
\end{proof}

\subsection{Orthogonal groups of dimension eight}

We find out in \magma \cite{bosma1997magma} all the nontrivial factorizations of $G$ with $\Soc(G)=\POm_8^+(2)$ in the next proposition.

\begin{proposition}\label{Small3}
Let $G$ be an almost simple group with socle $L=\Omega_8^+(2)$. Then the following five cases give all the nontrivial factorizations $G=HK$ with $H$ solvable.
\begin{itemize}
\item[(a)] $G=\Omega_8^+(2)$, $H$ is contained in $\Pa_k$ for some $k\in\{1,3,4\}$, and $(H,K)$ lies in row~\emph{5} of \emph{Table \ref{tab18}}; moreover $H$ is not contained in any maximal subgroup of $G$ other than $\Pa_1$, $\Pa_3$ and $\Pa_4$.
\item[(b)] $G=\Omega_8^+(2)$, $H$ is contained in $\A_9$, and $(H,K)$ lies in row~\emph{1} of \emph{Table \ref{tab18}}; moreover $H$ is not contained in any maximal subgroup of $G$ other than $\A_9$.
\item[(c)] $G=\Omega_8^+(2)$, $H$ is contained in $(3\times\PSU_4(2)){:}2$ and $\Pa_k$ simultaneously for some $k\in\{1,3,4\}$, and $(H,K)$ lies in rows~\emph{2--4} of \emph{Table \ref{tab18}}; moreover $H$ is not contained in any maximal subgroup of $G$ other than $\Pa_1$, $\Pa_3$, $\Pa_4$ and $(3\times\PSU_4(2)){:}2$.
\item[(d)] $G=\Omega_8^+(2)$, $H$ is contained in $(\PSL_2(4)\times\PSL_2(4)).2^2$ and $\Pa_k$ simultaneously for some $k\in\{1,3,4\}$, and $(H,K)$ lies in row~\emph{1} of \emph{Table \ref{tab18}}; moreover $H$ is not contained in any maximal subgroup of $G$ other than $\Pa_1$, $\Pa_3$, $\Pa_4$ and $(\PSL_2(4)\times\PSL_2(4)).2^2$.
\item[(e)] $G\neq\Omega_8^+(2)$, and $L=(H\cap L)(K\cap L)$.
\end{itemize}
\end{proposition}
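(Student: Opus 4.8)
The plan is to prove the proposition by direct computation in \magma, in exactly the spirit of Propositions~\ref{Small1} and~\ref{Small2}; what follows explains how to organise that computation so that it is both feasible and exhaustive. Write $L=\Omega_8^+(2)$, so that $\Out(L)\cong\Sy_3$ and, up to isomorphism, $G$ is one of $L$, $L.2$, $L.3$ or $L.\Sy_3$ --- the three transpositions of $\Out(L)$ forming a single conjugacy class, so that the three almost simple groups of type $L.2$ are conjugate in $\Aut(L)$, hence isomorphic, and may be handled together via triality. By Lemma~\ref{Embedding} together with Remark~\ref{rmk1} it suffices to treat nontrivial factorizations $G=HK$ with $HL=KL=G$, and for such a factorization one may fix core-free maximal subgroups $A\geqslant H$ and $B\geqslant K$, so that $G=AB$ is one of the nontrivial maximal factorizations of Theorem~\ref{Maximal}; the relevant entries are those with socle $\POm_8^+(2)$, whose factors are assembled from the three classes of parabolic subgroups $\Pa_1,\Pa_3,\Pa_4$, from $\N_1\cong\Sp_6(2)$, from $\A_9$, from $(3\times\PSU_4(2)){:}2$ and from $(\PSL_2(4)\times\PSL_2(4)).2^2$, together with their images under triality.

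First I would settle the socle case $G=L$. For each candidate maximal factorization $G=AB$, \magma lists the conjugacy classes of solvable subgroups $H$ of $A$; for each such $H$ one tests, via the order criteria in parts~(c)--(d) of Lemma~\ref{p3} (so that only $|G|$, $|H|$, $|B|$ and $|H\cap B|$ need be computed rather than the equality $G=HB$ itself), whether $G=HB$, and inside $B$ one then locates the unsolvable subgroups $K$ with $G=HK$. For each surviving $H$ one also records its complete set of maximal overgroups in $G$. The output is precisely the list in Table~\ref{tab18} together with the containment data of parts~(a)--(d): the $H$ of part~(a) lie simultaneously in $\Pa_1$, $\Pa_3$ and $\Pa_4$ and in no further maximal subgroup, those of part~(b) lie only in a copy of $\A_9$, those of part~(c) lie in $(3\times\PSU_4(2)){:}2$ together with a single $\Pa_k$, and those of part~(d) likewise in $(\PSL_2(4)\times\PSL_2(4)).2^2$ together with a single $\Pa_k$.

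For the groups with $G\neq L$ the same computation, carried out over $G=L.2$, $G=L.3$ and $G=L.\Sy_3$ (one representative of type $L.2$ sufficing by triality), shows that every nontrivial factorization $G=HK$ with $H$ solvable satisfies $L=(H\cap L)(K\cap L)$, which is part~(e); such factorizations are then governed by applying Hypothesis~\ref{Hypo} to the proper section $(L,H\cap L,K\cap L)$. The principal obstacle is one of scale and bookkeeping rather than of theory: $|L|\approx1.7\times10^{8}$ and $|L.\Sy_3|\approx10^{9}$, so one must exploit the triality action to cut down both the number of maximal factorizations $G=AB$ examined and the number of solvable-subgroup classes inspected (identifying the $\Pa_1$-, $\Pa_3$- and $\Pa_4$-classes, and the corresponding classes of $\A_9$ and $(3\times\PSU_4(2)){:}2$, under $\Out(L)$), and one must track fusion of subgroup classes under outer automorphisms carefully so that no factorization is overlooked or double-counted. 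The most delicate points are the ``moreover'' clauses asserting that $H$ lies in no maximal subgroup beyond those named, since verifying them requires enumerating \emph{all} maximal overgroups of each $H$, not merely exhibiting one.
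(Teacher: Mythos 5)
Your proposal takes the same route as the paper: Proposition~\ref{Small3} is established there simply by a \magma\ computation (the proposition is introduced by the sentence ``We find out in \magma\ all the nontrivial factorizations of $G$ with $\Soc(G)=\POm_8^+(2)$''), and your write-up is a careful account of how that computation would be organised and why it is exhaustive. Your identification of $\Out(L)\cong\Sy_3$, the reduction via Lemma~\ref{Embedding} and Remark~\ref{rmk1}, the use of triality to fuse the three parabolic classes and the three isomorphic copies of $L.2$, and your correct flagging of the ``moreover'' clauses as the delicate part (since they require enumerating \emph{all} maximal overgroups of each $H$, not merely producing one) all match the paper's implicit strategy. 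One small caveat: the reduction to $HL=KL=G$ via Remark~\ref{rmk1} is stated in the paper only for conclusions about $H\cap L$ and $K\cap L$, so it can be invoked directly for part~(e) but is not strictly needed for parts~(a)--(d) (where $G=L$ makes $HL=KL=G$ automatic); and the appeal to Hypothesis~\ref{Hypo} at the end should really be an appeal to parts~(a)--(d) of the same proposition, since Proposition~\ref{Small3} is itself part of the induction base and cannot lean on the inductive hypothesis. Neither point affects the correctness of the computational verification you describe.
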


\begin{table}[htbp]
\caption{}\label{tab18}
\centering
\begin{tabular}{|l|l|l|l|}
\hline
row & maximal subgroups of $G$ containing $H$ & $H$ & $K$\\
\hline
1 & $\Pa_1$, $\Pa_3$, $\Pa_4$, $\A_9$, $(\PSL_2(4)\times\PSL_2(4)).2^2$ & $2^2{:}15.4$ & $\Sp_6(2)$\\
2 & $\Pa_1$, $\Pa_3$, $\Pa_4$, $(3\times\PSU_4(2)){:}2$ & $2^4{:}15$ & $\Sp_6(2)$\\
3 & $\Pa_1$, $\Pa_3$, $\Pa_4$, $(3\times\PSU_4(2)){:}2$ & $2^4{:}15.2$ & $\Sp_6(2)$\\
4 & $\Pa_1$, $\Pa_3$, $\Pa_4$, $(3\times\PSU_4(2)){:}2$ & $2^4{:}15.4$ & $\Sp_6(2)$, $\A_9$\\
5 & $\Pa_1$, $\Pa_3$, $\Pa_4$ & $2^6{:}15$, $2^6{:}15.2$, $2^6{:}15.4$ & $\Sp_6(2)$, $\A_9$\\
\hline
\end{tabular}
\end{table}

For the rest of this subsection, let $d=(2,q-1)$ and $L=\POm_8^+(q)$ with $q=p^f$ for prime number $p$. We aim to show that part (b) or (c) of Proposition \ref{OmegaPlus} appears in this situation.

\begin{lemma}\label{OmegaPlus4}
If $q\geqslant3$ and $B\cap L=\Omega_7(q)$, then $H$ stabilizes a totally isotropic $k$-space with $k=1$ or $4$.
\end{lemma}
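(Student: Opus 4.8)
The plan is to read off, from the classification of maximal factorizations, the possibilities for a core-free maximal subgroup $A$ containing $H$ when the complementary factor $B$ satisfies $B\cap L=\Omega_7(q)=\N_1$, and then to locate $H$ inside $A$. First I would dispose of the case where $A$ has at least two unsolvable composition factors: by Proposition~\ref{p1} this forces $(L,H\cap L,K\cap L)$ into row~9 of Table~\ref{tab7} (the only row there whose second factor contains $\Omega_7(q)$, occurring for $L=\Omega_8^+(4)$), where $H\cap L<\Pa_k$ with $k\in\{1,3,4\}$; thus $H$ stabilises a totally singular subspace of the natural module $V$ of dimension $1$ or $4$ and there is nothing more to prove. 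Since none of the maximal subgroups $A\cap L$ that accompany $B\cap L=\Omega_7(q)$ in Table~\ref{tabOmegaPlus2} is solvable when $q\geqslant3$, we may assume henceforth that $A$ has exactly one unsolvable composition factor.

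Inspecting Table~\ref{tabOmegaPlus2} and discarding the rows treated above, the candidates for $A\cap L$ (with $B\cap L=\Omega_7(q)$) are: $\Pa_1$, $\Pa_3$ or $\Pa_4$; the subgroup $\Omega_7(q)$ lying in a triality-conjugate class of $\mathcal{C}_1$ subgroups, in which case $(A\cap B)\cap L=\G_2(q)$; and, when $q=3$, a short further list of subgroups each contained in $\Pa_1$, $\Pa_3$ or $\Pa_4$. If $A\cap L\in\{\Pa_1,\Pa_3,\Pa_4\}$ then $H\leqslant A\cap L=\Pa_k$ stabilises a totally singular subspace of $V$ of dimension $1$ (if $k=1$) or of dimension $4$ (if $k\in\{3,4\}$, recalling from Section~\ref{sec2} that $\Pa_3$ and $\Pa_4$ of $\POm_8^+(q)$ are both stabilisers of totally singular $4$-spaces), and the lemma holds. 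Suppose instead $A\cap L=\Omega_7(q)$, say $\Omega_7(q)$ is the stabiliser of the non-singular point $\langle v\rangle$. Then $A$ is almost simple with socle $\Omega_7(q)$, $A\cap B$ is core-free in $A$ with socle $\G_2(q)$, and $G=HB$ gives $A=H(A\cap B)$ by Lemma~\ref{p9}; so $(A,H,A\cap B)$ is a nontrivial factorization of an almost simple group with socle $\Omega_7(q)$, solvable first factor, and second factor of socle $\G_2(q)$. By Hypothesis~\ref{Hypo} this triple satisfies Theorem~\ref{SolvableFactor}, and inspecting the conclusions (equivalently, Proposition~\ref{Small2} and Lemma~\ref{Reduction}(g)) the only possibility is $q=3$ with $H\cap L\leqslant3^5{:}2^4.\AGL_1(5)<\Pa_1[\Omega_7(3)]$. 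Hence $H$ stabilises a totally singular $1$-space of $v^\bot$, which is a totally singular $1$-space of $V$, so again the lemma holds. The remaining $q=3$ subgroups are handled by the same inspection, each landing in $\Pa_1$, $\Pa_3$ or $\Pa_4$.

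The step I expect to be the main obstacle is the bookkeeping around triality: extracting from Table~\ref{tabOmegaPlus2} the exact list of partners $A$ for $B=\Omega_7(q)$, keeping track of which $\Pa_k$ (and which triality class of $\N_1$) appears, and verifying that no stabiliser of a totally singular $2$- or $3$-space of $V$ can occur there. The useful simplification is the identity $(A\cap B)\cap L=\G_2(q)$ for the $\N_1\cdot\N_1$ factorization of $\POm_8^+(q)$ together with the fact that $\Omega_7(q)$ possesses a solvable factor admitting a $\G_2(q)$-type complement only when $q=3$; this makes the branch $A\cap L=\Omega_7(q)$ follow directly from the inductive hypothesis, with no analysis of the half-spin geometry required.
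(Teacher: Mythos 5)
Your reduction via Proposition~\ref{p1}, your handling of the $\Pa_k$ cases, and your treatment of $A\cap L=\Omega_7(q)$ (inducting on the factorization $A=H(A\cap B)$ with $A\cap B$ having socle $\G_2(q)$, then using a triality to place $H$ in $\Pa_1$) all match the paper's argument. However, there is a genuine gap: your list of candidates for $A\cap L$ extracted from Table~\ref{tabOmegaPlus2} is incomplete. After interchanging $A$ and $B$ where needed, the rows with $B\cap L=\Omega_7(q)$ and $A$ having at most one unsolvable composition factor also include $A\cap L=\hat{~}((q+1)/d\times\Omega_6^-(q)).2^d$ (row~3, an $\N_2^-$ subgroup), $A\cap L=\hat{~}((q-1)/d\times\Omega_6^+(q)).2^d$ (row~4, an $\N_2^+$ or $\mathcal{C}_2$ subgroup), both for general $q\geqslant3$, and $A\cap L=\Omega_8^+(2)$ when $q=3$ (row~10). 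Your statement that the remaining candidates for $q=3$ are each contained in $\Pa_1$, $\Pa_3$, or $\Pa_4$ is false: none of these three subgroups is contained in a parabolic.

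These omitted cases require genuine work. The $\N_2^-$ case does not lead to a contradiction: one must conjugate by a triality sending $\N_2^-$ to a $\mathcal{C}_3$ subgroup $\hat{~}\GU_4(q).2$, apply Hypothesis~\ref{Hypo} (using a primitive prime divisor of $p^{4f}-1$ dividing $|H|$) to force $H\cap L\leqslant\Pa_2[\hat{~}\GU_4(q).2]$, and observe that a totally singular unitary $2$-space over $\GF(q^2)$ is a totally singular orthogonal $4$-space over $\GF(q)$ — this is how the lemma's conclusion is reached in this branch, and it is precisely the triality bookkeeping you flagged as the likely obstacle. The $\N_2^+$ case and the $\Omega_8^+(2)$ case are ruled out by $p$-part and $27$-divisibility contradictions respectively, using Lemma~\ref{Reduction}(c) and (h). Without addressing all three, the proof is incomplete.
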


\begin{proof}
By Proposition \ref{p1}, we may assume that $A$ has at most one unsolvable composition factor. Then in view of Lemma \ref{Reduction}(b), we see from Table \ref{tabOmegaPlus2} that the possibilities for $A\cap L$ are:
\[\mbox{$\Pa_1$,\quad$\Pa_3$,\quad$\Pa_4$,\quad$\Omega_7(q)$,\quad$\hat{~}((q+1)/d\times\Omega_6^-(q)).2^d$,}\]
\[\mbox{$\hat{~}((q-1)/d\times\Omega_6^+(q)).2^d$,\quad$\Omega_8^+(2)$ with $q=3$.}\]
If $A\cap L=\Pa_1$  or $\Pa_3$ or $\Pa_4$, then the lemma holds already. We deal with the remaining candidates one by one below. By Zsigmondy's theorem, $p^{4f}-1$ has a primitive prime divisor $r$, and $r$ divides $|H|$ as $r$ divides $|G|/|B|$.

{\bf Case 1.} Suppose $A\cap L=\Omega_7(q)$. Since $r$ divides $|H|$, it derives from Lemma \ref{Reduction}(d) and (g) that $q=3$ and $H\leqslant\Rad(A).(3^5{:}2^4.\AGL_1(5)).2$. Note that $L$ has a graph automorphism of order $3$ (see \cite[(15.1)]{aschbacher1984maximal}) which permutes $\{\Pa_1,\Pa_3,\Pa_4\}$. We may assume without loss of generality that $A\cap L=\N_1$. Then by Proposition \ref{Small2}, we have $H<\Pa_1[A]$. Consequently, $H$ stabilizes a totally isotropic $1$-space since $\Pa_1[A]\leqslant\Pa_1[G]$.

{\bf Case 2.} Suppose $A\cap L=\hat{~}((q+1)/d\times\Omega_6^-(q)).2^d$. Note that $L$ has a graph automorphism of order $3$ which sends $\N_2^-$ to a $\mathcal{C}_3$ subgroup $\hat{~}\GU_4(q).2$ of $L$ and permutes $\{\Pa_1,\Pa_3,\Pa_4\}$ \cite[(15.1)]{aschbacher1984maximal}. We may assume without loss of generality that $A\cap L=\hat{~}\GU_4(q).2\in\mathcal{C}_3$. Since $r$ divides $|H|$, we deduce from Lemma \ref{l3} and Hypothesis \ref{Hypo} that $H\cap L\leqslant\Pa_2[\hat{~}\GU_4(q).2]$. A totally singular unitary $2$-space over $\GF(q^2)$ is also a totally singular orthogonal $4$-space over $\GF(q)$. Therefore, $H$ stabilizes a totally isotropic $4$-space.

{\bf Case 3.} Suppose $A\cap L=\hat{~}((q-1)/d\times\Omega_6^+(q)).2^d$. In this case, $A$ has the unique unsolvable composition factor $\PSL_4(q)$. Write $R=\Rad(A)$ and $A/R=\PSL_4(q).\calO$. Since $r$ divides $|H|$, we deduce from Lemma~\ref{Reduction}(c) that either
$$
H\leqslant R.\left(\frac{q^4-1}{(q-1)(4,q-1)}.4\right).\calO,\mbox{ or $q=3$ and $H\leqslant R.(2^4{:}5{:}4).\calO$.}
$$
Consequently, $|H|_p$ divides
$$
4(|R||\calO|)_p=\frac{4|A|_p}{|\PSL_4(q)|_p}=\frac{4|A|_p(2^d)_p}{|A\cap L|_p}=4|G/L|_p(2^d)_p,
$$
and thus divides $48f$. According to the factorization $G=HB$ we know that $|H|_p$ is divisible by $|L|_p/|B\cap L|_p=q^3$. Hence we obtain $q^3\di48f$, which is impossible as $q\geqslant3$.

{\bf Case 4.} Suppose that $A\cap L=\Omega_8^+(2)$ with $q=3$. In this case, $|L|/|B\cap L|$ is divisible by $27$. Thus the factorization $G=HB$ requires $|H|$ to be divisible by $27$. However, Lemma \ref{Reduction}(h) implies that $|H|$ is not divisible by $27$, a contradiction. This completes the proof.
\end{proof}

\begin{lemma}\label{OmegaPlus1}
If $L=\POm_8^+(q)$, then one of the following holds.
\begin{itemize}
\item[(a)] $m=4$, $H\cap L\leqslant q^6{:}((q^4-1)/(4,q^4-1)).4<\Pa_1$ or $\Pa_3$ or $\Pa_4$, and $K\cap L=\Omega_7(q)$.
\item[(b)] $L=\Omega_8^+(2)$ or $\POm_8^+(3)$, and $(L,H\cap L,K\cap L)$ is as described in \emph{Table \ref{tab17}}.
\end{itemize}
\end{lemma}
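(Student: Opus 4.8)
The plan is to run the standard machinery of this chapter: embed $G=HK$ into a maximal factorization $G=AB$ via Lemma~\ref{Embedding} and Theorem~\ref{Maximal}, use Proposition~\ref{p1} and Lemma~\ref{Reduction} to cut down $A$, exploit the triality automorphism of $\POm_8^+(q)$ to keep the bookkeeping symmetric in $\{\Pa_1,\Pa_3,\Pa_4\}$, and then invoke the already-proved Lemma~\ref{OmegaPlus4} to force $H$ into a parabolic and $K$ onto $\Omega_7(q)$. First dispose of $q=2$: by Proposition~\ref{Small3}, every nontrivial factorization of an almost simple group with socle $\Omega_8^+(2)$ and solvable first factor has $H$ inside some $\Pa_k$ with $(H,K)$ recorded in Table~\ref{tab18}, and the residual sub-case $G\neq\Omega_8^+(2)$ with $L=(H\cap L)(K\cap L)$ feeds straight back into the same proposition because $\Out(L)$ is solvable; in every instance $(L,H\cap L,K\cap L)$ is as in rows~1--2 of Table~\ref{tab17}. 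So assume from now on that $q=p^f\geqslant3$.

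Choose core-free maximal subgroups $A\supseteq H$ and $B\supseteq K$; then, interchanging $A$ and $B$ if needed, $G=AB$ occurs in Table~\ref{tabOmegaPlus2}. If $A$ has at least two unsolvable composition factors, Proposition~\ref{p1} places $(L,H\cap L,K\cap L)$ in row~9 of Table~\ref{tab7} (which is conclusion~(a)) or gives $L=\Omega_8^+(2)$, already handled; so we may assume $A$ has a unique unsolvable composition factor $S$. Lemma~\ref{Reduction}(a),(b) then eliminates the exceptional and $\POm^-$-type possibilities for $S$, so that $A\cap L$ is a parabolic $\Pa_k$, one of the $\mathcal C_1$-subgroups $\N_1=\Omega_7(q)$ or $\N_2^\pm$, the $\mathcal C_3$-subgroup $\hat{~}\GU_4(q).2$, or, when $q=3$, a subfield or tensor-type subgroup from Table~\ref{tabOmegaPlus2}. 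The discussion now splits on whether $B\cap L=\Omega_7(q)$.

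When $B\cap L=\Omega_7(q)$, Lemma~\ref{OmegaPlus4} applies and shows that $H$ stabilizes a totally singular $k$-space with $k\in\{1,4\}$; replacing $A$ by the corresponding parabolic we may take $A=\Pa_k$ with $k\in\{1,3,4\}$ and $\Soc(A/\Rad(A))=\PSL_4(q)$. Since $G=HB$, the index $|L|/|B\cap L|=|\POm_8^+(q)|/|\Omega_7(q)|$ divides $|H|$, so $|H|$ is divisible by a primitive prime divisor of $q^4-1$ (Zsigmondy); inserting this into Lemma~\ref{Reduction}(c) forces $H$ into the Singer-type row there, and matching with the explicit structure of $\Pa_k[\POm_8^+(q)]$ yields $H\cap L\leqslant q^6{:}\frac{q^4-1}{(4,q^4-1)}.4$, except that for $q=3$ the extra row of Lemma~\ref{Reduction}(c) with $H\cap L\leqslant 3^6{:}2^4.\AGL_1(5)$ also survives, giving row~3 of Table~\ref{tab17}. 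To identify $K$, use $K\leqslant B$ and Lemma~\ref{p9} to write $B=(H\cap B)K$, then pass modulo $\Rad(B)$ to the almost simple group $\overline B$ with socle $\Omega_7(q)$: by Hypothesis~\ref{Hypo} (and Proposition~\ref{Small2} when $q=3$) either $\overline K\trianglerighteq\Omega_7(q)$, whence $K\cap L=(B\cap L)^{(\infty)}=\Omega_7(q)$ and conclusion~(b) holds, or $\overline{H\cap B}$ is a nontrivial solvable factor of $\overline B$ — an option that is excluded for $q>3$ by a primitive-prime-divisor count against the size of $|H|$, and settled by \magma{} for $q=3$, where it contributes the $\Omega_8^+(2)$-entries of Table~\ref{tab17}. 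In the complementary case $B\cap L\neq\Omega_7(q)$ one runs the routine elimination exactly as in Lemmas~\ref{Symplectic1}--\ref{Symplectic2} and Proposition~\ref{Omega}: a primitive prime divisor $r$ of $q^6-1$ divides $|L|/|B\cap L|$ hence $|H|$, while Lemma~\ref{Reduction}(c),(d),(h) bounds $|H|$ so that $r\nmid|H|$, a contradiction, apart from the finitely many small configurations with $q\leqslant4$ that are cleared directly by \magma.

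The main obstacle is the separation carried out at the end of Step~3. Because $\Omega_7(q)$ genuinely possesses solvable-factor factorizations (row~7 of Table~\ref{tab7}, and Proposition~\ref{Small2} for $q=3$), one cannot conclude $K\cap L=\Omega_7(q)$ merely from "$\overline B$ has no solvable factor"; instead one must show that whenever $\overline{H\cap B}$ is a nontrivial solvable factor of $\overline B$, the companion factor $\overline K$ is forced to contain $\Omega_6^-(q)$ or $\G_2(q)$, so that $K$ lies inside a proper subgroup of $B\cap L$, and then obtain a contradiction from a Zsigmondy inequality — equivalently, recognise that such a triple is already accounted for elsewhere in the classification. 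Coordinating this with triality, so that the parabolic in conclusion~(a) may be any of $\Pa_1,\Pa_3,\Pa_4$, and tracking the scalar factors distinguishing $\frac{q^4-1}{(q-1)(4,q-1)}$ (the shape supplied by Lemma~\ref{Reduction}(c)) from $\frac{q^4-1}{(4,q^4-1)}$ (the shape inside $\Pa_k[\POm_8^+(q)]$) is the detail-heavy part of the argument.
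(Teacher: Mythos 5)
Your overall framework matches the paper's: embed $G=HK$ into a maximal factorization, use Proposition~\ref{p1} and Lemma~\ref{Reduction} to constrain $A$, and invoke Lemma~\ref{OmegaPlus4} in the central case. But two of the substantive steps have gaps.

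First, the identification of $K\cap L$ when $B\cap L=\Omega_7(q)$. You pass to $\overline B=B/\Rad(B)$ and try to argue via Hypothesis~\ref{Hypo}; you correctly flag that this is delicate because $\Omega_7(q)$ itself admits nontrivial solvable-factor factorizations (row~7 of Table~\ref{tab7} and Proposition~\ref{Small2}), so one cannot deduce $\overline K\trianglerighteq\Omega_7(q)$ directly — but you leave the resolution as a "must show" and never carry it out. The paper avoids this obstacle altogether: once $H\cap L\leqslant q^6{:}\frac{q^4-1}{(4,q^4-1)}.4$ (or the $q=3$ exception), it simply applies Lemma~\ref{p4} to $G=HK$, gets a lower bound on $|K\cap L|$ of the form $q^6(q^6-1)(q^4-1)(q^2-1)\mid 2^5fd\,|K\cap L|$, and observes by inspecting the subgroup tables in~\cite{BHR-book} that the only subgroup of $\Omega_7(q)$ of such large order is $\Omega_7(q)$ itself. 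This is a short numerical argument with none of the structural subtleties you raise. Without something equivalent, your proof is incomplete at precisely the point you identified.

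Second, your treatment of the complementary case $B\cap L\neq\Omega_7(q)$ is wrong as stated. You claim that a primitive prime divisor $r$ of $q^6-1$ divides $|L|/|B\cap L|$. But for $B\cap L=\hat{~}\bigl((q+1)/d\times\Omega_6^-(q)\bigr).2^d$ (the $\N_2^-$ case, which is the main surviving candidate), $\Omega_6^-(q)\cong\PSU_4(q)$ has order divisible by $q^3+1$ and hence by any primitive prime divisor of $q^6-1$, so $r\mid|B\cap L|$ and $r\nmid|L|/|B\cap L|$. Your claimed contradiction therefore does not arise. The paper instead takes a primitive prime divisor $s$ of $q^4-1$ and a primitive prime divisor $t$ of $q^3-1$, observes $st\mid|L|/|B\cap L|$, and derives a contradiction against Lemma~\ref{Reduction}(c),(d),(g). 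The primitive prime divisor of $q^6-1$ is used in the paper for a different purpose: to first rule out $r\mid|H\cap L|$ (forcing $r\mid|B\cap L|$ and thereby pinning down the possibilities for $B\cap L$), not as a weapon against $|L|/|B\cap L|$ in this subcase.

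Everything else — handling $q=2$ by Proposition~\ref{Small3}, using a primitive prime divisor of $q^4-1$ to force the Singer-type row in Lemma~\ref{Reduction}(c), picking up the $q=3$ exception $3^6{:}2^4.\AGL_1(5)$, and routing the $\Omega_8^+(2)$ and $2^6{:}\A_8$ subcases for $q=3$ to parabolic constraints — agrees with the paper.
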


\begin{proof}
If $L=\Omega_8^+(2)$, the lemma follows directly by Proposition \ref{Small3}. Thus we assume $q\geqslant3$ for the rest of the proof. As a consequence, $p^{6f}-1$ has a primitive prime divisor $r$.

Assume that $r$ divides $|H\cap L|$. Then $r$ divides $|A\cap L|$, and inspecting Table \ref{tabOmegaPlus2} we obtain the candidates for $A\cap L$:
\begin{equation}\label{eq13}
\mbox{$\Omega_7(q)$,\quad$\hat{~}((q+1)/d\times\Omega_6^-(q)).2^d$,\quad$\Omega_8^+(2)$ with $q=3$,\quad$2^6{:}\A_8$ with $q=3$.}
\end{equation}
Since $r$ divides $|H|$, we deduce from Lemma~\ref{Reduction}(d) and (f)--(h) that either $A\cap L=\hat{~}((q+1)/d\times\Omega_6^-(q)).2^d$ with $q=8$ or $A\cap L=2^6{:}\A_8$ with $q=3$. Suppose that the former occurs. Then $B\cap L=\Omega_7(8)$, $\Pa_1$, $\Pa_3$ or $\Pa_4$, and hence $|L|/|B\cap L|$ is divisible by $13$. However, Lemma~\ref{Reduction}(f) implies that $|H|$ is not divisible by $13$ since $r$ divides $|H|$. This contradicts the factorization $G=HB$ as it requires $|L|/|B\cap L|$ to divide $|H|$ by Lemma~\ref{p4}.

Now $A\cap L=2^6{:}\A_8$ with $q=3$, and it follows that $B\cap L=\Pa_1$, $\Pa_3$ or $\Pa_4$, as in Table \ref{tabOmegaPlus2}. Therefore, $|L|/|B\cap L|$ is divisible by $35$, which indicates that $|H|$ is divisible by $35$ due to the factorization $G=HB$. Note that $A/\Rad(A)$ is an almost simple group with socle $\A_8$, and $H\Rad(A)/\Rad(A)$ is a nontrivial solvable factor of $A/\Rad(A)$ by Lemma \ref{l3}. This is contrary to Hypothesis \ref{Hypo} as Proposition \ref{Alternating} implies that $A/\Rad(A)$ has no nontrivial solvable factor of order divisible by $35$.

Consequently, $r$ does not divide $|H\cap L|$. Thus the factorization $G=HB$ forces $r$ to divide $|B\cap L|$. We thereby obtain the candidates for $B\cap L$ as in (\ref{eq13}). By Zsigmondy's theorem, $p^{4f}-1$ has a primitive prime divisor $s$.

{\bf Case 1.} Suppose $B\cap L=\Omega_7(q)$. Then by Lemma \ref{OmegaPlus4}, we may assume $A\cap L=\Pa_k$ with $k\in\{1,3,4\}$. Note that $\Pa_k=q^6{:}(\SL_4(q).(q-1)/d)/d$. In particular, $A$ has the unique unsolvable composition factor $\PSL_4(q)$. It derives from the factorization $G=HB$ that $s$ divides $|H|$ since $s$ divides $|L|/|B\cap L|$. Hence by Lemma~\ref{Reduction}(c), either
$$
H\leqslant R.\left(\frac{q^4-1}{(q-1)(4,q-1)}.4\right).\calO=\left(q^6.\frac{q^4-1}{(4,q^4-1)}.4\right).(G/L),
$$
or $q=3$ and $H\leqslant R.(2^4{:}5{:}4).\calO=(3^6.2^4.\AGL_1(5)).(G/L)$, where $R,\calO$ are defined in Lemma \ref{Reduction} with $A/R=\PSL_4(q).\calO$. Accordingly, either
$$
H\cap L\leqslant q^6{:}\frac{q^4-1}{(4,q^4-1)}.4,\mbox{ or $q=3$ and $H\cap L\leqslant3^6{:}2^4.\AGL_1(5)$.}
$$

Now $|H\cap L|$ divides $2^4q^6(q^4-1)/(4,q^4-1)$, and we deduce from the factorization $G=HB$ that $|L|$ divides $2^4q^6(q^4-1)|K\cap L||G/L|/(4,q^4-1)$ by Lemma \ref{p4}. Since $|G/L|$ divides $2f(4,q^4-1)$, this implies that $|L|$ divides $2^5fq^6(q^4-1)|K\cap L|$, that is,
$$
\mbox{$q^6(q^6-1)(q^4-1)(q^2-1)$ divides $2^5fd|K\cap L|$.}
$$
Hence we conclude $K\cap L=\Omega_7(q)$ (see \cite[Tables~8.28--8.29 and 8.39--8.40]{BHR-book}). Therefore, either part (a) of the lemma holds, or $(L,H\cap L,K\cap L)$ is as described in row 3 of Table \ref{tab17}.

{\bf Case 2.} Suppose $B\cap L=\hat{~}((q+1)/d\times\Omega_6^-(q)).2^d$. By Theorem \ref{Maximal}, we have the candidates for $A\cap L$:
\[\mbox{$\Omega_7(q)$,\quad$\Pa_1$,\quad$\Pa_3$,\quad$\Pa_4$,\quad$(3\times\Omega_6^+(4)).2$ with $q=4$}.\]
Let $t$ be a primitive prime divisor of $p^{3f}-1$. According to the factorization $G=HB$, the order $|H|$ is divisible by $|L|/|B\cap L|$, and thus divisible by $st$. However, since $A$ has socle $\Omega_7(q)$ or $\PSL_4(q)$, we deduce from Lemma~\ref{Reduction}(c), (d) and (g) that $st$ does not divide $|H|$, which is a contradiction.

{\bf Case 3.} Suppose that $B\cap L=\Omega_8^+(2)$ with $q=3$. It is seen in Table \ref{tabOmegaPlus2} that the candidates for $A\cap L$ are $\Pa_1$, $\Pa_3$, $\Pa_4$, $\Pa_{13}$, $\Pa_{14}$ and $\Pa_{34}$. From the factorization $G=HB$ we know that $|H|$ is divisible by $|L|/|B\cap L|$ and thus divisible by $13$.

Assume $A\cap L=\Pa_1$, $\Pa_3$ or $\Pa_4$. Then $A\cap L=3^6{:}\PSL_4(3)$, which implies that $A$ has the unique unsolvable composition factor $\PSL_4(3)$. Since $|H|$ is divisible by $13$, we conclude from Lemma \ref{Reduction}(c) that $H\cap L\leqslant3^6{:}(3^3{:}13{:}3)$, as in row 4 of Table \ref{tab17}.

Next assume $A\cap L=\Pa_{13}$, $\Pa_{14}$ or $\Pa_{34}$. Then $A\cap L=3^{3+6}{:}\PSL_3(3)$, which shows that $A$ has the unique unsolvable composition factor $\PSL_3(3)$. Since $|H|$ is divisible by $13$, we conclude from Lemma \ref{Reduction}(c) that $H\cap L\leqslant3^{3+6}{:}13.3$, as in row 4 of Table \ref{tab17}.

{\bf Case 4.} Suppose that $B\cap L=2^6{:}\A_8$ with $q=3$. Then seen from Table \ref{tabOmegaPlus2}, $A\cap L=\Pa_k=3^6{:}\PSL_4(3)$ with $k\in\{1,3,4\}$. In view of the factorization $G=HB$, we know that $|H|$ is divisible by $|L|/|B\cap L|$, and thus divisible by $5\cdot13$. However, as $A$ has the unique unsolvable composition factor $\PSL_4(3)$, Lemma \ref{Reduction}(c) shows that $|H|$ is not divisible by $5\cdot13$, which is a contradiction. The proof is thus finished.
\end{proof}

\subsection{Orthogonal groups of even dimension at least ten}

In this subsection, let $L=\POm_{2m}^+(q)$ with $m\geqslant5$ and $q=p^f$ for prime number $p$. We aim to show that part (a) of Proposition \ref{OmegaPlus} holds for such $L$.

\begin{lemma}\label{OmegaPlus3}
If $B\cap L=\N_1$, then $H$ stabilizes a totally singular $m$-space.
\end{lemma}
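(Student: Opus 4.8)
The strategy is to embed our factorization into a maximal one and exploit the inductive hypothesis on the maximal factor $A$ containing $H$. First I would invoke Lemma~\ref{Embedding} to take core-free maximal subgroups $A,B$ of $G$ with $H\leqslant A$ and $K\leqslant B$, so that $G=AB$ is a nontrivial maximal factorization and, since $B\cap L=\N_1=\Omega_{2m-1}(q)$, Theorem~\ref{Maximal} together with Table~\ref{tabOmegaPlus2} pins down the possibilities for $A\cap L$. By Proposition~\ref{p1} we may assume $A$ has at most one unsolvable composition factor, and by Lemma~\ref{Reduction}(b) we can discard any candidate whose socle is $\POm_{2\ell}^-(q)$. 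This should leave exactly the parabolic candidates $A\cap L=\Pa_k$ for $k\in\{1,\dots,m-2,m-1,m\}$ (i.e.\ the $\mathcal{C}_1$ subgroups stabilizing a totally singular subspace), plus possibly a few $\mathcal{N}$-type and small exceptional candidates, which I would need to eliminate.

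The core of the argument is the Zsigmondy/primitive-prime-divisor squeeze. Write $q=p^f$. Since $m\geqslant5$, $p^{f\cdot2(m-1)}-1$ and related values have primitive prime divisors; the key point is that $|L|/|B\cap L|=|\POm_{2m}^+(q)|/|\Omega_{2m-1}(q)|$ is divisible by a primitive prime divisor $r$ of $p^{2f(m-1)}-1$ (roughly $q^{m-1}(q^m-1)$ up to small factors), so by Lemma~\ref{p4} $r$ must divide $|H|$. For each non-parabolic candidate $A\cap L$ — the $\N_k^\varepsilon$ subgroups, the $\hat{~}\GU_m(q)$-type and $\hat{~}\GL_m(q)$-type $\mathcal{C}_2/\mathcal{C}_3$ subgroups, and any sporadic cases like those with $q\in\{2,3\}$ — I would apply the relevant clause of Lemma~\ref{Reduction}(c)--(h) to the unique unsolvable composition factor $S$ of $A$ and observe that the permissible solvable factors $H$ listed there cannot have order divisible by $r$ (typically because $|H|_p$ is bounded by something like $O(f)$ times a small constant, or because the only admissible $H$ lies in a Singer-normalizer whose order omits $r$). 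This contradiction with $r\di|H|$ rules out every candidate except the parabolics $A\cap L=\Pa_k$. A totally singular subspace stabilized by $\Pa_k$ contains (or is contained in) an $m$-dimensional totally singular subspace when $k\leqslant m$, so $H\leqslant A\leqslant\Pa_j[G]$ for some $j\in\{m-1,m\}$; using the triality graph automorphism of $\POm_8^+$ is irrelevant here since $m\geqslant5$, but for general plus-type groups the stabilizer of a totally singular $k$-space with $1\leqslant k\leqslant m-2$ is contained in the stabilizer of a totally singular $m$-space, so in all cases $H$ stabilizes a totally singular $m$-space.

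\textbf{Main obstacle.} The delicate part will be the bookkeeping for the intermediate parabolic cases $\Pa_k$ with $2\leqslant k\leqslant m-2$: here $A$ has unsolvable composition factor $\PSL_k(q)\times\POm_{2(m-k)}^+(q)$-ish structure — in fact $\Pa_k$ for orthogonal groups has a Levi complement $\hat{~}\GL_k(q)\times\GO_{2(m-k)}^+(q)$, which has \emph{two} unsolvable composition factors when $m-k\geqslant3$, contradicting the "at most one" reduction. So I expect that Proposition~\ref{p1} (via Lemma~\ref{l8} and Table~\ref{tab11}) already forces $k\in\{1,m-1,m\}$, and the remaining work is purely confirming the chain of containments $\Pa_1[G],\Pa_{m-1}[G],\Pa_m[G]$ all lie inside (or equal) a totally-singular-$m$-space stabilizer, which for $\Pa_1$ requires the standard fact that any isotropic $1$-space extends to a maximal totally singular subspace. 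If instead a genuinely intermediate parabolic survived, I would have to argue more carefully that its solvable subgroups satisfying the factorization still stabilize a maximal totally singular subspace, but I anticipate the two-composition-factor obstruction makes this moot. The other fiddly point is checking the small-field exceptions ($q=2$, where $\POm_{2m}^+(2)$ behaves differently and $63=2^6-1$ has no primitive prime divisor for $m=3$ but we have $m\geqslant5$ so Zsigmondy applies cleanly), which should be routine given the hypothesis $m\geqslant5$.
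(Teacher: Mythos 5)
Your proposed mechanism — rule out every non-parabolic candidate for $A$ by a primitive-prime-divisor contradiction, leaving only $\Pa_k$ — does not match what actually happens, and would fail to establish the lemma. From Table~\ref{tabOmegaPlus1}, the candidates for $A\cap L$ with $B\cap L=\N_1$ are \emph{not} a list of parabolics $\Pa_k$ for $k\leqslant m$; they are $\Pa_m$, $\Pa_{m-1}$, $\hat{~}\GU_m(q).2$ (for $m$ even), $(\PSp_2(q)\otimes\PSp_m(q)).a$, $\hat{~}\GL_m(q).2$, $\Omega_m^+(q^2).2^2$ (for $q\in\{2,4\}$ and $m$ even), $\Omega_9(q).a$ (for $m=8$), and $\Co_1$ (for $(m,q)=(12,2)$). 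No parabolic $\Pa_k$ with $1\leqslant k\leqslant m-2$ appears, so your "main obstacle" is a phantom; and more seriously, two of the $\mathcal{C}_3$ candidates — $\hat{~}\GU_m(q).2$ and $\Omega_m^+(q^2).2^2$ — \emph{cannot} be eliminated by any divisibility contradiction. In those cases, the inductive Hypothesis~\ref{Hypo} together with Theorem~\ref{SolvableFactor} does not rule $H$ out, but instead forces $H\leqslant\Pa_\ell[A]$ (or $\Pa_{\ell-1}[A]$, where $m=2\ell$), and one then needs the geometric observation that a totally singular $\ell$-space of the unitary (resp.\ extension-field orthogonal) form over $\GF(q^2)$ is simultaneously a totally singular $m$-space of the orthogonal form over $\GF(q)$. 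This geometric step is the heart of the argument for those two cases and is entirely absent from your plan.

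There is also a computational slip that undercuts the squeeze you intended: $|L|/|B\cap L|$ is, up to small factors, $q^{m-1}(q^m-1)$, which is \emph{not} divisible by any primitive prime divisor of $p^{2f(m-1)}-1$ (such a prime divides $q^{m-1}+1$, not $q^{m-1}(q^m-1)$). The available primitive prime divisors dividing $|L|/|B\cap L|$ come from $q^m-1$, and these do not exclude the $\mathcal{C}_3$ candidates. The correct layout is: discard $\Co_1$ by Hypothesis~\ref{Hypo}; handle $\Pa_m,\Pa_{m-1}$ trivially; eliminate the cases $(\PSp_2\otimes\PSp_m).a$, $\hat{~}\GL_m(q).2$ and $\Omega_9(q).a$ (for $m=8$) via Lemma~\ref{Reduction}(c),(d),(g) and a tailored Zsigmondy or $p$-part argument; and for the two $\mathcal{C}_3$ extension-field cases use induction plus the subspace-coincidence observation to conclude $H$ stabilizes a totally singular $m$-space.

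Separately, even where parabolics do arise, the chain-of-containment claim that $H\leqslant\Pa_k[G]\leqslant\Pa_j[G]$ for some $j\in\{m-1,m\}$ is not valid as stated: the stabilizer of a totally singular $k$-space with $k<m$ is not contained in the stabilizer of any fixed totally singular $m$-space (it is transitive on those containing the given $k$-space). This is moot here only because $\Pa_k$ with $1\leqslant k\leqslant m-2$ never occurs as a factor with $\N_1$.
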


\begin{proof}
Due to Hypothesis \ref{Hypo}, the conclusion of Theorem \ref{SolvableFactor} excludes the possibility of the case $A\cap L=\Co_1$. Thus we see from Table \ref{tabOmegaPlus1} that there are six cases for $A\cap L$.
\begin{itemize}
\item[(i)] $A\cap L=\Pa_m$ or $\Pa_{m-1}$.
\item[(ii)] $m$ is even, and $A\cap L=\hat{~}\GU_m(q).2$.
\item[(iii)] $m$ is even, $q>2$, and $A\cap L=(\PSp_2(q)\otimes\PSp_m(q)).a$ with $a\leqslant2$.
\item[(iv)] $A\cap L=\hat{~}\GL_m(q).2$.
\item[(v)] $m$ is even, $q=2$ or $4$, and $A\cap L=\Omega_m^+(q^2).2^2$.
\item[(vi)] $m=8$, and $A\cap L=\Omega_9(q).a$ with $a\leqslant2$.
\end{itemize}
If $A\cap L=\Pa_m$ or $\Pa_{m-1}$, then the lemma holds already. Now we deal with (ii)--(vi).

{\bf Case 1.} Suppose that $A\cap L=\hat{~}\GU_m(q).2$ with $m=2\ell$, as in (ii). By Hypothesis \ref{Hypo}, we see from Theorem \ref{SolvableFactor} that $H\leqslant\Pa_\ell[A]$. Note that a totally singular unitary $\ell$-space over $\GF(q^2)$ is also a totally singular orthogonal $m$-space over $\GF(q)$. We then conclude that $H\leqslant\Pa_\ell[A]$ stabilizes a totally singular $m$-space.

{\bf Case 2.} Suppose that (iii) appears. As Proposition \ref{p1} indicates that $A$ has at most one unsolvable composition factor, we have $q=3$ and $A\cap L=(\PSp_2(3)\times\PSp_m(3)).a$ with $a\leqslant2$. By Zsigmondy's theorem, $3^m-1$ has a primitive prime divisor $r$. From the factorization $G=HB$ we conclude that $r$ divides $|H|$ since $r$ divides $|L|/|B\cap L|$. However, Lemma \ref{Reduction}(d) implies that $|H|$ is not divisible by $r$, which is a contradiction.

{\bf Case 3.} Suppose $A\cap L=\hat{~}\GL_m(q).2$, as in (iv). By Lemma~\ref{Reduction}(c), we have $H\leqslant R.(((q^m-1)/((q-1)(m,q-1))).m).\calO$, where $R$ and $\calO$ are defined in Lemma \ref{Reduction} with $A/R=\PSL_m(q).\calO$. It follows that $|H|_p$ divides
$$
m(|R||\calO|)_p=\frac{m|A|_p}{|\PSL_m(q)|_p}=\frac{m(2|A|)_p}{|A\cap L|_p}=m(2|G/L|)_p
$$
and thus divides $4fm$. This implies that $q^{m-1}=(|L|/|B\cap L|)_p$ divides $4fm$ due to the factorization $G=HB$. As a consequence, we obtain
$$
2^{m-1}\leqslant p^{m-1}\leqslant p^{f(m-1)}/f\leqslant4m,
$$
which is impossible as $m\geqslant5$.

{\bf Case 4.} Suppose that (v) appears, that is, $A\cap L=\Omega^+_m(q^2).2^2$ with $m=2\ell$ and $q\in\{2,4\}$. By Zsigmondy's theorem, $2^{fm}-1$ has primitive prime divisor $r$ if $(m,q)\neq(6,2)$. Set $r=7$ if $(m,q)=(6,2)$. Since $r$ divides $|G|/|B|$, $r$ also divides $|H|$ as the factorization $G=HB$ requires. Due to Hypothesis \ref{Hypo}, it derives from the conclusion of Theorem \ref{SolvableFactor} that $H\leqslant\Pa_\ell[A]$ or $\Pa_{\ell-1}[A]$. Note that a totally singular orthogonal $\ell$-space over $\GF(q^2)$ is also a totally singular orthogonal $m$-space over $\GF(q)$. We then conclude have $H$ stabilizes a totally singular $m$-space.

{\bf Case 5.}  Assume that $L=\POm_{16}^+(q)$ and $A\cap L=\Omega_9(q).a$ with $a\leqslant2$, as in (vi). By Zsigmondy's theorem, $p^{8f}-1$ has a primitive prime divisor $r$. According to the factorization $G=HB$ we know that $r$ divides $|H|$ since $r$ divides $|L|/|B\cap L|$. However, Lemma \ref{Reduction}(g) implies that $|H|$ is not divisible by $r$, which is a contradiction.
\end{proof}

\begin{lemma}\label{OmegaPlus2}
If $L=\POm_{2m}^+(q)$ with $m\geqslant5$, then $H\cap L\leqslant q^{m(m-1)/2}{:}((q^m-1)/(4,q^m-1)).m<\Pa_m$ or $\Pa_{m-1}$, and $\Omega_{2m-1}(q)\trianglelefteq K\cap L\leqslant\N_1$.
\end{lemma}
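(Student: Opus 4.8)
The plan is to argue exactly in the spirit of Lemmas~\ref{OmegaPlus1} and~\ref{Omega}. First I would take $A,B$ to be core-free maximal subgroups of $G$ containing $H,K$ respectively (available by Lemma~\ref{Embedding}), so that $G=AB$ is one of the maximal factorizations of $L=\POm_{2m}^+(q)$ listed in Table~\ref{tabOmegaPlus1} via Theorem~\ref{Maximal}. By Proposition~\ref{p1} the case where $A$ has at least two unsolvable composition factors yields nothing for $m\geqslant5$, so we may assume $A$ has a unique unsolvable composition factor $S$; Lemma~\ref{Reduction}(a),(b) then excludes $S$ exceptional or $\POm_{2\ell}^-(q)$, and Lemma~\ref{l3} together with Hypothesis~\ref{Hypo} excludes $A\cap L=\Co_1$. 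This leaves $A\cap L$ among $\Pa_m$, $\Pa_{m-1}$, $\N_1$, $\hat{~}\GL_m(q).2$, $\hat{~}\GU_m(q).2$ (with $m$ even), $(\PSp_2(q)\otimes\PSp_m(q)).a$, $\Omega_m^+(q^2).2^2$ and, when $m=8$, $\Omega_9(q).a$.

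The first main step is to show $B\cap L=\N_1$. Since $m\geqslant5$, Zsigmondy's theorem (Theorem~\ref{Zsigmondy}) furnishes a primitive prime divisor $r$ of $q^{2(m-1)}-1$; note $r>2(m-1)\geqslant m$ and $r\nmid|\Out(L)|$, hence $r\nmid|G/L|$. Running through the list above with Lemma~\ref{Reduction}(c)--(h) (and Lemma~\ref{l3} for the candidates it rules out outright, such as $(\PSp_2(q)\otimes\PSp_m(q)).a$ with $m\geqslant8$), one checks that for every candidate $A\cap L\neq\N_1$ the bound forces $r\nmid|H|$; then $G=HB$ and Lemma~\ref{p4} give $r\mid|B\cap L|$, and comparing orders in Table~\ref{tabOmegaPlus1} the only surviving options are $B\cap L=\N_1$ or $B\cap L=\hat{~}\GU_m(q).2$. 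The latter case, together with its companion factor $A\cap L$ (which by Theorem~\ref{Maximal} is $\Pa_1$ or $\N_1$), is excluded by a second Zsigmondy argument just as in Lemma~\ref{Omega}: a primitive prime divisor of $q^{2(m-2)}-1$ divides $|L|/|\hat{~}\GU_m(q).2|$ but, by Lemma~\ref{Reduction}(g),(h), cannot divide $|H|$. There remains the ``dual'' possibility $A\cap L=\N_1$, $B\cap L\in\{\Pa_m,\Pa_{m-1}\}$; here $A$ has socle $\Omega_{2(m-1)+1}(q)$ with $m-1\geqslant4$, Lemma~\ref{Reduction}(g) bounds $|H|$ by roughly $q^{(m-1)m/2}(q^{m-1}-1)(m-1)|G/L|$ apart from the single exceptional configuration $(m,q)=(5,3)$, and then $r$ divides the index $[L:\Pa_m]$ (it divides $q^{m-1}+1$) while $r\nmid|H|$, contradicting $G=HB$.

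Granted $B\cap L=\N_1$, I would invoke Lemma~\ref{OmegaPlus3}: $H$ stabilizes a totally singular $m$-space, so after replacing $A$ we may take $A\cap L=\Pa_m$ or $\Pa_{m-1}$, whence $A$ has the unique unsolvable composition factor $\PSL_m(q)$. Writing $R=\Rad(A)$ and $A/R=\PSL_m(q).\calO$, Lemma~\ref{Reduction}(c) applies with $\ell=m\geqslant5$; all exceptional rows there have $\ell\leqslant4$, so only the first row survives and $H\leqslant R.(((q^m-1)/((q-1)(m,q-1))).m).\calO$, which translated down to $L$ gives $H\cap L\leqslant q^{m(m-1)/2}{:}((q^m-1)/(4,q^m-1)).m<\Pa_m$ (resp.\ $\Pa_{m-1}$); this is the first assertion and also forces $|H\cap L|$ to divide $q^{m(m-1)/2}(q^m-1)m/(4,q^m-1)$. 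For the second assertion, from $G=HK$ and Lemma~\ref{p9} we get $B=(H\cap B)K$, and passing to $\overline{B}=B/\Rad(B)$, almost simple with socle $T\cong\Omega_{2m-1}(q)$ by Lemma~\ref{p12}, yields $\overline{B}=\overline{H\cap B}\,\overline{K}$ with $\overline{H\cap B}$ solvable and $\overline{K}$ unsolvable. If $\overline{K}$ were core-free in $\overline{B}$, then by Hypothesis~\ref{Hypo} and Theorem~\ref{SolvableFactor} this factorization of an almost simple group with socle $\Omega_{2(m-1)+1}(q)$, $m-1\geqslant4$, would be the one in row~7 of Table~\ref{tab7} (or row~24 of Table~\ref{tab1} when $(m,q)=(5,3)$), so that $\overline{K}\cap T\leqslant\N_1^-[T]$ and hence $[T:\N_1^-[T]]$ divides $|\overline{B}|/|\overline{K}|$ and so divides $|\overline{H\cap B}|$; but $\overline{H\cap B}\cap T\leqslant\Pa_m[L]\cap\N_1[L]$, so $|\overline{H\cap B}|$ divides $q^{m(m-1)/2}(q^m-1)m|G/L|/(4,q^m-1)$, whereas $[T:\N_1^-[T]]$ carries a factor of size about $q^{m-1}$ coprime to $q^m-1$, to $m$ and to $|G/L|$, and the divisibility is impossible for $m\geqslant5$ except for a short list of small $(q,m)$. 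Therefore $\overline{K}\trianglerighteq T$, i.e.\ $\Omega_{2m-1}(q)\trianglelefteq K\cap L\leqslant\N_1$.

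The hard part will be the uniform prime-and-divisibility bookkeeping: first in walking through the candidate subgroups of Table~\ref{tabOmegaPlus1} to reduce to $B\cap L=\N_1$, and above all in the final contradiction, where the comparison of $[T:\N_1^-[T]]$ with the bound on $|H\cap B|$ becomes tight in even characteristic and for small $q$, so that a handful of cases (for instance $\POm_{10}^+(2)$) will need to be disposed of in \magma rather than by the generic estimate.
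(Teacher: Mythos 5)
Your overall architecture matches the paper's (Zsigmondy prime $r$ for $q^{2m-2}-1$, reduction to $B\cap L=\N_1$, Lemma~\ref{OmegaPlus3} and Lemma~\ref{Reduction}(c) to bound $H\cap L$ inside $\Pa_m$ or $\Pa_{m-1}$, then the factorization $\overline{B}=\overline{H\cap B}\,\overline{K}$ with Hypothesis~\ref{Hypo} to force $\Omega_{2m-1}(q)\trianglelefteq K\cap L$). There is, however, a genuine gap in the first half. When you conclude that $r$ divides $|B\cap L|$ and write that ``the only surviving options are $B\cap L=\N_1$ or $B\cap L=\hat{~}\GU_m(q).2$'', you have dropped $B\cap L=\N_2^-$: since $r$ is a primitive prime divisor of $q^{2m-2}-1$ it divides $q^{m-1}+1$, which divides $|\Omega_{2m-2}^-(q)|$, so $\N_2^-$ is just as much a candidate as the other two. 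This case really does occur in Table~\ref{tabOmegaPlus1} (rows 4, 9, 10, after interchanging $A$ and $B$, with $A\cap L\in\{\Pa_m,\Pa_{m-1},\hat{~}\GL_m(q).2\}$), and excluding it requires a separate argument: the paper takes a primitive prime divisor $s$ of $q^{m-1}-1$ (with $s=7$ when $(m,q)=(7,2)$), notes that $s$ divides $|L|/|\N_2^-|$ but, since $A$ has unique unsolvable composition factor $\PSL_m(q)$ and hence Lemma~\ref{Reduction}(c) bounds $H$ by a group whose order involves $q^m-1$ rather than $q^{m-1}-1$, concludes $s\nmid|H|$, contradicting $G=HB$. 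You cannot omit this without the proof failing.

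Two smaller remarks. First, for $q$ even you should cite Lemma~\ref{Reduction}(d) rather than (g) when $A\cap L=\N_1$, since $\N_1\cong\Sp_{2m-2}(q)$ there; the conclusion is the same. Second, your closing caution that the final divisibility comparison ``becomes tight in even characteristic'' and might need \magma\ for cases like $\POm_{10}^+(2)$ is unfounded: the paper's inequality
$$
\frac{q^{2(m-1)}(q^m-1)(q^{m-1}-1)}{2(2,q-1)}\di 2fmq^{m(m-1)/2}(q^m-1)
$$
reduces cleanly to $q^{m-1}-1\di 4fm(2,q-1)$, which for $m\geqslant5$ first forces $q=2$ and then $m=5$, and $(m,q)=(5,2)$ already fails the divisibility ($15\nmid20$); no computational check is required.
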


\begin{proof}
By Zsigmondy's theorem, $p^{f(2m-2)}-1$ has a primitive prime divisor $r$. Assume that $r$ divides $|H\cap L|$. Then $r$ divides $|A\cap L|$, and inspecting Table \ref{tabOmegaPlus1} we obtain the candidates for $A\cap L$:
\begin{equation}\label{eq12}
\mbox{$\N_1$,\quad$\hat{~}\GU_m(q).2$ with $m$ even,\quad$\N_2^-$.}
\end{equation}
Since $r$ divides $|H|$, we conclude from Lemma~\ref{Reduction}(b), (d) and (f) that none of them is possible. Therefore, $r$ does not divide $|H\cap L|$. Hence the factorization $G=HB$ forces $r$ to divide $|B\cap L|$ by Lemma \ref{p4}. We thus obtain the candidates for $B\cap L$ as in (\ref{eq12}).

{\bf Case 1.} Suppose $B\cap L=\N_1$. Then by Lemma \ref{OmegaPlus3}, we may assume that $A\cap L=\Pa_k$ with $k\in\{m,m-1\}$. Let $d=(4,q^m-1)/(2,q-1)$. Then $\Cen(\Omega_{2m}^+(q))=\Z_d$ and $\Pa_k=q^{m(m-1)/2}{:}(\SL_m(q).(q-1)/(2,q-1))/d$ (see \cite[Proposition 4.1.20]{kleidman1990subgroup}). In particular, $A$ has the unique unsolvable composition factor $\PSL_m(q)$. Hence we deduce from Lemma \ref{Reduction}(c) that
$$
H\leqslant\Rad(A).\left(\frac{q^m-1}{(q-1)(m,q-1)}.m\right).\calO=\left(q^{m(m-1)/2}.\frac{q^m-1}{(4,q^m-1)}.m\right).(G/L),
$$
with $A/\Rad(A)=\PSL_m(q).\calO$. Consequently,
\begin{equation}\label{eq8}
H\cap L\leqslant q^{m(m-1)/2}{:}\frac{q^m-1}{(4,q^m-1)}.m.
\end{equation}

Let $R=\Rad(B)$, $\overline{B}=B/R$, $\overline{K}=KR/R$ and $\overline{H\cap B}=(H\cap B)R/R$. From $G=HK$ we deduce $B=(H\cap B)K$ and $\overline{B}=\overline{H\cap B}{\,}\overline{K}$. Note that $\overline{B}$ is almost simple with socle $\Omega_{2m-1}(q)$. Since $\overline{H\cap B}$ is solvable, it then derives from Hypothesis \ref{Hypo} that either $\overline{K}\cap\Soc(\overline{B})\leqslant\Omega_{2m-2}^-(q).2$ or $\overline{K}\trianglerighteq\Omega_{2m-1}(q)$.

Assume that $\overline{K}\cap\Soc(\overline{B})\leqslant\Omega_{2m-2}^-(q).2$. Since $|\overline{B}|/|\overline{K}|$ divides $|B|/|K|$ and $|G|/|K|$ divides $|H\cap L||\Out(L)|$, we deduce that $(|G|/|B|)(|\overline{B}|/|\overline{K}|)$ divides $|H\cap L||\Out(L)|$. This together with (\ref{eq8}) leads to
$$
\frac{q^{2(m-1)}(q^m-1)(q^{m-1}-1)}{2(2,q-1)}\di2fmq^{m(m-1)/2}(q^m-1),
$$
which gives
\begin{equation}\label{eq20}
q^{m-1}-1\di4fm(2,q-1).
\end{equation}
As a consequence,
$$
\frac{p^{4f}-1}{5f}\leqslant\frac{p^{f(m-1)}-1}{fm}\leqslant4(2,q-1),
$$
which forces $q=2$. Substituting this into the above inequality, we obtain that $2^{m-1}-1\leqslant4m$ and thus $m=5$. However, the pair $(m,q)=(5,2)$ does not satisfy (\ref{eq20}), a contradiction.

Therefore, we have $\overline{K}\trianglerighteq\Omega_{2m-1}(q)$. It follows that $K\cap L$ contains $(B\cap L)^{(\infty)}=\Omega_{2m-1}(q)$, and thus $\Omega_{2m-1}(q)\trianglelefteq K\cap L\leqslant B\cap L=\N_1$, as the lemma states.

{\bf Case 2.} Suppose that $B\cap L=\hat{~}\GU_m(q).2$ with $m$ even. As listed in Table \ref{tabOmegaPlus1}, there are three candidates for $A\cap L$:
\[\mbox{$\N_1$,\quad$\Pa_1$,\quad$\N_2^+$ with $q=4$}.\]
By Zsigmondy's theorem, $p^{f(2m-4)}-1$ has a primitive prime divisor $s$. According to the factorization $G=HB$, the order $|H|$ is divisible by $|L|/|B\cap L|$, and thus divisible by $s$. However, since $A$ has socle $\Omega_{2m-1}(q)$ or $\POm_{2m-2}^+(q)$, we deduce from Lemma~\ref{Reduction}(d), (g) and (h) that $s$ does not divide $|H|$, which is a contradiction.

{\bf Case 3.} Finally suppose $B\cap L=\N_2^-$. Then by Theorem \ref{Maximal}, we obtain all the candidates for $A\cap L$:
\[\mbox{$\Pa_m$,\quad$\Pa_{m-1}$,\quad$\hat{~}\GL_m(q).2$ with $q\in\{2,4\}$}.\]
By Zsigmondy's theorem, $p^{f(m-1)}-1$ has a primitive prime divisor $s$ if $(m,q)\neq(7,2)$. Set $s=7$ if $(m,q)=(7,2)$. From the factorization $G=HB$ we know that $|L|/|B\cap L|$ divides $|H|$, whence $s$ divides $|H|$. However, since $A$ has the unique unsolvable composition factor $\PSL_m(q)$, we conclude from Lemma \ref{Reduction}(c) that $s$ does not divide $|H|$, a contradiction. The proof is thus completed.
\end{proof}

\section{Completion of the proof}

We are now able to complete the proof of Theorem~\ref{SolvableFactor} by summarizing the results obtained in previous sections.

\begin{lemma}\label{Complete}
Let $G$ be an almost simple group with socle $L$. Then each nontrivial factorization $G=HK$ with $H$ solvable satisfies \emph{Theorem~\ref{SolvableFactor}}.
\end{lemma}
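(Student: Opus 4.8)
The plan is to deduce Lemma~\ref{Complete} by induction on $|G|$, with essentially all of the mathematical content already isolated in the preceding propositions; what remains is organizing the case division and checking that the outputs are exactly the rows of Tables~\ref{tab7}--\ref{tab1}. So I would suppose the lemma fails, take a counterexample $G=HK$ with $|G|$ minimal, and observe that then every almost simple group of order properly dividing $|G|$ satisfies Theorem~\ref{SolvableFactor}; that is, Hypothesis~\ref{Hypo} holds. The induction is well-founded because the cases that do not rely on the inductive hypothesis — $K$ solvable, $L=\Soc(G)$ alternating, sporadic, or exceptional of Lie type, or $H$ contained in a solvable maximal subgroup — are settled unconditionally by Propositions~\ref{BothSolvable}, \ref{Alternating}, \ref{Sporadic}, \ref{ExceptionalLie} and \ref{p13} (the last of which in turn leans only on the computational Propositions~\ref{Small1} and on Theorems~\ref{p-dimensionLinear}--\ref{p-dimensionUnitary}, none of which invoke Hypothesis~\ref{Hypo}).

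First I would dispose of those cases. If $K$ is solvable, Proposition~\ref{BothSolvable} places $(G,H,K)$ in part~(a) of Theorem~\ref{SolvableFactor}, a contradiction; so assume $K$ unsolvable. If $L=\A_n$, then Proposition~\ref{Alternating} gives part~(b) (this also absorbs the classical groups $\A_5\cong\PSL_2(4)$, $\A_6\cong\PSL_2(9)$, $\A_8\cong\PSL_4(2)$, viewed with alternating group socle); if $L$ is sporadic, Proposition~\ref{Sporadic} gives part~(c); if $L$ is an exceptional group of Lie type, Proposition~\ref{ExceptionalLie} gives an outright contradiction; and if $H$ lies in some solvable maximal subgroup of $G$, Proposition~\ref{p13} gives the conclusion directly. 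Hence we may assume $L$ is a classical group of Lie type, not isomorphic to $\A_5$, $\A_6$ or $\A_8$, with $K$ unsolvable and $H$ contained in no solvable maximal subgroup of $G$.

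Next comes the standard reduction: by Lemma~\ref{Embedding}(b) there exist $L\trianglelefteq G^*\leqslant G$ and a factorization $G^*=H^*K^*$ with $H^*\cap L=H\cap L$, $K^*\cap L=K\cap L$ and $H^*L=K^*L=G^*$ (cf.\ Remark~\ref{rmk1}); since part~(d) of Theorem~\ref{SolvableFactor} constrains only the triple $(L,H\cap L,K\cap L)$, it suffices to treat $(G^*,H^*,K^*)$, so I would assume $HL=KL=G$. Then by Lemma~\ref{Embedding}(a) every maximal subgroup of $G$ containing $H$ is core-free, and by the previous paragraph any such maximal subgroup has at least one unsolvable composition factor. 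At this point the result follows by invoking, according to the type of $L$: Proposition~\ref{Linear} when $L=\PSL_n(q)$, Proposition~\ref{Symplectic} when $L=\PSp_{2m}(q)$, Proposition~\ref{Unitary} when $L=\PSU_n(q)$, Proposition~\ref{Omega} when $L=\Omega_{2m+1}(q)$, and Proposition~\ref{OmegaPlus} when $L=\POm_{2m}^\pm(q)$ — each proved under Hypothesis~\ref{Hypo}, each with conclusion one of the configurations of Tables~\ref{tab7}--\ref{tab1}, and with the genuinely small socles $\PSp_4(3)\cong\PSU_4(2)$, $\Omega_7(3)$, $\Omega_8^+(2)$ handled through Propositions~\ref{Small1}, \ref{Small2}, \ref{Small3}, and the possibility that a maximal overgroup of $H$ has two unsolvable composition factors handled through Proposition~\ref{p1}. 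This contradicts the minimality of $|G|$ and closes the induction.

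The one place where genuine care is required — and where I expect the main obstacle to lie, organizationally rather than mathematically — is verifying that these five type-specific propositions together leave no classical socle uncovered, and that their conclusions (often packaged as normal-subgroup sandwiches such as $q^{\,n-1}{:}\SL_{n-1}(q)\trianglelefteq K\cap L\leqslant\Pa_1$, and the like) translate faithfully into the $\hat{~}$-notation rows of Table~\ref{tab7} and the explicit list of Table~\ref{tab1}, consistently with the maximality/normalizer remarks (i)--(iii) following Theorem~\ref{SolvableFactor}. This is routine but must be carried out exhaustively. The converse direction of Theorem~\ref{SolvableFactor} is not part of this lemma: the infinite families are realized by the Constructions of the ``Examples in classical groups'' chapter, and the finitely many entries of Table~\ref{tab1} are confirmed by the cited \magma\ computations.
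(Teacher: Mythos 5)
Your proof is correct and follows essentially the same route as the paper: settle the non-classical, both-solvable, and solvable-maximal-overgroup cases unconditionally via Propositions~\ref{BothSolvable}, \ref{ExceptionalLie}, \ref{Alternating}, \ref{Sporadic} and \ref{p13}, reduce the remaining classical case to $HL=KL=G$ via Lemma~\ref{Embedding} (Remark~\ref{rmk1}), and then close the induction by applying the type-specific Propositions~\ref{Linear}, \ref{Symplectic}, \ref{Unitary}, \ref{Omega}, \ref{OmegaPlus} under Hypothesis~\ref{Hypo}. Your minimal-counterexample framing is just a rephrasing of the paper's induction on $|G|$, and your extra mention of Propositions~\ref{Small1}--\ref{Small3} and \ref{p1} merely makes explicit what is already folded into those five type-specific propositions.
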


\begin{proof}
Proposition~\ref{ExceptionalLie} shows that $L$ cannot be an exceptional group of Lie type. If $K$ is also solvable or $L$ is an alternating group or a sporadic simple group, then Propositions~\ref{BothSolvable}, \ref{Alternating} and \ref{Sporadic}, respectively, describe the triple $(G,H,K)$.

Now assume that $K$ is unsolvable and $L$ is a classical group of Lie type not isomorphic to any alternating group. To prove that part~(d) of Theorem \ref{SolvableFactor} holds, we may embed $G=HK$ into a nontrivial maximal factorization $G=AB$ (this means that we can find core-free maximal subgroups $A,B$ containing $H,K$ respectively, see Remark \ref{rmk1}). If $A$ is solvable, then part~(d) of Theorem \ref{SolvableFactor} follows by Proposition \ref{p13}. Under Hypothesis~\ref{Hypo}, Propositions~\ref{Linear}, \ref{Symplectic}, \ref{Unitary}, \ref{Omega} and~\ref{OmegaPlus} show that the triple $(G,H,K)$ lies in Table~\ref{tab7} or Table~\ref{tab1}. Therefore, we conclude by induction that any nontrivial factorization $G=HK$ with $H$ solvable satisfies Theorem~\ref{SolvableFactor}.
\end{proof}

To finish the proof of Theorem \ref{SolvableFactor}, it remains to verify that for each socle $L$ listed in Table~\ref{tab7} or Table~\ref{tab1}, there exist factorizations as described.

\begin{lemma}\label{Existence}
For each $L$ in \emph{Table \ref{tab7}} and \emph{Table~\ref{tab1}}, there exist group $G$ and subgroups $H,K$ as described such that $\Soc(G)=L$ and $G=HK$.
\end{lemma}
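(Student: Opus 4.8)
The plan is to exhibit, for every line of Table~\ref{tab7} and Table~\ref{tab1}, an explicit triple $(G,H,K)$ with $\Soc(G)=L$ and $G=HK$, thereby establishing the converse direction of Theorem~\ref{SolvableFactor}. The natural organizing principle is to split the verification into the infinite families (Table~\ref{tab7}) and the finitely many sporadic-looking rows (Table~\ref{tab1}), since these require genuinely different tools. For the infinite families in rows~2--9 of Table~\ref{tab7}, almost all the work has already been done: Propositions~\ref{ExampleUnitary}, \ref{ExampleSymplectic}, \ref{ExampleOrthogonal1} and~\ref{ExampleOrthogonal2} construct, respectively, factorizations $\GU_{2m}(q)=HK$, $\Sp_{2m}(q)=HK$ (with $q$ even), $\SO_{2m+1}(q)=HK$ (with $q$ odd) and $\SO_{2m}^+(q)=HK$, with $H=R{:}S$ a solvable group of the shape displayed in Table~\ref{tab7} and $K$ the relevant $\GU_{2m-1}(q)$, $\GO_{2m}^-(q)$, $\SO_{2m}^-(q)$ or $\SO_{2m-1}(q)$. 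First I would pass from these matrix-group factorizations to the corresponding factorizations of the almost simple groups by quotienting out scalars and, where necessary, passing to a suitable subgroup $G$ with $\Soc(G)=L$, using Lemma~\ref{p3}(b),(c) to check that the factorization property is preserved (this is purely a matter of comparing orders of $H\cap L$, $K\cap L$ and $H\cap K$ after reduction mod the center). Row~1 of Table~\ref{tab7} is immediate from the Singer-cycle discussion in Section~\ref{sec4}: $\PGL_n(q)=\hat{~}\GL_1(q^n){:}n\cdot\Pa_k[\PGL_n(q)]$ for $k\in\{1,n-1\}$, which restricts to a factorization of $\PSL_n(q)$ after intersecting with the socle.

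The remaining rows of Table~\ref{tab7} that are not literally one of Propositions~\ref{ExampleUnitary}--\ref{ExampleOrthogonal2} are handled via the exceptional isomorphisms and graph automorphisms spelled out in Remark~(ii) following Theorem~\ref{SolvableFactor}. Concretely, row~2 ($\PSL_4(q)\cong\POm_6^+(q)$) and the $k=1,3$ alternatives, rows~4--5 ($\PSp_4(q)\cong\Omega_5(q)$), rows~7 and~9, and the $k\in\{m,m-1\}$ and $k\in\{1,3,4\}$ alternatives in rows~8--9, all follow by applying a triality or graph automorphism $\tau$ to a factorization already constructed: if $G=HK$ then $G^\tau=H^\tau K^\tau$, and one reads off from the action of $\tau$ on the maximal subgroups (as in the worked example in Remark~(ii)) that the image triple is again of the tabulated form. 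This is bookkeeping rather than new construction, and I would present it as a short lemma ``applying a graph automorphism to the constructions of the previous chapter yields all of rows~2--9'' with the one representative computation already in the paper cited as the template.

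For Table~\ref{tab1}, the strategy is different. Rows~1--5 (the $\PSL_2(q)$ cases with $K\cong\A_5$) follow from Theorem~\ref{p-dimensionLinear}: each is listed as a genuine factorization in Table~\ref{tab2} (rows~3, 4, 5, 7, 9 there), and the converse clause of that theorem already asserts these exist; I would simply quote it. Row~9 ($\PSL_5(2)$) is likewise the content of Theorem~\ref{p-dimensionLinear}(b)/Table~\ref{tab2} row~17, and rows~18--19 ($\PSU_3(3)$, $\PSU_3(5)$) come from Theorem~\ref{p-dimensionUnitary}/Table~\ref{tab3}. Rows~6 and~27 are constructed explicitly in Examples~\ref{ExampleLinear} and the one following it, using Lemma~\ref{p8}. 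For the genuinely small remaining entries — rows~7--8, 10--17, 20--26, 28 — the honest approach is direct verification in {\sc Magma}: in each case $L$ has bounded order, one forms the named subgroups $H$ and $K$ (or rather conjugacy-class representatives of them), and checks $|H\cap K|\,|G|=|H|\,|K|$ by Lemma~\ref{p3}(c); the {\sc Magma} code is the one referenced in Appendix~B. The main obstacle, such as it is, is not mathematical depth but completeness and care: one must make sure that every row, including every alternative value of $k$ and every listed shape of $K\cap L$ (e.g.\ the five choices in row~11, or the pair $\PSL_2(q^2)$, $\PSL_2(q^2){:}2$ in rows~12--15), is actually exhibited, and that the ambient group $G$ chosen has $\Soc(G)=L$ exactly — in particular that when the table lists $H\cap L$ and $K\cap L$ rather than $H$ and $K$, one produces at least one valid $G$ (not necessarily unique) realizing those intersections, which is guaranteed by the Lemma~\ref{Embedding}-type construction $G=HL\cap KL$ applied in reverse. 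I would therefore close the proof with the sentence that the {\sc Magma} computations, together with Propositions~\ref{ExampleUnitary}--\ref{ExampleOrthogonal2} and Theorems~\ref{p-dimensionLinear}--\ref{p-dimensionUnitary}, cover every entry of both tables.
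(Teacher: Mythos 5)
Your proposal follows essentially the same route as the paper: row~1 of Table~\ref{tab7} via the Singer-cycle factorization, rows~2--9 by quoting Propositions~\ref{ExampleUnitary}--\ref{ExampleOrthogonal2} and then passing through scalars, exceptional isomorphisms ($\POm_6^+\cong\PSL_4$, $\POm_5\cong\PSp_4$, $\POm_4^-\cong\PSp_2(q^2)$) and graph/triality automorphisms exactly as in the worked Remark~(ii), and Table~\ref{tab1} by explicit verification. The only cosmetic difference is that you farm some Table~\ref{tab1} rows out to the already-proved Theorems~\ref{p-dimensionLinear}, \ref{p-dimensionUnitary} and Examples~\ref{ExampleLinear} ff., where the paper simply invokes \magma\ for the whole table; this is a harmless restructuring and introduces no circularity since those results precede Lemma~\ref{Existence}.
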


\begin{proof}
To see the existence for row 1 of Table \ref{tab7}, let $G=\PGL_n(q)$, $L=\Soc(G)$, $H$ be a Singer cycle and $K$ be the stabilizer of a $1$-space in $G$. Then $H\cap L=\hat{~}\GL_1(q^n)=(q^n-1)/(n,q-1)$, $K\cap L=\Pa_1$, and $G=HK$. Moreover, take $\tau$ to be a graph automorphism of $L$ of order $2$ such that $K^\tau\cap L=\Pa_{n-1}$, and then we have $\Soc(G^\tau)=L$ and $G^\tau=H^\tau K^\tau$.

Take $m=3$ in Proposition \ref{ExampleOrthogonal2}. Then by the isomorphisms $\POm_6^+(q)\cong\PSL_4(q)$ and $\POm_5(q)\cong\PSp_4(q)$, we see that row 2 of Table \ref{tab7} arises.

Next let $G,H,K$ be defined as in Proposition \ref{ExampleSymplectic}. Then $G=\Soc(G)=\Sp_{2m}(q)$ with $q$ even, $H$ is solvable, $H=q^{m(m+1)/2}{:}(q^m-1)\leqslant\Pa_m$, $K=\GO_{2m}^-(q)$, and $G=HK$ by Proposition \ref{ExampleSymplectic}. Hence row 3 of Table \ref{tab7} arises. If $m=2$, then further take $\tau$ to be a graph automorphism of $G$ of order $2$, under which we have $G^\tau=H^\tau K^\tau$, $H^\tau\leqslant(\Pa_2)^\tau=\Pa_1$ and $K^\tau=\Sp_2(q^2).2$ by \cite[(14.1)]{aschbacher1984maximal}. This shows that the row 4 of Table \ref{tab7} arises.

Take $m=2$ in Proposition \ref{ExampleOrthogonal1}. Then by the isomorphisms $\POm_5(q)\cong\PSp_4(q)$ and $\POm_4^-(q)\cong\PSp_2(q^2)$, we see that row 5 of Table \ref{tab7} arises.

For row 6 of Table \ref{tab7}, let $G,H,K$ be defined as in Proposition \ref{ExampleUnitary}, $Z=\Cen(G)$, $\overline{G}=G/Z$, $L=\Soc(\overline{G})$, $\overline{H}=HZ/Z$ and $\overline{K}=KZ/Z$. Then $\overline{G}=\PGU_{2m}(q)$, $\overline{H}\cap L=q^{m^2}{:}(q^{2m}-1)/((q+1)(2m,q+1))<\Pa_m$, $\overline{K}\cap L=\N_1$, and $\overline{G}=\overline{H}{\,}\overline{K}$ holds since $G=HK$ by Proposition \ref{ExampleUnitary}.

For row 7 of Table \ref{tab7}, let $G,H,K$ be defined as in Proposition \ref{ExampleOrthogonal1} and $L=\Soc(G)$. Then $G=\SO_{2m+1}(q)$, $H\cap L=(q^{m(m-1)/2}.q^m){:}(q^m-1)/2<\Pa_m$, $K\cap L=\N_1^-$, and $G=HK$ by Proposition \ref{ExampleOrthogonal1}.

Now let $G,A,H,K$ be defined as in Proposition \ref{ExampleOrthogonal2}, $Z=\Cen(G)$, $\overline{G}=G/Z$, $L=\Soc(\overline{G})$, $\overline{H}=HZ/Z$, $\overline{A}=AZ/Z$ and $\overline{K}=KZ/Z$. Then $\overline{G}=\PSO_{2m}^+(q)$, $\overline{A}\cap L$ is one of $\Pa_m$ and $\Pa_{m-1}$, say $\overline{A}\cap L=\Pa_m$, $\overline{H}\cap L=q^{m(m-1)/2}{:}(q^m-1)/(4,q^m-1)$, $\overline{K}\cap L=\N_1$, and $\overline{G}=\overline{H}{\,}\overline{K}$ by Proposition \ref{ExampleOrthogonal2}. Moreover, take $\sigma$ to be an automorphism of $L$ such that $\overline{A}^\sigma\cap L=\Pa_{m-1}$. We have $\overline{G}=\overline{H}^\sigma\overline{K}^\sigma$. Hence row 8 of Table \ref{tab7} arises. If $m=4$, then further take $\tau$ to be a graph automorphism of $L$ of order $3$ such that $\overline{A}^\tau\cap L=\Pa_1$, under which we have $\Soc(\overline{G}^\tau)=L$ and $\overline{G}^\tau=\overline{H}^\tau\overline{K}^\tau$. Consequently, row 9 of Table \ref{tab7} arises.

Finally, computation in \magma \cite{bosma1997magma} shows that for each row in Table~\ref{tab1}, there exists factorization $G=HK$ with $\Soc(G)=L$ and $(H\cap L,K\cap L)$ as described. This completes the proof.
\end{proof}


\chapter[Cayley graphs of solvable groups]{$s$-Arc transitive Cayley graphs of solvable groups}


This chapter is devoted to proving Theorem~\ref{CayleyGraph}, and is organized as follows. Section~\ref{Cay-pre} collects preliminary results that are needed in the ensuing arguments. In particular, Lemma~\ref{reduction-qp} reduces the proof of Theorem~\ref{CayleyGraph} to the quasiprimitive case. Section~\ref{pty-Gorenstein} presents a result regarding the outer automorphism group of simple groups, which generalizes a well-known result of Gorenstein and plays an important role in the proof of Theorem~\ref{CayleyGraph}. Section~\ref{qp-case} further reduces the quasiprimitive case to the affine case and the almost simple case. Then the final section completes the proof Theorem~\ref{CayleyGraph} by citing the classification of the affine case in \cite{IP} and treating the almost simple case based on Theorem~\ref{SolvableFactor}.

\section{Preliminaries}\label{Cay-pre}

Throughout this section, let $\Ga=(V,E)$ be a connected $G$-arc-transitive graph and $\{\alpha,\beta\}$ be an edge of $\Ga$. Denote by $\Ga(\alpha)$ the set of neighbors of $\alpha$ in $\Ga$, and $G_\alpha^{\Ga(\alpha)}$ the permutation group on $\Ga(\alpha)$ induced by $G_\alpha$.

\subsection{Normal subgroups and normal quotients}

As mentioned in the Introduction chapter, for a normal subgroup $N$ of $G$ which is intransitive on $V$, we have a normal quotient graph $\Ga_N=(V_N,E_N)$ of $\Ga$, where $V_N$ the set of $N$-orbits on $V$ and $E_N$ the set of $N$-orbits on $E$. We shall see that the normal quotient $\Ga_N$ inherits certain properties of $\Ga$.

The first lemma is a well-known result, see for example \cite[Lemma~1.6]{Praeger1985}.

\begin{lemma}\label{normal-quotient}
Let $\Ga=(V,E)$ be a connected $G$-arc-transitive graph such that $G_\alpha^{\Ga(\alpha)}$ is primitive, $N$ be a normal subgroup of $G$, and $\overline{G}=G/N$. If $|V_N|\geqslant3$, then the following statements hold.
\begin{itemize}
\item[(a)] $N$ is semiregular on $V$ and $\overline{G}$ is faithful on $V_N$.
\item[(b)] For any $\alpha\in V$ and $B\in V_N$, $G_\alpha\cong\overline{G}_B$.
\end{itemize}
\end{lemma}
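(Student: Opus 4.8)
The plan is to exploit the normality of $N$ in $G$. Since $N\trianglelefteq G$, for any $\alpha\in V$ we have $N_\alpha=N\cap G_\alpha\trianglelefteq G_\alpha$, so the permutation group $N_\alpha^{\Ga(\alpha)}$ induced on the neighbourhood is a normal subgroup of the primitive group $G_\alpha^{\Ga(\alpha)}$, hence is either trivial or transitive. The whole argument reduces to ruling out the transitive alternative, and this is exactly the step where the hypothesis $|V_N|\geqslant3$ is used.

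For part~(a), I would first suppose $N_\alpha^{\Ga(\alpha)}$ is transitive and fix an edge $\{\alpha,\beta\}$. Writing $B_1=\alpha^N$ and $B_2=\beta^N$, transitivity gives $\Ga(\alpha)=\beta^{N_\alpha}\subseteq B_2$, and since the hypothesis is symmetric in the two ends of an edge, also $\Ga(\beta)\subseteq B_1$; propagating this along paths and using connectedness of $\Ga$ shows $V=B_1\cup B_2$, whence $|V_N|\leqslant2$, a contradiction. Hence $N_\alpha^{\Ga(\alpha)}=1$, so $N_\alpha$ fixes $\Ga(\alpha)$ pointwise; by $G$-arc-transitivity $G_\beta^{\Ga(\beta)}$ is primitive for every $\beta$, so the same dichotomy at each vertex (with the transitive case again excluded) shows $N_\beta$ fixes $\Ga(\beta)$ pointwise for all $\beta$, and therefore $N_\alpha=N_\beta$ whenever $\alpha\sim\beta$. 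Connectedness then gives $N_\alpha=N_\gamma$ for every $\gamma\in V$, so $N_\alpha$ fixes $V$ pointwise, and since $G\leqslant\Aut(\Ga)$ acts faithfully on $V$ this forces $N_\alpha=1$; thus $N$ is semiregular. For faithfulness of $\overline{G}$ on $V_N$, let $K$ with $N\leqslant K\trianglelefteq G$ be the kernel of the action of $\overline{G}$ on $V_N$ and repeat the argument with $K_\alpha\trianglelefteq G_\alpha$ in place of $N_\alpha$: if $K_\alpha^{\Ga(\alpha)}$ is transitive then, as $K$ fixes every $N$-orbit setwise, all neighbours of $\alpha$ lie in a single $N$-orbit, so $\alpha^N$ has at most one neighbour in $\Ga_N$; but $\Ga_N$ is connected and $\overline{G}$-vertex-transitive, hence regular, which forces $|V_N|\leqslant2$, a contradiction. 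So $K_\alpha^{\Ga(\alpha)}=1$, whence as before $K_\alpha=1$, and since $K$ stabilizes $\alpha^N$ setwise with $N$ transitive on it we get $K=NK_\alpha=N$, i.e. $\overline{G}$ is faithful on $V_N$.

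Part~(b) is then a short orbit--stabilizer computation: $B=\alpha^N$ is stabilized setwise by $G_B=NG_\alpha$ since $N$ is transitive on $B$, so using $N_\alpha=1$ from part~(a),
\[
\overline{G}_B=G_B/N=NG_\alpha/N\cong G_\alpha/(G_\alpha\cap N)=G_\alpha/N_\alpha=G_\alpha .
\]
The connectedness propagations and this last computation are routine; the one genuinely delicate point, and the crux of the proof, is the exclusion of the transitive alternative for $N_\alpha^{\Ga(\alpha)}$ and $K_\alpha^{\Ga(\alpha)}$, where one must use $|V_N|\geqslant3$ carefully together with the vertex-transitivity (hence regularity) of $\Ga_N$.
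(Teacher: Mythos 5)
Your proof is correct. The paper does not actually prove this lemma — it cites it as a well-known fact, referring to Praeger's~\cite[Lemma~1.6]{Praeger1985} — so there is no proof in the paper to compare against, but your argument is the standard one and is complete. You correctly identify the crux: because $G_\alpha^{\Ga(\alpha)}$ is primitive and $N_\alpha\trianglelefteq G_\alpha$, the induced group $N_\alpha^{\Ga(\alpha)}$ is trivial or transitive; the transitive case forces $V=\alpha^N\cup\beta^N$ by the propagation along edges, contradicting $|V_N|\geqslant3$; then $N_\alpha$ fixes $\Ga(\alpha)$ pointwise, and by $G$-vertex-transitivity the same holds at every vertex, so connectedness gives $N_\alpha=1$ and semiregularity. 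The faithfulness of $\overline{G}$ on $V_N$ and the Frattini-type identification $G_B=NG_\alpha$ in part~(b) are handled correctly. Two small presentational points, neither a gap: in the faithfulness step you should note that if the unique adjacent $N$-orbit $\beta^N$ happened to equal $\alpha^N$ then $\Ga_N$ would have valency $0$ and $|V_N|=1$, which is also excluded; and the appeal to "$G$-arc-transitivity" for transporting the primitivity hypothesis to other vertices only needs vertex-transitivity, though of course arc-transitivity implies it.
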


The next lemma shows that the $s$-arc-transitivity is inherited by normal quotients.

\begin{lemma}\label{normal-q}
\emph{(Praeger \cite{Praeger})} Assume that $\Ga$ is a connected non-bipartite $(G,s)$-arc-transitive graph with $s\geqslant2$. Then there exists a normal subgroup $N$ of $G$ such that $G/N$ is a quasiprimitive permutation group on $V_N$ of type almost simple (AS), affine (HA), twisted wreath product (TW) or product action (PA) and $\Ga_N$ is $(G/N,s)$-arc transitive.
\end{lemma}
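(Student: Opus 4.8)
The plan is to obtain $N$ as a maximal intransitive normal subgroup of $G$, pass to the normal quotient $\Ga_N$, and then read off the quasiprimitive type from the O'Nan--Scott theorem. First I would take $N$ to be a normal subgroup of $G$ that is maximal subject to being intransitive on $V$, and set $\overline{G}=G/N$ acting on the set $V_N$ of $N$-orbits. Maximality forces $\overline{G}$ to be faithful and quasiprimitive on $V_N$: any normal subgroup of $\overline{G}$ has the form $M/N$ with $N\leqslant M\trianglelefteq G$, and if $M\neq N$ then $M$ is transitive on $V$, so $M/N$ is transitive on $V_N$. It then remains only to rule out $|V_N|\leqslant2$. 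Clearly $|V_N|\neq1$. If $|V_N|=2$, with $N$-orbits $\Delta_1,\Delta_2$, then the transitive group $\overline{G}$ interchanges $\Delta_1$ and $\Delta_2$; if some edge of $\Ga$ lay inside $\Delta_1$, then its $G$-orbit would be all of $E$ and would consist of edges lying inside $\Delta_1$ or inside $\Delta_2$, making $\Ga$ disconnected; hence every edge joins $\Delta_1$ to $\Delta_2$ and $\Ga$ is bipartite, contrary to hypothesis. Thus $|V_N|\geqslant3$.

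Next I would check that $\Ga_N$ is a graph admitting an $(\overline{G},s)$-arc-transitive action. As $s\geqslant2$, the stabilizer $G_\alpha$ is transitive on the $2$-arcs based at $\alpha$, so $G_\alpha^{\Ga(\alpha)}$ is $2$-transitive and in particular primitive; hence Lemma~\ref{normal-quotient} applies, giving that $N$ is semiregular on $V$ and that $G_\alpha\cong\overline{G}_B$ for $\alpha\in B\in V_N$. Furthermore the partition of $\Ga(\alpha)$ into its intersections with $N$-orbits is $G_\alpha$-invariant, hence trivial by primitivity; it cannot be the partition with a single part, for that would force the connected graph $\Ga_N$ to have valency $1$ and thus $|V_N|=2$; so distinct neighbours of $\alpha$ lie in distinct $N$-orbits, and the quotient map restricts to a bijection $\Ga(\alpha)\to\Ga_N(B)$. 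Therefore $\Ga$ is a cover of $\Ga_N$, every $s$-arc of $\Ga$ projects onto an $s$-arc of $\Ga_N$, this projection is surjective, and transitivity of $G$ on the $s$-arcs of $\Ga$ descends to transitivity of $\overline{G}$ on the $s$-arcs of $\Ga_N$. In particular $\overline{G}_B\cong G_\alpha$ acts $2$-transitively on $\Ga_N(B)$, a set of size $|\Ga(\alpha)|\geqslant2$.

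It then remains to identify the quasiprimitive type of $\overline{G}$ on $V_N$. Applying the O'Nan--Scott theorem for finite quasiprimitive permutation groups \cite{Praeger}, the group $\overline{G}$ is of one of the eight types HA, HS (holomorph of a simple group), HC (holomorph compound), AS, SD (simple diagonal), CD (compound diagonal), TW or PA, and one must exclude HS, HC, SD and CD. In each of these the socle of $\overline{G}$ is a nonabelian direct power $T^k$ and a point stabilizer meets the socle in a direct product of full diagonal subgroups of the factors; comparing this with the restriction that $\overline{G}_B\cong G_\alpha$ admit a faithful $2$-transitive action of degree at least $2$ — which forces the socle of $\overline{G}_B^{\Ga_N(B)}$ to be elementary abelian or nonabelian simple — yields a contradiction in every case, exactly as in Praeger's analysis of $2$-arc-transitive graphs. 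Following that argument leaves $\overline{G}$ of type HA, AS, TW or PA; taking $G/N=\overline{G}$ then completes the proof.

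The only step requiring real group theory — and the one I would expect to be the main obstacle — is this last exclusion of the diagonal and holomorph types, which is the technical heart of \cite{Praeger}; everything preceding it is the routine machinery of semiregular normal subgroups and normal quotients already packaged in Lemma~\ref{normal-quotient}.
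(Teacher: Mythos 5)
The paper gives no proof of this lemma; it is attributed to and cited from Praeger~\cite{Praeger}, so there is no in-paper argument to compare against. Your proposal is a faithful reconstruction of Praeger's proof strategy: take $N$ maximal among intransitive normal subgroups, verify that quasiprimitivity and $(G,s)$-arc-transitivity descend to $\Ga_N$, rule out $|V_N|\leqslant2$ via non-bipartiteness, and then invoke the O'Nan--Scott theorem for quasiprimitive groups and exclude types HS, HC, SD and CD. The preliminaries are handled correctly: the kernel of the $G/N$-action on $V_N$ is a normal subgroup that is intransitive, hence trivial by the quasiprimitivity just established, so faithfulness is indeed a consequence; the covering property (neighbours of a vertex lie in distinct $N$-orbits) gives the bijection needed to project and lift $s$-arcs; and the bipartiteness argument for $|V_N|\neq2$ is right.

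The one place where your argument is not self-contained — and you say so yourself — is the exclusion of HS, HC, SD and CD. The sentence beginning ``comparing this with the restriction that $\overline{G}_B\cong G_\alpha$ admit a faithful $2$-transitive action\ldots'' gestures at the right idea (Burnside's theorem restricts the socle of the $2$-transitive local action, while in the excluded types the stabilizer of a block in $\Soc(\overline{G})$ is a nontrivial product of diagonal subgroups), but the actual derivation of a contradiction requires a case analysis on how the simple direct factors of $\Soc(\overline{G})$ act on $\Ga_N(B)$, together with a further use of non-bipartiteness; this is genuinely the technical heart of Praeger's paper and is not captured by the one-sentence gloss. Since the lemma is explicitly attributed to~\cite{Praeger} and used as a black box in the present paper, deferring that step to the citation is reasonable — but one should be clear that what you have written is a scaffold around the cited theorem, not an independent proof of it.
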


Although a normal quotient of a Cayley graph is not necessarily a Cayley graph, we have the following observation.

\begin{lemma}\label{Cay-q}
If $\Ga$ is a Cayley graph of a group $R$ and $N$ is a normal subgroup of $G$. Then $RN/N$ is transitive on $V_N$.
\end{lemma}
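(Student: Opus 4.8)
The plan is to use that a Cayley graph of $R$ carries the regular, hence transitive, action of $R$ on its vertex set, and that transitivity is inherited by the quotient associated to the partition of $V$ into $N$-orbits. Here I regard $R$ as a subgroup of $G$ acting regularly on $V$, which is legitimate by the characterisation of Cayley graphs recalled in Chapter~\ref{Intro}.

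First I would record the natural projection $\pi\colon V\to V_N$ that sends a vertex to the $N$-orbit containing it; it is surjective by the definition of the normal quotient. Next I would verify that $R$ permutes the blocks of this partition: for $v\in V$ and $r\in R$, since $N\trianglelefteq G$ we have $r^{-1}Nr=N$, so $(v^N)^r=v^{Nr}=v^{r(r^{-1}Nr)}=(v^r)^N$, which is again an $N$-orbit. Thus $R$ acts on $V_N$ and $\pi$ is $R$-equivariant; as $R$ is transitive on $V$ and $\pi$ is onto, $R$ is transitive on $V_N$.

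Finally, every element of $N$ fixes each $N$-orbit setwise, so $N$ is contained in the kernel of the action of $G$ on $V_N$, and therefore the action of $R$ on $V_N$ factors through $RN/N\cong R/(R\cap N)$. Hence transitivity of $R$ on $V_N$ is precisely transitivity of $RN/N$ on $V_N$, as claimed. There is no genuine obstacle here; the only point needing a line of care is the computation that $r\in R$ carries an $N$-orbit to an $N$-orbit, which rests on nothing beyond $N\trianglelefteq G$.
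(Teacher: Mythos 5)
Your proof is correct, and since the paper states Lemma~\ref{Cay-q} without proof (evidently regarding it as immediate), your argument supplies the intended justification. The only implicit hypothesis worth flagging explicitly, which you do address correctly by treating $R$ as a subgroup of $G$ acting regularly on $V$, is that the regular subgroup furnished by the Cayley-graph characterisation must be taken inside $G$ (not merely inside $\Aut(\Ga)$) for the expression $RN/N$ to be meaningful; this is what the surrounding context of the paper assumes. Given that, the chain of observations — $R$ permutes $N$-orbits because $N\trianglelefteq G$, the projection $V\to V_N$ is $R$-equivariant and onto, transitivity descends, and $N$ lies in the kernel so the action factors through $RN/N$ — is exactly right and complete.
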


This reduces the proof of Theorem~\ref{CayleyGraph} to the quasiprimitive case.

\begin{lemma}\label{reduction-qp}
\emph{Theorem~\ref{CayleyGraph}} holds if it holds for the case where $X=G$ is vertex-quasiprimitive.
\end{lemma}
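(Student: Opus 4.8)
The plan is to take $N$ to be a maximal intransitive normal subgroup of $X$, set $G=X/N$ and $\Ga=\Si_N$, and verify that $(\Ga,G,s)$ again satisfies the hypotheses of Theorem~\ref{CayleyGraph} with the additional feature that $G$ is vertex-quasiprimitive; the assumed quasiprimitive case then returns $s\in\{2,3\}$ together with a description of $G=X/N$ and $\Ga=\Si_N$ in one of parts (a)--(e), which is exactly the conclusion of Theorem~\ref{CayleyGraph} for $(\Si,X)$. If $X$ is itself vertex-quasiprimitive there is nothing to prove (take $N=1$), so assume otherwise and, $X$ being finite, choose $N$ of largest order among the nontrivial intransitive normal subgroups of $X$. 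Since $\Si$ is $(X,s)$-arc-transitive with $s\geqslant2$, the local group $X_\alpha^{\Si(\alpha)}$ is $2$-transitive, hence primitive (the valency being at least three), which is the hypothesis needed to invoke Lemma~\ref{normal-quotient} once $|V_N|\geqslant3$ is known.

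First I would dispose of the degenerate cases $|V_N|\leqslant2$: the value $1$ contradicts intransitivity of $N$, and if $|V_N|=2$ then the two $N$-orbits form an $X$-invariant partition of $V$, so arc-transitivity of $X$ forces either all edges inside the orbits (making $\Si$ disconnected) or all edges between them (making $\Si$ bipartite), both contrary to hypothesis. With $|V_N|\geqslant3$, Lemma~\ref{normal-quotient} yields that $N$ is semiregular on $V$, that $G$ is faithful on $V_N$, and that $X_\alpha\cong G_B$ for $\alpha\in V$ lying in $B\in V_N$. Quasiprimitivity of $G$ on $V_N$ is then immediate from the maximality of $N$: a normal subgroup of $G$ intransitive on $V_N$ has the form $M/N$ with $N\leqslant M\trianglelefteq X$, and since a subgroup transitive on $V$ is transitive on the $X$-quotient $V_N$, such an $M$ is intransitive on $V$, whence $M=N$ by maximality.

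It remains to transfer the remaining hypotheses of Theorem~\ref{CayleyGraph} to $(\Ga,G)$. Connectedness of $\Ga$ follows from that of $\Si$; non-bipartiteness follows because an odd closed walk of $\Si$ projects to an odd closed walk of $\Ga$, which a bipartite graph cannot possess. Semiregularity of $N$ together with local primitivity shows in the standard way (a consequence of Lemma~\ref{normal-quotient}) that $\Si$ is a cover of $\Ga$, so $\Ga$ has the same valency as $\Si$, namely at least three; moreover the covering projection intertwines the $X$-action on $\Si$ with the $G$-action on $\Ga$ and realises the isomorphisms $X_\alpha\cong G_B$, so $\Ga$ is $(G,s)$-arc-transitive, and lifting $(s+1)$-arcs through the cover shows it is not $(G,s+1)$-arc-transitive, i.e.\ $\Ga$ is $(G,s)$-transitive. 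Finally, if $R\leqslant X$ is the given solvable vertex-transitive subgroup, then $RN/N\cong R/(R\cap N)$ is a solvable subgroup of $G$ transitive on $V_N$, exactly the analogue of Lemma~\ref{Cay-q} in this setting.

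The genuinely delicate ingredients are the ones entering the last paragraph: that the normal quotient by a maximal intransitive normal subgroup remains non-bipartite, that $\Si$ is a bona fide cover of $\Ga$ with unchanged valency, and that this cover preserves the precise level of arc-transitivity. I expect these to be the main obstacle in the sense that they must be invoked carefully, but under the standing hypothesis of local $2$-transitivity they are precisely the content, or immediate consequences, of Lemmas~\ref{normal-quotient} and~\ref{normal-q} together with the elementary theory of graph covers; no new computation is required. Putting everything together, the assumed quasiprimitive case applied to $(\Ga,G)$ completes the reduction.
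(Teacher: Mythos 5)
Your argument is correct and matches the route the paper intends (the paper leaves this lemma's proof implicit, relying on the discussion around Lemmas~\ref{normal-quotient}, \ref{normal-q} and~\ref{Cay-q}): take a maximal intransitive normal subgroup $N$ of $X$, pass to the quotient graph $\Si_N$ under $X/N$, and verify that quasiprimitivity, connectedness, non-bipartiteness, valency, the exact level of arc-transitivity, and the solvable vertex-transitive subgroup all descend. The only point worth stressing is the one you flag as delicate, namely that $(G,s+1)$-arc-transitivity of $\Ga$ would lift through the cover (using semiregularity of $N$ and unique path-lifting) to $(X,s+1)$-arc-transitivity of $\Si$, so that $\Ga$ is genuinely $(G,s)$-transitive and not merely $(G,s)$-arc-transitive.
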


For vertex-transitive normal subgroups of $G$, we have the following consequences.

\begin{lemma}\label{l10}
Let $K$ be a vertex-transitive normal subgroup of $G$. Then the following statements hold.
\begin{itemize}
\item[(a)] $G_\alpha/K_\alpha\cong G/K$.
\item[(b)] If $K$ is arc-transitive, then $G_{\alpha\beta}/K_{\alpha\beta}\cong G/K$.
\item[(c)] If $G_\alpha^{\Ga(\alpha)}$ is primitive and $K_\alpha\neq1$, then $K$ is arc-transitive.
\end{itemize}
\end{lemma}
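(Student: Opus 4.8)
\textbf{Proof plan for Lemma~\ref{l10}.}
The three statements are standard facts about arc-transitive graphs, and I would prove them using the orbit–stabilizer theorem together with the fact that $K$ is normal in $G$. Since $K$ is vertex-transitive, $G=KG_\alpha$, which immediately gives part~(a): by the second isomorphism theorem, $G/K=KG_\alpha/K\cong G_\alpha/(G_\alpha\cap K)=G_\alpha/K_\alpha$. For part~(b), when $K$ is arc-transitive we have $G=KG_{\alpha\beta}$ by the same reasoning applied to the action on arcs (note $\Ga$ is $G$-arc-transitive, so $G$ is transitive on arcs, and $K$ being arc-transitive means $K$ is also transitive on arcs; hence $G=KG_{\alpha\beta}$), and then $G/K=KG_{\alpha\beta}/K\cong G_{\alpha\beta}/(G_{\alpha\beta}\cap K)=G_{\alpha\beta}/K_{\alpha\beta}$.

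Part~(c) is the one requiring a genuine argument. Here $K\trianglelefteq G$ is vertex-transitive and $K_\alpha\neq1$, and $G_\alpha^{\Ga(\alpha)}$ is primitive; I want to conclude $K$ is arc-transitive, i.e. $K_\alpha$ is transitive on $\Ga(\alpha)$. The point is that $K_\alpha\trianglelefteq G_\alpha$ (since $K\trianglelefteq G$ and $K_\alpha=K\cap G_\alpha$), so the image $K_\alpha^{\Ga(\alpha)}$ is a normal subgroup of the primitive group $G_\alpha^{\Ga(\alpha)}$. A nontrivial normal subgroup of a primitive permutation group is transitive, so it suffices to show $K_\alpha^{\Ga(\alpha)}\neq1$. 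Suppose otherwise, so $K_\alpha$ fixes $\Ga(\alpha)$ pointwise; then for each neighbor $\gamma$ of $\alpha$ we have $K_\alpha\leqslant K_\gamma$, and since $K$ is vertex-transitive all point stabilizers are conjugate, forcing $|K_\alpha|\leqslant|K_\gamma|=|K_\alpha|$, hence $K_\alpha=K_\gamma$ for every $\gamma\in\Ga(\alpha)$. Now connectivity of $\Ga$ propagates this: by induction along paths, $K_\alpha=K_\delta$ for every vertex $\delta$, so $K_\alpha$ lies in the kernel of $K$ on $V$. But $K$ is faithful on $V$ (it is a subgroup of $\Aut(\Ga)$ acting on $V$), so $K_\alpha=1$, contradicting the hypothesis $K_\alpha\neq1$. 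Therefore $K_\alpha^{\Ga(\alpha)}\neq1$, it is a nontrivial normal subgroup of the primitive group $G_\alpha^{\Ga(\alpha)}$, hence transitive, and so $K_\alpha$ is transitive on $\Ga(\alpha)$, i.e. $K$ is arc-transitive.

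The main (and only real) obstacle is the kernel argument in part~(c): one must correctly invoke connectivity to spread the equality $K_\alpha=K_\gamma$ from neighbors to all vertices, and then use faithfulness of the automorphism group action on $V$ to kill $K_\alpha$. Everything else is a direct application of $G=KG_\alpha$ (respectively $G=KG_{\alpha\beta}$) and an isomorphism theorem. I would present parts~(a) and~(b) in one or two lines each and devote the bulk of the write-up to the normal-subgroup-of-a-primitive-group step in part~(c).
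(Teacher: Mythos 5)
Your proposal is correct and takes essentially the same approach as the paper: parts (a) and (b) via $G=KG_\alpha$ (resp.\ $G=KG_{\alpha\beta}$) and the second isomorphism theorem, and part (c) by showing $K_\alpha^{\Ga(\alpha)}$ is a nontrivial normal subgroup of the primitive group $G_\alpha^{\Ga(\alpha)}$. The paper dispatches the nontriviality step in one sentence ("since $\Ga$ is connected, if $K_\alpha^{\Ga(\alpha)}=1$ then $K_\alpha=1$"); you merely unpack that sentence into the standard kernel-propagation argument along paths, using vertex-transitivity of $K$ to conjugate $K_\alpha^{\Ga(\alpha)}=1$ to each subsequent vertex, which is exactly what is implicit in the paper.
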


\begin{proof}
Since $K$ is vertex-transitive, we have $G=KG_\alpha$, and thus
$$
G_\alpha/K_\alpha=G_\alpha/G_\alpha\cap K\cong G_\alpha K/K=G/K,
$$
as part~(a) states. If $K$ is arc-transitive, then $G=KG_{\alpha\beta}$, and so
$$
G_{\alpha\beta}/K_{\alpha\beta}=G_{\alpha\beta}/G_{\alpha\beta}\cap K\cong G_{\alpha\beta}K/K=G/K,
$$
proving part~(b). Now suppose that $G_\alpha^{\Ga(\alpha)}$ is primitive and $K_\alpha\neq1$. Since $\Ga$ is connected, we see that if $K_\alpha^{\Ga(\alpha)}=1$ then $K_\alpha=1$. Thus by our assumption, $K_\alpha^{\Ga(\alpha)}\neq1$. It follows that $K_\alpha^{\Ga(\alpha)}$ is transitive since $K_\alpha^{\Ga(\alpha)}$ is normal in $G_\alpha^{\Ga(\alpha)}$ and $G_\alpha^{\Ga(\alpha)}$ is primitive. This implies that $K$ is arc-transitive.
\end{proof}

\subsection{Vertex and arc stabilizers}

Now suppose further that $\Ga$ is $(G,2)$-arc-transitive. Denote by $G_\alpha^{[1]}$ the kernel of the action induced by $G_\alpha$ on $\Ga(\alpha)$, and $G_{\alpha\beta}^{[1]}:=G_\alpha^{[1]}\cap G_\beta^{[1]}$. Noting that $G_\alpha^{[1]}\trianglelefteq G_{\alpha\beta}$ and the kernel of the action of $G_\alpha^{[1]}$ on $\Ga(\beta)$ is equal to $G_{\alpha\beta}^{[1]}$, we have the observation as follows.

\begin{lemma}\label{abeq}
$G_\alpha^{[1]}/G_{\alpha\beta}^{[1]}\cong(G_\alpha^{[1]})^{\Ga(\beta)}\trianglelefteq G_{\alpha\beta}^{\Ga(\beta)}\cong G_{\alpha\beta}^{\Ga(\alpha)}$.
\end{lemma}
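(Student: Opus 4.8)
The plan is to read off all three assertions from the first isomorphism theorem together with arc-transitivity, after recording the elementary containments. Since $\beta\in\Ga(\alpha)$, every element of $G_\alpha^{[1]}$ fixes $\beta$, so $G_\alpha^{[1]}\leqslant G_{\alpha\beta}$; likewise $G_\beta^{[1]}\leqslant G_{\alpha\beta}$. As $G_\alpha^{[1]}\trianglelefteq G_\alpha$ and $G_{\alpha\beta}\leqslant G_\alpha$, we obtain $G_\alpha^{[1]}\trianglelefteq G_{\alpha\beta}$, which is the fact quoted just before the lemma.

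For the first isomorphism, I would consider the homomorphism $\rho\colon G_{\alpha\beta}\to\mathrm{Sym}(\Ga(\beta))$ given by restricting the action to $\Ga(\beta)$, whose image is $G_{\alpha\beta}^{\Ga(\beta)}$ by definition. The restriction $\rho|_{G_\alpha^{[1]}}$ has image $(G_\alpha^{[1]})^{\Ga(\beta)}$ and, as recalled in the sentence preceding the lemma, kernel exactly $G_\alpha^{[1]}\cap G_\beta^{[1]}=G_{\alpha\beta}^{[1]}$; hence $G_\alpha^{[1]}/G_{\alpha\beta}^{[1]}\cong(G_\alpha^{[1]})^{\Ga(\beta)}$. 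For the normality claim, since $G_\alpha^{[1]}\trianglelefteq G_{\alpha\beta}$ and the image of a normal subgroup under a homomorphism is normal in the image, $\rho(G_\alpha^{[1]})=(G_\alpha^{[1]})^{\Ga(\beta)}$ is normal in $\rho(G_{\alpha\beta})=G_{\alpha\beta}^{\Ga(\beta)}$.

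For the final isomorphism $G_{\alpha\beta}^{\Ga(\beta)}\cong G_{\alpha\beta}^{\Ga(\alpha)}$, I would use that $\Ga$ is $G$-arc-transitive, so there is $g\in G$ with $\alpha^g=\beta$ and $\beta^g=\alpha$. Conjugation by $g$ sends $G_{\alpha\beta}$ to $G_{\beta\alpha}=G_{\alpha\beta}$, hence is an automorphism of $G_{\alpha\beta}$, and the bijection $\Ga(\alpha)\to\Ga(\beta)$, $\delta\mapsto\delta^g$, intertwines the action of $x\in G_{\alpha\beta}$ on $\Ga(\alpha)$ with the action of $g^{-1}xg$ on $\Ga(\beta)$, since $(\delta^g)^{g^{-1}xg}=(\delta^x)^g$. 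Consequently this conjugation induces an isomorphism of permutation groups $G_{\alpha\beta}^{\Ga(\alpha)}\cong G_{\alpha\beta}^{\Ga(\beta)}$, finishing the proof.

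All the ingredients are elementary, so I do not anticipate a genuine obstacle; the only point needing care is keeping track that $G_\alpha^{[1]}$ and $G_\beta^{[1]}$ both lie inside $G_{\alpha\beta}$, so that the relevant restriction maps are defined on the correct groups, and that the kernel computation $\ker(\rho|_{G_\alpha^{[1]}})=G_{\alpha\beta}^{[1]}$ is precisely the observation stated in the line before the lemma.
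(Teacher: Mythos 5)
Your proof is correct and follows exactly the route the paper intends: the paper states the two key observations ($G_\alpha^{[1]}\trianglelefteq G_{\alpha\beta}$ and $\ker$ of the restriction equals $G_{\alpha\beta}^{[1]}$) immediately before the lemma, and you have simply unpacked them via the first isomorphism theorem, supplying the last step ($G_{\alpha\beta}^{\Ga(\beta)}\cong G_{\alpha\beta}^{\Ga(\alpha)}$ via an arc-swapping element) that the paper leaves implicit.
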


The next theorem is a fundamental result in the study of $2$-arc-transitive graphs, see~\cite[\S4]{Weiss}.

\begin{theorem}\label{DoubleStar}
Let $\Ga$ be a connected $(G,2)$-arc-transitive graph, and $\{\alpha,\beta\}$ be an edge of $\Ga$. Then the following statements hold.
\begin{itemize}
\item[(a)] $G_{\alpha\beta}^{[1]}$ is a $p$-group for some prime $p$.
\item[(b)] If $G_{\alpha\beta}^{[1]}$ is a nontrivial $p$-group with prime $p$, then $G_\alpha^{\Ga(\alpha)}\trianglerighteq\PSL_d(q)$ and $|\Ga(\alpha)|=(q^d-1)/(q-1)$ for some $p$-power $q$ and integer $d\geqslant2$.
\end{itemize}
\end{theorem}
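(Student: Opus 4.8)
The result is a cornerstone of the theory of $2$-arc-transitive graphs due to Weiss \cite{Weiss}, and I would follow the standard route, whose three ingredients are local $2$-transitivity, the Thompson--Wielandt theorem, and the classification of finite $2$-transitive groups (a consequence of CFSG). First note that connectedness together with $(G,2)$-arc-transitivity forces $G_\alpha^{\Ga(\alpha)}$ to be a $2$-transitive permutation group of degree $|\Ga(\alpha)|$, and in particular primitive. Also $G_{\alpha\beta}^{[1]}=G_\alpha^{[1]}\cap G_\beta^{[1]}$ is normalized by $G_{\alpha\beta}$, and since $G$ is arc-transitive there is an element interchanging $\alpha$ and $\beta$, hence swapping $G_\alpha^{[1]}$ and $G_\beta^{[1]}$ while normalizing $G_{\alpha\beta}^{[1]}$. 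For part~(a) I would then invoke the Thompson--Wielandt theorem in its graph-theoretic form: if $\Ga$ is connected, $G$ is transitive on the arcs of $\Ga$, and $G_\alpha^{\Ga(\alpha)}$ is primitive, then $G_{\alpha\beta}^{[1]}$ is a $p$-group for some prime $p$. Local $2$-transitivity supplies the primitivity, so this applies verbatim and yields~(a). (If a self-contained treatment is wanted, one reproves Thompson--Wielandt by taking a minimal counterexample, observing that a minimal normal subgroup of $G_{\alpha\beta}^{[1]}$ is elementary abelian, and analysing its action on the balls around the edge $\{\alpha,\beta\}$ against the primitive groups $G_\gamma^{\Ga(\gamma)}$ along a geodesic.)

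For part~(b), assume $N:=G_{\alpha\beta}^{[1]}$ is a nontrivial $p$-group. The first step is a connectivity argument showing $G_\alpha^{[1]}$ acts nontrivially on $\Ga(\beta)$. Indeed, if $(G_\alpha^{[1]})^{\Ga(\beta)}=1$, then $G_\alpha^{[1]}$ fixes $\beta$ and acts trivially on $\Ga(\beta)$, so $G_\alpha^{[1]}\leqslant G_\beta^{[1]}$; by arc-transitivity the same conclusion holds for every arc, so also $G_\beta^{[1]}\leqslant G_\alpha^{[1]}$ and hence $G_x^{[1]}=G_y^{[1]}$ for every edge $\{x,y\}$. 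By connectedness this common subgroup fixes every vertex and is therefore trivial, contradicting $1\neq N\leqslant G_\alpha^{[1]}$. Thus $(G_\alpha^{[1]})^{\Ga(\beta)}\neq1$, and by Lemma~\ref{abeq} it is a normal subgroup of the point stabilizer $G_{\alpha\beta}^{\Ga(\beta)}$ inside the $2$-transitive group $G_\beta^{\Ga(\beta)}$; moreover the Thompson--Wielandt analysis also bounds $G_\alpha^{[1]}$ so that this normal subgroup is a $p$-group. The final step is to run through the list of finite $2$-transitive groups and isolate those, say a group $T$ of degree $n=|\Ga(\alpha)|$, whose point stabilizer carries such a normal $p$-subgroup compatibly with the mutual (``amalgam'') structure at the two ends of the edge --- that is, with $(G_\beta^{[1]})^{\Ga(\alpha)}\trianglelefteq G_{\alpha\beta}^{\Ga(\alpha)}$ being the image of the first subgroup under an edge-reversing element. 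The outcome is $\Soc(T)=\PSL_d(q)$ acting on the $n=(q^d-1)/(q-1)$ points of $\PG_{d-1}(q)$ with $q$ a power of $p$ and $d\geqslant2$, which is exactly~(b).

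The main obstacle is this last step. Simply knowing that a point stabilizer of a $2$-transitive group has a nontrivial normal $p$-subgroup is not enough to force $\PSL_d(q)$: for instance the affine group $3^2{:}\Q_8$ is $2$-transitive of degree $9$ with point stabilizer the $2$-group $\Q_8$. So the argument must exploit the rigidity of the graph amalgam --- the simultaneous normal subgroups $(G_\alpha^{[1]})^{\Ga(\beta)}$ and $(G_\beta^{[1]})^{\Ga(\alpha)}$ interchanged by an edge-reversing element, and the constraints on how they are generated as one moves along $\Ga$ --- to eliminate the affine cases and the sporadic $2$-transitive families and pin down the defining characteristic. Carrying this out in full is precisely the content of Weiss's work \cite{Weiss}, so in practice I would cite it; a self-contained proof would require redeveloping the ``pushing up'' machinery for graph amalgams, which is lengthy.
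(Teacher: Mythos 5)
Your proposal is correct and takes essentially the same approach as the paper: the paper offers no proof of Theorem~\ref{DoubleStar} but simply refers to \cite[\S4]{Weiss}, which is exactly what you propose. Your sketch of the underlying argument (local $2$-transitivity via connectedness, Thompson--Wielandt for part~(a), the amalgam method combined with the CFSG classification of finite $2$-transitive groups for part~(b)) is an accurate summary of Weiss's route, and you correctly identify the genuine difficulty: merely having a nontrivial normal $p$-subgroup in a point stabilizer does not single out $\PSL_d(q)$, and the elimination of the affine and sporadic families requires the full amalgam/pushing-up analysis rather than a casual inspection of the $2$-transitive list.
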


Utilizing Theorem~\ref{DoubleStar} and results of~\cite{Li-Seress-Song}, we establish the following lemma.

\begin{lemma}\label{normal-stab}
Let $\Ga$ be $(G,2)$-arc-transitive, and $K$ be a vertex-transitive normal subgroup of $G$ such that $K_\alpha\neq1$ is imprimitive on $\Ga(\alpha)$. Then $G_\alpha^{\Ga(\alpha)}$ is an affine $2$-transitive permutation group of degree $p^d$, where $p$ is prime and $d\geqslant2$, and the following hold.
\begin{itemize}
\item[(a)] $G_{\alpha\beta}^{[1]}=1$.
\item[(b)] $K_{\alpha\beta}\cong\Z_k\times\Z_m$ with $k\di m$ and $\Z_m=K_{\alpha\beta}^{\Ga(\alpha)}\leqslant\GL_1(p^f)$ for some proper divisor $f$ of $d$.
\item[(c)] $K_\alpha\cong\Z_k\times(\Z_p^d{:}\Z_m)$.
\item[(d)] $G_\alpha^{[1]}\leqslant\Z_m$. In particular, $|G_\alpha^{[1]}|$ divides $p^f-1$.
\end{itemize}
\end{lemma}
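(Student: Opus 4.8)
The plan is to reduce everything to the structure of the local action $G_\alpha^{\Ga(\alpha)}$ and then read off the stabilizers. Since $\Ga$ is $(G,2)$-arc-transitive, $G_\alpha^{\Ga(\alpha)}$ is $2$-transitive, hence primitive; as $K$ is vertex-transitive with $K_\alpha\neq1$, Lemma~\ref{l10}(c) shows $K$ is arc-transitive, so $K_\alpha^{\Ga(\alpha)}$ is transitive, and it is imprimitive by hypothesis. A transitive group of prime degree is primitive, so the valency $n:=|\Ga(\alpha)|$ is not prime, and $K_\alpha^{\Ga(\alpha)}$ is a transitive imprimitive normal subgroup of the $2$-transitive group $G_\alpha^{\Ga(\alpha)}$. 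The classification of $2$-transitive groups together with the results of~\cite{Li-Seress-Song} then forces $G_\alpha^{\Ga(\alpha)}$ to be affine, say of degree $n=p^d$ with socle $\Z_p^d$, and since $n$ is not prime, $d\geqslant2$. As $\Z_p^d$ is the unique minimal normal subgroup of an affine $2$-transitive group, $K_\alpha^{\Ga(\alpha)}\geqslant\Z_p^d$, so $K_\alpha^{\Ga(\alpha)}=\Z_p^d{:}K_0$ with $K_0=K_{\alpha\beta}^{\Ga(\alpha)}$ normal in $G_0:=(G_\alpha^{\Ga(\alpha)})_\beta\leqslant\GL_d(p)$; moreover $G_0$ is transitive on the nonzero vectors of $\Z_p^d$ (Hering's theorem), while $K_0$ is not (else $K_\alpha^{\Ga(\alpha)}$ would be $2$-transitive, contradicting imprimitivity).

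Next I would prove part~(a). By Theorem~\ref{DoubleStar}(a), $G_{\alpha\beta}^{[1]}$ is an $r$-group for some prime $r$; suppose it is nontrivial. Theorem~\ref{DoubleStar}(b) then gives $\PSL_e(q_0)\trianglelefteq G_\alpha^{\Ga(\alpha)}$ in its natural $2$-transitive action with $n=(q_0^e-1)/(q_0-1)$, $e\geqslant2$, and $q_0$ an $r$-power. For $(e,q_0)\notin\{(2,2),(2,3)\}$ the group $\PSL_e(q_0)$ is simple, so any group between it and $\PGaL_e(q_0)$ is almost simple with that socle; in particular every nontrivial normal subgroup of $G_\alpha^{\Ga(\alpha)}$ then contains this $2$-transitive socle and so is primitive, contradicting the imprimitivity of $K_\alpha^{\Ga(\alpha)}$. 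Hence $(e,q_0)\in\{(2,2),(2,3)\}$, giving $n\in\{3,4\}$; since $d\geqslant2$ we must have $n=4$ and $G_\alpha^{\Ga(\alpha)}\in\{\A_4,\Sy_4\}$ with $K_\alpha^{\Ga(\alpha)}=\Z_2^2$ and $r=3$. This residual small-valency configuration is eliminated using \cite{Li-Seress-Song} (equivalently, the known bounds on vertex stabilizers of valency-four $2$-arc-transitive graphs, which force $|G_{\alpha\beta}^{[1]}|\leqslant3$ and, via Lemma~\ref{abeq} and connectedness, then $G_{\alpha\beta}^{[1]}=1$). Therefore $G_{\alpha\beta}^{[1]}=1$.

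For parts~(b)--(d), part~(a) gives $K_{\alpha\beta}^{[1]}=K\cap G_{\alpha\beta}^{[1]}=1$. Applying Lemma~\ref{abeq} to $G$ yields $G_\alpha^{[1]}\cong(G_\alpha^{[1]})^{\Ga(\beta)}\trianglelefteq G_{\alpha\beta}^{\Ga(\beta)}\cong G_0$, and applying it to the arc-transitive group $K$ (so $K_{\alpha\beta}^{\Ga(\beta)}\cong K_{\alpha\beta}^{\Ga(\alpha)}=K_0$) yields $K_\alpha^{[1]}\cong(K_\alpha^{[1]})^{\Ga(\beta)}\trianglelefteq K_0$. The heart of the argument is to analyse the normal subgroup $K_0$ of the transitive linear group $G_0$: since $K_0$ is \emph{not} transitive on the nonzero vectors of $\Z_p^d$, running through Hering's classification of $G_0$ and invoking the graph-theoretic restrictions of~\cite{Li-Seress-Song} shows $K_0$ is cyclic and contained in $\GL_1(p^f)$ for a proper divisor $f$ of $d$, and similarly that the image of $G_\alpha^{[1]}$ in $G_0$ lies inside $K_0$. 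Writing $K_0=\Z_m$ with $m\di p^f-1$ gives $G_\alpha^{[1]}\leqslant\Z_m$ (part~(d)) and, from $K_\alpha^{[1]}\hookrightarrow K_0$, $K_\alpha^{[1]}\cong\Z_k$ with $k\di m$. Finally, $K_\alpha^{[1]}$ and $K_\beta^{[1]}$ are normal in $K_{\alpha\beta}$ with $K_\alpha^{[1]}\cap K_\beta^{[1]}=K_{\alpha\beta}^{[1]}=1$, so they commute; since $K_{\alpha\beta}/K_\alpha^{[1]}\cong K_0$ is abelian, $K_{\alpha\beta}$ is abelian, and combining this with the split extension $K_\alpha^{\Ga(\alpha)}=\Z_p^d{:}\Z_m$ a routine extension argument gives $K_{\alpha\beta}\cong\Z_k\times\Z_m$ (part~(b)) and $K_\alpha\cong\Z_k\times(\Z_p^d{:}\Z_m)$ (part~(c)).

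The main obstacle is the classification-dependent step in the third paragraph: abstractly a transitive linear group $G_0$ can possess non-cyclic normal subgroups intransitive on the nonzero vectors (for instance inside $\GaL_1(p^d)$), so the reduction of both $K_0$ and the image of $G_\alpha^{[1]}$ to a cyclic subgroup of the multiplicative group of a proper subfield genuinely requires the graph structure, and this is precisely where the deeper results of~\cite{Li-Seress-Song} are indispensable; a secondary technical point is the disposal of the residual valency-four case in part~(a).
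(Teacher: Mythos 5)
Your argument for parts (a)--(c) follows essentially the same path as the paper: reduction to an affine local action via~\cite{Li-Seress-Song}, Theorem~\ref{DoubleStar} for (a), Lemma~\ref{abeq} to embed $K_\alpha^{[1]}$ into $K_{\alpha\beta}^{\Ga(\alpha)}=\Z_m$, and the commutator argument to show $K_{\alpha\beta}$ is abelian. You honestly flag your dependence on Hering's classification and on~\cite{Li-Seress-Song} at exactly the point where the paper simply cites \cite[Corollary~1.2 and Lemma~3.3]{Li-Seress-Song}, so nothing is lost there; your extra small-valency analysis in (a) is more detail than the paper gives, and the residual case $p^d=4$, $G_\alpha^{\Ga(\alpha)}\in\{\A_4,\Sy_4\}$ does indeed deserve a cleaner disposal than your sketch, but this is a minor point that the paper itself leaves implicit.

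The genuine gap is in part (d). Applying Lemma~\ref{abeq} to $G$ itself only gives $G_\alpha^{[1]}\cong(G_\alpha^{[1]})^{\Ga(\beta)}\trianglelefteq G_{\alpha\beta}^{\Ga(\beta)}$, and $G_{\alpha\beta}^{\Ga(\beta)}$ is the full transitive linear group $G_0\leqslant\GL_d(p)$, not the small cyclic group $K_0=\Z_m\leqslant\GL_1(p^f)$. You assert that ``the image of $G_\alpha^{[1]}$ in $G_0$ lies inside $K_0$'' and sweep it under the same classification umbrella, but the cited results constrain $K_{\alpha\beta}^{\Ga(\alpha)}$, not $G_\alpha^{[1]}$, and there is no structural reason for an arbitrary normal subgroup of $G_0$ to sit inside its cyclic normal subgroup $K_0$. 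The paper's proof of (d) uses a device you are missing: set $X:=KG_\alpha^{[1]}$ and observe that $X_\alpha^{[1]}=G_\alpha^{[1]}$, that $X_{\alpha\beta}^{[1]}\leqslant G_{\alpha\beta}^{[1]}=1$, and that $X_{\alpha\beta}^{\Ga(\alpha)}=K_{\alpha\beta}^{\Ga(\alpha)}$ because the extra $G_\alpha^{[1]}$-factor dies on $\Ga(\alpha)$; by arc-transitivity of $X$ (inherited from $K$) one then has $X_{\alpha\beta}^{\Ga(\beta)}=K_{\alpha\beta}^{\Ga(\beta)}\cong\Z_m$. Applying Lemma~\ref{abeq} to $X$ rather than to $G$ now yields $G_\alpha^{[1]}=X_\alpha^{[1]}\cong(X_\alpha^{[1]})^{\Ga(\beta)}\trianglelefteq X_{\alpha\beta}^{\Ga(\beta)}\cong\Z_m$, which is exactly (d). This passage from $G$ to the intermediate group $X$ is the key idea; without it your argument for (d) does not close.
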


\begin{proof}
By \cite[Corollary~1.2 and Lemma~3.3]{Li-Seress-Song}, $G_\alpha^{\Ga(\alpha)}$ is an affine $2$-transitive permutation group of degree $p^d$ with $p$ prime, and $K_{\alpha\beta}^{\Ga(\alpha)}\leqslant\GL_1(p^f)$, where $f$ is a proper divisor of $d$.
Consequently, $K_{\alpha\beta}^{\Ga(\alpha)}=\Z_m$ for some $m\di(p^f-1)$. Applying Theorem~\ref{DoubleStar} we have $G_{\alpha\beta}^{[1]}=1$, as part~(a) asserts. This together with Lemma~\ref{abeq} implies that $K_\alpha^{[1]}=K_\alpha^{[1]}/K_{\alpha\beta}^{[1]}$ is isomorphic to a subgroup of $K_{\alpha\beta}^{\Ga(\alpha)}$, so that $K_\alpha^{[1]}=\Z_k$ for some $k\di m$.

Now that $K_{\alpha\beta}/K_\alpha^{[1]}\cong K_{\alpha\beta}^{\Ga(\alpha)}$ is cyclic, $K_\alpha^{[1]}$ contains the commutator subgroup $K_{\alpha\beta}'$ of $K_{\alpha\beta}$. For the same reason, $K_\beta^{[1]}\geqslant K_{\alpha\beta}'$. Therefore, $K_{\alpha\beta}'\leqslant K_\alpha^{[1]}\cap K_\beta^{[1]}=K_{\alpha\beta}^{[1]}=1$, and hence $K_{\alpha\beta}$ is abelian. Since $K_\alpha^{[1]}=\Z_k$ and $K_{\alpha\beta}/K_\alpha^{[1]}=\Z_m$ are both cyclic, we conclude that $K_{\alpha\beta}=K_\alpha^{[1]}\times\Z_m$, proving part~(b). It follows that $K_\alpha=\bfO_p(K_\alpha){:}K_{\alpha\beta}=K_\alpha^{[1]}\times(\bfO_p(K_\alpha){:}\Z_m)$, as in part~(c).

Finally, let $X=KG_\alpha^{[1]}$. Then $X_\alpha^{[1]}=G_\alpha^{[1]}$, $X_\alpha^{\Ga(\alpha)}=K_\alpha^{\Ga(\alpha)}$, and $X_{\alpha\beta}^{\Ga(\alpha)}=K_{\alpha\beta}^{\Ga(\alpha)}$.
Viewing $X_{\alpha\beta}^{[1]}\leqslant G_{\alpha\beta}^{[1]}=1$, we deduce that
$G_\alpha^{[1]}=X_\alpha^{[1]}\cong(X_\alpha^{[1]})^{\Ga(\beta)}\lhd X_{\alpha\beta}^{\Ga(\beta)}=K_{\alpha\beta}^{\Ga(\beta)}$,
which leads to part~(d).
\end{proof}

\subsection{Coset graph construction}

As $\Ga$ is $G$-arc-transitive, there exists $g\in G$ interchanging $\alpha$ and $\beta$. Consequently,
$$
G_{\alpha\beta}^g=(G_\alpha\cap G_\beta)^g=G_\alpha^g\cap G_\beta^g=G_\beta\cap G_\alpha=G_{\alpha\beta},
$$
namely, $g$ normalizes the arc stabilizer $G_{\alpha\beta}$. Because $G_\alpha$ is transitive on $\Ga(\alpha)$, so the valency of $\Ga$ equals
$$
|\Ga(\alpha)|=|G_\alpha|/|G_{\alpha\beta}|=|G_\alpha|/|G_\alpha\cap G_\beta|=|G_\alpha|/|G_\alpha\cap G_\alpha^g|.
$$
Moreover, $\langle G_\alpha,g\rangle=G$ since $\Ga$ is connected. Thus we have the following lemma.

\begin{lemma}\label{CosetGraph}
There exists $g\in\Nor_G(G_{\alpha\beta})$ such that
\begin{itemize}
\item[(a)] $g^2\in G_{\alpha\beta}$ and $\langle G_\alpha,\Nor_G(G_{\alpha\beta})\rangle=\langle G_\alpha,g\rangle=G$;
\item[(b)] the valency of $\Ga$ equals $|G_\alpha|/|G_\alpha\cap G_\alpha^g|$;
\item[(c)] $\Ga$ is $(G,2)$-arc-transitive if and only if $G_\alpha$ is $2$-transitive on $[G_\alpha{:}G_\alpha\cap G_\alpha^g]$.
\end{itemize}
\end{lemma}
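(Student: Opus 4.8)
The element $g$ constructed in the paragraph immediately preceding the statement already carries out almost all of the work, so my plan is simply to record why it meets each of the three requirements. Since $g$ was chosen to interchange $\alpha$ and $\beta$, we have $\alpha^{g^2}=\beta^g=\alpha$ and $\beta^{g^2}=\alpha^g=\beta$, so $g^2$ fixes both $\alpha$ and $\beta$ and hence lies in $G_\alpha\cap G_\beta=G_{\alpha\beta}$. It was also shown there that $g$ normalises $G_{\alpha\beta}$ and that $\langle G_\alpha,g\rangle=G$ by connectedness of $\Ga$; since $g\in\Nor_G(G_{\alpha\beta})$ this yields the chain $G=\langle G_\alpha,g\rangle\leqslant\langle G_\alpha,\Nor_G(G_{\alpha\beta})\rangle\leqslant G$, forcing equality throughout. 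This gives part~(a).

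For part~(b) I would note that the valency of $\Ga$ is $|\Ga(\alpha)|$, and since $\Ga$ is $G$-arc-transitive the stabiliser $G_\alpha$ is transitive on $\Ga(\alpha)$, the stabiliser in $G_\alpha$ of the point $\beta$ being $G_\alpha\cap G_\beta$. Because $\beta=\alpha^g$ we have $G_\beta=G_\alpha^g$, whence $|\Ga(\alpha)|=|G_\alpha|/|G_\alpha\cap G_\alpha^g|$; this is precisely the displayed computation preceding the statement, now phrased in terms of $g$.

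For part~(c) the key point is to identify the $G_\alpha$-set $\Ga(\alpha)$ with the coset space $[G_\alpha{:}G_\alpha\cap G_\alpha^g]$: transitivity of $G_\alpha$ on $\Ga(\alpha)$ together with the stabiliser computation of part~(b) furnishes a $G_\alpha$-equivariant bijection between them. It then remains to check that, for a connected $G$-arc-transitive graph, the graph is $(G,2)$-arc-transitive if and only if $G_\alpha$ acts $2$-transitively on $\Ga(\alpha)$. Here one uses that arc-transitivity on a connected graph with an edge implies vertex-transitivity, so every $2$-arc can be moved to one with a prescribed middle vertex $\alpha$; a $2$-arc with middle vertex $\alpha$ is the same datum as an ordered pair of distinct neighbours of $\alpha$, so $G$ is transitive on $2$-arcs exactly when $G_\alpha$ is transitive on ordered pairs of distinct elements of $\Ga(\alpha)$, i.e. $2$-transitive on $\Ga(\alpha)$. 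Combining this equivalence with the equivariant bijection delivers part~(c).

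There is no genuinely hard step. The only place requiring care is the translation in part~(c) between $2$-arc-transitivity of $\Ga$ and $2$-transitivity of the local action $G_\alpha^{\Ga(\alpha)}$, where one must be attentive to the reduction to a fixed middle vertex and to the fact that the relevant vertex stabilisers are all conjugate in $G$, so their permutation actions on the respective neighbourhoods are equivalent; once that is in place, assembling (a), (b), (c) from the already-established facts about $g$ is immediate.
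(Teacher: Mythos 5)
Your proposal is correct and takes essentially the same approach as the paper: it uses the element $g$ exhibited in the paragraph preceding the lemma (any $g$ sending the arc $(\alpha,\beta)$ to $(\beta,\alpha)$), and merely records the verifications — that $g^2$ fixes both endpoints so lies in $G_{\alpha\beta}$, that $g\in\Nor_G(G_{\alpha\beta})$ forces $\langle G_\alpha,g\rangle\leqslant\langle G_\alpha,\Nor_G(G_{\alpha\beta})\rangle=G$, the orbit-stabilizer computation for the valency, and the standard identification of $2$-arc-transitivity with $2$-transitivity of the local action via the $G_\alpha$-equivariant bijection $\Ga(\alpha)\cong[G_\alpha{:}G_\alpha\cap G_\alpha^g]$. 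These are exactly the ingredients the paper leaves implicit in its ``thus we have the following lemma.''
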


Conversely, suppose that $K$ is a core-free subgroup of $G$ and $g$ is an element in $G\setminus K$ with $g^2\in K$. Then $G$ acts faithfully on $[G{:}K]$ by right multiplication, so that $G$ can be regarded as a transitive permutation group on $[G{:}K]$. Denote the points $K$ and $Kg$ in $[G{:}K]$ by $\alpha$ and $\beta$, respectively. Then $G_\alpha=K$, $G_\beta=K^g$, and $(\alpha,\beta)^g=(\beta,\alpha)$. The graph with vertex set $[G{:}K]$ and edge set $\{\alpha,\beta\}^G$ is $G$-arc-transitive, where two vertices $Kx,Ky$ are adjacent if and only if $yx^{-1}\in KgK$. Such a graph is called a \emph{coset graph}, denoted by $\Cos(G,K,KgK)$. It is steadily seen that $\Cos(G,K,KgK)$ is connected if and only if $\langle K,g\rangle=G$.

\begin{remark}
Replacing $g$ by some power of $g$, we may assume that $g$ is a $2$-element. This accelerates the search for $(G,2)$-arc-transitive graphs for given $G$ and $G_\alpha$, see the \magma codes in Appendix\,B.
\end{remark}

We introduce the Hoffman-Singlton graph and the Higman-Sims graph in terms of coset graphs.

\begin{example}\label{Hoffman-S}
Let $G=\PSiU_3(5)$, $K$ be a subgroup of $G$ isomorphic to $\Sy_7$ and $M$ a subgroup of $K$ isomorphic to $\Sy_6$. The normalizer $\Nor_G(M)$ of $M$ in $G$ is an extension of $M$ by $\Z_2$. Take $g$ to be any element of $\Nor_G(M)\setminus M$. Then $g^2\in M$ and $\langle K,g\rangle=G$, whence $\Cos(G,K,KgK)$ is connected. The graph $\Cos(G,K,KgK)$ is the well-known \emph{Hoffman-Singlton graph} \cite{Hoffman1960}, which is non-bipartite and $3$-transitive. Moreover, the full automorphism group of $\Cos(G,K,KgK)$ is isomorphic to $G$ and has a subgroup $5_+^{1+2}{:}8{:}2$ transitive on vertices.
\end{example}

\begin{example}\label{Higman-S}
Let $G=\HS$, $K$ be a subgroup of $G$ isomorphic to $\M_{22}$ and $M$ a subgroup of $K$ isomorphic to $\PSL_3(4)$. The normalizer $\Nor_G(M)$ of $M$ in $G$ is an extension of $M$ by $\Z_2$. Take $g$ to be any element of $\Nor_G(M)\setminus M$. Then $g^2\in M$ and $\langle K,g\rangle=G$. Thus $\Cos(G,K,KgK)$ is a connected $(G,2)$-arc-transitive graph. This is the \emph{Higman-Sims graph}, see \cite{atlas}. The full automorphism group of $\Cos(G,K,KgK)$ is isomorphic to $G.2$ and has a subgroup $5_+^{1+2}{:}8{:}2$ transitive on vertices. Note that the valency of $\Cos(G,K,KgK)$ is $|K|/|M|=22$. Hence $\Cos(G,K,KgK)$ is not $3$-arc-transitive since $|M|$ is not divisible by $3^2\cdot7^2=(22-1)^2$. Moreover, $\Cos(G,K,KgK)$ is non-bipartite since $G$ does not have a subgroup of index two.
\end{example}

\subsection{Other facts}

The following lemma is a well-known result in elementary number theory, which is a consequence of so-called Legendre's formula.

\begin{lemma}\label{divisors-n!}
For any positive integer $n$ and prime $p$, the $p$-part $(n!)_p<p^{n/(p-1)}$.
\end{lemma}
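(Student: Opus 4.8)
The plan is to invoke Legendre's formula for the $p$-adic valuation of a factorial and then compare the resulting sum of floor functions with a geometric series.

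First I would recall that, writing $v_p(m)$ for the exponent of $p$ in the prime factorization of a positive integer $m$, Legendre's formula gives
\[
v_p(n!)=\sum_{i=1}^{\infty}\left\lfloor\frac{n}{p^i}\right\rfloor,
\]
the sum being finite since $\lfloor n/p^i\rfloor=0$ once $p^i>n$. As $(n!)_p=p^{v_p(n!)}$, it suffices to prove the bound $v_p(n!)<n/(p-1)$.

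Next I would estimate the sum term by term via $\lfloor x\rfloor\leqslant x$ and note that this becomes strict for at least one index:
\[
\sum_{i=1}^{\infty}\left\lfloor\frac{n}{p^i}\right\rfloor<\sum_{i=1}^{\infty}\frac{n}{p^i}=n\cdot\frac{1/p}{1-1/p}=\frac{n}{p-1}.
\]
The inequality between the two series is strict because, picking any $i_0$ with $p^{i_0}>n$, the $i_0$-th summand on the left is $0$ whereas the corresponding summand on the right is $n/p^{i_0}>0$, while every other pair of summands obeys the non-strict inequality $\lfloor n/p^i\rfloor\leqslant n/p^i$. This gives $v_p(n!)<n/(p-1)$, hence $(n!)_p=p^{v_p(n!)}<p^{n/(p-1)}$, as claimed.

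There is essentially no obstacle here; the only point deserving a word of care is the strictness, which is settled by observing that the geometric series has infinitely many positive terms whereas the floor sum is finite. As an alternative one could instead use the identity $v_p(n!)=(n-s_p(n))/(p-1)$, where $s_p(n)\geqslant1$ denotes the sum of the base-$p$ digits of $n$, to obtain $v_p(n!)\leqslant(n-1)/(p-1)<n/(p-1)$ directly.
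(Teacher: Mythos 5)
Your proof is correct and follows exactly the route the paper indicates, namely Legendre's formula together with the geometric series bound, with the strictness properly justified by noting that the floor sum terminates while the geometric series does not. The alternative you mention via $v_p(n!)=(n-s_p(n))/(p-1)$ is equally valid and equally standard.
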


We also need a theorem of Dixon on the maximal order of solvable permutation groups of given degree.

\begin{theorem}\label{l13}
\emph{(Dixon \cite[Theorem~3]{Dixon1967})} For any solvable subgroup $R$ of $\Sy_n$, the order $|R|\leqslant24^{(n-1)/3}$.
\end{theorem}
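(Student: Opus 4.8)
The plan is to prove the inequality by induction on $n$, via the classical two-stage reduction to primitive groups followed by an appeal to the structure theory of solvable primitive linear groups.

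\emph{Base cases and reductions.} First I would verify the bound directly for small degrees; in particular $n=4$ gives equality, $|\Sy_4|=24=24^{(4-1)/3}$, and the iterated imprimitive wreath powers of $\Sy_4$ (of degrees $4^k$) show that the constant $24^{1/3}$ is best possible. For the inductive step let $R\leqslant\Sy_n$ be solvable. If $R$ is intransitive with orbits of sizes $n_1,\dots,n_k$, where $k\geqslant2$ and $\sum_i n_i=n$, then $R$ embeds in $\Sy_{n_1}\times\cdots\times\Sy_{n_k}$, so by the inductive hypothesis $|R|\leqslant\prod_i24^{(n_i-1)/3}=24^{(n-k)/3}\leqslant24^{(n-1)/3}$. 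If $R$ is transitive but imprimitive, fix a block system with $m$ blocks of size $k$, so $n=mk$ with $1<k<n$; letting $R_0$ be the kernel of the induced action on blocks, $R_0$ embeds in a direct product of $m$ solvable subgroups of $\Sy_k$ while $R/R_0$ is a solvable transitive subgroup of $\Sy_m$, whence by induction $|R|=|R_0|\,|R/R_0|\leqslant24^{m(k-1)/3}\cdot24^{(m-1)/3}=24^{(mk-1)/3}=24^{(n-1)/3}$.

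\emph{The primitive case.} It remains to bound $|R|$ when $R$ is primitive and solvable of degree $n$. Then $n=p^d$ for a prime $p$, and $R=\Z_p^d{:}R_0$ with $R_0\leqslant\GL_d(p)$ an irreducible solvable subgroup, so one must establish $p^d|R_0|\leqslant24^{(p^d-1)/3}$. For the finitely many small prime powers $p^d$ (say $p^d\leqslant27$ or so) this is a direct verification using the list of irreducible solvable subgroups of $\GL_d(p)$: for instance $|R_0|\leqslant(p-1)d$ when $d=1$; $R_0\leqslant\GL_2(3)$ of order $48$ when $p^d=9$; the irreducible solvable subgroups of $\GL_3(2)$ have order dividing $21$; and so on. In general I would invoke the structure of primitive solvable linear groups: writing $F$ for the Fitting subgroup of $R_0$, one has $\C_{R_0}(F)\leqslant F$ with $\Cen(F)$ cyclic, $F/\Cen(F)$ a direct product of symplectic-type $r$-groups whose parameters are controlled by $d$, and $R_0/F$ faithfully embedded in the corresponding product of symplectic groups over prime fields, itself bounded by a secondary induction on $d$. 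Feeding the crude estimates $|\Cen(F)|\leqslant p^d-1$ and the resulting bound for $|R_0|$, polynomial in $p^d$, into the comparison with $24^{(p^d-1)/3}=2^{p^d-1}\cdot3^{(p^d-1)/3}$, the exponential right-hand side wins with room to spare once $p^d$ is past the explicit cutoff.

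\emph{Main obstacle.} The delicate part is the primitive linear estimate: because the constant $24^{1/3}$ is attained, lossy bounds are not permissible, and a clean self-contained argument needs the (CFSG-free) reduction of irreducible solvable subgroups of $\GL_d(p)$ to symplectic-type normalizers together with the bookkeeping of the induced symplectic action. Organizing this so that $p^d|R_0|\leqslant24^{(p^d-1)/3}$ holds for every $(p,d)$ outside a short explicit list — which is then dispatched by hand — is where essentially all the effort concentrates; the intransitive and imprimitive reductions are routine by comparison. This is precisely the content of Dixon's theorem, which for our purposes we simply cite.
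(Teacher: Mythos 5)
The paper does not prove this statement at all; it is stated as Theorem~\ref{l13} with a citation to Dixon's 1967 paper and no argument supplied. Your proposal likewise concludes that one ``simply cites'' Dixon, so the two treatments agree in the only respect that matters for the paper.

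What you add is a correct sketch of the standard reduction underlying Dixon's result: the intransitive reduction $|R|\leqslant\prod_i 24^{(n_i-1)/3}=24^{(n-k)/3}$, the imprimitive reduction $|R|\leqslant 24^{m(k-1)/3}\cdot 24^{(m-1)/3}=24^{(mk-1)/3}$, and the identification of the iterated wreath powers of $\Sy_4$ on $4^k$ points as the extremal examples, all of which check out. You also correctly reduce the primitive case to the affine one and pose the right target inequality $p^d|R_0|\leqslant 24^{(p^d-1)/3}$ for irreducible solvable $R_0\leqslant\GL_d(p)$, with accurate small cases ($p^d=4$ giving equality via $\Sy_4\cong\AGL_1(4)$, order $21$ for the irreducible solvable subgroups of $\GL_3(2)$, order $48$ for $\GL_2(3)$). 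Where you stop short is exactly where Dixon's actual work lies: you gesture at the Fitting-subgroup and symplectic-type analysis without carrying it out, and since the bound is sharp there is genuinely no slack for crude estimates. That is an honest gap, but you flag it yourself and defer to the citation, which is precisely what the paper does; so as a review the conclusion is that your outline is accurate and the incompleteness is disclosed rather than hidden.
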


The next lemma is a consequence of~\cite[Corollary~5]{LPS}.

\begin{lemma}\label{l14}
Let $T$ ba a nonabelian simple group. Then for any solvable subgroup $R$ of $T$, there exists a prime divisor $r$ of $|T|$ not dividing $|R|$.
\end{lemma}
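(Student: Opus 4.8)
\textbf{Proof proposal for Lemma~\ref{l14}.}

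The plan is to use the classification of finite simple groups, but in a way that leans as much as possible on quantitative bounds already available in the excerpt, so that only a short residue has to be checked directly. Suppose for contradiction that $R$ is a solvable subgroup of the nonabelian simple group $T$ whose order is divisible by every prime divisor of $|T|$. The target is then a contradiction with the size or structure constraints on solvable subgroups. First I would dispose of the alternating groups: if $T=\A_n$, then $|R|$ is divisible by $\prod_{p\le n}p$, while Dixon's bound (Theorem~\ref{l13}) gives $|R|\le 24^{(n-1)/3}$; comparing this with the standard lower bound $\prod_{p\le n}p>2^{n}$ for $n$ not too small (via Bertrand/Chebyshev-type estimates, or just Lemma~\ref{divisors-n!} applied to the trivial subgroup to see how many prime powers $|A_n|$ carries) forces $n$ small, and the finitely many residual cases $\A_5,\dots,\A_{k}$ are settled by inspection of their prime divisors against the orders of their solvable (hence $\{2,3\}$- or small-prime) subgroups.

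Next I would handle the sporadic groups simply by citing their orders: each of the $26$ sporadic simple groups has at least one ``large'' prime divisor $r$ with $r^2\nmid|T|$ and $r$ exceeding the order of any solvable subgroup's relevant Hall component — concretely, one checks from the \textsc{Atlas} that a solvable subgroup cannot have order divisible by the largest prime divisor, since a solvable group of order divisible by $r$ would contain an element of order $r$ and the $r$-local structure of $T$ (the normalizer of a Sylow $r$-subgroup) is non-solvable or too small. This is a finite check. The bulk of the work is the groups of Lie type. Here the key tool is Zsigmondy's theorem (Theorem~\ref{Zsigmondy}) together with Lemma~\ref{l6}: for $T$ of Lie type over $\GF(q)$ with $q=p^f$, the order $|T|$ is divisible by a primitive prime divisor $r$ of $p^{em}-1$ for a suitable value $em$ near the Coxeter number, and by Lemma~\ref{l6} such $r$ satisfies $r\equiv 1\pmod{em}$ and $r>em$. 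If $R$ is solvable with $r\mid|R|$, then $R$ has an element $x$ of order $r$; the cyclic group $\langle x\rangle$ lies in a maximal torus, and $\Nor_T(\langle x\rangle)$ is (contained in) the normalizer of that torus, which is solvable only in bounded cases. The contradiction comes from forcing $R$ to contain the centralizer-normalizer data of \emph{two} independent primitive prime divisors (e.g. primitive prime divisors of $p^{n f}-1$ and $p^{(n-1)f}-1$ for $\PSL_n(q)$): no single solvable subgroup can simultaneously lie in the normalizer of two non-conjugate tori of coprime-ish orders, because that would make $R$ act irreducibly-in-incompatible-ways, contradicting solvability via a Clifford-theory / Hall-subgroup order count. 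For the classical groups one makes this precise using the known orders of torus normalizers, and the exceptional groups of Lie type are a short finite list handled the same way (each has two suitable Zsigmondy primes except for the tiny Zsigmondy exceptions $(2,6)$ and $(2^k-1,2)$, which correspond to $q\in\{2,3,4,8,9\}$-type small cases checked by hand).

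The main obstacle I anticipate is precisely the borderline small cases where Zsigmondy fails or where the two primitive prime divisors one wants to play off against each other coincide or are too small to invoke $r>em$: these include $\PSL_2(q)$ for small $q$, $\PSL_3(4)$, $\PSp_4(3)\cong\PSU_4(2)$, $\PSU_3(3)$, $\PSU_4(3)$, $\G_2(3)$, $\Sp_6(2)$, $\Omega_8^+(2)$, and a handful of others — exactly the groups that already appear throughout the tables of this paper. For those I would simply verify the statement by direct computation (consulting the orders of maximal subgroups, or \magma), checking that the largest prime divisor of $|T|$ divides the order of no solvable subgroup; since the assertion is that \emph{some} prime works, one only needs to exhibit one. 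I would organize the write-up so that the generic Lie-type argument is stated once with the torus-normalizer order bounds, and the exceptional small list is dispatched by a single appeal to \cite[Corollary~5]{LPS}, which is in fact exactly the reference the lemma cites; thus in the final text this ``proof'' is really a one-line deduction from \cite{LPS}, and the paragraphs above describe the structure of that cited result rather than reproving it.
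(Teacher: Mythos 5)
Your closing sentence lands on the same proof as the paper: Lemma~\ref{l14} is stated without argument, only with the citation to \cite[Corollary~5]{LPS}, so your proposal ultimately agrees with the paper's treatment.

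The reconstruction you sketch before that concession has a genuine hole in its first step, though. For $T=\A_n$ you compare the forced divisibility $\prod_{p\leqslant n}p\mid|R|$ against Dixon's bound $|R|\leqslant24^{(n-1)/3}$ and claim this pins $n$ to finitely many values. It does not: $\log 24^{(n-1)/3}=\frac{n-1}{3}\log 24\approx1.06\,n$, while $\log\prod_{p\leqslant n}p=\theta(n)\sim n$ with $\theta(n)<1.02\,n$ for all $n$, so Dixon's bound in fact exceeds the primorial for \emph{every} $n$ (and one checks the small cases directly). Nor does the auxiliary inequality $\prod_{p\leqslant n}p>2^n$ help, since $2^n<24^{(n-1)/3}$ for all $n\geqslant3$. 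No contradiction is produced and no bound on $n$ is obtained; a correct argument for $\A_n$ has to use the structure of a solvable subgroup containing a $p$-cycle for a Bertrand prime $p\in(n/2,n]$, not a raw order comparison. The sporadic paragraph also reasons in the wrong direction --- solvability of the normalizer of a Sylow $r$-subgroup is \emph{consistent} with, not an obstruction to, a solvable subgroup of full prime support, and you would still need a separate step forcing $R$ into that normalizer. These gaps do not affect the paper, whose proof really is the one-line appeal to \cite{LPS}, but they would sink the sketch if run as a standalone argument.
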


\section{A property of finite simple groups}\label{pty-Gorenstein}

%

A well-known theorem of Gorenstein \cite[Theorem~1.53]{gorenstein} says that for a nonabelian simple group $T$ of order divisible by a prime $r$, if $|T|$ is not divisible by $r^2$ then $|\Out(T)|$ is not divisible by $r$. We need an improved version of this theorem, which will be established in this section. The proof will invoke the following elementary number theory result. Recall that for any positive integer $f$, we denote the $r$-part of $f$ by $f_r$ and denote $f_{r'}=f/f_r$.

\begin{lemma}\label{r-part}
Let $f$ be a positive integer, $r$ be a prime number, and
$$
r_0=
\begin{cases}
r,\quad\text{if $r>2$}\\
4,\quad\text{if $r=2$}.
\end{cases}
$$
Then for any integer $t>1$, the following statements hold.
\begin{itemize}
\item[(a)] $r\di t^f-1 $ if and only if $r\di t^{f_{r'}}-1$.
\item[(b)] If $t\equiv1\pmod{r_0}$, then $(t^f-1)_r=f_r(t-1)_r$.
\item[(c)] If $r_0\di t^f-1$, then $(t^f-1)_r\geqslant r_0f_r$.
\end{itemize}
\end{lemma}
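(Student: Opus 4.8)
The plan is to treat the three statements in turn: part~(a) is a routine computation with the multiplicative order of $t$, part~(b) is the ``lifting the exponent'' lemma specialised to the two cases $r$ odd and $r=2$, and part~(c) is deduced from~(b) by passing to the multiplicative order of $t$ modulo $r_0$. For part~(a), first dispose of the case $r\di t$: then $t^f-1\equiv t^{f_{r'}}-1\equiv-1\pmod r$, so neither side divides $r$ and the equivalence is vacuous. Otherwise $\gcd(t,r)=1$; let $d$ be the order of $t$ in $\GF(r)^\times$. By Lemma~\ref{l6} (or directly, since $d\di r-1$) we have $\gcd(d,r)=1$, so writing $f=f_rf_{r'}$ we get $d\di f\iff d\di f_{r'}$, and hence $r\di t^f-1\iff d\di f\iff d\di f_{r'}\iff r\di t^{f_{r'}}-1$.

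For part~(b), the hypothesis $t\equiv1\pmod{r_0}$ is exactly the hypothesis of the lifting-the-exponent lemma: $r\di t-1$ when $r$ is odd, and $4\di t-1$ when $r=2$. This yields $(t^f-1)_r=f_r(t-1)_r$, which I would prove from scratch in two steps. First, for $r\nmid f$, factor $t^f-1=(t-1)\sum_{i=0}^{f-1}t^i$ and note that $\sum_{i=0}^{f-1}t^i\equiv f\not\equiv0\pmod r$, so $(t^f-1)_r=(t-1)_r$. Second, reduce the general case to $f=r$ by induction on $f_r$, and for $f=r$ write $s=t-1$ and expand $\sum_{i=0}^{r-1}(1+s)^i\equiv r+\tbinom{r}{2}s\pmod{s^2}$ to see that this sum has $r$-part exactly $r$; here the case $r=2$ needs $(s)_2\geqslant4$, which is why one must take $r_0=4$ rather than $2$.

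For part~(c), let $e$ be the order of $t$ modulo $r_0$ (well defined, since $r_0\di t^f-1$ forces $\gcd(t,r_0)=1$: if $r\di t$, or if $r=2$ and $t$ is even, then $t^f-1$ is a unit, resp. odd). As $r_0\di t^f-1$ we have $e\di f$; write $f=ek$. Since $r_0\di t^e-1$, part~(b) applies to the base $t^e$ and gives $(t^f-1)_r=(t^{ek}-1)_r=k_r(t^e-1)_r$. Now $(t^e-1)_r\geqslant r_0$ by definition of $e$. For $r$ odd, $e\di r-1$ so $e_r=1$ and $k_r=f_r$, whence $(t^f-1)_r\geqslant r_0f_r$. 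For $r=2$ one has $e\in\{1,2\}$: if $e=1$ then $k_r=f_r$ and the same bound holds; if $e=2$ then $f_2=2k_2$ while $t\equiv3\pmod4$ forces $(t^2-1)_2=(t-1)_2(t+1)_2\geqslant2\cdot4=8$, so $(t^f-1)_2=k_2(t^2-1)_2\geqslant(f_2/2)\cdot8=4f_2=r_0f_2$.

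The only point that is not pure bookkeeping is getting the power of $2$ right throughout: the constant $r_0=4$ is forced by the fact that $\sum_{i=0}^{1}(1+s)^i=2+s$ has $2$-part exactly $2$ only when $(s)_2\geqslant4$, and in part~(c) the factor of $2$ lost in passing from $f_2$ to $k_2$ when $e=2$ must be recovered from the improved bound $(t^2-1)_2\geqslant8$. I would therefore write the induction in~(b) and the two subcases $e\in\{1,2\}$ in~(c) explicitly for $r=2$, as that is the only place an off-by-a-factor-of-two slip could occur.
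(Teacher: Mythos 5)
Your argument is correct throughout. Parts (a) and (b) follow essentially the same route as the paper: (a) is Fermat's little theorem applied to the multiplicative order of $t$ (you add the small extra care of checking the case $r\mid t$, where both sides of the biconditional are vacuously false — the paper glosses over this), and (b) is exactly the iterated binomial-expansion computation of $\bigl(\sum_{i=0}^{r-1}(1+s)^i\bigr)_r$ with the $r_0=4$ threshold appearing for the same reason you identify.

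In part (c) you take a genuinely different decomposition. The paper passes from $t$ to $t^{f_{r'}}$ (for $r$ odd, using part (a)) or to $t^{2f_{2'}}$ (for $r=2$, using $t^2\equiv1\pmod8$), and then applies (b) with exponent $f_r$ or $2^{m-1}$ respectively. You instead introduce $e$, the order of $t$ modulo $r_0$, write $f=ek$, and apply (b) to the base $t^e$, recovering the lost factor of $2$ in the subcase $e=2$ from the bound $(t^2-1)_2\geqslant8$. Both decompositions deliver the same bound; yours treats the two parities of $r$ more uniformly at the cost of a case split on $e\in\{1,2\}$ when $r=2$, whereas the paper's version avoids naming the order but needs the somewhat ad hoc substitution $t\mapsto t^{2f_{2'}}$. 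Either is acceptable; there is no gap.
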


\begin{proof}
Let $f_r=r^m$, where $m\geqslant0$. As $r-1$ divides $r^m-1$, we derive $t^f=t^{f_{r'}(r^m-1)}t^{f_{r'}}\equiv t^{f_{r'}}\pmod{r}$ by Fermat's little theorem. Then part~(a) follows immediately.

Suppose that $t\equiv1\pmod{r_0}$. Then writing $u=t-1$ we have
$$
\frac{t^r-1}{t-1}=\frac{(u+1)^r-1}{u}=r+\frac{r(r-1)u}{2}+\sum\limits_{k=3}^r{r\choose k}u^{k-1}.
$$
Since $r_0\di u$, the above equality implies that $(t^r-1)/(t-1)\equiv r\pmod{r^2}$. As a consequence, $(t^r-1)_r=r(t-1)_r$. Applying this repeatedly, we obtain
$$
(t^f-1)_r=(t^{f_{r'}r^m}-1)_r=r(t^{f_{r'}r^{m-1}}-1)_r=\dots=r^m(t^{f_{r'}}-1)_r.
$$
In the meanwhile, the condition $t\equiv1\pmod{r}$ implies that
$$
\frac{t^{f_{r'}}-1}{t-1}=1+t+t^2+\dots+t^{f_{r'}-1}\equiv f_{r'}\pmod{r}
$$
and thus $(t^{f_{r'}}-1)/(t-1)$ is not divisible by $r$. Hence $(t^f-1)_r=r^m(t^{f_{r'}}-1)_r=r^m(t-1)_r$ as part~(b) asserts.

Next suppose that $t>1$ is an integer with $r_0\di t^f-1$. It follows from part~(a) that $r\di t^{f_{r'}}-1$. If $r>2$, then by part~(b) (replacing $t$ with $t^{f_{r'}}$ there), one has $(t^f-1)_r=f_r(t^{f_{r'}}-1)_r\geqslant rf_r$. If $r=2$ and $f_r=1$, then part~(c) holds trivially. If $r=2$ and $i_r\geqslant2$, then $t$ is odd and so $t^2\equiv1\pmod{8}$, whence we conclude from part~(b) that
$$
(t^f-1)_r=(t^{2f_{2'}\cdot2^{m-1}}-1)_2=2^{m-1}(t^{2f_{2'}}-1)_2\geqslant2^{m-1}\cdot8=2^{m+2}=r_0f_r.
$$
This proves part~(c).
\end{proof}

Now we give the improved version of Gorenstein's theorem mentioned above.

\begin{theorem}\label{CommonDivisor}
Let $T$ be a simple group, and $r$ be a common prime divisor of $|T|$ and $|\Out(T)|$. Then $|T|_r\geqslant r|\Out(T)|_r$, and further, for $r=2$ or $3$, $|T|_r=r|\Out(T)|_r$ if and only if one of the following occurs, where $p$ is a prime.
\begin{itemize}
\item[(a)] $r=2$, and $T\cong\PSL_2(p^f)$ with $p\equiv\pm3\pmod{8}$.
\item[(b)] $r=3$, and $T\cong\PSL_2(p^f)$ with $p\equiv\pm2$ or $\pm4\pmod{9}$ and $f\equiv0\pmod{3}$.
\item[(c)] $r=3$, and $T\cong\PSL_3(p^f)$, where either $p\equiv2$ or $5\pmod{9}$ and $f\equiv2$, $3$ or $4\pmod{6}$, or $p\equiv4$ or $7\pmod{9}$ and $f\not\equiv0\pmod{3}$.
\item[(d)] $r=3$, and $T\cong\PSU_3(p^f)$, where either $p\equiv2$ or $5\pmod{9}$ and $f\equiv0$, $1$ or $5\pmod{6}$, or $p\equiv4$ or $7\pmod{9}$ and $f\equiv0\pmod{3}$.
\end{itemize}
\end{theorem}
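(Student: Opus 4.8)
The strategy is to reduce everything to the classification of finite simple groups by going through the list of simple groups $T$ together with the explicit descriptions of $|T|$ and $|\Out(T)|$ recorded in Tables~\ref{tab10} and~\ref{tab23}. First I would dispose of the cheap cases: if $T$ is alternating or sporadic, then $|\Out(T)|$ divides $4$, so the only relevant primes are $r\in\{2,3\}$, and for each such $T$ one checks directly (the orders are known) that $|T|_r\geqslant r|\Out(T)|_r$, with equality never occurring for $\mathrm{A}_n$ with $n\geqslant5$ or for a sporadic group; this is a finite check producing none of the exceptions (a)--(d). The bulk of the work is the groups of Lie type, where I would split according to whether $r$ is the defining characteristic $p$ or not.

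\textbf{The defining-characteristic case.} Write $q=p^f$. Here $|T|_p$ is a large power of $p$ (at least $p^f$ for the smallest classical groups, e.g.\ $|\PSL_2(q)|_p=q$), while $|\Out(T)|_p$ is at most $f_p$ times a bounded factor coming from the diagonal/graph part (which is prime to $p$ for classical groups, the graph automorphisms of $\POm_8^+$ etc.\ being the only subtlety). Since $f_p\le p^{f/(p-1)}\ll p^f$ one gets $|T|_p\ge p|\Out(T)|_p$ comfortably, and in fact strict inequality once $f\geqslant1$; so no exceptions arise in defining characteristic. This is essentially Gorenstein's original estimate made quantitative, and Lemma~\ref{divisors-n!} (or just $f_p<p^f$) does the job. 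I would organize this as one short lemma covering all Lie type $T$ at once.

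\textbf{The cross-characteristic case.} Now $r\neq p$, and $|\Out(T)|=dfg$-type expressions where $d=(n,q\mp1)$ (or a small power of $2$ in the orthogonal cases), $f=\log_p q$, and $g\in\{1,2,3\}$ is the graph part. The $r$-part of $f$ contributes to $|\Out(T)|_r$, but by Lemma~\ref{r-part}(c) applied to $t=p$ (or $p^2$, etc.), whenever $r\mid q^e-1$ for the relevant exponent $e$ appearing in $|T|$, the factor $f_r$ is already ``absorbed with room to spare'' into $(q^e-1)_r$, giving $(q^e-1)_r\geqslant r_0 f_r\geqslant r f_r$. The remaining contribution to $|\Out(T)|_r$ is from the diagonal part $d_r$, and one compares $d_r$ against the $r$-part of the full order $|T|$, which contains many cyclotomic factors $q^i-1$ beyond the single one matching $d$. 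For $r\ge5$ this slack is always more than enough, so equality $|T|_r=r|\Out(T)|_r$ forces $r\in\{2,3\}$. For $r=2$ and $r=3$ the inequality is tight only for very small-rank groups --- $\PSL_2$, $\PSL_3$, $\PSU_3$ --- and there one carries out the exact $2$-adic/$3$-adic valuation computation using Lemma~\ref{r-part}(b) with the precise congruence conditions on $p$ and $f$; these computations are exactly what produce cases~(a)--(d), and the congruences $p\equiv\pm3\pmod 8$, $p\equiv\pm2,\pm4\pmod 9$ with $3\mid f$, etc., come out of demanding that $(q^2-1)_2$ resp.\ $(q^3-1)_3$ be as small as the stated formulas allow.

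\textbf{Main obstacle.} The routine-but-delicate part is the bookkeeping for $r\in\{2,3\}$ on the classical groups of rank $\geqslant2$ other than $\PSL_3,\PSU_3$: one must check that the extra cyclotomic factors in $|T|$ strictly beat $d_r$, so that equality is impossible and these groups contribute no exceptions. Here the potential trap is the orthogonal groups $\POm_{2m}^{\pm}$ where $|\Out(T)|$ can have $2$-part as large as $8$ (the $\D_8$ or $\Sy_4$ outer groups), and $\POm_8^+$ with its triality; I would handle these by noting that $|T|$ already contains $(q^m\mp1)(q^{m-1}\pm1)(q^{m-2}\mp1)\cdots$, whose $2$-part dwarfs $8\cdot f_2$ once $m\geqslant4$. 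The other mild nuisance is keeping the isogeny/center conventions straight so that ``$|T|$'' and ``$|\Out(T)|$'' are read consistently from Tables~\ref{tab10}--\ref{tab23}; once that is fixed the argument is a finite, if somewhat tedious, case analysis, and I expect no conceptual difficulty --- just care with the $r$-adic valuations via Lemma~\ref{r-part}.
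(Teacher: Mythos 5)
Your plan follows the paper's own strategy almost line for line: dispose of alternating and sporadic groups first, then for Lie type split into the defining-characteristic case and the cross-characteristic case, and in the latter use Lemma~\ref{r-part} to compare $r$-adic valuations of cyclotomic factors against $d_r f_r$, isolating $\PSL_2$, $\PSL_3$, $\PSU_3$ as the tight cases for $r\in\{2,3\}$. That is exactly the structure of the paper's proof, including the observation that for $r\geqslant5$ the slack in the cyclotomic factors rules out equality.

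There is, however, one factual slip in your handling of the alternating groups. You claim that equality $|T|_r = r|\Out(T)|_r$ \emph{never} occurs for $\A_n$ with $n\geqslant5$; this is false. For $T=\A_5$ one has $|T|_2=4$, $|\Out(T)|_2=2$, so $|T|_2=2|\Out(T)|_2$; and for $T=\A_6$ one has $|T|_2=8$, $|\Out(T)|_2=4$, again equality. Both of these must appear in the final list, and they do, precisely because $\A_5\cong\PSL_2(5)$ with $5\equiv-3\pmod8$ and $\A_6\cong\PSL_2(9)$ with $3\equiv3\pmod8$, so they are instances of part~(a). The paper records this explicitly in the first paragraph of its proof. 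If you blanket-assert ``no equality for alternating groups'' you either miss these two cases or contradict yourself when $\PSL_2(5)$ and $\PSL_2(9)$ reappear on the Lie-type side; the clean fix is to treat $\A_5,\A_6$ (and $\A_8\cong\PSL_4(2)$, etc.) via their Lie-type incarnations and only claim the strict inequality for $\A_n$ with $n\geqslant7$ and for the sporadic groups. A smaller nit: since $|\Out(T)|$ divides $4$ in this branch, the only relevant prime there is $r=2$, not $r\in\{2,3\}$. With those corrections the argument goes through and coincides with the paper's.
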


\begin{proof}
Assume that $T$ is alternating or sporadic. Then either $|\Out(T)|\leqslant2$, or $|\Out(T)|=4$ and $T=\A_6$. Hence $r=2$, and the inequality $|T|_r\geqslant r|\Out(T)|_r$ holds steadily. Further, $|T|_2=2|\Out(T)|_2$ if and only if $T=\A_5\cong\PSL_2(5)$ or $T=\A_6\cong\PSL_2(9)$, as in part~(a) of the lemma.

In the remainder of the proof, assume that $T$ is a simple group of Lie type defined over a field of characteristic $p$, and $T$ is not isomorphic to $\A_5$, $\A_6$, $\A_8$ or $\A_9$. If $r=p$, then either $T=\PSL_2(p^f)$, or $|T|_p\geqslant q^3>p|\Out(T)|_p$. Suppose that $T=\PSL_2(p^f)$. Then $p^f\geqslant8$. For $p=2$ or $3$, $|T|_p=p^f>pf\geqslant pf_p=p|\Out(T)|_p$. For $p\geqslant5$, $|T|_p=p^f\geqslant pf\geqslant pf_p=p|\Out(T)|_p$. Thus assume that $r\neq p$ hereafter.

{\bf Case 1.} Suppose that $r=2$. Then, in particular, $T\neq\,^2\B_2(q)$ or $\,^2\F_4(q)$. An inspection of $|T|$ and $|\Out(T)|$ for simple groups $T$ of Lie type shows that $|\Out(T)|_2\leqslant2d_2f_2$, and one of the following holds.
\begin{itemize}
\item[(i)] $|T|$ is divisible by $d(q^2-1)$.
\item[(ii)] $T=\PSL_2(q)$, $|T|_2=(q^2-1)_2/2$, and $|\Out(T)|_2=2f_2$.
\item[(iii)] $T=\,^2\G_2(q)$ with $q=3^{2c+1}\geqslant3^3$, $|T|_2=(q^3+1)_2(q-1)_2$, and $|\Out(T)|_2=1$.
\end{itemize}
Since $4\di q^2-1$, we conclude from Lemma \ref{r-part}(c) that
\begin{equation}\label{eq7}
(q^2-1)_2=(p^{2f}-1)_2\geqslant4(2f)_2=8f_2.
\end{equation}
If (i) occurs, then the above equality implies
$$
|T|_2\geqslant d_2(q^2-1)_2\geqslant8d_2f_2>2|\Out(T)|_2.
$$
If (iii) occurs, then apparently $|T|_2>2=2|\Out(T)|_2$.

Now assume that (ii) occurs. Then by (\ref{eq7}),
$$
|T|_2=(q^2-1)_2/2\geqslant4f_2=2|\Out(T)|_2.
$$
Moreover, $|T|_2=2|\Out(T)|_2$ if and only if
\begin{equation}\label{eq6}
(q^2-1)_2=8f_2.
\end{equation}
Note that $(q^2-1)_2=((p^2)^f-1)_2=f_2(p^2-1)_2$ by Lemma \ref{r-part}(b). We see that (\ref{eq6}) is equivalent to $(p^2-1)_2=8$, which is further equivalent to the condition in part~(a) of the lemma.

{\bf Case 2.} Suppose that $r=3$. Then $T\neq\,^2\G_2(q)$ and $p^2\equiv1\pmod{3}$. An inspection of $|T|$ and $|\Out(T)|$ divides simple groups $T$ of Lie type into the following six classes.
\begin{itemize}
\item[(i)] $|T|$ is divisible by $3d(q^2-1)$, and $|\Out(T)|_3=d_3f_3$.
\item[(ii)] $T=\PSL_2(q)$, $|T|_3=(q^2-1)_3$, and $|\Out(T)|_3=f_3$.
\item[(iii)] $T=\PSL_3(q)$, $|T|_3=(q^3-1)_3(q^2-1)_3/d_3$, and $|\Out(T)|_3=d_3f_3$.
\item[(iv)] $T=\PSU_3(q)$, $|T|_3=(q^3+1)_3(q^2-1)_3/d_3$, and $|\Out(T)|_3=d_3f_3$.
\item[(v)] $T=\POm_8^+(q)$, $|T|_3=(q^6-1)_3(q^4-1)_3^2(q^2-1)_3$, and $|\Out(T)|_3=3f_3$.
\item[(vi)] $T=\,^3\D_4(q)$, $|T|_3=(q^8+q^4+1)_3(q^6-1)_3(q^2-1)_3$, and $|\Out(T)|_3=3f_3$.
\end{itemize}
By Lemma \ref{r-part}(b) we have $(q^2-1)_3=f_3(p^2-1)_3$. If (i), (v) or (vi) occurs, then
$$
|T|_3/|\Out(T)|_3\geqslant3(q^2-1)_3/f_3=3(p^2-1)_3>3.
$$

Assume that~(ii) occurs. Then $|T|_3=(q^2-1)_3=f_3(p^2-1)_3=(p^2-1)_3|\Out(T)|_3$. Hence $|T|_3\geqslant3|\Out(T)|_3$, and $|T|_3=3|\Out(T)|_3$ if and only if $(p^2-1)_3=3$, which is equivalent to $p\equiv\pm2$ or $\pm4\pmod{9}$. This together with the condition that $3$ divides $|\Out(T)|$ leads to part~(b) of the lemma.

Next assume that (iii) appears. Then $|T|_3/|\Out(T)|_3=(q^3-1)_3(p^2-1)_3/d_3^2$. If $q\equiv2\pmod{3}$, then $|T|_3/|\Out(T)|_3=(p^2-1)_3\geqslant3$, and $|T|_3/|\Out(T)|_3=3$ is equivalent to $(p^2-1)_3=3$, which occurs exactly when $p\equiv2$ or $5\pmod{9}$ and $f$ odd. If $q\equiv1\pmod{3}$, then
$$
\frac{|T|_3}{|\Out(T)|_3}=\frac{(q^3-1)_3(p^2-1)_3}{9}=\frac{(q^6-1)_3(p^2-1)_3}{9}=\frac{3f_3(p^2-1)_3^2}{9}\geqslant3,
$$
and $|T|_3/|\Out(T)|_3=3$ is equivalent to $f_3(p^2-1)_3^2=9$, which occurs exactly when either $p\equiv2$ or $5\pmod{9}$ and $f\equiv\pm2\pmod{6}$, or $p\equiv4$ or $7\pmod{9}$ and $f\not\equiv0\pmod{3}$. To sum up, $|T|_3=3|\Out(T)|_3$ if and only if either $p\equiv2$ or $5\pmod{9}$ and $f\not\equiv0\pmod{6}$, or $p\equiv4$ or $7\pmod{9}$ and $f\not\equiv0\pmod{3}$. This together with the condition that $3$ divides $|\Out(T)|$ leads to part~(c) of the lemma.

Now assume that (iv) appears. Then $|T|_3/|\Out(T)|_3=(q^3+1)_3(p^2-1)_3/d_3^2$. If $q\equiv1\pmod{3}$, then $|T|_3/|\Out(T)|_3=(p^2-1)_3\geqslant3$, and $|T|_3/|\Out(T)|_3=3$ is equivalent to $(p^2-1)_3=3$, which occurs exactly when $p\equiv2$ or $5\pmod{9}$ and $f\equiv\pm1\pmod{6}$. If $q\equiv2\pmod{3}$, then
$$
\frac{|T|_3}{|\Out(T)|_3}=\frac{(q^3+1)_3(p^2-1)_3}{9}=\frac{(q^6-1)_3(p^2-1)_3}{9}=\frac{3f_3(p^2-1)_3^2}{9}\geqslant3,
$$
and $|T|_3/|\Out(T)|_3=3$ is equivalent to $f_3(p^2-1)_3^2=9$, which occurs exactly when either $p\equiv2$ or $5\pmod{9}$ and $f$ is even, or $p\equiv4$ or $7\pmod{9}$. To sum up, $|T|_3=3|\Out(T)|_3$ if and only if either $p\equiv2$ or $5\pmod{9}$ and $f\not\equiv3\pmod{6}$, or $p\equiv4$ or $7\pmod{9}$. This together with the condition that $3$ divides $|\Out(T)|$ leads to part~(d) of the lemma.

{\bf Case 3.} Assume that $r\geqslant5$. Then an inspection of $|T|$ and $|\Out(T)|$ for simple groups $T$ of Lie type shows that $|T|_p\geqslant q$ and $|\Out(T)|_r=d_rf_r$.

Suppose that $d_r>1$. Then $T=\PSL_n(q)$ or $\PSU_n(q)$, and $n\geqslant5$. First assume that $T=\PSL_n(q)$. Then $r\di q-1=p^f-1$, and hence we conclude from Lemma~\ref{r-part}(c) that $(q-1)_r=(p^f-1)_r\geqslant rf_r$. This together with the observation that $|T|$ is divisible by $d(q-1)$ yields $|T|_r\geqslant d_r(q-1)_r\geqslant rd_rf_r=r|\Out(T)|_r$. Now assume that $T=\PSU_n(q)$. Then $r\di q+1$, whence $r$ divides $q^2-1$ but not $q-1$. It follows that $(q+1)_r=(q^2-1)_r$, and appealing Lemma~\ref{r-part}(c) we obtain $(q+1)_r=(p^{2f}-1)_r\geqslant rf_r$. This together with the observation that $|T|$ is divisible by $d(q+1)$ yields $|T|_r\geqslant d_r(q+1)_r\geqslant rd_rf_r=r|\Out(T)|_r$.

Suppose that $d_r=1$. As $r$ divides $|T|$, an inspection of $|T|$ and $|\Out(T)|$ for simple groups $T$ of Lie type shows that one the following happens.
\begin{itemize}
\item[(i)] $r\di q^i-1$ for some $i\geqslant1$ with $|T|_r\geqslant(q^i-1)_r$.
\item[(ii)] $r\di q^i+1$ for some $i\geqslant2$ with $|T|_r\geqslant(q^i+1)_r$.
\item[(iii)] $r\di q^8+q^4+1$ with $|T|_r\geqslant(q^8+q^4+1)_r$.
\end{itemize}
First assume that (i) appears. Then we conclude from Lemma~\ref{r-part}(c) that $(q^i-1)_r=(p^{if}-1)_r\geqslant rf_r$, which implies
$|T|_r\geqslant(q^i-1)_r\geqslant rf_r=r|\Out(T)|_r$. Next assume that (ii) appears. Then $r$ divides $q^{2i}-1$ but not $q^i-1$. It follows that $(q^i+1)_r=(q^{2i}-1)_r$, and appealing Lemma~\ref{r-part}(c) we obtain $(q^i+1)_r=(p^{2if}-1)_r\geqslant rf_r$. This leads to $|T|_r\geqslant(q^i+1)_r\geqslant rf_r=r|\Out(T)|_r$. Finally assume that (iii) appears. If $r\di q^4-1$, then $q^8+q^4+1\equiv3\pmod{r}$, contrary to the condition that $r\di q^8+q^4+1$. Consequently, $r$ does not divide $q^4-1$, and so $(q^8+q^4+1)_r=(q^{12}-1)_r$. Now appealing Lemma~\ref{r-part}(c) we have $(q^8+q^4+1)_r=(p^{12f}-1)_r\geqslant rf_r$, which implies
$$
|T|_r\geqslant(q^8+q^4+1)_r\geqslant rf_r=r|\Out(T)|_r.
$$
The proof is thus completed.
\end{proof}

\section{Reduction to affine and almost simple groups}\label{qp-case}

In~\cite[Theorem~2]{Praeger} Praeger proved that the quasiprimitive groups acting $2$-arc-transitively on a connected graph can be divided into four different types, which were later called HA, AS, TW and PA~\cite[Theorem~5.1]{Praeger1997}, see Lemma~\ref{normal-q}. In this section, we determine the quasiprimitive types for those containing a vertex-transitive solvable subgroup. It will be shown that only types HA and AS can occur.

We fix some notation throughout this section. Let $\Ga=(V,E)$ be a connected $(G,2)$-arc-transitive graph with $G$ quasiprimitive on $V$, and $R$ be a solvable vertex-transitive subgroup of $G$. Take an edge $\{\alpha,\beta\}\in E$ and denote $N=\Soc(G)$. Then $N=T_1\times\dots\times T_\ell$, where $\ell\geqslant2$ and $T_1\cong\dots\cong T_\ell\cong T$ for some nonabelian simple group $T$. Let $\calT=\{T_1,\dots,T_\ell\}$, and $K$ be the kernel of $G$ acting by conjugation on $\calT$. Then $N\trianglelefteq K\leqslant\Aut(T_1)\times\dots\times\Aut(T_\ell)$.

\begin{lemma}\label{X=soluble-pro}
Assume that $N_\alpha$ is solvable. Then either $T=\PSL_2(q)$ with $q\geqslant5$, or $T=\PSU_3(8)$, $\PSU_4(2)$, $\PSL_4(2)$, $\M_{11}$, or $\PSL_3(q)$ with $q\in\{3,4,5,7,8\}$.
\end{lemma}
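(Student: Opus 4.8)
The hypothesis is that $N_\alpha$ is solvable, where $N=\soc(G)=T_1\times\cdots\times T_\ell$ with $\ell\geqslant2$. The first step is to observe that since $G$ is quasiprimitive on $V$ and $N\trianglelefteq G$, the socle $N$ is transitive on $V$, so $N=N_\alpha N_\beta$-type arguments apply; in particular $|V|=|N|/|N_\alpha|$. I would then exploit the product structure: write $\pi_i\colon N\to T_i$ for the coordinate projections. Since $\Ga$ is connected and $(G,2)$-arc-transitive, $N_\alpha$ is nontrivial (indeed $N$ cannot be regular on $V$ because a $2$-arc-transitive graph of valency $\geqslant3$ forces $N_\alpha\neq1$ by Lemma~\ref{l10}(c) together with the fact that $G_\alpha^{\Ga(\alpha)}$ is $2$-transitive hence primitive, unless one reduces to the cycle case which is excluded). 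The key is that each projection $\pi_i(N_\alpha)$ is a solvable subgroup of $T$, and moreover — this is the crucial point — $N$ acting on $V$ has the two point stabilizers $N_\alpha$, $N_\beta$ satisfying $N=N_\alpha N_\beta$ (arc-transitivity of $N$, which follows from Lemma~\ref{l10}(c) since $N_\alpha\neq1$ and $G_\alpha^{\Ga(\alpha)}$ is primitive).

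\textbf{Key steps.} First I would pin down that $N$ is arc-transitive, so $N=N_\alpha N_\beta$ is a factorization of $N$ with both factors solvable. Next, using a standard lemma on factorizations of direct products (of the type: if $N=T_1\times\cdots\times T_\ell=XY$ then each $T_i$ admits a factorization $T_i=\pi_i(X)\pi_i(Y)$, or one can do better with the ``full'' projections), I would deduce that $T$ itself has a factorization $T=UW$ with $U,W$ solvable. At this point Proposition~\ref{BothSolvable} applies directly: the only nonabelian simple groups admitting a factorization into two solvable subgroups are $\PSL_2(q)$ and the short list $\PSL_2(7)\cong\PSL_3(2)=\PSL_4(2)$ (note $\PSL_4(2)\cong\A_8$, but here one wants the Lie-type name), $\PSL_2(11)$, $\PSL_3(3)$, $\PSL_3(4)$, $\PSL_3(8)$, $\PSU_3(8)$, $\PSU_4(2)\cong\PSp_4(3)$ and $\M_{11}$. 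Intersecting this with the statement's claimed list — $\PSL_2(q)$ with $q\geqslant5$, or $\PSU_3(8)$, $\PSU_4(2)$, $\PSL_4(2)$, $\M_{11}$, or $\PSL_3(q)$ with $q\in\{3,4,5,7,8\}$ — I would need to check that $\PSL_3(5)$ and $\PSL_3(7)$ really do appear in Proposition~\ref{BothSolvable}'s list (they are listed there as possibilities from Kazarin's theorem even though \magma{} rules them out for the two-solvable-factor question; here the relevant point is only that $T$ has \emph{some} factorization into two solvable subgroups, and Kazarin's original list includes $\PSL_3(5)$ and $\PSL_3(7)$, so they must be retained). The small-$q$ cases $\PSL_2(2)\cong\Sy_3$, $\PSL_2(3)\cong\A_4$ are solvable, hence excluded as $T$ is nonabelian simple, which is why one gets $q\geqslant5$.

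\textbf{Main obstacle.} The delicate part is the passage from ``$N=N_\alpha N_\beta$ with $N_\alpha,N_\beta$ solvable'' to ``$T$ admits a factorization into two solvable subgroups.'' One cannot simply project: $\pi_i(N_\alpha)\pi_i(N_\beta)$ need not equal $T_i$ in general from $N_\alpha N_\beta=N$ alone, because $N_\alpha$ might be a ``diagonal-type'' subgroup whose projection is all of $T$. However, since $N_\alpha$ is \emph{solvable} and $T$ is \emph{not}, no projection $\pi_i(N_\alpha)$ can be all of $T_i$; more carefully, one uses the fact that for a factorization $N=XY$ of a direct product of nonabelian simple groups, the ``subdirect'' structure of $X$ and $Y$ forces, after suitable bookkeeping, a genuine factorization of $T$ into proper subgroups, and if $X,Y$ are both solvable these subgroups are solvable. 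I would carry this out via the following reduction: if $X$ is a solvable subgroup of $T_1\times\cdots\times T_\ell$ then $X\leqslant (X\cap\text{product of the }\pi_i(X))$ and each $\pi_i(X)$ is a proper solvable subgroup of $T$; combining $N=XY$ with the order identity $|X\cap Y|\,|N|=|X|\,|Y|$ (Lemma~\ref{p3}) and comparing orders coordinate-wise yields $|T|\leqslant|\pi_i(X)|\,|\pi_i(Y)|$ for at least one $i$ (in fact all $i$), which by Lemma~\ref{p3}(d) gives $T=\pi_i(X)\pi_i(Y)$. Then Proposition~\ref{BothSolvable} finishes it, modulo reading off which entries of that proposition's list are simple groups (all of them) and matching names. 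The only genuine care needed is the isomorphism bookkeeping $\PSL_3(2)\cong\PSL_2(7)$, $\PSL_4(2)\cong\A_8$, $\PSU_4(2)\cong\PSp_4(3)$, and confirming $\PSL_3(5),\PSL_3(7)$ are among the candidates that must be carried along at this stage even though they are later eliminated.
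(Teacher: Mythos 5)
Your proof takes a genuinely different route from the paper's, and it has a fatal gap: you never use the ambient hypothesis that $G$ contains a solvable vertex-transitive subgroup $R$, and instead try to extract a factorization $N=N_\alpha N_\beta$ with both factors solvable, then push it down to a factorization of $T$. This requires $N$ to be arc-transitive, hence $N_\alpha\neq1$. But Lemma~\ref{X=soluble-pro} is applied inside the proof of Lemma~\ref{l11} precisely to the TW case, where by definition $N=\Soc(G)$ is regular on $V$, so $N_\alpha=1$. Your parenthetical claim that ``$N$ cannot be regular on $V$ because [...] forces $N_\alpha\neq1$ by Lemma~\ref{l10}(c)'' misreads that lemma: part~(c) there is a one-directional implication (\emph{if} $K_\alpha\neq1$ and $G_\alpha^{\Ga(\alpha)}$ is primitive, \emph{then} $K$ is arc-transitive); it says nothing that would rule out $N_\alpha=1$. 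Indeed the TW quasiprimitive type exists for $2$-arc-transitive graphs and is only excluded after this lemma, using it. So your argument simply does not cover the case it most needs to. (Incidentally, the ``main obstacle'' you identify in the non-trivial case is not actually an obstacle: for $t\in T_i$, view the element of $N$ with $t$ in coordinate $i$ and identities elsewhere, write it as $xy$ with $x\in N_\alpha$, $y\in N_\beta$, and project, giving $T_i=\pi_i(N_\alpha)\pi_i(N_\beta)$ immediately; both factors are proper since they are solvable. But this cannot rescue the $N_\alpha=1$ case.)

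The paper's argument is both shorter and more robust because it uses $R$. Set $X=NR$. Then $X=NX_\alpha$ (Frattini), and $X=RX_\alpha$ since $R$ is vertex-transitive. The isomorphism $X_\alpha/N_\alpha\cong X/N\cong R/(R\cap N)$ shows $X_\alpha/N_\alpha$ is solvable, so $X_\alpha$ is solvable (using the hypothesis on $N_\alpha$), and thus $X=RX_\alpha$ is a product of two solvable subgroups. Now Kazarin's theorem is applied in the form that constrains the \emph{nonabelian composition factors} of a group that is a product of two solvable subgroups; since $T$ is a composition factor of $N\leqslant X$, it lies on Kazarin's list, which is exactly the list in the lemma. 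This works when $N_\alpha=1$ with no special pleading, and it also explains why $\PSL_3(5)$ and $\PSL_3(7)$ must remain in the conclusion: Kazarin's list is about composition factors, not about which simple groups themselves factor into two solvable subgroups, so the subsequent \magma{} elimination of $\PSL_3(5)$, $\PSL_3(7)$ inside the proof of Proposition~\ref{BothSolvable} is irrelevant here.
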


\begin{proof}
Let $X=NR$. Then $X=NX_\alpha$, and
$$
X_\alpha/N_\alpha\cong NX_\alpha/N=X/N=NR/N\cong R/(R\cap N),
$$
whence $X_\alpha/N_\alpha$ is solvable. This together with the assumption that $N_\alpha$ is solvable implies that $X_\alpha$ is solvable. Hence $X$ is a product of two solvable groups $R$ and $X_\alpha$, and then by \cite{kazarin1986groups} we conclude that $T$ is a one of the groups listed in the lemma.
\end{proof}

One will see that for TW or PA type $G$, the assumption of Lemma~\ref{X=soluble-pro} is satisfied, so that the simple groups listed in Lemma~\ref{X=soluble-pro} are all the possibilities for $T$. For later use, we give some properties for these groups in the next lemma.

\begin{lemma}\label{l16}
Let $T$ be a simple group listed in \emph{Lemma~\ref{X=soluble-pro}}, $r$ be the largest prime divisor of $|T|$, and $m$ be the smallest index of solvable subgroups in $T$. Then $r\leqslant m$, and the following statements hold.
\begin{itemize}
\item[(a)] If $m<3|\Out(T)|$, then $T=\PSL_2(5)$ or $\PSL_2(9)$.
\item[(b)] If $m\leqslant60$, then either $(T,m)=(\PSL_2(q),q+1)$ with prime power $8\leqslant q\leqslant59$, or $(T,m)$ lies in \emph{Table~\ref{tab20}}.
\end{itemize}
\end{lemma}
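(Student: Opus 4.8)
\emph{Overview of the strategy.} The groups $T$ in the list of Lemma~\ref{X=soluble-pro} consist of the infinite family $\PSL_2(q)$ together with finitely many small groups ($\PSU_3(8)$, $\PSU_4(2)\cong\PSp_4(3)$, $\PSL_4(2)\cong\A_8$, $\M_{11}$, and $\PSL_3(q)$ for $q\in\{3,4,5,7,8\}$), so the proof splits along these lines. For $\PSL_2(q)$ I will use Dickson's classification of subgroups \cite[Chapter~2]{huppert1967}: the maximal subgroups are the Borel subgroup $q{:}((q-1)/d)$ of index $q+1$, the dihedral subgroups $\D_{2(q\pm1)/d}$, the subgroups $\A_4$, $\Sy_4$, $\A_5$, and the subfield subgroups $\PSL_2(q_0)$ or $\PGL_2(q_0)$; among these the solvable ones are exactly the Borel and dihedral subgroups, $\A_4$, $\Sy_4$, and solvable subfield subgroups. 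For the finitely many remaining $T$ I will read the subgroup lattice off \cite{atlas}, confirming with \magma \cite{bosma1997magma} where convenient, and record the resulting data in Table~\ref{tab20}.

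\emph{Step 1: the bound $r\le m$ and the generic value of $m$.} Let $T=\PSL_2(q)$ with $q=p^f\geqslant13$. Every prime divisor of $|T|=q(q-1)(q+1)/d$ divides one of $p$, $(q-1)/d$, $(q+1)/d$, each of which is at most $q+1$; hence $r\leqslant q+1$. Conversely the Borel subgroup is solvable of index $q+1$, while among the solvable maximal subgroups only $\A_4$ and $\Sy_4$ could conceivably have smaller index, namely $q(q^2-1)/(12d)$ and $q(q^2-1)/(24d)$, and a short computation shows both exceed $q+1$ once $q\geqslant13$. Thus $m=q+1\geqslant r$ in the generic case. For $q\in\{5,7,8,9,11\}$ and for each non-$\PSL_2$ group in the list I will compute $m$ and $r$ directly and check $r\leqslant m$ by inspection; for example $\PSL_2(5)$ and $\PSL_2(7)$ have $m=5$ and $m=7$ (realised by $\A_4$ and $\Sy_4$), $\PSL_2(9)\cong\A_6$ has $m=10$, and $\PSL_3(3)$ has $m=13=r$ (realised by the parabolic $3^2{:}\GL_2(3)$, whose index equals $P(\PSL_3(3))=13$).

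\emph{Step 2: parts (a) and (b).} For part~(a) I must show $m\geqslant3|\Out(T)|$ for every $T$ in the list other than $\PSL_2(5)$ and $\PSL_2(9)$. For $T=\PSL_2(q)$ with $q\geqslant13$ we have $|\Out(T)|=df$ with $d=\gcd(2,q-1)$ and, by Step~1, $m=q+1$, so the claim reduces to the elementary inequality $p^f+1\geqslant3df$, which is verified by comparing $p^f$ with $f$ (treating $p=2$ and $p$ odd separately; the only near-equalities occur at the excluded values $q=4$ and $q=8$). The cases $q\in\{5,7,8,9,11\}$ and the finitely many other $T$ are checked individually, the exceptions $q\in\{5,9\}$ arising precisely because $\A_4\leqslant\PSL_2(5)$ and the Borel of $\PSL_2(9)$ have index $5$ and $10$, below $3|\Out|=6$ and $12$ respectively. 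For part~(b), if $m\leqslant60$ then for $T=\PSL_2(q)$ with $q\geqslant13$ we get $q+1\leqslant60$, i.e.\ $q\leqslant59$, which together with the small values $q=8,9,11$ (also satisfying $m=q+1\leqslant60$) yields the family $(\PSL_2(q),q+1)$ with prime power $8\leqslant q\leqslant59$; all other possibilities with $m\leqslant60$ — namely $\PSL_2(5)$, $\PSL_2(7)$, and those exceptional groups whose minimal solvable-subgroup index does not exceed $60$ — are collected into Table~\ref{tab20}, and for the remaining exceptional groups one checks $m>60$ from the subgroup structure.

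\emph{Main obstacle.} The only genuine content lies in the $\PSL_2(q)$ case with $q\geqslant13$: establishing via Dickson's theorem that the Borel subgroup realises $m=q+1$ (equivalently, that the competitors $\A_4$ and $\Sy_4$ have strictly larger index as soon as $q\geqslant13$), and then the elementary but slightly fiddly inequality $q+1\geqslant3df$. Everything else is a bounded computation; the care required there is simply not to overlook a small-index solvable subgroup in one of the few exceptional groups, and to list the correct exceptions in Table~\ref{tab20}.
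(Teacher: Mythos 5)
Your proposal is correct and takes essentially the same route as the paper, with one local variation. To establish $m=q+1$ for $T=\PSL_2(q)$ with $q=8$ or $q\geqslant13$, the paper simply invokes Theorem~\ref{l7}: the smallest index of \emph{any} proper subgroup of $T$ is $P(T)=q+1$, so $m\geqslant q+1$ immediately, and the solvable Borel $q{:}((q-1)/(2,q-1))$ of index $q+1$ gives equality. You instead go through Dickson's classification, showing the Borel undercuts the other solvable subgroups; that works but carries a little extra bookkeeping (you should also dispose of the dihedral and solvable subfield subgroups, not only $\A_4$ and $\Sy_4$, and note that $\A_4,\Sy_4$ need not even embed when $p=2$ — harmless here since $q=8$ is excluded from your generic argument). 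After that, both proofs reduce to the same elementary inequality $q+1\geqslant 3(2,q-1)f$ and the same finite inspection via \cite{atlas} of $q\in\{5,7,9,11\}$ and of $\PSU_3(8)$, $\PSU_4(2)$, $\PSL_4(2)$, $\M_{11}$, $\PSL_3(q)$ for $q\in\{3,4,5,7,8\}$.
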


\begin{table}[htbp]
\caption{}\label{tab20}
\centering
\begin{tabular}{|l|llllll|}
\hline
$T$&$\PSL_2(5)$&$\PSL_2(7)$&$\PSL_3(3)$&$\PSL_4(2)$&$\PSU_4(2)$&$\M_{11}$\\
\hline
$m$&$5$&$7$&$13$&$35$&$40$&$55$\\
\hline
\end{tabular}
\end{table}

\begin{proof}
First, for $T=\PSU_3(8)$, $\PSU_4(2)$, $\PSL_4(2)$, $\M_{11}$, or $\PSL_3(q)$ with $q\in\{3,4,5,7,8\}$, the value of $m$ can be derived from~\cite{atlas}, verifying the lemma directly. Next assume that $T=\PSL_2(q)$ with prime power $q\geqslant5$. For $q=5,7,9,11$, we see from~\cite{atlas} that $m=5,7,10,12$, respectively, satisfying the conclusion of the lemma. Thus assume that $q=8$ or $q\geqslant13$ in the following. Then according to Theorem~\ref{l7}, the smallest index of proper subgroups of $T$ is $q+1$. As a consequence, $m\geqslant q+1$. At the meanwhile, $T=\PSL_2(q)$ has a solvable subgroup $q{:}(q-1)/(2,q-1)$ of index $q+1$. Thereby we obtain $m=q+1\geqslant3|\Out(T)|$. Moreover, $r\di q(q+1)(q-1)$ implies that $r\di q$, $q+1$ or $q-1$, and hence $r\leqslant q+1=m$. This proves the lemma.
\end{proof}

\begin{lemma}\label{l11}
$G$ is not of type TW.
\end{lemma}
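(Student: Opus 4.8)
The plan is to recall that in a quasiprimitive group $G$ of type TW acting on $V$, the socle $N = T_1 \times \dots \times T_\ell$ is a nonabelian regular normal subgroup, so in particular $N$ is transitive on $V$ and $N_\alpha = 1$. Thus the hypothesis of Lemma~\ref{X=soluble-pro} is trivially satisfied (indeed $N_\alpha$ is trivial, hence solvable), and $T$ must be one of the simple groups listed there. Moreover, since $N$ is regular, $|V| = |N| = |T|^\ell$, and since $R$ is vertex-transitive we have $|R| \geqslant |V| = |T|^\ell$. The first step is to combine this with Dixon's bound (Theorem~\ref{l13}) and the structure of $G$: because $G$ is $2$-arc-transitive on a connected graph, Theorem~\ref{DoubleStar} severely restricts $G_\alpha^{\Ga(\alpha)}$, and in the TW case one knows $G$ acts faithfully and the point stabilizer $G_\alpha$ embeds into $\Out(T)^\ell$-type data via the twisted wreath construction; I would use this to bound $|G_\alpha|$ and hence $|G| = |V|\,|G_\alpha| = |T|^\ell |G_\alpha|$.

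Next I would exploit the tension between two facts: on one hand $R \leqslant G$ is solvable and vertex-transitive, so $|R| \geqslant |T|^\ell$; on the other hand $R$ sits inside $G$, whose order is controlled. The key quantitative input is Lemma~\ref{l14} (from \cite{LPS}): any solvable subgroup of the nonabelian simple group $T$ misses some prime divisor $r$ of $|T|$. Applying this $\ell$ times, or rather applying it to the projections, I want to show that a solvable subgroup of $N = T^\ell$ — and more generally of $K \leqslant \Aut(T)^\ell$ — cannot have order divisible by the full $r$-part needed to be transitive on $V$ when combined with the constraints coming from the permutation group $G_\alpha^{\Ga(\alpha)}$ being primitive (and in the $2$-arc-transitive case, $2$-transitive). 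Concretely, $R$ transitive forces $|R|_r \geqslant |V|_r = |T|_r^\ell$ for every prime $r$; but the $r$ furnished by Lemma~\ref{l14} does not divide $|R \cap N|$, so all of its $r$-part must come from $R N / N \hookrightarrow G/N$, which is a subgroup of a group of order dividing $\ell!\,|\Out(T)|^\ell$. This gives $|T|_r^\ell \mid |RN/N|_r \mid (\ell!)_r |\Out(T)|_r^\ell$, and then Theorem~\ref{CommonDivisor} ($|T|_r \geqslant r|\Out(T)|_r$, with the short list of equality cases) together with Lemma~\ref{divisors-n!} ($(\ell!)_r < r^{\ell/(r-1)}$) yields a contradiction for all $\ell \geqslant 2$.

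I expect the main obstacle to be the bookkeeping in the borderline cases: when $T = \PSL_2(q)$ for small $q$ (the equality cases in Theorem~\ref{CommonDivisor}, parts (a)–(b)) the crude inequality $|T|_r^\ell \mid (\ell!)_r |\Out(T)|_r^\ell$ might not immediately fail for $\ell = 2$ at the prime $r = 2$ or $r = 3$, so one has to choose $r$ more carefully — for instance taking $r$ to be the largest prime divisor of $|T|$, which by Lemma~\ref{l16} satisfies $r \leqslant m$ and for $\PSL_2(q)$ equals a primitive prime divisor of $q \pm 1$ with trivial $\Out$-contribution — and then the divisibility $|T|_r^\ell \mid (\ell!)_r$ is visibly impossible since $r > \ell$ whenever $r$ exceeds the degree, forcing $(\ell!)_r = 1$ while $|T|_r \geqslant r > 1$. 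I would structure the argument so that the generic case uses $|\Out(T)|$-comparison and the small cases use the largest-prime-divisor trick, and I would double-check that $R$ being \emph{vertex}-transitive (not merely a subgroup) is what supplies the crucial lower bound $|R| \geqslant |V| = |T|^\ell$; the connectedness and $2$-arc-transitivity enter only to pin down that $G$ is genuinely of TW type with $N$ regular and to invoke Lemma~\ref{normal-quotient}/\ref{X=soluble-pro}. Finally I would note that the conclusion is clean: no configuration survives, so $G$ is not of type TW.
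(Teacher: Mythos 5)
Your high-level setup is correct (TW implies $N$ regular, $N_\alpha=1$, $|V|=|T|^\ell$, so Lemma~\ref{X=soluble-pro} applies and Dixon's bound controls $\overline{R}=RK/K\lesssim\Sy_\ell$), and your arithmetic at the end of the $r$-part computation is sound if its premise holds. But the premise — ``the $r$ furnished by Lemma~\ref{l14} does not divide $|R\cap N|$'' — is a genuine gap. Lemma~\ref{l14} applied to the projection $R_1\leqslant T_1$ gives a prime $r$ with $r\nmid|R_1|$, but in the TW case $R$ need not be transitive on $\calT=\{T_1,\dots,T_\ell\}$, so the projections $R_1,\dots,R_\ell$ need not be conjugate or even isomorphic, and each $R_i$ can miss a \emph{different} prime. (Contrast with the PA case, Lemma~\ref{K-imprim}, where the paper \emph{does} apply this exact argument — but there it is justified by first showing the relevant subgroup acts transitively on $\calT$, forcing all the $R_i$ to be conjugate.) Already for $T=\A_5$ one can have $R_1\cong\D_{10}$ and $R_2\cong\A_4$, so that $\{3,5\}$ are both covered and no single prime is missing from $|R\cap N|$. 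Your fallback of taking $r$ to be the largest prime divisor does not close the gap either: for $T=\PSL_2(q)$ the Borel subgroup contains the characteristic, the dihedral subgroups $\D_{q\pm1}$ cover the other primes, so every prime divisor of $|T|$ lies in some maximal solvable subgroup.

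The paper sidesteps this by not hunting for a single missing prime. In the generic case it uses Lemma~\ref{l16}(a): if $m$ is the smallest index of a solvable subgroup of $\Aut(T)$ and $m\geqslant3|\Out(T)|$, then $|R_i|\leqslant|\Aut(T)|/m\leqslant|T|/3$ for \emph{every} $i$, so $|R\cap K|\leqslant|T|^\ell/3^\ell$, forcing $|\overline{R}|\geqslant3^\ell$, which beats Dixon's bound $24^{(\ell-1)/3}$ for all $\ell\geqslant2$ — no prime-by-prime bookkeeping at all. In the two exceptional cases $T=\PSL_2(5)$ and $\PSL_2(9)$ (where $m<3|\Out(T)|$), the paper replaces your ``single missing prime'' idea with a two-prime dichotomy: every solvable subgroup of $\Aut(T)$ misses $3$ or misses $5$ (its order divides $24$ or $20$, resp.\ $288$ or $40$), so one partitions the $\ell$ indices into $x$ of one kind and $y$ of the other, obtains simultaneously $3^\ell\di3^x(\ell!)_3$ and $5^\ell\di5^y(\ell!)_5$ from the transitivity of $R$, and adds the two Legendre inequalities to reach $2\ell<7\ell/4$, a contradiction. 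Theorem~\ref{CommonDivisor} is never invoked in this lemma. If you want to salvage your plan, the correct fix is exactly this partitioning of indices by which prime is missing (applying Lemma~\ref{l14} projection-by-projection, as you vaguely suggest), together with the observation that in the non-exceptional cases the uniform bound $|R_i|\leqslant|T|/3$ is simpler than tracking $r$-parts.
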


\begin{proof}
Suppose that $G$ is quasiprimitive of TW type. Then $N_\alpha=1$, and $|V|=|N|=|T|^\ell$. As a consequence, $T$ is a simple group listed in Lemma~\ref{X=soluble-pro}. Let $m$ be the smallest index of solvable subgroups of $\Aut(T)$, and $\overline{R}=RK/K$. Then $\overline{R}\leqslant G/K\lesssim\Sy_\ell$, and $|R\cap K||\overline{R}|=|R|$ is divisible by $|T|^\ell$ since $R$ is transitive on $V$. Let $R_i$ be the projection of $R\cap K$ into $\Aut(T_i)$, where $1\leqslant i\leqslant\ell$. Then $R\cap K\lesssim R_1\times\dots\times R_\ell$. By Lemma~\ref{l16}, either $m\geqslant3|\Out(T)|$, or $T=\PSL_2(5)$ or $\PSL_2(9)$.

First assume that $m\geqslant3|\Out(T)|$. For $1\leqslant i\leqslant\ell$, since $|R_i|m\leqslant|\Aut(T)|$, we have $|R_i|\leqslant|T|/3$. Now $|R\cap K|\leqslant|R_1|\cdots|R_\ell|\leqslant|T|^\ell/3^\ell$, while $|R\cap K||\overline{R}|\geqslant|T|^\ell$. Thus $|\overline{R}|\geqslant3^\ell$, which violates Theorem~\ref{l13} on the order of the solvable subgroup $\overline{R}$ in $\Sy_\ell$.

Next assume that $T=\PSL_2(5)$. Then any solvable subgroup of $\Aut(T)$ has order dividing $24$ or $20$. Hence $|R_1|\cdots|R_\ell|$ divides $24^x20^y$ for some nonnegative integers $x$ and $y$ with $x+y=\ell$. Viewing that $|R\cap K|$ divides $|R_1|\cdots|R_\ell|$ and $|\overline{R}|$ divides $\ell!$, we obtain $60^\ell\di24^x20^y\ell!$ since $60^\ell$ divides $|R|=|R\cap K||\overline{R}|$. In particular, $3^\ell\di3^x(\ell!)_3$ and $5^\ell\di5^y(\ell!)_5$. This implies $\ell<x+\ell/2$ and $\ell<y+\ell/4$ by Lemma~\ref{divisors-n!}, which leads to a contradiction that $2\ell<(x+\ell/2)+(y+\ell/4)=x+y+3\ell/4=7\ell/4$.

Finally assume that $T=\PSL_2(9)$. Then any solvable subgroup of $\Aut(T)$ has order dividing $288$ or $40$, and so $|R\cap K|$ divides $288^x40^y$ for some nonnegative integers $x$ and $y$ with $x+y=\ell$. It follows that $360^\ell\di288^x40^y\ell!$. In particular, $3^{2\ell}\di3^{2x}(\ell!)_3$ and $5^\ell\di5^y(\ell!)_5$. This implies $2\ell<2x+\ell/2$ and $\ell<y+\ell/4$ by Lemma~\ref{divisors-n!}, which leads to a contradiction that $2\ell<x+y+\ell/2=3\ell/2$. The proof is thus completed.
\end{proof}

In the next few lemmas we exclude quasiprimitive type PA for $G$. Recall that $K\leqslant\Aut(T_1)\times\dots\times\Aut(T_\ell)$. We call a subgroup of $K$ \emph{diagonal} if it isomorphic to its projection into $\Aut(T_i)$ for each $1\leqslant i\leqslant\ell$.

\begin{lemma}\label{K-imprim}
If $G$ is of type PA, then $K_\alpha^{\Ga(\alpha)}$ is transitive and imprimitive.
\end{lemma}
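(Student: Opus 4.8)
The statement concerns a quasiprimitive group $G$ of type PA acting $2$-arc-transitively on a connected graph $\Ga$, with a solvable vertex-transitive subgroup $R$, and asserts that the induced action $K_\alpha^{\Ga(\alpha)}$ of the kernel $K$ on the neighbourhood $\Ga(\alpha)$ is transitive and imprimitive. The plan is to argue in two stages: first, that $K_\alpha^{\Ga(\alpha)}$ is transitive, and second, that it cannot be primitive.

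\textbf{Transitivity.} First I would recall that, since $G$ is of type PA, the socle $N=T_1\times\dots\times T_\ell$ acts on $V$ with point stabilizer $N_\alpha$ a nontrivial subdirect product, and in particular $N_\alpha\neq1$; hence also $K_\alpha\neq1$ as $N\leqslant K$. Since $\Ga$ is connected and $G$-arc-transitive, $G_\alpha^{\Ga(\alpha)}$ is a transitive group, and by Theorem~\ref{DoubleStar} (applied via the standard fact that a connected $(G,2)$-arc-transitive graph has $G_\alpha^{\Ga(\alpha)}$ $2$-transitive) $G_\alpha^{\Ga(\alpha)}$ is $2$-transitive, hence primitive on $\Ga(\alpha)$. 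Now $K_\alpha\trianglelefteq G_\alpha$ (because $K\trianglelefteq G$), so $K_\alpha^{\Ga(\alpha)}$ is a normal subgroup of the primitive group $G_\alpha^{\Ga(\alpha)}$. As in the proof of Lemma~\ref{l10}(c), a nontrivial normal subgroup of a primitive group is transitive; so it remains only to check $K_\alpha^{\Ga(\alpha)}\neq1$. If $K_\alpha^{\Ga(\alpha)}=1$ then $K_\alpha$ fixes $\Ga(\alpha)$ pointwise, and by connectedness (the standard argument that the pointwise stabilizer of a neighbourhood, being normal in the stabilizer of an edge and propagating along the connected graph, is trivial) we would get $K_\alpha=1$, contradicting $K_\alpha\neq1$. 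Therefore $K_\alpha^{\Ga(\alpha)}$ is transitive.

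\textbf{Imprimitivity.} Suppose for contradiction that $K_\alpha^{\Ga(\alpha)}$ is primitive on $\Ga(\alpha)$. The key point is that in PA type the group $G$ preserves a product structure on $V$, i.e.\ $V$ can be identified with $\Delta^\ell$ for some set $\Delta$ on which (a section of) $K$ acts, and the normality of $N$ forces $N_\alpha$ (hence $K_\alpha$) to respect this decomposition; this means $K_\alpha^{\Ga(\alpha)}$ itself carries a product-action structure, or more precisely a block structure reflecting the $\ell\geqslant2$ coordinates, which is incompatible with primitivity of a $2$-transitive group unless the coordinatewise components are themselves $2$-transitive and the whole thing degenerates. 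I would then combine this with the constraint coming from the solvable vertex-transitive subgroup $R$: using Lemma~\ref{l10}(a) one has $G_\alpha/K_\alpha\cong G/K\lesssim\Sy_\ell$, and $RK/K$ is transitive on $\calT$, so $\overline{R}=RK/K$ is a solvable transitive subgroup of $\Sy_\ell$. If $K_\alpha^{\Ga(\alpha)}$ were primitive, then by Theorem~\ref{DoubleStar}(a)--(b) applied to the group $X=NR$ (or directly to $G$) the bottom group $N_\alpha^{\Ga(\alpha)}$ would be forced to be trivial — since in PA type $N_\alpha$ acts on $\Ga(\alpha)$ through its projections, and a primitive $2$-transitive group of the relevant kind cannot arise as such a projection-controlled action — giving $N_\alpha^{\Ga(\alpha)}=1$, hence $N_\alpha=1$ by connectedness, contradicting that $G$ is of type PA.

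\textbf{Main obstacle.} The delicate step is making the product-structure argument precise: translating "$G$ preserves a product action on $V$" into a usable statement about the local action $K_\alpha^{\Ga(\alpha)}$ and showing primitivity there fails. The cleanest route is probably to invoke the results of~\cite{Li-Seress-Song} (already cited in the proof of Lemma~\ref{normal-stab}) on $2$-arc-transitive graphs whose automorphism group has an imprimitive normal subgroup locally, together with the observation that in PA type the coordinate blocks give $K_\alpha$ a nontrivial system of blocks on $\Ga(\alpha)$; then primitivity of $G_\alpha^{\Ga(\alpha)}$ combined with $K_\alpha\trianglelefteq G_\alpha$ yields a contradiction unless these blocks are trivial, which is exactly the degenerate situation ruled out by $\ell\geqslant2$ together with the transitivity of $N$ on $\calT$. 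I expect the bulk of the remaining work to be bookkeeping with the product structure and the normal subgroup $K$, not deep group theory.
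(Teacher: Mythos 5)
The transitivity half of your argument is fine and matches the paper: since $G$ is PA, $N_\alpha\neq1$, hence $K_\alpha\neq1$, and $K_\alpha^{\Ga(\alpha)}$ is a nontrivial normal subgroup of the $2$-transitive (hence primitive) group $G_\alpha^{\Ga(\alpha)}$, so it is transitive; this is exactly Lemma~\ref{l10}(c). (Minor quibble: $2$-transitivity of $G_\alpha^{\Ga(\alpha)}$ is an immediate consequence of $(G,2)$-arc-transitivity, not of Theorem~\ref{DoubleStar}.)

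The imprimitivity half, however, has a genuine gap, and the central claim on which you hang it is not correct as stated. You assert that because $G$ preserves a product structure on $V$, the local action $K_\alpha^{\Ga(\alpha)}$ ``itself carries a product-action structure, or more precisely a block structure reflecting the $\ell\geqslant2$ coordinates.'' There is no reason for that to hold: $\Ga(\alpha)$ is an arbitrary $G_\alpha$-orbit inside $V$, and the product structure on $V$ does not descend to a block system for $K_\alpha$ acting on $\Ga(\alpha)$. The paper never claims such a block system; its mechanism is entirely different. It sets $M_i=\C_K(T_i)\trianglelefteq K$, observes that $M_i$ has more than two orbits on $V$, and then applies Lemma~\ref{normal-quotient} with $K$ in the role of the arc-transitive group: \emph{if} $K_\alpha^{\Ga(\alpha)}$ were primitive, Lemma~\ref{normal-quotient} would force each $M_i$ to be semiregular on $V$, whence $K_\alpha\cap M_i=1$ and $K_\alpha$ (so also $N_\alpha$) is diagonal. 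That diagonal shape is what gives control: one gets $|V|$ divisible by $|T|^{\ell-1}$ and $K/N\lesssim\Out(T)$, and then the vertex-transitive solvable $R$ is squeezed through divisibility estimates — orbit-by-orbit on $\calT$ using Lemma~\ref{divisors-n!}, the bound $|T|_r\geqslant r|\Out(T)|_r$ from Theorem~\ref{CommonDivisor}, Kazarin's theorem, and finally Theorem~\ref{SolvableFactor} applied to the almost simple quotients $K/M_i$ — until a contradiction is reached. None of this machinery appears in your sketch; the phrase ``a primitive $2$-transitive group of the relevant kind cannot arise as such a projection-controlled action'' is doing all the work without any supporting argument. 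Your appeal to~\cite{Li-Seress-Song} is also misplaced here: in the paper those results are used \emph{after} this lemma (in Lemma~\ref{normal-stab}), precisely in the imprimitive case that this lemma establishes, not to prove imprimitivity. So the essential idea of the paper's proof — primitivity $\Rightarrow$ semiregular $M_i$ $\Rightarrow$ diagonal $K_\alpha$ $\Rightarrow$ arithmetic contradiction via $R$ — is missing from your proposal, and what you substitute for it is not a valid argument.
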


\begin{proof}
As $G$ is quasiprimitive of type PA, we have $N_\alpha\neq1$. It follows from Lemma~\ref{l10} that $N_\alpha^{\Ga(\alpha)}$ is transitive, and so is $K_\alpha^{\Ga(\alpha)}$. For $1\leqslant i\leqslant\ell$, let $R_i$ be the projection of $R\cap N$ into $T_i$ and $M_i=\C_K(T_i)$. Then $M_i\lhd K$, and $K/M_i$ is almost simple with socle $T$. Note that $M_i\times T_i\geqslant N$ is transitive on $V$ while $M_i$ is not. We infer that $T_i$ transitively permutes the orbits of $M_i$, and so $M_i$ has more than two orbits.

Suppose for a contradiction that $K_\alpha^{\Ga(\alpha)}$ is primitive. Then $M_i$ is semiregular on $V$ by Lemma~\ref{normal-quotient}. Hence $K_\alpha\cap M_i=1$ for each $1\leqslant i\leqslant\ell$, which means that $K_\alpha$ is diagonal. Without loss of generality, assume $K_\alpha=\{(x,\dots,x)\mid x\in P\}$ for some $P\leqslant\Aut(T)$. Then $K_\alpha\cong P$ and $N_\alpha=K_\alpha\cap N=K_\alpha\cap(T_1\times\dots\times T_\ell)=\{(x,\dots,x)\mid x\in P\cap T\}$. Consequently, $|V|=|N|/|N_\alpha|$ is divisible by $|T|^{\ell-1}$, and $K_\alpha/N_\alpha\cong P/P\cap T\cong PT/T\leqslant\Out(T)$. Since $N$ is transitive on $V$, we derive that
$$
K/N=K_\alpha N/N\cong K_\alpha/K_\alpha\cap N=K_\alpha/N_\alpha\lesssim\Out(T).
$$
Hence $|R\cap K|/|R\cap N|$ divides $|\Out(T)|$, and $|R\cap K||RK/K|=|R|$ is divisible by $|T|^{\ell-1}$ due to the vertex-transitivity of $R$. Then as $|R\cap N|$ divides $|R_1\times\dots\times R_\ell|$,
\begin{equation}\label{eq27}
\mbox{$|T|^{\ell-1}$ divides $|R_1\times\dots\times R_\ell||\Out(T)||RK/K|$.}
\end{equation}

Consider an arbitrary orbit of $R$ by conjugation on $\calT$, say $\{T_1,\dots,T_k\}$, where $1\leqslant k\leqslant\ell$. Clearly, $R_1\cong\dots\cong R_k$. Let $\overline{N}=T_1\times\dots\times T_k$, $L=M_1\cap\dots\cap M_k$, $\overline{K}=KL/L$, $\overline{V}$ be the orbits of $L$ on $V$, $v\in\overline{V}$, and $\overline{R}$ be the permutation group on $\overline{V}$ induced by $R$. Then $\overline{N},\overline{K}$ are transitive permutation groups on $\overline{V}$, and since $K_\alpha$ is diagonal, $\overline{K}_v$ is diagonal too. Moreover, the projection of $\overline{R}\cap\overline{N}$ into $T_i$ is still $R_i$ for $1\leqslant i\leqslant k$. Noticing $\overline{R}\,\overline{K}/\overline{K}\lesssim\Sy_k$, then along the same lines of the previous paragraph, we derive that
\begin{equation}\label{eq5}
\mbox{$|T|^{k-1}$ divides $|R_1\times\dots\times R_k||\Out(T)||\Sy_k|=|R_1|^k|\Out(T)|k!$}.
\end{equation}
Assume that $k\geqslant1$. Since $R_1$ is a solvable subgroup of $T_1$, we know from Lemma~\ref{l14} that there exists a prime $r$ dividing $|T|$ but not $|R_1|$. By Theorem~\ref{CommonDivisor}, $|T|_r\geqslant r|\Out(T)|_r$, whence $r|T|_r^{k-2}$ divides $(k!)_r$ by (\ref{eq5}). Accordingly,
\begin{equation}\label{eq4}
r|T|_r^{k-2}\leqslant(k!)_r<r^{k/(r-1)}
\end{equation}
by Lemma~\ref{divisors-n!}. In particular, $r^{k-1}<r^{k/(r-1)}$, which forces $r=2$. As $|T|$ is divisible by $4$, (\ref{eq4}) implies that $2^{2k-3}<2^k$, and so $k\leqslant2$. If $k=2$, then $|T|_2$ divides $2|\Out(T)|_2$ because of (\ref{eq5}), and thereby we have $T=\PSL_2(p^f)$ with prime $p\equiv\pm3\pmod{8}$ by Theorem~\ref{CommonDivisor}. However, in this situation (\ref{eq5}) requires that $|\PSL_2(p^f)|=p^f(p^{2f}-1)/2$ divides $2|R_1|^2|\Out(T)|=4f|R_1|^2$, which is not possible for a solvable subgroup $R_1$ of $\PSL_2(p^f)$. Therefore, $k=1$. It follows that $\overline{K}=K/M_1$ is an almost simple group with socle $T$, and $\overline{R}\cap\overline{N}=\overline{R}\cap\soc(\overline{K})\cong R_1$.

The conclusion of the previous paragraph implies that $R$ normalizes $T_i$ for each $1\leqslant i\leqslant\ell$, whence $R\leqslant K$. For $1\leqslant i\leqslant\ell$ denote by $\alpha_i$ the orbit of $M_i$ on $V$ containing $\alpha$. Then by the transitivity of $R$ we have a factorization $K/M_i=(RM_i/M_i)(K/M_i)_{\alpha_i}$. Moreover, $K/M_i$ is an almost simple group with socle $T$, $(RM_i/M_i)\cap\soc(K/M_i)\cong R_i$, and $(K/M_i)_{\alpha_i}\cong K_\alpha$ by Lemma~\ref{normal-quotient}. Let $Y=RT_1$. Then $T_1$ is normal in $Y$ as $T_1$ is normal in $K$. Since $(T_1)_\alpha=1$ by Lemma~\ref{normal-quotient}, we have $Y_\alpha\cap T_1=1$, and thus
$$
Y_\alpha=Y_\alpha/Y_\alpha\cap T_1\cong Y_\alpha T_1/T_1\leqslant Y/T_1=RT_1/T_1\cong R/R\cap T_1
$$
is solvable. Now $Y=RY_\alpha$ is a product of two solvable subgroups, so by~\cite{kazarin1986groups}, $T\cong T_1$ is one of the groups: $\PSL_2(q)$ with $q\geqslant5$, $\PSU_3(8)$, $\PSU_4(2)$, $\PSL_4(2)$, $\M_{11}$, and $\PSL_3(q)$ with $q\in\{3,4,5,7,8\}$. Applying Theorem~\ref{SolvableFactor} to the factorizations $K/M_i=(RM_i/M_i)(K/M_i)_{\alpha_i}$ for $1\leqslant i\leqslant\ell$, where $\soc(K/M_1)\cong\dots\cong\soc(K/M_\ell)\cong T$ is one of the above groups and $(K/M_1)_{\alpha_1}\cong\dots\cong(K/M_\ell)_{\alpha_\ell}$, we conclude that there exists a prime divisor $r$ of $|T|$ dividing none of $|(RM_i/M_i)\cap\soc(K/M_i)|$ for $1\leqslant i\leqslant\ell$. Consequently, $r$ does not divide $|R_1\times\dots\times R_k|$. Thus (\ref{eq27}) implies that $|T|_r^{\ell-1}$ divides $|\Out(T)|_r$, which is not possible by Theorem~\ref{CommonDivisor}. This contradiction completes the proof.
\end{proof}

\begin{lemma}\label{Restriction}
Let $\overline{R}=RK/K$. If $G$ is of type PA, then $G_\alpha^{\Ga(\alpha)}$ is an affine $2$-transitive permutation group of degree $p^d$, where $p$ is prime and $d\geqslant2$, and the following statements hold.
\begin{itemize}
\item[(a)] $\overline{R}$ is isomorphic to a solvable subgroup of $G_{\alpha\beta}/K_{\alpha\beta}$, and $|T|^\ell/|R_1\times\dots R_\ell|$ divides $p^d|K_{\alpha\beta}||\overline{R}|$.
\item[(b)] $|T|^\ell/|R_1\times\dots R_\ell|$ divides $|K_{\alpha\beta}^{\Ga(\alpha)}||G_\alpha^{\Ga(\alpha)}|$.
\item[(c)] $K_\alpha$ is solvable, and $K_{\alpha\beta}$ is diagonal.
\item[(d)] $T$ is one of the groups listed in \emph{Lemma~\ref{X=soluble-pro}}.
\end{itemize}
\end{lemma}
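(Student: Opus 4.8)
\textbf{Proof proposal for Lemma~\ref{Restriction}.}
The plan is to run the same diagonal–subgroup argument that appeared in the proof of Lemma~\ref{K-imprim}, but now fed through the structural machinery of Lemma~\ref{normal-stab} instead of Lemma~\ref{normal-quotient}. By Lemma~\ref{K-imprim}, $K_\alpha^{\Ga(\alpha)}$ is transitive and imprimitive; since $N_\alpha\neq1$ (because $G$ is quasiprimitive of type PA, so $N$ is transitive and nontrivial on $V$) and $N\trianglelefteq K$, the action $N_\alpha^{\Ga(\alpha)}$ is nontrivial, and by connectedness this forces $N_\alpha\neq1$ to act nontrivially; one checks $K_\alpha\neq1$ is imprimitive on $\Ga(\alpha)$. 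Thus the hypotheses of Lemma~\ref{normal-stab} are met with the vertex-transitive normal subgroup $K$ in place of its ``$K$''. That lemma immediately yields that $G_\alpha^{\Ga(\alpha)}$ is an affine $2$-transitive group of degree $p^d$ with $d\geqslant2$, gives $G_{\alpha\beta}^{[1]}=1$ (so in particular $K_{\alpha\beta}^{[1]}\leqslant G_{\alpha\beta}^{[1]}=1$, meaning $K_{\alpha\beta}\cong K_{\alpha\beta}^{\Ga(\alpha)}$ is cyclic), and describes $K_\alpha$ and $K_{\alpha\beta}$ explicitly. From $G_{\alpha\beta}^{[1]}=1$ we also get that $K_{\alpha\beta}\hookrightarrow K_{\alpha\beta}^{\Ga(\alpha)}$ is cyclic, hence solvable, and $K_\alpha\cong\Z_k\times(\Z_p^d{:}\Z_m)$ is solvable — this is part (c)'s first claim; the diagonality of $K_{\alpha\beta}$ needs the separate argument below.

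For the counting statements (a) and (b), I would first establish that $K_{\alpha\beta}$ is diagonal. The centralizers $M_i=\C_K(T_i)$ satisfy $M_i\trianglelefteq K$ and, as in Lemma~\ref{K-imprim}, $T_i$ permutes the $M_i$-orbits on $V$ transitively so $M_i$ has more than two orbits; hence by Lemma~\ref{normal-quotient}(a) each $M_i$ is semiregular on $V$, giving $K_\alpha\cap M_i=1$ and therefore $K_\alpha$ (hence $K_{\alpha\beta}$) is diagonal. This proves (c). Writing $\overline{R}=RK/K\lesssim\Sy_\ell$, vertex-transitivity of $R$ gives $|T|^\ell\di|V|\,|K_\alpha|$ — more precisely $|V|=|N|/|N_\alpha|=|T|^\ell/|N_\alpha|$ and $|R|$ is divisible by $|V|$, so $|T|^\ell$ divides $|N_\alpha|\,|R|$. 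Since $R\cap N$ projects into $R_i\leqslant T_i$, we have $|R\cap N|\di|R_1\times\dots\times R_\ell|$, and combining $|R|=|R\cap K||\overline{R}|$ with $K_\alpha$ diagonal (so $|K_\alpha|$ divides $|P|$ for $P$ its common projection, and $|N_\alpha|=|P\cap T|$, $|K_\alpha/N_\alpha|\leqslant|\Out(T)|$, forcing $K/N\lesssim\Out(T)$ as in Lemma~\ref{K-imprim}) one extracts the stated divisibilities. Using $|K_\alpha|=p^d|K_{\alpha\beta}|/|K_\alpha^{\Ga(\alpha)}|\cdot|K_\alpha^{\Ga(\alpha)}|$... more cleanly: $|K_\alpha|$ divides $p^d|K_{\alpha\beta}|$ because $K_\alpha^{\Ga(\alpha)}$ has degree $p^d$ and stabilizer $K_{\alpha\beta}^{\Ga(\alpha)}\cong K_{\alpha\beta}$, giving $|K_\alpha|=|K_\alpha^{\Ga(\alpha)}|\,|K_{\alpha\beta}|\leqslant p^d|K_{\alpha\beta}|$; feeding this into $|T|^\ell\di|N_\alpha|\,|R\cap K||\overline{R}|$ and $|N_\alpha|\di|K_\alpha|$ yields (a), and passing to the $\Ga(\alpha)$-actions (replacing $|K_{\alpha\beta}|,|K_\alpha|$ by $|K_{\alpha\beta}^{\Ga(\alpha)}|,|G_\alpha^{\Ga(\alpha)}|$ via Lemma~\ref{abeq} and $G_{\alpha\beta}^{[1]}=1$) yields (b). For (a) one also checks that $\overline{R}$ embeds into $G_{\alpha\beta}/K_{\alpha\beta}$: indeed $R\cap K\leqslant R_{\alpha\beta}$ up to conjugacy isn't quite it — rather, since $RK=G$ would be too strong; instead use that $R$ acts on $\calT$ through $\overline{R}$ and an arc stabilizer argument shows $\overline{R}\cong R/(R\cap K)$ maps into $G_{\alpha\beta}/K_{\alpha\beta}$ by the coset-graph element $g$ of Lemma~\ref{CosetGraph} normalizing things appropriately.

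Finally, part (d): since $K_\alpha$ is solvable by (c), the group $X=NR$ is a product of the two solvable subgroups $R$ and $X_\alpha$ (as $X_\alpha/N_\alpha$ is a quotient of $R/(R\cap N)$ and $N_\alpha\leqslant K_\alpha$ is solvable), so Kazarin's theorem~\cite{kazarin1986groups}, exactly as in Lemma~\ref{X=soluble-pro}, forces $T$ to lie on the stated list. The main obstacle I anticipate is part (a)'s assertion that $\overline{R}$ is isomorphic to a solvable subgroup of $G_{\alpha\beta}/K_{\alpha\beta}$ rather than merely of $G/K$: this requires carefully tracking how $R$, which is vertex-transitive but need not be arc-transitive, interacts with $K$ at the level of the arc stabilizer, and it is where the coset-graph element $g$ with $g^2\in G_{\alpha\beta}$ and $\langle G_\alpha,g\rangle=G$ from Lemma~\ref{CosetGraph}, together with $K\trianglelefteq G$, must be used to produce the embedding; the divisibility bookkeeping in (b), by contrast, should be a routine consequence once (a) and the diagonality of $K_{\alpha\beta}$ are in hand.
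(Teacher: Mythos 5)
Your plan correctly identifies the two structural inputs (Lemma~\ref{K-imprim} feeding into Lemma~\ref{normal-stab}), and parts of the first paragraph and part~(d) are sound. But there are two genuine gaps, one of them a mathematical error rather than an omission.

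First, the diagonality claim. You assert that each $M_i=\C_K(T_i)$ is semiregular on $V$ by Lemma~\ref{normal-quotient}(a), hence $K_\alpha\cap M_i=1$, hence $K_\alpha$ (and so $K_{\alpha\beta}$) is diagonal. This cannot be right: $M_i$ is normal in $K$ but not in $G$, so Lemma~\ref{normal-quotient} with the ambient group $G$ does not apply; and to apply it with $K$ in the role of its ``$G$'' requires $K_\alpha^{\Ga(\alpha)}$ to be primitive, which is exactly what Lemma~\ref{K-imprim} rules out in the PA case. Indeed, in the PA case $K_\alpha$ is \emph{not} diagonal — $N_\alpha$ is a nontrivial subgroup of $T_1\times\dots\times T_\ell$ of ``product'' shape, and the whole point of the contradiction in Lemma~\ref{K-imprim} is that assuming $K_\alpha$ diagonal kills the PA case. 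The paper's actual argument only claims $K_{\alpha\beta}$ is diagonal and proves it by a different route: $K_\alpha^{\Ga(\alpha)}\cong\Z_p^d{:}\Z_m$ is a Frobenius group with kernel $\Z_p^d$; the image $M_\alpha^{\Ga(\alpha)}$ is a normal intransitive subgroup (intransitivity coming from non-bipartiteness of $\Ga$), hence lies in the regular kernel $\Z_p^d$, hence $M_{\alpha\beta}^{\Ga(\alpha)}=1$, i.e.\ $M_{\alpha\beta}=M_\alpha^{[1]}=M_\beta^{[1]}$, and therefore $K_{\alpha\beta}\cap M=M_{\alpha\beta}^{[1]}=1$. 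That Frobenius-kernel step is the idea you are missing.

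Second, the embedding $\overline{R}\hookrightarrow G_{\alpha\beta}/K_{\alpha\beta}$, which you flag as your main obstacle and leave as a sketch involving the coset-graph element $g$. The paper's route is cleaner and avoids $g$ entirely: since $K_\alpha\neq1$ and $G_\alpha^{\Ga(\alpha)}$ is primitive, Lemma~\ref{l10}(c) shows $K$ is arc-transitive; then Lemma~\ref{l10}(b) applied to the arc-transitive normal subgroups $N$ and $K$ gives $G/N\cong G_{\alpha\beta}/N_{\alpha\beta}$ and $G/K\cong G_{\alpha\beta}/K_{\alpha\beta}$ outright, so $\overline{R}\leqslant G/K\cong G_{\alpha\beta}/K_{\alpha\beta}$. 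The same isomorphisms also supply the bookkeeping you need for~(a): $|V|=|T|^\ell/(p^d|N_{\alpha\beta}|)$ divides $|R|=|R\cap K||\overline{R}|$, and $|R\cap K|$ divides $|K/N||R\cap N|=(|K_{\alpha\beta}|/|N_{\alpha\beta}|)|R\cap N|$, giving exactly $|T|^\ell/|R_1\times\dots\times R_\ell|\di p^d|K_{\alpha\beta}||\overline{R}|$ without ever invoking diagonality of $K_\alpha$. Part~(b) then follows from~(a) together with $|\overline{R}|\di|G_{\alpha\beta}/K_{\alpha\beta}|$ and $|G_\alpha^{[1]}|\di|K_{\alpha\beta}^{\Ga(\alpha)}|$ from Lemma~\ref{normal-stab}(d); your identity $|K_\alpha|=|K_\alpha^{\Ga(\alpha)}||K_{\alpha\beta}|$ is also wrong as written (the correct one is $|K_\alpha|=|K_\alpha^{[1]}||K_\alpha^{\Ga(\alpha)}|$, and one needs $|K_{\alpha\beta}|=|K_\alpha^{[1]}||K_{\alpha\beta}^{\Ga(\alpha)}|$ to recover the factor $p^d$).
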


\begin{proof}
Suppose that $G$ has type PA. Then $K_\alpha^{\Ga(\alpha)}$ is imprimitive by Lemma~\ref{K-imprim}. We thereby see from Lemma~\ref{normal-stab} that $G_\alpha^{\Ga(\alpha)}$ is an affine $2$-transitive permutation group of degree $p^d$, where $p$ is prime and $d\geqslant2$, and $K_\alpha$ is solvable.

Since $N_\alpha\neq1$ and $K_\alpha\neq1$, Lemma~\ref{l10} shows that $G/N\cong G_{\alpha\beta}/N_{\alpha\beta}$ and $G/K\cong G_{\alpha\beta}/K_{\alpha\beta}$. Consequently, $\overline{R}$ is isomorphic to a solvable subgroup of $G_{\alpha\beta}/K_{\alpha\beta}$, and $|K/N|=|K_{\alpha\beta}/N_{\alpha\beta}|$. Because $N$ is arc-transitive by Lemma~\ref{l10} and $R$ is transitive on $V$, so $|R|$ is divisible by $|V|=|N|/|N_\alpha|=|T|^\ell/(p^d|N_{\alpha\beta}|)$. Then as $|R|=|R\cap K||\overline{R}|$ divides $|K/N||R\cap N||\overline{R}|=|K_{\alpha\beta}/N_{\alpha\beta}||R\cap N||\overline{R}|$, we deduce that
$|T|^\ell/|R_1\times\dots R_\ell|$ divides $p^d|K_{\alpha\beta}||\overline{R}|$. Hence part~(a) holds.

Viewing $\overline{R}\lesssim G_{\alpha\beta}/K_{\alpha\beta}$, we derive from part~(a) that $|T|^\ell/|R_1\times\dots R_\ell|$ divides $p^d|G_{\alpha\beta}|=|G_\alpha|=|G_\alpha^{[1]}||G_\alpha^{\Ga(\alpha)}|$. Thus part~(b) is true since $|G_\alpha^{[1]}|$ divides $|K_{\alpha\beta}^{\Ga(\alpha)}|$ by Lemma~\ref{normal-stab}.

To prove that $K_{\alpha\beta}$ is diagonal, take an arbitrary $i\in\{1,\dots,\ell\}$ and let $M=\prod_{j\not=i}K_j$. If $M_\alpha^{\Ga(\alpha)}$ is transitive, then the neighbors of $\alpha$ are in the same orbit of $M$ and so $\Ga$ will be bipartite. Hence $M_\alpha^{\Ga(\alpha)}$ is an intransitive normal subgroup of the Frobenius group $K_\alpha^{\Ga(\alpha)}$. This forces $M_{\alpha\beta}^{\Ga(\alpha)}=1$, which means $M_{\alpha\beta}=M_\alpha^{[1]}$. Similarly, $M_{\alpha\beta}=M_\beta^{[1]}$. It follows that $K_{\alpha\beta}\cap M=M_{\alpha\beta}=M_\alpha^{[1]}\cap M_\beta^{[1]}=M_{\alpha\beta}^{[1]}=1$. Therefore, $K_{\alpha\beta}$ is diagonal, proving part~(c).

Finally, as $K_\alpha$ is solvable we know that $N_\alpha$ is solvable too. Then Lemma~\ref{X=soluble-pro} shows that $T$ is one of the groups listed there.
\end{proof}

\begin{lemma}\label{l9}
$G$ is not of type PA.
\end{lemma}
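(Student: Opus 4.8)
\textbf{Proof plan for Lemma~\ref{l9} ($G$ is not of type PA).}
The plan is to argue by contradiction, assuming $G$ is quasiprimitive of type PA, and to exploit the divisibility constraint in Lemma~\ref{Restriction}(b) together with the restriction in Lemma~\ref{Restriction}(d) on $T$ and the structural consequences of $G_\alpha^{\Ga(\alpha)}$ being an affine $2$-transitive group of degree $p^d$ with $d\geqslant2$. First I would set up notation exactly as in the preceding lemmas: $N=T_1\times\dots\times T_\ell=\Soc(G)$ with $\ell\geqslant2$, $R_i$ the projection of $R\cap N$ into $T_i$, and $\overline R=RK/K\lesssim\Sy_\ell$. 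Since the $R_i$ are solvable subgroups of $T$, Lemma~\ref{l14} supplies, for each $i$, a prime divisor of $|T|$ not dividing $|R_i|$; and by Theorem~\ref{SolvableFactor} applied as in the proof of Lemma~\ref{K-imprim} (or directly by Lemma~\ref{l14} used uniformly over the conjugacy class of $T_i$ under $R$), one finds a \emph{single} prime $r$ dividing $|T|$ but dividing none of $|R_1|,\dots,|R_\ell|$. Then $r$ does not divide $|R_1\times\dots\times R_\ell|$, so Lemma~\ref{Restriction}(b) forces $|T|_r^{\ell}$ to divide $|K_{\alpha\beta}^{\Ga(\alpha)}|_r\,|G_\alpha^{\Ga(\alpha)}|_r$.

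The next step is to bound the right-hand side. Because $G_\alpha^{\Ga(\alpha)}$ is an affine $2$-transitive group of degree $p^d$, we have $G_\alpha^{\Ga(\alpha)}\leqslant\AGL_d(p)$, so $|G_\alpha^{\Ga(\alpha)}|$ divides $p^d|\GL_d(p)|$; moreover by Lemma~\ref{normal-stab} $|G_\alpha^{[1]}|$ divides $p^{f}-1$ for a proper divisor $f$ of $d$, and $K_{\alpha\beta}^{\Ga(\alpha)}\leqslant\GL_1(p^f)$ is cyclic of order dividing $p^f-1$. Hence the $r$-part of $|K_{\alpha\beta}^{\Ga(\alpha)}||G_\alpha^{\Ga(\alpha)}|$ is at most $(p^d)_r\,(|\GL_d(p)|)_r\,(p^f-1)_r$, which I would bound crudely using Lemma~\ref{divisors-n!} (applied to $(|\GL_d(p)|)_r$, which divides $p^{d(d-1)/2}\prod_{i=1}^d(p^i-1)$) to get something like $(p^f-1)_r\cdot p^{d(d-1)/2}\cdot r^{d/(r-1)}$ when $r\neq p$, and a power of $p$ when $r=p$. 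On the other hand, $|T|_r$ is bounded below: $|T|_r\geqslant r$ always, and by Theorem~\ref{CommonDivisor} one has good control — in particular $|T|_r$ grows with the field size for the groups in Lemma~\ref{X=soluble-pro}. For the infinite family $T=\PSL_2(q)$ with $q=p_0^{f_0}\geqslant5$ I would take $r$ to be the largest prime divisor of $|T|$ (so $r\geqslant 5$ whenever $q\geqslant 8$, and small cases $q\in\{5,7,9,11\}$ handled by hand), note $R_i$ solvable in $\PSL_2(q)$ has order dividing $q(q-1)/(2,q-1)$, $2(q+1)$, $2(q-1)$ or $12$, pick $r$ dividing the ``other'' large factor, and then $|T|_r^\ell$ being divisible only by the bounded quantity above yields $\ell=1$, contradicting $\ell\geqslant2$; for $q\geqslant8$ this is immediate since $|T|_r\geqslant r\geqslant5 > $ any $r$-part occurring in $|\GL_d(p)|$ unless $d$ is large, and largeness of $d$ is itself controlled because $|V|=|N|/|N_\alpha|$ must be divisible by $|T|^{\ell-1}\geqslant |T|$ while $|V|$ has valency-related constraints. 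The finitely many sporadic-type $T$ in Lemma~\ref{X=soluble-pro} ($\PSU_3(8)$, $\PSU_4(2)$, $\PSL_4(2)$, $\M_{11}$, $\PSL_3(q)$ for $q\in\{3,4,5,7,8\}$) are disposed of one at a time: in each, $|\Out(T)|$ is tiny, pick $r$ the largest prime divisor of $|T|$, check $r\nmid|R_i|$ for every solvable $R_i$, and observe $|T|_r=r$ with $r$ exceeding any $r$-part of $|\GL_d(p)|$ that could arise, again forcing $\ell=1$.

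The main obstacle I anticipate is controlling $d$ (equivalently $p^d=|\Ga(\alpha)|$) relative to $|T|$: a priori $p^d$ could be large, and then $|\GL_d(p)|$ could absorb a large $r$-part, so the crude bound might not immediately defeat $|T|_r^\ell$. The fix is to feed in the transitivity of $R$ more carefully: since $R$ is vertex-transitive and solvable, $|V|=|N|/|N_\alpha|=|T|^\ell/(p^d|N_{\alpha\beta}|)$ divides $|R|=|R\cap K||\overline R|$, and $|R\cap K|\leqslant |R_1\times\dots\times R_\ell|\cdot|K/N|$ with $|K/N|\leqslant|\Out(T)|^\ell$; combining this with $\overline R\lesssim\Sy_\ell$ and Dixon's bound (Theorem~\ref{l13}) $|\overline R|\leqslant 24^{(\ell-1)/3}$ gives $|T|^\ell \leqslant |R_1\times\dots\times R_\ell|\,|\Out(T)|^\ell\,24^{(\ell-1)/3}\,p^d|N_{\alpha\beta}|$, which — since $|N_{\alpha\beta}|$ divides a power times $|K_{\alpha\beta}|$ controlled by $\GL_d(p)$-type data — forces $p^d$ to be bounded below by roughly $(|T|/(|R_i||\Out(T)|\cdot 24^{1/3}))^\ell$ and hence makes $d$ \emph{large} only when $|T|/|R_i|$ is small, i.e. only for $\PSL_2(q)$ with the ``right-sized'' solvable factor, a case one then pins down directly with the prime $r$ chosen to kill the remaining factor. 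So the argument is a two-front estimate: an upper bound on $|T|_r^\ell$ from the stabilizer structure, and a lower bound on $|V|$ (hence on $d$, hence a constraint tying $|R_i|$ to $|T|$) from solvable vertex-transitivity, and the two are incompatible for $\ell\geqslant2$. I would organize the write-up as: (1) produce the common prime $r$; (2) the stabilizer upper bound on $|T|_r^\ell$; (3) the vertex-transitivity lower bound; (4) case analysis over the list in Lemma~\ref{X=soluble-pro}, with $\PSL_2(q)$ split into $q\leqslant 11$ (direct) and $q\geqslant 8$ (estimates), yielding $\ell=1$ in all cases, the desired contradiction.
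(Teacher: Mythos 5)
The critical mechanism you are missing is the lower bound on $\ell$ coming from the structure of $G_{\alpha\beta}/K_{\alpha\beta}$. The paper's argument rests on two facts you do not invoke: (i) since $N$ is the unique minimal normal subgroup, $G/K\cong G_{\alpha\beta}/K_{\alpha\beta}$ embeds as a transitive subgroup of $\Sy_\ell$; (ii) Hering's classification of affine $2$-transitive permutation groups says $G_{\alpha\beta}^{\Ga(\alpha)}$ normalizes $\SL_n(q)$, $\Sp_{2n}(q)'$, $\G_2(q)'$, sits inside $\GaL_1(p^d)$, or is one of a short sporadic list. Except in the $\GaL_1$ and sporadic cases, $G_{\alpha\beta}/K_{\alpha\beta}$ has a nonabelian simple section $S$ ($K_{\alpha\beta}^{\Ga(\alpha)}$ being abelian), and Lemma~\ref{MinimalDegree}(c) then forces $\ell\geqslant P(S)$. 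This is what makes (\ref{eq2}) lethal: the left side $\max\{5,p\}^\ell$ is exponential in $\ell\geqslant P(S)$, while the right side is polynomial in the defining parameters of $G_\alpha^{\Ga(\alpha)}$. Without this bound, nothing stops $\ell$ from being $2$ or $3$, and in that regime your $r$-part inequality is very far from contradictory. Your proposed substitute — the vertex-transitivity lower bound combined with Dixon's theorem — does not replace this: both of your estimates push $p^d$ upward, so they are mutually compatible rather than in tension, and the claimed "two-front incompatibility for $\ell\geqslant2$" is asserted but not forced by the inequalities you write down.

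Two secondary issues. First, your opening step assumes a \emph{single} prime $r$ dividing $|T|$ but none of $|R_1|,\dots,|R_\ell|$. Lemma~\ref{l14} produces a prime for each $R_i$, but not a uniform one; the uniform-prime argument in the proof of Lemma~\ref{K-imprim} was available there only because $K_\alpha$ (and hence $K_{\alpha\beta}$) was assumed diagonal, which is exactly the hypothesis that fails here. The paper sidesteps this entirely by working with $m$, the minimal index of a solvable subgroup of $T$, so that $m^\ell\leqslant|T|^\ell/|R_1\times\dots\times R_\ell|$ requires no choice of prime at all, and then Lemma~\ref{l16} supplies $m\geqslant r\geqslant\max\{5,p\}$. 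Second, you suggest $|V|$ is divisible by $|T|^{\ell-1}$ in this PA setting; that held in the diagonal case of Lemma~\ref{K-imprim}, but here $K_\alpha$ is imprimitive, $K_{\alpha\beta}$ is diagonal while $K_\alpha$ is not, and you cannot get $|N_\alpha|\leqslant|T|$ without further work. To salvage your outline you would need to graft in the Hering case division and the $\ell\geqslant P(\cdot)$ step; at that point you would essentially be reproducing the paper's proof.
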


\begin{proof}
Let $r$ be the largest prime divisor of $|T|$, $m$ be the smallest index of solvable subgroups in $T$, and $\overline{R}=RK/K$. Suppose for a contradiction that $G$ has type PA. Then from Lemma~\ref{Restriction} we know that $G_\alpha^{\Ga(\alpha)}$ is an affine $2$-transitive permutation group of degree $p^d$, where $p$ is prime and $d\geqslant2$. The affine $2$-transitive permutation groups were classified by Hering~\cite{Hering1985}, see also~\cite{Liebeck-rank3}. By virtue of this classification, we will exclude the candidates of $2$-transitive permutation groups for $G_\alpha^{\Ga(\alpha)}$ case by case. According to Lemma~\ref{Restriction}(d), $T$ satisfies Lemma~\ref{X=soluble-pro}, and so the integers $r\leqslant m$ satisfy Lemma~\ref{l16}. Noticing $N_\alpha^{\Ga(\alpha)}\geqslant\Soc(G_\alpha^{\Ga(\alpha)})=\Z_p^d$ as $N_\alpha^{\Ga(\alpha)}\vartriangleleft G_\alpha^{\Ga(\alpha)}$, we see that $|T|^\ell=|N|$ is divisible by $p$. Hence $|T|$ is divisible by $p$, and so $m\geqslant r\geqslant\max\{5,p\}$. Then the observation $m^\ell\leqslant|T|^\ell/|R_1\times\dots R_\ell|$ together with Lemma~\ref{Restriction}(b) yields
\begin{equation}\label{eq2}
\max\{5,p\}^\ell\leqslant m^\ell\leqslant|K_{\alpha\beta}^{\Ga(\alpha)}||G_\alpha^{\Ga(\alpha)}|.
\end{equation}
For any group $X$, let $P(X)$ denote the smallest index of a proper subgroup of $X$. Note that $G/K\cong G_{\alpha\beta}/K_{\alpha\beta}$ by Lemma~\ref{l10}, and $G/K$ is isomorphic to a transitive subgroup of $\Sy_\ell$ since $N$ is the unique minimal normal subgroup of $G$.

{\bf Case 1.} Suppose that $\SL_n(q)\leqslant G_{\alpha\beta}^{\Ga(\alpha)}\leqslant\GaL_n(q)$, where $n\geqslant2$ and $q=p^f$. Then $K_{\alpha\beta}^{\Ga(\alpha)}\leqslant\Z_{q-1}$, and $\PSL_n(q)$ is a section of $G_{\alpha\beta}/K_{\alpha\beta}$. It follows from Lemma~\ref{MinimalDegree}(c) that $\ell\geqslant P(\PSL_n(q))$ due to $G_{\alpha\beta}/K_{\alpha\beta}\lesssim\Sy_\ell$. Thereby (\ref{eq2}) leads to
\begin{equation}\label{eq3}
\max\{5,p\}^{P(\PSL_n(q))}\leqslant(q-1)|\AGaL_n(q)|.
\end{equation}

Assume that $(n,q)\neq(4,2)$ or $(2,q)$ with $q\leqslant11$. Then we have $P(\PSL_n(q))=(q^n-1)/(q-1)$ according to Table~\ref{tab24}, and hence (\ref{eq3}) implies
\begin{equation}\label{eq1}
5^{(q^n-1)/(q-1)}\leqslant|K_{\alpha\beta}^{\Ga(\alpha)}||G_\alpha^{\Ga(\alpha)}|\leqslant(q-1)|\AGaL_n(q)|<q^{n^2+n+2}.
\end{equation}
If $q=2$, then $n\geqslant3$ and the above inequality turns out to be $5^{2^n-1}<2^{n^2+n+2}$, not possible. It derives from (\ref{eq1}) that $5^{q^{n-1}}<5^{(q^n-1)/(q-1)}<q^{n^2+n+2}$. Hence $q^{n-1}\ln5<(n^2+n+2)\ln q<(n^2+n+2)q/2$, and so $q^{n-2}\ln25<n^2+n+2$. This indicates $n\leqslant3$ as $q\geqslant3$. Further, if $n=3$, then (\ref{eq1}) turns out to be $5^{q^2+q+1}<q^{14}$, which is not possible. We therefore have $n=2$. However, (\ref{eq1}) then turns out to be $5^{q+1}<q^8$, violating our assumption that $q>11$.

Consequently, we only need to deal with the candidates in the table below.
\[
\begin{array}{|l|lllllllll|}\hline
n&2&2&2&2&2&2&2&2&4 \\
\hline
q&2&3&4&5&7&8&9&11&2 \\
\hline
P(\PSL_n(q))&2&3&5&5&7&9&6&11&8 \\
\hline
\end{array}
\]
For $(n,q)=(2,2)$, $(2,7)$, $(2,11)$ or $(4,2)$, the inequality (\ref{eq3}) does not hold, a contradiction. Thus $n=2$ and $q\in\{3,4,5,8,9\}$. Recall from (\ref{eq2}) that
\begin{equation}\label{eq33}
m^\ell\leqslant|K_{\alpha\beta}^{\Ga(\alpha)}||G_\alpha^{\Ga(\alpha)}|\leqslant(q-1)|\AGaL_2(q)|=q^3(q^2-1)(q-1)^2f.
\end{equation}

Assume that $q=3$. Then $K_{\alpha\beta}^{\Ga(\alpha)}\leqslant\Z_2$, and
$$
|\Sy_\ell|\geqslant|G_{\alpha\beta}/K_{\alpha\beta}|\geqslant|G_{\alpha\beta}^{\Ga(\alpha)}/K_{\alpha\beta}^{\Ga(\alpha)}|
\geqslant|\SL_2(3)|/2\geqslant12.
$$
This implies $\ell\geqslant4$, and so (\ref{eq33}) requires $m^4\leqslant2^5\cdot3^3$. Accordingly we obtain $m\leqslant5$, and thus $T=\A_5$ by Lemma~\ref{l16}. Observe that any solvable subgroup of $T$ has index divisible by $5$ or $6$. We infer from Lemma~\ref{Restriction}(b) that $5^x6^y\di2^5\cdot3^3$ for some nonnegative integers $x$ and $y$ with $x+y=\ell$, which is impossible as $\ell\geqslant4$.

Assume that $q=4$. Then $\ell\geqslant P(\PSL_2(4))=5$, and (\ref{eq33}) requires $m^5\leqslant2^6\cdot3^3\cdot5$. Hence $m\leqslant7$, and so $T=\A_5$ or $\PSL_2(7)$ by Lemma~\ref{l16}. If $T=\A_5$, then any solvable subgroup of $T$ has index divisible by $5$ or $6$, and we infer from Lemma~\ref{Restriction}(b) that $5^x6^y\di2^6\cdot3^3\cdot5$ for some nonnegative integers $x$ and $y$ with $x+y=\ell\geqslant5$, a contradiction. Thus $T\neq\A_5$. Similarly, we derive that $T\neq\PSL_2(7)$.

Assume that $q=5$. Then $\ell\geqslant P(\PSL_2(5))=5$, and (\ref{eq33}) requires $m^5\leqslant2^7\cdot3\cdot5^3$. Hence $m\leqslant8$, and thus Lemma~\ref{l16} implies $T=\A_5$ as $|T|$ is divisible by $p=5$. Now we derive from Lemma~\ref{Restriction}(b) that $5^x6^y\di2^7\cdot3\cdot5^3$ for some nonnegative integers $x$ and $y$ with $x+y=\ell\geqslant5$, a contradiction.

Assume that $q=8$. Then $\ell\geqslant P(\PSL_2(4))=9$, and (\ref{eq33}) requires $m^5\leqslant2^9\cdot3^2\cdot7^3$. Hence $m\leqslant5$, and so $T=\A_5$ by Lemma~\ref{l16}. Now we derive from Lemma~\ref{Restriction}(b) that $5^x6^y\di2^9\cdot3^2\cdot7^3$ for some nonnegative integers $x$ and $y$ with $x+y=\ell\geqslant9$, a contradiction.

Assume that $q=9$. Then $\ell\geqslant P(\PSL_2(9))=6$, and (\ref{eq33}) requires $m^6\leqslant2^{11}\cdot3^6\cdot5$. Hence $m\leqslant13$, and thus Lemma~\ref{l16} implies that $T=\A_5$, $\PSL_2(7)$, $\PSL_2(8)$, $\A_6$, $\PSL_2(11)$ or $\PSL_3(3)$. If $T=\PSL_2(7)$, then any solvable subgroup of $T$ has index divisible by $7$ or $8$, and thereby we derive from Lemma~\ref{Restriction}(b) that $7^x8^y\di2^{11}\cdot3^6\cdot5$ for some nonnegative integers $x$ and $y$ with $x+y=\ell\geqslant6$, not possible. Similarly we exclude the candidates $\PSL_2(8)$, $\A_6$, $\PSL_2(11)$ and $\PSL_3(3)$ for $T$. It follows that $T=\A_5$, and so $|T|^\ell/|R_1\times\dots R_\ell|$ is divisible by $5^x6^y$ for some nonnegative integers $x$ and $y$ with $x+y=\ell$. According to Lemma~\ref{Restriction}(c), $K_{\alpha\beta}\lesssim\Aut(T)=\Sy_5$. This together with Lemma~\ref{normal-stab}(b) and $K_{\alpha\beta}^{\Ga(\alpha)}\leqslant\Z_8$ implies that $|K_{\alpha\beta}|$ divides $4$. Hence $5^x6^y$ divides $2^2\cdot3^4|\overline{R}|$ by Lemma~\ref{Restriction}(a). Note that Lemma~\ref{Restriction}(b) implies $5^x6^y\di2^{11}\cdot3^6\cdot5$, which yields $x\leqslant1$ and $y\leqslant6$. Therefore, $(x,y)=(1,6)$, $(1,5)$ or $(0,6)$ as $\ell=x+y\geqslant6$. However, none of these values for $(x,y)$ allows $5^x6^y$ to divide $2^2\cdot3^4|\overline{R}|$ for some solvable $\overline{R}\lesssim\Sy_\ell=\Sy_{x+y}$, a contradiction.

{\bf Case 2.} Suppose that $G_{\alpha\beta}^{\Ga(\alpha)}\trianglerighteq\Sp_{2n}(q)'$, where $n\geqslant2$ and $q=p^f$. Then $G_{\alpha\beta}^{\Ga(\alpha)}\leqslant(\Z_{q-1}\circ\Sp_{2n}(q)).\Z_f$, $K_{\alpha\beta}^{\Ga(\alpha)}\leqslant\Z_{q-1}$, and $\PSp_{2n}(q)'$ is a section of $G_{\alpha\beta}/K_{\alpha\beta}$. It follows from Lemma~\ref{MinimalDegree}(c) that $\ell\geqslant P(\PSp_{2n}(q)')$ due to $G_{\alpha\beta}/K_{\alpha\beta}\lesssim\Sy_\ell$. Let $q=p^f$. Then (\ref{eq2}) yields
\begin{equation}\label{eq24}
\max\{5,p\}^{P(\PSp_{2n}(q)')}\leqslant q^{2n}|\Sp_{2n}(q)|(q-1)^2f.
\end{equation}
Noticing that $f\leqslant q$, we obtain
\begin{equation}\label{eq23}
\max\{5,p\}^{P(\PSp_{2n}(q)')}\leqslant q^{2n+1}|\Sp_{2n}(q)|(q-1)^2<q^{2n^2+3n+3}.
\end{equation}
Moreover, by Theorem~\ref{l7}, one of the following appears.
\begin{itemize}
\item[(i)] $q>2$, $(n,q)\neq(2,3)$ and $P(\PSp_{2n}(q)')=(q^{2n}-1)/(q-1)$.
\item[(ii)] $q=2$, $n\geqslant3$ and $P(\PSp_{2n}(q)')=2^{n-1}(2^n-1)$.
\item[(iii)] $(n,q)=(2,3)$ and $P(\PSp_{2n}(q)')=27$.
\item[(iv)] $(n,q)=(2,2)$ and $P(\PSp_{2n}(q)')=6$.
\end{itemize}
The possibility for~(iii) and~(iv) is directly ruled out by (\ref{eq24}). If~(i) occurs, then (\ref{eq23}) implies that $q^{2n-1}<(q^{2n}-1)/(q-1)<(2n^2+3n+3)f$, and thus $3^{2n-1}\leqslant q^{2n-1}/f<2n^2+3n+3$, not possible. If~(ii) occurs, then (\ref{eq23}) implies that
$$
2^{2^n(2^n-1)}<5^{2^{n-1}(2^n-1)}<2^{2n^2+3n+3},
$$
and thus $2^n(2^n-1)<2n^2+3n+3$, not possible either.

{\bf Case 3.} Suppose that $G_{\alpha\beta}^{\Ga(\alpha)}\trianglerighteq\G_2(q)'$, where $q=2^f$. Then $p^d=q^6$, $G_{\alpha\beta}^{\Ga(\alpha)}\leqslant(\Z_{q-1}\circ\G_2(q)).\Z_f$, $K_{\alpha\beta}^{\Ga(\alpha)}\leqslant\Z_{q-1}$, and $\G_2(q)'$ is a section of $G_{\alpha\beta}/K_{\alpha\beta}$. It follows from Lemma~\ref{MinimalDegree}(c) that $\ell\geqslant P(\G_2(q)')$ due to $G_{\alpha\beta}/K_{\alpha\beta}\lesssim\Sy_\ell$. Then (\ref{eq2}) yields
\begin{equation}\label{eq25}
5^{P(\G_2(q)')}\leqslant q^6|\G_2(q)|(q-1)^2f<q^{23}.
\end{equation}
For $q=2$ or $4$, $P(\G_2(q)')=28$ or $416$ by~\cite{atlas}, contrary to (\ref{eq25}). Thus $q\geqslant8$. Consulting the classification of maximal subgroups of $\G_2(2^f)'=\G_2(2^f)$ in~\cite{Cooperstein1981}, one immediately concludes that $P(\G_2(q)')=(q^6-1)/(q-1)$. Thereby (\ref{eq25}) implies that
$$
2^{q^5}<5^{q^5}<5^{(q^6-1)/(q-1)}<q^{23}=2^{23f},
$$
and so $8^5/3\leqslant q^5/f<23$, a contradiction.

{\bf Case 4.} Suppose that $G_\alpha^{\Ga(\alpha)}\leqslant\AGaL_1(p^d)$. Let $H=G_{\alpha\beta}^{\Ga(\alpha)}\cap\GL_1(p^d)$, and $M$ be the full preimage of $H$ under the natural homomorphism from $G_{\alpha\beta}$ to $G_{\alpha\beta}^{\Ga(\alpha)}$. Note by Lemma~\ref{normal-stab} that $G_{\alpha\beta}^{[1]}=1$, $K_{\alpha\beta}^{\Ga(\alpha)}\leqslant\GL_1(p^f)$ for some proper divisor $f$ of $d$, and $|G_\alpha^{[1]}|$ divides $p^f-1$. From (\ref{eq2}) we deduce
\begin{equation}\label{eq31}
\max\{5,p\}^\ell\leqslant|K_{\alpha\beta}^{\Ga(\alpha)}||G_\alpha^{\Ga(\alpha)}|\leqslant(p^f-1)p^d(p^d-1)d.
\end{equation}

As $M/M\cap G_\alpha^{[1]}\cong M^{\Ga(\alpha)}=H$ is cyclic, we have $M'\leqslant G_\alpha^{[1]}$. Similarly, $M/M\cap G_\beta^{[1]}\cong M^{\Ga(\beta)}\cong H$ yields $M'\leqslant G_\beta^{[1]}$. Thus $M'\leqslant G_\alpha^{[1]}\cap G_\beta^{[1]}=G_{\alpha\beta}^{[1]}=1$, which means that $M$ is abelian. Consequently, $M$ has a subgroup $L$ isomorphic to $M^{\Ga(\beta)}\cong H$. Denote $\overline{G}=G_{\alpha\beta}/K_{\alpha\beta}$, $\overline{M}=MK_{\alpha\beta}/K_{\alpha\beta}$ and $\overline{L}=LK_{\alpha\beta}/K_{\alpha\beta}$. Clearly, $\overline{L}$ is a cyclic normal subgroup of $\overline{M}$. Since $\overline{M}$ is a normal subgroup of $\overline{G}$ and $\overline{G}\cong G/K$ is a transitive permutation group on $\calT$, the orbits of $\overline{M}$ on $\calT$, say $\Delta_1,\dots,\Delta_k$, are of equal size. For $1\leqslant i\leqslant k$, let $M_i$ and $L_i$ be the restriction of $\overline{M}$ and $\overline{L}$, respectively, on $\Delta_i$. Then $\overline{M}=M_1\times\dots\times M_k$, $\overline{L}=L_1\times\dots\times L_k$, and $L_i$ is a cyclic normal subgroup of $M_i$ for each $1\leqslant i\leqslant k$. From the transitivity of $M_i$ on $\Delta_i$ we deduce that $L_i$ is semiregular on $\Delta_i$, and thus $|L_i|$ divides $|\Delta_i|=|\Delta_1|$ for each $1\leqslant i\leqslant k$. Noticing that $|L_1|,\dots,|L_k|$ are pairwise coprime as $\overline{L}$ is cyclic, we conclude that $|\overline{L}|=|L_1|\cdots|L_k|$ divides $|\Delta_1|$. In particular, $|\overline{L}|$ divides $\ell$. Because $|\overline{M}/\overline{L}|$ divides $|M/L|=|M\cap G_\alpha^{[1]}|$ and $|G_\alpha^{[1]}|$ divides $p^f-1$, so $|\overline{M}/\overline{L}|$ divides $p^f-1$. Moreover, $|\overline{G}/\overline{M}|$ divides $d$, since $|\overline{G}/\overline{M}|$ divides $|G_{\alpha\beta}/M|=|G_{\alpha\beta}^{\Ga(\alpha)}/H|$ and $|G_{\alpha\beta}^{\Ga(\alpha)}/H|$ divides $|\GaL_1(p^d)/|\GL_1(p^d)|=d$. It follows that $|\overline{G}|$ divides $(p^f-1)d|\overline{L}|$ and thus divides $(p^f-1)d\ell$. Since $\overline{G}$ is divisible by $|G_{\alpha\beta}^{\Ga(\alpha)}/K_{\alpha\beta}^{\Ga(\alpha)}|$ while $|G_{\alpha\beta}^{\Ga(\alpha)}/K_{\alpha\beta}^{\Ga(\alpha)}|$ is divisible by $(p^d-1)/(p^f-1)$, we thereby obtain \begin{equation}\label{eq29}
(p^d-1)/(p^f-1)\di(p^f-1)d\ell.
\end{equation}
Observe that the greatest common divisor of $(p^d-1)/(p^f-1)$ and $p^f-1$ equals $(d/f,p^f-1)$ due to $(p^d-1)/(p^f-1)\equiv d/f\pmod{p^f-1}$. Then (\ref{eq29}) is equivalent to $(p^d-1)/(p^f-1)\di(d/f,p^f-1)d\ell$, which implies that
\begin{equation}\label{eq30}
\ell\geqslant\frac{(p^d-1)f}{(p^f-1)d^2}\geqslant\frac{(p^d-1)d/2}{(p^{d/2}-1)d^2}=\frac{p^{d/2}+1}{2d}.
\end{equation}

Suppose $p=2$. Combining (\ref{eq31}) and (\ref{eq30}) one obtains
\begin{equation}\label{eq21}
5^\frac{2^{d/2}+1}{2d}\leqslant(2^{d/2}-1)2^d(2^d-1)d.
\end{equation}
As a consequence, $2^{(2^{d/2}+1)/d}<5^{(2^{d/2}+1)/(2d)}<2^{3d}$, or equivalently, $2^{d/2}+1<3d^2$. This implies that $d\leqslant20$. However, $d=20$ does not satisfy (\ref{eq29}), whence $d\leqslant19$. If $d$ is prime, then $f=1$ and (\ref{eq29}) yields $2^d-1\di\ell$, which in conjunction with (\ref{eq31}) gives $5^{2^d-1}\leqslant2^d(2^d-1)d$, a contradiction. If $d$ is a power of $2$, then (\ref{eq29}) yields $2^{d/2}+1\di\ell$, and so (\ref{eq31}) leads to $5^{2^{d/2}+1}\leqslant(2^{d/2}-1)2^d(2^d-1)d$, not possible. If $d=9$, then $73\di\ell$ by (\ref{eq29}), not satisfying (\ref{eq31}). If $d=10$, then by (\ref{eq29}), either $f\leqslant2$ and $31\di\ell$ or $f=5$ and $33\di\ell$, still not satisfying (\ref{eq31}). The same argument excludes $d\in\{12,14,15,18\}$. Therefore, $d=6$. If $f\leqslant2$, then $7\di\ell$ by (\ref{eq29}), not satisfying (\ref{eq31}). Thus $f=3$ as a proper divisor of $d$. Since $G_{\alpha\beta}^{\Ga(\alpha)}/K_{\alpha\beta}^{\Ga(\alpha)}$ is divisible by $(2^6-1)/(2^3-1)=9$ and $G_{\alpha\beta}^{\Ga(\alpha)}/K_{\alpha\beta}^{\Ga(\alpha)}$ is a section of $\Sy_\ell$, we conclude $\ell\geqslant6$. Then (\ref{eq2}) requires
$$
m^6\leqslant m^\ell\leqslant|K_{\alpha\beta}^{\Ga(\alpha)}||G_\alpha^{\Ga(\alpha)}|\leqslant|\GL_1(2^3)||\AGaL_6(2)|=2^7\cdot3^3\cdot7^2,
$$
which means $m\leqslant7$. It follows from Lemma~\ref{l16} that $T=\A_5$ or $\PSL_2(7)$. If $T=\A_5$, then any solvable subgroup of $T$ has index divisible by $5$ or $6$, and thereby we derive from Lemma~\ref{Restriction}(b) that $5^x6^y\di2^7\cdot3^3\cdot7^2$ for some nonnegative integers $x$ and $y$ with $x+y=\ell\geqslant6$, not possible. Hence $T\neq\A_5$. Similarly $T\neq\PSL_2(7)$, a contradiction.

Suppose $p=3$. Combining (\ref{eq31}) and (\ref{eq30}) one obtains
$$
5^\frac{3^{d/2}+1}{2d}\leqslant(3^{d/2}-1)3^d(3^d-1)d.
$$
As a consequence, $5^{(3^{d/2}+1)/(2d)}<3^{3d}$, and thus $5^{3^{d/2}+1}<3^{6d^2}<5^{5d^2}$. This is equivalent to $3^{d/2}+1<5d^2$, whence $d\leqslant11$. If $d=2$, then $f=1$, and (\ref{eq31}) yields $\ell\leqslant3$ while $|G_{\alpha\beta}^{\Ga(\alpha)}/K_{\alpha\beta}^{\Ga(\alpha)}|$ is divisible by $(3^2-1)/(3-1)=4$, contrary to the fact that $G_{\alpha\beta}^{\Ga(\alpha)}/K_{\alpha\beta}^{\Ga(\alpha)}$ is a section of $\Sy_\ell$. If $d=3$, then $13\di\ell$ by (\ref{eq29}), not satisfying (\ref{eq31}). The same argument excludes $d\in\{5,6,7,8,9,10,11\}$. Therefore, $d=4$, and so $K_{\alpha\beta}^{\Ga(\alpha)}\leqslant\GL_1(3^2)$ as $f\leqslant2$. By (\ref{eq29}) we have $5\di\ell$. Then since (\ref{eq2}) requires
$$
m^5\leqslant m^\ell\leqslant|K_{\alpha\beta}^{\Ga(\alpha)}||G_\alpha^{\Ga(\alpha)}|\leqslant|\GL_1(3^2)||\AGaL_4(3)|=2^9\cdot3^4\cdot5,
$$
we obtain $m\leqslant11$. It follows from Lemma~\ref{l16} that $T=\A_5$, $\PSL_2(7)$, $\PSL_2(8)$ or $\A_6$. If $T=\PSL_2(7)$, then any solvable subgroup of $T$ has index divisible by $7$ or $8$, and thereby we derive from Lemma~\ref{Restriction}(b) that $7^x8^y\di2^9\cdot3^4\cdot5$ for some nonnegative integers $x$ and $y$ with $x+y=\ell\geqslant5$, which is impossible. Hence $T\neq\PSL_2(7)$. Similarly, $T\neq\PSL_2(8)$ or $\A_6$. Now $T=\A_5$, and thus $|T|^\ell/|R_1\times\dots R_\ell|$ is divisible by $5^x6^y$ for some nonnegative integers $x$ and $y$ with $x+y=\ell$. Thereby Lemma~\ref{Restriction}(b) implies that $5^x6^y\di2^9\cdot3^4\cdot5$, which leads to $(x,y)=(1,4)$ as $x+y=\ell\geqslant5$. In view of $K_{\alpha\beta}^{\Ga(\alpha)}\leqslant\Z_8$, we derive from Lemma~\ref{normal-stab}(b) that $K_{\alpha\beta}$ is a $2$-group. Accordingly, $5^x3^y$ divides $3^4|\overline{R}|$ by Lemma~\ref{Restriction}(a), whence $|\overline{R}|$ is divisible by $5$. It follows that $\overline{R}$ is transitive on $\calT$. However, this indicates that $R_1\cong\dots\cong R_5$ and thus $(|\A_5|/|R_1|)^5$ divides $2^9\cdot3^4\cdot5$, not possible.

Thus far we have known that $p\geqslant5$. Then (\ref{eq31}) together with (\ref{eq30}) implies
\begin{equation}\label{eq32}
p^\frac{p^{d/2}+1}{2d}\leqslant(p^{d/2}-1)p^d(p^d-1)d<p^{5d/2}d,
\end{equation}
or equivalently, $p^{d/2}+1<5d^2+2d\log_pd$. As a consequence, $5^{d/2}+1<5d^2+2d\log_5d$, which restricts $d\leqslant6$. If $d=6$, then (\ref{eq32}) forces $p=5$ and so (\ref{eq29}) yields $21\di\ell$, not satisfying (\ref{eq31}). If $d=5$, then (\ref{eq29}) implies that $(p^5-1)/(p-1)\di25\ell$ and so $\ell>25p^4$, which leads to $p^{p^4/25}<5p^{11}\leqslant p^{12}$ by (\ref{eq31}), a contradiction. If $d=3$, then (\ref{eq29}) implies that $(p^2+p+1)/(9,p^2+p+1)\di\ell$, which leads to $p^{(p^2+p+1)/(9,p^2+p+1)}<3p^7<p^8$ by (\ref{eq31}), not possible. If $d=4$, then it follows from (\ref{eq32}) that $p=5$ or $7$. If $d=4$ and $p=5$, then (\ref{eq29}) yields $13\di\ell$, violating (\ref{eq31}). If $d=4$ and $p=7$, then (\ref{eq29}) yields $25\di\ell$, again violating (\ref{eq31}). Consequently, $d\neq4$, and thereby we have $d=2$. Now (\ref{eq32}) gives $p\in\{5,7,11,13,17,19\}$. If $p=13$ or $17$, then (\ref{eq29}) yields $7\di\ell$ or $9\di\ell$, respectively, not satisfying (\ref{eq31}). We next rule out the possibilities for $p\in\{5,7,11,19\}$.

Assume $p=5$. Then (\ref{eq29}) yields $3\di\ell$, and so (\ref{eq2}) requires
$$
m^3\leqslant m^\ell\leqslant|K_{\alpha\beta}^{\Ga(\alpha)}||G_\alpha^{\Ga(\alpha)}|\leqslant|\GL_1(5)||\AGaL_2(5)|=2^7\cdot3\cdot5^3,
$$
giving $m\leqslant36$. Moreover, $|T|$ is divisible by $p=5$. Thus it follows from Lemma~\ref{l16} that $T=\A_8$ or $\PSL_2(q)$ with $q\in\{5,9,11,16,19,25,29,31\}$. If $T=\A_8$, then any solvable subgroup of $T$ has index divisible by $3$ or $7$, and thereby we derive from Lemma~\ref{Restriction}(b) that $3^x7^y\di2^7\cdot3\cdot5^3$ for some nonnegative integers $x$ and $y$ with $x+y=\ell\geqslant3$, not possible. Hence $T\neq\A_8$. Similarly, $T\neq\PSL_2(q)$ for $q\in\{11,16,25,29,31\}$. Therefore, $T=\A_5$, $\A_6$ or $\PSL_2(19)$. We derive a contradiction below under the assumption that $T=\A_5$, while a similar contradiction can be derived for $T=\A_6$ or $\PSL_2(19)$. Note that $|T|^\ell/|R_1\times\dots R_\ell|$ is divisible by $5^x6^y$ for some nonnegative integers $x$ and $y$ with $x+y=\ell$, and thereby Lemma~\ref{Restriction}(b) implies $5^x6^y\di2^9\cdot3^4\cdot5$. This shows that $x\leqslant1$ and $y\leqslant4$, which leads to $(x,y)=(3,0)$ or $(2,1)$ since $3\di(x+y)$. In view of $K_{\alpha\beta}^{\Ga(\alpha)}\leqslant\Z_4$, we derive from Lemma~\ref{normal-stab}(b) that $K_{\alpha\beta}$ is a $2$-group, and so $5^x3^y$ divides $5^2|\overline{R}|$ by Lemma~\ref{Restriction}(a). If $(x,y)=(3,0)$, then $5$ divides $|\overline{R}|$, contrary to the condition $\overline{R}\lesssim\Sy_\ell=\Sy_3$. Consequently, $(x,y)=(2,1)$, and thus $3$ divides $|\overline{R}|$. It follows that $\overline{R}$ is transitive on $\calT$. However, this indicates that $R_1\cong R_2\cong R_3$ and so $(|\A_5|/|R_1|)_2^3$ divides $5^2|\overline{R}|$, again contrary to $\overline{R}\lesssim\Sy_3$.

Assume $p=7$. Since $G_{\alpha\beta}^{\Ga(\alpha)}/K_{\alpha\beta}^{\Ga(\alpha)}$ is divisible by $(7^2-1)/(7-1)=8$ and $G_{\alpha\beta}^{\Ga(\alpha)}/K_{\alpha\beta}^{\Ga(\alpha)}$ is a section of $\Sy_\ell$, we conclude $\ell\geqslant4$. Then (\ref{eq2}) requires
$$
m^4\leqslant m^\ell\leqslant|K_{\alpha\beta}^{\Ga(\alpha)}||G_\alpha^{\Ga(\alpha)}|\leqslant|\GL_1(7)||\AGaL_2(7)|=2^6\cdot3^2\cdot7^2,
$$
which means $m\leqslant12$. Moreover, $|T|$ is divisible by $p=7$. Thus it follows from Lemma~\ref{l16} that $T=\PSL_2(7)$ or $\PSL_2(8)$. If $T=\PSL_2(8)$, then any solvable subgroup of $T$ has index divisible by $9$ or $28$, and thereby we derive from Lemma~\ref{Restriction}(b) that $9^x28^y\di2^6\cdot3^2\cdot7^2$ for some nonnegative integers $x$ and $y$ with $x+y=\ell\geqslant4$, not possible. Now $T=\PSL_2(7)$, and so $|T|^\ell/|R_1\times\dots R_\ell|$ is divisible by $7^x8^y$ for some nonnegative integers $x$ and $y$ with $x+y=\ell$. Then Lemma~\ref{Restriction}(b) implies that $7^x8^y\di2^6\cdot3^2\cdot7^2$, which leads to $(x,y)=(2,2)$ as $x+y=\ell\geqslant5$. In view of $K_{\alpha\beta}^{\Ga(\alpha)}\leqslant\Z_6$, we derive from Lemma~\ref{normal-stab}(b) that $|K_{\alpha\beta}|$ divides $36$. Consequently, $7^x8^y$ divides $2^2\cdot3^2\cdot7^2|\overline{R}|$ by Lemma~\ref{Restriction}(a). However, this indicates that $|\overline{R}|$ is divisible by $2^4$, contrary to the condition $\overline{R}\lesssim\Sy_\ell=\Sy_4$.

Assume $p=11$. Since $G_{\alpha\beta}^{\Ga(\alpha)}/K_{\alpha\beta}^{\Ga(\alpha)}$ is divisible by $(11^2-1)/(11-1)=12$ and $G_{\alpha\beta}^{\Ga(\alpha)}/K_{\alpha\beta}^{\Ga(\alpha)}$ is a section of $\Sy_\ell$, we have $\ell\geqslant4$. Moreover,(\ref{eq29}) indicates $3\di\ell$. Thereby we conclude that $\ell\geqslant6$. Then (\ref{eq2}) requires
$$
m^6\leqslant m^\ell\leqslant|K_{\alpha\beta}^{\Ga(\alpha)}||G_\alpha^{\Ga(\alpha)}|\leqslant|\GL_1(11)||\AGaL_2(11)|=2^5\cdot3\cdot5^3\cdot11^3,
$$
which means $m\leqslant15$. Also, $|T|$ is divisible by $p=11$. Thus it follows from Lemma~\ref{l16} that $T=\PSL_2(11)$. Now any solvable subgroup of $T$ has index divisible by $11$ or $12$, and thereby we derive from Lemma~\ref{Restriction}(b) that $11^x12^y\di2^5\cdot3\cdot5^3\cdot11^3$ for some nonnegative integers $x$ and $y$ with $x+y=\ell\geqslant6$, not possible.

Assume $p=19$. Then (\ref{eq29}) yields $5\di\ell$, and so (\ref{eq2}) requires
$$
m^5\leqslant m^\ell\leqslant|K_{\alpha\beta}^{\Ga(\alpha)}||G_\alpha^{\Ga(\alpha)}|\leqslant|\GL_1(19)||\AGaL_2(19)|=2^5\cdot3^4\cdot5\cdot19^2,
$$
giving $m\leqslant21$. Moreover, $|T|$ is divisible by $p=19$. Thus it follows from Lemma~\ref{l16} that $T=\PSL_2(19)$. Note that any solvable subgroup of $T$ has index divisible by $19$ or $20$. Thereby we derive from Lemma~\ref{Restriction}(b) that $19^x20^y\di2^5\cdot3^4\cdot5\cdot19^2$ for some nonnegative integers $x$ and $y$ with $x+y=\ell\geqslant5$, not possible.

{\bf Case 5.} Suppose that $G_\alpha^{\Ga(\alpha)}$ is not one of the $2$-transitive permutation groups in the previous cases. Then $(p^d,G_{\alpha\beta}^{\Ga(\alpha)},K_{\alpha\beta}^{\Ga(\alpha)})$ lies in the table below.
\[
\begin{array}{|l|l|l|l|}
\hline
\text{row} &p^d & G_{\alpha\beta}^{\Ga(\alpha)} & K_{\alpha\beta}^{\Ga(\alpha)}\leqslant \\
\hline
1 &3^6 & \SL_2(13) & 2 \\
2 &2^4 & \A_7 & 1 \\
3 &3^4 & 2^{1+4}.5\leqslant G_{\alpha\beta}^{\Ga(\alpha)}\leqslant2^{1+4}.\Sy_5 & 2 \\
\hline
4 &5^2 & \SL_2(3) & 2 \\
5 &5^2 & \Q_8.6,\ \SL_2(3).4 & 4 \\
6 &7^2 & \Q_8.\Sy_3,\ \SL_2(3).6 & 6 \\
7 &11^2 & \SL_2(3).5,\ \SL_2(3).10 & 10 \\
8 &23^2 & \SL_2(3).22 & 22 \\
\hline
9 &11^ 2 & \SL_2(5), 5\times\SL_2(5) & 10 \\
10 &19^ 2 & 9\times\SL_2(5) & 18 \\
11 &29^ 2 & 7\times\SL_2(5), 28.\PSL_2(5) & 28 \\
12 &59^ 2 & 29\times\SL_2(5) & 58 \\
\hline
\end{array}
\]

Assume that row~1 appears. Then it follows from Lemma~\ref{MinimalDegree}(c) that $\ell\geqslant P(\PSL_2(13))=14$ since $\PSL_2(13)$ is a section of $G_{\alpha\beta}/K_{\alpha\beta}\lesssim\Sy_\ell$. Thereby (\ref{eq2}) yields $5^{14}\leqslant2\cdot3^6|\SL_2(13)|$, a contradiction. Similar argument rules out row~2.

Assume that row~3 appears. Then $G_{\alpha\beta}^{\Ga(\alpha)}/K_{\alpha\beta}^{\Ga(\alpha)}$ has a section $2^4{:}5$, and so $\Sy_\ell$ has a section $2^4{:}5$. This implies that $\ell\geqslant10$. It follows from (\ref{eq2}) that $5^{10}\leqslant2\cdot3^4|2^{1+4}.\Sy_5|$, a contradiction.

Next consider rows~4--8. As $|\Sy_\ell|\geqslant|G_{\alpha\beta}/K_{\alpha\beta}|\geqslant|G_{\alpha\beta}^{\Ga(\alpha)}/K_{\alpha\beta}^{\Ga(\alpha)}|
\geqslant12$, we have $\ell\geqslant4$. Note that $|K_{\alpha\beta}^{\Ga(\alpha)}||G_\alpha^{\Ga(\alpha)}|$ divides $24p^2(p-1)^2$. Then (\ref{eq2}) yields $m^4\leqslant24p^2(p-1)^2$, which gives an upper bound for $m$ in the table below. This together with the condition that $p$ divides $|T|$ restricts the possibilities for $T$ by Lemma~\ref{l16}, also shown in the table.
\[
\begin{array}{|l|l|l|l|l|}
\hline
p&5&7&11&23\\
\hline
m\leqslant&9&14&23&49\\
\hline
T&\A_5&\PSL_2(7),\PSL_2(8),\PSL_2(13)&\PSL_2(11)&\PSL_2(23)\\
\hline
\end{array}
\]
For $p=5$, since any solvable subgroup of $T=\A_5$ has index divisible by $5$ or $6$, we derive from Lemma~\ref{Restriction}(b) that $5^x6^y\di24\cdot5^2\cdot4^2$ for some nonnegative integers $x$ and $y$ with $x+y=\ell\geqslant4$, a contradiction. For $p=7$, if $T=\PSL_2(7)$, then since any solvable subgroup of $T$ has index divisible by $7$ or $8$, Lemma~\ref{Restriction}(b) yields that $7^x8^y\di24\cdot7^2\cdot6^2$ for some nonnegative integers $x$ and $y$ with $x+y=\ell\geqslant4$, not possible. Similarly, $T$ is not $\PSL_2(8)$ or $\PSL_2(13)$ either for $p=7$. For $p=11$, since any solvable subgroup of $T=\PSL_2(11)$ has index divisible by $11$ or $12$, it follows from Lemma~\ref{Restriction}(b) that $11^x12^y\di24\cdot11^2\cdot10^2$ for some nonnegative integers $x$ and $y$ with $x+y=\ell\geqslant4$, a contradiction. For $p=23$, any solvable subgroup of $T=\PSL_2(23)$ has index divisible by $23$ or $24$, and thereby it follows from Lemma~\ref{Restriction}(b) that $11^x12^y\di24\cdot23^2\cdot22^2$ for some nonnegative integers $x$ and $y$ with $x+y=\ell\geqslant4$, again a contradiction.

Finally, consider rows~9--12. Note that $|K_{\alpha\beta}^{\Ga(\alpha)}||G_\alpha^{\Ga(\alpha)}|$ divides $60p^2(p-1)^2$, and $\ell\geqslant P(\PSL_2(5))=5$ since $\PSL_2(5)$ is a section of $G_{\alpha\beta}/K_{\alpha\beta}\lesssim\Sy_\ell$. It derives from
(\ref{eq2}) that $m^5\leqslant60p^2(p-1)^2$, which gives $m\leqslant14$, $23$, $33$ or $58$ corresponding to $p=11$, $19$, $29$ or $59$, respectively. We conclude that $T=\PSL_2(p)$ by Lemma~\ref{l16} as $|T|$ is divisible by $p$. Then any solvable subgroup of $T$ has index divisible by $p$ or $p+1$, and so Lemma~\ref{Restriction}(b) implies that
$p^x(p-1)^y\di60p^2(p-1)^2$ for some nonnegative integers $x$ and $y$ with $x+y=\ell\geqslant5$, not possible.
\end{proof}

We summarize the outcomes of this section in the following proposition.

\begin{proposition}\label{reduction-AS}
Let $\Ga=(V,E)$ be a connected $(G,2)$-arc-transitive graph, and $R$ be a solvable vertex-transitive subgroup of $G$. If $G$ is quasiprimitive on $V$, then $G$ is either an almost simple group or an affine group.
\end{proposition}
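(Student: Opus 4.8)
The plan is to invoke the O'Nan--Scott classification of quasiprimitive permutation groups as organized by Praeger (Lemma~\ref{normal-q}): since $\Ga$ is a connected $(G,2)$-arc-transitive graph with $G$ quasiprimitive on $V$, the type of $G$ is one of HA, AS, TW, PA. Thus Proposition~\ref{reduction-AS} amounts precisely to ruling out types TW and PA under the extra hypothesis that $G$ contains a solvable vertex-transitive subgroup $R$. These two exclusions are exactly the content of Lemma~\ref{l11} and Lemma~\ref{l9} respectively, so the proof is a two-line assembly: ``By Lemma~\ref{normal-q}, $G$ has type HA, AS, TW or PA. By Lemmas~\ref{l11} and~\ref{l9}, $G$ is not of type TW or PA. Hence $G$ is affine (type HA) or almost simple (type AS).''

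First I would restate the dichotomy from Lemma~\ref{normal-q} and note that we are in the quasiprimitive situation by hypothesis, so all four types are a priori available. Then I would cite Lemma~\ref{l11} to kill TW and Lemma~\ref{l9} to kill PA; both lemmas are proved earlier in this section under exactly the standing notation fixed at the start of Section~\ref{qp-case} (namely $\Ga$ connected $(G,2)$-arc-transitive, $G$ quasiprimitive, $R$ a solvable vertex-transitive subgroup), which matches the hypotheses of the proposition verbatim. Finally I would observe that type HA is by definition the affine case and type AS the almost simple case, completing the argument.

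Since this proposition is purely a summary, there is essentially no obstacle in the proof itself; all the work has been front-loaded into Lemmas~\ref{l11} and~\ref{l9}. If pressed to identify where the real difficulty lies, it is in Lemma~\ref{l9}: excluding PA type requires the property of finite simple groups established in Theorem~\ref{CommonDivisor} (the sharpened Gorenstein bound $|T|_r \geqslant r|\Out(T)|_r$), the classification of affine $2$-transitive groups via Hering's theorem, Dixon's bound on solvable permutation groups (Theorem~\ref{l13}), and a delicate case analysis over the possible socle components $T$ from Lemma~\ref{X=soluble-pro}. But for the statement of Proposition~\ref{reduction-AS} as such, the proof is immediate.

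\begin{proof}
By Lemma~\ref{normal-q}, the quasiprimitive permutation group $G$ on $V$ is of type HA, AS, TW or PA. Lemma~\ref{l11} shows that $G$ is not of type TW, and Lemma~\ref{l9} shows that $G$ is not of type PA. Therefore $G$ is of type HA or AS, that is, $G$ is an affine group or an almost simple group.
\end{proof}
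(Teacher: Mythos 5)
Your proof is correct and essentially identical to the paper's: both rule out TW and PA via Lemmas~\ref{l11} and~\ref{l9} and then conclude by the four-type dichotomy for quasiprimitive $2$-arc-transitive groups (the paper cites Praeger's theorem directly where you cite Lemma~\ref{normal-q}, which packages the same result).
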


\begin{proof}
We have seen in Lemmas~\ref{l11} and~\ref{l9} that the $G$ is not of type TW or PA. Then by~\cite[Theorem~2]{Praeger}, the quasiprimitive type of $G$ must be HA or AS.
\end{proof}

\section[Proof]{Proof of Theorem~\ref{CayleyGraph}}

We shall complete the proof of Theorem~\ref{CayleyGraph} in this section. Since quasiprimitive $2$-arc-transitive graphs of affine type are classified in \cite{IP}, we only need to treat the almost simple case by Proposition~\ref{reduction-AS}.
For convenience, we make a hypothesis as follows.

\begin{hypothesis}\label{GraphHypo}
Let $\Ga=(V,E)$ be a connected $(G,s)$-arc-transitive graph, where $s\geqslant2$ and the valency of $\Ga$ is at least three. Suppose that $G$ is almost simple with socle $L$, and $G$ is quasiprimitive on $V$ containing a solvable vertex-transitive subgroup $R$. Let $\{\alpha,\beta\}\in E$.
\end{hypothesis}

By the quasiprimitivity of $G$ we immediately derive from the above hypothesis that $L$ is transitive on $V$ and $\Ga$ is non-bipartite. Furthermore, by Lemmas~\ref{l10} and~\ref{CosetGraph} we have the consequences below.

\begin{lemma}\label{l17}
Under \emph{Hypothesis~\ref{GraphHypo}}, the following statements hold.
\begin{itemize}
\item[(a)] $G$ has a factorization $G=RG_\alpha$ with a solvable factor $R$ and a core-free factor $G_\alpha$.
\item[(b)] Either $G=L$ or $G_\alpha\nleqslant L$.
\item[(c)] There exists a $2$-element $g$ in $\Nor_G(G_{\alpha\beta})$ such that
$$
\langle G_\alpha,\Nor_G(G_{\alpha\beta})\rangle=\langle G_\alpha,g\rangle=G
$$
and $G_\alpha$ is $2$-transitive on $[G_\alpha{:}G_\alpha\cap G_\alpha^g]$.
\end{itemize}
\end{lemma}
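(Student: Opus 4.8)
\textbf{Proof proposal for Lemma~\ref{l17}.}

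The plan is to assemble the three claims directly from earlier results under Hypothesis~\ref{GraphHypo}. For part~(a): since $G$ is vertex-transitive on $V$ and $\alpha\in V$, the orbit-counting identity gives $G=RG_\alpha$ because $R$ is already vertex-transitive (so $R$ acts transitively on $[G{:}G_\alpha]$, which is equivalent to $G=RG_\alpha$ by Lemma~\ref{p3}). The factor $R$ is solvable by hypothesis. For $G_\alpha$ being core-free: $G$ is quasiprimitive on $V$, hence faithful, so no nontrivial normal subgroup of $G$ is contained in the point stabilizer $G_\alpha$; equivalently the core of $G_\alpha$ in $G$ is trivial. This also makes $G=RG_\alpha$ a nontrivial factorization in the sense needed to apply Theorem~\ref{SolvableFactor} later. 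Part~(a) is therefore immediate once these two observations are spelled out.

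For part~(b): since $G$ is almost simple with socle $L$, if $G\neq L$ then I must show $G_\alpha\nleqslant L$. Suppose for contradiction that $G\neq L$ but $G_\alpha\leqslant L$. Then $L$ is transitive on $V$ (as already noted, a consequence of quasiprimitivity: $L$ is the unique minimal normal subgroup and is nontrivial, hence transitive), so $L=L\cap G=L\cap RG_\alpha$ and one computes $|V|=|G{:}G_\alpha|=|L{:}L_\alpha|$ with $L_\alpha=L\cap G_\alpha=G_\alpha$; comparing orders, $|G{:}L|=|G_\alpha{:}L_\alpha|=|G_\alpha{:}G_\alpha|=1$, contradicting $G\neq L$. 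So $G_\alpha\nleqslant L$, which is part~(b). (This is really just the standard fact that when the socle is already transitive, the point stabilizer supplements the socle.)

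For part~(c): this is a direct application of Lemma~\ref{CosetGraph}. Since $\Ga$ is connected and $G$-arc-transitive, Lemma~\ref{CosetGraph} furnishes an element $g\in\Nor_G(G_{\alpha\beta})$ with $g^2\in G_{\alpha\beta}$ and $\langle G_\alpha,\Nor_G(G_{\alpha\beta})\rangle=\langle G_\alpha,g\rangle=G$, and moreover $\Ga$ is $(G,2)$-arc-transitive if and only if $G_\alpha$ is $2$-transitive on $[G_\alpha{:}G_\alpha\cap G_\alpha^g]$. Since $s\geqslant2$, $\Ga$ is indeed $(G,2)$-arc-transitive, so the $2$-transitivity holds. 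It remains to upgrade $g$ to a $2$-element: by the remark following Lemma~\ref{CosetGraph}, replacing $g$ by a suitable power (the $2$-part of $g$ modulo $G_{\alpha\beta}$, using $g^2\in G_{\alpha\beta}$) we may take $g$ to be a $2$-element while preserving all the stated properties, because such a replacement does not change the double coset $G_\alpha g G_\alpha$ up to the relevant equivalence and still generates $G$ together with $G_\alpha$.

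I do not expect any genuine obstacle here; the lemma is a bookkeeping step that packages Lemma~\ref{CosetGraph}, the quasiprimitivity hypothesis, and the almost-simple structure into the exact form needed for the subsequent analysis. The only point requiring a little care is the argument for part~(b) — making sure the order comparison is set up correctly — and the justification that $g$ can be chosen to be a $2$-element, which should be cited from the remark after Lemma~\ref{CosetGraph} rather than reproved.
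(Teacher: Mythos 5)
Your proof is correct and essentially reconstructs the argument the paper leaves implicit — the paper offers only the one-line justification that part~(a) comes from quasiprimitivity, $L$ is transitive on $V$, and the rest follows from Lemmas~\ref{l10} and~\ref{CosetGraph}. Your order comparison in part~(b) is exactly the explicit form of the isomorphism $G_\alpha/L_\alpha\cong G/L$ given by Lemma~\ref{l10}(a) with $K=L$, and the $2$-element adjustment in part~(c) is precisely what the remark following Lemma~\ref{CosetGraph} intends (taking $g^m$ where $m$ is the odd part of the order of $g$ preserves $g^2\in G_{\alpha\beta}$, the normalizing property, the double coset, and $\langle G_\alpha,g\rangle=G$).
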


The candidates for $(G,R,G_\alpha)$ in the factorization $G=RG_\alpha$ are categorized by Theorem~\ref{SolvableFactor} into parts~(a)--(d) there, and we will analyze them separately. Above all, we consider the candidates $(G,R,G_\alpha)=(G,H,K)$ as in part~(a) of Theorem~\ref{SolvableFactor}, which is the case where both $H$ and $K$ are solvable.

\begin{lemma}\label{SolvableStab}
Under \emph{Hypothesis~\ref{GraphHypo}}, if $G_\alpha$ is solvable, then $L=\PSL_2(q)$ for some prime power $q$, so that $\Ga$ is classified in \cite{HNP}.
\end{lemma}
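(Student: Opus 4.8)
The idea is to combine the factorization $G=RG_\alpha$ coming from Lemma~\ref{l17}(a) with the list of factorizations of almost simple groups having \emph{both} factors solvable, namely Proposition~\ref{BothSolvable}. Since by hypothesis $G_\alpha$ is solvable and $R$ is solvable, the pair $(R,G_\alpha)$ (in some order) is one of the cases of Proposition~\ref{BothSolvable}. Part~(a) there gives exactly $L=\PSL_2(q)$, which is the desired conclusion, so the entire task reduces to ruling out the finitely many sporadic-looking candidates in part~(b): the socles $\PSL_2(7)$, $\PSL_2(11)$, $\PSL_3(3)$, $\PSL_3(4)$, $\PSL_3(8)$, $\PSU_3(8)$, $\PSU_4(2)\cong\PSp_4(3)$ and $\M_{11}$, with $(G,R,G_\alpha)$ running through the rows of Table~\ref{tab4}. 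Note that we may not simply interchange $R$ and $G_\alpha$: the roles are fixed by the graph, since $R$ must be vertex-transitive (hence of order divisible by $|V|=|G|/|G_\alpha|$) and $G_\alpha$ must be a point stabilizer admitting a $2$-transitive action on its neighbourhood. Both of these are strong extra constraints beyond being a factor.

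\textbf{Key steps.} First I would record that $\Ga$ is $(G,2)$-arc-transitive of valency $\geqslant3$, so by Lemma~\ref{l17}(c) there is a $2$-element $g\in\Nor_G(G_{\alpha\beta})$ with $\langle G_\alpha,g\rangle=G$ and $G_\alpha$ acting $2$-transitively on $[G_\alpha:G_\alpha\cap G_\alpha^g]$ of degree $=|\Ga(\alpha)|=|G_\alpha|/|G_{\alpha\beta}|\geqslant3$. In particular $G_\alpha$ is a solvable group with a faithful $2$-transitive action of degree at least $3$; by Huppert's theorem such a group has degree a prime power $p^e$ and is contained in $\AGaL_1(p^e)$ unless it is one of the finitely many exceptional affine $2$-transitive groups listed (e.g.\ in Example after Lemma~\ref{p3}). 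Second, for each candidate socle $L$ in Proposition~\ref{BothSolvable}(b) I would go through the corresponding rows of Table~\ref{tab4} and check, using the orders, whether $G_\alpha$ (the factor playing the role of the point stabilizer) can carry such a $2$-transitive action of degree $\geqslant3$, and whether the connectivity condition $\langle G_\alpha,g\rangle=G$ for a suitable $2$-element $g$ normalizing $G_{\alpha\beta}$ can be met. Most rows are eliminated immediately because the relevant solvable factor has no faithful $2$-transitive action of degree $\geqslant3$ (for instance factors of the shape $[n]{:}\Z_k$ that are metacyclic of the wrong type, or $\Sy_4$ acting only on $3$ or $4$ points with the other factor then too small to be vertex-transitive). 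The few surviving numerical possibilities I would dispatch by a direct \magma check, exactly in the spirit of the computations invoked throughout Chapter~\ref{p-dim} and Section~4.1; this is legitimate since only a bounded list of explicit small groups is involved.

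\textbf{Main obstacle.} The delicate part is not the generic elimination but making sure no genuine graph hides among the small cases --- in particular the $\PSp_4(3)\cong\PSU_4(2)$ and $\M_{11}$ rows, where the factors are reasonably large and one of them ($2^4{:}\A_5$-type subgroups, $\M_9.2$, etc.) is close to admitting an interesting action. Here I would lean on the precise coset-graph criterion: even when $G=RG_\alpha$ holds with both factors solvable and $G_\alpha$ admits \emph{some} $2$-transitive action, one still needs an element $g\in\Nor_G(G_{\alpha\beta})$, with $g^2\in G_{\alpha\beta}$, generating $G$ together with $G_\alpha$, such that the induced action of $G_\alpha$ on $[G_\alpha:G_\alpha\cap G_\alpha^g]$ is $2$-transitive. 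Checking this is a finite computation for each candidate, and I expect it to yield no examples, completing the reduction to $L=\PSL_2(q)$, whereupon the classification of $(G,2)$-arc-transitive graphs with $\Soc(G)=\PSL_2(q)$ in \cite{HNP} applies and gives the stated conclusion.
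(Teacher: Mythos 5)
Your overall strategy matches the paper's: invoke Lemma~\ref{l17}(a) to get the factorization $G=RG_\alpha$ with both factors solvable, feed it into Proposition~\ref{BothSolvable}, observe that part (a) is the desired conclusion $L=\PSL_2(q)$, and eliminate the finitely many candidates from Table~\ref{tab4} via the $2$-transitivity of $G_\alpha^{\Ga(\alpha)}$, the connectivity criterion of Lemma~\ref{l17}(c), and \magma{} for the small residue.

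However, there is a genuine gap in the step that does the heavy lifting. You assert that $G_\alpha$ is ``a solvable group with a faithful $2$-transitive action of degree at least~$3$'' and then plan to run Huppert's classification against $G_\alpha$ itself. This is false: the action of $G_\alpha$ on $[G_\alpha{:}G_{\alpha\beta}]\cong\Ga(\alpha)$ has kernel $G_\alpha^{[1]}$, which is frequently nontrivial. For example, in row~7 of Table~\ref{tab4} one has $G_\alpha=2^{3+6}{:}7^2{:}3$ with $G_\alpha^{\Ga(\alpha)}=2^3{:}7{:}3$, so $|G_\alpha^{[1]}|=2^6\cdot7$; similarly for $\PSU_3(8)$ in row~8. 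Huppert's theorem constrains $G_\alpha^{\Ga(\alpha)}$, not $G_\alpha$, so if you apply faithfulness you will either wrongly conclude these rows are impossible (for the wrong reason) or have no handle on them at all. The paper's proof circumvents this by observing that, since $G_\alpha$ is solvable, $G_\alpha^{\Ga(\alpha)}$ cannot have $\PSL_d(q)$ as a section, so Theorem~\ref{DoubleStar} forces $G_{\alpha\beta}^{[1]}=1$ once the valency is at least five; then Lemma~\ref{abeq} embeds $G_\alpha^{[1]}$ into $G_{\alpha\beta}^{\Ga(\alpha)}$, giving the divisibility constraint $|G_\alpha|\mid|G_\alpha^{\Ga(\alpha)}|^2/|\Ga(\alpha)|$ (equation~(\ref{eq34})). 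This is the weaker-but-correct replacement for faithfulness, and it is exactly what dispatches rows~7--12. Your proposal would need this extra ingredient (or equivalent) before the case analysis closes.
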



\begin{proof}
Suppose that $L\neq\PSL_2(q)$ for any prime power $q$. Then by Proposition~\ref{BothSolvable}, $(G,R,G_\alpha)=(G,H,K)$ or $(G,K,H)$ for some triple $(G,H,K)$ in rows~4--12 of Table~\ref{tab4}.

Assume that $L=\PSL_3(3)$ as in rows~4--5 of Table~\ref{tab4}. Then since $G_\alpha$ is $2$-transitive on $\Ga(\alpha)$, we infer that $G_\alpha=3^2{:}2.\Sy_4$ or $\AGaL_1(9)$ and in particular $G_\alpha<L$. This yields $G=L$ by Lemma~\ref{l17}(b), and so $|V|=|G|/|G_\alpha|=13$ or $39$. As a consequence, the valency of $\Ga$ is even, which restricts $G_\alpha=3^2{:}2.\Sy_4$ and $G_{\alpha\beta}=3^2{:}2.\Sy_3$. However, this causes $\Nor_G(G_{\alpha\beta})=G_{\alpha\beta}<G_\alpha$ and then $\langle G_\alpha,\Nor_G(G_{\alpha\beta})=G_\alpha$, contrary to Lemma~\ref{l17}(c).

For $L=\PSL_3(4)$ as in row~6 of Table~\ref{tab4}, since the quasiprimitivity of $G$ on $V$ requires that there is no subgroup of index two in $G$ containing $G_\alpha$, the possibilities for $(G,G_\alpha)$ are $(L.\Sy_3,7{:}3.\Sy_3)$, $(L.(\Sy_3\times2),7{:}6.\Sy_3)$ and $(L.\Sy_3,2^4{:}(3\times\D_{10}).2)$. Searching in \magma \cite{bosma1997magma} for these candidates gives no such connected $(G,2)$-arc-transitive graph.

In the following we exclude rows~7--12 of Table~\ref{tab4}. If $\Ga$ has valency at least five, then since $G_\alpha$ is solvable, we have $G_{\alpha\beta}^{[1]}=1$ by Theorem~\ref{DoubleStar} and thus $G_\alpha^{[1]}\trianglelefteq G_{\alpha\beta}^{\Ga(\beta)}\cong G_{\alpha\beta}^{\Ga(\alpha)}$ by Lemma~\ref{abeq}. In particular, if $\Ga$ has valency at least five, then
\begin{equation}\label{eq34}
\text{$|G_\alpha|=|G_\alpha^{[1]}||G_\alpha^{\Ga(\alpha)}|$ divides
$|G_{\alpha\beta}^{\Ga(\alpha)}||G_\alpha^{\Ga(\alpha)}|=|G_\alpha^{\Ga(\alpha)}|^2/|\Ga(\alpha)|$}.
\end{equation}

Let $L=\PSL_3(8)$ as in row~7 of Table~\ref{tab4}. Since $G_\alpha$ is $2$-transitive on $\Ga(\alpha)$, we have $G_\alpha=2^{3+6}{:}7^2{:}3$ or $2^{3+6}{:}7^2{:}6$. If $G_\alpha=2^{3+6}{:}7^2{:}6$, then $|\Ga(\alpha)|=7$ and $G=\PSL_3(8).6$, which leads to a contradiction that both $|\Ga(\alpha)|$ and $|V|=|G|/|G_\alpha|$ are odd. Thus $G_\alpha=2^{3+6}{:}7^2{:}3$, and so $G_\alpha^{\Ga(\alpha)}=2^3{:}7{:}3$. However, (\ref{eq34}) requires $|G_\alpha|$ to divide $|G_\alpha^{\Ga(\alpha)}|^2$, a contradiction.

Let $L=\PSU_3(8)$ as in row~8 of Table~\ref{tab4}. Since $G_\alpha$ is $2$-transitive on $\Ga(\alpha)$ and $G$ does not have a subgroup of index two containing $G_\alpha$, we have $(G,G_\alpha)=(\PSU_3(8).3^2.\calO,2^{3+6}{:}(63{:}3).\calO)$ with $\calO\leqslant\Z_2$. As a consequence, $|V|=|G|/|G_\alpha|=513$ is odd, and so $|\Ga(\alpha)|$ is even. Hence $|\Ga(\alpha)|=2^6$ and $G_\alpha^{\Ga(\alpha)}=2^6{:}(63{:}3).\calO$, but this does not satisfy (\ref{eq34}), a contradiction.

Assume next that $L=\PSU_4(2)$ and $G_\alpha=H$ or $K$ as in rows~9--11 of Table~\ref{tab4}. For each candidate of $G_\alpha$, let $X$ be the maximal subgroup of $G$ containing $G_\alpha$, where $X\cap L=2^4{:}\A_5$, $3_+^{1+2}{:}2.\A_4$ or $3^3{:}\Sy_4$. Then computation in \magma \cite{bosma1997magma} verifies that for any subgroup $M$ of $G_\alpha$ such that $G_\alpha$ acts $2$-transitively on $[G_\alpha{:}M]$, one has $\Nor_G(M)\leqslant X$. This violates Lemma~\ref{l17}(c).

For $G=\M_{11}$ as in row~12 Table~\ref{tab4}, $G_\alpha=\M_9.2=3^2{:}\Q_8.2$ and $G_{\alpha\beta}=\Q_8.2$, which makes $|V|=|G|/|G_\alpha|$ and $|\Ga(\alpha)|=|G_\alpha|/|G_{\alpha\beta}|$ both odd, a contradiction. This proves the lemma.
\end{proof}

Next we treat the candidates for $(G,R,G_\alpha)=(G,H,K)$ as described in part~(b) of Theorem~\ref{SolvableFactor}.

\begin{lemma}\label{AlternatingGraph}
Under \emph{Hypothesis~\ref{GraphHypo}}, if $L=\A_n$ with $n\geqslant7$, then $\Ga=\K_n$.
\end{lemma}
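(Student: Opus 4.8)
The plan is to combine the factorization data from Proposition~\ref{Alternating} with the coset-graph machinery of Lemma~\ref{l17}. Under Hypothesis~\ref{GraphHypo} the group $G$ is almost simple with socle $L=\A_n$ ($n\geqslant7$) and we have a factorization $G=RG_\alpha$ with $R$ solvable and $G_\alpha$ core-free. Since $G_\alpha$ must act $2$-transitively on $\Ga(\alpha)$ (Lemma~\ref{l17}(c) gives the $2$-transitivity, and Theorem~\ref{DoubleStar} controls the structure), $G_\alpha$ cannot itself be solvable in the generic range, so we are looking at the cases of Proposition~\ref{Alternating} in which $K=G_\alpha$ is unsolvable --- that is, either $G_\alpha$ contains $\A_{n-1}$, or $\A_{n-2}\trianglelefteq G_\alpha\leqslant\Sy_{n-2}\times\Sy_2$, or $\A_{n-3}\trianglelefteq G_\alpha$, or one of the small exceptional rows with $n\in\{6,8\}$; the $n=6$ rows are excluded by $n\geqslant7$, and the $n=8$ rows of Table~\ref{tab6} have $K=\AGL_3(2)$, which is also on the $\A_{n-1}$-type list in a sense but must be handled by inspection.

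First I would dispose of the ``large stabilizer'' case $\A_{n-1}\trianglelefteq G_\alpha\leqslant\Sy_{n-1}$. Here $\Ga$ has $n$ vertices and $G$ acts on $V$ as it does on $\{1,\dots,n\}$; the only $G$-arc-transitive graph on which $\A_n$ (or $\Sy_n$) acts with point stabilizer (essentially) $\Sy_{n-1}$ and with valency at least three is the complete graph $\K_n$ --- indeed $G_\alpha=\Sy_{n-1}$ acts $2$-transitively on the remaining $n-1$ points, which are exactly the neighbors of $\alpha$, forcing every other vertex to be adjacent to $\alpha$. One checks via Lemma~\ref{l17}(c) that a suitable swapping $2$-element $g$ exists (a transposition, times an even permutation if $G=\A_n$), and that $\langle G_\alpha,g\rangle=G$, so $\Ga=\Cos(G,G_\alpha,G_\alpha gG_\alpha)=\K_n$ genuinely occurs. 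The bulk of the work is then to show the remaining cases \emph{cannot} occur.

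For the case $\A_{n-2}\trianglelefteq G_\alpha\leqslant\Sy_{n-2}\times\Sy_2$: then $G_\alpha$ acts on $\Ga(\alpha)$ and must do so $2$-transitively, but the only $2$-transitive actions of a group with a normal subgroup $\A_{n-2}$ and index-bounded quotient are the natural action on $n-2$ points (or on $\binom{n-2}{2}$ points, not $2$-transitive), and one shows the valency thus forced is incompatible with $G_\alpha$ being an \emph{edge} stabilizer sitting inside an arc-transitive group --- concretely, $\Nor_G(G_{\alpha\beta})$ then fails to generate $G$ with $G_\alpha$, contradicting Lemma~\ref{l17}(c); alternatively, the parity of $|V|=|G|/|G_\alpha|$ and of the valency $|G_\alpha|/|G_{\alpha\beta}|$ can be played off against each other as in the proof of Lemma~\ref{SolvableStab}. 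The case $\A_{n-3}\trianglelefteq G_\alpha$ (only $n=8$ or $n=32$ in Proposition~\ref{Alternating}, with $G_\alpha\cap L$ a $3$-homogeneous affine group) is even more restrictive: $n=32$ forces $G_\alpha=\AGaL_1(32)$ which is solvable, handled by Lemma~\ref{SolvableStab}; $n=8$ with $G_\alpha$ related to $\AGL_3(2)$ is finite and is killed by a direct \magma\ check (or by the valency/parity argument), noting that $\AGL_3(2)$ as a $2$-transitive group of degree $8$ would make $\Ga$ have $8$ vertices again, but then $G_\alpha$ is too small to be $\Sy_7$-like. I expect the main obstacle to be organizing the exceptional small-degree rows (especially $n=8$, where $\A_8\cong\PSL_4(2)$ and several overlapping maximal subgroups appear) cleanly enough that the argument reduces to a short finite computation rather than a long case analysis; the infinite families themselves succumb readily to the $2$-transitivity constraint plus Lemma~\ref{l17}(c).
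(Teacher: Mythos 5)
Your outline is close in spirit to the paper's proof — organize by the cases of Proposition~\ref{Alternating}, use the coset-graph machinery of Lemma~\ref{l17} to kill the unwanted cases, and observe that case~(i) gives $\Ga=\K_n$ — but there is one genuine error in case~(iii) that would break the argument.

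For $n=32$ in case~(iii), you write that ``$n=32$ forces $G_\alpha=\AGaL_1(32)$ which is solvable, handled by Lemma~\ref{SolvableStab}.'' This swaps the two factors. In the notation of Proposition~\ref{Alternating}(c), it is the \emph{solvable} factor $H$ (i.e.\ $R$ in Hypothesis~\ref{GraphHypo}) that equals $\AGaL_1(32)$; the stabilizer $G_\alpha$ plays the role of $K$, so $\A_{29}\trianglelefteq G_\alpha\leqslant\Sy_{29}\times\Sy_3$, which is certainly not solvable. Lemma~\ref{SolvableStab} therefore does not apply, and your argument has a hole exactly where the paper's argument is most delicate: one needs to use $|G_\alpha|\geqslant|G|/|R|$ to force $\A_{29}\times\Z_3\trianglelefteq G_\alpha$, then observe $\Z_3\leqslant G_\alpha^{[1]}$, show $G_{\alpha\beta}^{[1]}=1$ via Theorem~\ref{DoubleStar}(b), and derive a contradiction from Lemma~\ref{abeq} since $G_\alpha^{[1]}\cong(G_\alpha^{[1]})^{\Ga(\beta)}$ would then have to be a nontrivial normal subgroup of a group sandwiched between $\A_{28}$ and $\Sy_{28}$, which is impossible. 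You also blur case~(iii) at $n=8$ (where $G_\alpha\geqslant\A_5$) with case~(iv) (where $G_\alpha=\AGL_3(2)$, rows~5--6 of Table~\ref{tab6}); both are disposed of by computer, but they are distinct cases. For case~(ii) your normalizer/parity idea is the right one, though the paper actually carries it out with a careful split on $G=\A_n$ versus $\Sy_n$ and on $G_\alpha=\A_{n-2}$ versus $(\Sy_{n-2}\times\Sy_2)\cap\A_n$, each of which requires computing $\Nor_G(G_{\alpha\beta})$ and checking $\langle G_\alpha,g\rangle$ explicitly.
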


\begin{proof}
The triple $(G,R,G_\alpha)=(G,H,K)$ is classified in Proposition~\ref{Alternating}, which shows that one of the following occurs.
\begin{itemize}
\item[(i)] $\A_n\trianglelefteq G\leqslant\Sy_n$ with $n\geqslant6$, and $\A_{n-1}\trianglelefteq G_\alpha\leqslant\Sy_{n-1}$.
\item[(ii)] $\A_n\trianglelefteq G\leqslant\Sy_n$ with $n=p^f$ for some prime $p$, and $\A_{n-2}\trianglelefteq G_\alpha\leqslant\Sy_{n-2}\times\Sy_2$.
\item[(iii)] $\A_n\trianglelefteq G\leqslant\Sy_n$ with $n=8$ or $32$, and $\A_{n-3}\trianglelefteq G_\alpha\leqslant\Sy_{n-3}\times\Sy_3$.
\item[(iv)] $(G,R,G_\alpha)=(G,H,K)$ in rows~5--6 of Table~\ref{tab6}.
\end{itemize}

First assume that~(i) occurs. Then viewing Lemma~\ref{l17}(b) we have $(G,G_\alpha)=(\Sy_n,\Sy_{n-1})$ or $(\A_n,\A_{n-1})$. It follows that $G$ is $2$-transitive on $[G{:}G_\alpha]$, and hence $\Ga=\K_n$.

Next assume that~(ii) occurs. For $n=7$, $8$ and $9$, respectively, computation in \magma \cite{bosma1997magma} shows that there is no such connected $(G,2)$-arc-transitive graph. Thus $n\geqslant11$.

Suppose $G=\A_n$. Then either $G_\alpha=\A_{n-2}$ or $G_\alpha=(\Sy_{n-2}\times\Sy_2)\cap\A_n\cong\Sy_{n-2}$. Assume that $G_\alpha=\A_{n-2}$. Because $G_\alpha$ acts $2$-transitively on $[G_\alpha{:}G_{\alpha\beta}]$, so we have $G_{\alpha\beta}=\A_{n-3}$. It follows that $\Nor_G(G_{\alpha\beta})=(\Sy_{n-3}\times\Sy_3)\cap\A_n$. Thus for any $2$-element $g\in\Nor_G(G_{\alpha\beta})\setminus G_\alpha$, we have $\langle G_\alpha,g\rangle=\langle\A_{n-2},(1,2)(n-2,n-1)\rangle$, $\langle\A_{n-2},(1,2)(n-2,n)\rangle$ or $\langle\A_{n-2},(1,2)(n-1,n)\rangle$, and hence $\langle G_\alpha,g\rangle\cong\A_{n-1}$ or $\Sy_{n-2}$. This is contrary to Lemma~\ref{l17}(c). Now $G_\alpha=(\Sy_{n-2}\times\Sy_2)\cap\A_n=\langle\A_{n-2},(1,2)(n-1,n)\rangle$. Since $G_\alpha$ acts $2$-transitively on $[G_\alpha{:}G_{\alpha\beta}]$, we may assume $G_{\alpha\beta}=\langle\A_{n-3},(1,2)(n-1,n)\rangle$ without loss of generality. It follows that $\Nor_G(G_{\alpha\beta})=G_{\alpha\beta}<G_\alpha$, still contrary to Lemma~\ref{l17}(c).

Suppose $G=\Sy_n$. Then $G_\alpha=\Sy_{n-2}$, $\A_{n-2}\times\Sy_2$ or $\Sy_{n-2}\times\Sy_2$ by Lemma~\ref{l17}(b). This implies that $L=\A_{n}$ is $2$-arc-transitive on $\Ga$, which is not possible as shown in the previous paragraph.

Now assume that~(iii) appears. If $n=8$, then searching in \magma \cite{bosma1997magma} shows that no such connected $(G,2)$-arc-transitive graph arises. Therefore, $n=32$. According to Proposition~\ref{Alternating}, $R=\AGaL_1(32)$, which implies that $|G_\alpha|$ is divisible by $3|\Sy_{29}|$ since $|G_\alpha|$ is divisible by $|G|/|R|$. Hence $\A_{29}\times\Z_3\trianglelefteq G_\alpha$, and so $\Z_3\leqslant G_\alpha^{[1]}$, $\A_{29}\leqslant G_\alpha^{\Ga(\alpha)}\leqslant\Sy_{29}$ and $\A_{28}\leqslant G_{\alpha\beta}^{\Ga(\alpha)}\leqslant\Sy_{28}$. By Theorem~\ref{DoubleStar}(b) we conclude $G_{\alpha\beta}^{[1]}=1$, and it then follows from Lemma~\ref{abeq} that $G_\alpha^{[1]}\cong(G_\alpha^{[1]})^{\Ga(\beta)}\trianglelefteq G_{\alpha\beta}^{\Ga(\alpha)}$, not possible.

Finally, for case~(iv), computation in \magma \cite{bosma1997magma} shows that there exists no such connected $(G,2)$-arc-transitive graph. This completes the proof.
\end{proof}

The case where $(G,R,G_\alpha)=(G,H,K)$ as in part~(c) of Theorem~\ref{SolvableFactor} is dealt with in the next lemma.

\begin{lemma}\label{SporadicGraph}
Under \emph{Hypothesis~\ref{GraphHypo}}, if $L$ is a sporadic simple group and $G_\alpha$ is unsolvable, then either $\Ga$ is a complete graph, or $\Ga$ is the Higman-Sims graph and $(G,G_\alpha)=(\HS,\M_{22})$ or $(\HS.2,\M_{22}.2)$.
\end{lemma}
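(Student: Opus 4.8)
The starting point is Lemma~\ref{l17}(a): $G=RG_\alpha$ is a factorization of $G$ with $R$ solvable and $G_\alpha$ core-free. Since $G_\alpha$ is unsolvable and $L$ is a sporadic simple group, Proposition~\ref{Sporadic} applies with $(H,K)=(R,G_\alpha)$, so the triple $(G,R,G_\alpha)$ is either as in part~(a) or~(b) of that proposition or is a row of Table~\ref{tab8}. In every case $V=[G{:}G_\alpha]$, and I would recover $\Ga$ from the permutation action of $G$ on $V$ via the coset-graph description in Lemma~\ref{l17}(c): $\Ga\cong\Cos(G,G_\alpha,G_\alpha gG_\alpha)$ for some $2$-element $g\in\Nor_G(G_{\alpha\beta})$ with $\langle G_\alpha,g\rangle=G$, and $G_\alpha$ acts $2$-transitively on $\Ga(\alpha)=[G_\alpha{:}G_{\alpha\beta}]$, which has size at least three.

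The first family of cases is that in which $G_\alpha$ is a point stabilizer of a multiply transitive action of $G$. This covers part~(a) of Proposition~\ref{Sporadic} with $G=\M_{12}$ (the possibility $G=\M_{12}.2$ is excluded here, since then the socle $\M_{12}$ is intransitive on the $24$ cosets of $\M_{11}$, violating the quasiprimitivity of $G$ on $V$), part~(b) with $G=\M_{24}$, and rows~1, 2, 5, 6 together with the subcase $G_\alpha=\M_{22}$ of row~7 of Table~\ref{tab8} --- that is, the natural actions of $\M_{11}$ on $11$ and on $12$ points, of $\M_{22}.2$ on $22$ points, and of $\M_{23}$ on $23$ points. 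In all of these $G$ is $2$-transitive on $V$, so the only $G$-arc-transitive graph on $V$ is the complete graph, and hence $\Ga=\K_{|V|}$.

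The Higman--Sims graph arises from rows~9, 10, 11 of Table~\ref{tab8}, where $G\in\{\HS,\HS.2\}$ and $G_\alpha\in\{\M_{22},\M_{22}.2\}$. The pair $(\HS.2,\M_{22})$ gives $|V|=200$, but then the socle $\HS$ has two orbits of length $100$ on $V$ (because $\HS\cap\M_{22}=\M_{22}$ has index $100$ in $\HS$), contradicting quasiprimitivity; so this pair does not occur. For $(\HS,\M_{22})$ and $(\HS.2,\M_{22}.2)$ the valency is $22$, and since the two-point stabilizer in the $3$-transitive action of $\M_{22}$ on $22$ points has order $960$, which is not divisible by $21$, the graph cannot be $3$-arc-transitive; identifying $\Cos(G,G_\alpha,G_\alpha gG_\alpha)$ with the Higman--Sims graph as in Example~\ref{Higman-S} then yields the asserted conclusion in these cases.

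It remains to rule out the rows of Table~\ref{tab8} not yet treated: rows~3, 4, 8, 12, 13 and the subcases $G_\alpha\in\{\PSL_3(4){:}2,\ 2^4{:}\A_7\}$ of row~7. For rows~12 and~13 the group $G_\alpha$ is almost simple with socle $\Sp_4(4)$ and $\G_2(4)$ respectively; neither of these, nor any quotient of $G_\alpha$ of degree at least three, admits a $2$-transitive permutation representation, so the required group $G_\alpha^{\Ga(\alpha)}$ cannot exist and these rows are impossible. For rows~3, 4, 8 and the two subcases of row~7 I would carry out the coset-graph construction of Lemma~\ref{l17}(c) directly: for each subgroup $G_{\alpha\beta}\leqslant G_\alpha$ on which $G_\alpha$ acts $2$-transitively of degree at least three, one enumerates the $2$-elements $g\in\Nor_G(G_{\alpha\beta})\setminus G_\alpha$ and checks whether $\langle G_\alpha,g\rangle=G$; a finite computation in \magma\ \cite{bosma1997magma} shows that no connected $(G,2)$-arc-transitive graph of valency at least three results. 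I expect this last, computational, step to be the main obstacle, and within it the case $G=\J_2.2$, $G_\alpha=\G_2(2)$ to be the most delicate: here $\G_2(2)\cong\PSU_3(3){:}2$ does carry a $2$-transitive action (of degree $28$), so the row is not eliminated merely on local grounds and the global condition $\langle G_\alpha,g\rangle=G$ must genuinely be verified.
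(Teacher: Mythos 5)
Your proposal is correct and follows essentially the same route as the paper: reduce to Proposition~\ref{Sporadic} via the factorization $G=RG_\alpha$, observe that the $2$-transitive cases force $\Ga$ to be complete, identify the $\HS$ rows with the Higman--Sims graph, and eliminate the remaining rows by showing no suitable $2$-transitive $G_\alpha^{\Ga(\alpha)}$ and connectivity-preserving $g$ can coexist. The only notable difference is cosmetic: for rows $7$--$8$ the paper replaces your proposed \magma\ search by explicit normalizer computations ($\Nor_G(G_{\alpha\beta})=G_{\alpha\beta}$ forces $\langle G_\alpha,\Nor_G(G_{\alpha\beta})\rangle\neq G$), while your quasiprimitivity-based exclusions of $(\M_{12}.2,\M_{11})$ and $(\HS.2,\M_{22})$ make explicit what the paper handles implicitly via Lemma~\ref{l17}(b).
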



\begin{proof}
By Proposition~\ref{Sporadic}, either $G$ acts $2$-transitively on $[G{:}K]$, or $(G,K)$ lies in rows~3--4 and~7--13 of Table~\ref{tab8}. For the former, $\Ga$ is a complete graph. Since $G_\alpha$ is $2$-transitive on $\Ga(\alpha)$, it is not possible for $G_\alpha=\Sp_4(4).4$ or $\G_2(4).2$ as in row~12 or~13, respectively, of Table~\ref{tab8}. Moreover, computation in \magma \cite{bosma1997magma} shows that there exists no non-bipartite connected $(G,2)$-arc-transitive graph for rows~3--4 of Table~\ref{tab8}. Thus we only need to consider rows~7--11 of Table~\ref{tab8}.

Let $G=\M_{23}$ as in row~7 of Table~\ref{tab8}. If $G_\alpha=\M_{22}$, then $G$ acts $2$-transitively on $[G{:}G_\alpha]$ and so $\Ga=\K_{23}$. If $G_\alpha=\PSL_3(4){:}2$, then $G_{\alpha\beta}=2^4{:}\Sy_5$ and $\Nor_G(G_{\alpha\beta})=G_{\alpha\beta}$, contradicting Lemma~\ref{l17}(c). If $G_\alpha=2^4{:}\A_7$, then $G_{\alpha\beta}=\A_7$, $2^4{:}\A_6$ or $2^4{:}\GL_3(2)$ and $\Nor_G(G_{\alpha\beta})=G_{\alpha\beta}$, again contradicting Lemma~\ref{l17}(c).

If $G=\J_2.2$ as in row~8 of Table~\ref{tab8}, then $G_\alpha=\G_2(2)$ and $G_{\alpha\beta}=3_+^{1+2}{:}8{:}2$ as $G_\alpha$ is $2$-transitive on $\Ga(\alpha)$, but $\Nor_G(G_{\alpha\beta})=G_{\alpha\beta}$, violating the connectivity of $\Ga$.

Now let $L=\HS$ as in rows~9--11 of Table~\ref{tab8}. Viewing Lemma~\ref{l17}(b), we have $(G,G_\alpha)=(\HS.\calO,\M_{22}.\calO)$ with $\calO\leqslant\Z_2$. From the $2$-transitivity of $G_\alpha$ on $\Ga(\alpha)$ we derive that $G_{\alpha\beta}=\PSL_3(4).\calO$. Hence $\Ga$ has order $|G|/|G_\alpha|=100$ and valency $|G_\alpha|/|G_{\alpha\beta}|=22$. This is the well-known Higman-Sims graph, see Example~\ref{Higman-S}.
\end{proof}

Finally we embark on the candidates for $(G,R,G_\alpha)=(G,H,K)$ as in part~(d) of Theorem~\ref{SolvableFactor}.

\begin{lemma}\label{ClassicalGraph}
Under \emph{Hypothesis~\ref{GraphHypo}}, if $G$ is a classical group of dimension greater than two and $G_\alpha$ is unsolvable, then $\Ga$ is the Hoffman-Singlton graph and $(G,G_\alpha)=(\PSU_3(5),\A_7)$ or $(\PSiU_3(5),\Sy_7)$.
\end{lemma}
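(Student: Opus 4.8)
The plan is to invoke Theorem~\ref{SolvableFactor}(d) to pin down the finitely many possibilities for the triple $(G,G_\alpha)$ arising from the factorization $G=RG_\alpha$ of Lemma~\ref{l17}(a) with $R$ solvable and $G_\alpha$ unsolvable, and then to eliminate all but one of them using the coset graph constraints of Lemma~\ref{l17}(c) together with the structural information on $2$-arc stabilizers from Theorem~\ref{DoubleStar} and Lemma~\ref{abeq}. Since $G$ is classical of dimension at least three and $G_\alpha$ is unsolvable, the triple $(L,G_\alpha\cap L,R\cap L)$ must appear in Table~\ref{tab7} or Table~\ref{tab1} (the roles of $H,K$ being $R,G_\alpha$ here), and moreover $G_\alpha$ must be $2$-transitive on $\Ga(\alpha)=[G_\alpha{:}G_{\alpha\beta}]$, which is a severe restriction: the stabilizer of a point in $G_\alpha$ must have index equal to a valency admitting a $2$-transitive action.

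First I would go through the infinite families in Table~\ref{tab7}. In every such row $K\cap L$ (our $G_\alpha\cap L$) contains a large normal subgroup such as $\SL_{n-1}(q)$, $\Omega_{2m}^-(q)$, $\Sp_2(q^2)$, $\SU_{2m-1}(q)$, $\Omega_{2m}^\pm(q)$, etc., while $\Nor_L(K\cap L)$ is a maximal parabolic-type or $\mathcal{C}_1$ subgroup. For these, $G_\alpha$ is far too large relative to any $2$-transitive degree: the point stabilizer $G_{\alpha\beta}$ would have to be a subgroup of very small index in a group with socle $\PSL_{n-1}(q)$ or $\Omega_m^\varepsilon(q)$, and Theorem~\ref{l7} (minimal degrees) rules out a $2$-transitive quotient action of the required degree — concretely, $|G_\alpha^{[1]}|$ divides $|G_{\alpha\beta}^{\Ga(\alpha)}|$ by Lemma~\ref{abeq}, and comparing $|G_\alpha|$ with $|G_\alpha^{\Ga(\alpha)}|^2/|\Ga(\alpha)|$ as in inequality~(\ref{eq34}) forces a contradiction unless $q$ and the dimension are tiny, which then fall under the sporadic rows of Table~\ref{tab1}. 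I would also note that $G_\alpha$ being an abstract $2$-transitive group of degree $\geqslant3$ forces its socle to be either elementary abelian or simple $2$-transitive, which is incompatible with having $\SL_{n-1}(q)$ ($n-1\geqslant3$) or an orthogonal group as a composition factor while the degree stays small.

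Next I would treat the finitely many rows of Table~\ref{tab1} one at a time by \magma\ computation, exactly as in the proofs of Lemmas~\ref{SolvableStab}, \ref{AlternatingGraph} and \ref{SporadicGraph}: for each candidate $(G,G_\alpha)$ with $G_\alpha$ unsolvable (rows 1--5 have $K\cap L=\A_5$, rows 6--9, 10--11, etc. give $\PSL_3(3)$, $(4\times\PSL_2(9)){:}2$, $2^6{:}(\Sy_3\times\PSL_3(2))$, $2^4{:}\A_5$, and so on, while rows 18--19 give $\PSL_2(7)$ inside $\PSU_3(3)$ and $\A_7$ inside $\PSU_3(5)$), I would enumerate subgroups $M\leqslant G_\alpha$ on which $G_\alpha$ acts $2$-transitively and check whether $\Nor_G(M)\nleqslant G_\alpha$ and $\langle G_\alpha,\Nor_G(M)\rangle=G$, i.e.\ whether a connected non-bipartite $(G,2)$-arc-transitive coset graph exists; for $\PSU_3(5)$ with $G_\alpha\cong\A_7$ (and $\PSiU_3(5)$ with $G_\alpha\cong\Sy_7$) this is precisely the Hoffman--Singleton graph recorded in Example~\ref{Hoffman-S}, of valency $7$ and $3$-arc-transitive, and every other row yields no such graph. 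The main obstacle I anticipate is organizing the Table~\ref{tab7} elimination cleanly: one must be careful that $G_\alpha$ is only determined up to $G_\alpha\cap L$ together with $G/L$, so the parity and divisibility arguments (using that both $|V|=|G|/|G_\alpha|$ and $|\Ga(\alpha)|=|G_\alpha|/|G_{\alpha\beta}|$ cannot both be odd for a graph of valency $\geqslant3$, and that $G_{\alpha\beta}^{[1]}=1$ whenever the valency is at least five by Theorem~\ref{DoubleStar}(b)) must be run uniformly across all the families; handling $\POm_8^+$ with its triality and the small exceptional isomorphisms requires the extra care already flagged in remark~(ii) after Theorem~\ref{SolvableFactor}.
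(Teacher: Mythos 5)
Your framework (Theorem~\ref{SolvableFactor}(d) plus coset-graph constraints) is the right starting point, and your plan for the finitely many rows of Table~\ref{tab1} by MAGMA plus normalizer checks matches the paper's treatment. The gap is in how you propose to eliminate the infinite families of Table~\ref{tab7}. You claim that comparing $|G_\alpha|$ with $|G_\alpha^{\Ga(\alpha)}|^2/|\Ga(\alpha)|$ via inequality~(\ref{eq34}), or noting that $G_\alpha$ is ``too large relative to any $2$-transitive degree,'' rules these rows out. That is false for the rows that actually survive the first filter: if $K\cap L\trianglerighteq q^{n-1}{:}\SL_{n-1}(q)$ (row~1), then $G_\alpha$ itself is an affine or almost simple $2$-transitive group of degree $q^{n-1}$ or $(q^{n-1}-1)/(q-1)$, and taking $G_\alpha^{[1]}$ trivial makes (\ref{eq34}) a tautology. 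The same happens for rows~4--6 with $K\cap L\trianglerighteq\Sp_2(q^2)$, $\PSp_2(q^2)$, or $\SU_3(q)$, each of which acts $2$-transitively on $q^2+1$ or $q^3+1$ points. No crude order bound of the kind you sketch eliminates these.

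What the paper actually needs for these cases are two structural arguments you do not mention. For the linear case, it needs Lemma~\ref{l5} to conclude $G_\alpha<\PGaL_n(q)$, which splits into: if $G\leqslant\PGaL_n(q)$ then $\Nor_G(G_{\alpha\beta})$ lies in the parabolic $\Pa_1[G]$, so $\langle G_\alpha,\Nor_G(G_{\alpha\beta})\rangle\neq G$, violating connectivity; if $G\nleqslant\PGaL_n(q)$ then $G\cap\PGaL_n(q)$ is an index-two subgroup containing $G_\alpha$, hence intransitive on $V$, contradicting quasiprimitivity. For the symplectic and unitary cases, it needs the Borel--Tits theorem: $\bfO_p(L_{\alpha\beta})\neq1$ forces $N=L_{\alpha\beta}\langle g\rangle$ into a parabolic subgroup, and a careful computation of $|X\cap\Pa_i|$ then shows $N$ stays inside the maximal subgroup $X$ containing $L_\alpha$, again contradicting connectivity. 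These are the key ideas missing from your proposal; without them the central cases of the lemma remain open.
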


\begin{proof}
According to part~(d) of Theorem~\ref{SolvableFactor}, $(G,R,G_\alpha)=(G,H,K)$ with $(L,H\cap L,K\cap L)$ described in Tables~\ref{tab7}--\ref{tab1}. Since $G_\alpha$ acts $2$-transitively on $\Ga(\alpha)$, inspecting the candidates in Tables~\ref{tab7}--\ref{tab1} we conclude that one of the following occurs.
\begin{itemize}
\item[(i)] $L=\PSL_n(q)$, where $n\geqslant3$ and $(n,q)\neq(3,2)$ or $(3,3)$, and $L_\alpha\trianglerighteq q^{n-1}{:}\SL_{n-1}(q)$.
\item[(ii)] $L=\PSp_4(q)$, and $\PSL_2(q^2)\trianglelefteq L_\alpha\leqslant\PSL_2(q^2).2$ as in row~4 or 5 of Table~\ref{tab7}.
\item[(iii)] $L=\PSU_4(q)$, and $\SU_3(q)\trianglelefteq L_\alpha$ as in row~6 of Table~\ref{tab7}.
\item[(iv)] $(L,H\cap L,K\cap L)$ lies in rows~6--21, 23 and~25--26 of Table~\ref{tab1}.
\end{itemize}

{\bf Case 1.} Suppose that (i) appears. Then either $L_\alpha^{\Ga(\alpha)}\trianglerighteq q^{n-1}{:}\SL_{n-1}(q)$ is affine with $|\Ga(\alpha)|=q^{n-1}$, or $L_\alpha^{\Ga(\alpha)}\trianglerighteq\PSL_{n-1}(q)$ is almost simple with $|\Ga(\alpha)|=(q^{n-1}-1)/(q-1)$. Accordingly, $G_{\alpha\beta}$ is the maximal subgroup of index $q^{n-1}$ or $(q^{n-1}-1)/(q-1)$ in $G_\alpha$. If $G\leqslant\PGaL_n(q)$, then $\Nor_G(G_{\alpha\beta})\leqslant\Pa_1[G]$, which leads to $\langle G_\alpha,\Nor_G(G_{\alpha\beta})\rangle\leqslant\Pa_1[G]$, contrary to Lemma~\ref{CosetGraph}(c). Consequently, $G\nleqslant\PGaL_n(q)$, and so $G\cap\PGaL_n(q)$ has index $2$ in $G$. Moreover, $G\cap\PGaL_n(q)$ contains $G_\alpha$ by Lemma~\ref{l5}. This implies that $G\cap\PGaL_n(q)$ is not transitive on $V$, which is impossible as $G$ is quasiprimitive on $V$.

{\bf Case 2.} Suppose that (ii) appears. In light of the graph automorphism of $L$ for even $q$, we may assume without loss of generality that $L_\alpha$ is a $\mathcal{C}_3$ subgroup of $L$. Denote the stabilizer of a totally isotropic $1$ or $2$-space in $L$ by $\Pa_1$ or $\Pa_2$, respectively. Since $L_\alpha\trianglelefteq G_\alpha$ and $G_\alpha$ is $2$-transitive on $\Ga(\alpha)$, we deduce from the classification of $2$-transitive permutation groups that $L_\alpha$ is $2$-transitive on $\Ga(\alpha)$ and
$$
L_{\alpha\beta}=L_\alpha\cap\Pa_2=q^2{:}\frac{q^2-1}{(2,q-1)}\text{ or }q^2{:}\frac{q^2-1}{(2,q-1)}.2.
$$
Therefore, $\Ga$ is $(L,2)$-arc-transitive, and then by Lemma~\ref{CosetGraph} there exists $g\in L$ such that $g$ normalizes $L_{\alpha\beta}$, $g^2\in L_{\alpha\beta}$ and $\langle L_\alpha,g\rangle=L$. Let $X=\PSL_2(q^2).2$ be the maximal subgroup of $L$ containing $L_\alpha$, and $N=\langle g\rangle L_{\alpha\beta}=L_{\alpha\beta}\langle g\rangle$. Since $\bfO_p(L_{\alpha\beta})\neq1$, we know that $\bfO_p(N)\neq1$ and hence $N$ is contained in the parabolic subgroup $\Pa_1$ or $\Pa_2$ of $L$ by Borel-Tits theorem. Note that the intersection of $\Pa_1$ with the $\mathcal{C}_3$ subgroup $X$ has order $|X||\Pa_1|/|L|=2q^2(q-1)/(2,q-1)$ as the factorization $L=X\Pa_1$ holds. If $N\leqslant\Pa_1$, then $L_{\alpha\beta}\leqslant\Pa_1$, which leads to a contradiction that
$$
2q^2(q-1)\geqslant|\Pa_1\cap X|\geqslant|\Pa_1\cap L_\alpha|\geqslant|L_{\alpha\beta}|\geqslant q^2(q^2-1).
$$
Consequently, $N\leqslant\Pa_2$, from which we conclude
$$
N=q^2{:}\frac{q^2-1}{(2,q-1)}.2=X\cap\Pa_2
$$
as $|N|/|L_{\alpha\beta}|=2$. It follows that $N\leqslant X$, and then $\langle L_\alpha,g\rangle\leqslant\langle L_\alpha,N\rangle\leqslant X\neq L$, contrary to the connectivity of $\Ga$.

{\bf Case 3.} Suppose that (iii) appears. In this case, $Y\trianglelefteq L_\alpha\leqslant X$, where $Y=\SU_3(q)$ and $X=\GU_3(q)/d$ with $d=(4,q+1)$. Denote the stabilizer of a totally isotropic $1$ or $2$-space in $L$ by $\Pa_1$ or $\Pa_2$, respectively. Since $L_\alpha\trianglelefteq G_\alpha$ and $G_\alpha$ is $2$-transitive on $\Ga(\alpha)$, we deduce from the classification of $2$-transitive permutation groups that $L_\alpha$ is $2$-transitive on $\Ga(\alpha)$ and $L_{\alpha\beta}\cap Y=q^{1+2}{:}(q^2-1)$. Therefore, $\Ga$ is $(L,2)$-arc-transitive, and then by Lemma~\ref{CosetGraph} there exists $g\in L$ such that $g$ normalizes $L_{\alpha\beta}$, $g^2\in L_{\alpha\beta}$ and $\langle L_\alpha,g\rangle=L$. Let $N=\langle g\rangle L_{\alpha\beta}=L_{\alpha\beta}\langle g\rangle$. Since $\bfO_p(L_{\alpha\beta})\neq1$, we know that $\bfO_p(N)\neq1$ and hence $N$ is contained in the parabolic subgroup $\Pa_1$ or $\Pa_2$ of $L$ by Borel-Tits theorem. We conclude that
$$
N\leqslant q^{1+2}{:}(q^2-1).\frac{q+1}{d}\leqslant X
$$
as $|N|/|L_{\alpha\beta}|=2$, but this implies $\langle L_\alpha,g\rangle\leqslant\langle L_\alpha,N\rangle\leqslant X\neq L$, contrary to the connectivity of $\Ga$.

{\bf Case 4.} Suppose that (iv) appears. For the candidates of $(L,H\cap L,K\cap L)$ in rows~6--11 of Table~\ref{tab1}, searching in \magma \cite{bosma1997magma} gives no such connected $(G,2)$-arc-transitive graph.

Let $L=\PSp_4(p)$, where $p\in\{5,7,11,23\}$, as in rows~12--15 of Table~\ref{tab1}. Then $L_\alpha=\PSL_2(p^2){:}\calO$ with $\calO\leqslant\Z_2$, and since $G_\alpha^{\Ga(\alpha)}$ is $2$-transitive, $L_\alpha^{\Ga(\alpha)}$ is also $2$-transitive. Therefore, $\Ga$ is $(L,2)$-arc-transitive. By the $2$-transitivity of $L_\alpha$ on $\Ga(\alpha)$ we derive that $L_{\alpha\beta}=(p^2{:}(p^2-1)/2).\calO$ and then $\Nor_L(L_{\alpha\beta})<\PSL_2(p^2){:}2$. However, this implies that $\langle L_\alpha,\Nor_L(L_{\alpha\beta})\rangle\leqslant\PSL_2(p^2){:}2<L$, contrary to the connectivity of $\Ga$.

Assume that $L=\Sp_6(2)$ as in row~16 of Table~\ref{tab1}. Then $G=L$, and $G_\alpha=\A_8$ or $\Sy_8$. If $G_\alpha=\Sy_8$, then $G$ is $2$-transitive but not $3$-transitive on $[G{:}G_\alpha]$, a contradiction. Thus $G_\alpha=\A_8$, and we have $G_{\alpha\beta}=2^3{:}\GL_3(2)$ or $\A_7$ since $G_\alpha$ acts $2$-transitively on $\Ga(\alpha)$. If $G_{\alpha\beta}=2^3{:}\GL_3(2)$, then $\Nor_G(G_{\alpha\beta})=G_{\alpha\beta}<G_\alpha$, violating the connectivity of $\Ga$. If $G_{\alpha\beta}=\A_7$, then $\Sy_7\cong\Nor_G(G_{\alpha\beta})<G_\alpha$, still violating the connectivity of $\Ga$.

Let $L=\PSp_6(3)$ as in row~17 of Table~\ref{tab1}. Then $L_\alpha=\PSL_2(27).3$ is also $2$-transitive on $\Ga(\alpha)$, and hence $\Ga$ is $(L,2)$-arc-transitive. Due to the $2$-transitivity of $L_\alpha^{\Ga(\alpha)}$ we deduce $G_{\alpha\beta}=3^3{:}13{:}3$. However, it follows that $\Nor_L(L_{\alpha\beta})=L_{\alpha\beta}<L_\alpha$, contrary to the connectivity of $\Ga$. The same arguments rules out rows~18 and~20 of Table~\ref{tab1}.

Now let $L=\PSU_3(5)$ as in row~19 of Table~\ref{tab1}. Then $L_\alpha=\A_7$ is also $2$-transitive on $\Ga(\alpha)$, and hence $L_{\alpha\beta}=\PSL_2(7)$ or $\A_6$. If $L_{\alpha\beta}=\PSL_2(7)$, then $\Nor_L(L_{\alpha\beta})=L_{\alpha\beta}<L_\alpha$, not possible. Consequently, $L_{\alpha\beta}=\A_6$, and $\Ga$ has order $|L|/|L_\alpha|=50$ and valency $|L_\alpha|/|L_{\alpha\beta}|=7$. This is the well-known Hoffman-Singleton graph introduced in Example~\ref{Hoffman-S}, where we see that $\Aut(\Ga)=\PSiU_3(5)$ and so $(G,G_\alpha)=(\PSU_3(5),\A_7)$ or $(\PSiU_3(5),\Sy_7)$.

Let $L=\PSU_4(8)$ as in row~21 of Table~\ref{tab1}. Then $L_\alpha=2^{12}{:}\SL_2(64).7$, and $|V|=|L|/|L_\alpha|=4617$ is odd. Since $G_\alpha$ is $2$-transitive on $\Ga(\alpha)$, it follows that $|\Ga(\alpha)|=65$ is odd, a contradiction. (We remark that the subgroup $2^{12}{:}\SL_2(64)$ of $L_\alpha$ is not isomorphic to $\ASL_2(64)$ and does not have any $2$-transitive affine permutation representation.) The same argument excludes row~23 of Table~\ref{tab1}.

Assume finally that $L=\Omega_8^+(2)$ as in rows~25--26 of Table~\ref{tab1}, where $L_\alpha=\Sp_6(2)$ or $\A_9$. Then $L_\alpha$ is also $2$-transitive on $\Ga(\alpha)$. If $L_\alpha=\Sp_6(2)$, then $L_{\alpha\beta}=\Sy_8$ or $\SO_6^-(2)$, but $\Nor_L(L_{\alpha\beta})=L_{\alpha\beta}<L_\alpha$, violating the connectivity of $\Ga$. If $L_\alpha=\A_9$, then $L_{\alpha\beta}=\A_8$, but $\Nor_L(L_{\alpha\beta})=L_{\alpha\beta}<L_\alpha$, still violating the connectivity of $\Ga$. This completes the proof.
\end{proof}


%

\vskip0.1in
{\bf Proof of Theorem~\ref{CayleyGraph}:}
Let $\Si$ be a non-bipartite connected $(X,s)$-transitive graph and $R$ be a solvable vertex-transitive subgroup of $X$, where $s\geqslant2$ and the valency of $\Si$ is at least three. By virtue of Lemma~\ref{reduction-qp} we may assume that $X=G$ is vertex-quasiprimitive and $\Si=\Ga$. By Proposition~\ref{reduction-AS}, $G$ is affine or almost simple.

If $G$ is affine, then $\Ga$ is not $(G,3)$-arc-transitive by~\cite[Proposition~2.3]{Li05}, and so $s=2$ as in part~(b) of Theorem~\ref{CayleyGraph}.

Now assume that $G$ is almost simple. Then $(G,\Ga)$ satisfies Hypothesis~\ref{GraphHypo}. If $\PSL_2(q)\leqslant G\leqslant\PGaL_2(q)$ for some prime power $q\geqslant4$, then $\Ga$ is classified in~\cite{HNP}, from which we conclude that $s=3$ if and only if $\Ga$ is the Petersen graph. Thus assume that $\Soc(G)\neq\PSL_2(q)$. Then Lemmas~\ref{SolvableStab}--\ref{ClassicalGraph} show that $(G,\Ga)$ satisfies part~(a), (d) or (e) of Theorem~\ref{CayleyGraph}. Note that the complete graph is not $3$-arc-transitive, the Hoffman-Singlton graph is $3$-transitive by Example~\ref{Hoffman-S} and the Higman-Sims graph is not $3$-arc-transitive by Example~\ref{Higman-S}. This completes the proof of Theorem~\ref{CayleyGraph}.
\qed

\appendix


\chapter[Maximal factorizations of almost simple classical groups]{Tables for nontrivial maximal factorizations of almost simple classical groups}


\begin{table}[htbp]
\caption{$L=\PSL_n(q)$, $n\geqslant2$}\label{tabLinear}
\centering
\begin{tabular}{|l|l|l|l|}
\hline
row & $A\cap L$ & $B\cap L$ & remark\\
\hline
1 & $\hat{~}\GL_a(q^b).b$ & $\Pa_1$, $\Pa_{n-1}$ & $ab=n$, $b$ prime \\
\hline
2 & $\PSp_n(q).a$ & $\Pa_1$, $\Pa_{n-1}$ & $n\geqslant4$ even, \\
 & & & $a=(2,q-1)(n/2,q-1)/(n,q-1)$ \\
\hline
3 & $\PSp_n(q).a$ & $\Stab(V_1\oplus V_{n-1})$ & $n\geqslant4$ even, $G\nleqslant\PGaL_n(q)$, \\
 & & & $a=(2,q-1)(n/2,q-1)/(n,q-1)$ \\
\hline
4 & $\hat{~}\GL_{n/2}(q^2).2$ & $\Stab(V_1\oplus V_{n-1})$ & $n\geqslant4$ even, $q\in\{2,4\}$, $G\nleqslant\PGaL_n(q)$\\
\hline
5 & $\Pa_1$ & $\A_5$ & $n=2$, $q\in\{11,19,29,59\}$ \\
\hline
6 & $\Pa_1$ & $\Sy_4$ & $n=2$, $q\in\{7,23\}$ \\
\hline
7 & $\Pa_1$ & $\A_4$ & $G=\PGL_2(11)$ \\
\hline
8 & $\D_{34}$ & $\PSL_2(4)$ & $G=\PGaL_2(16)$ \\
\hline
9 & $\PSL_2(7)$ & $\A_6$ & $n=3$, $q=4$, $G=L.2$ \\
\hline
10 & $31.5$ & $\Pa_2$, $\Pa_3$ & $n=5$, $q=2$ \\
\hline
\end{tabular}
\end{table}

\begin{table}[htbp]
\caption{$L=\PSp_{2m}(q)$, $m\geqslant2$}\label{tabSymplectic}
\centering
\begin{tabular}{|l|l|l|l|}
\hline
row & $A\cap L$ & $B\cap L$ & remark\\
\hline
1 & $\PSp_{2a}(q^b).b$ & $\Pa_1$ & $ab=m$, $b$ prime \\
\hline
2 & $\Sp_{2a}(q^b).b$ & $\GO_{2m}^+(q)$, $\GO_{2m}^-(q)$ & $q$ even, $ab=m$, $b$ prime \\
\hline
3 & $\GO_{2m}^-(q)$ & $\Pa_m$ & $q$ even \\
\hline
4 & $\GO_{2m}^-(q)$ & $\Sp_m(q)\wr\Sy_2$ & $m$ even, $q$ even \\
\hline
5 & $\Sp_m(4).2$ & $\N_2$ & $m\geqslant4$ even, $q=2$ \\
\hline
6 & $\GO_{2m}^-(2)$ & $\GO_{2m}^+(2)$ & $q=2$ \\
\hline
7 & $\GO_{2m}^-(4)$ & $\GO_{2m}^+(4)$ & $q=4$, $G=L.2$, two classes \\
 & & & of factorizations if $m=2$ \\
\hline
8 & $\Sp_m(16).2$ & $\N_2$ & $m\geqslant4$ even, $q=4$, $G=L.2$ \\
\hline
9 & $\GO_{2m}^-(4)$ & $\Sp_{2m}(2)$ & $q=4$, $G=L.2$ \\
\hline
10 & $\GO_{2m}^-(16)$ & $\Sp_{2m}(4)$ & $q=16$, $G=L.4$ \\
\hline
11 & $\Sz(q)$ & $\GO_4^+(q)$ & $m=2$, $q=2^f$, $f\geqslant3$ odd, \\
 & & & two classes of factorizations \\
\hline
12 & $\G_2(q)$ & $\GO_6^+(q)$, $\GO_6^-(q)$, $\Pa_1$, $\N_2$ & $m=3$, $q$ even \\
\hline
13 & $2^4.\A_5$ & $\Pa_1$, $\Pa_2$ & $m=2$, $q=3$ \\
\hline
14 & $\PSL_2(13)$ & $\Pa_1$ & $m=3$, $q=3$ \\
\hline
15 & $\GO_8^-(2)$ & $\Sy_{10}$ & $m=4$, $q=2$ \\
\hline
16 & $\PSL_2(17)$ & $\GO_8^+(2)$ & $m=4$, $q=2$ \\
\hline
\end{tabular}
\end{table}

\begin{table}[htbp]
\caption{$L=\PSU_n(q)$, $n\geqslant3$}\label{tabUnitary}
\centering
\begin{tabular}{|l|l|l|l|}
\hline
row & $A\cap L$ & $B\cap L$ & remark\\
\hline
1 & $\N_1$ & $\Pa_m$ & $n=2m$ \\
\hline
2 & $\N_1$ & $\PSp_{2m}(q).a$ & $n=2m$, $a=(2,q-1)(m,q+1)/(n,q+1)$ \\
\hline
3 & $\N_1$ & $\hat{~}\SL_m(4).2$ & $n=2m\geqslant6$, $q=2$ \\
\hline
4 & $\N_1$ & $\hat{~}\SL_m(16).3.2$ & $n=2m$, $q=4$, $G\geqslant L.4$ \\
\hline
5 & $\PSL_2(7)$ & $\Pa_1$ & $n=3$, $q=3$ \\
\hline
6 & $\A_7$ & $\Pa_1$ & $n=3$, $q=5$ \\
\hline
7 & $19.3$ & $\Pa_1$ & $n=3$, $q=8$, $G\geqslant L.3^2$ \\
\hline
8 & $3^3.\Sy_4$ & $\Pa_2$ & $n=4$, $q=2$ \\
\hline
9 & $\PSL_3(4)$ & $\Pa_1$, $\PSp_4(3)$, $\Pa_2$ & $n=4$, $q=3$ \\
\hline
10 & $\N_1$ & $\PSU_4(3).2$, $\M_{22}$ & $n=6$, $q=2$ \\
\hline
11 & $\J_3$ & $\Pa_1$ & $n=9$, $q=2$ \\
\hline
12 & $\Suz$ & $\N_1$ & $n=12$, $q=2$ \\
\hline
\end{tabular}
\end{table}

\begin{table}[htbp]
\caption{$L=\Omega_{2m+1}(q)$, $m\geqslant3$, $q$ odd}\label{tabOmega}
\centering
\begin{tabular}{|l|l|l|l|}
\hline
row & $A\cap L$ & $B\cap L$ & remark\\
\hline
1 & $\N_1^-$ & $\Pa_m$ & \\
\hline
2 & $\G_2(q)$ & $\Pa_1$, $\N_1^+$, $\N_1^-$, $\N_2^-$ & $m=3$ \\
\hline
3 & $\G_2(q)$ & $\N_2^+$ & $m=3$, $q>3$ \\
\hline
4 & $\PSp_6(q).a$ & $\N_1^-$ & $m=6$, $q=3^f$, $a\leqslant2$ \\
\hline
5 & $\F_4(q)$ & $\N_1^-$ & $m=12$, $q=3^f$ \\
\hline
6 & $\G_2(3)$ & $\Sy_9$, $\Sp_6(2)$ & $m=3$, $q=3$ \\
\hline
7 & $\N_1^+$ & $\Sy_9$, $\Sp_6(2)$ & $m=3$, $q=3$ \\
\hline
8 & $\Pa_3$ & $\Sy_9$, $\Sp_6(2)$, $2^6.\A_7$ & $m=3$, $q=3$ \\
\hline
\end{tabular}
\end{table}

\begin{table}[htbp]
\caption{$L=\POm_{2m}^-(q)$, $m\geqslant4$}\label{tabOmegaMinus}
\centering
\begin{tabular}{|l|l|l|l|}
\hline
row & $A\cap L$ & $B\cap L$ & remark\\
\hline
1 & $\Pa_1$ & $\hat{~}\GU_m(q)$ & $m$ odd \\
\hline
2 & $\N_1$ & $\hat{~}\GU_m(q)$ & $m$ odd \\
\hline
3 & $\Pa_1$ & $\Omega_m^-(q^2).2$ & $m$ even, $q\in\{2,4\}$, $G=\Aut(L)$ \\
\hline
4 & $\N_2^+$ & $\GU_m(4)$ & $m$ odd, $q=4$, $G=\Aut(L)$ \\
\hline
5 & $\A_{12}$ & $\Pa_1$ & $m=5$, $q=2$ \\
\hline
\end{tabular}
\end{table}

\begin{table}[htbp]
\caption{$L=\POm_{2m}^+(q)$, $m\geqslant5$}\label{tabOmegaPlus1}
\centering
\begin{tabular}{|l|l|l|l|}
\hline
row & $A\cap L$ & $B\cap L$ & remark\\
\hline
1 & $\N_1$ & $\Pa_m$, $\Pa_{m-1}$ & \\
\hline
2 & $\N_1$ & $\hat{~}\GU_m(q).2$ & $m$ even \\
\hline
3 & $\N_1$ & $(\PSp_2(q)\otimes\PSp_m(q)).a$ & $m$ even, $q>2$, $a=\gcd(2,m/2,q-1)$ \\
\hline
4 & $\N_2^-$ & $\Pa_m$, $\Pa_{m-1}$ & \\
\hline
5 & $\Pa_1$ & $\hat{~}\GU_m(q).2$ & $m$ even \\
\hline
6 & $\N_1$ & $\hat{~}\GL_m(q).2$ & $G\geqslant L.2$ if $m$ odd \\
\hline
7 & $\N_1$ & $\Omega_m^+(4).2^2$ & $m$ even, $q=2$ \\
\hline
8 & $\N_1$ & $\Omega_m^+(16).2^2$ & $m$ even, $q=4$, $G\geqslant L.2$ \\
\hline
9 & $\N_2^-$ & $\hat{~}\GL_m(2).2$ & $q=2$, $G\geqslant L.2$ if $m$ odd \\
\hline
10 & $\N_2^-$ & $\hat{~}\GL_m(4).2$ & $q=4$, $G\geqslant L.2$, $G\neq\GO_{2m}^+(4)$ \\
\hline
11 & $\N_2^+$ & $\hat{~}\GU_m(4).2$ & $m$ even, $q=4$, $G=L.2$ \\
\hline
12 & $\Omega_9(q).a$ & $\N_1$ & $m=8$, $a\leqslant2$ \\
\hline
13 & $\Co_1$ & $\N_1$ & $m=12$, $q=2$ \\
\hline
\end{tabular}
\end{table}

\begin{table}[htbp]
\caption{$L=\POm_8^+(q)$}\label{tabOmegaPlus2}
\centering
\begin{tabular}{|l|l|l|l|}
\hline
row & $A\cap L$ & $B\cap L$ & remark\\
\hline
1 & $\Omega_7(q)$ & $\Omega_7(q)$ & $A=B^\tau$ for some triality $\tau$ \\
\hline
2 & $\Omega_7(q)$ & $\Pa_1$, $\Pa_3$, $\Pa_4$ & \\
\hline
3 & $\Omega_7(q)$ & $\hat{~}((q+1)/d\times\Omega_6^-(q)).2^d$ & $d=(2,q-1)$, $B$ in $\mathcal{C}_1$ or $\mathcal{C}_3$ \\
\hline
4 & $\Omega_7(q)$ & $\hat{~}((q-1)/d\times\Omega_6^+(q)).2^d$ & $q>2$, $G=L.2$ if $q=3$, \\
 & & & $d=(2,q-1)$, $B$ in $\mathcal{C}_1$ or $\mathcal{C}_2$ \\
\hline
5 & $\Omega_7(q)$ & $(\PSp_2(q)\otimes\PSp_4(q)).2$ & $q$ odd, $B$ in $\mathcal{C}_1$ or $\mathcal{C}_4$ \\
\hline
6 & $\Omega_7(q)$ & $\Omega_8^-(q^{1/2})$ & $q$ square, $B$ in $\mathcal{C}_5$ or $\mathcal{S}$ \\
\hline
7 & $\Pa_1$, $\Pa_3$, $\Pa_4$ & $\hat{~}((q+1)/d\times\Omega_6^-(q)).2^d$ & $d=(2,q-1)$, $B$ in $\mathcal{C}_1$ or $\mathcal{C}_3$ \\
\hline
8 & $\A_9$ & $\Omega_7(2)$, $\Pa_1$, $\Pa_3$, $\Pa_4$, & $q=2$ \\
 & & $(3\times\Omega_6^-(2)).2$ & \\
\hline
9 & $\Omega_7(2)$ & $(\PSL_2(4)\times\PSL_2(4)).2^2$ & $q=2$, $B$ in $\mathcal{C}_2$ or $\mathcal{C}_3$ \\
\hline
10 & $\Omega_8^+(2)$ & $\Omega_7(3)$ & $q=3$ \\
\hline
11 & $\Omega_8^+(2)$ & $\Pa_1$, $\Pa_3$, $\Pa_4$ & $q=3$ \\
\hline
12 & $\Omega_8^+(2)$ & $\Pa_{13}$, $\Pa_{14}$, $\Pa_{34}$ & $q=3$, $G\geqslant L.2$ \\
\hline
13 & $2^6.\A_8$ & $\Pa_1$, $\Pa_3$, $\Pa_4$ & $q=3$, $G\geqslant L.2$ \\
\hline
14 & $\Omega_7(4)$ & $(\PSL_2(16)\times\PSL_2(16)).2^2$ & $q=4$, $G\geqslant L.2$ \\
\hline
15 & $(5\times\Omega_6^-(4)).2$ & $(3\times\Omega_6^+(4)).2$ & $q=4$, $G\geqslant L.2$ \\
\hline
\end{tabular}
\end{table}


\chapter[\magma codes]{\magma codes}



\begin{verbatim}
/*
\end{verbatim}
  Input : a finite group G\\
  Output: sequence of pairs [H,K] such that G=HK with H and K both core-free
\begin{verbatim}
*/
_Factorizations:=function(G)
 list:=[];
 S:=[i:i in Subgroups(G)|#Core(G,i`subgroup) eq 1];
 T:=MaximalSubgroups(G);
 while #T gt 0 do
  K:=T[1]`subgroup;
  core:=Core(G,K);
  if #core eq 1 then
   for i in S do
    H:=i`subgroup;
    if #H*#K eq #G*#(H meet K) then
     Append(~list,[H,K]);
     T:=T cat MaximalSubgroups(K);
    end if;
   end for;
  else
   T:=T cat MaximalSubgroups(K);
  end if;
  Remove(~T,1);
 end while;
 return list;
end function;
\end{verbatim}

\begin{verbatim}


/*
\end{verbatim}
  Input : a finite group G\\
  Output: sequence of pairs [H,K] such that G=HK with H solvable and K core-free
\begin{verbatim}
*/
_SolvableFactorizations:=function(G)
 list:=[];
 S:=SolvableSubgroups(G);
 T:=MaximalSubgroups(G);
 while #T gt 0 do
  K:=T[1]`subgroup;
  core:=Core(G,K);
  if #core eq 1 then
   for i in S do
    H:=i`subgroup;
    if #H*#K eq #G*#(H meet K) then
     Append(~list,[H,K]);
     T:=T cat MaximalSubgroups(K);
    end if;
   end for;
  else
   T:=T cat MaximalSubgroups(K);
  end if;
  Remove(~T,1);
 end while;
 return list;
end function;
\end{verbatim}

\begin{verbatim}


/*
\end{verbatim}
  Input : a finite group G\\
  Output: the maximal solvable subgroups as well as some other solvable subgroups of G
\begin{verbatim}
*/
_MaximalSolvableCandidates:=function(G)
 list:=[];
 temp:=MaximalSubgroups(G);
 while #temp gt 0 do
  if IsSolvable(temp[1]`subgroup) eq true then
   Append(~list,temp[1]`subgroup);
  else
   temp:=temp cat MaximalSubgroups(temp[1]`subgroup);
  end if;
  Remove(~temp,1);
 end while;
 return list;
end function;
\end{verbatim}

\begin{verbatim}


/*
\end{verbatim}
  Input : a finite group G\\
  Output: sequence of pairs [H,K] such that G=HK with H maximal solvable and K core-free
\begin{verbatim}
*/
_MaximalSolvableFactorizations1:=function(G)
 list:=[];
 S:=_MaximalSolvableCandidates(G);
 T:=MaximalSubgroups(G);
 while #T gt 0 do
  K:=T[1]`subgroup;
  core:=Core(G,K);
  if #core eq 1 then
   for H in S do
    if #H*#K eq #G*#(H meet K) then
     Append(~list,[H,K]);
     T:=T cat MaximalSubgroups(K);
    end if;
   end for;
  else
   T:=T cat MaximalSubgroups(K);
  end if;
  Remove(~T,1);
 end while;
 return list;
end function;
\end{verbatim}

\begin{verbatim}


/*
\end{verbatim}
  Input : a finite group G and a subgroup A\\
  Output: sequence of pairs [H,K] such that G=HK with H maximal solvable in A and K core-free
\begin{verbatim}
*/
_MaximalSolvableFactorizations2:=function(G,A)
 list:=[];
 S:=_MaximalSolvableCandidates(A);
 T:=MaximalSubgroups(G);
 while #T gt 0 do
  K:=T[1]`subgroup;
  core:=Core(G,K);
  if #core eq 1 then
   for H in S do
    if #H*#K eq #G*#(H meet K) then
     Append(~list,[H,K]);
     T:=T cat MaximalSubgroups(K);
    end if;
   end for;
  else
   T:=T cat MaximalSubgroups(K);
  end if;
  Remove(~T,1);
 end while;
 return list;
end function;
\end{verbatim}

\begin{verbatim}


/*
\end{verbatim}
  Input : a finite group G\\
  Output: the core-free maximal subgroups of G
\begin{verbatim}
*/
_CoreFreeMaximalSubgroups:=function(G)
 L:=MaximalSubgroups(G);
 return [i:i in L|#Core(G,i`subgroup) eq 1];
end function;
\end{verbatim}

\begin{verbatim}


/*
\end{verbatim}
  Input : a finite group G\\
  Output: sequence of pairs [H,K] such that G=HK with H maximal solvable and K core-free maximal
\begin{verbatim}
*/
_MaximalFactorizations1:=function(G)
 list:=[];
 S:=_MaximalSolvableCandidates(G);
 T:=_CoreFreeMaximalSubgroups(G);
 for H in S do
  for i in T do
   K:=i`subgroup;
   if #H*#K eq #G*#(H meet K) then
    Append(~list,[H,K]);
   end if;
  end for;
 end for;
 return list;
end function;
\end{verbatim}

\begin{verbatim}


/*
\end{verbatim}
  Input : a finite group G and a subgroup A\\
  Output: sequence of pairs [H,K] such that G=HK with H maximal solvable in A and K core-free maximal
\begin{verbatim}
*/
_MaximalFactorizations2:=function(G,A)
 list:=[];
 S:=_MaximalSolvableCandidates(A);
 T:=_CoreFreeMaximalSubgroups(G);
 for H in S do
  for i in T do
   K:=i`subgroup;
   if #H*#K eq #G*#(H meet K) then
    Append(~list,[H,K]);
   end if;
  end for;
 end for;
 return list;
end function;
\end{verbatim}

\begin{verbatim}


/*
\end{verbatim}
  Input : a finite group G and a subgroup K\\
  Output: sequence of conjugacy classes of subgroups M of K such that K=$\mathrm{G}_\alpha$ and M=$\mathrm{G}_{\alpha\beta}$ for an edge $\{\alpha,\beta\}$ of some (G,2)-arc-transitive graph
\begin{verbatim}
*/
_TwoArcTransitive:=function(G,K)
 list:=[];
 I:=[i:i in MaximalSubgroups(K)|Transitivity(CosetImage(K,i`subgroup
 )) gt 1 and sub<G|K,Normalizer(G,i`subgroup)> eq G];
 for i in I do
  M:=i`subgroup;
  N:=Normaliser(G,M);
  P:=Sylow(N,2);
  R,_:=RightTransversal(N,Normaliser(N,P));
  s:=0;
  for x in R do
   if exists(g){g:g in P^x|K^g meet K eq M and sub<G|K,g> eq G
   and g^2 in K} then
    s:=s+1;
   end if;
  end for;
  if s gt 0 then
   Append(~list,M);
  end if;
 end for;
 return list;
end function;
\end{verbatim}

\end{document}